\documentclass{amsart}
\usepackage[numeric]{amsrefs}
\usepackage[utf8]{inputenc}
\usepackage{tikz-cd}
\usepackage{quiver}
\usepackage{amsmath}
\usepackage{mathtools} 
\usepackage{amsfonts}
\usepackage{amssymb} 
\usepackage{amsthm}
\usepackage{bbm}
\usepackage{mathrsfs}
\usepackage{stmaryrd}
\usepackage{comment}
\usepackage[hidelinks]{hyperref}
\newcommand{\VAN}[3]{#2}
\mathtoolsset{showonlyrefs}
\newtheorem{Def}[subsubsection]{Definition}
\newtheorem{Thm}[subsubsection]{Theorem}
\newtheorem{Lem}[subsubsection]{Lemma}
\newtheorem{Prop}[subsubsection]{Proposition}
\newtheorem{Cor}[subsubsection]{Corollary}
\newtheorem{Conj}[subsubsection]{Conjecture}
\newtheorem{Question}{Question}
\newtheorem*{Thm*}{Theorem}

\newcounter{alphalabels}

\newtheorem{mainThm}[alphalabels]{Theorem}

\theoremstyle{remark}
\newtheorem{Eg}[subsubsection]{Example}
\newtheorem{Rem}[subsubsection]{Remark}
\newtheorem{Claim}[subsubsection]{Claim}

\numberwithin{equation}{subsection}

\newcommand{\Hom}{\underline{\operatorname{Hom}}}
\newcommand{\Aut}{\mathbf{Aut}}
\newcommand{\spec}{\operatorname{Spec}}
\newcommand{\spf}{\operatorname{Spf}}
\newcommand{\spa}{\operatorname{Spa}}
\newcommand{\spd}{\operatorname{Spd}}
\newcommand{\fp}{\mathbb{F}_{p}}
\newcommand{\fpbar}{\overline{\mathbb{F}}_{p}}
\newcommand{\ovmu}{\overline{\mu}}
\newcommand{\zp}{\mathbb{Z}_{p}}

\newcommand{\zlocp}{\mathbb{Z}_{(p)}}
\newcommand{\zpbr}{\breve{\mathbb{Z}}_{p}}

\newcommand{\zpbreve}{\breve{\mathbb{Z}}_{p}}
\newcommand{\qpbreve}{\breve{\mathbb{Q}}_{p}}
\newcommand{\shg}{\operatorname{Sh}_{K}}
\newcommand{\shgv}{\operatorname{Sh}_{U}\gvx}
\newcommand{\shgb}{\operatorname{Sh}_{K}^{[b]}}

\newcommand{\cb}{\operatorname{Sh}_{K}^{\llbracket b \rrbracket }}

\newcommand{\igcs}{\operatorname{Ig}_{\mathrm{CS}}}

\newcommand{\ovfp}{\overline{\mathbb{F}}_{p}}
\newcommand{\qpbr}{\breve{\mathbb{Q}}_{p}}
\newcommand{\qpbar}{\overline{\mathbb{Q}}_p}
\newcommand{\qp}{\mathbb{Q}_{p}}
\newcommand{\grgmu}{\operatorname{Gr}_{G,[\mu^{-1}]}}
\newcommand{\afp}{\mathbb{A}_{f}^{p}}

\newcommand{\gal}{\operatorname{Gal}}
\newcommand{\gafp}{\mathsf{G}(\afp)}

\newcommand{\gx}{{(\mathsf{G}, \mathsf{X})}}
\newcommand{\hy}{{(\mathsf{H}, \mathsf{Y})}}
\newcommand{\gvx}{(\mathsf{G}_{V},\mathsf{H}_{V})}
\newcommand{\gv}{\mathsf{G}_{V}}
\newcommand{\g}{\mathsf{G}}
\newcommand{\scrs}{\mathscr{S}}
\newcommand{\gmhat}{\widehat{\mathbb{G}}_{m}}
\newcommand{\gmhateta}{\widehat{\mathbb{G}}_{m,\eta}}

\newcommand{\cont}{\operatorname{Cont}}
\newcommand{\Cont}{\operatorname{Cont}}
\newcommand{\scrshat}{\widehat{\scrs}}
\newcommand{\G}{\mathsf{G}}

\newcommand{\B}{\mathsf{B}}
\newcommand{\F}{\mathsf{F}}
\newcommand{\V}{\mathsf{V}}
\newcommand{\E}{\mathsf{E}}
\newcommand{\calG}{\mathcal{G}}
\newcommand{\bmu}{b_{\mu}}
\newcommand{\can}{\mathrm{can}}
\newcommand{\ad}{\mathrm{ad}}

\newcommand{\ebreve}{\breve{E}}

\newcommand{\oee}{\mathcal{O}_{E}}

\newcommand{\tildex}{\widetilde{\mbx}}

\newcommand{\tildeh}{\widetilde{\mathcal{H}}}

\newcommand{\nilp}{\operatorname{Nilp}}
\newcommand{\mbx}{\mathbb{X}}
\newcommand{\mbxcan}{\mathbb{X}^{\mathrm{can}}}
\newcommand{\oeab}{\mathcal{O}_{\eab}}
\newcommand{\an}{\mathrm{an}}

\newcommand{\smat}[1]{\left(\begin{smallmatrix} #1 \end{smallmatrix}\right)}

\newcommand{\mby}{\mathbb{Y}}

\newcommand{\mintgbmu}{\mathcal{M}^{\mathrm{int}}_{\mathcal{G},b,[\mu]}}
\newcommand{\mgbmu}{M_{G,b_{\mu},[\mu],\mathcal{G}(\zp)}}
\newcommand{\minfgbmu}{M_{G,b_{\mu},[\mu],\infty}}
\newcommand{\minftbmu}{M_{T,b_{\mu},[\mu],\infty}}
\newcommand{\mtbmu}{M_{T,b_{\mu},[\mu],\mathcal{T}(\zp)}}
\newcommand{\minttbmu}{\mathcal{M}^{\mathrm{int}}_{\mathcal{T},b_{\mu},[\mu]}}
\newcommand{\mintftbmu}{\mathscr{M}_{\mathcal{T},b_{\mu},[\mu]}}
\newcommand{\mintfgbmu}{\mathscr{M}_{\mathcal{G},b,[\mu]}}
\newcommand{\mintfgbordmu}{\mathscr{M}_{\mathcal{G},b,[\mu]}}
\newcommand{\mintgbordmu}{\mathcal{M}^{\mathrm{int}}_{\mathcal{G},b,[\mu]}}
\newcommand{\dr}{\mathrm{dR}}
\newcommand{\HT}{\mathrm{HT}}
\newcommand{\locan}{\mathrm{la}}
\newcommand{\Lie}{\operatorname{Lie}}

\newcommand{\igcsf}{\mathfrak{Ig}_{\mathrm{CS}}}
\newcommand{\igcsa}{\mathfrak{Ig}_{\mathrm{CS},\eta}}
\newcommand{\igcsd}{\mathfrak{Ig}_{\mathrm{CS},\eta}^{\diamondsuit}}

\newcommand{\igcsdE}{\mathfrak{Ig}_{\mathrm{CS},\eta,\eab}^{\diamondsuit}}
\newcommand{\igmf}{\mathfrak{Ig}_{\mathrm{M}}}
\newcommand{\igma}{\mathfrak{Ig}_{\mathrm{M},\eta}}

\newcommand{\igmaC}{\mathfrak{Ig}_{\mathrm{M},\eta,C}}

\newcommand{\perf}{\operatorname{Perf}}

\newcommand{\igmaL}{\mathfrak{Ig}_{\mathrm{M},\eta,L}}
\newcommand{\igcsdL}{\mathfrak{Ig}_{\mathrm{CS},\eta,L}^{\diamondsuit}}
\newcommand{\igcsaL}{\mathfrak{Ig}_{\mathrm{CS},\eta,L}}

\newcommand{\igm}{\operatorname{Ig}_{\mathrm{M}}}

\newcommand{\ul}[1]{\underline{#1}}
\newcommand{\et}{\mathrm{\acute{e}t}}
\newcommand{\Spec}{\mathrm{Spec}}
\newcommand{\cris}{\mathrm{cris}}
\newcommand{\mf}[1]{\mathfrak{#1}}
\newcommand{\mc}[1]{\mathcal{#1}}

\newcommand{\gr}{\mathrm{gr}}
\newcommand{\oelocp}{\mathcal{O}_{\mathsf{E},(v)}}
\newcommand{\oeplocp}{\mathcal{O}_{\mathsf{E}',(v')}}
\newcommand{\pdr}{\mathcal{P}_{\dr}}
\newcommand{\gdr}{\mathcal{G}_{\dr}}

\newcommand{\pcris}{\mathcal{P}_{\cris}}
\newcommand{\eab}{E^{\mathrm{ab}}}
\newcommand{\shtglocmu}{\mathrm{Sht}^{\mathrm{W}}_{\mathcal{G},\mu}}

\newcommand{\mbb}[1]{\mathbb{#1}}
\newcommand{\opn}[1]{\operatorname{#1}}

\newcommand\restr[2]{{
  \left.\kern-\nulldelimiterspace 
  #1 
  \vphantom{\big|} 
  \right|_{#2} 
  }}

\makeatletter
\newcommand*{\da@rightarrow}{\mathchar"0\hexnumber@\symAMSa 4B }
\newcommand*{\da@leftarrow}{\mathchar"0\hexnumber@\symAMSa 4C }
\newcommand*{\xdashrightarrow}[2][]{%
  \mathrel{%
    \mathpalette{\da@xarrow{#1}{#2}{}\da@rightarrow{\,}{}}{}%
  }%
}
\newcommand{\xdashleftarrow}[2][]{%
  \mathrel{%
    \mathpalette{\da@xarrow{#1}{#2}\da@leftarrow{}{}{\,}}{}%
  }%
}
\newcommand*{\da@xarrow}[7]{%
  \sbox0{$\ifx#7\scriptstyle\scriptscriptstyle\else\scriptstyle\fi#5#1#6\m@th$}%
  \sbox2{$\ifx#7\scriptstyle\scriptscriptstyle\else\scriptstyle\fi#5#2#6\m@th$}%
  \sbox4{$#7\dabar@\m@th$}%
  \dimen@=\wd0 %
  \ifdim\wd2 >\dimen@
    \dimen@=\wd2 %
  \fi
  \count@=2 %
  \def\da@bars{\dabar@\dabar@}%
  \@whiledim\count@\wd4<\dimen@\do{%
    \advance\count@\@ne
    \expandafter\def\expandafter\da@bars\expandafter{%
      \da@bars
      \dabar@ 
    }%
  }%
  \mathrel{#3}%
  \mathrel{%
    \mathop{\da@bars}\limits
    \ifx\\#1\\%
    \else
      _{\copy0}%
    \fi
    \ifx\\#2\\%
    \else
      ^{\copy2}%
    \fi
  }%
  \mathrel{#4}%
}
\makeatother

\newcommand{\bincoeff}[2]{\genfrac(){0pt}{0}{#1}{#2}}

\newcommand{\bgmu}{B(G,[\mu^{-1}])}
\newcommand{\bun}{\operatorname{Bun}}

\newcommand{\Hdg}{\mathrm{Hdg}}
\newcommand{\dR}{\mathrm{dR}}
\newcommand{\Fl}{\mathrm{Fl}}
\newcommand{\Fil}{\mathrm{Fil}}
\newcommand{\Spf}{\mathrm{Spf}}

\title[$p$-adic Maass--Shimura operators on $\mu$-ordinary Igusa varieties]{$p$-adic Maass--Shimura operators on $\mu$-ordinary Igusa varieties}
\author{Andrew Graham}
\address{Mathematical Institute, University of Oxford, Woodstock Road, Oxford OX2 6GG, United Kingdom}
\email{andrew.graham@maths.ox.ac.uk}
\thanks{AG was (partly) funded by UK Research and Innovation grant MR/V021931/1.}

\author{Pol van Hoften} 
\address{School of Mathematical Sciences, Zhejiang University, 866 Yuhangtang Rd, Hangzhou, 310058, P. R. China}
\email{pvhoften@zju.edu.cn}
\thanks{PvH was (partly) funded by the Dutch Research Council (NWO) under the grant VI.Veni.232.127.}
\author{Sean Howe}
\address{Department of Mathematics, University of Utah, 155 S 1400 E, Salt Lake City, UT 84112}
\email{sean.howe@utah.edu}
\thanks{SH was (partly) funded by National Science Foundation grants DMS-2201112 and DMS-2501816 and as a member at the Institute for Advanced Study during the academic year 2023-24 by the Friends of the Institute for Advanced Study Membership.}

\begin{document}
\begin{abstract}
For Shimura varieties of Hodge type, we optimally extend algebraic Maass--Shimura differential operators on $p$-integral nearly holomorphic automorphic forms to differential operators on $\mu$-ordinary Mantovan Igusa varieties. We then show that the rank one operators can be integrated to an action of an explicit formal group. Via $p$-adic Fourier theory, this provides a $p$-adic interpolation by extending the action of a symmetric algebra of differential operators to the algebra of functions on the Tate module of the dual $p$-divisible group. Passing to the generic fiber, we obtain an action of an explicit algebra of $p$-adic locally analytic functions, and we show that the action of the subalgebra of locally constant functions is equivalent to a natural Hecke action and thus preserves classical forms. In the ordinary case, we further show that the locally analytic action extends to nearly overconvergent automorphic forms. Our results extend, clarify, and recover prior constructions.
\end{abstract}
\maketitle
\tableofcontents


\section{Introduction}

\subsection{Overview} Maass--Shimura differential operators play a key role in the study of automorphic forms. For example, for the group $\opn{GL}_2$, one considers the operator
\[
\Theta_k = \frac{1}{2 \pi i} \left( \frac{d}{dz} + \frac{k}{z-\bar{z}} \right), \quad \quad z \in \mathbb{H} = \{ z \in \mbb{C} : \opn{Im}(z) > 0 \},
\]
which takes weight $k$ modular forms to weight $k+2$ nearly holomorphic modular forms. Special values of nearly holomorphic forms obtained by applying these operators have an interesting connection with $L$-values; for example, if $f$ is a cuspidal modular form of weight $k \geq 2$ and $j \geq 0$ is an integer, then values of $\Theta^j f  := \Theta_{k+2(j-1)} \circ \cdots \circ \Theta_k f$ at points in $\mbb{H} \cap F$ for an imaginary quadratic number field $F$ are closely related to special values of the Rankin--Selberg $L$-function $L(f, \chi^{-1}, s)$, where $\chi$ is an anticyclotomic character of infinity-type $(k+j, -j)$ (see \cite[Theorem 5.4]{BertoliniDarmonPrasanna}).\smallskip  

In order to study the arithmetic of such $L$-values, it is often desirable to be able to $p$-adically interpolate the Maass--Shimura operators. For $\opn{GL}_2$, the operators can be interpolated by considering $q$-expansions; this approach is used in work of Bertolini--Darmon--Prasanna \cite{BertoliniDarmonPrasanna} for example to construct anticyclotomic Rankin--Selberg $p$-adic $L$-functions for a fixed cuspidal modular form $f$ as $\chi$ varies. In the complex world, the classical Maass--Shimura operators for $\opn{GL}_2$ have been generalized to other groups that give rise to Shimura varieties (for example, in the PEL case by Shimura \cite{ShimuraArithmeticity}). It is therefore natural to ask whether these operators can be $p$-adically interpolated, and this question has been studied previously, e.g., in \cite{EischenFintzenMantovanVarma} and \cite{EischenMantovan}. \smallskip

In the case of $\mathrm{GL}_2$, there is a natural space of $p$-adic modular forms that can be interpreted as the space of functions on the Katz--Igusa formal scheme $\mf{Ig}_{\mathrm{Katz}}$ (a $\mbb{Z}_p^\times$-cover of the formal ordinary locus of the modular curve). Algebraic nearly holomorphic forms of any weight $k$ embed in $\mathcal{O}(\mf{Ig}_{\mathrm{Katz}})$, and there is a differential operator $\theta$ on $\mathcal{O}(\mf{Ig}_{\mathrm{Katz}})$ that extends the operators $\Theta_{k}$ under these embeddings. In \cite{HoweUnipotent}, one of us (SH) constructed an action of the formal group $\widehat{\mathbb{G}}_m$ on $\mf{Ig}_{\mathrm{Katz}}$  that integrates $\theta$. By $p$-adic Fourier theory, this gives a dual action of the algebra $\cont(\zp,\zp)$ on $\mathcal{O}(\mf{Ig}_{\mathrm{Katz}})$, which encodes the desired $p$-adic interpolation. The purpose of the present work is to generalize this construction to Hodge type Shimura varieties. This has potential applications to the construction of $p$-adic $L$-functions, see \S\ref{Sec:PAdicLFunctions}. \smallskip 

Our main result, Theorem \ref{Thm:IntroIntegralAction}, gives a systematic construction of formal group actions on Igusa formal schemes that integrate certain Maass--Shimura operators on nearly holomorphic automorphic forms. By the $p$-adic Fourier theory of \cite{FourierPaper}, such an action gives rise to an action of the algebra of functions on the Tate module of the dual $p$-divisible group (Corollary \ref{Cor:IntroIntegralOperators}). This resulting algebra action is the desired $p$-adic interpolation of Maass--Shimura operators in higher dimensions and extends the interpolation of \cite{EischenFintzenMantovanVarma} and \cite{EischenMantovan}. After inverting $p$, we obtain a variant of this algebra action that can be described explicitly as the action of an algebra of locally analytic functions (Corollary \ref{Cor:IntroRationalOperatorsI}), and we show that this action neatly encodes the relation between $p$-adic differential operators and $p$-adic Hecke operators on $p$-adic automorphic forms (Theorem \ref{Thm:IntroInfiniteLevelTheorem}). In the ordinary case, we also show that the action extends to nearly overconvergent forms (Theorem \ref{Thm:IntroNOCmainThm}) generalizing \cite{GrahamPilloniRodriguesJacinto}, \cite{UFJ}.

\subsection{Main results} 

Let $\gx$ be a Shimura datum of Hodge type with reflex field $\mathsf{E}$. Let $p$ be an odd\footnote{See \S \ref{Sub:IgusaI} for a comment on this hypothesis.} prime such that $G=\g_{\qp}$ is unramified and let $v \mid p$ be a prime of $\mathsf{E}$ above $p$; set $E=\mathsf{E}_v$. Let $K^p \subset \gafp$ be a neat compact open subgroup, and let $K_p=\mathcal{G}(\zp)$ for a reductive model $\mathcal{G}$ of $G$ over $\zlocp $. Let $\scrs_K$ be the canonical integral model of the Shimura variety for $\gx$ of level $K=K^pK_p$ over $\mathcal{O}_{\mathsf{E},(v)}$. Our assumptions imply (see \S\ref{sss.finite-extension-cocharacter-choice-existence}) that we can choose a finite extension $\mathsf{E}'$ of $\mathsf{E}$ contained in $E$ and a representative $\mu:\mathbb{G}_{m,\mathcal{O}_{\mathsf{E}', (v)}} \to \mathcal{G}_{\mathcal{O}_{\mathsf{E}', (v)}}$ lying in the conjugacy class of Hodge cocharacters $[\mu]$ associated with $X$; we fix such a choice and an embedding $\mathsf{E}' \to \mathbb{C}$ extending $\mathsf{E} \subset \mathbb{C}$ (to facilitate the comparison of the $p$-adic and complex theories).  

\subsubsection{} Let $\mathcal{P}_{\mu^{-1}} \subset \mathcal{G}_{\oeplocp}$ be the parabolic associated with $\mu$, see \S \ref{sss.cocharacter-conv} for our conventions, let $\mathcal{M}_{\mu} = \mathcal{M}_{\mu^{-1}} \subset \mathcal{P}_{\mu^{-1}}$ be the centralizer of $\mu^{-1}$ and let $\mathcal{U}_{\mu}$ be the unipotent radical of the opposite parabolic $\mathcal{P}_{\mu}$. We write $\mf{g},\mf{p}_{\mu^{-1}}$, $\mf{m}_{\mu}=\mf{m}_{\mu^{-1}}$, and $\mf{u}_{\mu}$, respectively, for the Lie algebras of  $\mathcal{G}$, $\mathcal{P}_{\mu^{-1}}$, $\mathcal{M}_{\mu} = \mathcal{M}_{\mu^{-1}}$, and $\mathcal{U}_{\mu}$. We use similar notation for subgroups and Lie algebras associated with other (fractional) cocharacters.

\subsubsection{} Let $\mathcal{P}_{\dr} \to \scrs_{K, \mathcal{O}_{\mathsf{E}', (v)}}$ be the canonical $\mathcal{P}_{\mu^{-1}}$-torsor. We consider $\mathcal{O}(\mathcal{P}_{\dr})$, the space of algebraic $p$-integral nearly holomorphic automorphic forms of level $K$. To motivate this terminology, we observe that for a representation $\mathcal{P}_{\mu^{-1}} \to \mathcal{M}_{\mu} \to \operatorname{GL}(W)$, we get an automorphic vector bundle $\mathcal{P}_{\dr} \times^{\mathcal{P}_{\mu^{-1}}} W$ and a pullback map (which is injective if $W$ is an irreducible representation of $\mathcal{M}_{\mu}$)
\begin{align}
    W^{\vee} \otimes H^0(\scrs_{K}, \mathcal{P}_\dR \times^{\mathcal{P}_{\mu^{-1}}} W) \to \mathcal{O}(\mathcal{P}_{\dr}).
\end{align}
Thus we can consider classical $p$-integral holomorphic automorphic forms (of varying weights) as sections of $\mathcal{O}(\mathcal{P}_{\dr})$.  More generally, we could have started with a representation $W$ of $\mathcal{P}_{\mu^{-1}}$ that does not factor through $\mathcal{M}_{\mu}$. The sections of such vector bundles are classical nearly holomorphic forms of weight $W$ as in, e.g., \cite{ZLiu19}. 
\begin{Prop}[Proposition \ref{Prop:TPdrToLieGisaniso}, Lemma \ref{Lem.inclusion-of-Lie-algebras}] \label{Prop:IntroKS}
The tangent bundle $T_{\pdr}$ of $\pdr$ is canonically isomorphic to $\mathfrak{g} \otimes \mathcal{O}_{\pdr}$ and the induced map $\mathfrak{g} \to T_{\pdr}$ is compatible with Lie brackets. 
\end{Prop}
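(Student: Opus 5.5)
Two things have to be shown: that $T_{\pdr}$ is a trivial bundle with fibre $\mathfrak{g}$, and that the resulting map $\mathfrak{g}\to T_{\pdr}$ respects brackets. For both, the plan is to pass through the associated $\mathcal{G}$-torsor $\gdr := \pdr \times^{\mathcal{P}_{\mu^{-1}}} \mathcal{G}$ and the Gauss--Manin connection on it.

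\textbf{Step 1: the exact sequence.} The $\mathcal{P}_{\mu^{-1}}$-action on $\pdr$ gives infinitesimal generators $\mathfrak{p}_{\mu^{-1}}\otimes\mathcal{O}_{\pdr}\hookrightarrow T_{\pdr}$; these are exactly the vertical vector fields. Hence there is an exact sequence
\[
0 \to \mathfrak{p}_{\mu^{-1}} \otimes \mathcal{O}_{\pdr} \to T_{\pdr} \to \pi^*T_{\scrs_K} \to 0,
\]
where $\pi\colon \pdr\to\scrs_K$ is the projection.

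\textbf{Step 2: the map $T_{\pdr}\to\mathfrak{g}\otimes\mathcal{O}_{\pdr}$.} The Gauss--Manin connection on $\mathcal{H}^1_{\dR}$ preserves the Hodge tensors, so it is a flat connection on $\gdr$. A flat connection on a $\mathcal{G}$-torsor is a $\mathcal{G}$-equivariant splitting of the Atiyah sequence, equivalently a $\mathfrak{g}$-valued connection $1$-form $\omega$ on $\gdr$. Pull $\omega$ back along the $\mathcal{P}_{\mu^{-1}}$-equivariant closed immersion $\pdr\hookrightarrow\gdr$ to get
\[
\omega|_{\pdr}\colon T_{\pdr}\to \mathfrak{g}\otimes\mathcal{O}_{\pdr}.
\]
On vertical vectors this map is the inclusion $\mathfrak{p}_{\mu^{-1}}\hookrightarrow\mathfrak{g}$. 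So it induces a map of quotients
\[
\pi^*T_{\scrs_K}\to (\mathfrak{g}/\mathfrak{p}_{\mu^{-1}})\otimes\mathcal{O}_{\pdr}.
\]
Unwinding definitions, this quotient map is the Kodaira--Spencer map: Griffiths transversality plus the Hodge filtration identify it with $T_{\scrs_K}\to \pdr\times^{\mathcal{P}_{\mu^{-1}}}(\mathfrak{g}/\mathfrak{p}_{\mu^{-1}})$, the component of $\nabla$ that moves the filtration.

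\textbf{Step 3: Kodaira--Spencer is an isomorphism.} This is the key input and the main obstacle. I would use that $\scrs_K$ is smooth over $\oelocp$ and that its completed local rings at mod $p$ points are described by Kisin's deformation theory. Faltings' description of these deformation rings as the completion of the opposite unipotent $\widehat{U}_{\mu}$ shows that Kodaira--Spencer is an isomorphism onto $\mathfrak{g}/\mathfrak{p}_{\mu^{-1}}\cong\mathfrak{u}_\mu$. Equivalently, one can say that the local model map is étale. Since both sides are vector bundles on a flat base, it suffices to check the isomorphism after base change to the closed fibre and on the generic fibre. With Steps 1 and 2, the five lemma then shows that $\omega|_{\pdr}$ is an isomorphism.

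\textbf{Step 4: compatibility with brackets.} Let $\xi_X$ be the vector field corresponding to $X\in\mathfrak{g}$. The Maurer--Cartan structure equation gives
\[
d\omega+\tfrac12[\omega,\omega]=\Omega=0,
\]
with $\Omega=0$ by flatness of the Gauss--Manin connection. Evaluating on $\xi_X,\xi_Y$ and using that $\omega(\xi_X)=X$ is constant, one gets
\[
-\omega([\xi_X,\xi_Y])+[X,Y]=0,
\]
so $[\xi_X,\xi_Y]=\xi_{[X,Y]}$. The sign depends on the left/right convention for the action and for $\mathfrak{g}$, so I would fix conventions so that the generators of the right $\mathcal{P}_{\mu^{-1}}$-action are bracket compatible. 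As a cross-check, on the $\mathfrak{p}_{\mu^{-1}}$ part the statement reduces to the standard compatibility for the infinitesimal action of a group on a torsor.
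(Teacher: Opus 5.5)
Your proposal is correct. For the isomorphism it is in substance the paper's argument. The paper notes that $T_{\pdr}=\ker(\iota^*d\pi_{\Hdg})$ is complementary to the Gauss--Manin horizontal subbundle, because $d\pi_{\Hdg}\circ s$ is the Kodaira--Spencer map; that is your five-lemma argument in other words.

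The difference lies in where the Kodaira--Spencer isomorphism comes from. You invoke Kisin's/Faltings' description of the completed local rings of $\scrs_K$, which also requires knowing that Kodaira--Spencer for Faltings' universal deformation, computed via the crystalline connection, is an isomorphism. The paper instead reduces to the Siegel case. Since $\pdr\to\mathcal{P}_{V,\dR}$ is a closed immersion, the map for $\pdr$ is injective on fibres at closed points once the Siegel map is an isomorphism, and a rank count finishes. So the paper only needs the classical Siegel Kodaira--Spencer isomorphism. That is a cheaper input than yours, though yours is also valid.

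For the bracket compatibility your route is genuinely different. The paper uses flatness over $\oeplocp$ to reduce to $\mathbb{C}$. It then writes $\pdr(\mathbb{C})=\mathsf{G}(\mathbb{Q})\backslash(U\times\mathsf{G}(\mathbb{A}_f)/K)$ for an open $U\subset\mathsf{G}(\mathbb{C})$, and checks that the vector fields coming from $\mathfrak{g}$ are restrictions of left-invariant vector fields on $U$.

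Your structure-equation argument says that a flat Cartan connection is an absolute parallelism with constant structure functions. It is purely algebraic: it works over any base on which the Gauss--Manin connection is integrable, and needs neither flatness over $\oeplocp$ nor complex uniformization. It also isolates integrability as the input that forces the bracket relation.

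To make it complete, state that $\Omega=0$ uses two things. Integrability gives the horizontal--horizontal components. The $\mathcal{G}$-equivariance of $\omega$ gives the components with a vertical argument. Also state that the structure equation pulls back along $\iota$ to $\pdr$. You can avoid the factor $\tfrac12$ by defining $[\omega,\omega]$ directly as the alternating form $(A,B)\mapsto[\omega(A),\omega(B)]$.

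On signs: in the paper $\gdr$ and $\pdr$ are left torsors (frames $H_1^{\dR}\to\Lambda$ with post-composition), not right torsors as your notation suggests. Fixing the normalization via the $\mathfrak{p}_{\mu^{-1}}$-part, as you propose, is the right way to handle this.

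In exchange, the paper's route produces the explicit complex model that it reuses in \S\ref{sub:AlgebraicMaassShimura} to compare with the classical $C^\infty$ Maass--Shimura operators.
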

Using Proposition \ref{Prop:IntroKS} we give $\mathcal{O}(\mathcal{P}_{\dr})$ an action of $\mathfrak{g}$. The action of $\mf{u}_{\mu} \subset \mf{g}$ provides algebraic Maass--Shimura operators that, after extending scalars along $\mathcal{O}_{\mathsf{E}',(v)} \to \mathbb{C}$, recover the action of the classical $C^{\infty}$ Maass--Shimura operators introduced in \cite{ShimuraArithmeticity}, see \S \ref{sss.intro-real} and \S \ref{sub:AlgebraicMaassShimura}.

\begin{Rem} \label{Rem:ClassicalPerspective} In previous work on (nearly) holomorphic / nearly overconvergent $p$-adic automorphic forms, algebraic Maass--Shimura operators have been constructed from a sheaf-theoretic perspective by using the Gauss-Manin connection on certain automorphic vector bundles, see e.g., \cites{EischenFintzenMantovanVarma, EischenMantovan, Urban, GrahamPilloniRodriguesJacinto, AndreattaIovita, ZLiu19, kazi2025twistedtripleproductpadic}. The perspective we adopt here appears to be new, and simplifies the theory by instead using the Gauss-Manin connection to parallelize the tangent bundle of $\mathcal{P}_\dR$. In particular, the behavior of the Maass--Shimura operators under composition is completely transparent from our perspective.
\end{Rem}

\subsubsection{} Let $b_{\mu}=\mu(p^{-1}) \in G(\qpbr)$ be the $\mu$-ordinary element. There is a dense open $\mu$-ordinary locus 
$\scrs_{K,k_{E}}^{[b_{\mu}]} \subset \scrs_{K,k_{E}}$, where $k_{E}$ is the residue field of $\mathcal{O}_{E}$; we consider its $p$-adic formal completion $(\scrshat_K)^{[b_{\mu}]}$ over $\spf \mathcal{O}_{E}$.  We use similar notation for $p$-adic formal completions of other objects. 

Let $\overline{\mu}$ be the Galois average of $\mu$, and consider $\mathcal{N}=\mathcal{M}_{\mu} \cap \mathcal{U}_{\overline{\mu}}$. We consider $\mathcal{O}(\mathcal{P}_{\dr})^{\mathcal{N}} \subset \mathcal{O}(\mathcal{P}_{\dr})$, which has an action of $(\operatorname{Sym}(\mf{u}_{\mu}))^{\mathcal{N}}$ induced by the algebraic Maass--Shimura operators. We construct in \S \ref{Sec:DifferentialOperators} a morphism
\begin{align}
\label{intro.map-to-pdr-quot}    \igmf \to \widehat{\mathcal{N}}\backslash \widehat{\mathcal{P}}_{\dr}
\end{align}
over $\scrs_K$, where $\igmf \to (\scrshat_K)^{[b_{\mu}]}$ is a pro-\'etale torsor for $\mathcal{M}_{\overline{\mu}}(\zp)$; it is the analogue in the general case of the Katz--Igusa tower for $\opn{GL}_2$ (or rather its variant $\mf{I}_{\mathrm{Mant}}$ as in \cite[Definition 4.1.2]{Howe.CompletedKirillov}), and we consider $\mathcal{O}(\igmf)$ as our space of $p$-adic automorphic forms (cf. \cite{EischenMantovan}, but note that we do not pass here to invariants under the unipotent radical of a Borel in $\mathcal{M}_{\overline{\mu}}(\zp)$). Pullback along \eqref{intro.map-to-pdr-quot} induces a natural map 
\begin{align}\label{eq.intro-functions-restriction-map}
    \mathcal{O}(\mathcal{P}_{\dr})^{\mathcal{N}}=\mathcal{O}(\mathcal{N} \backslash \mathcal{P}_{\dr}) \to \mathcal{O}(\igmf),
\end{align}
and we show that there is a compatible action of $(\operatorname{Sym}(\mf{u}_{\mu}))^{\mathcal{N}}$ on $\mathcal{O}(\igmf)$. Our main results interpolate the Maass--Shimura operators lying in the subalgebra $\operatorname{Sym}\left(\mf{u}_{\mu}^{\mathcal{N}}\right)$ on $\mathcal{O}(\igmf)$ by extending the action of $\operatorname{Sym}\left(\mf{u}_{\mu}^{\mathcal{N}}\right)$ to a larger algebra of functions. 

\begin{Rem} Our construction of the action of $(\operatorname{Sym}(\mf{u}_{\mu}))^{\mathcal{N}}$ on $\mathcal{O}(\igmf)$ compatibly with the algebraic Maass--Shimura operators on $\mathcal{O}(\mathcal{P}_{\dr})^{\mathcal{N}}$ is already new. The main ingredient is a parallelization of the tangent bundle of $\mathcal{Q}$, a natural $\mathcal{N}$-torsor over $\igmf$, see Proposition \ref{Prop:MuOrdinaryKodairaSpencer}.
\end{Rem}

\begin{mainThm}[Theorem \ref{Thm:GlobalAction}] \label{Thm:IntroIntegralAction} 
There is an explicit $p$-divisible formal group $\mathcal{H}$ over $\spf \mathcal{O}_E$ with an action of $\mathcal{M}_{\overline{\mu}}(\zp)$ such that $\operatorname{Lie} \mathcal{H}$ is $\mathcal{M}_{\overline{\mu}}(\zp)$-equivariantly isomorphic to $\mf{u}_{\mu}^\mathcal{N}$. Furthermore, there is an $\mathcal{M}_{\overline{\mu}}(\zp)$-semilinear action of $\mathcal{H}$ on $\igmf$ such that the action of $\Lie \mathcal{H}=\mf{u}_{\mu}^{\mathcal{N}}$ on $\mathcal{O}(\igmf)$ given by differentiation agrees with the extension to $\mathcal{O}(\igmf)$ of the algebraic Maass--Shimura operators described above. 
\end{mainThm}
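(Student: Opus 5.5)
We construct $\mathcal{H}$ and its action through Serre--Tate-type deformation theory at $\mu$-ordinary points, as in the $\opn{GL}_2$ case of \cite{HoweUnipotent}, and then compare derivatives with the Maass--Shimura operators via Proposition~\ref{Prop:IntroKS}.

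\textbf{Step 1: local model.} At a $\mu$-ordinary point, the $p$-divisible group with $\mathcal{G}$-structure has a canonical slope filtration indexed by the slopes of $\overline{\mu}$. Over $\igmf$, the trivialization of the graded pieces reduces this filtered object to the fixed $\mathcal{M}_{\overline{\mu}}$-structure $\mathbb{X}^{\mathrm{can}}$. The formal neighbourhood of a point, and in fact $\igmf$ locally, is then governed by extensions between the graded pieces. This is the $\mu$-ordinary Serre--Tate theory of Moonen and Shankar--Zhou, and in Igusa-tower form the local model is the integral Rapoport--Zink space $\mintfgbmu$ with the graded pieces trivialized. I would take $\mathcal{H}$ to be the ``extreme'' part of this deformation space. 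Concretely, $\mathcal{H}$ is the $p$-divisible formal group of extensions between the étale-type and multiplicative-type pieces for which the Hodge filtration constraint is linear. Its tangent space is the part of $\mf{u}_\mu$ fixed by $\mathcal{N}$, that is, the part of $\mf{u}_\mu$ of pure $\overline{\mu}$-slope equal to the $\mu$-weight, which is $\mf{u}_\mu^{\mathcal{N}}$. The group $\mathcal{H}$ is then explicitly a twist of $\widehat{\mathbb{G}}_m \otimes (\text{a } \zp\text{-lattice})$, or more generally a Lubin--Tate-type group built from the unramified data. The group $\mathcal{M}_{\overline{\mu}}(\zp)$ acts by functoriality, and this gives the equivariance of the identification $\Lie\mathcal{H}\cong \mf{u}_\mu^{\mathcal{N}}$.

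\textbf{Step 2: the action.} Points of $\mathcal{H}$ act on the deformation data by Baer sum. In the $\mathcal{G}$-structure language this is left translation by the unipotent group $\mathcal{U}$ attached to the extremal slopes, which is commutative because it is extremal. To globalize, I would use the product formula $\igmf \cong$ (Igusa variety) $\times$ local model, in its formal-scheme version. Equivalently, on $S$-points of $\igmf$ I would modify the universal $\mu$-ordinary $p$-divisible group with tensors by pushing out the slope filtration along a given point of $\mathcal{H}(S)$. Two things must be checked: that the crystalline tensors are preserved, and that the resulting object is again $\mu$-ordinary with a trivialization. The modified object defines a point of $\scrshat_K$ by the Kisin/Kim--Madapusi deformation theory for Hodge type. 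Semilinearity with respect to $\mathcal{M}_{\overline{\mu}}(\zp)$ follows from naturality of the pushout.

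\textbf{Step 3: comparison of derivatives.} This is the main obstacle. The derivative of the action is a map $\Lie\mathcal{H} \to T_{\igmf}$. We must identify it with the restriction of the parallelization $\mf{g}\otimes\mathcal{O} \cong T_{\pdr}$ from Proposition~\ref{Prop:IntroKS}, transported through the $\mathcal{N}$-torsor $\mathcal{Q}$ and the map \eqref{intro.map-to-pdr-quot}. The parallelization comes from the Gauss--Manin connection, so I would compute at first order. The Baer-sum deformation along a tangent vector changes the Hodge filtration of the Dieudonné crystal inside the unit-root splitting (the conjugate-filtration splitting). By Grothendieck--Messing, and because the Igusa trivialization is horizontal, this change is exactly translation by the corresponding element of $\mf{u}_\mu$ acting on $\pdr$. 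Two points need care. First, the unit-root splitting must give the reduction of $\pdr$ used to define \eqref{intro.map-to-pdr-quot}. Second, after quotienting by $\mathcal{N}$, only the $\mathcal{N}$-invariant directions descend, which matches $\mf{u}_\mu^{\mathcal{N}}$. I would first verify both points in the ordinary Siegel case by explicit Dieudonné module computation, as in \cite{HoweUnipotent}, and then reduce the general case to that computation via the Hodge embedding, using functoriality of all the constructions.
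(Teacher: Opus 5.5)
The decisive failure is the reduction in Step~3. Under the Hodge embedding, the $\mu$-ordinary locus of $\scrshat_K$ lands in the Newton stratum of $b_\mu$ regarded in $\mathrm{GSp}(V)$. That stratum is not the ordinary one as soon as $E\neq\qp$; for instance, in \S\ref{sub:ExampleII} the $p$-divisible group contains a Lubin--Tate constituent of height $r\ge 2$. Moreover, the image of $\mathcal{U}_{\overline{\mu},0}$ is in general not central in the corresponding unipotent group of $\mathrm{GSp}(V)$. In that example it consists of homomorphisms from the \'etale part into the Lubin--Tate part, and it does not commute with the cross terms through which the Siegel unipotent group links the two factors of $V$. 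Consequently $\mathcal{H}$ does not act on the Siegel side by any ``extremal Baer sum'', and neither the ordinary computation of \cite{HoweUnipotent} nor functoriality of your construction is available. As written, your argument covers at most the case $E=\qp$.

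This is exactly why the paper proves the derivative identity (Theorem~\ref{Thm:IdentificationAction}) in greater generality: for an arbitrary class $[b]$, and for an arbitrary vector subgroup $\mathcal{W}\le\mathcal{G}$ such that $\mathcal{W}(\zpbr)$ is a Dieudonn\'e module under $\mathrm{Ad}(b)\circ\sigma$ with admissible filtration $\mathcal{W}\cap\mathcal{P}_{\mu^{-1}}$. This hypothesis is preserved by the Hodge embedding. So one may legitimately reduce to $\gvx$ on a general central leaf, where $\mathfrak{Ig}^b$ has the Caraiani--Scholze moduli description.

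Step~2 also lacks the key construction. Baer sums combined with Kisin/Kim--Madapusi deformation theory only identify complete local rings at closed points. They do not produce a morphism $\mathcal{H}\times\igmf\to\igmf$ over a general base. Also, when the extreme slope pieces are not \'etale and multiplicative, identifying their $\mathcal{E}xt^1$ with $\mathcal{H}$ compatibly with tensors is itself unproved.

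Your description of $\mf{u}_\mu^{\mathcal N}$ as the part of $\mf{u}_\mu$ where $\overline{\mu}$ has weight $1$ is also wrong. In \S\ref{sub:ExampleII}, $\overline{\mu}$ has weight $1/r$ on $\mf{u}_\mu$, yet $\mf{u}_\mu^{\mathcal N}=\mf{u}_\mu$. The correct statement is $\mf{u}_\mu^{\mathcal N}=\mf{u}_\mu\cap\mf{u}_{\overline{\mu},0}$ (Lemma~\ref{Lem:LieAlgebraH}).

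The paper avoids deformation theory entirely:
\begin{enumerate}
\item It defines $\mathcal{H}$ via Proposition~\ref{prop.filtered-dieudonne-theory} from $\mathcal{U}_{\overline{\mu},0}(\oee)$, with Frobenius $\mathrm{Ad}(b_\mu)\circ\sigma$ and filtration $\mathcal{U}_{\overline{\mu},0}\cap\mathcal{P}_{\mu^{-1}}$.
\item Over quotients of perfect rings by regular sequences, it uses the Scholze--Weinstein description $\widetilde{\mathcal{H}}(R)=\mathcal{W}(B^+_\cris(R/p))^{\varphi=1}\subset G(B^+_\cris(R/p))^{\varphi=1}=\Aut_G(\tildex)(R)$.
\item This lets $\widetilde{\mathcal{H}}$ act on $\igcsf$ through the Hamacher--Kim action of self-quasi-isogenies.
\item Since $T_p\mathcal{H}$ lands in $\Aut_{\mathcal G}(\mbxcan)^\circ$ and $\widetilde{\mathcal{H}}$ is central there, the action descends to an action of $\mathcal{H}=\widetilde{\mathcal{H}}/T_p\mathcal{H}$ on $\igmf$.
\end{enumerate}

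The same quasi-isogeny description is what makes the first-order comparison rigorous. One lifts $t\in\Lie\mathcal{H}$ to $\tilde t\in\widetilde{\mathcal{H}}(S)$. One then identifies $D(\alpha_{\tilde t})$ modulo $\Fil^0$ with $w$ via the quasi-logarithm (Lemma~\ref{Lem:Identification}). Finally one compares with the crystalline connection, which equals the Gauss--Manin connection. Your ``translation inside the unit-root splitting'' is only a heuristic version of this computation. Without the quasi-isogeny action, your proposal has no well-defined global action to differentiate.
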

Let $\mathcal{H}^{\vee}$ be the Serre-dual of the $p$-divisible formal group $\mathcal{H}$ of Theorem \ref{Thm:IntroIntegralAction}, and let $T_p \mathcal{H}^{\vee}$ be its $p$-adic Tate module (an affine $p$-adic formal scheme over $\spf \mathcal{O}_E$). It comes equipped with a natural Hodge--Tate map $T_p \mathcal{H}^{\vee} \to \omega_{\mathcal{H}}$. This induces a $\mathcal{M}_{\overline{\mu}}(\zp)$-equivariant algebra morphism $\operatorname{Sym}(\Lie \mathcal{H}) \to \mathcal{O}(T_p \mathcal{H}^{\vee})$. Using the integral $p$-adic Fourier theory of \cite[Section 6]{FourierPaper}, we deduce the following corollary.
\begin{Cor}[Corollary \ref{Cor:IntegralFourierConsequence}] \label{Cor:IntroIntegralOperators}
The action of $\operatorname{Sym}(\Lie \mathcal{H})=\operatorname{Sym}(\mathfrak{u}_{\mu}^{\mathcal{N}})$ on $\mathcal{O}(\igmf)$ extends to a continuous $\mathcal{M}_{\overline{\mu}}(\zp)$-semilinear algebra action of $\mathcal{O}(T_p \mathcal{H}^{\vee})$ on $\mathcal{O}(\igmf)$.
\end{Cor}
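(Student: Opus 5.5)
The plan is to deduce the corollary formally from Theorem \ref{Thm:IntroIntegralAction} together with the integral $p$-adic Fourier theory of \cite[Section 6]{FourierPaper}.

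The key input from the Fourier theory is a duality between distributions on $\mathcal{H}$ and functions on $T_p\mathcal{H}^\vee$. For a $p$-divisible formal group $\mathcal{H}$ over $\spf \mathcal{O}_E$, Cartier duality $\mathcal{H}\times \mathcal{H}^\vee\to \widehat{\mathbb{G}}_m$, restricted to $\mathcal{H}\times T_p\mathcal{H}^\vee$, gives an isomorphism
\[
\mathcal{O}(T_p\mathcal{H}^\vee)\;\cong\;\mathcal{O}(\mathcal{H})^{\vee}=\operatorname{Dist}(\mathcal{H}),
\]
where the right-hand side is the continuous $\mathcal{O}_E$-linear dual. Under this isomorphism:
\begin{itemize}
\item convolution of distributions corresponds to multiplication of functions;
\item the Hodge--Tate map $\operatorname{Sym}(\Lie\mathcal{H})\to\mathcal{O}(T_p\mathcal{H}^\vee)$ corresponds to the inclusion of invariant differential operators, i.e.\ distributions supported at the identity, into $\operatorname{Dist}(\mathcal{H})$.
\end{itemize}
I would cite this from \cite{FourierPaper}. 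I would also check that the isomorphism is functorial in $\mathcal{H}$, hence $\mathcal{M}_{\overline{\mu}}(\zp)$-equivariant, and that it is compatible with the identification $\Lie\mathcal{H}=\mf{u}_\mu^{\mathcal{N}}$.

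Given the action $a\colon\mathcal{H}\times\igmf\to\igmf$ of Theorem \ref{Thm:IntroIntegralAction}, I would define the action of a distribution $\lambda$ on $f\in\mathcal{O}(\igmf)$ by
\[
\lambda\cdot f=(\lambda\hat\otimes\mathrm{id})(a^*f).
\]
Here $a^*f\in\mathcal{O}(\mathcal{H})\hat\otimes\mathcal{O}(\igmf)$, the completed tensor product over $\mathcal{O}_E$. Since $\mathcal{H}$ is formally smooth, $\mathcal{O}(\mathcal{H})$ is a power series ring, so pairing with a continuous functional is well defined after $p$-adic completion. The action axiom $a\circ(m\times\mathrm{id})=a\circ(\mathrm{id}\times a)$ shows that convolution goes to composition. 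The unit axiom shows that the Dirac distribution at $0$, which is the function $1$, acts as the identity. Compatibility with the algebra structure on $\mathcal{O}(\igmf)$ is more subtle. Multiplication on the function side corresponds to convolution, which is what makes this an algebra action on the module $\mathcal{O}(\igmf)$; I read ``algebra action'' as an action of the algebra $\mathcal{O}(T_p\mathcal{H}^\vee)$, which is what the above gives. Continuity follows from continuity of $a^*$ and of the pairing.

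Next, the extension property. A distribution supported at the identity, coming from $X\in\Lie\mathcal{H}$, acts by $f\mapsto (X\otimes\mathrm{id})(a^*f)$, which is the derivative of the action. Theorem \ref{Thm:IntroIntegralAction} identifies this with the extended Maass--Shimura operator attached to $X\in\mf{u}_\mu^{\mathcal{N}}$. Monomials in $\operatorname{Sym}$ correspond to convolutions of such distributions, so they act by compositions. Because the group is commutative, the action of $\operatorname{Sym}(\Lie\mathcal{H})$ obtained this way agrees with the one built from commuting Maass--Shimura operators. Semilinearity follows from the $\mathcal{M}_{\overline{\mu}}(\zp)$-semilinearity of $a$, i.e.\ $a(gh,gx)=g\,a(h,x)$, combined with the equivariance of the Fourier isomorphism.

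The main obstacle is the first step: verifying the precise integral statement of the Fourier isomorphism and its Hodge--Tate compatibility, in particular the normalizations that match $\operatorname{Sym}(\Lie\mathcal{H})$ with the invariant differential operators. Since this is stated to be taken from \cite[Section 6]{FourierPaper}, I expect it only needs to be quoted carefully rather than proved here.
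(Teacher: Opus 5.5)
Your proposal is correct and is essentially the paper's argument. The paper dualizes the co-action $a^{*}\colon\mathcal{O}(\igmf)\to\mathcal{O}(\mathcal{H})\widehat{\otimes}_{\mathcal{O}_E}\mathcal{O}(\igmf)$ to get a continuous action of $\mathcal{O}(\mathcal{H})^{*}$ and transports it along the integral Fourier isomorphism $\mathbb{F}^{\mathcal{H}}\colon\mathcal{O}(\mathcal{H})^{*}\xrightarrow{\sim}\mathcal{O}(T_p\mathcal{H}^{\vee})$ of \cite[Proposition 7.1.12]{FourierPaper}. It then uses \cite[Proposition 7.0.2]{FourierPaper} for compatibility with $\HT^{*}$ on $\Lie\mathcal{H}$, Theorem \ref{Thm:GlobalAction} for the Maass--Shimura comparison, and the construction itself for semilinearity.

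One small imprecision: integrally, $\HT^{*}(\operatorname{Sym}\Lie\mathcal{H})$ is only a proper subalgebra of the distributions supported at the identity. For $\gmhat$, the latter correspond to the integer-valued polynomials spanned by the $\binom{x}{n}$. Since you only use the image of $\operatorname{Sym}$, this does not affect the argument.
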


\begin{Eg}\label{example.intro-integral-ordinary}
If $E=\qp$, then the $\mu$-ordinary locus is just the ordinary locus, see \cite[Corollary 4.3.2]{LeeNewton}. In this case, $\mathcal{N}$ is trivial, $\mathcal{H}=\mf{u}_{\mu} \otimes \gmhat$ and $\mathcal{O}(T_p \mathcal{H}^{\vee})=\operatorname{Cont}(\mf{u}_{\mu}^{\ast},\zp)$. Corollary \ref{Cor:IntroIntegralOperators} then provides a $p$-adic interpolation of the algebraic Maass--Shimura operators on $\mathcal{O}(\mathcal{P}_{\dr})$. 
\end{Eg}

\begin{Rem} \label{Rem:IntroEischenMantovan}
A version of Corollary \ref{Cor:IntroIntegralOperators} for unitary Shimura varieties appears in the work of Eischen--Fintzen--Mantovan--Varma and Eischen--Mantovan, see \cite{EischenFintzenMantovanVarma} and \cite{EischenMantovan}. For a $\mathfrak{N} \subset \mathcal{M}_{\ovmu}$ the unipotent radical of a Borel, they construct certain $p$-adic differential operators on the space of invariants $\mathcal{O}(\igmf)^{\mathfrak{N}(\zp)}$. For comparison, their results concern interpolating the action of $\operatorname{Sym} \Lie \mathcal{H}^{\mathrm{m}}$, where $\mathcal{H}^{\mathrm{m}}$ is the multiplicative part of $\mathcal{H}$, see Lemma \ref{Lem:SimpleWeights}. Moreover, writing $\mathcal{O}(T_p \mathcal{H}^{m,\vee})=\cont(\zp^{\oplus k}, \zp)$ for some integer $k$, the operators constructed by Eischen--Mantovan correspond to $p$-adic limits of polynomial functions with $\mathbb{Z}_p$-coefficients, thus are a proper subset of all continuous functions (for example when $k=1$, the function $z\mapsto \binom{z}{p}$ is not such a limit). We show that our operators restrict to theirs in Proposition \ref{Prop:ComparisonEM}. The method of \cite{EischenFintzenMantovanVarma} and \cite{EischenMantovan} uses an explicit description of the action of differential operators in Serre--Tate coordinates, whereas our construction is coordinate-free.  
\end{Rem}
\subsubsection{} \label{subsub:IntroTateModule} Outside of the ordinary case, the ring $\mathcal{O}(T_p \mathcal{H}^{\vee})$ is often hard to describe explicitly. The adic generic fiber $T_p H^{\vee}=T_p \mathcal{H}^{\vee}_{\eta}$ is a (Galois-twisted) profinite set and thus $\mathcal{O}(T_p H^{\vee})$ \emph{does} have an explicit description. However, outside the ordinary case, the  map
\begin{align}
   \mathcal{O}(T_p \mathcal{H}^{\vee})[\tfrac{1}{p}] \to \mathcal{O}(T_p H^{\vee})
\end{align}
might not be injective or surjective (see \cite[Remark 7.2.4]{FourierPaper}), and so we do not generally get an action of $\mathcal{O}(T_p H^{\vee})$ on $\mathcal{O}(\igmf)[\tfrac{1}{p}]=\mathcal{O}(\igma)$.

However, by \cite[Corollary 7.2.7]{FourierPaper}, there is a subalgebra
\begin{align}
    \mathcal{O}^{\gamma-\mathrm{la}}(T_p H^{\vee}) \subset \mathcal{O}(T_p H^{\vee})
\end{align}
which naturally lifts to a subalgebra of $\mathcal{O}(T_p \mathcal{H}^{\vee})[\tfrac{1}{p}]$ and thus acts on $\mathcal{O}(\igma)$. We now describe this algebra after an extension of scalars.  \smallskip

Let $C / E$ be the completion of an algebraic closure. Let $\mathcal{U}_{\overline{\mu},0}$ be the center of $\mathcal{U}_{\overline{\mu}}$ and let $\mf{u}_{\overline{\mu},0}$ be its Lie algebra. Then, $\mf{u}_{\mu,C}^{\mathcal{N}}=\mf{u}_{\overline{\mu},0,C} \cap \mf{u}_{\mu,C}$ is a submodule of $\mf{u}_{\overline{\mu},0,C}$. After choosing a suitable $C$-point on the associated infinite level Rapoport--Zink space, we obtain an identification $T_p \mathcal{H}(\mathcal{O}_C)=T_p H(C)=\mf{u}_{\overline{\mu},0}$. Choosing a basis $\zeta$ for $\mathbb{Z}_p(1)$, we obtain also an automorphism $\Omega_\zeta$ of $\mf{u}_{\mu,C}^{\mathcal{N}}$ that scales root spaces by certain abelian periods in $C$ (see \S\ref{sss.omegazeta}). For $\iota: \mf{u}_{\mu,C}^{\mathcal{N}} \rightarrow \mf{u}_{\overline{\mu},0, C}$ the inclusion, let $\gamma = (\iota \circ \Omega_\zeta)^*|_{(\mf{u}_{\overline{\mu},0})^{\ast} }$, so that $\gamma: (\mf{u}_{\overline{\mu},0})^{\ast} \rightarrow (\mf{u}_{\mu,C}^{\mathcal{N}})^*$. By \cite[Theorem 1]{FourierPaper}, for $\otimes^\blacksquare$ the solid tensor product of \cite{CondensedNotes}, there is an isomorphism 
\[ \mathcal{O}^{\gamma-\mathrm{la}}(T_p H^{\vee}) \otimes_{E}^{\blacksquare} C \to \mathcal{O}^{\gamma-\mathrm{la}}(\mf{u}_{\overline{\mu},0}^{\ast},C),\]
where the target is the ring of functions $\mf{u}_{\overline{\mu},0}^{\ast} \to C$ that are locally pulled back along $\gamma$ from analytic functions on open subsets of $(\mf{u}_{\mu,C}^{\mathcal{N}})^*$. In particular, we can view  $\operatorname{Sym}(\mf{u}_{\mu,C}^{\mathcal{N}})$ as the polynomial functions on $(\mf{u}_{\mu,C}^\mathcal{N})^*$, and then pull these back along $\gamma$ to obtain a map $\gamma^*:\operatorname{Sym}(\mf{u}_{\mu,C}^{\mathcal{N}}) \hookrightarrow \mathcal{O}^{\gamma-\mathrm{la}}(\mf{u}_{\overline{\mu},0}^{\ast},C)$. 

\begin{Cor} \label{Cor:IntroRationalOperatorsI}
    The action of $\operatorname{Sym}(\mf{u}_{\mu,C}^{\mathcal{N}})$ on 
    \[ \mathcal{O}(\igmaC)=\mathcal{O}(\igmf)[\tfrac{1}{p}]\otimes_{E}^{\blacksquare} C \]
    extends along  $\gamma^*$:  $\operatorname{Sym}(\mf{u}_{\mu,C}^{\mathcal{N}}) \hookrightarrow  \mathcal{O}^{\gamma-\mathrm{la}}(\mf{u}_{\overline{\mu},0}^{\ast},C)$ to an action of  $\mathcal{O}^{\gamma-\mathrm{la}}(\mf{u}_{\overline{\mu},0}^{\ast},C)$.
\end{Cor}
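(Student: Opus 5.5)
The plan is to deduce Corollary \ref{Cor:IntroRationalOperatorsI} formally from Corollary \ref{Cor:IntroIntegralOperators} and the results of \cite{FourierPaper} quoted above. By Corollary \ref{Cor:IntroIntegralOperators} we have a continuous algebra action of $\mathcal{O}(T_p\mathcal{H}^\vee)$ on $\mathcal{O}(\igmf)$ extending the action of $\operatorname{Sym}(\Lie\mathcal{H})$. The argument then has four steps.

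\textbf{Step 1: invert $p$.} Inverting $p$ gives an action of $\mathcal{O}(T_p\mathcal{H}^\vee)[\tfrac1p]$ on $\mathcal{O}(\igma)=\mathcal{O}(\igmf)[\tfrac1p]$.

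\textbf{Step 2: restrict along the Fourier-theoretic lift.} By \cite[Corollary 7.2.7]{FourierPaper}, the subalgebra $\mathcal{O}^{\gamma-\mathrm{la}}(T_pH^\vee)\subset \mathcal{O}(T_pH^\vee)$ lifts canonically to a subalgebra of $\mathcal{O}(T_p\mathcal{H}^\vee)[\tfrac1p]$. Restricting the action along this lift, $\mathcal{O}^{\gamma-\mathrm{la}}(T_pH^\vee)$ acts on $\mathcal{O}(\igma)$. One must check that this lift contains the image of $\operatorname{Sym}(\Lie\mathcal{H})[\tfrac1p]$ under the Hodge--Tate map, so that the new action still extends the Maass--Shimura operators. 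This should hold by construction: the polynomial functions coming from $\omega_{\mathcal{H}}$ are $\gamma$-analytic, and the lift in \cite{FourierPaper} is compatible with the Hodge--Tate map.

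\textbf{Step 3: base change to $C$.} Apply $-\otimes^\blacksquare_E C$ to obtain an action of $\mathcal{O}^{\gamma-\mathrm{la}}(T_pH^\vee)\otimes^\blacksquare_E C$ on $\mathcal{O}(\igmaC)$. Continuity, together with the fact that $\otimes^\blacksquare$ is symmetric monoidal, lets the module structure map pass through the solid tensor product. Then transport the action along the isomorphism of \cite[Theorem 1]{FourierPaper} onto $\mathcal{O}^{\gamma-\mathrm{la}}(\mf{u}_{\overline\mu,0}^*,C)$.

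\textbf{Step 4: compatibility with $\gamma^*$.} The remaining point, and the one I expect to be the main obstacle, is to verify that under this isomorphism the image of $\operatorname{Sym}(\Lie\mathcal{H})\otimes C=\operatorname{Sym}(\mf{u}^{\mathcal{N}}_{\mu,C})$ is exactly $\gamma^*$ of the polynomial functions. Concretely, the Hodge--Tate map $T_pH^\vee\to\omega_{\mathcal{H}}\otimes C$, evaluated at the chosen point of the infinite-level Rapoport--Zink space, must be identified with $\iota\circ\Omega_\zeta$ after dualizing. I would proceed as follows:
\begin{itemize}
\item Reduce to root spaces, since the identification $T_pH(C)=\mf{u}_{\overline\mu,0}$ and $\Omega_\zeta$ are both compatible with the weight decomposition for the torus action.
\item Use that $\mathcal{H}$ is explicit: over $\mathcal{O}_C$ it is a product of Lubin--Tate type groups indexed by the root spaces.
\item For each such factor, compute the Hodge--Tate period. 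It is the abelian period defining $\Omega_\zeta$ in \S\ref{sss.omegazeta}, once the basis $\zeta$ of $\mathbb{Z}_p(1)$ is fixed.
\end{itemize}

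With these identifications the two maps agree on generators of $\operatorname{Sym}(\mf{u}^{\mathcal{N}}_{\mu,C})$. Since both are algebra maps, they agree on all of $\operatorname{Sym}(\mf{u}^{\mathcal{N}}_{\mu,C})$, which gives the asserted extension along $\gamma^*$.
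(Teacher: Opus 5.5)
Your Steps 1--3 match the paper. It deduces the corollary from Corollary \ref{Cor:RationalFourierApplication} together with Corollary \ref{Cor:HodgeTateTripleH}. Corollary \ref{Cor:RationalFourierApplication} provides the action of $\mathcal{O}(T_pH^\vee)^{\gamma-\mathrm{la}}$ on $\mathcal{O}(\igma)$, built either from $\mathcal{D}(H)$ or, as you do, from the integral action and the lift (see \S\ref{subsub:rla}).

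In Step 2, ``by construction'' needs one more ingredient. Since $r^{\mathrm{int}}$ need not be injective, the commutative diagram \eqref{Eq:BigDiagram} alone does not show that the lift sends $\gamma^*\ell$ to $\HT^*\ell$. You need that the two Fourier transforms send the distribution ``derivative along $\ell$ at the identity'' to $\gamma^*\ell$ and to $\HT^*\ell$ respectively. This is what underlies Corollaries \ref{Cor:IntegralFourierConsequence}(2) and \ref{Cor:RationalFourierApplication}(1).

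The genuine gap is Step 4, which is exactly the content of Corollary \ref{Cor:HodgeTateTripleH}. First, $\mathcal{H}$ is not a product of Lubin--Tate type groups indexed by root spaces. The Frobenius $\mathrm{Ad}(b_\mu)\circ\sigma$ permutes root spaces along $\sigma$-orbits, so $\mathcal{H}$ decomposes along orbits. Each factor is a CM $p$-divisible group of height equal to the orbit length and dimension equal to the number of roots of $\mu$-weight one in the orbit. In Example \ref{Eg:IntroLT}, for instance, there are $n-1$ factors of height $d$ but $d(n-1)$ root spaces. Root lines only appear as eigenlines of the CM action on $T_pH\otimes C$.

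More seriously, you take as given that the identification $T_pH(C)=\mf{u}_{\overline{\mu},0}$ respects root decompositions. That identification comes from $T_p\mathcal{H}\to\Aut_{\mathcal{G}}(\mbxcan)$ and the trivialization of $T_p\mbxcan$ given by the chosen point $x$. That it lands isomorphically on $\mathcal{U}_{\overline{\mu},0}(\zp)$, and after tensoring with $C$ carries CM eigenlines to root spaces, is the main thing to prove. It also depends on how $x$ is chosen.

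The missing idea is to take $x$ on the infinite-level local Shimura variety of a maximal torus $\mathcal{T}$ through which $\mu$ factors. The argument then runs as follows.
\begin{enumerate}
\item Kisin's matching of \'etale and crystalline tensors makes the crystalline comparison for $\mbxcan$ a single element $c\in\mathcal{T}(B_{\cris,\eab})$. By tensor functoriality, the comparison for $\mathcal{H}$ is $\mathrm{Ad}(c)$ (Lemma \ref{Lem:IdentificationIsomorphisms}).
\item Full faithfulness of filtered Dieudonn\'e theory identifies the image of $T_pH$ with $\mathcal{P}_{\overline{\mu}}(\zp)\cap cU_{\overline{\mu},0}(B_{\cris,\eab})c^{-1}$. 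This equals $\mathcal{U}_{\overline{\mu},0}(\zp)$ because $c$ normalizes $U_{\overline{\mu},0}$.
\item Since $c$ commutes with $\mu(t)$, comparing the lattices $U_{\overline{\mu},0}(B^+_{\dR,\eab})$ and $\mu(t^{-1})U_{\overline{\mu},0}(B^+_{\dR,\eab})\mu(t)$ gives $\Fil^1_{\HT}=U^{\mathcal{N}}_\mu(\eab)$.
\item As $\Omega$ is defined in \S\ref{sss.omegazeta} directly from $c$ and $t$, the identity $\HT=\iota\circ\Omega$, hence $\gamma=(\iota\circ\Omega_\zeta)^*$, is then immediate, with no period computations.
\end{enumerate}
Without $c$, computing each factor's Hodge--Tate period in isolation determines it only up to the choice of basis of that factor's Tate module. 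So it cannot be matched with $\Omega_\zeta$, and your last bullet asserts the required identity rather than proving it. The explicit Lubin--Tate periods of \S\ref{sss.period-description} are an output of this argument, not an input.
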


\begin{Rem}For the definition of $\gamma$-locally analytic functions we could forget about $\Omega_{\zeta}$ and instead take $\gamma$ to just be the dual of the inclusion, but it is important to incorporate $\Omega_{\zeta}$ in order to match the $\mathsf{E}'$-rational Maass--Shimura operators. 
\end{Rem}

\begin{Eg}
    In the ordinary case as in Example \ref{example.intro-integral-ordinary}, the choice of a point on the infinite level Rapoport--Zink space can be made using the fixed basis $\zeta$ of $\mathbb{Z}_p(1)$ so that the resulting period automorphism $\Omega_\zeta$ is the identity. Thus in this case we are just recovering the action of $C^\locan(\mf{u}_{\mu}^*, C) \subseteq \mathrm{Cont}(\mf{u}_{\mu}^{*}, C)$ after inverting $p$. 
\end{Eg}

\begin{Eg} \label{Eg:IntroLT}
Let $\mathsf{E} \subseteq \mathbb{C}$ be a CM field with totally real subfield $\mathsf{F} \not=\mathbb{Q}$ and let $V$ be a Hermitian space over $\mathsf{E}$ of with signatures $(n-1,1),(n,0), \cdots (n,0)$, with $n \ge 2$. We write $\g$ for $\operatorname{GU}(V)$, and consider the usual associated PEL datum $\gx$. Suppose that $p$ is inert in $\mathsf{F}$ and splits into $p=\mf{p}_1 \mf{p}_2$ in $\mathsf{E}$. The reflex field of $\gx$ is equal to $\mathsf{E}$ in this case (since $\mathsf{F} \not=\mathbb{Q}$). Then $\mc{U}_{\overline{\mu},0}= \mathcal{U}_{\overline{\mu}}$ and we can identify $\mathcal{H}$ with 
\begin{align}
    \operatorname{LT}^{\oplus n-1},
\end{align}
where $\operatorname{LT}$ is the Lubin--Tate formal group over $\mathcal{O}_{\oee}$ of height $d=[E:\qp]$ and dimension $1$ corresponding to the place $w$ of $E$. Moreover, even though $\mc{N}$ is non-trivial, we have $\mathcal{U}_{\mu}=\mathcal{U}_{\mu}^{\mathcal{N}}$ since $\mathcal{U}_{\overline{\mu}}$ is abelian, thus $\operatorname{Lie} \mathcal{H} = \mf{u}_{\mu}$ and $\operatorname{Sym}(\mf{u}_{\mu})=(\operatorname{Sym} \mf{u}_{\mu})^{\mathcal{N}}$.

After basechange to $C$, we obtain an identification of $T_p H^{\vee}$ with $n-1$ copies of $\oee$. Moreover, this identifies $\mathcal{O}^{\gamma-\mathrm{la}}(\mf{u}_{\overline{\mu},0}^{\ast},C)$ with the ring of $C$-valued functions on $\mathcal{O}_{E}^{\oplus n-1}$ which are ${E}$-analytic via $w:{E} \to C$; here the $p$-adic Fourier theory goes back to Schneider--Teitelbaum \cite{SchneiderTeitelbaumFourier}. In particular, after $p$-depleting by multiplying by the indicator function for $(\mathcal{O}_{E}^\times)^{n-1}$, we obtain $p$-adic families of differential operators parameterized by $E$-analytic characters of $(\mathcal{O}_{E}^\times)^{n-1}$. The period automorphism $\Omega_{\zeta}$ corresponds to multiplication by a Lubin--Tate period, see \S \ref{sss.period-description}.
\end{Eg}

\subsubsection{} In the ordinary case (i.e., $E=\qp$), we also show that the action in Corollary \ref{Cor:IntroRationalOperatorsI} extends to an action on nearly overconvergent forms, generalizing \cite{GrahamPilloniRodriguesJacinto} and \cite[\S7]{UFJ}. To state this result, we define the space of nearly overconvergent automorphic forms (in the ordinary case) as
\[
\mathscr{N}^{\dagger} := \varinjlim^{\mathrm{lcv}}_U\opn{H}^0(U, \mathcal{O}_{P_{\opn{dR}}^{\opn{an}}}),
\]
where $P_{\opn{dR}}^{\opn{an}}$ is the analytification of $P_{\opn{dR}}$ and the (locally convex) inductive limit is over all open subspaces $U \subset P_{\opn{dR}}^{\opn{an}}$ containing the closure of $\mathfrak{Ig}_{M, \eta}$ (via the map $\mathfrak{Ig}_{M, \eta} \to P_{\opn{dR}}^{\opn{an}}$). This is an LB-space over $\mbb{Q}_p$ of compact-type and has the structure of a $(\mathfrak{g}, \mathcal{M}_{\mu}(\mbb{Z}_p))$-module compatible with the $(\mathfrak{g}, P_{\mu^{-1}})$-module structure on $\mathcal{O}_{P_{\opn{dR}}}$.

\begin{mainThm}[Theorem \ref{Thm:NOCmainThm}]\label{Thm:IntroNOCmainThm}
    Suppose we are in the ordinary case, i.e., $E=\qp$. Then there exists a unique continuous and $\mathcal{M}_{\mu}(\mbb{Z}_p)$-semilinear algebra action of $C^{\opn{la}}(\mathfrak{u}_{\mu}^*, \mbb{Q}_p)$ on $\mathscr{N}^{\dagger}$ that is compatible with the action in Corollary \ref{Cor:IntroRationalOperatorsI} via the natural pullback map $\mathscr{N}^{\dagger} \to \mathcal{O}(\mathfrak{Ig}_{M, \eta})$.
\end{mainThm}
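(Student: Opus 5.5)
Uniqueness is the easy half; existence comes from building the action on a cofinal system of neighbourhoods and checking that locally constant functions and the $\mathfrak{u}_\mu$-directional derivatives act compatibly with $\mathscr{N}^\dagger \to \mathcal{O}(\igma)$.

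\emph{Uniqueness.} In the ordinary case $\mathcal{N}$ is trivial and $\mathcal{H}=\mathfrak{u}_\mu\otimes\gmhat$. The algebra $C^{\mathrm{la}}(\mathfrak{u}_\mu^*,\mathbb{Q}_p)$ is the locally convex colimit over $h$ of Banach algebras of $h$-analytic functions. Polynomials times indicator functions of cosets $a+p^h\mathfrak{u}_\mu^*$ span a dense subspace of each of these. So a continuous action is determined by two pieces of data: the action of $\operatorname{Sym}(\mathfrak{u}_\mu)$, which must be the Maass--Shimura action coming from Proposition~\ref{Prop:IntroKS}, and the action of the locally constant functions. I would first prove that the pullback $\mathscr{N}^\dagger\to\mathcal{O}(\igma)$ is injective. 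This holds because $\igma$ is Zariski dense in each strict neighbourhood: it maps onto the ordinary locus, and the $\mathcal{M}_\mu(\mathbb{Z}_p)$-orbits generate the fibres of $P_{\dR}^{\an}$ densely. Injectivity together with compatibility then forces uniqueness.

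\emph{Existence.} I would build the action on a cofinal system of strict neighbourhoods $U$. For $f=\mathbbm{1}_{a+p^h\mathfrak{u}_\mu^*}$, the action of $\mathcal{O}(T_p\mathcal{H}^\vee)$ on $\mathcal{O}(\igmf)$ is obtained by Fourier theory from the action of $\mathcal{H}[p^h]=\mathfrak{u}_\mu\otimes\mu_{p^h}$. It should be given by an averaging formula
\[
f\cdot F=p^{-hd}\sum_{\epsilon\in \mathfrak{u}_\mu\otimes \mu_{p^h}}\langle a,\epsilon\rangle^{-1}\,\epsilon^*F ,
\]
and I would identify this with a $U_p$/$V_p$-type Hecke correspondence: the $p$-depletion and twisting operators of the infinite-level theorem (Theorem~\ref{Thm:IntroInfiniteLevelTheorem}). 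These Hecke correspondences are defined on $P_{\dR}^{\an}$ over strict neighbourhoods of the ordinary locus, after possibly shrinking them, via the canonical subgroup. This is the strategy of \cite{GrahamPilloniRodriguesJacinto}: one interprets $\mathfrak{u}_\mu\otimes\mu_{p^h}$ as quotients by subgroups of $A[p^h]$ complementary to the canonical subgroup, which extend overconvergently. Next I would extend the Lie algebra action of $\mathfrak{u}_\mu\subset\mathfrak{g}$ on $\mathcal{O}_{P_{\dR}^{\an}}$ to $h$-analytic functions. For this one needs norm estimates on each $U$: $\|X^n F\|_U\le C\, n!\, r^{-n}\|F\|_{U'}$ for $X\in\mathfrak{u}_\mu$ and $F$ restricted to a smaller neighbourhood $U'$, after twisting by a locally constant projector so that $X$ acts topologically nilpotently. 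With these bounds, the series $\sum_n f_n\,X^n$ for an analytic function $f=\sum f_n x^n$ on a small ball converges on $\varinjlim_U$. Compatibility with $\mathcal{O}(\igma)$ then holds on polynomials and indicators, hence everywhere by continuity, and multiplicativity follows from injectivity.

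\emph{Main obstacle.} The hard step is the norm estimate. The Maass--Shimura derivatives $X^n$ are only locally nilpotent in a suitable sense after composing with the $p$-depletion or $U_p$ operator, and the radius $r$ has to be controlled uniformly as $U$ shrinks. I would try to prove it on the unipotent part of $P_{\dR}^{\an}$ over $U$, in coordinates given by a splitting of the Hodge filtration near the ordinary locus (unit-root splitting overconvergently extended). In these coordinates $X$ acts as $\partial$ plus a term that is small on $U$, so the estimates reduce to those for $\frac{d}{dz}$ on polydiscs, exactly as in the $\mathrm{GL}_2$ case of \cite{UFJ}.
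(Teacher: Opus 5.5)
Your uniqueness step rests on injectivity of $\mathscr{N}^{\dagger}\to\mathcal{O}(\igma)$, and the reason you give for it is false. Over a point $y$ of the ordinary locus, the image of $\igma$ in the fibre of $P^{\mathrm{an}}_{\dR}$ is $s(y)\cdot\mathcal{M}_\mu(\mathbb{Z}_p)$, where $s$ is the unit-root section. Its Zariski closure is $s(y)M_\mu$, so the $U_{\mu^{-1}}$-directions of the $P_{\mu^{-1}}$-torsor are never seen. In particular, the $U_{\mu^{-1}}$-coordinates relative to the unit-root splitting are nonzero functions over the ordinary locus that vanish on the image. What keeps such functions out of $\mathscr{N}^\dagger$ is only that the unit-root splitting does not overconverge (the $E_2$ phenomenon for $\mathrm{GL}_2$). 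So injectivity, if true, is a global statement you have not proved, and your ``multiplicativity follows from injectivity'' inherits the problem. The paper gets uniqueness differently. It requires that $\operatorname{Sym}\mathfrak{u}_\mu$ act through the $\mathfrak{g}$-module structure (part (1) of Theorem \ref{Thm:NOCmainThm}), and then uses continuity together with density of polynomials (partial sums of Mahler expansions) in each $C_\varepsilon(\mathfrak{u}_\mu^*,\mathbb{Q}_p)$. Indicator functions need no separate treatment.

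The more serious gap is the analytic estimate. On the tube $P^{\mathrm{univ}}_{\dR,n}\simeq\igma\times\mathcal{P}_{\mu^{-1},n}$ one has $\partial_w\equiv\theta_w\times 1\pmod{p^n}$ (Lemma \ref{Lem:PartialWequivThetaW}). Here $\theta_w$ is the derivation along the Igusa variety coming from the $\mathcal{H}$-action, a $q\frac{d}{dq}$-type operator in the base direction, not $\frac{d}{dz}$ on a polydisc. It has norm at most $1$ but is not topologically nilpotent (for $\mathrm{GL}_2$ it is $q\frac{d}{dq}$, which fixes $q$). Cauchy-type bounds therefore only let power series convergent on the closed unit polydisc act. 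The topological nilpotence of $X-a$ on the image of a coset projector is exactly what must be proved; no coordinate computation supplies it.

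What $\theta_w$ does satisfy is integrality of the operators $\binom{\theta_w}{k}$, coming from the Fourier-theoretic $\operatorname{Cont}$-action. The missing idea is the perturbation lemma \cite[Lemma 3.2.1]{UFJ}: an integral operator congruent modulo $p^n$ to one admitting a continuous $\operatorname{Cont}(\mathbb{Z}_p,\mathbb{Z}_p)$-action admits a $C_\varepsilon$-action, via $\sum_k a_k\binom{\partial_w}{k}$, once $n\ge n(\varepsilon)$. This handles locally constant and analytic functions simultaneously. Your Hecke construction of indicator functions is therefore not needed, and in any case its single branches $\epsilon^*$ need labelings of complementary subgroups and compatibility with frames of $P_{\dR}$ that you do not provide.

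Also, an overconvergent extension of the unit-root splitting does not exist. What overconverges, via canonical subgroups, is its reduction modulo $p^n$: a reduction of structure of $P^{\mathrm{an}}_{\dR}$ over $\mathcal{S}_r$ to $\mathcal{P}_{\square,n}=\mathcal{M}_\mu(\mathbb{Z}_p)\mathcal{P}_{\mu^{-1},n}$. The group $\mathcal{P}_{\mu^{-1},n}$ (identity mod $p^n$, Levi mod $p^{2n}$) is chosen so that the error $[\operatorname{Ad}(g)w]$ lies in $p^n\operatorname{Lie}^+\mathcal{P}_{\mu^{-1},n}$. The passage from the ordinary locus to strict neighbourhoods is then made not by uniform radius control, but by the interpolation property of the Banach chain $V_r\to V_{r+1}\to\cdots\to V_\infty$ together with \cite[Proposition 3.4.1]{UFJ}. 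Finally, the Hodge-type case is deduced from the Siegel case using the closed immersion, surjectivity of $\mathscr{N}^\dagger$ for $\mathsf{G}_V$ onto $\mathscr{N}^\dagger$ for $\mathsf{G}$, and density of polynomials.
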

Our proof of Theorem \ref{Thm:NOCmainThm} is by a reduction to the Siegel case, which we handle following the general strategy of \cite{GrahamPilloniRodriguesJacinto} and \cite[\S7]{UFJ}. However, at a certain step, we use a different choice of coordinates to eliminate the analysis of a nilpotent operator, resulting in a simpler proof and stronger control over integrality of the action; see Proposition \ref{Prop:OrdinarySiegelOCcomp} and Remark \ref{Rem:GrowthOfAction}.

\begin{Rem}
We do not prove a $\mu$-ordinary generalization of Theorem \ref{Thm:IntroNOCmainThm}, but see Conjecture \ref{Conj:NOC} for an (optimistic) conjecture. 
\end{Rem}

\subsection{Torsion points, locally constant functions, and Hecke operators.}
Consider the restriction of the action in Theorem \ref{Thm:IntroIntegralAction} to the $p^n$-torsion subgroup $\mathcal{H}[p^n]$. In terms of the algebra action of Corollary \ref{Cor:IntroIntegralOperators}, this corresponds to considering the action of characters of $T_p \mathcal{H}^\vee$ valued in $\mu_{p^n}$. In particular, after inverting $p$, extending scalars to $C$, and making the choices to obtain Corollary \ref{Cor:IntroRationalOperatorsI}, the action of the torsion subgroup $H(C)[p^\infty]=\bigcup_n\mathcal{H}[p^n](C)$ spans the action of the locally constant functions $\mathcal{O}^{\mathrm{sm}}(\mf{u}_{\overline{\mu},0}^{\ast},C) \subseteq \mathcal{O}^{\gamma-\mathrm{la}}(\mf{u}_{\overline{\mu},0}^{\ast},C)$.\smallskip 

To obtain Corollary \ref{Cor:IntroIntegralOperators}, the first choice we made was a point in the infinite level Rapoport--Zink space. After passing to the rigid analytic generic fiber, this choice provides a lift of the map \eqref{intro.map-to-pdr-quot} from the (finite) level $K^pK_p$ to the partial infinite level $K^p\mathcal{U}_{\overline{\mu}}(\mathbb{Z}_p)$. In particular, we can also evaluate nearly holomorphic automorphic forms of level $K^p\mathcal{U}_{\overline{\mu}}(\mathbb{Z}_p)$ to obtain elements of $\mathcal{O}(\mf{Ig}_{M,\eta,C})$, and the action of 
\[ H(C)[p^\infty]=T_p \mathcal{H}(C) \otimes \mathbb{Q}_p/\mathbb{Z}_p=\mf{u}_{\overline{\mu},0}\otimes \mathbb{Q}_p/\mathbb{Z}_p = \mathcal{U}_{\overline{\mu},0}(\mathbb{Q}_p)/\mathcal{U}_{\overline{\mu},0}(\mathbb{Z}_p)\]
extends the Hecke action of $\mathcal{U}_{\overline{\mu},0}(\mathbb{Q}_p)/\mathcal{U}_{\overline{\mu},0}(\mathbb{Z}_p)$ on these forms; see Theorem \ref{Thm:IntroInfiniteLevelTheorem}.(2) for a precise statement. In particular, the action of $\mathcal{O}^{\mathrm{sm}}(\mf{u}_{\overline{\mu},0}^{\ast},C)$ preserves classical (nearly) holomorphic automorphic forms. 

\begin{Rem} \label{Rem:HeckeDifferentialRemark}
That the same algebra action simultaneously interpolates the action of Hecke operators and differential operators is important for applications to $p$-adic $L$-functions. In particular, certain $p$-depletions with complicated descriptions via Hecke operators seem to have simple descriptions via this algebra action. We expect that this observation can be used in the construction of new $p$-adic $L$-functions, especially in cases where the $\mu$-ordinary locus is not the ordinary locus. Moreover, we hope that this might clarify certain choices of test vectors that are made in existing constructions of $p$-adic $L$-functions. In \S \ref{Sec:PAdicLFunctions}, we give an informal discussion of these ideas in some examples.
\end{Rem}

\subsubsection{} So far we have noted the Hecke actions of $\mathcal{U}_{\overline{\mu},0}(\mathbb{Q}_p)/\mathcal{U}_{\overline{\mu},0}(\mathbb{Z}_p)$ and of $\mathcal{M}_{\overline{\mu}}(\mathbb{Z}_p)$. In fact these are both part of a larger Hecke action of $C[\mathcal{U}_{\overline{\mu}}(\mathbb{Z}_p) \backslash\!\backslash P_{\overline{\mu}}(\mathbb{Q}_p)]$ on $\mathcal{O}(\mf{Ig}_{M,\eta,C})$ obtained by taking invariants in a larger space of functions at infinite level with a $P_{\overline{\mu}}(\mathbb{Q}_p)$-action (see \S\ref{ss.infinite-level}). The various compatibilities are summarized in the following result. Let $N$ be the generic fiber of $\mathcal{N}$, and let $P_{\dr,\infty}$ be the basechange to the Shimura variety with infinite level at $p$ of the generic fiber of $\pdr$. 

\begin{mainThm} \label{Thm:IntroInfiniteLevelTheorem}  \hfill
\begin{enumerate}
\item The restriction map
\[ \mathcal{O}(N\backslash P_{\dR,\infty,C})^{\mathcal{U}_{{\overline{\mu}}}(\mathbb{Z}_p)} \rightarrow \mathcal{O}(\mf{Ig}_{M,\eta,C}) \]
is $C[\mathcal{U}_{\overline{\mu}}(\mathbb{Z}_p) \backslash\!\backslash P_{\overline{\mu}}(\mathbb{Q}_p)]$-equivariant when the nearly holomorphic automorphic forms are equipped with the twisted Hecke action obtained by combining the usual Hecke action with the torsor action through (see \S \ref{subsub:TwistedHeckeAction})
\[ P_{\overline{\mu}}(\mathbb{Q}_p) \rightarrow M_{\overline{\mu}}(\mathbb{Q}_p)\subseteq M_{\mu}(E)\subseteq P_{\mu^{-1}}(E). \]
\item The Hecke action of 
\[ C[\mc{U}_{\overline{\mu},0}(\mathbb{Q}_p)/\mc{U}_{\overline{\mu},0}(\zp)] \subseteq C[\mathcal{U}_{\overline{\mu}}(\mathbb{Z}_p) \backslash\!\backslash P_{\overline{\mu}}(\mathbb{Q}_p)]\]
on $\mathcal{O}(\mf{Ig}_{M,\eta,C})$ agrees with the action of 
\[ \mathcal{O}^{\mathrm{sm}}(\mf{u}_{\overline{\mu},0}^{\ast},C) \subseteq \mathcal{O}^{\gamma-\locan}(\mf{u}_{\overline{\mu},0}^{\ast},C) \]
on $\mathcal{O}(\mf{Ig}_{M,\eta,C})$ under the isomorphism 
\[ 
C[\mc{U}_{\overline{\mu}, 0}(\mathbb{Q}_p)/\mathcal{U}_{\overline{\mu}, 0}(\zp)] \xrightarrow{\sim} \mathcal{O}^{\mathrm{sm}}(\mf{u}_{\overline{\mu},0}^{\ast},C)
\]
sending $u \in \mathcal{U}_{\overline{\mu}, 0}(\mathbb{Q}_p)/\mathcal{U}_{\overline{\mu}, 0}(\zp)=\mf{u}_{\overline{\mu},0} \otimes \mathbb{Q}_p/\mathbb{Z}_p$ to the associated\footnote{The fixed trivialization $\mathbb{Z}_p=\mathbb{Z}_p(1)$ allows us to identify $\mf{u}_{\overline{\mu},0}^*=\mf{u}_{\overline{\mu},0}^*(1)$ and thus interpret an element of $\mf{u}_{\overline{\mu},0} \otimes \mathbb{Q}_p/\mathbb{Z}_p$ as a function on $\mf{u}_{\overline{\mu},0}^*$ valued in $\mathbb{Q}_p(1)/\mathbb{Z}_p(1)=\mu_{p^\infty}(C).$} finite order character. 
\item For $g \in P_{\overline{\mu}}(\mathbb{Q}_p)$, $s \in \mathcal{O}(\mf{Ig}_{M,\eta,C})$, and $f \in \mathcal{O}^{\gamma-\locan}(\mf{u}_{\overline{\mu},0}^*, C)$, the Hecke action of $[g]=\mathcal{U}_{\overline{\mu}}(\mathbb{Z}_p) g \mathcal{U}_{\overline{\mu}}(\mathbb{Z}_p)  \in  C[\mathcal{U}_{\overline{\mu}}(\mathbb{Z}_p) \backslash\!\backslash P_{\overline{\mu}}(\mathbb{Q}_p)]$ satisfies 
\[ [g] \cdot (f \cdot s)= (g \cdot_! f) \cdot ([g] \cdot s), \]
where $g \cdot_! f$ is defined by, writing $\mathrm{Ad}^*$ for the coadjoint action, 
\[ (g \cdot_! f)(z) = \begin{cases} f(\mathrm{Ad}_{g^{-1}}^*(z)) & \textrm{ if $\mathrm{Ad}^*_{g^{-1}}(z) \in \mathfrak{u}_{\overline{\mu},0}^*$} \\
0 & \textrm{ otherwise}.\end{cases} \]
\end{enumerate}
\end{mainThm}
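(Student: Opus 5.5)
We work throughout at infinite level at $p$. Let $\mathcal{S}_{K^p}$ denote the perfectoid Shimura variety, and let $\mathfrak{Ig}_{M,\eta,C}$ be the quotient by $\mathcal{U}_{\overline{\mu}}(\mathbb{Z}_p)$ of the $P_{\overline{\mu}}(\mathbb{Q}_p)$-stable locus $\mathcal{I}_\infty \subset \mathcal{S}_{K^p,C}$. Here $\mathcal{I}_\infty$ is the preimage of the $\mu$-ordinary Igusa variety at full infinite level. The choice of a $C$-point on the infinite level Rapoport--Zink space gives a trivialization of $P_{\dr,\infty}$ over $\mathcal{I}_\infty$ that reduces the structure group compatibly with the Hodge--Tate period map. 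Concretely, over $\mathcal{I}_\infty$ the Hodge--Tate period map lands in the $P_{\overline{\mu}}(\mathbb{Q}_p)$-orbit of a fixed point. Hence the pullback of $N\backslash P_{\dr,\infty}$ to $\mathcal{I}_\infty$ acquires a $P_{\overline{\mu}}(\mathbb{Q}_p)$-equivariant section. The equivariance is for the action twisted through $P_{\overline{\mu}}(\mathbb{Q}_p)\to M_{\overline{\mu}}(\mathbb{Q}_p)\subseteq P_{\mu^{-1}}(E)$.

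For part (1), I will check that this section is equivariant for $P_{\overline{\mu}}(\mathbb{Q}_p)$ acting on both sides, using the torsor action on the target. Then I take $\mathcal{U}_{\overline{\mu}}(\mathbb{Z}_p)$-invariants. The Hecke operator $[g]$ is, on both sides, the composite of pullback along $g$ and the trace over $\mathcal{U}_{\overline{\mu}}(\mathbb{Z}_p)g\mathcal{U}_{\overline{\mu}}(\mathbb{Z}_p)/\mathcal{U}_{\overline{\mu}}(\mathbb{Z}_p)$. Since restriction commutes with both operations, equivariance follows formally once the map \eqref{intro.map-to-pdr-quot} has been lifted equivariantly. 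The only real input is that the twist by $M_{\overline{\mu}}(\mathbb{Q}_p)$ is exactly what makes the section equivariant. This reflects how $g$ changes the trivialization of the Hodge filtration via $\mu$, and I would verify it on the Rapoport--Zink side.

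For part (2), I first identify the action of $\mathcal{H}[p^\infty](C)$ on $\mathfrak{Ig}_{M,\eta,C}$ coming from Theorem \ref{Thm:IntroIntegralAction}. I claim that, under $H(C)[p^\infty]=\mathcal{U}_{\overline{\mu},0}(\mathbb{Q}_p)/\mathcal{U}_{\overline{\mu},0}(\mathbb{Z}_p)$, it coincides with right translation by $u\in\mathcal{U}_{\overline{\mu},0}(\mathbb{Q}_p)$ on $\mathcal{I}_\infty/\mathcal{U}_{\overline{\mu}}(\mathbb{Z}_p)$. Because $\mathcal{U}_{\overline{\mu},0}$ is central in $\mathcal{U}_{\overline{\mu}}$, this translation descends to the quotient, so the Hecke double coset $[u]$ is a single translation with no trace.

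To prove the claim, I reduce to the local model. The formal group $\mathcal{H}$ acts through quasi-isogenies of the universal $p$-divisible group with $\mathcal{G}$-structure that lie in the unipotent part. These quasi-isogenies are exactly $\mathcal{U}_{\overline{\mu},0}(\mathbb{Q}_p)$ acting on the Rapoport--Zink space, and on its torsion points the action becomes translation by rational points of $\mathcal{U}_{\overline{\mu},0}$. Having established the claim, I use $p$-adic Fourier theory from \cite{FourierPaper}: a point $x\in H(C)[p^\infty]$ acting on $\mathcal{O}(\mathfrak{Ig})$ corresponds to evaluation at $x$. This is the character $T_pH^\vee\to\mu_{p^\infty}$ given by the Weil pairing, and it matches the stated isomorphism once the footnote's trivialization of $\mathbb{Z}_p(1)$ is fixed. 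I expect this matching, namely identifying the formal group action on torsion with Hecke translation and keeping track of the choice of $\zeta$ and the RZ point, to be the main obstacle. I would attack it first over the Rapoport--Zink space, where everything is explicit via Dieudonné theory.

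For part (3), I argue by density and continuity. The operator $f\mapsto [g]\circ f\circ[g]^{-1}$ has meaning only after restricting to supports. For $f$ a finite-order character $u$, part (2) together with $[g][u]=[gug^{-1}][g]$ gives the formula. The conjugate $gug^{-1}$ lies in $\mathcal{U}_{\overline{\mu},0}(\mathbb{Q}_p)$ because $P_{\overline{\mu}}$ normalizes it. The cosets of $gug^{-1}$ that are not integral contribute the vanishing outside $\mathrm{Ad}^*_g(\mathfrak{u}^*_{\overline{\mu},0})$, as in the Fourier dual of pushforward along $\mathrm{Ad}_g$. Locally constant functions are dense in $\mathcal{O}^{\gamma\text{-la}}$, although not for its own topology. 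I would therefore instead deduce the general case by checking the identity on $\operatorname{Sym}(\mathfrak{u}_{\mu,C}^{\mathcal{N}})$ multiplied by locally constant functions. The identity on $\operatorname{Sym}$ follows from $g$-equivariance of the $\mathfrak{g}$-action on $\mathcal{O}(P_{\dr})$ under Proposition \ref{Prop:IntroKS}, via part (1). I then conclude by continuity of the action in Corollary \ref{Cor:IntroRationalOperatorsI}, since such products span a dense subspace.
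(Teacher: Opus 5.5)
Your arguments for (1) and (2) follow the paper. For (1), the input is the twisted $P_{\overline{\mu}}(\mathbb{Q}_p)$-equivariance of the lifted map $\mathfrak{Ig}_{\mathrm{CS},\eta,L}^{\diamondsuit}\to P_{\mathrm{dR},\infty,L}^{\mathrm{an}}$, which the paper checks on the Rapoport--Zink side just as you suggest (Lemma~\ref{Lem:TwistedHeckeAction}). For (2), the chosen point identifies $V_pH_L$ with $\underline{U_{\overline{\mu},0}(\mathbb{Q}_p)}\subset\widetilde{G}_{b_\mu,L}$ (\S\ref{sub:TateModuleH}). Hence at level $J_m$ the Hecke action of $U_{\overline{\mu},0}(\mathbb{Q}_p)$ is the action of $H_{m,L}[p^\infty]$, and functoriality of the Fourier transform for $H_{m,L}[p^\infty]\subset H_{m,L}$ turns torsion points into locally constant characters.

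Part (3) has a genuine gap. Your reduction to $f\in\gamma^*\operatorname{Sym}(\mathfrak{u}^{\mathcal{N}}_{\mu,C})$ needs to know how $[g]$ intertwines the differentiation action of $\operatorname{Lie}H$ on \emph{all} of $\mathcal{O}(\mathfrak{Ig}_{M,\eta,C})$. However, the $\mathfrak{g}$-equivariance on $\mathcal{O}(P_{\mathrm{dR}})$ together with part (1) only gives this on the image of the restriction map from nearly holomorphic forms. Reaching arbitrary $s$ would need a density theorem for classical forms in $\mathcal{O}(\mathfrak{Ig}_{M,\eta,C})$, which is not available here, and your final density-plus-continuity step inherits this problem.

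The missing idea is that the whole algebra action is $P_{\overline{\mu}}(\mathbb{Q}_p)$-semilinear by construction once you work on $\mathcal{O}(\mathfrak{Ig}_{\mathrm{CS},\eta,L})^{\mathrm{sm}}=\varinjlim_m\mathcal{O}(\mathfrak{Ig}_{M,m,\eta,L})$. There $\mathcal{O}^{\gamma-\mathrm{la}}(\mathfrak{u}^*_{\overline{\mu},0,\mathbb{Q}_p},L)$ acts as the Fourier dual of the actions of the $H_m$, which are conjugated by $g$ inside $\widetilde{G}_{b_\mu}$. Moreover, on Mantovan-level functions it acts through restriction to $\mathfrak{u}^*_{\overline{\mu},0}$ (cf.\ Theorem~\ref{Thm:InfiniteLevelTheorem}(1),(4)). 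With this, (3) is a formal computation and needs no density argument:
\begin{itemize}
\item replace $f$ by its extension by zero $f_!$;
\item write $[g]=\sum_i gu_i$ with $u_i\in U_{\overline{\mu}}(\mathbb{Q}_p)$;
\item get $[g](f_!\cdot s)=\sum_i(gu_i\cdot f_!)\cdot(gu_i\cdot s)=(g\cdot f_!)\cdot([g]s)$, since $U_{\overline{\mu}}$ acts trivially on its center;
\item restrict $g\cdot f_!$ to $\mathfrak{u}^*_{\overline{\mu},0}$, where it equals $g\cdot_!f$; this is allowed because $[g]s$ is again at Mantovan level.
\end{itemize}

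Your character case also does not produce the extension by zero in the way you describe. Part (2) and $[g][u]=[gug^{-1}][g]$ give $[g](\chi_u\cdot s)=\chi_{gug^{-1}}\cdot([g]s)$, and $\chi_{gug^{-1}}$ vanishes nowhere. This agrees with $(g\cdot_!\chi_u)\cdot([g]s)$ only because $[g]s$ is invariant under $g\,\mathcal{U}_{\overline{\mu},0}(\mathbb{Z}_p)g^{-1}$. That invariance makes the indicator of $\{z:\mathrm{Ad}^*_{g^{-1}}z\in\mathfrak{u}^*_{\overline{\mu},0}\}$ act trivially on $[g]s$; the vanishing does not come from ``non-integral cosets of $gug^{-1}$''.
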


\begin{Rem} Part (3) of Theorem \ref{Thm:IntroInfiniteLevelTheorem} is most interesting when $g \in M_{\overline{\mu}}(\mathbb{Q}_p)$. In particular, for each of $+/-$, let $M_{\overline{\mu}}(\mathbb{Q}_p)^{\pm} \subseteq M_{\overline{\mu}}(\mathbb{Q}_p)$ be the sub-monoid of elements such that $m^{\pm 1} \mc{U}_{\overline{\mu}}(\mathbb{Z}_p) m^{\mp 1} \supset \mc{U}_{\overline{\mu}}(\mathbb{Z}_p)$. Then we have natural inclusions
\[ C[M_{\overline{\mu}}(\mathbb{Q}_p)^{\pm}] \subseteq C[\mathcal{U}_{\overline{\mu}}(\mathbb{Z}_p) \backslash\!\backslash P_{\overline{\mu}}(\mathbb{Q}_p)] \]
and part (3) of Theorem \ref{Thm:IntroInfiniteLevelTheorem} describes how the Hecke actions of these subalgebras intertwines with the action of $\mathcal{O}^{\gamma-\locan}(\mf{u}_{\overline{\mu},0}^*, C)$. 
\end{Rem}

\begin{Eg}
    Suppose $\gx$ is the standard Shimura datum attached to the group $\mathsf{G} = \opn{GSp}_{2g}$ with Hodge cocharacter $\mu(z) = \left( \begin{smallmatrix} z I_g & \\ & I_g \end{smallmatrix} \right)$. Then $\mathcal{O}^{\gamma-\locan}(\mf{u}_{\overline{\mu},0}^{\ast},C)$ is identified with the algebra of locally analytic functions $\mathcal{O}^{\opn{la}}(\opn{Sym}_{g \times g}(\mbb{Z}_p), C)$ on the space of $g \times g$ symmetric matrices with entries in $\mbb{Z}_p$, and this algebra acts on sections $\mathcal{O}(\mathfrak{Ig}_{M, \eta,C})$. The space $\mathcal{O}(\mathfrak{Ig}_{M, \eta,C})$ carries an action of the ``Siegel $U_p$-operator''
    \[
    U_p = \frac{1}{p^{g(g+1)/2}} \sum_{A \in \opn{Sym}_{g \times g}(\mbb{Z}_p)/\opn{Sym}_{g \times g}(p\mbb{Z}_p)} \left( \begin{smallmatrix} p I_g & A \\ & I_g \end{smallmatrix} \right) \; \in \; \mbb{Q}[P_{\mu}(\mbb{Q}_p)],
    \]
    and also an action of the Frobenius morphism $\varphi = \mu(p)^{-1} = \left( \begin{smallmatrix} p^{-1}I_g &  \\ & I_g \end{smallmatrix} \right)$. One can show that the action of $1 - \varphi \circ U_p$ on $\mathcal{O}(\mathfrak{Ig}_{M, \eta})$ coincides with the action of the indicator function $\mathbf{1}_{X} \in \mathcal{O}^{\opn{la}}(\opn{Sym}_{g \times g}(\mbb{Z}_p), C)$ of $X = \opn{Sym}_{g \times g}(\mbb{Z}_p) \setminus \opn{Sym}_{g \times g}(p \mbb{Z}_p)$ (see \cite[Lemma 6.6.1]{GrahamPilloniRodriguesJacinto} for the $g=1$ case; the general case follows from the same proof \emph{mutatis mutandis}).
\end{Eg}

\subsection{Sketch of proof} We continue with the notation as defined above. 

\subsubsection{The Caraiani--Scholze Igusa formal scheme.} Let $\igcs$ be the perfect $\mu$-ordinary Igusa variety over $k_E$ of Hamacher--Kim \cite{HamacherKim}, with Witt vector lift $\igcsf \to \spf \mathcal{O}_E$. If $\mbxcan$ is the canonical lift of the $\mu$-ordinary $p$-divisible group with $\mathcal{G}$-structure $\mbx$ in the sense of \cite{ShankarZhou}, then $\igcsf \to (\scrshat_K)^{[b_{\mu]}}$ is a torsor for the $p$-adic formal group $\Aut_{\mathcal{G}}(\mbxcan)$ of automorphisms of $\mbxcan$ that are compatible with $\mathcal{G}$-structures. There is a  natural map $\igcsf \to \igmf$ which is a torsor for the identity component $\Aut_{\mathcal{G}}(\mbxcan)^{\circ}$ of $\Aut_{\mathcal{G}}(\mbxcan)$.

\subsubsection{} There is a natural morphism $\igcsf \to \widehat{\mathcal{P}}_{\dr}$. In particular, we can evaluate nearly holomorphic automorphic forms to functions on $\igcsf$. Now, the action of $\Aut_{\mathcal{G}}(\mbxcan)$ on $\igcsf$ in fact extends to the action of a larger group $\Aut_{G}(\tildex)$ of self quasi-isogenies of $\mbxcan$ (equivalently, of $\mbx$). We describe a part of this action explicitly: For $\mathcal{U}_{\overline{\mu},0}$ the center of $\mathcal{U}_{\overline{\mu}}$, we show that $\mathcal{U}_{\overline{\mu},0}(\mathcal{O}_E)\cong \mf{u}_{\overline{\mu},0} \otimes \mathcal{O}_E$ has a natural structure of an admissible filtered Dieudonn\'{e} module induced by $\mu^{-1}$ and $b=b_{\mu}$, thus gives rise to a $p$-divisible group $\mathcal{H}/\mathcal{O}_E$. Using the semiperfect Dieudonn\'{e} theory of \cite{ScholzeWeinstein} combined with Grothendieck--Messing theory, we show that the universal cover $\widetilde{\mathcal{H}}$ of $\mathcal{H}$ maps naturally to the center of $\Aut_{G}(\tildex)^{\circ}$, and that $T_p \mathcal{H} \leq \widetilde{\mathcal{H}}$ factors through $\Aut_{\mathcal{G}}(\mbxcan)^\circ$. In particular, $\mathcal{H}=\widetilde{\mathcal{H}}/T_p \mathcal{H}$ acts on $\igmf=\Aut_{\mathcal{G}}(\mbxcan)^{\circ}\backslash \igcsf$.

\subsubsection{}
The action of $T_p \mathcal{H}$ on $\igcsf$ does not in general fix the map $\igcsf \to \widehat{\mathcal{P}}_{\dr}$, but it can only change it by an element of $\mathcal{N}$, so that we obtain the induced map $\igmf \rightarrow \widehat{\mathcal{N}}\backslash \widehat{\mathcal{P}}_{\dr}$ written in \eqref{intro.map-to-pdr-quot}. Computing the derivative of this map in order to prove Theorem \ref{Thm:IntroIntegralAction} requires a manipulation with the Gauss--Manin connection, because this is what is used to obtain the parallelization $T_{\mathcal{P}_\dR}=\mf{g} \otimes \mathcal{O}_{\mathcal{P}_\dR}$. However, because the Gauss--Manin connection agrees with the crystalline connection, this is amenable to attack by a technical but direct computation in explicit Dieudonn\'{e} theory. The basic strategy for this computation builds on the strategy used for the modular curve  in \cite[Theorem 5.3.1]{HoweUnipotent}. However, even in the case of the modular curve, our use of the torsor $\mathcal{P}_\dR$ here and the parallelization of its tangent bundle provides a more ``direct" and clean conceptual argument than the one in \cite{HoweUnipotent} --- in particular, it provides the relation with the theory of nearly holomorphic forms directly, instead of breaking it off into another step in the process (the modular curve case suffers somewhat from the embarrassment of prior computations available to compare with, which obscures the essential role of this structure). 

\subsubsection{} In fact, we make the derivative computation in a more general setting (see \S \ref{sub:GeneralPAdicDifferentialOperators}) that also applies to other Newton strata, and to other quotients of $\igcsf$ in the $\mu$-ordinary case. This applies for example when $\mathcal{U}_{\overline{\mu}}$ is not abelian, and allows us to interpolate the Maass--Shimura operators in the directions of $\mf{u}_{\mu} \backslash \mf{u}_{\mu}^{\mathcal{N}}$. From a classical perspective, the case of the Mantovan Igusa variety is the key one, and thus we do not push very far in the applications of this more general statement. However, even for proving Theorem \ref{Thm:IntroIntegralAction}, it is useful to allow this level of generality in order to facilitate a reduction of the computation to the Siegel case (as, in general, the $\mu$-ordinary stratum in $\scrs_K$ will embed into a non-ordinary stratum in the Siegel moduli space). 

\subsubsection{}\label{sss.intro-real} This relaxation is also useful from a conceptual perspective. In particular, it highlights that our computations are essentially the same as the computations comparing different constructions of $C^\infty$ Maass--Shimura operators. Indeed, the ``real Igusa variety" is the quotient $\mathsf{G}(\mathbb{Q}) \backslash \mathsf{G}(\mathbb{A})/K$, whose $C^\infty$ function theory plays host to the classical complex theory of automorphic forms on $G$. Viewing 
\[ \mathsf{X} \subseteq \Fl_{[\mu^{-1}]}(\mathbb{C})=\mathsf{G}(\mathbb{C})/\mathsf{P}_{\mu^{-1}}(\mathbb{C})\]
by sending a Hodge structure to the Hodge filtration, we can choose an element $x_\dR \in \mathsf{G}(\mathbb{C})$, and then consider the embedding 
\[\mathsf{G}(\mathbb{Q}) \backslash \mathsf{G}(\mathbb{A})/K = \mathsf{G}(\mathbb{Q}) \backslash \mathsf{G}(\mathbb{R}) \times \mathsf{G}(\mathbb{A}^{\infty})/K \to \mathsf{G}(\mathbb{Q}) \backslash \mathsf{G}(\mathbb{C}) \times \mathsf{G}(\mathbb{A}^{\infty})/K.\]
sending $(g_\infty,g^\infty)$ to $(g_\infty x_\dR, g^\infty)$. This plays the role of \eqref{intro.map-to-pdr-quot} here: Indeed, 
\[ \mathcal{P}_\dR(\mathbb{C})=\mathsf{G}(\mathbb{Q}) \backslash \mathsf{G}(\mathbb{C}) \times \mathsf{G}(\mathbb{A}^{\infty})/K,\]
and a parallelization of its tangent bundle is evident from the right multiplication action of $G(\mathbb{C})$; this analytic parallelization is the same as the algebraic parallelization used above, and encodes within it the Maass--Shimura operators. We explain this comparison in more detail in \S\ref{sub:AlgebraicMaassShimura}; the interpretation of $\mathsf{G}(\mathbb{Q}) \backslash \mathsf{G}(\mathbb{A})/K$ as a ``real Igusa variety" is developed partially in \cite[\S1]{HoweUnipotent} and \cite[Remark 1.1.1]{Howe.CompletedKirillov}.

\begin{Rem}
In certain special PEL cases, there is another construction of actions of $p$-adic formal groups on Mantovan Igusa varieties which uses Baer sums of extensions of $p$-divisible groups, see e.g. \cites{LiuZhangZhang, Fiore}. We expect that their actions agree with ours.
\end{Rem}

\subsection{Outline} In \S \ref{Sec:Preliminaries} we collect some preliminaries on crystalline Dieudonn\'e theory and Grothendieck--Messing theory, as well as recalling some perfectoid geometry. In \S \ref{Sec:IntegralModels}, we recall integral models of Shimura varieties of Hodge type, the canonical torsors over them and the Hodge period map. Next, we recall the theory of Igusa varieties in this setting, and prove some new properties of them in the $\mu$-ordinary case. In \S \ref{Sec:DifferentialOperators} we prove Theorem \ref{Thm:IntroIntegralAction} as outlined above, and give a detailed discussion of Remark \ref{Rem:IntroEischenMantovan}. In \S \ref{Sec:ApplyingFourier} we apply the $p$-adic Fourier theory of \cite{FourierPaper} to prove Theorem \ref{Thm:IntroInfiniteLevelTheorem} and Corollaries \ref{Cor:IntroIntegralOperators} and \ref{Cor:IntroRationalOperatorsI}. In \S \ref{Sec:NearlyOC}, we prove Theorem \ref{Thm:IntroNOCmainThm} and in \S \ref{Sec:PAdicLFunctions}, we speculate about  applications to $p$-adic $L$-functions in several examples. In Appendix \ref{appendix.locally-convex-inductive-limits} we discuss some subtleties of the interaction of locally convex inductive limits with condensed mathematics.

\subsection{Acknowledgements} We would like to thank Fabrizio Andreatta, Ana Caraiani, Sean Cotner, Ellen Eischen, Michael Harris, Lorenzo La Porta, Elena Mantovan, James Newton and Karl Schwede for helpful discussions and correspondence. We are particularly grateful to Michael Harris for his consistent encouragement, and for his stimulating questions surrounding Theorem \ref{Thm:IntroInfiniteLevelTheorem}. Large parts of this paper were written while the authors were visiting the University of Oxford and the Max-Planck-Institut für Mathematik (Bonn), and we thank these institutions for their hospitality. For the purpose of Open Access, the authors have applied a CC BY public copyright licence to any Author Accepted Manuscript (AAM) version arising from this submission. 

\subsection{Notation and conventions}\label{sss.cocharacter-conv}

For $\lambda$ a fractional cocharacter of a reductive group scheme $\mathcal{G}$ over a DVR, we write $\mathcal{P}_{\lambda}$ for the associated parabolic subgroup of $\mathcal{G}$, i.e., the stabilizer of the decreasing filtration $\Fil_\lambda^\bullet$ by weight spaces for $\lambda$ (for any representation $\mc{V}$ of $\mc{G}$, the filtered part $\Fil^t_{\lambda} \mc{V}$ is the sum of weight spaces where $\lambda$ acts through weight $t'\geq t$). We write  $\mathcal{M}_{\lambda}$ for the centralizer of $\lambda$, which is a Levi subgroup of $\mc{P}_{\lambda}$, and $\mathcal{U}_{\lambda}$ for the unipotent radical of $\mc{P}_{\lambda}$. We write $\mf{g},\mf{p}_{\lambda}$, $\mf{m}_{\lambda}$, and $\mf{u}_{\lambda}$, respectively, for the Lie algebras of $\mc{G},\mc{P}_\lambda$, $\mc{M}_\lambda$, and $\mc{U}_\lambda$. We will also often write $G,P_{\lambda}, M_{\lambda}$ and $U_{\lambda}$ for the respective generic fibers of $\mc{G},\mc{P}_\lambda$, $\mc{M}_\lambda$, and $\mc{U}_\lambda$.

We will denote Shimura data and their reflex fields by sans serif fonts, for example $\gx$ for a Shimura datum and $\mathsf{E}$ for its reflex field. For a prime $p$ and a place $v$ of $\mathsf{E}$ above $p$, we will write $E$ for the completion of $\mathsf{E}$ at $v$ with ring of integers $\mathcal{O}_E$ and residue field $k_E$. We will then write $G$ for the base change of $\mathsf{G}$ to $\qp$, and $\mathcal{G}$ for a reductive group scheme over $\zp$ with generic fiber $G$. By \cite[Proposition 3.14]{DanielsYoucis}), there is an essentially unique reductive group scheme $\mc{G}_{\zlocp}$ over $\zlocp$ with generic fiber $\g$ such that $\mathcal{G} = \mc{G}_{\zlocp} \otimes_{\zlocp} \zp$.

Our (conjugacy class of) Hodge cocharacters, normalized as in \cite[Section 1.3.1]{KisinPoints}, is denoted by $[\mu]$ and is defined over $\mathsf{E}$. Our normalization of the Hodge cocharacter is such that the descending Hodge filtration on de Rham homology is a $\mu^{-1}$-filtration as above for some $\mu \in [\mu]$. Under this convention, the de Rham homology of the universal abelian variety with its descending Hodge filtration defines a (left) $P_{\mu^{-1}}$-torsor over the Shimura variety. We consider $[\mu]$ as a $G(\qpbar)$-conjugacy class using the place $v$ of $\mathsf{E}$. We use $\mathrm{Gr}_G$ to denote the $B_\mathrm{dR}^+$-affine Grassmannian, with its stratification by Schubert cells as defined in \cite[Definition 19.2.2]{ScholzeWeinsteinBerkeley}. Under our conventions, the Shimura variety over $E$ with infinite level at $p$ has a Hodge--Tate period map with target in $\mathrm{Gr}_{G,{\mu^{-1}}}$. Moreover, the Białynicki-Birula map induces an identification $\grgmu \to \operatorname{Fl}_{G,[\mu]}^{\diamondsuit}$, where $\operatorname{Fl}_{G,[\mu]}$ is the variety of filtrations (or parabolic subgroups) of type $[\mu]$. 

We  define Newton cocharacters as usual in the theory of isocrystals. Thus, for $\nu$ the Newton cocharacter, any associated slope filtration (e.g., on a deformation of a $\mu$-ordinary $p$-divisible group or the Harder--Narasimhan filtration for an associated vector bundle on the Fargues--Fontaine curve) is a $\nu^{-1}$-filtration. 

\begin{Eg}
To help fix these conventions in mind, we note that a Hodge cocharacter for the Shimura datum defining the modular curve can be taken as 
\[ \mu: \mathbb{G}_m \rightarrow \mathrm{GL}_2,\; z \mapsto \begin{bmatrix} z & 0 \\ 0 & 1\end{bmatrix}.\]
For $E$ an elliptic curve we may view the standard representation of $\mathrm{GL}_2$ as the de Rham homology $H_{1,\dR}$ of $E$. The upper left $z$ in the matrix corresponds to $\Lie E$, i.e., the $(-1,0)$ part isomorphic to the non-trivial quotient $\gr^{-1}H_{1,\dR}$ for the Hodge filtration, and the lower right $1$ in the matrix corresponds to $\omega_{E^\vee}$, i.e., the $(0,-1)$ part that is the non-trivial submodule $\Fil^0H_{1,\dR}$ for the Hodge filtration --- in other words, the Hodge filtration on $H_{1,\dR}$ is the filtration $\Fil^\bullet_{\mu^{-1}}$ as defined above.  

The covariant (i.e., homological) isocrystal of an ordinary elliptic curve $E_0$ over $\overline{\mathbb{F}}_p$ is isomorphic to $(\qpbr^2, \mu^{-1}(p) \circ \sigma)$, for $\sigma$ the lift of Frobenius on coordinates, so that its Newton cocharacter is  $\nu=\mu^{-1}$. In particular, the associated slope filtration is a $\nu^{-1}=\mu$ filtration. At the level of $p$-divisible groups, this says that the multiplicative formal group appearing naturally as a sub-$p$-divisible group in any lift $E$ of $E_0$ corresponds to the upper left entry $z$ as above. This is compatible with the Lie algebra in the Hodge decomposition appearing there too --- in other words, the slope filtration in this case induces the splitting of the Hodge filtration on de Rham homology dual to the unit root splitting on de Rham cohomology. 
\end{Eg}


\section{Preliminaries} \label{Sec:Preliminaries} In this section we will collect and prove preliminary results about $p$-divisible groups. We will also recall some perfectoid geometry.

\subsection{Some perfectoid geometry} In this section we introduce some standard notation in perfectoid geometry; we refer the reader to \cite{ScholzeWeinsteinBerkeley}, \cite{FarguesScholze} and \cite[Section 2.1]{PappasRapoportShtukas} for more details. Let us write $\perf$ for the category of affinoid perfectoid spaces over $\fp$ equipped with the v-topology.

\subsubsection{Formal schemes} We follow the conventions of formal schemes over $\spf \zp$ of \cite[Section~2.2]{ScholzeWeinstein}. In particular, we think of formal schemes over $\spf \zp$ as functors on the category $\nilp$ of $\zp$-algebras in which $p$ is nilpotent, see \cite[Section~2.2]{ScholzeWeinstein}. Moreover, there is a fully faithful functor $\mathcal{X} \mapsto \mathcal{X}^{\mathrm{ad}}$ from the category of formal schemes over $\spf \zp$ to the category of pre-adic spaces over $\spa \zp$, see \cite[Proposition 2.2.1]{ScholzeWeinstein}.

\subsubsection{} If $X$ is a pre-adic space over $\spa(\zp,\zp)$ we let $X^\diamondsuit$ denote the functor on $\perf$ described in \cite[Lemma 18.1.1]{ScholzeWeinsteinBerkeley}; note that it commutes with fiber products. For a Huber pair $(A,A^+)$ we write $\spd(A,A^+)$ instead of $\spa(A,A^+)^\diamondsuit$, and we abbreviate it as $\spd(A)$ when $A^+$ is equal to the subring $A^\circ$ of power bounded elements. For analytic adic spaces over $\spa(\qp,\zp)$, for example for rigid spaces over a complete extension of $\qp$, the functor $X \mapsto X^\diamondsuit/\spd \mathbb{Q}_p$ is often fully faithful, see e.g. \cite[Lemma 3.1.12]{FourierPaper}. 

\subsubsection{} For a formal scheme $\mathfrak{X}$ over $\spf(\zp)$, we write $\mathfrak{X}^\diamondsuit$ for $(\mathfrak{X}^\mathrm{ad})^\diamondsuit$. If $\mathfrak{X}^\mathrm{ad}_{\eta} = \mathfrak{X}^\mathrm{ad} \times_{\spa(\zp,\zp)} \spa(\qp,\zp)$ denotes the adic generic fiber, then
\begin{align}
    (\mathfrak{X}^\mathrm{ad}_{\eta})^{\diamondsuit} = (\mathfrak{X})^{\diamondsuit} \times_{\spd \zp} \spd \qp,
\end{align}
we will often implicitly use this. For a $\zp$-scheme $X$, we consider $X^\diamond \simeq (\widehat{X})^\diamondsuit$, where $\widehat{X}$ denotes the formal scheme over $\spf(\zp)$ given by the $p$-adic completion of $X$. 

\subsubsection{} For $(R,R^+) \in \perf$, recall from \cite[Proposition II.1.16]{FarguesScholze} the relative Fargues--Fontaine curve $X_{(R,R^+)}/\spa (\mathbb{Q}_p, \mathbb{Z}_p)$. 

\subsubsection{} \label{subsub:BunGI}Let $G$ be a reductive group over $\qp$. Following \cite{FarguesScholze}, we denote by $\bun_G(R,R^+)$ the groupoid of $G$-torsors on $X_{(R,R^+)}$. By \cite[Theorem~III.0.2]{FarguesScholze}, the presheaf of groupoids $\bun_G$ on $\perf$ sending $(R,R^+)$ to $\bun_G(R,R^+)$ is a stack for the v-topology. 

\subsubsection{} \label{subsub:BunGII} For a choice of algebraic closure $\ovfp$ of $\fp$ we set $\zpbr=W(\ovfp)$ and $\qpbr=W(\ovfp)[\tfrac{1}{p}]$. Let $\sigma$ be the automorphism of $\qpbr$ induced by the absolute Frobenius on $\ovfp$. Let $B(G)$ be the set of $\sigma$-conjugacy classes in $G(\qpbr)$. If $[\mu]$ is a $G(\qpbar)$-conjugacy class of minuscule cocharacters, we let $\bgmu \subset B(G)$ be the subset of $[\mu^{-1}]$-admissible elements, as defined in \cite[Section 1.1.5]{KMPS}. If $k$ is a perfect field and $b \in G(W(k)[\tfrac{1}{p}])$, then there is an induced morphism $\spd k \xrightarrow{b} \bun_{G,k}$, and we let $\widetilde{G}_b$ denote the automorphism functor of $b$ over $\spd k$.

\subsection{Preliminaries on \texorpdfstring{$p$}{p}-divisible groups} Recall that $\nilp$ denotes the category of $\zp$-algebras $R$ such that $p$ is nilpotent in $R$.

\subsubsection{Tate modules and universal covers}
For $R \in \nilp$ and $\mbx$ a $p$-divisible group  over $R$, we will write $T_p \mbx$ for the $p$-adic Tate module of $\mbx$ defined by
\begin{align}
    T_p \mbx:=\varprojlim_n \mbx[p^n].
\end{align}
It is representable by a flat affine scheme over $\spec R$ by \cite[Prop. 3.3.1]{ScholzeWeinstein}. We moreover write $\tildex$ for the universal cover of $\mbx$, defined as 
\begin{align}
    \tildex:=\varprojlim_{[p]} \mbx.
\end{align}
If $\mbx$ is isogenous to the product of an \'etale $p$-divisible group and a connected $p$-divisible group, then $\tildex$ is representable by a formal scheme by \cite[Prop. 3.1.3.(iii)]{ScholzeWeinstein}. It is explained in the proof of \cite[Proposition 4.1.2]{CaraianiScholze} that there is a short exact sequence of fpqc sheaves 
\begin{align}
    0 \to T_p \mbx \to \tildex \to \mbx \to 0
\end{align}
where the map $\tildex \to \mbx$ is projection onto the first coordinate. 

\begin{Lem}\label{lemma.cover-and-tate-module-rep-properties}
Let $k \in \nilp$ be a perfect field. Let $\mbx_0$ be a $p$-divisible group over $k$, and let $\tildex_0$ be its universal cover. Then $\mbx_0=\mbx^\circ_0 \times \mbx^{\et}_0$, where $\mbx^\circ_0$ is connected and $\mbx^{\et}_0$ is \'{e}tale. For any choice for coordinates $x_1, \ldots, x_d$ on $\mbx^\circ_0$, so that $\mbx^\circ_0=\Spf k[[x_1,\ldots,x_d]]$, we have that $\tildex^\circ_0=\Spf k[[x_1^{1/p^\infty},\ldots, x_d^{1/p^\infty}]]$. In particular:
\begin{enumerate}
\item $T_p \mbx_0=\spec \left(\mathcal{O}(T_p \mbx^{\et}) [x_1^{1/p^\infty},\ldots, x_d^{1/p^\infty}]/(x_1, \ldots, x_d) \right)$, 
so that $T_p \mbx_0$ is represented by the quotient of a perfect ring by a regular sequence.
\item If $\mbx$ is a lift of $\mbx_0$ to $\Spf W(k)$, then $T_p \mbx$ is represented by the formal spectrum of a $p$-adically complete $W(k)$-algebra $\mathcal{O}(T_p \mbx)$ with $\mathcal{O}(T_p \mbx)/p$ the quotient of a perfect ring by a regular sequence.
\item If $\mbx$ is a lift of $\mbx_0$ to $\Spf W(k)$, then its connected part $\mbx^\circ$ is a lift of $\mbx_0^\circ$, and, for coordinates $y_1, \ldots, y_d$ on $\mbx^\circ$ so that 
$\mbx^\circ=\Spf(W(k)[[y_1, \ldots, y_d]])$, 
\[ \widetilde{\mbx^\circ}=\Spf W(k)[[x_1^{1/p^\infty},\ldots, x_d^{1/p^\infty}]] \]
where the $x_i$ are elements of $\mathcal{O}(\widetilde{\mbx^\circ})$ such that $x_i \equiv y_i \mod p$.
\end{enumerate}
\end{Lem}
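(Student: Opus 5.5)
The plan is to reduce everything to explicit computations with the Frobenius on coordinates. The connected–étale splitting $\mbx_0=\mbx_0^\circ\times\mbx_0^{\et}$ over a perfect field is standard: the connected–étale sequence splits because the reduced subscheme of $\mbx_0[p^n]$ maps isomorphically onto the étale quotient when $k$ is perfect. Since universal covers and Tate modules commute with products, we may treat the two factors separately. For the étale part, $[p]$ is an automorphism of $\widetilde{\mbx}_0^{\et}$ up to the Tate module, and $T_p\mbx_0^{\et}$ is pro-étale, so $\mathcal{O}(T_p\mbx^{\et})$ is a filtered colimit of finite étale $k$-algebras, hence perfect.

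For the connected part, the key observation is that in characteristic $p$, multiplication by $p$ on $\mbx_0^\circ$ factors as $V\circ F$, where $F$ is the relative Frobenius. The Verschiebung $V$ is an isogeny, so up to isogeny the inverse limit along $[p]$ agrees with the inverse limit along Frobenius. More precisely, I would use that $\varprojlim_{[p]}$ and $\varprojlim_{F}$ are both the universal cover: this is the fact \cite[Prop.~3.1.3]{ScholzeWeinstein} that $\tildex$ depends only on $\mbx$ up to isogeny, combined with the fact that the inverse limit is cofinal in either system (interleave $[p]$ with $F$ and $V$). The inverse limit of $\Spf k[[x_1,\dots,x_d]]$ along the absolute Frobenius $x_i\mapsto x_i^p$ is $\Spf k[[x_i^{1/p^\infty}]]$ (completed), using perfectness of $k$ to identify relative and absolute Frobenius up to the automorphism $\sigma$ of $k$. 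This gives $\tildex^\circ_0$.

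Part (1) then follows from $T_p\mbx_0^\circ=\tildex^\circ_0\times_{\mbx_0^\circ}\{0\}$, i.e. the fiber of the projection $\tildex\to\mbx$ over the identity section. Under the identification above, the projection is given by $x_i\mapsto x_i$ (first coordinate), so the fiber is cut out by $(x_1,\dots,x_d)$. Since $x_1,\dots,x_d$ is regular in the perfect ring $k[x_i^{1/p^\infty}]$, and the completion is irrelevant modulo the $x_i$, we get the displayed formula, taking the product with the étale part.

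For (2) and (3), the lift $\mbx$ over $W(k)$ has connected part $\mbx^\circ$ lifting $\mbx_0^\circ$ by rigidity of étale parts. We also use the crystalline nature of the universal cover \cite[Prop.~3.1.3(ii)]{ScholzeWeinstein}: $\widetilde{\mbx^\circ}(R)=\tildex_0^\circ(R/p)$ for $p$-nilpotent $R$. Hence $\widetilde{\mbx^\circ}$ is the canonical (Witt vector type) lift of the perfect formal scheme $\Spf k[[x_i^{1/p^\infty}]]$, namely $\Spf W(k)[[x_i^{1/p^\infty}]]$ with $x_i=[x_i]$ Teichmüller lifts. That $x_i\equiv y_i$ mod $p$ comes from reducing the projection to $\mbx^\circ$. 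Then $\mathcal{O}(T_p\mbx)$ is the $p$-adically complete fiber over the identity section, and its reduction mod $p$ is $\mathcal{O}(T_p\mbx_0)$ by flatness, using (1).

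The main obstacle is justifying the Frobenius-versus-$[p]$ cofinality argument rigorously at the level of formal schemes and completions. I would handle this by citing the Scholze–Weinstein isogeny invariance directly rather than redoing the cofinality argument.
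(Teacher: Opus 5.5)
Your proposal is correct and follows the paper's route: the paper cites \cite[Proposition 3.1.3-(iii)]{ScholzeWeinstein} for the presentation $\tildex^\circ_0=\Spf k[[x_1^{1/p^\infty},\ldots,x_d^{1/p^\infty}]]$ and reads off (1)--(3) as you do, via the fiber of $\tildex\to\mbx$ over the identity section, the crystalline nature of the universal cover, and reduction mod $p$. One caveat: isogeny invariance alone does not identify $\varprojlim_{[p]}\mbx_0^\circ$ with the Frobenius limit, since for an \'etale group the Frobenius limit is the group itself. The identification also uses connectedness, namely that some power of Frobenius kills $\mbx_0^\circ[p]$, so it is cleaner to cite 3.1.3(iii) directly, which already gives the presentation, as the paper does.
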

\begin{proof}
    The presentation $\tildex^\circ_0=\Spf k[[x_1^{1/p^\infty},\ldots, x_d^{1/p^\infty}]]$ follows from  \cite[Proposition 3.1.3-(iii)]{ScholzeWeinstein}, and the claims (1)-(3) follow from this result.
\end{proof}

\subsubsection{} For $R \in \nilp$ and $\mbx$ a $p$-divisible group  over $R$, we write $E\mbx$ for the universal vector extension of $\mbx$, and $D(\mbx)=\Lie E\mbx$. If $R$ is $p$-adically complete and $\mbx$ is a $p$-divisible group over $R$, we write $D(\mbx)=\varprojlim_n D(\mbx_{R/p^n})$. 

\subsubsection{}For $R \in \nilp$ and $A$ an abelian variety over $R$, we write $EA$ for the universal vector extension of $A$, and $D(A)=\Lie EA$. If $R$ is $p$-adically complete and $A$ is an abelian variety over $\Spf R$, we write $D(A)=\varprojlim_n D(A_{R/p^n})$. In both cases, the map $A[p^\infty] \rightarrow A$ induces a canonical identification $D(A[p^\infty])=D(A)$. 

\subsubsection{}\label{GM} Let $R \in \nilp$, and recall the big fppf crystalline site $\operatorname{CRIS}(\spec R)=\operatorname{CRIS}(\spec R/\Sigma)$, see \cite[Section 2.1.1]{DeJongDieudonne} or \cite[Section 1.1]{BBM}. Here $\Sigma=(\spec \zp, p \zp, \gamma)$, where $\gamma$ is the canonical divided power structure on $p \zp$. We note that $p$ is locally nilpotent on $T'$ for any $T/\Spec R \rightarrow T'/\Spec \mathbb{Z}_p$ in $\operatorname{CRIS}(\spec R)$. For $\Spec S / \Spec R$ and $S' \twoheadrightarrow S$ an arbitrary $p$-complete divided powers thickening, we can evaluate any crystal $\mathbb{M}$ by
\begin{multline} \mathbb{M}((\Spec S / \Spec R, \Spec S \hookrightarrow \Spec S')):=\\\varprojlim_n \mathbb{M}(\Spec (S/p^n) / \Spec R \hookrightarrow \Spec (S'/p^n) / \Spec \mathbb{Z}_p). \end{multline}
We will write this evaluation as $\mathbb{M}(S')$ when the other data is clear.

\subsubsection{} For $R \in \nilp$ and a $p$-divisible group $\mbx$ over $\spec R$, as in \cite[\S3.2]{ScholzeWeinstein}, there is a crystal $\mathbb{M}(\mbx)$ on $\operatorname{CRIS}(\spec R)$ whose values on affine $\Spec S/\Spec R \hookrightarrow \Spec S'/\Spec \mathbb{Z}_p$ can be computed as $D(\mathbb{X}_{S'})$ for $\mathbb{X}_{S'}$ any lift of $\mathbb{X}_S$ to ${S'}$.

\newcommand{\QPRS}{{\mathrm{QPRS}}}
\subsubsection{}\label{sss.qprs-dieudonne} We write $\QPRS$\footnote{A quotient of a perfect ring by a regular sequence is, in particular, quasi-regular semi-perfectoid in the sense of \cite[Definition 1.1.4]{AnschutzLeBras}. We use the smaller class $\QPRS$ because already the results on explicit Dieudonn\'{e} theory for $\QPRS$ rings as established in \cite{ScholzeWeinstein} will suffice for our purposes.} for the full sub-category of $\mathbb{F}_p$-algebras whose objects are those $\mathbb{F}_p$-algebras that are isomorphic to a quotient of a perfect $\mathbb{F}_p$-algebra by a regular sequence. For $R \in \QPRS$, there is an initial $p$-adically complete divided powers thickening $A_{\mathrm{cris}}(R) \twoheadrightarrow R$ see \cite[Proposition 4.1.3]{ScholzeWeinstein}; we let $B^+_{\cris}(R)=A_{\cris}(R)[\frac{1}{p}]$. We write $\sigma:A_{\mathrm{cris}}(R) \to A_{\mathrm{cris}}(R)$ for the map induced by the absolute Frobenius on $R$. 

Recall that a \emph{covariant Dieudonn\'e module} over $R$ is a pair $(M^+, \varphi_{M^+})$, where $M^+$ is a finite locally free $A_{\mathrm{cris}}(R)$-module and where
\begin{align}
    \varphi_{M^+}:  \sigma^{\ast} M^+[\tfrac 1p] \to M^+[\tfrac 1p]
\end{align}
is an isomorphism such that  \begin{align}
    M^+ \subseteq \varphi_{M^+} (\sigma^{\ast}M^+) \subseteq \tfrac{1}{p} M^+.
\end{align}
Given a $p$-divisible group $\mbx$ over $R$, we let 
\[ \mathbb{D}(\mbx)=\mathbb{M}(\mbx)(A_{\mathrm{cris}}(R)), \]
equipped with its Dieudonn\'{e} module structure as in  \cite[\S4.1]{ScholzeWeinstein} (here our $\varphi$ is the map $\frac{F}{p}$ for $F$ as in \cite[\S4.1]{ScholzeWeinstein}). By \cite[Theorem A]{ScholzeWeinstein} (cf. \cite{AnschutzLeBras} for a strengthening), the functor $\mbx \to \mathbb{D}(\mbx)$ from $p$-divisible groups over $R$ to covariant Dieudonn\'e modules over $R$ is fully faithful, and the functor $\mbx \to \mathbb{D}(\mbx)[\tfrac{1}{p}]$ from the isogeny category of $p$-divisible groups over $R$ to $\varphi$-modules over $B^+_\cris(R)$ is fully faithful. 

\begin{Rem} \label{Rem:NormalizationsDieudonne}
 Our conventions agree with those of \cite{CaraianiScholze}. In particular, the Dieudonn\'e module of $\qp/\mathbb{Z}_p$ is $A_{\mathrm{cris}}(R)$ equipped with the Frobenius given by $\sigma$, and the Dieudonn\'e module of $\mu_{p^{\infty}}$ is $A_{\mathrm{cris}}(R)$ equipped with the Frobenius given by $\tfrac{\sigma}{p}$. This also means that the contravariant Dieudonn\'e module is isomorphic to the dual of the covariant Dieudonn\'e module, seen as an $F$-crystal (see \cite[footnote on page 692]{CaraianiScholze}).
\end{Rem}

\subsubsection{}\label{ss.D-maps} Suppose $R$ is $p$-adically complete and and $R/p \in \QPRS$. Then, for $\mathbb{X}_1$ and $\mathbb{X}_2$ $p$-divisible groups over $R$, maps $\alpha:\tildex_1 \rightarrow \tilde{\mathbb{X}}_2$ are the same as quasi-isogenies $\mathbb{X}_{1,R/p} \rightarrow \mathbb{X}_{2,R/p}$, and we write 
\[ D(\alpha): D(\mbx_1)[\tfrac{1}{p}] \rightarrow D(\mbx_2)[\tfrac{1}{p}]\]
for the map induced by base change along $B^+_\cris(R/p) \rightarrow R[\tfrac{1}{p}]$ of the map \[\mathbb{D}(\alpha): \mathbb{D}(\mbx_{1,R/p})[\tfrac{1}{p}]\rightarrow \mathbb{D}(\mbx_{2,R/p})[\tfrac{1}{p}]\]
induced by the associated quasi-isogeny mod $p$. We note that, if $\alpha$ comes from an isomorphism $\mathbb{X}_{1,R/J} \xrightarrow{\sim} \mathbb{X}_{2,R/J}$ for some divided powers ideal $J$, then $D(\alpha)$ agrees, after inverting $p$, with the integral map $D(\mbx_1) \rightarrow D(\mbx_2)$ induced by the crystalline connection, i.e., the crystal structure of $\mathbb{M}(\mbx_{1,R/J}) \simeq \mathbb{M}(\mbx_{2,R/J})$ (since $\mathbb{D}(\alpha)$ is induced by the same crystal structure). In particular, in this setting, if $R$ is also $p$-torsion free then $D(\alpha)$ is integral and equals this map.

\subsubsection{}\label{sss.gm-theory}

Let $R$ be $p$-adically complete. For $\mbb{X}$ a $p$-divisible group over $R$ and $\mbb{X}_0:=\mbx_{R/p}$, we have $\mathbb{M}(\mbx_0)(R)=D(\mbb{X})=\Lie E\mbx$. Writing $V_{\mbx}$ for the kernel of $E\mbx \rightarrow \mbx$ and similarly for $V_0$, we obtain a functor 
\[ \mbx \mapsto (\mbb{X}_0, V_\mbx \subseteq \mathbb{M}(\mbx_0)(R) ) \]
from $p$-divisible groups over $R$ to pairs consisting of a $p$-divisible group over $R/p$ and a lift of the Hodge filtration $V_0 \subseteq \mathbb{M}(\mbx_0)(R/p)$ to $V \subseteq \mathbb{M}(\mbx_0)(R)$. By Grothendieck--Messing theory, see \cite[Theorem 1.6 on p. 151]{Messing-TheCrystalsAssociatedToBarsottiTateGroups}, this is an equivalence. 

\subsubsection{} Suppose now that $R/\mathbb{Z}_p$ is $p$-adically complete and $R/p \in \QPRS$. Combining the Dieudonn\'{e} theory over $R/p$ with Grothendieck--Messing theory, we obtain:
\begin{Prop}\label{prop.filtered-dieudonne-theory}
    The functor 
    \[ (\mathbb{D}, \Fil^0): \mbb{X} \mapsto (\mathbb{D}(\mbb{X}_0), V_{\mbb{X}} \subseteq \Lie E\mbb{X}=\mathbb{M}(\mbb{X}_0)(R)=\mathbb{D}(\mbb{X}_0) \otimes_{A_\cris(R/p)} R) \] 
is fully faithful from the category of $p$-divisible groups over $R$ to the category of pairs $(M, V)$ consisting of a Dieudonn\'{e} module over $R/p$ and a locally free direct summand $V=\Fil^0(M \otimes_{A_\cris(R/p)} R)$, and there is a functorial identification 
\[ \Lie \mbb{X} = \gr^{-1}(\mathbb{D}(\mbb{X}_0) \otimes_{A_\cris(R/p)} R)= (\mathbb{D}(\mbb{X}_0) \otimes_{A_\cris(R/p)} R)/V_{\mathbb{X}}.\]
This functor is compatible with any base change $R \rightarrow R'$ for $R$ and $R'$ any $p$-complete rings with $R/p$ and $R'/p$ in $\QPRS$. Moreover, when $R/p=k$ is a perfect field so that $A_\cris(R/p)=W(k)$, the essential image of $(\mathbb{D}, V)$ consists of those $(M, V)$ such that $V_k$ is the kernel of the map
\[ M_{k} \rightarrow \sigma_*M_{k}  \]
obtained by reducing modulo $p$ the adjoint to $p\varphi: (\sigma^*M)_{W(k)} \rightarrow M_{W(k)}$. 
\end{Prop}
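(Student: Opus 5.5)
The plan is to factor the functor through the Grothendieck--Messing equivalence of \S\ref{sss.gm-theory}, which identifies $p$-divisible groups over $R$ with pairs $(\mbb{X}_0, V)$, where $V \subseteq \mathbb{M}(\mbb{X}_0)(R)$ lifts the Hodge filtration. The kernel $pR$ of $R \to R/p$ carries the canonical divided powers, since $R$ is a $p$-complete $\zp$-algebra and $\gamma$ on $p\zp$ extends; to be careful, I will apply Messing modulo $p^n$ and pass to the limit as in \S\ref{GM}. We then need to evaluate the crystal at $R$. Since $A_\cris(R/p)$ is the initial $p$-adically complete divided power thickening of $R/p$ (\cite[Proposition 4.1.3]{ScholzeWeinstein}), there is a unique PD morphism $A_\cris(R/p) \to R$ over $R/p$. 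The crystal property then gives $\mathbb{M}(\mbb{X}_0)(R) = \mathbb{D}(\mbb{X}_0)\otimes_{A_\cris(R/p)} R$, functorially in $\mbb{X}_0$.

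Full faithfulness follows in two steps. By \cite[Theorem A]{ScholzeWeinstein}, morphisms $\mbb{X}_0 \to \mbb{Y}_0$ correspond bijectively to morphisms of Dieudonn\'e modules $\mathbb{D}(\mbb{X}_0)\to\mathbb{D}(\mbb{Y}_0)$. By Grothendieck--Messing, a morphism over $R/p$ lifts, necessarily uniquely, to $R$ exactly when the induced map on $\mathbb{M}(-)(R)$ carries $V_{\mbb X}$ into $V_{\mbb Y}$. Combining the two gives the claimed bijection on Hom sets. The identification $\Lie\mbb{X} = \Lie E\mbb{X}/V_{\mbb{X}}$ comes from the defining exact sequence $0\to V_{\mbb{X}}\to E\mbb{X}\to\mbb{X}\to 0$ on Lie algebras. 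Base change compatibility holds because $A_\cris$ is functorial for maps in $\QPRS$, the PD map to $R$ is unique, and both the crystal and Messing theory commute with base change.

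For the essential image when $R/p=k$ is perfect, we have $A_\cris(k)=W(k)=R$. By full faithfulness plus Messing, a pair $(M,V)$ is in the image iff $M \cong \mathbb{D}(\mbb{X}_0)$ for some $\mbb{X}_0$ and $V$ lifts the Hodge filtration of $\mbb{X}_0$. Classical Dieudonn\'e theory over a perfect field (essential surjectivity, from \cite[Theorem A]{ScholzeWeinstein} or the classical result) handles the first condition, since the lattice condition in the definition of a Dieudonn\'e module is exactly what classical theory requires. The remaining task is to identify the Hodge filtration $V_0 \subseteq M_k$ with $\ker(M_k \to \sigma_*M_k)$, the map adjoint to $p\varphi$ modulo $p$.

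This normalization check is the step I expect to be the main obstacle. With the covariant conventions of Remark \ref{Rem:NormalizationsDieudonne}, $p\varphi = F$ is the Frobenius of \cite[\S4.1]{ScholzeWeinstein}. The standard fact is $\ker(F \bmod p) = \operatorname{im}(V \bmod p)$, and this image equals the Hodge filtration $\omega_{\mbb{X}_0^\vee}\subseteq D(\mbb{X}_0)$. I would verify it on $\qp/\zp$ and $\mu_{p^\infty}$ using the Remark, where $F=p\sigma$ resp. $\sigma$, so the kernels mod $p$ are everything resp. $0$. These match $V=\Fil^0$ being all of $D$ for étale groups and $0$ for $\mu_{p^\infty}$, since $\Lie\mu_{p^\infty}$ has rank one. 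The general case then reduces to the general comparison of the crystalline Hodge filtration with $\ker F$ (e.g. \cite{BBM}), keeping track of the duality between covariant and contravariant conventions.
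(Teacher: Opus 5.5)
Your proposal is correct and follows essentially the same route as the paper's proof. Full faithfulness comes from Scholze--Weinstein's Theorem A over $R/p$ combined with Grothendieck--Messing, and base change from functoriality of the crystal and of the universal vector extension. The essential image over a perfect field comes from classical Dieudonn\'e theory, which identifies the Hodge filtration with $\ker(p\varphi \bmod p)$. Your checks on $\mathbb{Q}_p/\mathbb{Z}_p$ and $\mu_{p^\infty}$ confirm that this kernel, not the image, is the right normalization under the paper's covariant conventions.
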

\begin{proof}
The full faithfulness is immediate from the discussion in \S\ref{sss.qprs-dieudonne} and \S\ref{sss.gm-theory}, and the compatibility with base change comes from the compatibility of $p$-complete base change for the formation of the universal vector extension. Thus it remains only to verify the last statement, where it suffices to observe that classical Dieudonn\'{e} theory for perfect fields implies this kernel is equal to the Hodge filtration and that any Dieudonn\'{e} module arises from a $p$-divisible group. 
\end{proof}
When $k$ is a perfect field and $M$ is Dieudonn\'e module over $k$, we will refer to locally free direct summands $\operatorname{Fil}^0 M \subset M$ as in Proposition \ref{prop.filtered-dieudonne-theory} as \emph{admissible filtrations} on $M$.

\subsubsection{}\label{sss.abelian-variety-compatibility}
Suppose $R \in \nilp$ and $A/R$ is an abelian variety. Then, by \cite[\S4, p.65, \S13]{MazurMessing}, the dual $\mathbb{M}(A[p^\infty])^*$ is canonically identified with the relative degree $1$ crystalline cohomology $\mathbb{M}(A)^*$ of $A/R$, and this matches the Hodge filtrations on $\mathbb{M}(A)(R)^*$ and $\mathbb{M}(A[p^\infty])(R)^*$ (as both can be computed using the Lie algebra of the universal vector extension). Moreover, by \cite[Proposition V.3.6.4 on p.359]{Berthelot-CohomologieCristallineDesSchemeas}, the Gauss--Manin connection and Hodge filtration on the relative de Rham cohomology $H^{1}_{\dr}(A/R)$ are canonically identified with the crystalline connection and Hodge filtration on $\mathbb{M}(A)(R)^*$. Taking duals everywhere, we find that the de Rham homology $\mathcal{H}_{1}^{\dr}(A/R)$ and $\mathbb{M}(A[p^\infty])(R)$ are canonically identified, matching the crystalline and Gauss--Manin connections and matching their Hodge filtrations.


\section{Shimura varieties and Igusa varieties} \label{Sec:IntegralModels} In \S \ref{Sub:IntegralModels} we recall some of the theory of canonical integral models of Shimura varieties of Hodge type. In particular, we define the canonical torsor and its Hodge period map, and in \S \ref{sub:canonical-P-torsor} we study the differential of the Hodge period map. In \S \ref{sub:AlgebraicMaassShimura} we then give a construction of algebraic Maass--Shimura operators on spaces of integral nearly holomorphic automorphic forms, and compare these to analytic Maass--Shimura operators over $\mathbb{C}$. In \S \ref{Sub:IgusaI} we recall the Igusa varieties of \cite{HamacherKim} and some results of \cite{DAddeziovH}. In \S \ref{sub:ComparisonIgusa}, we compare the Igusa varieties of \cite{HamacherKim} with other Igusa varieties in the literature. After that, we specialize to the setting of $\mu$-ordinary Igusa varieties in \S \ref{sub:MuOrdinaryIgusa}, and study their formal lifts and adic generic fibers in \S \ref{sub:InfiniteLevel}.

\subsection{Integral models and the canonical \texorpdfstring{$\mathcal{G}$}{G}-torsor} \label{Sub:IntegralModels} 

\subsubsection{} For a symplectic space $(V, \psi)$ over $\mathbb{Q}$, we write $\g_V \coloneqq \operatorname{GSp}(V, \psi)$ for the group of symplectic similitudes of $V$ over $\mathbb{Q}$. It admits a Shimura datum $\gvx$, where $\mathsf{H}_V$ is the union of the Siegel upper and lower half-spaces, interpreted as a conjugacy class of weight $-1$ homomorphisms from the Deligne torus as in \cite[(1.3.2)]{KisinPoints}. 

Let $\gx$ be a Shimura datum of Hodge type with reflex field $\mathsf{E}$. Fix a prime $p$ and a place $v$ above $p$ of the reflex field $\mathsf{E}$. Write $G:=\g \otimes \qp$, and $E:=\mathsf{E}_v$ for the $v$-adic completion of $\mathsf{E}$, and $\mathcal{O}_E$ for its ring of integers. Let $K_p \subseteq G(\qp)$ be a hyperspecial subgroup. We fix a choice of a Hodge embedding $\gx \to \gvx$ and a self-dual $\zlocp$-lattice $\Lambda_{(p)} \subseteq V$ such that $K_p$ is the stabilizer in $G(\qp)$ of $\Lambda \coloneqq \Lambda_{(p)} \otimes_{ \zlocp} \zp$; such choices are possible as explained in \cite[Section 2.3.15]{KisinPappas}. Let $\mathcal{G}_{\zlocp}$ be the Zariski closure of $\g$ in $\operatorname{GL}(\Lambda_{(p)})$, a reductive group scheme over $\zlocp$, and write $\mathcal{G}:=\mathcal{G}_{\zlocp}\otimes_{\zlocp}\zp$ for the corresponding reductive integral model of $G$ over $\zp$.

\subsubsection{} Fix a neat compact open subgroup $K^p \subset \gafp$ and let $K=K^pK_p$. By \cite[Main theorem]{KisinModels} and \cite[Main Theorem]{MadapusiPeraKim}, there is a smooth canonical integral model $\scrs_{K}=\scrs_{K}\gx$ over $\oelocp$ of the Shimura variety $\mathbf{Sh}_K\gx$. By \cite[Theorem 1.1.1]{XuNormalization}, we can find a compact open subgroup $U^p \subset \gv(\afp)$ such that the natural map
\begin{align}
    \mathscr{S}_{K}\gx \to \mathscr{S}_{U}\gvx \otimes_{\zlocp} \mathcal{O}_{E,(v)}
\end{align}
is a closed immersion. The pullback of the universal abelian scheme (up to prime-to-$p$ isogeny) over $\mathscr{S}_{U}\gvx$ gives rise to an abelian scheme (up to prime-to-$p$ isogeny) $A$ over $\mathscr{S}_{K}\gx$ with associated $p$-divisible group $\mby=A[p^{\infty}]$. 

\subsubsection{} We will often write $\scrs_K = \scrs_K\gx$ if the Shimura datum $\gx$ is clear from context; the same goes for all the other objects that we will introduce below.

\subsubsection{Tensors} Let us write $\Lambda_{(p)}^\otimes$ for the tensor space of $\Lambda_{(p)}$, defined as the direct sum of $\Lambda_{(p)}^{\otimes n} \otimes (\Lambda_{(p)}^{\ast})^{\otimes m}$ for all pairs of integers $m, n \ge 0$. We will also use this notation later for modules over commutative rings, or over sheaves of rings. By \cite[Lemma 1.3.2]{KisinModels}, the subgroup $\mathcal{G}_{\zlocp} \subseteq \operatorname{GL}(\Lambda_{(p)})$ is the stabilizer of a collection of tensors $\{s_\alpha \in \Lambda_{(p)}^\otimes\}_{\alpha \in \mathscr{A}}$. 

\subsubsection{} By \cite[Corollary 2.3.9]{KisinModels}, there are natural tensors 
\[
\{s_{\alpha, \dr} \in H^{\dr}_1(A/\scrs_K\gx)^{\otimes}\}_{\alpha \in \mathscr{A}},
\]
which are horizontal for the Gauss--Manin connection $\nabla$. We now define a $\mathcal{G}_{\zlocp}$-torsor $\beta:\mathcal{G}_{\dr} \to \scrs_K$ which sends $x:T \to \scrs_K$ to the set of isomorphisms
\begin{align}
   x^{\ast}H^{\dr}_1(A/\scrs_K\gx) \xrightarrow{\sim} \Lambda_{(p),T}
\end{align}
taking $s_{\alpha, \mathrm{dR}}$ to $s_{\alpha} \otimes 1$ for all $\alpha \in \mathscr{A}$, see \cite[Proposition 4.5.6]{Lovering} for the fact that this is a $\mathcal{G}_{\zlocp}$-torsor. It is moreover explained in \cite[Section 4.5.7]{Lovering} (see \cite[Lemma 3.3.2]{Lovering}) that there is a $\mathcal{G}_{\zlocp}$-equivariant (and hence smooth) morphism
\begin{align}
    \pi_{\mathrm{Hdg}}:\mathcal{G}_{\dr} \to \operatorname{Fl}_{[\mu^{-1}]}.
\end{align}
Here $\operatorname{Fl}_{[\mu^{-1}]}$ is the natural integral model over $\oelocp$ of the flag variety over $E$ associated with the inverse of the conjugacy class of the Hodge cocharacter $[\mu]$, see \cite[Section 3.2]{Lovering}.

\begin{Rem} We note that Lovering uses de Rham cohomology instead of de Rham homology in \cite[Section 4.5.7]{Lovering}. Moreover, in his definition he takes the set of isomorphisms (in our notation) $\Lambda_{(p),T} \xrightarrow{\sim} H^1_{\dr}(A/\scrs_K\gx)  $ instead of $\Lambda_{(p),T}^{\ast} \xrightarrow{\sim} H^1_{\dr}(A/\scrs_K\gx) $. This is probably either an error or due to a differing convention about the weight of the Hodge structures in the Siegel Shimura datum (which is not actually specified in loc. cit.) from our work / Kisin's work on integral models. In any case, because we use the weight $-1$ Siegel Shimura datum, the representation $V$ of $\g$ should correspond to homological realizations rather than cohomological realizations. See e.g. \cite[Section 5.1.5]{KisinShinZhu} for a comprehensive reference. \end{Rem}

\subsection{The tangent bundle of the canonical \texorpdfstring{$\mathcal{P}$}{p}-torsor} \label{sub:canonical-P-torsor}

\subsubsection{}\label{sss.finite-extension-cocharacter-choice-existence} Choose a finite extension $\mathsf{E} \subset \mathsf{E}' \subset E$ such that $[\mu]$ has a representative defined over $\mathsf{E}'$; this is possible because $G=\g \otimes \qp$ is quasi-split. We fix such a representative, which we can moreover arrange (after possibly enlarging $\mathsf{E}'$) so that it extends to a morphism $\mu:\mathbb{G}_{m,\oeplocp} \to \mathcal{G}_{\oeplocp}$. We thus obtain a parabolic subgroup $\mathcal{P}_{\mu^{-1},\mathcal{O}_{\mathsf{E}',(v')}}$ as the stabilizer of the filtration $\operatorname{Fil}^{\bullet}_{\mu^{-1}}$ (see \S \ref{sss.cocharacter-conv} for our conventions). We note that we can view $\Fil^{\bullet}_{\mu^{-1}}$ as an element of $\Fl_{[\mu^{-1}]}(\oeplocp)$, and that, viewing $\Lambda_{\oeplocp}=\Lambda_{(p)} \otimes_{\zlocp} \oeplocp$ as a representation of $\mc{G}_{\oeplocp}$, it yields a filtration $\operatorname{Fil}^{\bullet}_{\mu^{-1}}\Lambda_{\oeplocp}$.

\subsubsection{} In this section and in \S \ref{sub:AlgebraicMaassShimura} only, we will implicitly basechange $\scrs_K$ to $\mathcal{O}_{\mathsf{E}',(v')}$. We will also omit the subscript $\mathcal{O}_{\mathsf{E}',(v')}$ on $\mathcal{P}_{\mu^{-1}}$ and $\mathcal{G}$. There is a natural $\mathcal{P}_{\mu^{-1}}$-torsor $\mathcal{P}_{\dr} \subset \mathcal{G}_{\dr}$ over $\scrs_K$ representing the following subfunctor: It sends $x:T \to \scrs_K$ to the set of isomorphisms $H^{\dr}_1(A/\scrs_K\gx) \xrightarrow{\sim} \Lambda_{(p),T}$ respecting the tensors and identifying $\operatorname{Fil}^{\bullet}_{\mathrm{Hdg},T}$ with $\operatorname{Fil}^{\bullet}_{\mu^{-1}}\Lambda_T$. In other words $\mathcal{P}_\dR$ is the fiber of $\pi_{\mathrm{Hdg}}$ over $\Fil^{\bullet}_{\mu^{-1}} \in \operatorname{Fl}_{[\mu^{-1}]}(\oeplocp)$. The space of \emph{$p$-integral nearly holomorphic forms of level $K$} is defined to be the space of global sections $\opn{H}^0(\mathscr{S}_K, \mathcal{O}_{\mathcal{P}_{\dR}/\mathscr{S}_K})$. 

\subsubsection{The Gauss--Manin connection} Writing $\mf{g}=\Lie \mathcal{G}_{\zlocp}$, we have a short exact sequence of tangent bundles (where we recall $\beta$ is the structure map of $\mathcal{G}_{\dr}$) 
\begin{equation} \label{Eq:ShortExactSequenceTangentBundles}
     0 \to \mf{g} \otimes \mathcal{O}_{\mathcal{G}_{\dR}} \to T_{\mathcal{G}_{\dR}} \xrightarrow{d\beta}\beta^{\ast} T_{\mathscr{S}_K} \to 0  .
\end{equation}
By the usual theory of the Atiyah bundle (see, e.g., \cite{Biswas.Atiyah} for a detailed description in the context of differential geometry), a connection on $\mathcal{G}_\dR$ is the same as a $\mathcal{G}$-equivariant splitting $s:\beta^{\ast} T_{\mathscr{S}_K} \rightarrow T_{\mathcal{G}_{\dR}}$, and the connection is integrable if and only if the splitting is a map of Lie algebras on the $\mathcal{G}$-invariant sections. This applies, in particular, for the Gauss--Manin connection on $\mathcal{G}_\dR$ and we write $s$ below for the associated splitting. For use in later arguments, we now recall the construction of $s$ in terms of the usual algebraic interpretation of the Gauss--Manin connection.

\subsubsection{} \label{subsub:Constructions} Let us denote $D(A) = \mathcal{H}_1^{\dR}(A/\mathscr{S}_K)$. We use the Gauss--Manin construction on $D(A)$ to construct $s$: Let $x_0 \colon S = \opn{Spec}R \to \mathcal{G}_{\dR}$ be a morphism, and let $\varphi_{x_0} \colon D(A_{x_0}) \xrightarrow{\sim}  \Lambda_{(p), R} $ be the corresponding tensor preserving trivialization. An element $v \in\beta^{\ast}T_{\mathscr{S}_K} (S)$ is the same thing as a morphism 
\[ v \colon S_{\varepsilon} := S \times_{\oeplocp} \opn{Spec}(\oeplocp[\varepsilon]/\varepsilon^2) \to \mathscr{S}_K\]
such that the composition $S \to S \times_{\oeplocp} \opn{Spec}(\oeplocp[\varepsilon]/\varepsilon^2) \xrightarrow{v} \mathscr{S}_K$ agrees with $\beta \circ x_0$. Let $A_v$ denote the abelian variety over $S_{\varepsilon}$ associated with $v$ and let $A_{x_0, S_{\varepsilon}}$ denote the base-change along the map $S_{\varepsilon} \to S$ induced from $\oeplocp \to \oeplocp[\varepsilon]/\varepsilon^2$. The Gauss--Manin connection induces an isomorphism
\[
D(v) \colon D(A_v)  \xrightarrow{\sim} D(A_{x_0, S_{\varepsilon}}) = D(A_{x_0}) \otimes_R R[\varepsilon]/\varepsilon^2
\]
which is the identity modulo $\varepsilon$. The isomorphism $D(v)$ is tensor preserving because the de Rham tensors are horizontal for the Gauss--Manin connection. Thus the composition of $D(v)$ with $\varphi_{x_0} \otimes_{R} R[\varepsilon]/\varepsilon^2$ is also tensor preserving, which means that $v:S_{\varepsilon} \to \mathscr{S}_K$ lifts to a point $s(v):S_{\varepsilon} \to \mathcal{G}_{\dR}$, and this defines $s$. 

\subsubsection{} Recall that $\iota \colon \mathcal{P}_{\dR} \to \mathcal{G}_{\dR}$ is the reduction of structure to $\mathcal{P}_{\mu^{-1}}$ parametrizing frames of $D(A)$ matching the Hodge filtration with $\Fil^\bullet_{\mu^{-1}}$; equivalently, it is the fiber of $\pi_{\Hdg}$ over $\Fil^\bullet_{\mu^{-1}}$. We have a natural map given as the composition 
\begin{equation} \label{Eqn:TPdrToLieG}
T_{\mathcal{P}_{\dR}} \xrightarrow{d\iota} \iota^* T \mathcal{G}_{\dR} \xrightarrow{\iota^*(1-(s\circ d\beta))} \mf{g} \otimes \mathcal{O}_{\mathcal{P}_{\dR}}
\end{equation}
where we note that the map $1-s\circ d\beta$ pulled back for the second arrow is just the left splitting of \eqref{Eq:ShortExactSequenceTangentBundles} induced by the right splitting $s$.

\begin{Prop} \label{Prop:TPdrToLieGisaniso}
    The composition of the two arrows in \eqref{Eqn:TPdrToLieG} is an isomorphism of vector bundles $T_{\mathcal{P}_\dR} \rightarrow \mf{g} \otimes \mathcal{O}_{\mathcal{P}_\dR}$. 
\end{Prop}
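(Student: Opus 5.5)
We first reduce to a rank count plus a fiberwise statement. Both sides are vector bundles on the smooth scheme $\mathcal{P}_\dR$. The relative dimension of $\mathcal{P}_\dR$ over the base is $\dim \mathscr{S}_K + \dim \mathcal{P}_{\mu^{-1}}$. By Griffiths transversality and Kodaira--Spencer (the Hodge period map $\pi_{\Hdg}$ is étale-locally an isomorphism of $\mathscr{S}_K$ onto $\Fl_{[\mu^{-1}]}$ up to the $\mathcal{G}$-action), we have $\dim \mathscr{S}_K = \dim \mf{g}/\mf{p}_{\mu^{-1}} = \operatorname{rk}\mf{u}_\mu$. So both bundles have rank $\dim \mf{g}$. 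It therefore suffices to show that \eqref{Eqn:TPdrToLieG} is surjective, or injective, on fibers at every geometric point $x_0$ of $\mathcal{P}_\dR$.

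We analyze the map using the exact sequence \eqref{Eq:ShortExactSequenceTangentBundles} restricted to $\mathcal{P}_\dR$. There is a sub-exact sequence
\[ 0 \to \mf{p}_{\mu^{-1}}\otimes\mathcal{O}_{\mathcal{P}_\dR} \to T_{\mathcal{P}_\dR} \to \beta^*T_{\mathscr{S}_K}\to 0, \]
and on the vertical part $\mf{p}_{\mu^{-1}}$ the map $1-s\circ d\beta$ is just the inclusion $\mf{p}_{\mu^{-1}}\hookrightarrow \mf{g}$, since $d\beta$ kills vertical vectors. Passing to quotients, we obtain a map
\[ \beta^*T_{\mathscr{S}_K} \to \mf{g}/\mf{p}_{\mu^{-1}}\otimes \mathcal{O}_{\mathcal{P}_\dR}. \]
By the five lemma, the proposition reduces to showing that this induced map is an isomorphism. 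We then identify it with the Kodaira--Spencer map. Take a tangent vector $v\colon S_\varepsilon\to\mathscr{S}_K$ and a lift $\tilde v\colon S_\varepsilon \to \mathcal{P}_\dR$, that is, a frame $\varphi_{\tilde v}$ of $D(A_v)$ carrying the Hodge filtration of $A_v$ to $\Fil^\bullet_{\mu^{-1}}$. Then $(1-s\,d\beta)(\tilde v)$ is the element $1+\varepsilon X\in\mathcal{G}(R[\varepsilon])$, $X \in \mf{g}\otimes R$, comparing $\varphi_{\tilde v}$ with the Gauss--Manin-transported frame $(\varphi_{x_0}\otimes R[\varepsilon])\circ D(v)$. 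Unwinding the construction of $s$ in \S\ref{subsub:Constructions} makes this precise. The class of $X$ modulo $\mf{p}_{\mu^{-1}}$ measures how the Hodge filtration $\Fil_{\Hdg}(A_v)$, transported by the Gauss--Manin connection into $D(A_{x_0})\otimes R[\varepsilon]$ and then to $\Lambda$, deviates from $\Fil^\bullet_{\mu^{-1}}$. That deviation is exactly $d\pi_{\Hdg}$ applied to $s(v)$, viewed in $T_{\Fl}=\mf{g}/\mf{p}_{\mu^{-1}}$ at the base point. Equivalently, it is the Kodaira--Spencer class: under $\mf{g}/\mf{p}_{\mu^{-1}} \subset \Hom(\Fil^0, \gr^{-1})$, it is the map $\omega_{A^\vee}\to \Lie A$ given by $\nabla_v$ followed by projection.

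It remains to show that Kodaira--Spencer induces an isomorphism $T_{\mathscr{S}_K}\cong \mathcal{P}_\dR\times^{\mathcal{P}_{\mu^{-1}}}(\mf{g}/\mf{p}_{\mu^{-1}})$. This is the main obstacle, since it is where Hodge type integrality enters. I would deduce it from Grothendieck--Messing theory (\S\ref{sss.gm-theory}) together with Kisin's description of the complete local rings of $\mathscr{S}_K$ as deformation spaces. At a point $x$ over a perfect field, first-order deformations of $x$ correspond to lifts of the Hodge filtration in $D$ that respect the tensors, which are exactly the $\mathcal{G}$-translates of the filtration. Kisin's construction, or Lovering's, shows that the local model map $\pi_{\Hdg}$ is formally smooth of relative dimension $\dim\mathcal{G}$ on $\mathcal{G}_\dR$; this is the smoothness quoted above. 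Hence $d\pi_{\Hdg}\circ s$ is injective on fibers, using the identification of the Gauss--Manin and crystalline connections from \S\ref{sss.abelian-variety-compatibility}. By the dimension count, it is then an isomorphism, which completes the proof.
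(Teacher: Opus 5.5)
Your reduction is sound and amounts to the same thing as the paper's argument. The five-lemma step correctly isolates the induced map $\beta^*T_{\mathscr{S}_K}\to(\mf{g}/\mf{p}_{\mu^{-1}})\otimes\mathcal{O}_{\mathcal{P}_\dR}$. Since vectors tangent to $\mathcal{P}_\dR$ are killed by $d\pi_{\Hdg}$, this map is, up to sign, $d\pi_{\Hdg}\circ s$. The paper phrases the same fact as $T_{\mathcal{P}_\dR}=\ker(\iota^*d\pi_{\Hdg})$ being complementary to the image of $s$. Either way, everything comes down to the Kodaira--Spencer map $d\pi_{\Hdg}\circ s$ being injective on fibers at closed points. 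For the rank count, take $\dim\mathscr{S}_K=\dim\Fl_{[\mu^{-1}]}$ from $\dim_{\mathbb{C}}\mathsf{X}$ and smoothness of $\mathscr{S}_K$, not from Kodaira--Spencer, to avoid circularity.

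The gap is in your justification of that injectivity. You deduce it from formal smoothness of $\pi_{\Hdg}$ of relative dimension $\dim\mathcal{G}$, but smoothness says nothing about $s$. Because $\pi_{\Hdg}$ is $\mathcal{G}$-equivariant and $\mathcal{G}$ acts transitively on $\Fl_{[\mu^{-1}]}$, the vertical directions $\mf{g}\otimes\mathcal{O}$ alone already surject onto $\pi_{\Hdg}^*T_{\Fl_{[\mu^{-1}]}}$. So $\pi_{\Hdg}$ is smooth whatever the connection is, which is why the paper calls it ``equivariant (and hence smooth)''. For a concrete failure: a trivial torsor with the trivial connection and a period map constant along the base is smooth of the same relative dimension, yet its Kodaira--Spencer map is zero.

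What you actually need is the deformation-theoretic statement in your preceding sentence, used only for injectivity. By Grothendieck--Messing, or equivalently the Kodaira--Spencer isomorphism for abelian schemes that the paper cites from Lan, combined with the agreement of the Gauss--Manin and crystalline connections: a first-order deformation whose Hodge filtration is the trivial lift is itself trivial.

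You do not need Kisin's description of the complete local rings for this, since injectivity is inherited from the Siegel case. The map $\mathscr{S}_K\to\mathscr{S}_U(\mathsf{G}_V,\mathsf{H}_V)$ is a closed immersion, so it is injective on tangent spaces. Also, with $\mf{p}_V\subset\mf{g}_V$ the corresponding Siegel parabolic, $\mf{g}/\mf{p}_{\mu^{-1}}\to\mf{g}_V/\mf{p}_V$ is injective on every fiber, because the $\mu$-weight decompositions are compatible. Your rank count then gives surjectivity. This reduction to the Siegel case is exactly the paper's Claim. With the smoothness sentence replaced by this argument, your proof goes through.
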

\begin{proof}
We will argue by a reduction to the case that $\gx=\gvx$. There is a commutative diagram (where objects with a subscript $V$ denote the corresponding objects for $\gvx$)
    \begin{equation}
        \begin{tikzcd}
            T_{\mathcal{P}_{\dr}} \arrow{r} \arrow{d} & \mathfrak{g} \otimes \mathcal{O}_{\pdr} \arrow{d} \\
            T_{\mathcal{P}_{V,\dr}} \otimes_{\mathcal{O}_{\mathcal{P}_{V,\dr}}} \mathcal{O}_{\pdr} \arrow{r} & \mathfrak{g}_V \otimes \mathcal{O}_{\pdr}  \\
            T_{\mathcal{P}_{V,\dr}} \arrow{r} \arrow{u} & \mathfrak{g}_V \otimes \mathcal{O}_{\mathcal{P}_{V,\dr}}, \arrow{u}
        \end{tikzcd}
    \end{equation}
    where the top vertical maps are injective because $\pdr \to \mathcal{P}_{V,\dr}$ is a closed immersion. 
\begin{Claim} \label{Claim:ReductiontoGSP}
    If the bottom horizontal arrow is an isomorphism, then the top horizontal arrow is an isomorphism
\end{Claim}
\begin{proof}
The middle horizontal arrow is an isomorphism because it is a base change of the bottom horizontal arrow. The top-left vertical map is injective on closed points of $\mathcal{P}_{\dR}$ (because the map $\pdr \to \mathcal{P}_{V,\dr}$ is a closed immersion). Since the middle horizontal arrow is an isomorphism, the base change of the top horizontal map to any closed point of $\mathcal{P}_{\dr}$ stays injective. Since $T_{ \mathcal{P}_{\dr}}$ has the same rank as $\mathfrak{g} \otimes \mathcal{O}_{\mathcal{P}_{\dR}}$, it follows that the top horizontal arrow is also an isomorphism.   
\end{proof}
By Claim \ref{Claim:ReductiontoGSP}, we may and will assume that we are in the case $\gx=\gvx$. We consider the composition
    \begin{align} \label{eq:KSISO}
        \beta^{\ast} T_{\scrs_{K}} \xrightarrow{s} T_{\mathcal{G}_{\dr}} \xrightarrow{d\pi_{\mathrm{Hdg}}} \pi_{\mathrm{Hdg}}^{\ast} T_{\operatorname{Fl}_{[\mu^{-1}]}},
    \end{align}
    which can be identified with the pullback under $\beta$ of the Kodaira--Spencer isomorphism of \cite[Proposition 2.3.5.2]{Lan} by unwinding the definitions. In particular, it is injective at all closed points\footnote{Alternatively, it is injective at all closed points because a square zero deformation of an abelian variety is the same thing as a square zero deformation of its Hodge filtration.}. On the other hand, since $\pdr \subset \mathcal{G}_{\dr}$ is the inverse image of $\Fil_{\mu^{-1}}^{\bullet} \in \operatorname{Fl}_{[\mu^{-1}]}$, the tangent bundle $T_{\mathcal{P}_{\dR}}$ is the kernel of $\iota^* d\pi_{\Hdg}$. In particular, by comparing dimensions, we see that $T_{\mathcal{P}_{\dR}}$ is a complementary subbundle to $\iota^*\beta^{\ast}T_{\scrs_{K}}$ in $\iota^* T_{\mathcal{G}_{\dR}}$, i.e., $\iota^* T_{\mathcal{G}_{\dR}} = \iota^*\beta^{\ast}T_{\scrs_{K}} \oplus T_{\mathcal{P}_{\dR}}$. Thus the projection of $T_{\mathcal{P}_{\dR}}$ along $\iota^*\beta^{\ast}T_{\scrs_{K}}$ to the complementary sub-bundle $\mathfrak{g} \otimes \mathcal{O}_{\mathcal{P}_{\dR}}$ is an isomorphism. 
\end{proof}

\subsection{Algebraic Maass--Shimura operators} \label{sub:AlgebraicMaassShimura} 
Restricting the inverse of the isomorphism in Proposition \ref{Prop:TPdrToLieGisaniso} to $\mf{g}_{\oeplocp} \subseteq H^0(\mathcal{P}_\dR, \mf{g} \otimes \mathcal{O}_{\mathcal{P}_\dR})$, we obtain a canonical inclusion
\begin{equation}\label{eq.inclusion-lie-algebra-to-vector-fields} \mf{g}_{\oeplocp} \hookrightarrow H^0(\mathcal{P}_\dR, T_{\mathcal{P}_\dR}). \end{equation}
The algebraic Maass--Shimura operators will be obtained from those vector fields lying in the image of $\mf{u}_\mu \subseteq \mf{g}$. In particular, we would like to say that these vector fields commute, so that we obtain an induced map from $\operatorname{Sym}(\mf{u}_{\mu})$ to differential operators on $\mathcal{P}_\dR$. While $\mf{u}_{\mu}$ is a commutative Lie subalgebra of $\mf{g}$, we have not yet shown that \eqref{eq.inclusion-lie-algebra-to-vector-fields} respects the Lie bracket. We now establish this by base change to $\mathbb{C}$ where it is clear from a complex analytic computation.

\begin{Rem}The claim can also be deduced by using the Grothendieck--Messing period map in divided powers neighborhoods of closed points to reduce to a computation on the flag variety. In fact, this is essentially the same argument, since in the complex analytic setting we are exploiting the existence of a flat trivialization over the universal cover to reduce to the same computation on the flag variety. We prefer the complex analytic argument here because the familiar complex analytic uniformization makes the existence of the parallelization of Proposition \ref{Prop:TPdrToLieGisaniso} completely transparent, and because the same setup can then be reused in the comparison with classical Maass--Shimura operators. 
\end{Rem}

\begin{Lem}\label{Lem.inclusion-of-Lie-algebras} The map \eqref{eq.inclusion-lie-algebra-to-vector-fields} is a map of Lie algebras, where $\mf{g}$ is equipped with its usual Lie algebra structure from the bracket of left invariant vector fields on $G$ and $H^0(\mathcal{P}_\dR, T_{\mathcal{P}_\dR})$ is equipped with the Lie algebra structure coming from the bracket of vector fields on $\mathcal{P}_\dR$. 
\end{Lem}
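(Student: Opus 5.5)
The plan is to reduce to a statement over $\mathbb{C}$ and then read off the Lie bracket from complex uniformization.

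\textbf{Reduction to $\mathbb{C}$.} The map \eqref{eq.inclusion-lie-algebra-to-vector-fields} is $\oeplocp$-linear. The scheme $\mathcal{P}_\dR$ is smooth and flat over $\oeplocp$, so $H^0(\mathcal{P}_\dR, T_{\mathcal{P}_\dR})$ is torsion-free and injects into its base change along $\oeplocp \to \mathsf{E}' \to \mathbb{C}$. The bracket of vector fields and the construction of \eqref{Eqn:TPdrToLieG} both commute with flat base change, since the Gauss--Manin splitting $s$ and $d\beta$, $d\iota$ are compatible with base change. It therefore suffices to prove the claim for the complex-analytic manifold $\mathcal{P}_\dR(\mathbb{C})$, and even locally on it.

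\textbf{Uniformization.} Over $\mathbb{C}$, write $\mathcal{G}_\dR(\mathbb{C}) = \mathsf{G}(\mathbb{Q})\backslash \big(\mathsf{X}\times \mathsf{G}(\mathbb{C}) \times \mathsf{G}(\mathbb{A}^\infty)\big)/K$. Here a point is a Hodge structure $h$ together with a frame of $D(A)$, and the frame is transported to $\Lambda_{\mathbb{C}}$ via the Betti lattice. Concretely, over the universal cover $\mathsf{X}$ the de Rham homology is trivialized by the flat Betti frame $V_{\mathbb{C}}$. The Gauss--Manin connection is the trivial connection in this frame, and the horizontal tensors are the constant $s_\alpha$. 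Hence, locally,
\[
\mathcal{G}_\dR \cong \mathsf{X} \times \mathsf{G}(\mathbb{C}),
\]
with $\mathsf{G}$ acting on the second factor and the Gauss--Manin splitting $s$ being the horizontal lift $T\mathsf{X} \to T\mathsf{X}\oplus 0$. Under this identification the Hodge period map is $\pi_{\Hdg}(h,g) = g\cdot \Fil_h$, where $\Fil_h$ is the Hodge filtration of $h$. The projector $1 - s\circ d\beta$ is projection onto the vertical factor, followed by identifying $T_g\mathsf{G}(\mathbb{C})$ with $\mf{g}$ via left or right translation; I will fix the convention that matches the torsor action in \eqref{Eq:ShortExactSequenceTangentBundles}.

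\textbf{The key step: making the vector fields explicit.} The submanifold $\mathcal{P}_\dR$ is $\{(h,g): g\Fil_h = \Fil_{\mu^{-1}}\}$. Since $\mathsf{X}\hookrightarrow \Fl_{[\mu^{-1}]}(\mathbb{C})$ is an open embedding, the map $(h,g)\mapsto g^{-1}$ gives a local isomorphism of $\mathcal{P}_\dR$ onto an open subset of $\mathsf{G}(\mathbb{C})$, with $h = g^{-1}\Fil_{\mu^{-1}}$. For $Y\in\mf{g}$, the vector field attached to $Y$ is the unique tangent vector to $\mathcal{P}_\dR$ whose vertical component is $Y$. In the coordinate $g$, this tangent vector is $\frac{d}{dt}\, g\exp(tY)$ (or $\exp(tY)g$, depending on convention), with $h$ forced to move accordingly; this motion is compatible because the constraint $g\Fil_h = \Fil_{\mu^{-1}}$ is preserved.

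Thus the vector fields in question are exactly the left-invariant (or right-invariant) vector fields on an open subset of $\mathsf{G}(\mathbb{C})$. These satisfy the bracket relations of $\mf{g}$; right-invariant fields would give the negative bracket, which is ruled out by the stated convention. The main obstacle is this bookkeeping: matching the convention for the $\mathcal{G}$-torsor action in \eqref{Eq:ShortExactSequenceTangentBundles} with the side of translation, so that the fields come out left-invariant with the correct sign.

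Finally, since both sides are algebraic and agree after analytification, the identity $[\,\cdot,\cdot\,]$ holds in $H^0(\mathcal{P}_\dR, T_{\mathcal{P}_\dR})\otimes\mathbb{C}$, and hence integrally by the injectivity established in the reduction step.
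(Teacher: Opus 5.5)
Your reduction to $\mathbb{C}$, the uniformization in which the Gauss--Manin splitting is the product splitting, and the identification of $\mathcal{P}_\dR(\mathbb{C})$ with an open subset of $\mathsf{G}(\mathbb{C})$ via $(h,g)\mapsto g^{-1}$ are exactly the paper's argument. (The paper's coordinate $g$, with $x=g\Fil^\bullet_{\mu^{-1}}$ and $\pi_{\Hdg}(x,g)=g^{-1}x$, is your $g^{-1}$.) The gap is the step you call bookkeeping. Your sentence ``right-invariant fields would give the negative bracket, which is ruled out by the stated convention'' is not an argument. The bracket on $\mf{g}$ is fixed by the statement. Whether the resulting fields are left- or right-invariant is decided only by how the vertical inclusion $\mf{g}\otimes\mathcal{O}_{\mathcal{G}_\dR}\hookrightarrow T_{\mathcal{G}_\dR}$ in \eqref{Eq:ShortExactSequenceTangentBundles} is normalized. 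For one of the two natural normalizations the lemma fails.

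Concretely, in your coordinates $\mathcal{G}$ acts by post-composition on frames, $k\cdot(h,g)=(h,kg)$. Suppose the vertical inclusion is the derivative of this action, $Y\mapsto \frac{d}{dt}\big|_{t=0}\exp(tY)\cdot(h,g)=(0,Yg)$. Then the field on $\mathcal{P}_\dR$ with vertical component $Y$ is $g\mapsto Yg$, the right-invariant field $Y^R$ (equivalently $-Y^L$ in the coordinate $u=g^{-1}$). Since $[Y^R,Z^R]=-[Y,Z]^R$, you get an anti-homomorphism. So carrying out your plan literally, ``matching the torsor action'', produces the wrong sign.

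The lemma holds exactly when the vertical inclusion is $Y\mapsto\frac{d}{dt}\big|_{t=0}\exp(-tY)\cdot p$. Equivalently, one regards $\mathcal{G}_\dR$ as the right torsor of inverse frames $\Lambda\to D(A)$. This normalization is built into the paper's proof: it identifies the vertical direction at $g$ with $\mf{g}$ by left translation and computes $d\iota(t)=(\mathrm{Ad}(g)(t)\cdot x,\,g\cdot t)$, whose vertical part is $t^L$.

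You need to state this normalization and check it explicitly rather than appeal to the desired conclusion. For the Maass--Shimura operators the sign is harmless, since $\mf{u}_\mu$ is abelian. But it is exactly the difference between a map of Lie algebras and an anti-map.
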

\begin{proof}
    Because everything is flat over $\oeplocp$, it suffices to prove the comparison over $\mathbb{C}$. By the construction of Shimura varieties, there is a natural isomorphism of complex analytic spaces
    \begin{equation}\label{eq.complex-gdr} \mathcal{G}_\dR(\mathbb{C}) = \mathsf{G}(\mathbb{Q}) \backslash \left( \mathsf{X} \times \mathsf{G}(\mathbb{C}) \times \mathsf{G}(\mathbb{A}_f)/ K \right)\end{equation}
    such that $\pi_\Hdg: \mathcal{G}_\dR(\mathbb{C}) \rightarrow \Fl_{[\mu^{-1}]}(\mathbb{C})$ is given by viewing $\mathsf{X}$ as an open subset of $\Fl_{[\mu^{-1}]}(\mathbb{C})$ (by sending a Hodge structure to the associated Hodge filtration), then taking the map induced by the $\g(\mathbb{Q})$-invariant map
    \[ \mathsf{X} \times \mathsf{G}(\mathbb{C}) \times \mathsf{G}(\mathbb{A}_f)/ K \xrightarrow{(x, g, g'K) \mapsto (g^{-1}x)} \Fl_{[\mu^{-1}]}(\mathbb{C}) . \]
    The $P_{\mu^{-1}}(\mathbb{C})$-torsor $\mathcal{P}_\dR(\mathbb{C})$ is given by the preimage of $\Fil^\bullet_{\mu^{-1}}$ under this map. We can also express $\mathcal{P}_\dR(\mathbb{C})$ in another way as follows: let $U \subseteq \g(\mathbb{C})$ be the open subset of elements $g$ such that $g \cdot \Fil^{\bullet}_{\mu^{-1}} \in \mathsf{X}$. Then, we obtain a map 
    \[ \mathsf{G}(\mathbb{Q}) \backslash \left( U \times \mathsf{G}(\mathbb{A}_f)/ K \right) \xrightarrow{ (g, g'K) \mapsto (g \Fil^\bullet_{\mu^{-1}}, g, g'K) } \mathcal{G}_\dR(\mathbb{C}) \]
    that is an isomorphism onto $\mathcal{P}_\dR(\mathbb{C})$. In particular, using this presentation we obtain a natural map from $\mf{g}_{\mathbb{C}}$ to vector fields on $\mathcal{P}_\dR(\mathbb{C})$ by restriction of the left-invariant fields on $\g(\mathbb{C})$ to the open subspace $U$. This is evidently compatible with the Lie bracket, so it remains to see that it agrees with the map induced by \eqref{eq.inclusion-lie-algebra-to-vector-fields}. 

    By construction, the splitting from the Gauss--Manin connection 
    \[ T_{\mathcal{G}_\dR(\mathbb{C})}=\beta^{\ast} T_{\scrs_{K}(\mbb{C})} \oplus \left(\mf{g} \otimes \mathcal{O}_{\mathcal{G}_\dR(\mathbb{C})}\right) \]
    is precisely that induced by the product structure on $\mathsf{X} \times \g(\mathbb{C})$ via \eqref{eq.complex-gdr}. In particular, making the computation before taking the quotient by $\g(\mathbb{Q})$ and forgetting the term $\g(\mathbb{A}_f)/K$, the derivative $d\iota$ of $\iota:\pdr(\mathbb{C}) \to \mathcal{G}_{\dr}(\mathbb{C})$ sends $t \in \mf{g}$, viewed by the above method as a vector field on $U$, to the vector field that above a point $(x,g)$ where $x=g \Fil^{\bullet}_{\mu^{-1}}$ is given by $(\mathrm{Ad}(g)(t) \cdot x, g \cdot t)$. In particular, the projection to the second component is the left invariant vector field associated with $t$. Since this projection is the inverse to the algebraic construction of \eqref{eq.inclusion-lie-algebra-to-vector-fields}, we conclude the two constructions agree.  
\end{proof}

\subsubsection{} \label{subsub:AlgebraicOperatorsDef} It follows from Lemma \ref{Lem.inclusion-of-Lie-algebras}
that the map of \eqref{eq.inclusion-lie-algebra-to-vector-fields} induces a map from the universal enveloping algebra $U(\mf{g})$ into the ring of differential operators on $\mathcal{O}_{\mathcal{P}_{\dR}}$. We refer to the differential operators in the image of $\opn{Sym}(\mf{u}_{\mu})=U(\mf{u}_{\mu})$ as the \emph{algebraic Maass--Shimura operators}. 

\subsubsection{} We now want to compare them to the usual Maass--Shimura operators on nearly holomorphic $C^\infty$-automorphic forms for $G$. 

To that end, we fix $x \in \mathsf{X}$ and let $K_\infty \subset \mathsf{G}(\mathbb{R})$ be the stabilizer of $x$. If we fix a $g_0 \in \mathsf{G}(\mathbb{C})$ such that $g_0 \cdot \Fil_{\mu^{-1}}^{\bullet}=x$, then we obtain an embedding 
\[ \mathsf{G}(\mathbb{Q})\backslash \mathsf{G}(\mathbb{A})/K=  \mathsf{G}(\mathbb{Q})\backslash \mathsf{G}(\mathbb{R}) \times \mathsf{G}(\mathbb{A}^\infty)/K\xrightarrow{(g, g'K)\mapsto (gx, g g_0 , g'K)}\mathcal{P}_\dR(\mathbb{C}) \]
where in writing the coordinates of the map we have viewed $\mathcal{P}_\dR(\mathbb{C})$ as a subset of $\mathcal{G}_\dR(\mathbb{C})$ with the quotient presentation of the latter as in the proof of Lemma \ref{Lem.inclusion-of-Lie-algebras}. Under this embedding, algebraic nearly holomorphic forms (i.e. algebraic functions on $\mathcal{P}_\dR$) pull back to $K_\infty$-finite functions in 
\[ 
C^\infty(\mathsf{G}(\mathbb{Q})\backslash \mathsf{G}(\mathbb{A})/K, \mathbb{C}).
\]
Moreover, since any section of $\mathcal{P}_{\dR}$ is killed by $\mathfrak{u}_{\mu^{-1}}^M \subset \opn{Sym}\mathfrak{u}_{\mu^{-1}}$ for $M \gg 1$, sections of $\mathcal{P}_{\dR}$ pull back to $K_{\infty}$-finite functions which are also $\opn{Ad}(g_0^{-1})(\mathfrak{u}_{\mu^{-1}})$-finite. This motivates the following definition. Recall that the choice $x \in \mathsf{X}$ determines a complex structure on $\mathsf{X}$ as well as a Hodge decomposition 
\[
\mathfrak{g}_{\mbb{C}} = \mathfrak{g}_{\mbb{C}}^{(0, 0)} \oplus \mathfrak{g}_{\mbb{C}}^{(-1, 1)} \oplus \mathfrak{g}_{\mbb{C}}^{(1, -1)};
\]
we have an identification of $\mathfrak{g}_{\mbb{C}}^{(-1, 1)} = \opn{Ad}(g_0^{-1})(\mathfrak{u}_{\mu})$ (resp. $\mathfrak{g}_{\mbb{C}}^{(1, -1)} = \opn{Ad}(g_0^{-1})(\mathfrak{u}_{\mu^{-1}})$) with the holomorphic (resp. anti-holomorphic) tangent bundle on $\mathsf{X}$, see \cite[\S 2]{HarrisPartial}.

\begin{Def}
    We define the space of nearly holomorphic $C^{\infty}$-forms for $\mathsf{G}$ to be the subspace of $K_{\infty}$-finite and $\mathfrak{g}_{\mbb{C}}^{(1, -1)}$-finite elements of $C^\infty(\mathsf{G}(\mathbb{Q})\backslash \mathsf{G}(\mathbb{A})/K, \mathbb{C})$.
\end{Def}

To justify this definition, note that by multiplying by a suitable automorphy factor $j(g, x)$ and evaluating at $1 \in \mathsf{G}(\mbb{A}^{\infty})$, any nearly holomorphic $C^{\infty}$-form 
\[
F \in C^\infty(\mathsf{G}(\mathbb{Q})\backslash \mathsf{G}(\mathbb{A})/K, \mathbb{C})
\]
gives rise to a $C^{\infty}$-function $f \colon \mathsf{X} \to W$ which satisfies $f(\gamma \cdot y) = j(\gamma, y) \cdot f(y)$ for all $\gamma \in \Gamma := \mathsf{G}(\mbb{Q}) \cap K$ and $y \in \mathsf{X}$. Here $W$ is the dual of the finite-dimensional algebraic representation of $\mathcal{M}_{\mu}(\mbb{C})$ generated by the $K_{\infty}$-orbit of $F$. By the discussion above, $f$ is then killed by the anti-holomorphic differential operators of large enough degree. Thus, any nearly holomorphic $C^{\infty}$-form gives rise to a nearly holomorphic $C^{\infty}$-function on $\mathsf{X}$ in the usual sense (see \cite[Section 13]{ShimuraArithmeticity}). Furthermore, if $F$ is pulled back from $\mathcal{P}_{\dR}$, the action of the algebraic Maass--Shimura operators on $F$ corresponds to the usual Maass--Shimura operators on nearly holomorphic $C^{\infty}$-functions on $\mathsf{X}$ (cf., e.g., \cite[\S 2.5]{ZLiu19} in the Siegel case).   

\begin{Rem}
    Strictly speaking, nearly holomorphic $C^{\infty}$-forms as we have defined them may not be automorphic forms, because we have not imposed a growth condition (equivalently, a growth condition on $f$ at the boundary of $\mathsf{X}$).\footnote{It is true, however, that if a section of $\mathcal{P}_{\dR}$ extends to the canonical extension of $\mathcal{P}_{\dR}$ over a toroidal compactification of $\mathscr{S}_K$, then the associated nearly holomorphic $C^{\infty}$-form is an automorphic form (see \cite{Su19}).} Since this is not the main focus of this article, we ignore this additional condition in this section.
\end{Rem}

\subsection{Igusa varieties} \label{Sub:IgusaI} We now introduce Igusa varieties. \textbf{From now on, we will assume that $p>2$, so that we may use the results of \cite{HamacherKim}, \cite{KimCentralLeaves}, \cite{DAddeziovH}.} We will moreover implicitly base change $\scrs_{K}$ and all other objects from $\oeplocp$ to $\oee$.

\subsubsection{} Let us write $\operatorname{Sh}_{K}=\scrs_{K,k_{E}}$ for the special fiber over $k_{E}$, the residue field of $\mathcal{O}_E$. For $x \in \shg(\ovfp)$, we write $A_x$ for the abelian variety up to prime-to-$p$ isogeny over $\ovfp$ corresponding to the image of $x \in \shgv(\ovfp)$ and $\mby_x$ for its associated $p$-divisible group. It is explained\footnote{In \cite{ShankarZhou} they use contravariant Dieudonn\'e theory instead of covariant Dieudonn\'e theory, so their $\mathbb{D}(-)$ is the dual of ours. Nevertheless, a module and its dual have the same tensor space, so we may use their results.} in \cite[Section 6.3]{ShankarZhou}, that there are canonical crystalline tensors $\{s_{\alpha,\mathrm{cris},x}\}$ in $\mathbb{D}(\mby_x)^{\otimes}$, that are invariant under the Frobenius on $\mathbb{D}(\mby_x)[\tfrac{1}{p}]$. They are constructed by choosing a lift $\tilde{x}$ of $x$ to $\spf \zpbr$, and considering the de Rham tensors $\{s_{\alpha,\dr,x}\}$ in $D(\mby_{\tilde{x}})^{\otimes} = \mathbb{D}(\mby_x)^{\otimes}$, but they do not depend on the choice of $\tilde{x}$. It is moreover explained in loc. cit. that there is an isomorphism
\begin{align} \label{Eq:Basis}
    \mathbb{D}(\mby_x) \xrightarrow{\sim} \Lambda \otimes_{\zp} \zpbr 
\end{align}
taking $s_{\alpha,\mathrm{cris},x}$ to $s_{\alpha} \otimes 1$ for all $\alpha \in \mathscr{A}$. Under such an isomorphism, the Frobenius is given by an element $b_x \in G(\qpbreve)$, which is well defined up to $\sigma$-conjugacy by $\mathcal{G}(\zpbreve)$, where $\sigma:G(\qpbreve) \to G(\qpbreve)$ is induced by the Frobenius $\sigma:\qpbreve \to \qpbreve$. We will write $\llbracket b \rrbracket$ for the $\sigma$-conjugacy class of $b$ under $\mathcal{G}(\zpbr)$, and $[b]$ for its $\sigma$-conjugacy class under $G(\qpbr)$.

\subsubsection{Central leaves} Recall from \cite[Section 5.4]{DAddeziovH}, cf. \cite[Corollary 3.3.8]{HamacherKim} that for $b \in G(\qpbreve)$ there are (reduced) locally closed subschemes
\begin{align}
    \cb \subseteq \shgb \subseteq \shg
\end{align}
of $\shg$. The subscheme $\shgb$ is called the \emph{Newton stratum} attached to $[b]$, and the subscheme $\cb \subseteq \shgb$ is called the \emph{central leaf} attached to $\llbracket b \rrbracket$. We note that the natural map $\cb \to \shgb$ is a closed immersion by \cite[Corollary 3.3.8]{HamacherKim} and that the central leaf $\cb$ is smooth and equidimensional by \cite[Corollary 5.3.1]{KimCentralLeaves}.

\subsubsection{} \label{subsub:Generalb} Let $b \in G(\qpbr)$ be an element whose $\sigma$-conjugacy class $[b]$ is contained in $B(G,\mu^{-1})$ and such that
\begin{align}
    (\Lambda \otimes_{\zp} \zpbr, b \circ \mathrm{Id} \otimes \sigma),
\end{align}
is the covariant Dieudonn\'e module of a $p$-divisible group $\mbx_b$ over $\fpbar$. Associated with $\mbx_b$ is the universal cover $\tildex_b$ with automorphism formal group $\Aut(\tildex_b)$, which contains $\Aut(\mbx_b)$ as a closed subgroup. Recall from \cite[Lemma 4.4.4]{DAddeziovH} the closed subgroup
\begin{align}
    \Aut_{G}(\tildex_b) \subset \Aut(\tildex_b),
\end{align}
whose intersection with $\Aut(\mbx_b)$ we denote by $\Aut_{\mathcal{G}}(\mbx_b)$. On $\QPRS$-rings $R$, it follows from \cite[Lemma 4.4.4, Lemma 2.3.10]{DAddeziovH} that there is a commutative diagram
\begin{equation} \label{Eq:DescriptionAutomorphismGroups}
    \begin{tikzcd}
    \Aut_{\mathcal{G}}(\mbx_b)(R) \arrow{r} \arrow[d, "\sim"] & \Aut_{G}(\tildex_b)(R) \arrow[d, "\sim"] \\ 
    \mathcal{G}(A_{\cris}(R))^{\varphi_b=1} \arrow{r} & G(B^+_{\cris}(R))^{\varphi_b=1},
    \end{tikzcd}
\end{equation}
where $\varphi_b$ denotes $\sigma$-conjugation by $b$. The notation $\mathcal{G}(A_{\cris}(R))^{\varphi_b=1}$ should be interpreted as $G(B^+_{\cris}(R))^{\varphi_b=1} \cap \mathcal{G}(A_{\cris}(R))$.

\subsubsection{} \label{subsub:Igb} Now assume that $\mbx_b$ is completely slope divisible. Recall from \cite[Section 5.1]{HamacherKim} the Igusa variety
\begin{align}
    \operatorname{Ig}^b \to \operatorname{Sh}_{K,\fpbar}^{\llbracket b \rrbracket},
\end{align}
which is an $\Aut_{\mathcal{G}}(\mbx_b)$-torsor by \cite[Proposition 5.4.3]{DAddeziovH}. It is explained in \cite[Section 5.1]{HamacherKim} that $\operatorname{Ig}^b$ is a perfect scheme and that the action of $\Aut_{\mathcal{G}}(\mbx_b)$ extends to an action of $\Aut_G(\tildex_b)$. By construction, there is a closed immersion
\begin{align} \label{Eq:InclusionIgusaSiegel}
    \operatorname{Ig}^b\gx \to \operatorname{Ig}^b\gvx \times_{\operatorname{Sh}_{U,\fpbar}\gvx^{\llbracket b \rrbracket}} \operatorname{Sh}_{K,\fpbar}\gx^{\llbracket b \rrbracket}.
\end{align}
Moreover, the Siegel Igusa variety $\operatorname{Ig}^b\gvx \to \operatorname{Sh}_{U,\fpbar}\gvx^{\llbracket b \rrbracket}$ represents the functor sending $x:T \to \operatorname{Sh}_{U,\fpbar}\gvx^{\llbracket b \rrbracket}$ to the set of isomorphisms $\rho:\mby_{T} \xrightarrow{\sim} \mbx_{b,T}$ compatible with the polarizations, see \cite[Definition 4.3.1]{CaraianiScholze}. Furthermore, the closed immersion of perfect schemes \eqref{Eq:InclusionIgusaSiegel} is characterized by the fact that on $\fpbar$-points it consists precisely of those isomorphisms $\rho:\mby \xrightarrow{\sim} \mbx_{b}$ such that $\mathbb{D}(\rho):\mathbb{D}(\mby) \xrightarrow{\sim} \Lambda_{\zpbr} $ sends $s_{\alpha,\cris,x}$ to $s_{\alpha} \otimes 1$ for all $\alpha \in \mathscr{A}$. 

\subsubsection{The product formula} Let $b$ be as in \S \ref{subsub:Generalb}. We consider the $p$-adic formal scheme $\scrshat_K=\scrshat_K\gx$ which contains a formal subscheme $\left(\scrshat_K\right)^{[b]}$ coming from the Newton stratum $\shgb$; we consider it as a functor on the category $\nilp_{\mathcal{O}_E}$ of nilpotent $\mathcal{O}_E$-algebras. Let us write $\mintfgbmu \to \spf \zpbr$ for the Rapoport--Zink space of Hodge type associated with $(\mathcal{G},b, \mu)$ of \cite[Section 5.3]{HamacherKim}, note that axiom A of loc. cit. holds in the hyperspecial case by work of Kisin \cite{KisinPoints}. The $p$-divisible group $\mbx_b$ defines a tautological point $x_0 \in \mintfgbmu(\ovfp)$.\footnote{The construction of $\mintfgbmu$ in \cite[Section 5.3]{HamacherKim} agrees with the one in \cite[Section 4.10]{PappasRapoportShtukas}, where the diamond $\mintfgbmu^{\diamondsuit}$ is moreover shown to agree with the integral moduli space of local shtukas $\mintgbmu$.}

By \cite[Lemma 5.12]{HamacherKim}, there is an action of $\Aut_G(\tildex_b)$ on $\mintfgbmu$. In fact $\mintfgbmu$ is defined as a closed formal subscheme of the Rapoport--Zink space $\operatorname{RZ}_{\mbx_b}$, see \cite[Theorem 2.16]{RapoportZink}, which is the moduli space on $\nilp_{\zpbr}$ sending $R$ to the set of isomorphism classes of pairs $(X, \Xi)$ where $X$ is a $p$-divisible group over $R$ and $\Xi$ is a quasi-isogeny $\mbx_{b,R/p} \dashrightarrow X \otimes_{R} R/p$. This has a moduli-theoretic action of $\Aut(\tildex_b)$. The precise statement of \cite[Lemma 5.12]{HamacherKim} is that the action of $\Aut(\tildex_b)$ on $\operatorname{RZ}_{\mbx_b}$ restricts to an action of $\Aut_G(\tildex_b)$ on $\mintfgbmu$.

We also consider the closed formal subscheme $\operatorname{RZ}_{\mbx_{b}, \lambda} \subset \operatorname{RZ}_{\mbx_{b}}$ where $\Xi$ is compatible with the polarization on $\mbx_{b}$ up to a scalar. Its diamond $\operatorname{RZ}_{\mbx_{b}, \lambda}^{\diamondsuit}$ is the integral local Shimura variety for the integral local Shimura datum $(\operatorname{GSp}(\Lambda),b,[\mu])$.

\subsubsection{} \label{subsub:FormalLiftingGeneral} Let $\mathfrak{Ig}^b \to \spf \zpbr$ be the Witt-vector lift to $\zpbr$ of the perfect scheme $\operatorname{Ig}^b$ and, by abuse of notation, let $\Aut_G(\tildex_b)$ be the Witt-vector lift to $\zpbr$ of the perfect affine formal scheme $\Aut_G(\tildex_b)$. Recall that the action of $\Aut_{\calG}(\mbx_{b})$ on $\operatorname{Ig}^b$ extends to an action of $\Aut_G(\tildex_b)_{\fpbar}$. It follows formally that the action of $\Aut_{G}(\tildex_b)$ on $\mathfrak{Ig}^b$ extends to an action of $\Aut_G(\tildex_b)$. Indeed, this follows because both functors satisfy $\mathcal{F}(R)=\mathcal{F}(R/p)$ by the rigidity of quasi-isogenies, and thus it is enough to define the action modulo $p$.

\subsubsection{} \label{subsub:ProductFormula} There is a product formula map, see \cite[Definition 4.3.11, Lemma 4.3.12]{CaraianiScholze}, 
\begin{align}
    \mathfrak{Ig}^b\gvx \times \operatorname{RZ}_{\mbx_{b}, \lambda} \to (\scrshat_{U}\gvx)^{[b]}_{\spf \zpbr}
\end{align}
which on $R$-points for $R \in \nilp_{\zpbr}$ sends $((A, \rho), (X, \Xi))$ to the unique polarized abelian variety $B$ over $R$ up to prime-to-$p$ quasi-isogeny equipped with a quasi-isogeny $A \dashrightarrow B$ compatible with polarizations and level structures, which on $p$-divisible groups is given by $A[p^{\infty}] \xrightarrow{\sim} \mbx_{b,R} \xdashrightarrow{\Xi} X_{R}$. One can now consider the following commutative diagram
\begin{equation} \label{Eq:ProductFormulaExistsDiagram}
    \begin{tikzcd}
        \mathfrak{Ig}^b\gx \times \mintfgbmu \arrow{r} \arrow[d, dashed, "\pi_{\infty}"] & \mathfrak{Ig}^b\gvx \times \operatorname{RZ}_{\mbx_{b}, \lambda} \arrow{d} \\
        (\scrshat_{K}\gx)^{[b]}_{\spf \zpbr} \arrow{r} & (\scrshat_{U}\gvx)^{[b]}_{\spf \zpbr}
    \end{tikzcd}
\end{equation}

\subsubsection{} \label{subsub:FormalLiftingGeneralII} It follows from \cite[Theorem 6.8]{HamacherKim} that the dashed arrow in \eqref{Eq:ProductFormulaExistsDiagram} exists, and that the resulting map 
\begin{align} \label{Eq:ProductFormula}
    \pi_{\infty}:\mathfrak{Ig}^b \times \mintfgbmu \to \left(\scrshat_K\right)^{[b]}_{\spf \zpbr}
\end{align}
is $\Aut_G(\tildex_b)$-invariant and a quasi-torsor for $\Aut_G(\tildex_b)$. Moreover, evaluating the perfect special fiber of $\pi_{\infty}$ in $(-, x_0)$, recovers $ \operatorname{Ig}^b \to \mathbf{Sh}_{K,\fpbar}^{\llbracket b \rrbracket}$. In particular, for any choice of point $\tilde{x}: \spf \zpbr \to \mintfgbmu$ lifting $x_0$, we get a lift
\begin{align}
    \mathfrak{Ig}^b \to \left(\scrshat_K\right)^{[b]}_{\spf \zpbr}
\end{align}
lifting $\operatorname{Ig}^b\to \operatorname{Sh}_{K,\fpbar}^{\llbracket b \rrbracket}$. This morphism is a quasi-torsor for the closed subgroup $\Aut_{\calG}(\mbx_{\tilde{x}}) \subset \Aut_G(\tildex_b)$, which is the functor of automorphisms of the lift $\mbx_{\tilde{x}}$ of $\mbx_b$ determined by $\tilde{x}$, consisting of automorphisms that lift automorphisms lying in $\Aut_{\calG}(\mbx_b)$. It can be seen equivalently as the stabilizer inside $\Aut_G(\tildex_b)$ of the point $\tilde{x} \in \mintfgbmu(\spf \zpbr)$. 

\subsubsection{}\label{sss.pdRmapConstruction} The choice of $\tilde{x}$ defines a lift $\mbx_{\tilde{x}}$ of $\mbx_{b}$, and thus an admissible filtration on $\mathbb{D}(\mbx_b)=\Lambda \otimes_{\zp} \zpbr$, see Proposition \ref{prop.filtered-dieudonne-theory}. \textbf{We assume from now on that this filtration is given by $\operatorname{Fil}_{\mu^{-1}}^{\bullet} \Lambda_{\zpbr}$}. Therefore, there is a morphism $\mathfrak{Ig}^b\gvx \to \mathcal{P}_{\dr,\zpbr}\gvx$ taking an isomorphism $\rho$ to the induced isomorphism $D(\rho):D(\mby_{T}) \xrightarrow{\sim} D(\mbx_{\tilde{x},T})=\Lambda_T$ of filtered modules. 
\begin{Prop} \label{Prop:MapToPdr}
The dashed arrow exists in the following commutative diagram
\begin{equation}
    \begin{tikzcd}
        \mathfrak{Ig}^b\gx \arrow{r} \arrow[d, dashed] &  \mathfrak{Ig}^b\gvx \arrow{d} \\
        \mathcal{P}_{\dr,\zpbr}\gx \arrow{r} & \mathcal{P}_{\dr,\zpbr}\gvx.
    \end{tikzcd}
\end{equation}
\end{Prop}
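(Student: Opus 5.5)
Since $\mathcal{P}_{\dr,\zpbr}\gx \to \mathcal{P}_{\dr,\zpbr}\gvx \times_{\scrshat_U\gvx} \scrshat_K\gx$ is a closed immersion, I will produce the dashed arrow by showing that the composite $\mathfrak{Ig}^b\gx \to \mathcal{P}_{\dr,\zpbr}\gvx$ factors through this closed subscheme. Concretely, for a point $\rho$ of $\mathfrak{Ig}^b\gx$ with image $x$ in $\scrshat_K$ (via $\pi_\infty(-,\tilde{x})$), the trivialization $D(\rho): D(\mby_x) \xrightarrow{\sim} \Lambda$ already respects the Hodge filtration. So it suffices to show that $D(\rho)$ sends $s_{\alpha,\dr}$ to $s_\alpha\otimes 1$ for all $\alpha$. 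This is a closed condition on $\mathfrak{Ig}^b\gx$, because it is an equality of sections of a vector bundle on a $p$-adic formal scheme.

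First I reduce to a pointwise check. The formal scheme $\mathfrak{Ig}^b\gx$ is the Witt vector lift of a perfect scheme, so it is $p$-torsion free and reduced mod $p$. The locus where the tensor identity holds is a closed formal subscheme $Z$. To show $Z$ is everything, it suffices to check that $Z$ contains all $W(k)$-valued points lifting $k$-points for perfect fields $k$, e.g.\ $k=\ovfp$. Indeed, a function on $W(R)$ with $R$ perfect and reduced vanishes if it vanishes at all Teichmüller-type lifts of geometric points: its reductions mod $p$ vanish, and one inducts on $p$-adic digits using perfectness and $p$-torsion freeness. A cleaner route is to first check the condition on the perfect special fiber, where it concerns crystalline tensors in $\mathbb{D}(\mby)$ evaluated on $W(R)=A_{\cris}(R)$. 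Rigidity of quasi-isogenies then shows that the map $D(\rho)$ over $W(R)$ is obtained from $\mathbb{D}(\rho_{R})$ by base change $A_\cris(R)\to W(R)$, as in \S\ref{ss.D-maps}.

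Next I check points. For $\rho \in \mathfrak{Ig}^b\gx(W(k))$ with reduction $\rho_0\in \operatorname{Ig}^b\gx(k)$, the lifted point $x\in\scrshat_K(W(k))$ is the lift determined by $\tilde{x}$ through the product formula. The isomorphism $D(\rho)$ equals $\mathbb{D}(\rho_0)$, because over $W(k)$ the Dieudonné module of the reduction computes the de Rham homology of any lift. Crystalline tensors $s_{\alpha,\cris,x_0}$ are by definition the de Rham tensors of any lift (\cite[Section 6.3]{ShankarZhou}). Hence $s_{\alpha,\dr,x}$ corresponds to $s_{\alpha,\cris,x_0}$. By the characterization of the closed immersion \eqref{Eq:InclusionIgusaSiegel}, $\mathbb{D}(\rho_0)$ sends $s_{\alpha,\cris,x_0}$ to $s_\alpha\otimes 1$, which is the required identity.

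The main obstacle is the globalization step in the second paragraph. One must justify that the horizontal de Rham tensors over the $p$-adic family $\mathfrak{Ig}^b$ are controlled by the crystalline tensors on the perfect special fiber, so that pointwise verification suffices. I would handle this using that $\mathfrak{Ig}^b$ is $p$-torsion free with $\mathcal{O}(\mathfrak{Ig}^b)/p$ perfect and reduced, so its functions embed into a product over geometric points of $W(\kappa)$. The tensor identity is an identity of such functions, since $\mathcal{P}_{\dr}$ is affine over the base. One also has to check that the lift of $x$ over $W(k)$ agrees with the point coming from $\pi_\infty(\rho,\tilde{x})$; this is built into the construction of the product formula.
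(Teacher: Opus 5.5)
Your proposal is correct and follows essentially the paper's argument. The paper likewise uses that the horizontal maps are closed immersions, reduces via $p$-torsion-freeness of the Witt-vector lift (packaged as Lemma \ref{Lem:CommutativeAlgebra}) to $\zpbr$-points, and there uses $D(\rho)=\mathbb{D}(\rho_{\fpbar})$, the compatibility of crystalline and de Rham tensors, and the characterization of \eqref{Eq:InclusionIgusaSiegel}. One small point: that characterization is only stated on $\fpbar$-points, so you should either restrict to $k=\fpbar$ and justify the pointwise reduction by density of $\fpbar$-points in $\operatorname{Ig}^b$ (as the paper does), or else justify the tensor characterization at all geometric points.
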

\begin{Lem} \label{Lem:CommutativeAlgebra}
Consider a commutative diagram
\begin{equation}
\begin{tikzcd}
    A & B \arrow[l, twoheadrightarrow]  \\
    D \arrow[u, dashed] & C \arrow[l, twoheadrightarrow] \arrow{u}
\end{tikzcd}
\end{equation}
of $p$-adically complete and $p$-torsion free $\zp$-algebras with surjective horizontal maps. If the natural evaluation map $A \to \prod_{f:A \to \zpbr} \zpbr$ is injective and if the dashed arrow exists on $\zpbr$-points, then the dashed arrow exists.
\end{Lem}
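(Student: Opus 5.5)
The dashed arrow should be a ring map $D \to A$ making the square commute. Write $I = \ker(C \twoheadrightarrow D)$, $\alpha: C \to B \to A$ for the composite, and $\pi: C \to D$ for the surjection. Since $\pi$ is surjective, a factorization of $\alpha$ through $\pi$ is unique if it exists, and it exists if and only if $\alpha(I) = 0$. So the plan is to show that every $c \in I$ satisfies $\alpha(c) = 0$ in $A$. Since $A \to \prod_{f} \zpbr$ is injective by hypothesis, it suffices to check that $f(\alpha(c)) = 0$ for every ring map $f: A \to \zpbr$.

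Interpret ``the dashed arrow exists on $\zpbr$-points'' in its geometric form: for every $\zpbr$-point of $\Spf A$, i.e.\ every $f: A \to \zpbr$, the composite $C \to A \xrightarrow{f} \zpbr$ factors through $D$. Concretely, the induced point of $\Spf C$ lies in the closed formal subscheme $\Spf D$. This gives $f\circ\alpha(I)=0$ for every $f$, and hence $\alpha(I) = 0$ by injectivity of the evaluation map.

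Therefore $\alpha$ descends to a unique map $D \to A$. Commutativity of the square holds by construction, since the composite $C \to D \to A$ equals $C \to B \to A$.

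The $p$-adic completeness and $p$-torsion-freeness hypotheses are not needed for this argument; they only matter for applying the lemma, where one takes $\Spf$ and checks that the evaluation map is injective.

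There is no real obstacle here. The one point needing care is reading the hypothesis correctly: it must mean that for each $f$ the composite $C \to \zpbr$ kills $\ker(C \to D)$, which is exactly the statement that the corresponding $\zpbr$-point of $\Spf B$ (or of $\Spf A$) maps into $\Spf D$. The content of the lemma is only this formal reduction from the algebra $A$ to its $\zpbr$-valued points.
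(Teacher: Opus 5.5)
Your proof is correct and is essentially the paper's argument: the paper only says the lemma follows from a diagram chase on the cube formed by the square and the evaluation maps to products of copies of $\zpbr$. Your reduction to showing $\alpha(\ker(C \to D)) = 0$, checked pointwise through the injective evaluation map on $A$, is exactly that chase written out, and you are also right that $p$-adic completeness and $p$-torsion freeness play no role in the argument itself.
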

\begin{proof}
This follows from a straightforward diagram chase on the commutative cube formed by the diagram and the evaluation maps to the corresponding product of copies of $\zpbr$ for each term.
\end{proof}
\begin{proof}[Proof of Proposition \ref{Prop:MapToPdr}]
The horizontal arrows are closed immersions by construction, and because the morphism $\scrs_{K}\gx \to \scrs_{U}\gvx$ is, see \cite[Theorem 1.1.1 ]{XuNormalization}; thus the dashed arrow is unique if it exists. By Lemma \ref{Lem:CommutativeAlgebra}, it suffices to show that the dashed arrow exists (uniquely) on the level of $\zpbr$-points (the hypothesis of the lemma follows from the fact that $\fpbar$-points are dense in $\operatorname{Ig}^b$, being an inverse limit of perfections of finite type $\fpbar$-algebras). In other words, we are trying to show that for $z:\spf \zpbr \to \scrs_{K}\gx$ and an isomorphism $\rho:\mby_{z} \xrightarrow{\sim} \mbx_{\tilde{x}}$ with $(z, \rho) \in \mathfrak{Ig}^b\gx(\spf \zpbr)$, the induced morphism $D(\rho)$ is tensor preserving. But it follows from the construction of $\operatorname{Ig}^b$ that this is true for $D(\rho)=\mathbb{D}(\rho_{\fpbar})$, see \S \ref{subsub:Igb}; here we use the compatibility of the crystalline tensors and the de Rham tensors.
\end{proof}

\subsubsection{} There is a natural map $\Aut(\mbx_{{\tilde{x}}}) \to \operatorname{GL}(\Lambda_{\zpbr})$ which sends an automorphism of the $p$-divisible group $\mbx_{{\tilde{x}}}$ to the induced automorphism of the module $D(\mbx_{{\tilde{x}}})=\Lambda_{\zpbr}$. 
\begin{Lem} \label{Lem:TensorPreservingI}
The subgroup $\Aut_{\mathcal{G}}(\mbx_{{\tilde{x}}}) \subset \Aut(\mbx_{{\tilde{x}}})$ is mapped to the subgroup $\mathcal{P}_{\mu^{-1}} \subset \mathcal{G}_{\zpbr} \subset \operatorname{GL}(\Lambda_{\zpbr})$.
\end{Lem}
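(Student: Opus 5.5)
Two things have to be shown: the image of $\Aut_{\mathcal{G}}(\mbx_{\tilde{x}})$ in $\operatorname{GL}(\Lambda_{\zpbr})$ lies in $\mathcal{G}_{\zpbr}$, and it also stabilizes the filtration $\Fil^\bullet_{\mu^{-1}}\Lambda_{\zpbr}$. Since $\mathcal{P}_{\mu^{-1}}$ is by definition the stabilizer in $\mathcal{G}$ of this filtration, these two facts together give the lemma.

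\emph{Filtration.} An automorphism $g$ of the $p$-divisible group $\mbx_{\tilde{x}}$ over an $R$ induces an automorphism of the universal vector extension $E\mbx_{\tilde{x},R}$. This automorphism preserves the kernel $V_{\mbx}$ of $E\mbx \to \mbx$. Consequently $D(g)$ preserves the Hodge filtration on $D(\mbx_{\tilde{x},R})=\Lambda_R$, and by the standing assumption of \S\ref{sss.pdRmapConstruction} that filtration is $\Fil^\bullet_{\mu^{-1}}\Lambda_R$. This step is formal, because the Hodge filtration is functorial; we can also see it from Proposition \ref{prop.filtered-dieudonne-theory}.

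\emph{Tensors.} For this we use the argument of Lemma \ref{Lem:CommutativeAlgebra}. The group $\Aut_{\mathcal{G}}(\mbx_{\tilde{x}})$ is a closed subgroup of $\Aut_G(\tildex_b)$, whose special fiber is perfect. Its coordinate ring is $p$-adically complete and $p$-torsion free, and it embeds into the product of $\zpbr$ over its $\zpbr$-points, using density of $\fpbar$-points exactly as in the proof of Proposition \ref{Prop:MapToPdr}. Because $\mathcal{G}_{\zpbr} \subset \operatorname{GL}(\Lambda_{\zpbr})$ is closed, it therefore suffices to check on $\zpbr$-points. A $\zpbr$-point is an automorphism $g$ of $\mbx_{\tilde{x}}$ over $\zpbr$ whose reduction $g_0$ lies in $\Aut_{\mathcal{G}}(\mbx_b)(\fpbar)$. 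By rigidity and \S\ref{ss.D-maps}, $D(g)$ is the base change of $\mathbb{D}(g_0)$ along $A_{\cris}(\fpbar)=\zpbr$. By \eqref{Eq:DescriptionAutomorphismGroups}, $\mathbb{D}(g_0)$ lies in $\mathcal{G}(\zpbr)$, i.e.\ it fixes the tensors $s_\alpha \otimes 1$ under the identification $\mathbb{D}(\mbx_b)=\Lambda_{\zpbr}$.

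The step I expect to be the main obstacle is making the reduction to $\zpbr$-points rigorous. One route is the density argument above. The alternative is to work on $\QPRS$-rings $R$ directly, via \eqref{Eq:DescriptionAutomorphismGroups} together with the base-change compatibility of Proposition \ref{prop.filtered-dieudonne-theory}: identify $D(g)$ with $\mathbb{D}(g_{R/p})\otimes_{A_\cris(R/p)} R \in \mathcal{G}(R)$. On that route the issue is to check that the identification of $D$ over $R$ with $\mathbb{D}$ mod $p$ is integral and compatible with the fixed trivialization. I would use the $\QPRS$ description if it is available, since it avoids any density hypothesis.
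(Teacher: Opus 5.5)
Your argument for the filtration is fine. The argument for the tensors rests on a claim that is false in exactly the cases of interest. The coordinate ring of $\Aut_{\mathcal{G}}(\mbx_{\tilde{x}})$ does \emph{not} embed into a product of copies of $\zpbr$ indexed by its $\zpbr$-points, so Lemma~\ref{Lem:CommutativeAlgebra} cannot be applied.

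\begin{itemize}
\item Being a closed subgroup of the perfect $\Aut_G(\tildex_b)$ does not make the special fibre $\Aut_{\mathcal{G}}(\mbx_b)$ perfect or reduced, and it typically has very few $\fpbar$-points.
\item For the ordinary modular curve, the identity component of $\Aut_{\mathcal{G}}(\mbxcan)$ is $T_p\gmhat$. Its special fibre is $\Spec \fpbar[x^{1/p^\infty}]/(x)$, and its only $\zpbr$-point is the identity, since $\zpbr$ contains no nontrivial $p$-power roots of unity.
\item Checking on $\zpbr$-points therefore says nothing about the connected part, which is where the content of the lemma lies. The paper makes exactly this point in the remark following the lemma.
\end{itemize}

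Your $\QPRS$ fallback does not close the gap either. On a single $p$-complete $R$ with $R/p\in\QPRS$, the identity $D(g)=\mathbb{D}(g_{R/p})\otimes_{A_\cris(R/p)}R\in\mathcal{G}(R)$ is fine; compatibility with the trivialization is not the real issue. To conclude that the morphism of formal schemes factors through the closed subscheme $\mathcal{G}_{\zpbr}$, however, you need a jointly injective family of such points on $\mathcal{O}(\Aut_{\mathcal{G}}(\mbx_{\tilde{x}}))$. Lemma~\ref{lemma.cover-and-tate-module-rep-properties} provides such a family for $T_p\mathcal{H}$. Nothing comparable is available for $\Aut_{\mathcal{G}}(\mbx_{\tilde{x}})$ in general; even its $p$-torsion freeness is only established later, in the $\mu$-ordinary case.

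The missing idea is to borrow density from $\mathfrak{Ig}^b$, where it does hold, via the quasi-torsor structure.
\begin{enumerate}
\item The action gives an isomorphism $\mathfrak{Ig}^b\gx\times_{\Spf\zpbr}\Aut_{\mathcal{G}}(\mbx_{\tilde{x}})\xrightarrow{\sim}\mathfrak{Ig}^b\gx\times_{\scrshat_{K,\zpbr}}\mathfrak{Ig}^b\gx$, $(\rho,g)\mapsto(\rho,g\circ\rho)$.
\item Apply the map of Proposition~\ref{Prop:MapToPdr} to both factors, then invert the torsor isomorphism $\pdr\times_{\Spf\zpbr}\mathcal{P}_{\mu^{-1}}\xrightarrow{\sim}\pdr\times_{\scrshat_{K,\zpbr}}\pdr$. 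This gives a morphism $\mathfrak{Ig}^b\gx\times\Aut_{\mathcal{G}}(\mbx_{\tilde{x}})\to\mathcal{P}_{\mu^{-1}}$.
\item After pushing out to $\operatorname{GL}(\Lambda_{\zpbr})$, this morphism is simply $(\rho,g)\mapsto D(g)$, because $D(g\circ\rho)=D(g)\circ D(\rho)$.
\item Hence the lemma holds after base change along $\mathfrak{Ig}^b\gx\to\Spf\zpbr$. That map is faithfully flat, being the Witt vector lift of a nonempty perfect scheme, so the statement descends.
\end{enumerate}
This handles the tensors and the filtration at once. The only density argument used is the legitimate one inside the proof of Proposition~\ref{Prop:MapToPdr}, on the perfect scheme $\operatorname{Ig}^b$.
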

\begin{proof}
Consider the following diagram, where the horizontal arrows are the product of the identity map with the natural action map and the right vertical arrow is the product of the map given by Proposition \ref{Prop:MapToPdr} with itself:
\begin{equation}
    \begin{tikzcd}
        \mathfrak{Ig}^b\gx \times_{\Spf \zpbr} \Aut_{\mathcal{G}}(\mbx_{{\tilde{x}}}) \arrow{r}{\sim} \arrow[d, dashed] & \mathfrak{Ig}^b\gx \times_{\scrshat_{K,\zpbr}} \mathfrak{Ig}^b\gx \arrow{d} \\
        \mathcal{P}_{\dr,\zpbr} \times_{\Spf \zpbr} \mathcal{P}_{\mu^{-1}} \arrow{r}{\sim} &  \mathcal{P}_{\dr,\zpbr} \times_{\scrshat_{K,\zpbr}} \mathcal{P}_{\dr,\zpbr}.
    \end{tikzcd}
\end{equation}
Composing the top horizontal map, the right vertical map, and the inverse of the bottom horizontal map, we thus fill in the dashed arrow. On the other hand, if we push-out the bottom row from $\mathcal{P}_{\mu^{-1}}$ to $\operatorname{GL}(\Lambda_{\zpbr})$, then the same composition is given by the product of the map given by Proposition \ref{Prop:MapToPdr} and the natural map $\Aut_{\mathcal{G}}(\mbx_{{\tilde{x}}}) \rightarrow \Aut(\mbx_{{\tilde{x}}}) \to \operatorname{GL}(\Lambda_{\zpbr})$. Thus the lemma holds after base change along $\mathfrak{Ig}^b\gx \to \spf \zpbr$, and since this is a faithfully flat map, we are done. 
\end{proof}
\begin{Rem}
We cannot prove Lemma \ref{Lem:TensorPreservingI} in the same way that we proved Proposition \ref{Prop:MapToPdr} because $\Aut_{\mathcal{G}}(\mbx_b)$ does not generally have a schematically dense set of $\fpbar$-points.
\end{Rem}

\subsubsection{} \label{subsub:UniformizationPdr} Recall that $X$ denotes the universal $p$-divisible group over $\operatorname{RZ}_{\mbx_{b}, \lambda}$. Let $\mathcal{P}_{\dr}^{\mathrm{loc}}(G_V) \to \operatorname{RZ}_{\mbx_{b}, \lambda}$ be the torsor of trivializations $D(X) \xrightarrow{\sim} \Lambda_{\zpbr}$ matching the Hodge filtration on $D(X)$ with $\operatorname{Fil}^{\bullet}_{\mu^{-1},\zpbr}$ and compatible with the polarization up to a scalar.
\begin{Lem} \label{Lem:UniformizationPdrI}
There is a natural isomorphism $\mf{Ig}^b\gvx \times_{\spf \zpbr} \mathcal{P}_{\dr}^{\mathrm{loc}}(G_V) \xrightarrow{\mathcal{I}} \pi_{\infty}\gvx^{\ast} \mathcal{P}_{\dr,\zpbr}\gvx$ over $\mf{Ig}^b\gvx \times_{\spf \zpbr} \operatorname{RZ}_{\mbx_{b}, \lambda}$.
\end{Lem}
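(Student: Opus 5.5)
We construct $\mathcal{I}$ on points, check that it lands in $\pi_\infty^\ast\mathcal{P}_{\dr}$, and then show it is an isomorphism of torsors.

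\textbf{Points of the source.} Let $R \in \nilp_{\zpbr}$. An $R$-point of the source is a tuple $((A,\rho),(X,\Xi),\tau)$. Here $\rho \colon A[p^\infty] \xrightarrow{\sim} \mbx_{b,R}$ is a polarization-compatible isomorphism over $R$, using the rigidity of \S\ref{subsub:FormalLiftingGeneral} to lift from $R/p$. Next, $(X,\Xi)$ is a point of $\operatorname{RZ}_{\mbx_b,\lambda}$. Finally, $\tau \colon D(X) \xrightarrow{\sim} \Lambda_R$ is a trivialization matching the Hodge filtration with $\Fil^\bullet_{\mu^{-1}}$ and the polarization up to scalar.

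\textbf{Points of the target.} By \S\ref{subsub:ProductFormula}, the image point $\pi_\infty(\ldots)$ is the abelian variety $B$ equipped with a quasi-isogeny $f \colon A \dashrightarrow B$. On $p$-divisible groups, $f$ is given by $\Xi\circ\rho$, and the isomorphism $B[p^\infty]\cong X$ is part of the data. An $R$-point of the target is therefore a filtration-matching trivialization of $D(B)=D(B[p^\infty])$; see \S\ref{sss.abelian-variety-compatibility}.

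\textbf{Definition of $\mathcal{I}$.} Set
\[ \mathcal{I}((A,\rho),(X,\Xi),\tau) \;=\; \tau\circ D(\iota), \]
where $\iota \colon B[p^\infty] \xrightarrow{\sim} X$ is the identification built into the product formula. Since $\iota$ is a genuine isomorphism over $R$, $D(\iota)$ is an isomorphism of filtered modules. Hence $\tau\circ D(\iota)$ matches the Hodge filtration on $D(B)$ with $\Fil^\bullet_{\mu^{-1}}\Lambda_R$. It is compatible with polarizations up to scalar because $\rho$, $\Xi$ and the product-formula map all respect polarizations up to scalar.

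\textbf{Naturality and the inverse.} The construction is natural in $R$, so $\mathcal{I}$ is a morphism over $\mf{Ig}^b\gvx \times \operatorname{RZ}_{\mbx_b,\lambda}$. It is visibly $\mathcal{P}_{\mu^{-1}}$-equivariant for the action by post-composition on $\Lambda$. A morphism of torsors under the same group is automatically an isomorphism. Explicitly, the inverse sends a trivialization $\tau'$ of $D(B)$ to $\tau'\circ D(\iota)^{-1}$.

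\textbf{Expected obstacle.} The main point is to make $\iota$ canonical, i.e.\ to check that it is part of the data of the product formula. The polarized abelian variety $B$ is only defined up to prime-to-$p$ quasi-isogeny, and the isomorphism $B[p^\infty]\cong X$ must be independent of the choice of representative. I would handle this by noting that prime-to-$p$ quasi-isogenies induce isomorphisms on $p$-divisible groups, and hence on $D(-)$, compatibly with the Hodge filtrations. So the construction descends to the equivalence classes.

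I would then check compatibility with the Gauss--Manin and crystalline structures using \S\ref{sss.abelian-variety-compatibility}. This is not needed for the torsor isomorphism itself, but it is needed for the later use of this isomorphism.
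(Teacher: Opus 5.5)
Your proposal is correct and is essentially the paper's argument. The paper just observes that the moduli description of $\pi_{\infty}\gvx$ yields a canonical isomorphism $\pi_{\infty}\gvx^{\ast}\mby \xrightarrow{\sim} \mf{Ig}^b\gvx \times_{\spf \zpbr} X$ compatible with polarizations up to scalar (your $\iota$), and takes $\mathcal{I}$ to be the map it induces on trivializations of $D(-)$; your pointwise formula $\tau \mapsto \tau \circ D(\iota)$, with its explicit inverse, spells out exactly that construction.
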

\begin{proof}
    It follows directly from the moduli description of $\pi_{\infty}\gvx$ that there is a canonical isomorphism of $p$-divisible groups $\pi_{\infty}\gvx^{\ast} \mby \xrightarrow{\sim} \mf{Ig}^b\gvx \times_{\spf \zpbr} X$ compatible with the polarizations up to a scalar. This canonical isomorphism induces the desired isomorphism $\mathcal{I}$.
\end{proof}

\subsubsection{} \label{subsub:MapToPdrUniformization} There is a $\zpbr$-point $\xi^{\mathrm{can},\dR}$ of $\mathcal{P}_{\dr}^{\mathrm{loc}}(G_V)$ corresponding to $\mbx_{\tilde{x}}$ together with the canonical identification $D(\mbx_{\tilde{x}})=\Lambda_{\zpbr}$. The map $\mf{Ig}^b\gvx \to \mathcal{P}_{\dr,\zpbr}\gvx$ constructed in \S\ref{sss.pdRmapConstruction} is then given by
\begin{align}
    \mf{Ig}^b\gvx \xrightarrow{1 \times \xi^{\mathrm{can},\mathrm{dr}}} \mf{Ig}^b\gvx \times \mathcal{P}_{\dr}^{\mathrm{loc}}(G_V) \xrightarrow{\mathcal{I}} \mathcal{P}_{\dr,\zpbr}\gvx.
\end{align}

\subsubsection{} There are de Rham tensors $\{s_{\alpha, \dr, \mathcal{M}}\}_{\alpha \in \mathscr{A}}$ in $D(X)^{\otimes}$ over $\mintfgbmu$ such that the closed subscheme $\mathcal{P}_{\dr}^{\mathrm{loc}}(G) \subset \mathcal{P}_{\dr}^{\mathrm{loc}}(G_V) \times_{\operatorname{RZ}_{\mbx_{b}, \lambda}} \mintfgbmu$ defined by those trivializations that match $s_{\alpha, \dr, \mathcal{M}}$ with the tensors $s_{\alpha} \otimes 1$ is a torsor for $\mathcal{P}_{\mu^{-1}}$, see \cite[Remark 2.3.5.(b)]{HowardPappasRZ}.\footnote{The Rapoport--Zink space of Hodge type considered in \cite{HowardPappasRZ} agrees with $\mintfgbmu$ because its construction agrees with the construction in \cite[Section 4.3]{HamacherKim}.} We consider the commutative diagram
\begin{equation} \label{Eq:CommutativeDiagramUniformizationPdr}
    \begin{tikzcd}
        \mf{Ig}^b\gvx \arrow{r}{1 \times \xi^{\mathrm{can},\dR}} & \mf{Ig}^b\gvx \times \mathcal{P}_{\dr}^{\mathrm{loc}}(G_V) \arrow{r}{\mathcal{I}} & \mathcal{P}_{\dr,\zpbr}\gvx \\
        \mf{Ig}^b\gx \arrow[u, hook] \arrow[r, dashed] & \mf{Ig}^b\gx \times \mathcal{P}_{\dr}^{\mathrm{loc}}(G) \arrow[r, dashed] \arrow[u, hook]& \mathcal{P}_{\dr, \zpbr}\gx \arrow[u, hook].
    \end{tikzcd}
\end{equation}
\begin{Prop} \label{Prop:UniformizationPdrII}
The dashed arrows in \eqref{Eq:CommutativeDiagramUniformizationPdr} exist (uniquely).
\end{Prop}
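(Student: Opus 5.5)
Both vertical arrows in \eqref{Eq:CommutativeDiagramUniformizationPdr} that we need to lift through are closed immersions, so uniqueness is automatic and only existence needs proof. We treat the two dashed arrows separately.

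\emph{The first dashed arrow.} We must show that $1 \times \xi^{\mathrm{can},\dR}$ restricted to $\mf{Ig}^b\gx$ lands in $\mf{Ig}^b\gx \times \mathcal{P}_{\dr}^{\mathrm{loc}}(G)$. This reduces to a statement about the single point $\xi^{\mathrm{can},\dR}$. We need $\xi^{\mathrm{can},\dR} \in \mathcal{P}_{\dr}^{\mathrm{loc}}(G)(\zpbr)$, i.e.\ that the canonical identification $D(\mbx_{\tilde{x}}) = \Lambda_{\zpbr}$ carries $s_{\alpha,\dr,\mathcal{M}}$ at $\tilde{x}$ to $s_\alpha \otimes 1$. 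We then argue as follows.
\begin{itemize}
\item The tensors $s_{\alpha,\dr,\mathcal{M}}$ on $\mintfgbmu$ are horizontal.
\item They reduce at $x_0$ to the crystalline tensors of $\mbx_b$.
\item By construction of $\mbx_b$ from $(\Lambda_{\zpbr}, b\sigma)$, these crystalline tensors are the $s_\alpha \otimes 1$ under $\mathbb{D}(\mbx_b) = \Lambda_{\zpbr}$.
\end{itemize}
Since $D(\mbx_{\tilde{x}}) = \mathbb{D}(\mbx_b)$ via the crystal, the claim follows. The filtration condition holds by our standing assumption that $\tilde{x}$ induces $\Fil^\bullet_{\mu^{-1}}$.

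\emph{The second dashed arrow.} We must show that $\mathcal{I}$ sends $\mf{Ig}^b\gx \times \mathcal{P}_{\dr}^{\mathrm{loc}}(G)$ into $\mathcal{P}_{\dr,\zpbr}\gx$. First, $\pi_\infty\gvx$ restricted to $\mf{Ig}^b\gx \times \mintfgbmu$ factors through $(\scrshat_K\gx)^{[b]}$ by \eqref{Eq:ProductFormulaExistsDiagram}. Hence $\mathcal{P}_{\dr}\gx$ pulls back to a closed subscheme of $\pi_\infty\gvx^*\mathcal{P}_{\dr}\gvx$ over $\mf{Ig}^b\gx \times \mintfgbmu$, namely the trivializations matching the pulled-back de Rham tensors $\pi_\infty^* s_{\alpha,\dr}$. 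The source is the locus matching $s_{\alpha,\dr,\mathcal{M}}$, transported along the canonical isomorphism $\pi_\infty^*\mby \cong X$.

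It therefore suffices to prove the key claim: under $D(\pi_\infty^*\mby) \cong D(X)$ over $\mf{Ig}^b\gx \times \mintfgbmu$, the tensor $\pi_\infty^* s_{\alpha,\dr}$ corresponds to $s_{\alpha,\dr,\mathcal{M}}$. Both are sections of a vector bundle, so we will check equality after reducing via Lemma \ref{Lem:CommutativeAlgebra}-type density arguments to $\zpbr$-points. This uses the fact that $\mf{Ig}^b$ has a dense set of $\fpbar$-points and that $\mintfgbmu$ is formally smooth (normal), so that equality can be tested on $\mathcal{O}_K$-points.

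At such a point $(z,\rho) \times y$, both tensors are horizontal. Each specializes, in the Dieudonné module of the special fiber, to crystalline tensors: for $\pi_\infty^* s_{\alpha,\dr}$ these are $s_{\alpha,\cris}$ at the image point of $\scrs_K$, and for $s_{\alpha,\dr,\mathcal{M}}$ they are the tensors transported from $\mbx_b$ via the quasi-isogeny $\Xi$. These agree because $\rho$ is tensor preserving by the defining property of $\operatorname{Ig}^b\gx$ (\S\ref{subsub:Igb}) and $\Xi$ preserves tensors by the definition of $\mintfgbmu$. The de Rham tensors are determined by the crystalline ones via the crystalline comparison/horizontality, exactly as in the proof of Proposition \ref{Prop:MapToPdr}, so they coincide.

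\emph{Main obstacle.} The hardest step is this compatibility of de Rham tensors between the global and local (Rapoport--Zink) sides. I would first try to import it directly from the construction of $\pi_\infty$ in \cite[Theorem 6.8]{HamacherKim} (and \cite{HowardPappasRZ}), where the Hodge-type RZ tensors are defined precisely so that the product map respects tensors. Failing that, I would argue pointwise on $\mathcal{O}_K$-points using the $p$-adic comparison, where both tensors come from the étale tensors.
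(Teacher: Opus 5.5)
\paragraph{The first dashed arrow.} Your argument here is fine, and it takes a more direct route than the paper. The paper obtains this arrow from Proposition~\ref{Prop:MapToPdr}. You instead read it off from the definition of the Hodge-type tensors at $\tilde{x}=(\mbx_{\tilde x},\mathrm{id})$, where the crystalline tensors are by definition $s_\alpha\otimes 1$ on $\mathbb{D}(\mbx_b)=\Lambda_{\zpbr}$.

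\paragraph{The gap in the second dashed arrow.} At a point $((z,\rho),y)$, the image $w=\pi_\infty((z,\rho),y)$ is in general a different point of $\scrs_K\gx$ from $z$. Its reduction $\bar w$ is the point in the isogeny class of $\bar z$ determined by $y_{\fpbar}\in\mintfgbmu(\fpbar)$. It is related to $\bar z$ by the quasi-isogeny $\psi=\Xi_{\fpbar}\circ\rho_{\fpbar}\colon A_{\bar z}[p^\infty]\dashrightarrow A_{\bar w}[p^\infty]$. What you actually need at the special fibre is
\[
s_{\alpha,\cris,\bar w}=\psi_*(s_{\alpha,\cris,\bar z}).
\]
You invoke two facts: that $\rho$ is tensor preserving, and that $\Xi$ carries $s_\alpha\otimes 1$ to the Rapoport--Zink tensors. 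These only show that the Rapoport--Zink tensors at $y$ equal $\psi_*(s_{\alpha,\cris,\bar z})$. They say nothing about the Shimura variety's own crystalline tensors at the new point $\bar w$.

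That equality is not formal. Knowing that $w\in\scrs_K\gx$, and that $\scrs_K\gx\to\scrs_U\gvx$ is a closed immersion, does not tell you which $\mathcal{G}$-structure on $\mathbb{D}(A_{\bar w}[p^\infty])$ the Shimura variety carries relative to this particular quasi-isogeny. What is needed is the compatibility of the product formula with crystalline tensors. On $\fpbar$-points this goes back to Kisin's construction of isogeny classes; in families it is \cite[Proposition 5.12.(2)]{HamacherProduct}. Citing that result is the whole of the paper's proof of the right-hand arrow.

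Your fallback via \'etale tensors hits the same wall. At a point over $\mathcal{O}_K$, both sets of \'etale tensors are determined by the respective crystalline tensors at $\bar w$. So you are back to comparing $s_{\alpha,\cris,\bar w}$ with $\psi_*(s_{\alpha,\cris,\bar z})$.

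\paragraph{How to repair it.} The proposal becomes correct once you import this compatibility as a black box. At that point your pointwise verification is superfluous, and your treatment of the second arrow coincides with the paper's. Note that the paper takes the tensor statement from Hamacher's product-formula paper rather than from the existence statement \cite[Theorem 6.8]{HamacherKim}, and remarks that the two product maps agree.

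\paragraph{A smaller point on density.} If you do run a density argument, use $\zpbr$-points of $\mintfgbmu$ rather than $\mathcal{O}_K$-points. For $K$ with ramification index $e>p-1$, the map $\mathcal{O}_K\to\fpbar$ is not a divided-powers thickening. So the crystal cannot be evaluated as in the proof of Proposition~\ref{Prop:MapToPdr}. On the other hand, $\zpbr$-points already separate functions on $\mf{Ig}^b\times\mintfgbmu$, because $\fpbar$ is infinite.
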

\begin{proof}
The rightmost dashed arrow exists because the de Rham tensors $\{s_{\alpha,\dr}\}$ on $\pi_{\infty}\gvx^{\ast} \mby $ are matched with the de Rham tensors $\{s_{\alpha, \dr, \mathcal{M}}\}$ under the canonical isomorphism described in the proof of Lemma \ref{Lem:UniformizationPdrI}, see \cite[Proposition 5.12.(2)]{HamacherProduct}.\footnote{The product formula map constructed in \cite[Proposition 5.12.(2)]{HamacherProduct} agrees with the one constructed in \cite[Theorem 6.8]{HamacherKim} because it is also the unique dashed arrow making \eqref{Eq:ProductFormulaExistsDiagram} commute.} The leftmost dashed arrow exists by Proposition \ref{Prop:MapToPdr}.
\end{proof}

\begin{Rem} Strictly speaking the de Rham tensors on $\mintfgbmu$ are only defined on a restricted class of test objects, namely those objects of $\operatorname{Nilp}_{\zpbr}$ that are formally smooth and formally of finite type over $\zpbr/p^n$ for some $n$. Now $\mintfgbmu \times_{\spf \zpbr} \spec \zpbr / p^n$ is Zariski locally of this form for all $n$, which then allows us to define $\mathcal{P}_{\mathrm{dr}}^{\mathrm{loc}}(G)$. To check that the product formula preserves these tensors, we need to check this on test objects that are products of objects in the restricted class above with formal spectra of Witt vector lifts of perfect formal schemes, and this is allowed by \cite[Proposition 5.12.(2)]{HamacherProduct}.

\end{Rem}

\subsection{Comparing Igusa varieties} \label{sub:ComparisonIgusa} The goal of this section is to compare the construction of Igusa varieties of Hamacher--Kim \cite{HamacherKim} used above, with other constructions of Igusa varieties in the literature. We moreover show that the product formula map $\pi_{\infty}$ of Hamacher--Kim introduced above is compatible with the product formula map coming from the Igusa stacks of \cite{DvHKZIgusaStacks}. 

\begin{Rem}
We now briefly comment on why we use both perspectives on Igusa varieties in this paper. The techniques of Hamacher--Kim are used to show in \cite{DAddeziovH} that $\operatorname{Ig}^b \to \mathbf{Sh}_{K,\fpbar}^{\llbracket b \rrbracket}$ is a torsor for $\Aut_{\mathcal{G}}(\mbx_b)$. We can only recover this statement up to perfection from the Igusa stacks perspective, and the perfection of $\Aut_{\mathcal{G}}(\mbx_b)^{\circ}$ is trivial, so this loses a lot of information. On the other hand, the perspective coming from Igusa stacks makes passage to the adic generic fiber much easier to understand.
\end{Rem}

\subsubsection{} Recall the perfect Igusa variety $\operatorname{Ig}_{K^p}^b\gx \to \operatorname{Sh}_{K,\fpbar}^{\llbracket b \rrbracket,\mathrm{perf}}$ of \cite[Section 4.3.4]{DvHKZIgusaStacks}. It is explained in \cite[Section 2.18, before the proof of Proposition 2.20]{HamacherKimII} that the Igusa variety $\operatorname{Ig}^b \to \operatorname{Sh}_{K,\fpbar}^{\llbracket b \rrbracket,\mathrm{perf}}$ can be identified with $\operatorname{Ig}_{K^p}^b\gx \to \operatorname{Sh}_{K,\fpbar}^{\llbracket b \rrbracket,\mathrm{perf}}$. More precisely, let $\shtglocmu$ be as in \cite[Section 3.1.6]{DvHKZIgusaStacks}. Then both $\operatorname{Ig}^b$ and $\operatorname{Ig}_{K^p}^b\gx$ can be written as the fiber product of a map $\operatorname{Sh}_{K,\fpbar}^{\llbracket b \rrbracket,\mathrm{perf}} \to \shtglocmu$ with the map $\spec \fpbar \to \shtglocmu$ determined by $b$, see \cite[Remark 3.1.8]{DvHKZIgusaStacks}. The two maps $\operatorname{Sh}_{K,\fpbar}^{\llbracket b \rrbracket,\mathrm{perf}} \to \shtglocmu$ moreover agree by \cite[Proposition 5.3.3]{DvHKZIgusaStacks}.  

\subsubsection{}  Recall that associated with $b$ we have the v-sheaf $\widetilde{G}_{b}$ over $\spd \fpbar$, see \S \ref{subsub:BunGII}.
\begin{Lem} \label{Lem:DieudonneBundles}
    There is a natural isomorphism $\Aut_{G}(\tildex_b)^{\diamondsuit} \xrightarrow{\sim} \widetilde{G}_{b}$. 
\end{Lem}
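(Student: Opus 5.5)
We will construct the isomorphism $\Aut_{G}(\tildex_b)^{\diamondsuit} \to \widetilde{G}_{b}$ on points and then check that it is bijective. Recall from \cite[Proposition III.5.1]{FarguesScholze} that for $(R,R^+) \in \perf$ over $\ovfp$ there is a description
\[ \widetilde{G}_b(R,R^+) = G(B_{\cris}^+(R^+/\varpi))^{\varphi_b=1}, \]
and more invariantly $\widetilde{G}_b(R,R^+)=G(B(R,R^+))^{\varphi_b=1}$, which agrees with $G(B^+_{\cris}(R^+/\varpi))^{\varphi_b=1}$ for any pseudo-uniformizer $\varpi$. This holds because $\varphi$-invariants may be computed on any of the period rings between $A_{\inf}$ and $B$ (the Frobenius is an isomorphism and the rings are interlaced). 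On the other side, $\Aut_G(\tildex_b)$ is an affine formal scheme of the shape $\Spf$ of a ring such as $k[[x_i^{1/p^\infty}]]$. Its diamond sends $(R,R^+)$ to the maps $\Spf R^+ \to \Aut_G(\tildex_b)$ (using that the coordinates are topologically nilpotent). By rigidity, $\tildex_b(R^+)=\tildex_b(R^+/\varpi)$, so these are elements of $\Aut_G(\tildex_b)(R^+/\varpi)$.

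\textbf{Step 1.} The ring $R^+/\varpi$ (with $\varpi$ a pseudo-uniformizer) is the quotient of the perfect ring $R^+$ by a single nonzerodivisor, hence lies in $\QPRS$. The diagram \eqref{Eq:DescriptionAutomorphismGroups} then gives
\[ \Aut_G(\tildex_b)(R^+/\varpi) \simeq G(B^+_{\cris}(R^+/\varpi))^{\varphi_b=1}. \]
Combining this with the above description of $\widetilde{G}_b$ yields a bijection on $(R,R^+)$-points.

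\textbf{Step 2.} We check that this bijection is independent of $\varpi$ and functorial in $(R,R^+)$. Independence of $\varpi$ follows by comparing $\varpi$ with $\varpi^n$ via the transition maps $A_{\cris}(R^+/\varpi^n) \to A_{\cris}(R^+/\varpi)$, since both sides are compatible with these maps. Functoriality then follows from the naturality of \eqref{Eq:DescriptionAutomorphismGroups} and of Dieudonn\'e theory. This produces a natural isomorphism of presheaves on affinoid perfectoids, hence of v-sheaves. If needed, we also check that the group structures match, which is automatic from the functorial construction.

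\textbf{Main obstacle.} The main obstacle is the identification of $\widetilde{G}_b$ with $\varphi_b$-invariants in $G(B^+_{\cris}(R^+/\varpi))$ rather than in $G(B(R,R^+))$. This reduces to a Tannakian argument: an automorphism of $\mathcal{E}_b$ on $X_{(R,R^+)}$ is a $\varphi$-equivariant tensor automorphism of the trivial isocrystal twisted by $b$. Working in a faithful representation $\Lambda$, we reduce to $\operatorname{GL}_n$, where we need that $(B^{n})^{\varphi = b\sigma}$ equals $(B^{+,n}_{\cris})^{\varphi=b\sigma}$. We handle this by the standard argument of \cite[Prop. 4.1.?]{ScholzeWeinstein}, namely Frobenius iteration, which shows that $\varphi$-invariants in $B$ lie in $B^+$, after decomposing $b$ by slopes. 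Alternatively, we can cite the comparison of Dieudonn\'e theory over $\QPRS$ rings with vector bundles on the Fargues--Fontaine curve (\cite[Prop. 5.?]{ScholzeWeinstein}) to get the identification directly.
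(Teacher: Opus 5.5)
Your proposal is correct and is essentially the paper's argument. The map is the same: a quasi-isogeny of $\mathbb{X}_{b,R^+/\varpi}$ goes, via \eqref{Eq:DescriptionAutomorphismGroups} for the $\mathrm{QPRS}$ ring $R^+/\varpi$, to a $\varphi_b$-invariant element of $G(B^+_{\cris}(R^+/\varpi))$ and then to an automorphism of $\mathcal{E}_b$. General $G$ is handled inside $\operatorname{GL}(\Lambda)$ via tensors. The comparison of $\varphi_b$-invariants over $B^+_{\cris}(R^+/\varpi)$ and over $B_{(R,R^+)}$, which you isolate as the main obstacle, is exactly the $\operatorname{GL}$ case; the paper cites \cite[Corollary 9.46]{ZhangThesis} for it instead of reproving it.

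Two corrections. First, the description of $\widetilde{G}_b$ that comes for free is $G(B_{(R,R^+)})^{\varphi_b=1}$, not the $B^+_{\cris}(R^+/\varpi)$ one, so the comparison is the genuine input and not something to "recall". Second, your slogan that $\varphi$-invariants can be computed on any ring between $A_{\mathrm{inf}}$ and $B$ is false: $\{x:\sigma(x)=px\}$ is zero in $A_{\mathrm{inf}}[\tfrac1p]$, but contains $t=\log[\epsilon]$ in $B^+_{\cris}$ (e.g.\ over $\mathcal{O}_{C^\flat}$). This is why the slope-by-slope argument through $B^+_{\cris}$, or the citation, is really needed.
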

\begin{proof}
In the case that $G=\operatorname{GL}(V)$ this is \cite[Corollary 9.46]{ZhangThesis}. On $(R,R^+)$-points for a perfectoid Huber pair $(R,R^+)$ over $\fpbar$, this sends a quasi-isogeny $\mbx_{b,R^+/\varpi} \dashrightarrow \mbx_{b,R^+/\varpi}$ first to the induced isomorphism $\mathbb{D}(\mbx_{b,R^+/\varpi})[\tfrac{1}{p}] \to \mathbb{D}(\mbx_{b,R^+/\varpi})[\tfrac{1}{p}]$, and then to the induced isomorphism of vector bundles on the relative Fargues--Fontaine curve $X_{(R,R^+)}$, see \cite[Section 2.5.6]{DvHKZIgusaStacks}. It is straightforward to check from the definition and \eqref{Eq:DescriptionAutomorphismGroups}, that the construction described above identifies the subgroups $\Aut_{G}(\tildex_b)^{\diamondsuit}$ and $\widetilde{G}_{b}$ of $\Aut(\tildex_b)^{\diamondsuit}$.
\end{proof}

\subsubsection{} Recall the v-sheaf Igusa variety $\operatorname{Ig}_{K^p}^{b,\mathrm{v}} \to \operatorname{Sh}_{K,\fpbar}^{\diamond}$ of \cite[First paragraph of Section 4.4]{DvHKZIgusaStacks}. Recall the inclusion $\operatorname{Ig}^{b,\diamond} \to \operatorname{Ig}^{b,\mathrm{v}}_{K^p}$ over 
$\operatorname{Sh}_{K,\fpbar}^{\diamond}$ of \cite[Corollary 4.4.3]{DvHKZIgusaStacks}, which identifies $\operatorname{Ig}^{b,\mathrm{v}}_{K^p}$ with the relative canonical compactification of $\operatorname{Ig}^{b,\diamond} \to \operatorname{Sh}_{K,\fpbar}^{\diamond}$. It is explained in \cite[Lemma 8.5.2]{DvHKZIgusaStacks} that the action of $\widetilde{G}_b$ on $\operatorname{Ig}^{b,\mathrm{v}}_{K^p}$ restricts to an action of $\widetilde{G}_b$ on $\operatorname{Ig}^{b,\diamond}$. Under the isomorphism of Lemma \ref{Lem:DieudonneBundles}, this agrees with the action of $\Aut_{G}(\tildex_b)^{\diamondsuit}$. Indeed, this can be reduced to the Siegel case, where it is \cite[Corollary 11.26]{ZhangThesis}.

\subsubsection{} The diamond $\mintfgbmu^{\diamondsuit}$ is isomorphic to the integral moduli space of shtukas $\mintgbmu$, see \cite[Theorem 2.5.4]{PappasRapoportRZSpaces}. More precisely, they are both the same closed subdiamonds of $\operatorname{RZ}_{\mbx_{b}}^{\diamondsuit} \xrightarrow{\sim} \mathcal{M}_{\operatorname{GL}_{\Lambda},b,[\mu]}^{\mathrm{int}}$, where the isomorphism is \cite[Corollary 25.1.3]{ScholzeWeinsteinBerkeley}. 

\subsubsection{} \label{subsub:CompatibilityProductFormula} Recall that $\mathfrak{Ig}^{b,\diamondsuit} = \operatorname{Ig}^{b,\diamond} \times_{\spd \fpbar} \spd \zpbr$ since $\operatorname{Ig}^b$ is a perfect scheme. It follows from the construction of $\pi_{\infty}$ (namely by reduction to the Siegel case), that $\pi_{\infty}^{\diamondsuit}$ agrees with the product formula map induced by \cite[Corollary 4.5.3]{DvHKZIgusaStacks} given by
\begin{align}
    \operatorname{Ig}^{b,\mathrm{v}}_{\spd \zpbr} \times \mintgbmu \to \scrshat_{K}^{\diamondsuit},
\end{align}
by restricting along
\begin{align}
    \mathfrak{Ig}^{b,\diamondsuit} \times \mintfgbmu^{\diamondsuit}  \to \operatorname{Ig}^{b,\mathrm{v}}_{\spd \zpbr} \times \mintgbmu. 
\end{align}
Indeed, in the Siegel case this follows from \cite[Corollary 11.26]{ZhangThesis}. Here we are using \cite[Theorem VII]{DvHKZIgusaStacks} that \cite[Conjecture 4.2.1]{DvHKZIgusaStacks} holds, which is required to invoke \cite[Corollary 4.5.3]{DvHKZIgusaStacks}. 

\subsection{Igusa varieties in the \texorpdfstring{$\mu$}{mu}-ordinary case} \label{sub:MuOrdinaryIgusa} We now specialize to the $\mu$-ordinary case. 

\subsubsection{The \texorpdfstring{$\mu$}{mu}-ordinary locus} \label{subsub:FramingObject} Recall that our choice of place $v$ of $\E$ gives us a conjugacy class of Hodge cocharacters $[\mu]$ over $E$. Recall from \S \ref{sss.finite-extension-cocharacter-choice-existence} the fixed representative $\mu:\mathbb{G}_{m,\mathcal{O}_{E}} \to \calG_{\mathcal{O}_{E}}$. Let $b_{\mu}=\mu(p^{-1}) \in G(E)$ be the $\mu$-ordinary element. The special fiber $\operatorname{Sh}_K:=\left(\scrs_K\right)_{k_E}$ has a dense open $\mu$-ordinary locus, see
\cite[Main theorem]{Wortmann} (or \cite[Theorem 3]{KMPS} for a published reference), which we will write as $\operatorname{Sh}_K^{[b_{\mu}]}$. In this case, there is an equality $\operatorname{Sh}_K^{[b_{\mu}]}=\operatorname{Sh}_K^{[[b_{\mu}]]}$, i.e., the $\mu$-ordinary locus is a central leaf. We will write $\mbx=\mbx_{b_{\mu}}$. 

\subsubsection{} \label{subsub:IgM} We remark that $\mbx$ is completely slope divisible because the Newton cocharacter $\nu_{b_{\mu}}=-\overline{\mu}$ is integral, see \cite[Definition 2.4.1, discussion after Definition 2.4.2]{KimLeaves}. It follows from this that the pullback of $\mby$ to $\operatorname{Sh}_K^{[b_{\mu}]}$ is completely slope divisible and thus admits a slope filtration $\operatorname{Fil}^{\bullet} \mby_{\operatorname{Sh}_K^{[b_{\mu}]}}$, see \cite[Corollary 2.2]{OortZink}. Let us write $\igcs \to \shg^{[b_{\mu}]}$ for the induced perfect Igusa variety, which is defined over $k_{E}$ since $b_{\mu} \in G(E)$ hence $\mbx$ is defined over $k_{E}$. 

\subsubsection{} Let $\Aut_{\calG}(\mbx)^{\mathrm{slp}} \subset \Aut_{\calG}(\mbx)$ be the closed subgroup of automorphisms that preserve the slope decomposition of $\mbx$. The natural map
\begin{align}
    \Aut_{\calG}(\mbx)^{\mathrm{slp}} \to \pi_0(\Aut_{\calG}(\mbx))
\end{align}
is a bijection, giving a semidirect product decomposition
\begin{align}
    \Aut_{\calG}(\mbx) = \Aut_{\calG}(\mbx)^{\circ} \rtimes \Aut_{\calG}(\mbx)^{\mathrm{slp}},
\end{align}
see \cite[Section 4.1.8, Lemma 4.4.4]{DAddeziovH}. We note that $\Aut_{\calG}(\mbx)^{\circ}(\fpbar)=\{1\}$, and that $\Aut_{\calG}(\mbx)^{\mathrm{slp}}_{\fpbar}$ is simply the profinite group scheme associated with the profinite group $\Aut_{\calG}(\mbx)(\fpbar)=G_{b_{\mu}}(\qp) \cap \mathcal{G}(\zpbr)=\mathcal{M}_{\overline{\mu}}(\zp)$, see \cite[Lemma 4.4.4]{DAddeziovH}. In fact, the action of $\mathcal{M}_{\overline{\mu}}(\zp)$ on $\mbx$ is already defined over $k_{E}$; this can be readily checked on the level of Dieudonn\'e modules. This shows that $\Aut_{\calG}(\mbx)^{\mathrm{slp}}(k_{E})=\Aut_{\calG}(\mbx)^{\mathrm{slp}}(\ovfp)$ and hence $\Aut_{\calG}(\mbx)^{\mathrm{slp}} \simeq \ul{\mathcal{M}_{\overline{\mu}}(\zp)}$.

\subsubsection{} We recall the Mantovan Igusa variety $\igm \to \operatorname{Sh}_K^{[b_{\mu}]}$, see \cite[Section 5.4.1]{DAddeziovH}; it is defined over $k_{E}$ since $b_{\mu} \in G(E)$ so that $\mbx$ is defined over $k_{E}$. There is a closed immersion
\begin{align}
    \igm\gx \to \igm \gvx \times_{\operatorname{Sh}_U\gvx^{[[b_{\mu}]]}}\operatorname{Sh}_K\gx^{[b_{\mu}]} ,
\end{align}
and $\igm \gvx \to \operatorname{Sh}_U\gvx^{[[b_{\mu}]]}$ is the moduli space of trivializations $\operatorname{gr}^{\bullet}_{\operatorname{slp}} \mby \to \operatorname{gr}^{\bullet}_{\operatorname{slp}} \mbx_{\igm}$ of associated graded quotients of the slope filtrations, compatible with the polarizations. It follows from the discussion in loc. cit. that $\igm \to \operatorname{Sh}_K^{[b_{\mu}]}$ is a torsor for $\ul{\mathcal{M}_{\overline{\mu}}(\zp)}$. There is a natural map
\begin{align}
    \igcs \to \igm
\end{align}
over $\operatorname{Sh}_K^{[b_{\mu}]}$, which is equivariant for the natural map
\begin{align}
    \Aut_{\calG}(\mbx) \to \ul{\mathcal{M}_{\overline{\mu}}(\zp)}
\end{align}
sending an automorphism of $\mbx$ to the induced automorphism of $\operatorname{gr}^{\bullet}_{\operatorname{slp}}$. 

\subsubsection{Formal lifting} By Grothendieck--Messing theory (see \S\ref{sss.gm-theory} and Proposition \ref{prop.filtered-dieudonne-theory}), lifts $\mbx_{\zpbr}$ of $\mbx$ to $\zpbr$ correspond to lifts of the Hodge filtration to $\mathbb{D}(\mbx)$. There is a particular lift $\mbxcan$ corresponding to the filtration induced by the cocharacter $\mu^{-1}$, called the \emph{canonical lift} of $\mbx$. By \cite[Proposition 3.4]{ShankarZhou} the induced map 
\begin{align}
    \xi^{\mathrm{can}}:\spf \zpbr \to \operatorname{RZ}_{\mathbb{X}}
\end{align}
factors through $\mintfgbmu$. Moreover as explained in Section 3.6 of \cite{ShankarZhou}, there is a unique direct sum decomposition
\begin{align} \label{eq:LiftSlopeDecomposition}
    \mbxcan:=\bigoplus_i \mbxcan_i,
\end{align}
lifting the slope decomposition of $\mbx$.
\begin{Prop} \label{Prop:FaithfullyFlat}
The morphism $\igcsf \to \left(\scrshat_K\right)^{[\bmu]}$ corresponding to the canonical lift $\mbxcan$ is representable in faithfully flat morphisms.
\end{Prop}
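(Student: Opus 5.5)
The plan is to reduce faithful flatness of $\igcsf \to (\scrshat_K)^{[\bmu]}$ to a statement about torsors, using the product formula \eqref{Eq:ProductFormula} at the point $\tilde{x}=\xi^{\mathrm{can}}$. By \S\ref{subsub:FormalLiftingGeneralII}, the map $\igcsf = \mathfrak{Ig}^{b_\mu} \to (\scrshat_K)^{[\bmu]}$ (after base change to $\zpbr$; faithful flatness descends along $\oee \to \zpbr$) is a quasi-torsor for the stabilizer $\Aut_{\calG}(\mbxcan) \subset \Aut_G(\tildex)$ of $\xi^{\mathrm{can}}$. Being a quasi-torsor means the map $\igcsf \times \Aut_{\calG}(\mbxcan) \to \igcsf \times_{(\scrshat_K)^{[\bmu]}} \igcsf$ is an isomorphism. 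It therefore suffices to show that the morphism is faithfully flat, i.e.\ that it becomes a genuine torsor fpqc-locally. Given the quasi-torsor property, this amounts to the existence of sections fpqc-locally together with flatness of the group.

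\textbf{Step 1: reduce to the special fibre.} Everything is $p$-adic and $\Aut_{\calG}(\mbxcan)$ is the Witt/formal lift of $\Aut_{\calG}(\mbx)$. I would first argue, via the rigidity of quasi-isogenies and Serre--Tate, that for $R\in\nilp$ the data parametrized by $\igcsf$ over an $R$-point $y$ of $(\scrshat_K)^{[\bmu]}$ are isomorphisms $\mby_y \simeq \mbxcan_R$ respecting tensors. The key input is that the $\mu$-ordinary locus is a single central leaf, so the $\mu$-ordinary deformations are exactly the ones where $\mby$ lifts the slope filtration compatibly with $\mbxcan$. Concretely, I would identify $\igcsf$ over $R$ with the functor of tensor-preserving isomorphisms $\mby_R \xrightarrow{\sim} \mbxcan_R$, reducing to the Siegel case via \eqref{Eq:InclusionIgusaSiegel} as in Proposition \ref{Prop:MapToPdr}.

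\textbf{Step 2: fpqc-local existence of isomorphisms.} Here I would use the unique lifting of the slope decomposition \eqref{eq:LiftSlopeDecomposition}. Over the formal $\mu$-ordinary locus, $\mby$ carries a lifted slope filtration whose graded pieces are isoclinic. Étale locally (using the Mantovan torsor $\igm$, which is a pro-étale $\mathcal{M}_{\overline{\mu}}(\zp)$-torsor), these graded pieces are isomorphic to $\mbxcan_i$. The extension classes of the filtration are then trivialized after a further faithfully flat (perfection-type, $\Aut_{\calG}(\mbx)^{\circ}$-torsor) cover, using the torsor statement in the special fibre from \cite[Proposition 5.4.3]{DAddeziovH}. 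Step 2 shows that sections exist fpqc-locally.

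\textbf{Step 3: flatness of the group.} It remains to see that $\Aut_{\calG}(\mbxcan)$ is flat over $\zpbr$. Since it is $p$-torsion free and $p$-adically complete, with special fibre $\Aut_{\calG}(\mbx)$, this follows from $\mathcal{F}(R)=\mathcal{F}(R/p)$ type rigidity, as in \S\ref{subsub:FormalLiftingGeneral}. Combining Steps 2 and 3 with the quasi-torsor property gives the proposition.

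The main obstacle is Step 2, specifically trivializing the extension classes between slope pieces over the $p$-adic formal scheme rather than merely mod $p$. My first attempt would be to reduce modulo $p$, where the torsor property of \cite{DAddeziovH} is known. I would then lift using that both $\igcsf$ and $(\scrshat_K)^{[\bmu]}$ are flat over $\zpbr$, applying the fibrewise flatness criterion for $p$-adic formal schemes: a morphism of $p$-torsion-free $p$-complete objects that is faithfully flat mod $p$ is faithfully flat.
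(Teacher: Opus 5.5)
Your last paragraph is, by itself, the paper's proof. Work over an affine open $W\subset(\scrshat_K)^{[\bmu]}$. The preimage $\mathfrak{Ig}_{\mathrm{CS},W}$ is affine because $\igcs\to\operatorname{Sh}_K^{[\bmu]}$ is affine. Then apply Lemma \ref{Lem:CompletelyFlat} to $S=\mathcal{O}(W)\to T=\mathcal{O}(\mathfrak{Ig}_{\mathrm{CS},W})$: both are $p$-adically complete and $p$-torsion free, and $S/p\to T/p$ is faithfully flat because $\igcs\to\operatorname{Sh}_K^{[\bmu]}$ is a torsor for $\Aut_{\calG}(\mbx)$, which is a group scheme over a field. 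This criterion needs the base $S$ to be Noetherian. That holds here, since $W$ is formally of finite type over $\oee$, so state it with that hypothesis rather than for arbitrary $p$-complete $p$-torsion-free objects. No base change to $\zpbr$ is needed, because everything is already defined over $\oee$.

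Steps 1--3 should be dropped. Once the mod-$p$ criterion is in hand you never need fpqc-local sections or flatness of the group, and as written these steps do not hold up.

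\begin{itemize}
\item Step 2 trivializes the extension classes using the cover $\igcsf\to\igmf$. But faithful flatness of that cover over the formal base is equivalent to the proposition itself, so the argument is circular.
\item Step 3 is wrong. The rigidity $\mathcal{F}(R)=\mathcal{F}(R/p)$ holds for $\Aut_G(\tildex)$ and for Witt lifts of perfect schemes, not for $\Aut_{\calG}(\mbxcan)$. By Grothendieck--Messing, an automorphism of $\mbx_{R/p}$ lifts to $\mbxcan_R$ only if it preserves the Hodge filtration. Also, $\Aut_{\calG}(\mbx)$ is not perfect (its identity component has trivial perfection), so $\Aut_{\calG}(\mbxcan)$ is not a Witt lift of its special fibre.
\item Step 3 also assumes $\mathcal{O}(\Aut_{\calG}(\mbxcan))$ is $p$-torsion free. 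Over $\oee$ that is exactly what flatness means, so it cannot be taken as given.
\end{itemize}

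In the paper the implication runs the other way. Faithful flatness of $\igcsf\to(\scrshat_K)^{[\bmu]}$, together with the quasi-torsor property, gives Corollary \ref{Cor:Torsor}, including flatness of $\Aut_{\calG}(\mbxcan)$. That flatness is then used in Lemma \ref{Lem:InverseLimitGroupSchemes}.
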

To prove Proposition \ref{Prop:FaithfullyFlat}, we will need the following lemma. 
\begin{Lem}[Proposition 5.1 of \cite{BhattDirectSummand}] \label{Lem:CompletelyFlat} 
Let $S \to T$ be a morphism of commutative rings where $S$ is Noetherian. Suppose that there is an element $t \in S$ such that both $S$ and $T$ are $t$-adically complete and $t$-torsion free. If $S/t \to T/t$ is (faithfully) flat, then $S \to T$ is (faithfully) flat.
\end{Lem}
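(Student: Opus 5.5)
The plan is to check flatness through the vanishing of $\operatorname{Tor}^S_i(M,T)$ for all finitely generated $S$-modules $M$. Since $S$ is Noetherian, this suffices. I would split $M$ into its $t$-power torsion and its $t$-torsion-free quotient.

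\emph{Step 1: the case $t^n M=0$.} First I show that $S/t^n \to T/t^n$ is flat for every $n$. Because $S$ and $T$ are $t$-torsion free, multiplication by $t^{k}$ gives isomorphisms $S/t \xrightarrow{\sim} t^kS/t^{k+1}S$, and similarly for $T$. Hence $\operatorname{gr}_t(T/t^n) \cong \operatorname{gr}_t(S/t^n) \otimes_{S/t} T/t$. The local flatness criterion for a nilpotent ideal, applied to $(t) \subset S/t^n$, then gives flatness of $T/t^n$ over $S/t^n$. Since $T$ is $t$-torsion free, we have $T \otimes^L_S S/t^n \simeq T/t^n$. Therefore, if $t^nM=0$,
\[ \operatorname{Tor}_i^S(M,T) \cong \operatorname{Tor}_i^{S/t^n}(M, T/t^n) = 0 \quad (i\ge 1). \]

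\emph{Step 2: the case $M$ $t$-torsion free.} Choose a resolution $F_\bullet \to M$ by finite free $S$-modules, and set $K = F_\bullet \otimes_S T$. Each term of $K$ is a finite free $T$-module, hence $t$-adically complete. Since $T$ is $t$-torsion free, $K/t = F_\bullet \otimes_S T/t$, and
\[ H_i(K/t) = \operatorname{Tor}_i^{S/t}(M/t, T/t), \]
using $M \otimes^L_S S/t \simeq M/t$ (as $M$ is $t$-torsion free). By flatness mod $t$, this vanishes for $i \geq 1$. The long exact sequence for $K \xrightarrow{t} K \to K/t$ then shows that $t$ acts surjectively on $H_i(K)$ for $i \ge 1$. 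The $H_i(K)$ are derived $t$-complete, being homology of a complex of (derived) $t$-complete modules. Derived Nakayama therefore forces $H_i(K) = \operatorname{Tor}_i^S(M,T) = 0$ for $i\ge 1$.

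\emph{Step 3: general $M$.} Since $S$ is Noetherian, the $t$-power torsion $M[t^\infty]$ is killed by some $t^n$. The short exact sequence $0 \to M[t^\infty] \to M \to M/M[t^\infty] \to 0$ and the associated long exact Tor sequence, together with Steps 1 and 2, give $\operatorname{Tor}_1^S(M,T)=0$. Hence $T$ is flat over $S$.

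\emph{Step 4: faithful flatness.} Suppose $S/t \to T/t$ is faithfully flat. Since $S$ is $t$-adically complete, $t$ lies in the Jacobson radical of $S$, so every maximal ideal $\mathfrak m$ of $S$ contains $t$. Then $T/\mathfrak m T = (T/t) \otimes_{S/t} S/\mathfrak m \neq 0$ by faithful flatness mod $t$. Together with flatness, this gives faithful flatness.

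The main point of care is Step 2. There one must use derived completeness of the homology groups rather than classical completeness; this is harmless, since homology of a complex of derived complete modules is derived complete. One must also make sure the torsion-free hypothesis on $T$ is really what identifies $K/t$ with the reduction mod $t$.
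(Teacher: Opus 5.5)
Your proof is correct. The paper gives no proof of this lemma: it quotes Proposition~5.1 of \cite{BhattDirectSummand} and moves on. So your write-up is a self-contained replacement for that citation, not a variant of an in-text argument.

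Its main benefit is that it shows where each hypothesis enters:
\begin{itemize}
\item $t$-torsion-freeness of $S$ and $T$ gives flatness of $S/t^n \to T/t^n$, the identification $T \otimes^L_S S/t^n \simeq T/t^n$, and injectivity of $t$ on $K$.
\item Completeness of $T$ feeds derived Nakayama.
\item Noetherianity supplies finite free resolutions and bounded $t$-power torsion.
\item Completeness of $S$ is used only in Step~4. It is genuinely needed there: $\mathbb{Z} \to \mathbb{Z}_p$ with $t=p$ is flat and an isomorphism mod $t$, but not faithfully flat.
\end{itemize}

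Two remarks on the route.

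First, you could stop after Step~1 and invoke the standard criterion that a $t$-adically complete module over a Noetherian ring, flat modulo every $t^n$, is flat. Steps~2--3 are essentially a derived-Nakayama proof of that criterion in this torsion-free setting.

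Second, the torsion/torsion-free splitting in Step~3 is really needed for your method. For a general finitely generated $M$ one has $H_1(K/t) \cong M[t] \otimes_{S/t} T/t$, which is usually nonzero. The long exact sequence then only shows that $t$ is surjective on $\operatorname{Tor}^S_i(M,T)$ for $i \geq 2$ and injective on $\operatorname{Tor}^S_1(M,T)$. That is not enough to apply Nakayama in degree one.
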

\begin{proof}[Proof of Proposition \ref{Prop:FaithfullyFlat}]
Let $W \subset \left(\scrshat_K\right)^{[\bmu]}$ be an affine open formal subscheme, then the base change $\mf{Ig}_{\mathrm{CS},W}$ is an affine formal scheme because $\igcs \to \operatorname{Sh}_K^{[\bmu]}$ is affine. Now we are in the situation of Lemma \ref{Lem:CompletelyFlat} with $S=\mathcal{O}(W)$ and $T=\mathcal{O}(\mf{Ig}_{\mathrm{CS},W})$. Indeed, both source and target are $p$-adically complete and $p$-torsion free, and $S/p \to T/p$ is faithfully flat. Thus we deduce that $S \to T$ is faithfully flat and adic. Therefore $\mf{Ig}_{\mathrm{CS},W} \to W$ is representable in faithfully flat morphisms; the proposition follows. 
\end{proof}
\begin{Cor} \label{Cor:Torsor}
The morphism $\igcsf \to \left(\scrshat_K\right)^{[\bmu]}$ is a torsor for $\Aut_{\mathcal{G}}(\mbxcan)$, and $\Aut_{\mathcal{G}}(\mbxcan) \to \spf \mathcal{O}_E$ is representable in faithfully flat morphisms.
\end{Cor}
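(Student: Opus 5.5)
The plan is to deduce the corollary from Proposition \ref{Prop:FaithfullyFlat} together with the quasi-torsor statement for the product formula. Specialize \S\ref{subsub:FormalLiftingGeneralII} to $b=b_{\mu}$ and $\tilde{x}=\xi^{\mathrm{can}}$; this is allowed because $\mbxcan$ corresponds to $\Fil^\bullet_{\mu^{-1}}$, as assumed in \S\ref{sss.pdRmapConstruction}. That discussion shows the map $\igcsf \to (\scrshat_K)^{[\bmu]}$ is a quasi-torsor for $\Aut_{\mathcal{G}}(\mbxcan)$. Concretely, the action map induces an isomorphism
\[ \igcsf \times \Aut_{\mathcal{G}}(\mbxcan) \xrightarrow{\sim} \igcsf \times_{(\scrshat_K)^{[\bmu]}} \igcsf, \]
which is the same isomorphism used in the proof of Lemma \ref{Lem:TensorPreservingI}. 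Everything is first defined over $\zpbr$; since $\mbx$, $\mbxcan$ and the action are defined over $k_E$ and $\mathcal{O}_E$ respectively, I will descend along $\mathcal{O}_E \to \zpbr$. The same reasoning should also apply over $\mathcal{O}_E$ directly.

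To upgrade from quasi-torsor to torsor, I need the structure map to be an fpqc (or flat-topology) covering. That is exactly Proposition \ref{Prop:FaithfullyFlat}: the map is representable in faithfully flat morphisms, and it is affine because $\igcs \to \operatorname{Sh}_K^{[\bmu]}$ is affine. So faithfully flat locally on the base the map acquires a section, and pulling back the displayed isomorphism along such a section trivializes it. This gives the torsor statement.

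For the second claim, I would base change the displayed isomorphism along a point of $\igcsf$ over an affine open $W$. This identifies $\Aut_{\mathcal{G}}(\mbxcan)\times_{\spf \mathcal{O}_E}\mf{Ig}_{\mathrm{CS},W}$ with $\mf{Ig}_{\mathrm{CS},W}\times_W \mf{Ig}_{\mathrm{CS},W}$. The right side is faithfully flat over $\mf{Ig}_{\mathrm{CS},W}$, since it is a base change of the faithfully flat map from Proposition \ref{Prop:FaithfullyFlat}.

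Next I need $\mf{Ig}_{\mathrm{CS},W}\to \spf\mathcal{O}_E$ to be faithfully flat, so that I can descend flatness of $\Aut_{\mathcal{G}}(\mbxcan)$ by fpqc descent of flatness. This holds because $W$ is $p$-torsion free and smooth over $\mathcal{O}_E$ and the map to $W$ is faithfully flat; surjectivity is fine since $W$ is nonempty for a suitable choice. Here Lemma \ref{Lem:CompletelyFlat} ensures that flatness behaves well for $p$-adically complete rings. I would argue mod $p^n$ for each $n$ and pass to the limit using $p$-completeness, so as to work with genuine schemes.

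The main obstacle is technical. The group $\Aut_{\mathcal{G}}(\mbxcan)$ is non-noetherian, so descent of flatness has to be done mod $p^n$ for affine schemes. I also need to check that the quasi-torsor isomorphism of \S\ref{subsub:FormalLiftingGeneralII}, which is stated on the Hamacher--Kim test objects, holds on all of $\nilp_{\mathcal{O}_E}$. I would handle the latter by the rigidity $\mathcal{F}(R)=\mathcal{F}(R/p)$ from \S\ref{subsub:FormalLiftingGeneral}, reducing to the perfect special fiber, where it is the torsor statement of \cite[Proposition 5.4.3]{DAddeziovH}.
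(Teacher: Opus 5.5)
Your proposal is correct and is essentially the argument the paper intends (the corollary is stated without a separate proof): the quasi-torsor property from \S\ref{subsub:FormalLiftingGeneralII}, specialized to $\xi^{\mathrm{can}}$ and combined with Proposition \ref{Prop:FaithfullyFlat}, gives the torsor statement, and faithful flatness of $\Aut_{\mathcal{G}}(\mbxcan)$ then follows by descent along the faithfully flat map $\mf{Ig}_{\mathrm{CS},W} \to \spf \mathcal{O}_E$ (this descent is also the source of the $p$-torsion-freeness the paper later uses in Lemma \ref{Lem:InverseLimitGroupSchemes}). The extra check you flag about test objects is not needed: the paper treats the quasi-torsor statement for $\pi_{\infty}$ as a statement about functors on all of $\nilp_{\zpbr}$, and the restricted class of test objects it mentions later concerns only the de Rham tensors.
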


\subsubsection{} Let $\ul{\mathcal{M}_{\overline{\mu}}(\zp)} \to \spf \mathcal{O}_E$ be the unique pro-(finite \'etale) lift of the pro-(finite \'etale) group scheme $\ul{\mathcal{M}_{\overline{\mu}}(\zp)}$. We claim that it can be identified with the subgroup $\Aut_{\mathcal{G}}(\mbxcan)^{\mathrm{slp}} \subset \Aut_{\mathcal{G}}(\mbxcan)$ consisting of automorphisms preserving the direct sum decomposition \eqref{eq:LiftSlopeDecomposition}. Indeed, $\Aut_{\mathcal{G}}(\mbxcan)^{\mathrm{slp}}$ is a lift of $\Aut_{\mathcal{G}}(\mbx)^{\operatorname{slp}}=\ul{\mathcal{M}_{\overline{\mu}}(\zp)}$, and it is moreover pro-(finite \'etale) because the map
\begin{align}
    \Aut_{\mathcal{G}}(\mbxcan)^{\mathrm{slp}}(\spf \zpbr) \to \Aut_{\mathcal{G}}(\mbx)^{\mathrm{slp}}(\spec \fpbar) = \Aut_{\mathcal{G}}(\mbx)(\spec \fpbar)
\end{align}
is a bijection by \cite[Theorem 3.5]{ShankarZhou}. 

\subsubsection{} There is a unique pro-(finite \'etale) lift
\begin{align}
    \igmf \to \left(\scrshat_K\right)^{[\bmu]}
\end{align}
of $\igm \to \operatorname{Sh}_K^{[\bmu]}$ coming from the identification of the \'etale sites of $\operatorname{Sh}_K^{[\bmu]}$ and $\left(\scrshat_K\right)^{[\bmu]}$. By the uniqueness and Corollary \ref{Cor:Torsor} we see that it can equivalently be described as the pushout of $\mathfrak{Ig}_\mathrm{CS}$ along the natural map 
\begin{align}
    \Aut_{\mathcal{G}}(\mbxcan) \to \Aut_{\mathcal{G}}(\mbxcan)^{\operatorname{slp}}=\ul{\mathcal{M}_{\overline{\mu}}(\zp)}
\end{align}
sending an automorphism of $\mbxcan$ to the induced automorphism of the associated graded of the filtration induced by \eqref{eq:LiftSlopeDecomposition}, or equivalently as the quotient of $\mathfrak{Ig}_\mathrm{CS}$ by $\Aut_{\mathcal{G}}(\mbxcan)^\circ$. It follows from fpqc descent along $\igcsf \to \left(\scrshat_K\right)^{[\bmu]}$ that the slope filtration of $\mby$ on the special fiber lifts (uniquely) to a filtration of $\mby$ over $\left(\scrshat_K\right)^{[\bmu]}$. Moreover, it follows that the trivialization of $\operatorname{gr}^{\bullet}_{\operatorname{slp}} \mby_{\igm}$ lifts uniquely to a trivialization of $\operatorname{gr}^{\bullet}_{\operatorname{slp}} \mby_{\igmf}$. 

\begin{Lem} \label{Lem:InverseLimitGroupSchemes}
The $p$-adic formal group $\Aut_{\mathcal{G}}(\mbxcan) \to \spf \oee$ can be written as $\varprojlim_n U_n$, with $U_n \to \spf \oee$ representable by finite faithfully flat group schemes. 
\end{Lem}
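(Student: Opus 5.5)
We will use the semidirect product decomposition $\Aut_{\mathcal{G}}(\mbxcan) = \Aut_{\mathcal{G}}(\mbxcan)^{\circ} \rtimes \ul{\mathcal{M}_{\overline{\mu}}(\zp)}$, obtained by lifting the decomposition of $\Aut_{\calG}(\mbx)$ from \S\ref{sub:MuOrdinaryIgusa} via the identification of $\Aut_{\mathcal{G}}(\mbxcan)^{\mathrm{slp}}$ with $\ul{\mathcal{M}_{\overline{\mu}}(\zp)}$. This reduces the statement to the two factors separately. The second factor is already a pro-finite-\'etale group: it is $\varprojlim_n \ul{\mathcal{M}_{\overline{\mu}}(\zp)/K_n}$, where $K_n = \ker(\mathcal{M}_{\overline{\mu}}(\zp) \to \mathcal{M}_{\overline{\mu}}(\mathbb{Z}/p^n))$ is a cofinal system of normal open subgroups.

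For the identity component we will realize $\Aut_{\mathcal{G}}(\mbxcan)$ as a closed subgroup of $\Aut(\mbxcan) \subseteq \prod_{i\le j} \Hom(\mbxcan_i,\mbxcan_j)$, using the slope decomposition \eqref{eq:LiftSlopeDecomposition}. By Tate's theorem (rigidity), $\Hom(\mbxcan_i,\mbxcan_j) = \varprojlim_n \Hom(\mbxcan_i[p^n],\mbxcan_j[p^n])$. For each $n$ we set
\[ U_n := \text{the scheme-theoretic image of } \Aut_{\mathcal{G}}(\mbxcan) \text{ in } \Aut(\mbxcan[p^n]). \]
Here $\Aut(\mbxcan[p^n])$ is a closed subscheme of $\Hom(\mbxcan[p^n],\mbxcan[p^n])$, which is a finite scheme over $\oee$ because Hom between finite flat group schemes is representable by a finite group scheme when the source is connected and the target is \'etale. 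In the $\mu$-ordinary case the unipotent part only involves maps $\mbxcan_i \to \mbxcan_j$ with slopes of the required ordering, and these are precisely the Hom schemes that are finite. The groups $U_n$ are then finite group schemes over $\oee$, and $\Aut_{\mathcal{G}}(\mbxcan) = \varprojlim_n U_n$ because the transition maps are compatible and the limit recovers the Tate-module description.

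The remaining issue, and the main obstacle, is \emph{flatness} of the $U_n$. Scheme-theoretic images need not be flat, and we know flatness only for the full $\Aut_{\mathcal{G}}(\mbxcan)$ (Corollary~\ref{Cor:Torsor}). We plan two routes. The first is to note that $\oee$ is a DVR, so flatness is equivalent to $\varpi$-torsion freeness. We would define $U_n$ as the flat closure, i.e.\ the schematic closure of the generic fiber of the image. The inverse limit is unchanged because $\Aut_{\mathcal{G}}(\mbxcan)$ is itself flat by Corollary~\ref{Cor:Torsor}, so it factors through the flat closures; the flat closures of group schemes over a DVR are again group schemes. The second route, as a backup, is to give an explicit description. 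We would identify $\Aut_{\mathcal{G}}(\mbxcan)^\circ$, via the filtration by the center series of $\mathcal{U}_{\overline{\mu}}$, as an iterated extension of groups of the form $T_p$ of $p$-divisible groups built from $\Hom(\mbxcan_i,\mbxcan_j)$ as in Lemma~\ref{lemma.cover-and-tate-module-rep-properties}. Each such $T_p\mathcal{H}' = \varprojlim \mathcal{H}'[p^n]$ is visibly an inverse limit of finite flat group schemes, and the extensions and semidirect products preserve this form. Finally, faithfulness (surjectivity onto $\spf\oee$) of each $U_n$ is automatic, since each contains the identity section.
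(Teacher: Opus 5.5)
Your architecture matches the paper's. You take $U_n$ to be the scheme-theoretic image of $\Aut_{\mathcal{G}}(\mbxcan)$ in $\Aut(\mbxcan[p^n])$, and flatness follows because its coordinate ring embeds in $\mathcal{O}(\Aut_{\mathcal{G}}(\mbxcan))$, which is $p$-torsion-free by Corollary~\ref{Cor:Torsor}; your ``flat closure'' is therefore just the image itself. The limit statement follows from the closed immersion $\Aut_{\mathcal{G}}(\mbxcan)\subseteq\Aut(\mbxcan)=\varprojlim_n\Aut(\mbxcan[p^n])$. The semidirect-product reduction you open with is never actually used.

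The gap is finiteness of $U_n$. You deduce it from finiteness of the ambient Hom scheme, and that is false.
\begin{itemize}
\item Over $\oee$, $\Hom(\mu_{p^n},\mathbb{Z}/p^n\mathbb{Z})$ is a single point on the special fiber but has $p^n$ geometric points on the generic fiber. It is therefore quasi-finite but not finite, and it is a factor of $\Hom(\mbxcan[p^n],\mbxcan[p^n])$ whenever $\mbxcan$ has both multiplicative and \'etale parts.
\item More seriously, even on the special fiber the ambient is not finite once $\mbx$ has a local-local slope piece, which is exactly the non-ordinary case. For $\mathrm{LT}$ as in \S\ref{sub:ExampleIII}, the maps $N_t\colon\mathrm{LT}[p]\twoheadrightarrow\alpha_p\xrightarrow{t}\alpha_p\hookrightarrow\mathrm{LT}[p]$ satisfy $N_t^2=0$, so $t\mapsto 1+N_t$ embeds $\mathbb{G}_a$ into $\Aut(\mbx[p])$.
\item The same $\alpha_p$-phenomenon makes Hom schemes between truncations of two local-local pieces non-finite, however the slopes are ordered. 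So ``the correctly ordered Homs are finite'' fails.
\item Your criterion ``source connected, target \'etale'' also points the wrong way. The off-diagonal entries go from lower to higher slope, as in $\Hom(\mathbb{Z}/p^n\mathbb{Z},\mu_{p^n})=\mu_{p^n}$.
\end{itemize}
In the ordinary case, restricting to upper-triangular blocks, all blocks are finite and your argument works. In general, finiteness has to come from $\Aut_{\mathcal{G}}(\mbxcan)$ itself, not from the scheme it sits in.

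The paper gets this input from the proof of \cite[Lemma 5.5.3]{DAddeziovH}, which shows that the special fiber of the image is a finite group scheme over $k_E$. It then passes to the $p$-adic completion of the image, which is finite over $\spf\oee$ by Nakayama and faithfully flat by Lemma~\ref{Lem:CompletelyFlat}.

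You could also close the gap directly, as follows.
\begin{enumerate}
\item The image is flat of finite type over $\oee$, and its generic fiber is closed in $\Aut(\mbxcan[p^n]_E)$, which is finite \'etale.
\item Torsion-freeness forces every irreducible component to dominate $\spec\oee$, so the special fiber is $0$-dimensional and hence finite.
\item Since $\mathcal{O}(\Aut_{\mathcal{G}}(\mbxcan))$ is $p$-adically separated, the coordinate ring of the image injects into its $p$-adic completion. That completion is finite by Nakayama, so the image itself is finite.
\end{enumerate}

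Your second route, via iterated extensions of Tate modules, might also yield finiteness. But you propose it only for flatness, and the exact sequences it needs over all of $\nilp_{\oee}$ are not established.
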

\begin{proof}
Consider the affine finite type\footnote{As in \cite[Section 5.5]{DAddeziovH}, we note that the automorphism group of an affine finite $\oee$-scheme $\spec A$ is a closed subgroup of $\operatorname{GL}_{\oee}(A)$, hence affine and of finite type.} group scheme over $\spec \oee$ given by the automorphism group $\Aut(\mbxcan[p^n])$. The natural map $\Aut_{\mathcal{G}}(\mbxcan) \to \Aut(\mbxcan[p^n])$ corresponds to a morphism $\mathcal{O}(\Aut(\mbxcan[p^n])) \to \mathcal{O}(\Aut_{\mathcal{G}}(\mbxcan))$ which factors through a quotient $\mathcal{O}(\Aut(\mbxcan[p^n])) \to \mathcal{O}(U_n')$ via an injective map $$\mathcal{O}(U_n') \to \mathcal{O}(\Aut_{\mathcal{G}}(\mbxcan)).$$ It follows formally that $U_n' \to \Aut(\mbxcan[p^n])$ is a closed immersion of affine finite type group schemes over $\spec \oee$. Since $\mathcal{O}(U_n')$ is $p$-torsion free by Corollary \ref{Cor:Torsor}, it follows that $U_n' \to \spec \oee$ is faithfully flat. \smallskip 

We now set $U_n=\spf \mathcal{O}(U_n') = U_n' \times_{\spec \oee} \spf \oee$ so that $\mathcal{O}(U_n)$ is the $p$-adic completion of $\mathcal{O}(U_n')$. It follows from the proof of \cite[Lemma 5.5.3]{DAddeziovH} that $U_{n,k_{E}}'$ is a finite group scheme over $k_{E}$ and thus by Nakayama that $U_{n} \to \spf \oee$ is (representable in) finite morphisms. It follows from Lemma \ref{Lem:CompletelyFlat} that $U_n \to \spf \oee$ is representable in faithfully flat morphisms. It remains to show that the natural map $\Aut_{\mathcal{G}}(\mbxcan) \to \varprojlim_n U_n$ is an isomorphism. For this, we note that $\Aut_{\mathcal{G}}(\mbxcan) \subset \Aut(\mbxcan)$ is a closed immersion and that $\Aut(\mbxcan) = \varprojlim_n ( \Aut(\mbxcan[p^n]) \times_{\spec \oee} \spf \oee)$. The lemma now follows formally. 
\end{proof}

\subsection{Adic Igusa varieties} \label{sub:InfiniteLevel} We now describe $\mu$-ordinary Igusa varieties on the generic fiber. 

\subsubsection{} We continue with the $\mu$-ordinary setting of \S \ref{subsub:FramingObject}. We start with the morphisms of formal schemes
\begin{align}
    \igcsf \to \left(\scrshat_K\right)^{[\bmu]} \to \scrshat_K,
\end{align}
which induce maps on adic generic fibers
\begin{align}
    \igcsa \to \left(\scrshat_K\right)^{[\bmu]}_{\eta} \to (\scrshat_{K})_\eta=:\mathbf{Sh}_{K}^{\mathrm{an},\circ}\subseteq \mathbf{Sh}_{K}^{\mathrm{an}}. 
\end{align}

\begin{Lem} \label{Lem:TorsorGenericFiber}
    The map $\igcsa \to \left(\scrshat_K\right)^{[\bmu]}_{\eta}$ is a pro-\'etale torsor for $\left(\Aut_{\mathcal{G}}(\mbxcan)\right)^{\ad}_{\eta}$. 
\end{Lem}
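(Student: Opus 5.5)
The plan is to pass to adic generic fibers in Corollary \ref{Cor:Torsor} and Lemma \ref{Lem:InverseLimitGroupSchemes}. By Corollary \ref{Cor:Torsor}, $\igcsf \to (\scrshat_K)^{[\bmu]}$ is an fpqc torsor for $\Aut_{\mathcal{G}}(\mbxcan)$. In particular, the action map
\[ \igcsf \times_{\spf \oee} \Aut_{\mathcal{G}}(\mbxcan) \to \igcsf \times_{(\scrshat_K)^{[\bmu]}} \igcsf \]
is an isomorphism of formal schemes. Taking adic generic fibers commutes with fiber products, so we get the analogous isomorphism after $(-)^{\ad}_\eta$. This shows that $\igcsa \to (\scrshat_K)^{[\bmu]}_\eta$ is a pseudo-torsor for $(\Aut_{\mathcal{G}}(\mbxcan))^{\ad}_\eta$. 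It remains to see that the map is a pro-étale cover, i.e. that it admits sections pro-étale locally.

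For this I would use the presentation $\Aut_{\mathcal{G}}(\mbxcan)=\varprojlim_n U_n$ from Lemma \ref{Lem:InverseLimitGroupSchemes}, with each $U_n$ finite and faithfully flat over $\spf \oee$. Set $\mathfrak{Ig}_n := \igcsf / \ker(\Aut_{\mathcal{G}}(\mbxcan)\to U_n)$. Concretely, $\mathfrak{Ig}_n$ is the quotient parametrizing isomorphisms $\mby[p^n]\simeq \mbxcan[p^n]$ compatible with $\mathcal{G}$-structure. Working over affine opens $W$ as in the proof of Proposition \ref{Prop:FaithfullyFlat}, I would show that $\mathfrak{Ig}_n \to (\scrshat_K)^{[\bmu]}$ is a $U_n$-torsor, hence finite and faithfully flat. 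This uses fpqc descent, together with the finiteness of $U_n$ and the fact that $\mathcal{O}(\igcsf)$ is the completed colimit of the $\mathcal{O}(\mathfrak{Ig}_n)$. On generic fibers $U_{n,\eta}$ is a finite flat group over $E$, hence finite étale since we are in characteristic $0$. So $\mathfrak{Ig}_{n,\eta}\to (\scrshat_K)^{[\bmu]}_\eta$ is a finite étale $U_{n,\eta}$-torsor. Finally, $\igcsa \sim \varprojlim_n \mathfrak{Ig}_{n,\eta}$ in the sense of Scholze, since $\mathcal{O}(\igcsf)$ is the $p$-adic completion of $\varinjlim_n \mathcal{O}(\mathfrak{Ig}_n)$. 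Hence $\igcsa$ is a pro-étale torsor for $\varprojlim_n U_{n,\eta} = (\Aut_{\mathcal{G}}(\mbxcan))^{\ad}_\eta$.

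The main obstacle is the intermediate step: showing that the finite-level quotients $\mathfrak{Ig}_n$ exist as formal schemes, are finite flat torsors for $U_n$, and have the completed colimit property. Here I would mimic the proof of Lemma \ref{Lem:InverseLimitGroupSchemes}. First, define $\mathfrak{Ig}_n$ as the scheme-theoretic image of $\igcsf$ in the finite $(\scrshat_K)^{[\bmu]}$-scheme $\operatorname{Isom}(\mby[p^n],\mbxcan[p^n])$. Its ring of functions is then $p$-torsion free. Next, deduce flatness from Lemma \ref{Lem:CompletelyFlat} and the mod $p$ statement; the mod $p$ statement is the corresponding fact for $\igcs$ over $\shg^{[\bmu]}$, proved in \cite{DAddeziovH}. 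An alternative that avoids this is to work purely on diamonds, using the identification of $\mathfrak{Ig}^{b,\diamondsuit}$ with the Igusa variety of \cite{DvHKZIgusaStacks} from \S\ref{subsub:CompatibilityProductFormula}. The torsor property on the generic fiber could then be read off from the product formula there, but the finite-level approach seems more direct.
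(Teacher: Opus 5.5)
Your proposal is correct and follows the paper's proof. In both, the map is a pseudo-torsor because adic generic fibers commute with fiber products; one then writes $\igcsf$ as an inverse limit of torsors for the finite flat $U_n$ of Lemma~\ref{Lem:InverseLimitGroupSchemes}, passes to generic fibers (which commute with inverse limits), and uses that the generic fiber of a finite flat $U_n$-torsor is a $U_{n,\eta}$-torsor, for which the paper cites \cite[proof of Lemma 4.1.6]{FourierPaper}.

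The step you flag as the main obstacle is easier than you suggest. Take the pushout $\mathfrak{Ig}_n:=\igcsf\times^{\Aut_{\mathcal{G}}(\mbxcan)}U_n$; it is a finite flat $U_n$-torsor by fpqc descent from Corollary~\ref{Cor:Torsor}, and its limit is $\igcsf$ because $\Aut_{\mathcal{G}}(\mbxcan)=\varprojlim_n U_n$. This avoids two things in your version: the implicit assumption, in your quotient-by-kernel description, that $\Aut_{\mathcal{G}}(\mbxcan)\to U_n$ is an fpqc epimorphism; and the mod-$p$ flatness detour through \cite{DAddeziovH}.
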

\begin{proof}
It is clearly a quasi-torsor because taking adic generic fibers commutes with fiber products. We note that we can write
\begin{align}
    \igcsf \to \left(\scrshat_K\right)^{[\bmu]}
\end{align}
as an inverse limit of torsors $\mf{Y}_n \to \left(\scrshat_K\right)^{[\bmu]}$ for finite flat group schemes $U_n$, see Lemma \ref{Lem:InverseLimitGroupSchemes}. Since taking adic generic fibers commutes with inverse limits, it thus suffices to check that the rigid fiber of a torsor for a finite faithfully flat $p$-adic formal group $U_n$ gives a torsor for $U_{n,\eta}$; this is explained in \cite[proof of Lemma 4.1.6]{FourierPaper}.
\end{proof}

\subsubsection{} Next, we identify the generic fiber of $\left(\Aut_{\mathcal{G}}(\mbxcan)\right)^{\ad}_{\eta}$. Recall that the map $\igcsf \to \scrshat_K$ is induced from the product formula by the canonical lift $ \xi^{\mathrm{can}}:\spf \mathcal{O}_E \to \mintfgbordmu$. We consider the composition $\xi_{\HT}$ of $\xi^{\mathrm{can}}_{E}$ with the local Hodge--Tate period map $M_{G,b_{\mu},[\mu],\mathcal{G}(\zp)} \to [\grgmu / \ul{\mathcal{G}(\zp)}]$ of level $\mathcal{G}(\zp)$, where $\mgbmu:=\mintgbordmu \times_{\spd \oee} \spd E$. We further consider $\xi_{\HT,V}$ defined as the composition
\begin{align}
  \spd E \xrightarrow{  \xi_{\HT}} [\grgmu / \ul{\mathcal{G}(\zp)}] \to [\operatorname{Gr}_{\operatorname{GL}(V),[\mu], \spd E}/\ul{\operatorname{GL}(\Lambda)(\zp)}].
\end{align}
Using the Beauville--Laszlo map $\left[\grgmu / \ul{\mathcal{G}(\zp)} \right] \to \operatorname{Bun}_{G}$ and its $\operatorname{GL}(V)$ analogue, we get compatible closed subgroups
\begin{align}
    \Aut(\xi_{\HT,V}) \to \widetilde{\operatorname{GL}(V)}_{b_{\mu},\spd E}, \qquad \Aut(\xi_{\HT}) \to \widetilde{G}_{b_{\mu}, \spd E}.
\end{align}
\begin{Lem} \label{Lem:AutomorphismTwoWaysI}
Under the isomorphism $\Aut(\tildex)^{\diamondsuit} \xrightarrow{\sim} \widetilde{\operatorname{GL}(V)}_{b_{\mu}}$ of Lemma \ref{Lem:DieudonneBundles}, the closed subgroup $\Aut(\xi_{\HT,V})$ is identified with $\Aut(\mbxcan)^{\diamondsuit}_{\spd E}$.
\end{Lem}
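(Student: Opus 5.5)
We compare both subgroups inside $\widetilde{\operatorname{GL}(V)}_{b_{\mu}}$ by evaluating on points. Both sides are closed subgroups, and v-sheaves are determined by their values on affinoid perfectoid test objects. So it suffices to show that, for every perfectoid Huber pair $(R,R^+)$ over $\spd E$ (with untilt $R^\sharp$ over $E$), an element $g \in \Aut(\tildex)^{\diamondsuit}(R,R^+)$ lies in $\Aut(\mbxcan)^\diamondsuit(R,R^+)$ if and only if its image in $\widetilde{\operatorname{GL}(V)}_{b_\mu}$ stabilizes $\xi_{\HT,V}$.

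\textbf{Unwinding $\Aut(\xi_{\HT,V})$.} Via the Beauville--Laszlo map, $\xi_{\HT,V}$ is the datum of the vector bundle $\mathcal{E}_{b_\mu}$ on $X_{(R,R^+)}$ together with a $\GL(\Lambda)(\zp)$-lattice, i.e. a $\zp$-local system $\mathbb{L}$, arising as the modification at $\infty$ determined by the Hodge--Tate filtration. Concretely $\mathbb{L} = T_p\mbxcan_{R^\sharp}$, with $\mathcal{E}_{b_\mu}|_{X\setminus\infty} \simeq \mathbb{L}\otimes\mathcal{O}$. An automorphism of $\mathcal{E}_{b_\mu}$ (an element of $\widetilde{\operatorname{GL}(V)}_{b_\mu}$) fixes $\xi_{\HT,V}$ exactly when the induced automorphism of $V_p = \mathbb{L}[\tfrac1p]$ (from $\mathcal{E}_{b_\mu}\otimes\mathcal{O}(X\setminus\infty)$ on global sections) preserves the lattice $\mathbb{L}$.

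\textbf{Unwinding $\Aut(\mbxcan)^\diamondsuit$.} Here we use the Scholze--Weinstein classification of $p$-divisible groups over $\mathcal{O}_C$ (and its relative version over perfectoid $R^{\sharp+}$) by pairs $(T,W)$. A quasi-isogeny $g$ of $\mbx_{R^+/\varpi}$ lifts uniquely to a quasi-isogeny of $\mbxcan_{R^{\sharp+}}$ by rigidity; this is the identification $\Aut(\tildex) = \Aut(\tildex^{\mathrm{can}})$. This quasi-isogeny is an actual automorphism iff it preserves $T_p\mbxcan$ inside $\widetilde{\mbx}^{\mathrm{can}}(R^{\sharp+}) \supset V_p\mbxcan$. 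Indeed, a quasi-isogeny preserving the Tate module integrally and whose inverse does so too is an isomorphism; this can be checked on geometric points, and by the full faithfulness $T_p$ of Scholze--Weinstein for $p$-divisible groups over $\mathcal{O}_C$ it then holds v-locally. Finally, the action of $g$ on $V_p\mbxcan = \widetilde{\mbx}^{\mathrm{can}}(R^{\sharp+})\cap\ldots$ agrees with its action on $H^0(X\setminus\infty,\mathcal{E}_{b_\mu})$ under the identification of Lemma \ref{Lem:DieudonneBundles}. This uses the standard fact that the universal cover evaluates to $(\mathbb{D}\otimes B^+_{\cris})^{\varphi=p}$-type periods, with the Tate module cut out by the Hodge filtration at $\infty$, i.e. the identification of $\widetilde{\mbx}$ with Banach--Colmez space sections $H^0(\mathcal{E}_{b_\mu}\otimes\mathcal{O}(1))$ restricted appropriately.

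\textbf{Main obstacle.} The main difficulty will be this compatibility between the two descriptions of $V_p$, namely via $\tildex$ and via the modified bundle. It must be set up functorially in $(R,R^+)$ with the correct normalizations (covariant Dieudonn\'e theory, $\varphi = F/p$, and the $\mu^{-1}$ conventions). I would reduce to geometric points $\spa(C,C^+)$ using that both are closed subgroups, then invoke Scholze--Weinstein's theorem that $\mbxcan_{\mathcal{O}_C}\mapsto (T_p, \Lie\otimes C(1))$ matches the modification of $\mathcal{E}_{b_\mu}$ at $\infty$ (Fargues' description). With that in place, the two membership conditions coincide and the lemma follows.
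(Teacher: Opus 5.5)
Your overall plan (reduce to geometric points, then combine the Scholze--Weinstein classification of $p$-divisible groups over $\mathcal{O}_C$ by $(T,W)$ with its compatibility with modifications of bundles on the Fargues--Fontaine curve, \cite[Theorem 5.1.4, Proposition 5.1.6]{ScholzeWeinstein}) is exactly the paper's proof. However, your unwinding of the two subgroups rests on a false step. Rigidity does not lift a quasi-isogeny of $\mbx_{R^+/\varpi}$ to a quasi-isogeny of $\mbxcan_{R^{\sharp+}}$. It applies only to nilpotent thickenings, and in the limit the denominators become unbounded, so all you get is an automorphism of the universal cover $\widetilde{\mbx}^{\mathrm{can}}$. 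For example, in the modular curve case, where $\mbxcan=\qp/\zp\oplus\mu_{p^\infty}$, the off-diagonal part of $\Aut(\tildex)(\mathcal{O}_C/p)$ is $\widetilde{\mu}_{p^\infty}(\mathcal{O}_C)$, a Banach--Colmez space. By contrast, self-quasi-isogenies of $\mbxcan_{\mathcal{O}_C}$ contribute only $\operatorname{Hom}(\qp/\zp,\mu_{p^\infty})[1/p]=\qp(1)$ there. Correspondingly, a general $g\in\widetilde{\operatorname{GL}(V)}_{b_\mu}$ induces no automorphism of $V_p$: on $X\setminus\infty$ it acts $B_e$-linearly on $V_p\otimes_{\qp}B_e$, where $B_e=H^0(X\setminus\infty,\mathcal{O})$, and need not preserve $V_p\otimes 1$. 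So your description of $\Aut(\xi_{\HT,V})$ drops its essential condition: $g$ must preserve the modification, i.e.\ the subsheaf $\mathbb{L}\otimes\mathcal{O}_X\subset\mathcal{E}_{b_\mu}$. Only after that does one ask that the resulting element of $\operatorname{GL}(V_p)$ preserve $\mathbb{L}$. This condition is where the content of the lemma lies.

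The repair stays within your framework. At a geometric point, \cite[Proposition 5.1.6]{ScholzeWeinstein} identifies $\widetilde{\mbx}^{\mathrm{can}}(\mathcal{O}_C)$ with global sections of (the appropriate twist of) $\mathcal{E}_{b_\mu}$, compatibly with the action of $\Aut(\tildex)$. Under this identification $V_p\mbxcan=\ker(\log)$ corresponds to the sections of $\mathbb{L}\otimes\mathcal{O}_X$. Now formulate both memberships as the single condition that $g$ maps $T_p\mbxcan$ onto itself inside this common space. This is in effect the criterion you wrote for $\Aut(\mbxcan)$, but it needs this justification rather than rigidity. On the bundle side, the condition makes $g$ preserve the subsheaf generated by $V_p$, hence the modification, hence $g\in\Aut(\xi_{\HT,V})$; the converse is immediate. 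On the $p$-divisible group side, the same preservation of the modification shows that $g|_{T_p}$ preserves the Hodge--Tate filtration. So by \cite[Theorem 5.1.4]{ScholzeWeinstein} it comes from a unique $h\in\Aut(\mbxcan_{\mathcal{O}_C})$, and $\widetilde{h}=g$ because both are automorphisms of $\mathcal{E}_{b_\mu}$ that agree away from $\infty$. Your discussion of checking isomorphy v-locally over general $R^{\sharp+}$ then becomes unnecessary: both groups are closed, so the comparison can be made on rank one geometric points, as in the paper.
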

\begin{proof}
This can be checked on rank one geometric points $(C, \mathcal{O}_C)$. There, it is a consequence of the Scholze--Weinstein classification of $p$-divisible groups in terms of their Hodge--Tate filtered $p$-adic Tate modules, see \cite[Theorem 5.1.4]{ScholzeWeinstein}, and its compatibility with crystalline Dieudonn\'e theory via modifications of vector bundles on the Fargues--Fontaine curve, see \cite[Proposition 5.1.6]{ScholzeWeinstein}. 
\end{proof}
\begin{Lem} \label{Lem:AutomorphismTwoWaysII}
Under the isomorphism $\Aut_{G}(\tildex)^{\diamondsuit} \xrightarrow{\sim} \widetilde{G}_{b_{\mu}}$ of Lemma \ref{Lem:DieudonneBundles}, the closed subgroup $\Aut_{\mathcal{G}}(\mbxcan)_{\spd E}^{\diamondsuit}$ is identified with $\Aut(\xi_{\HT})$. 
\end{Lem}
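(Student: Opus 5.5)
We reduce to Lemma \ref{Lem:AutomorphismTwoWaysI} by intersecting both sides with the $G$-structured subgroup. Both $\Aut_{\mathcal{G}}(\mbxcan)^{\diamondsuit}_{\spd E}$ and $\Aut(\xi_{\HT})$ are closed subgroups of $\widetilde{G}_{b_\mu,\spd E}=\Aut_G(\tildex)^{\diamondsuit}_{\spd E}$. We will show that each equals
\[
\widetilde{G}_{b_\mu,\spd E}\cap \Aut(\xi_{\HT,V}),
\]
where the intersection is taken inside $\widetilde{\operatorname{GL}(V)}_{b_\mu,\spd E}$.

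For $\Aut_{\mathcal{G}}(\mbxcan)$, the identification is essentially by definition. An automorphism of $\mbxcan$ lies in $\Aut_{\mathcal{G}}(\mbxcan)$ exactly when it lies in $\Aut(\mbxcan)$ and its image in $\Aut(\tildex)$ lies in $\Aut_G(\tildex)$. By \S\ref{subsub:FormalLiftingGeneralII}, $\Aut_{\mathcal{G}}(\mbxcan)$ is the stabilizer in $\Aut_G(\tildex)$ of the point $\xi^{\mathrm{can}}$, while $\Aut(\mbxcan)$ is the stabilizer in $\Aut(\tildex)$ of its image in $\operatorname{RZ}_{\mbx}$. Because $\mintfgbmu\to\operatorname{RZ}_{\mbx}$ is a closed immersion (hence a monomorphism), the two stabilizer conditions agree for elements of $\Aut_G(\tildex)$. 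Hence
\[
\Aut_{\mathcal{G}}(\mbxcan)=\Aut_G(\tildex)\times_{\Aut(\tildex)}\Aut(\mbxcan).
\]
Applying $(-)^{\diamondsuit}_{\spd E}$, which commutes with fiber products, and then Lemma \ref{Lem:AutomorphismTwoWaysI} gives the claimed description of $\Aut_{\mathcal{G}}(\mbxcan)^{\diamondsuit}_{\spd E}$.

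For $\Aut(\xi_{\HT})$, we must show that an element $g\in\widetilde{G}_{b_\mu}$ fixing $\xi_{\HT,V}$ in $[\operatorname{Gr}_{\operatorname{GL}(V)}/\ul{\operatorname{GL}(\Lambda)(\zp)}]$ already fixes $\xi_{\HT}$ in $[\grgmu/\ul{\mathcal{G}(\zp)}]$. We check this on geometric points $\spd(C,\mathcal{O}_C)$, which suffices because both sides are closed subgroups. Over $C$, the point $\xi_{\HT}$ is the modification of the $G$-bundle $\mathcal{E}_{b_\mu}$ to the trivial $G$-bundle, together with a $\mathcal{G}$-lattice structure on its $\zp$-points, i.e. a $\mathcal{G}(\zp)$-orbit of trivializations. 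The element $g$ acts through $\mathcal{E}_{b_\mu}$, and fixing $\xi_{\HT,V}$ means the induced automorphism of the trivial $\operatorname{GL}(V)$-bundle preserves the lattice $\Lambda$. That induced automorphism is $G$-structured, since $g\in\widetilde{G}_{b_\mu}$ and the modification is $G$-equivariant. It therefore lies in $G(\qp)\cap\operatorname{GL}(\Lambda)(\zp)=\mathcal{G}(\zp)$, because $\mathcal{G}$ is the Zariski closure of $G$ in $\operatorname{GL}(\Lambda)$ and $K_p$ is the stabilizer of $\Lambda$. This shows
\[
\Aut(\xi_{\HT})=\widetilde{G}_{b_\mu}\cap \Aut(\xi_{\HT,V}).
\]
Combining the two descriptions proves the lemma.

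The main obstacle is the second step, which requires care with stacky automorphisms. For families beyond geometric points, the condition ``fixes $\xi_{\HT,V}$'' must be read as the existence of a $\ul{\operatorname{GL}(\Lambda)(\zp)}$-valued element intertwining the two lattices, and one must check that this element lands in $\ul{\mathcal{G}(\zp)}$. Two facts should make this work. First, $\ul{\mathcal{G}(\zp)}\to\ul{\operatorname{GL}(\Lambda)(\zp)}$ is a closed immersion whose image is cut out by the tensors $s_\alpha$. Second, $g$ preserves the étale tensors, which correspond to $s_\alpha$ via the compatibility of the tensors on $\mintfgbmu$ with the local Shimura datum, as in \cite{PappasRapoportRZSpaces}. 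A further technical point is that the Hodge--Tate point $\xi_{\HT}$ is defined at level $\mathcal{G}(\zp)$. This is exactly why the comparison goes through the Scholze--Weinstein classification already used in Lemma \ref{Lem:AutomorphismTwoWaysI}.
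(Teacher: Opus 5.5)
Your proposal is correct and follows the paper's proof. Both subgroups are identified with $\widetilde{G}_{b_\mu}\cap \Aut(\xi_{\HT,V})$: the first via the description of $\Aut_{\mathcal{G}}(\mbxcan)$ as $\Aut_G(\tildex)\cap\Aut(\mbxcan)$ together with Lemma \ref{Lem:AutomorphismTwoWaysI}, and the second via the modification description of the Beauville--Laszlo map and the closed immersion $\grgmu\to\operatorname{Gr}_{\operatorname{GL}(V)}$, which your tensor/lattice argument with $G(\qp)\cap\operatorname{GL}(\Lambda)(\zp)=\mathcal{G}(\zp)$ spells out in more detail (the paper takes the first description as the definition rather than deriving it from stabilizers, and does not pass to geometric points in the second step).
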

\begin{proof}
By definition, $\Aut_{\mathcal{G}}(\mbxcan)$ is defined to be the intersection of $\Aut(\mbxcan)$ with $\Aut_G(\tildex)$ inside $\Aut(\tildex)$. Since $\grgmu \to \operatorname{Gr}_{\operatorname{GL}(V),[\mu^{-1}], \spd E}$ is a closed immersion, it follows from the moduli description of the Beauville--Laszlo map in terms of modifications of $G$-bundles, see \cite[Section III.3]{FarguesScholze}, that $\Aut(\xi_{\HT}) \subset \Aut(\xi_{\HT,V})$ is the intersection of $\Aut(\xi_{\HT,V})$ with $\widetilde{G}_{b_{\mu}}$ inside $\Aut(\tildex)^{\diamondsuit}$. We now conclude using Lemma \ref{Lem:AutomorphismTwoWaysI}. 
\end{proof}

\subsubsection{} Next, we reinterpret the map from $\igcsa$ to the Shimura variety in terms of the Hodge--Tate period map. By the discussion in \S \ref{subsub:CompatibilityProductFormula}, we can interpret this using the product formula map $\operatorname{Ig}^{b_{\mu},\mathrm{v}}_{\spd \oee} \times \mintgbordmu \to (\scrshat_K)^{\diamondsuit}=\scrs_K^{\diamond}$ of \cite[Corollary 4.5.3]{DvHKZIgusaStacks}. Passing to the generic fiber, we get a map
\begin{align}
    \operatorname{Ig}^{b_{\mu},\mathrm{v}}_{\spd E} \times \mgbmu \to \mathbf{Sh}_K^{\circ, \diamondsuit},
\end{align}
and the map $\operatorname{Ig}^{b_{\mu},\mathrm{v}}_{\spd E} \to \mathbf{Sh}_K^{\circ, \diamondsuit}$ comes from evaluating this at the $\spd E$ point coming from $\xi^{\mathrm{can}}_{\eta}$. Using the Igusa stack diagram, see \cite[Theorem I, proof of Proposition 4.4.5]{DvHKZIgusaStacks}, we can reinterpret $\operatorname{Ig}^{b_{\mu},\mathrm{v}}_{\spd E} \to \mathbf{Sh}_K^{\circ, \diamondsuit}$ as the fiber product 
\begin{equation}
    \begin{tikzcd}
       \operatorname{Ig}^{b_{\mu},\mathrm{v}}_{K^p,\spd E} \arrow{d} \arrow{r} & \spd E \arrow{d}{\xi_{\HT}} \\
       \mathbf{Sh}_K^{\circ, \diamondsuit} \arrow{r} & \left[\grgmu / \ul{\mathcal{G}(\zp)} \right]. 
    \end{tikzcd}
\end{equation}
We thus see that $\operatorname{Ig}^{b_{\mu},\mathrm{v}}_{K^p,\spd E}$ is a torsor for $\Aut(\xi_{\HT})$ over the closed subdiamond of $\mathbf{Sh}_K^{\circ, \diamondsuit}$ corresponding to the image of $\xi_{\HT}$, where $\Aut(\xi_{\HT})$ acts via its map to $\widetilde{G}_{b_{\mu},\spd E}$. Now $\igcsd \to \operatorname{Ig}^{b_{\mu},v}_{K^p,\spd E}$ is the canonical compactification of $\igcsd$ towards $\mathbf{Sh}_K^{\circ, \diamondsuit}$, see \cite[Corollary 4.4.3]{DvHKZIgusaStacks} and \cite[Corollary 18.8]{EtCohDiam}. By Lemma \ref{Lem:AutomorphismTwoWaysII}, we thus see that $\igcsd \to (\scrshat_K)^{[b_{\mu}], \diamondsuit}_{\spd E}$ is isomorphic to the base change of 
\begin{align}
    (\scrshat_K)^{[b_{\mu}], \diamondsuit}_{\spd E} \to \mathbf{Sh}_K^{\circ, \diamondsuit} \to \left[\grgmu / \ul{\mathcal{G}(\zp)} \right]
\end{align}
along $\xi_{\HT}$. 

\subsubsection{} Recall the infinite level local Shimura variety, which is a $\ul{\mathcal{G}(\zp)}$-torsor $$\minfgbmu \to \mgbmu.$$ Let $C$ be the completion of an algebraic closure of $E$ and let $\eab \subset C$ be the completion of the maximal abelian extension of $E$. We are going to choose an $\eab$-point $x$ of $\minfgbmu $ above $\xi^{\mathrm{can}}_{\eta}$; all our further constructions will depend on this choice. To make this choice compatible with our fixed choice of $\mu$, we recall the local Hodge--Tate period map
\begin{align}
    \pi_{\HT}^{\mathrm{loc}}:\minfgbmu \to \grgmu \xrightarrow{\sim} \operatorname{Fl}_{[\mu]}^{\diamondsuit}. 
\end{align}
Our choice of $\mu$ defines a $\spa E$ point $y$ of $\operatorname{Fl}_{[\mu]}$ given by $\Fil^\bullet_\mu$ (or the parabolic $P_{\mu}$).
\begin{Prop} \label{Prop:CompatibleChoice}
There is a choice of $\eab$-point $x$ of $\mathcal{M}_{G,b_{\mu},[\mu],\infty}$ above $\xi^{\can}_{\eta}$ such that $\pi_{\HT}^{\mathrm{loc}}(x)=y$.
\end{Prop}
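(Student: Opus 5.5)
The plan is to identify the fiber of $\minfgbmu \to \mgbmu$ over $\xi^{\can}_\eta$ as a torsor under $\ul{\mathcal{G}(\zp)}$ and pin down where its points land under $\pi^{\mathrm{loc}}_{\HT}$. Over $C$, a point $x$ above $\xi^{\can}_\eta$ amounts to a trivialization $T_p\mbxcan_{\mathcal{O}_C} \simeq \Lambda$ respecting the étale tensors. Such trivializations exist over $C$ by Kisin's comparison of crystalline and étale tensors, since $\xi^{\can}$ lies in $\mintfgbmu$. Under this trivialization, $\pi^{\mathrm{loc}}_{\HT}(x)$ is the Hodge--Tate filtration of $T_p\mbxcan\otimes C$. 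The set of values $\pi_{\HT}^{\mathrm{loc}}(x)$, as $x$ ranges over the fiber, is therefore a single $\mathcal{G}(\zp)$-orbit in $\operatorname{Fl}_{[\mu]}(C)$.

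\textbf{Step 1: the Hodge--Tate filtration is defined by a cocharacter over $\zp$.} The canonical lift decomposes as $\mbxcan = \bigoplus_i \mbxcan_i$ as in \eqref{eq:LiftSlopeDecomposition}. Each summand is, up to an unramified twist, a Lubin--Tate type group with CM by the centralizer torus of $\overline{\mu}$. Indeed, the Dieudonn\'e module is $\Lambda_{\zpbr}$ with Frobenius $\mu(p^{-1})\sigma$ and filtration $\Fil^\bullet_{\mu^{-1}}$, so $\mbxcan$ has CM by a maximal torus $\mathcal{T}\subset \mathcal{M}_\mu$ containing the image of $\mu$. Choosing $\mathcal{T}$ defined over $\zp$ is possible since $G$ is unramified and $\mu$ is defined over $\mathcal{O}_E$; one then uses a $\mathcal{T}$-valued CM canonical lift.

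For CM $p$-divisible groups, the Hodge--Tate filtration on $T_p\otimes C$ is the weight filtration of the Hodge cocharacter $\mu$ (up to the inverse dictated by the conventions fixed in \S\ref{sss.cocharacter-conv}), transported via the $\mathcal{T}(\zp)$-equivariant trivialization. Concretely, I would do the computation for the torus $T$: the local Shimura variety for $(T,b_\mu,\mu)$ at infinite level is $\ul{T(\qp)/T(\zp)}$-many copies of $\spd$ of the field cut out by Lubin--Tate theory. Its points are defined over $\eab$, because the Galois action on $T_p$ factors through the abelian character $\operatorname{Gal}(\overline{E}/E)^{ab}\to T(\zp)$ given by the reciprocity map composed with $\mu$ and the norm. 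Its Hodge--Tate map is constant with value $\Fil^\bullet_\mu$, since $T$ commutes with $\mu$.

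\textbf{Step 2: push the torus point into $\minfgbmu$.} Using the functoriality $\mathcal{M}_{T,b_\mu,\mu,\infty}\to \minfgbmu$, the torus point $x_T$ above $\xi^{\can}_\eta$ maps to a point $x$ above $\xi^{\can}_\eta$, because $\xi^{\can}$ is the image of the unique torus point at level $\mathcal{T}(\zp)$. The point $x$ is defined over $\eab$, and its Hodge--Tate image is the image of that of $x_T$, namely the filtration defined by $\mu$, which is $y$. Functoriality of $\pi_{\HT}^{\mathrm{loc}}$ in the group follows from its moduli description.

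\textbf{Main obstacle.} I expect the hard step to be the torus case. There one must verify that $\xi^{\can}$ is indeed the image of the torus local Shimura variety point, which amounts to identifying the Dieudonn\'e filtration $\Fil_{\mu^{-1}}$ with the one coming from $\mathcal{T}$. One must also fix the sign convention so that the Hodge--Tate filtration is $\Fil_\mu$ and not its opposite. I would settle the sign by reducing to $\mu_{p^\infty}$ and $\qp/\zp$, using Remark \ref{Rem:NormalizationsDieudonne}.
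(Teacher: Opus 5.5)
Your proposal is essentially the paper's proof. The paper likewise chooses a maximal torus $\mathcal{T}$ through which $\mu$ factors, observes that $\xi^{\can}$ comes from the integral local Shimura variety for $(\mathcal{T},b_\mu,[\mu])$, uses that the torus flag variety is a single point mapping to $y$, and obtains $\eab$-points of $\minftbmu$ from local class field theory; for the step you flag as the main obstacle, it simply cites Pappas--Rapoport and the Shankar--Zhou/Kisin--Zhou construction of the canonical lift. The sign check via $\mu_{p^\infty}$ and $\qp/\zp$ is unnecessary, since functoriality of $\operatorname{Fl}_{[\mu]}$ in the group already forces the image to be $y$; and the Lubin--Tate/``centralizer torus of $\overline{\mu}$'' description in Step~1 should be dropped, because that centralizer is the Levi $M_{\overline{\mu}}$, not a torus.
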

\begin{proof}
Let $\mathcal{T} \subset \mathcal{G}$ be a maximal torus such that $\mu$ factors through $\mathcal{T}_{\mathcal{O}_E}$. Then $b_{\mu} \in T(E)$, and we have a morphism $(\mathcal{T}, b_{\mu}, [\mu]) \to (\mathcal{G}, b_{\mu}, [\mu])$ of integral local Shimura data. By \cite[Remark 4.2.3]{PappasRapoportShtukas}, we have that $\xi^{\mathrm{can}}_{k_{E}} \in \minttbmu(\spd k_{E})$ and by the construction of the canonical lift, see \cite[Theorem 6.5]{ShankarZhou} or \cite[Theorem 3.4.5]{KisinZhou}, we in fact have $\xi^{\mathrm{can}} \in \minttbmu(\spd \oee)$. So we have reduced to the case that $T$ is an unramified torus, in which case $\operatorname{Fl}_{[\mu]}(\spd E) = \{y\}$. It thus suffices to show that $\minftbmu(\eab)$ is nonempty. Now $\minftbmu \times_{\spd E} \spd C$ is a locally profinite set noncanonically isomorphic to $\ul{T(\qp)}$, whose Galois action is described by local class field theory for $E$, see e.g. \cite[Proposition 3.15]{GleasonComponents}; it thus certainly has points over $\eab$. 
\end{proof}

\subsubsection{} Fix $E \subset L \subset C$ and let $x$ be an $L$-point of $\minfgbmu$ satisfying the conclusion of Proposition \ref{Prop:CompatibleChoice}, and consider $y=\pi_{\HT}(x)$. Then $\Aut(y)_{L}$ is identified with the stabilizer in $\ul{\mathcal{G}(\zp)}$ of the point $y$, which is just $\ul{\mathcal{P}_{\overline{\mu}}(\zp)} = \mathcal{G}(\zp) \cap P_{\mu}(E)$. Under the natural identification
\begin{align}
    \Aut_{\mathcal{G}}(\mbxcan)^{\diamondsuit}_{\spd L} \xrightarrow{\sim} \Aut(y)_{L} \xrightarrow{\sim} \ul{\mathcal{P}_{\overline{\mu}}(\zp)},
\end{align}
of Lemma \ref{Lem:AutomorphismTwoWaysII}, we remark that $\Aut_{\mathcal{G}}(\mbxcan)^{\circ,\diamondsuit}_{\spd L}$ is identified with $\ul{\mathcal{U}_{\overline{\mu}}(\zp)}$, since $\Aut_{\mathcal{G}}(\mbxcan)^{\circ}$ consists of those automorphisms acting trivially on the associated graded of the slope filtration. 

\subsubsection{} In fact, the morphism $\ul{\mathcal{P}_{\overline{\mu}}(\zp)} \to \widetilde{G}_{b_{\mu},L}$ induced by $y$ extends to a morphism $\ul{P_{\overline{\mu}}(\qp)} \to \widetilde{G}_{b_{\mu},L}$; the stabilizer in $G(\qp)$ of the point $y$. Indeed, this follows from the $G(\qp)$-equivariance of the Beauville--Laszlo map $\operatorname{BL}:\grgmu \to \bun_{G}$, see \cite[Chapter III.3]{FarguesScholze}. 

\subsubsection{} \label{subsub:MapsGenericFiber} Our choice of $x$ induces a left $\ul{P_{\ovmu}(\qp)}$-equivariant map (here and below we consider the $\ul{G(\qp)}$-action on $\mathbf{Sh}_{K^p,L}^{\circ, \diamondsuit}$ as a left action instead of a right action)
\begin{align}
    \igcsdE \to \mathbf{Sh}_{K^p,L}^{\circ, \diamondsuit}
\end{align}
lifting $\igcsdE \to \mathbf{Sh}_{K, L}^{\circ, \diamondsuit}$. Now let $P_{\dR}^{\mathrm{an}}$ be the analytification of $P_{\dr}=\mathcal{P}_{\dr,E}$  and consider the induced map
\begin{align} \label{Eq:InfiniteLevelMapToPdr}
     \igcsdE \to P_{\dr,L,\infty}^{\mathrm{an}}:=\mathbf{Sh}_{K^p,L}^{\circ, \diamondsuit} \times_{ \mathbf{Sh}_{K,L}^{\circ, \diamondsuit}}  P_{\dr,L}^{\mathrm{an}}.
\end{align}
The right hand side has a natural action of $\ul{G(\qp)} \times P_{\mu^{-1}}^{\mathrm{an}}$ (where the action of $G(\mathbb{Q}_p)$ comes from the canonical $G(\mathbb{A}_f)$-equivariant structure on $P_\dR$ over the full tower of Shimura varieties), while the left hand side has a natural action of $\widetilde{G}_{b_{\mu},L} \supset \ul{P_{\overline{\mu}}(\qp)}$. 
\begin{Lem} \label{Lem:TwistedHeckeAction}
The map in \eqref{Eq:InfiniteLevelMapToPdr} is $\ul{P_{\overline{\mu}}(\qp)}$-equivariant, where $\ul{P_{\overline{\mu}}(\qp)}$ acts on the right hand side via the map 
\begin{align}
    \ul{P_{\overline{\mu}}(\qp)} \to \ul{G(\qp)} \times P^{\mathrm{an},\diamondsuit}_{\mu^{-1}}
\end{align}
that is the natural inclusion in the first factor and in the second factor is the composition
\begin{align} \label{Eq:FactorsThrough N}
    \ul{P_{\overline{\mu}}(\qp)} \to \ul{M_{\overline{\mu}}(\qp)} \subset \ul{M_{\mu}(E)} \subset P_{\mu^{-1}}^{\mathrm{an},\diamondsuit}.
\end{align}
\end{Lem}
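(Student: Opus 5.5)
The map \eqref{Eq:InfiniteLevelMapToPdr} has two components. The first is the map to $\mathbf{Sh}_{K^p,L}^{\circ,\diamondsuit}$, which is $\ul{P_{\overline{\mu}}(\qp)}$-equivariant by construction (\S\ref{subsub:MapsGenericFiber}). I would therefore check equivariance in two steps. First, reduce to the Siegel case. Second, in the Siegel case, compute how $g \in P_{\overline{\mu}}(\qp)$ changes the de Rham frame.

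\textbf{Reduction to the Siegel case.} The map $P_{\dr,L,\infty}^{\mathrm{an}}\gx \to P_{\dr,L,\infty}^{\mathrm{an}}\gvx$ is a closed immersion of diamonds. It is equivariant for $G(\qp)\times P_{\mu^{-1}} \to G_V(\qp) \times P_{V,\mu^{-1}}$. The composite $P_{\overline{\mu}}(\qp)\to M_{\overline\mu}(\qp)\subset M_\mu(E)$ is compatible with the corresponding Siegel composite. Hence it suffices to prove that the two actions agree after pushing forward to $P_{\dr,V}$. Strictly, $P_{\overline\mu}(\qp)$ lands inside $P_{V,\overline{\mu}}(\qp)$, and the identity is tested on the image of $\igcsdE\gx$, which maps to $\mathfrak{Ig}\gvx$. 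By Proposition \ref{Prop:UniformizationPdrII}, the map to $P_\dr\gvx$ is $\mathcal{I}\circ(1\times\xi^{\mathrm{can},\dR})$ on the generic fiber.

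\textbf{The Siegel computation.} A point of $\igcsdE$ gives a $p$-divisible group $\mby$ together with an isomorphism $\rho:\mby\simeq \mbxcan$. The de Rham frame is $D(\rho)$ composed with the identification $D(\mbxcan)=\Lambda$. The infinite-level structure is $T_p\mby \simeq T_p\mbxcan \simeq \Lambda$, using $x$. An element $g\in P_{\overline\mu}(\qp)\subset \widetilde G_{b_\mu}$ acts as a self-quasi-isogeny $\tilde g$ of $\mbxcan$, and it replaces $\rho$ by $\tilde g\circ\rho$.
\begin{itemize}
\item On the Tate-module trivialization, $\tilde g$ acts as $g$, because $x$ identifies $\widetilde G_{b_\mu}\supset P_{\overline\mu}(\qp)$ with the stabilizer of $y$ inside $G(\qp)$. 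This gives the Hecke factor.
\item On the de Rham frame, the new frame is $D(\tilde g)\circ D(\rho)$, with $D(\tilde g)$ defined as in \S\ref{ss.D-maps}.
\end{itemize}
The claim to prove is that $D(\tilde g)$ acts on $\Lambda_L$ through the image of $g$ in $M_{\overline\mu}(\qp)\subset M_\mu(E)$.

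To see this, I would use the slope decomposition \eqref{eq:LiftSlopeDecomposition} of $\mbxcan$. The universal cover satisfies $\tildex = \prod_i \widetilde{\mbxcan_i}$. The Dieudonné module is $\Lambda_{\zpbr}$ with Frobenius $b_\mu\sigma$, and its self-quasi-isogenies are $G(B^+_{\cris})^{\varphi_{b_\mu}=1}$ by \eqref{Eq:DescriptionAutomorphismGroups}. On $\breve{\mathbb Q}_p$-points this group is $G_{b_\mu}(\qp)=M_{\overline\mu}(\qp)$. The part of $\widetilde G_{b_\mu}$ in the unipotent radical direction $U_{\overline\mu}(\qp)$ consists of "nonconstant" automorphisms of the universal cover. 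These map an element $g=mu$ to $(m,\, u)$, where $u$ is realized by a $p$-adic period, i.e.\ a map $\tildex_i\to\tildex_j$ between different slopes. After base change along $B^+_\cris\to L$ for the lift $\mbxcan$, these off-diagonal morphisms must induce zero on $D(-)[1/p]$.

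This is the main obstacle. I would argue it as follows. Morphisms $\widetilde{\mbxcan_i}\to\widetilde{\mbxcan_j}$ between different slopes come from elements of $B^{+,\varphi=p^{s}}_\cris$ with $s>0$. The image of such an element under $B^+_\cris(\mathcal O_C/p)\to C$, evaluated on the canonical lift, is constrained by the requirement of preserving the Hodge filtration. Alternatively, and more cleanly, I would compute $D(\tilde g)$ via Lemma \ref{Lem:AutomorphismTwoWaysII} and the Beauville--Laszlo description. The de Rham side is the fiber at $\infty$ of the vector bundle $\mathcal{E}_{b_\mu}$ with its $B_\dR^+$-lattice. An automorphism $g$ of $\mathcal E_{b_\mu}$ acts on the de Rham fiber through its image in the associated graded of the HN filtration. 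That filtration is split canonically by $\mu$, because it is compatible with the fiber of the $\mu$-ordinary crystal at $\xi^{\can}$, which is defined over $E$ and $T$-valued. This gives exactly the composite \eqref{Eq:FactorsThrough N}.

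The remaining steps are routine. The $M_{\overline\mu}(\qp)$ part acts diagonally via constant automorphisms in $\mathcal M_{\overline\mu}(\zp)[1/p]$, since $\mu$ factors through the torus $\mathcal T$ as in Proposition \ref{Prop:CompatibleChoice}. Finally, I would check the sign and inverse conventions for the left actions.
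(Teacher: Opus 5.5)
The gap is the step you flag as the main obstacle, and it cannot be closed as stated: the claim that $D(\tilde u)=1$ for $u\in U_{\ovmu}(\qp)$ is false in general.

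\emph{Why your two arguments do not give it.} Your first argument only shows that $D(\tilde u)$ preserves $\Fil^\bullet_{\mu^{-1}}$ and is unipotent for the slope filtration, which is a filtration of the crystal. That gives $D(\tilde u)\in (U_{\ovmu}\cap P_{\mu^{-1}})(L)=N(L)$, not $D(\tilde u)=1$. Your second argument asserts that an automorphism of $\mathcal{E}_{b_\mu}$ acts on the de Rham fiber through its image in the HN-graded. This is wrong: unipotent automorphisms are built from sections of $\mathcal{O}(\lambda)$ with $\lambda>0$, and their values at $\infty$ need not vanish (already $\theta\colon B_{\cris}^{+,\varphi=p}\to C$ is surjective).

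\emph{An explicit counterexample.} Take $h\in T_p\mathcal{H}(\mathcal{O}_L)$, written as $(a,\theta(a))$ as in \S\ref{sss.map-construction}. As an automorphism of $\mbxcan$, $h$ acts on $D(\mbxcan)=\Lambda_{\mathcal{O}_L}$ by $\theta(a)\in\Fil^0\mathcal{W}(\mathcal{O}_L)$. The assignment $h\mapsto\theta(a)$ is the restriction of the quasi-logarithm (cf.\ the proof of Lemma~\ref{Lem:Identification}). Via the \'etale--crystalline comparison, this is the Hodge--Tate map $T_p\mathcal{H}\to\omega_{\mathcal{H}^\vee}\otimes L=\Fil^0\mathcal{W}(L)$, which is surjective after $\otimes C$. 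So it is nonzero as soon as $\Fil^0\mathcal{W}\neq 0$, for example in \S\ref{sub:ExampleII}. Under $x$, $h$ corresponds to some $u\in\mathcal{U}_{\ovmu,0}(\zp)\subseteq K_p$ (\S\ref{sub:TateModuleH}). Such $u$ acts trivially on $P^{\mathrm{an}}_{\dR,L}$ at level $K$, both through the Hecke factor and through $\overline{u}=1$. Hence the statement as written would force the generic fiber of the map $\igcsf\to\mathcal{P}_{\dR}$ of Proposition~\ref{Prop:MapToPdr} to be $h$-invariant. In fact it is translated by $\theta(a)$. This is exactly the action of $T_p\mathcal{H}$ through $\Fil^0(\mathcal{W})\subseteq\mathcal{N}$ recorded in \S\ref{sub:GeneralPAdicDifferentialOperators}, and in the introduction, where the map is said to change only by an element of $\mathcal{N}$.

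\emph{What can be proved.} Once the Levi part is settled, your filtration argument shows that the de Rham factor is $j\mapsto D(j^{\can})\in\overline{j}\,N(L)$. In other words, you get the lemma after passing to $N^{\mathrm{an}}\backslash P^{\mathrm{an}}_{\dR,L,\infty}$.
\begin{itemize}
\item This is well posed, since $M_{\ovmu}$ normalizes $N$.
\item It is all that is used later: \eqref{Eq:DiamondMap} and Theorem~\ref{Thm:InfiniteLevelTheorem}(6).
\item In the ordinary case $N=1$ and the off-diagonal periods are multiples of $t$ with $\theta(t)=0$, so it is the literal statement.
\end{itemize}

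\emph{Comparison with the paper.} The paper's proof is the same computation organized differently. It moves $j$ across the $\widetilde G_{b_\mu}$-torsor uniformization so that $j^{-1}$ acts on the base point $\ell$. It then reduces to the local $\mathrm{GL}_V$ moduli problem and re-represents $\ell$ using the lift $j^{\can}$. Its final sentence identifies the action of $j^{\can}$ on $D(\mathbb{X}^{\can})$ with $\overline{j}$ without argument, so it needs the same correction.

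\emph{The Levi step.} Your Levi step is also too quick. The embedding $P_{\ovmu}(\qp)\hookrightarrow\widetilde G_{b_\mu}$ depends on $x$. Replacing $x$ by $xt_0$ with $t_0\in\mathcal{T}(\zp)$ still satisfies Proposition~\ref{Prop:CompatibleChoice}. It conjugates the embedding by $t_0$, which is nontrivial on $M_{\ovmu}$ when the latter is nonabelian, while the de Rham action of each quasi-isogeny is unchanged. So ``constant on the Levi'' requires normalizing $x$ so that its crystalline period $c$ centralizes $M_{\ovmu}$, as in Example~\ref{Eg:OrdinaryOmegaZeta}. The fact that $\mu$ factors through $\mathcal{T}$ does not provide this.
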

\begin{proof}\newcommand{\Spd}{\mathrm{Spd}\,} \newcommand{\loc}{\mathrm{loc}}
We first note that the map \eqref{Eq:InfiniteLevelMapToPdr} can be obtained as follows: First, by Proposition \ref{Prop:UniformizationPdrII}, there is a uniformization map
\begin{align} \label{Eq:InfiniteLevelPdRUniformization} \igcsdL \times_{\Spd L} P_{\dr,\loc,\infty,L}^{\diamondsuit} \to P_{\dr,\infty,L}^{\diamondsuit} \end{align}
where $P_{\dr,\loc,\infty,L}$ is the $P_{\mu^{-1}}$-torsor over the infinite level local Shimura variety. We have a canonical point $\ell \in P_{\dr,\loc,\infty,L}(x)$ lying above $x$, and \eqref{Eq:InfiniteLevelMapToPdr} is the composition of \eqref{Eq:InfiniteLevelPdRUniformization} with 
\[ \igcsdL \to \igcsdL \times_{\Spd L} P_{\dr,\loc,\infty,L}^{\diamondsuit},\; i \mapsto (i, \ell), \]
see \S \ref{subsub:MapToPdrUniformization}. Because \eqref{Eq:InfiniteLevelPdRUniformization} is a $\widetilde{G}_{b_{\mu}}$-torsor, for $j$ a section of $\widetilde{G}_{b_{\mu}}$, $(j \cdot i, \ell)$ maps to the same point as $(i, j^{-1} \cdot \ell)$. The map \eqref{Eq:InfiniteLevelMapToPdr} is equivariant for the action of $\ul{G(\mathbb{Q}_p)} \times P_{\mu^{-1}}$ where on the left the action is purely on $P_{\dr,\loc,\infty,L}^{\diamondsuit}$. We are thus reduced to verifying that, if $j$ is a section of $\ul{P_{\overline{\mu}}(\mathbb{Q}_p)} \leq \widetilde{G}_{b_{\mu}}$, then $j^{-1} \cdot_{\widetilde{G}_{b_{\mu}}} \ell = 
\overline{j} \cdot_{P_{\mu^{-1}}} (j \cdot_{\ul{G(\mathbb{Q}_p)}} \ell)$, where $\overline{j}$ is the projection into $M_{\overline{\mu}}$ in the statement. This is a purely local statement about the action of $\widetilde{G}_{b_{\mu}}$ on $P_{\dr,\loc,\infty,L}$, and can be reduced to a statement about the infinite level local Shimura variety for $(\operatorname{GL}_V, b_{\mu}, [\mu])$. \smallskip

In this case, $P_{\dr,\loc,\infty,L}^{\diamondsuit}$ is a moduli space of quadruples $(\mathfrak{G}, \rho, \beta,\alpha,)$ over perfectoid Huber pairs $(R,R^+)$ over $L$, where $(\mathfrak{G}, \rho) \in \operatorname{RZ}_{\mbx}(\spf R^+)$, where $\alpha: T_p \mathfrak{G} \xrightarrow{\sim} \Lambda$ is an isomorphism of $\zp$ local systems, and where $\beta:D(\mathfrak{G}) \xrightarrow{\sim} V_{R}$ is an isomorphism of filtered vector bundles. Indeed, this follows from the moduli interpretation of the infinite level Rapoport--Zink space in \cite[Definition 4.2.3]{CaraianiScholze}. We can change the moduli interpretation, without affecting the representing object, by taking $\mathfrak{G}$ up to isogeny over $R^+$, and by replacing $\beta$ with a rational version. The point $\ell$ then corresponds to the quadruple $$(\mathbb{X}^\can, \tilde{\mathbb{X}}^\can \xrightarrow{\rho_\can}\tilde{\mathbb{X}}, D(\mathbb{X}^\can) \xrightarrow{\beta_\can} V_{\qpbreve}, V_p \mathbb{X}^\can \xrightarrow{x} V_{\qp}),$$  
where $\rho_\can$ and $\beta_\can$ denote the canonical isomorphisms from the construction. Thus, 
\[ j^{-1} \cdot_{\widetilde{G}_{b_{\mu}}} \ell = (\mathbb{X}^\can, \tilde{\mathbb{X}}^\can \xrightarrow{j^{-1} \circ \rho_{\can}}\tilde{\mathbb{X}}, D(\mathbb{X}^\can) \xrightarrow{\beta_\can} V_{\qpbreve}, V_p \mathbb{X}^\can \xrightarrow{x} V_{\mathbb{Q}_p}). \]
Because $j$ lifts to a quasi-isogeny $j^\can$ of $\mathbb{X}^\can$ (in other words, $\rho_{\can} \circ j^\can=j \circ \rho_{\can}$), this point is equivalent to
\begin{align} 
(\mathbb{X}^\can, \tilde{\mathbb{X}}^\can \xrightarrow{j^{-1} \circ \rho_{\can} \circ j^\can}\tilde{\mathbb{X}}, D(\mathbb{X}^\can) \xrightarrow{\beta_\can \circ D(j^\can)} V_{\qpbreve}, V_p \mathbb{X}^\can \xrightarrow{x \circ V_p(j^\can)} V_{\mathbb{Q}_p})\\
=(\mathbb{X}^\can, \tilde{\mathbb{X}}^\can \xrightarrow{\rho_{\can}}\tilde{\mathbb{X}}, D(\mathbb{X}^\can) \xrightarrow{\beta_\can \circ D(j^\can)} V_{\qpbreve}, V_p \mathbb{X}^\can \xrightarrow{x \circ V_p(j^\can)} V_{\mathbb{Q}_p}).\end{align}
Since $j$ is viewed as an element of $P_{\overline{\mu}}(\mathbb{Q}_p)$ (resp. $P_{\mu^{-1}}$) via the action on $V_p \mathbb{X}^\can$  (resp. $D(\mathbb{X}^\can)$) through the given trivializations, this is equal to 
\[ \overline{j} \cdot_{P_{\mu^{-1}}} \left(j \cdot_{\ul{G(\mathbb{Q}_p)}} (\mathbb{X}^\can, \tilde{\mathbb{X}}^\can \xrightarrow{\rho_{\can}}\tilde{\mathbb{X}}, D(\mathbb{X}^\can) \xrightarrow{\beta_\can} V_{\qpbreve}, V_p \mathbb{X}^\can \xrightarrow{x} V_{\mathbb{Q}_p}) \right), \]
as claimed. 
\end{proof}


\section{\texorpdfstring{$p$}{p}-adic differential operators} \label{Sec:DifferentialOperators} In \S \ref{sub:GeneralPAdicDifferentialOperators} and \S \ref{sub:MuOrdinaryOperators}, we prove Theorem \ref{Thm:IntroIntegralAction}, see Theorem \ref{Thm:IdentificationAction} and Theorem \ref{Thm:GlobalAction}. Afterwards, we construct additional algebraic Maass--Shimura operators on $\mathcal{O}(\igmf)$, see \S \ref{sub:ActiononIgm}. In \S \ref{sub:TateModuleH}, we compute the (Hodge--Tate filtered) Tate module of $\mathcal{H}$. In \S \ref{sub:EischenMantovan}, we compare our operators to those of \cite{EischenMantovan}, \cite{EischenFintzenMantovanVarma}.

\subsection{Construction of \texorpdfstring{$p$}{p}-adic differential operators} \label{sub:GeneralPAdicDifferentialOperators}

\subsubsection{}\label{sss.action-setup} We now return to the notation established in \S \ref{Sub:IgusaI}. In particular, we consider a class $[b] \in \bgmu$. We suppose given a representative $b \in [b]$ such that $\Lambda_{\zpbr}$ is a covariant Dieudonn\'{e} module under $b \circ \mathrm{Id} \otimes \sigma$. The associated $p$-divisible group with extra structure $\mbx_b$ over $\fpbar$ gives rise to a central leaf $C_b=\cb \otimes_{k_{E}} \fpbar$ and a perfect Igusa variety $\operatorname{Ig}^b \to C_b$. We suppose given a lift $\mbx_{\tilde{x}}$ of $\mbx_b$ corresponding to $\tilde{x}:\spf \zpbr \to \mintfgbmu$, which is determined under Grothendieck--Messing theory by an admissible filtration $V_0=\Fil^0 \Lambda_{\zpbr}$. We fix a Hodge cocharacter $\mu$ over $\zpbr=W(\fpbar)$ such that $V_0=\Fil^0_{\mu^{-1}}$, which exists by e.g. \cite[Proposition 4.8]{Zhou}. \smallskip

We fix now a vector subgroup $\mc{W} \leq \mc{G}$ such that:
\begin{enumerate}
    \item $\mc{W}_{\qpbreve}$ is preserved by $\mathrm{Ad}(b)$, and $\mc{W}(\zpbreve) \subseteq \mc{W}(\zpbreve)[\tfrac{1}{p}]=\mc{W}(\qpbreve)$ is a covariant Dieudonn\'{e} module under $\mathrm{Ad}(b) \circ \sigma$, i.e., $\mathrm{Ad}(b) \circ \sigma$ on $\mc{W}(\qpbreve)$ induces a map $\mc{W}(\zpbreve) \rightarrow \frac{1}{p} \mc{W}(\zpbreve)$ whose image contains $\mc{W}(\zpbreve)$.
    \item $\Fil^0\mc{W}_{\zpbreve}:=\mc{W}_{\zpbr} \cap \mathcal{P}_{\mu^{-1}}$ is a vector subgroup of $\mc{W}_{\zpbreve}$ such that $\Fil^0\mc{W}_{\zpbr}(\zpbr)$ is an admissible filtration on $\mc{W}(\zpbreve)$. 
\end{enumerate}
By Proposition \ref{prop.filtered-dieudonne-theory}, there is a unique $p$-divisible group $\mathcal{H}/\zpbr$ equipped with an identification $\mbb{D}(\mathcal{H}_{\fpbar})=\mc{W}(\zpbr)$ (as a Dieudonn\'{e} module) with Hodge filtration $\Fil^0(\mbb{D}(\mathcal{H}_{\fpbar}))=\Fil^0(\mc{W}(\zpbr))$. 

\subsubsection{} \label{subsub:LieHGeneral} There is a canonical identification $\Lie \mathcal{H}=\gr^{-1}(\mc{W}_{\zpbreve})$, see Proposition \ref{prop.filtered-dieudonne-theory}. We will use a modified version of this latter canonical identification: We write 
\[ \mf{w}:=\Lie \mc{W} \subseteq \Lie \mc{G}=:\mf{g},\]
and $\Fil^0(\mf{w}_{\zpbr}):=\mf{w} \cap \Fil_{\mu^{-1}}^0(\mf{g}_{\zpbr})$. A vector group is naturally isomorphic to its Lie algebra (by sending an $R$-point $v$ to the $R[\varepsilon]/\varepsilon^2$-point $\varepsilon v$), and for $\mc{W}$ this construction is compatible with the filtrations. Thus we can identify $\gr^{-1}(\mc{W}_{\zpbreve})$ with $\gr^{-1}(\mf{w}_{\zpbr})$. In particular, we have canonical identifications
\begin{equation}\label{eq.canonical-idents-LieH} \Lie \mathcal{H}=\gr^{-1}(\mc{W}_{\zpbr})=\gr^{-1}(\mf{w}_{\zpbreve}).\end{equation}

\begin{Rem} \label{Rem:FakeLogarithm} The identification $\mathcal{W} \to \mathfrak{w}$ above identifies the filtered Dieudonn\'{e} module $(\mathcal{W}(\zpbreve), \Fil^0\mathcal{W}(\zpbreve))$ with $(\mf{w}_{\zpbreve}, \Fil^0(\mf{w}_{\zpbreve}))$, where the Frobenius on $\mf{w}_{\qpbreve}$ is induced by the Frobenius $\mathrm{Ad}(b) \circ \mathrm{Id}\otimes\sigma$ on $\mf{g}_{\qpbreve}$. 
\end{Rem}

\subsubsection{}\label{sss.map-construction}
We now construct a map $T_p \mc{H} \rightarrow \Aut_\mc{G}(\mbx_{\tilde{x}})$. By Lemma \ref{lemma.cover-and-tate-module-rep-properties}, $T_p \mc{H}=\Spf R$ where $R$ is $p$-adically complete and $R/p \in \QPRS$, so it suffices to give the map on $R \in \nilp$ with $R/p \in \QPRS$. For such an $R$, writing $\varphi=\mathrm{Ad}(b) \circ \sigma$, we may identify
\[ \Aut_{\mc{G}}(\mbx_{\tilde{x}})(R)=\mathcal{G}(A_\cris(R/p))^{\varphi=1} \times_{\mathcal{G}(R)} \mathcal{P}_{\mu^{-1}}(R)\]
using \eqref{Eq:DescriptionAutomorphismGroups} together with Proposition \ref{prop.filtered-dieudonne-theory}. We similarly identify 
\[ T_p \mc{H}(R)=\Hom((\mathbb{Q}_p/\mathbb{Z}_p)_R, \mathcal{H}_R)=\mc{W}(A_{\cris}(R/p))^{\varphi=1} \times_{\mc{W}(R)} \Fil^0(\mc{W}(R)), \]
so that we may take the map $T_p \mc{H}(R) \rightarrow \Aut_{\mc{G}}(\mbx_{\tilde{x}})(R)$ induced by $\mc{W} \hookrightarrow \mc{G}$. 

We now extend this to a map $\tildeh \rightarrow \Aut_G(\tildex)$: Since $\tildeh=\bigcup_k \frac{1}{p^k} T_p\mc{H}$, it suffices to describe compatible maps from $\frac{1}{p^k} T_p \mc{H}$ as $k$ varies. Identifying $\frac{1}{p^k} T_p \mc{H}$ with $T_p \mc{H}$ by multiplication by $p^k$, it suffices to describe such maps on $R$ as above. For such an $R$, noting that $\Aut_G(\tildex)(R)=G(B^+_\cris(R/p))^{\varphi=1}$, we take the map
\[ T_p \mc{H}(R) \subseteq \mc{W}(A_{\cris}(R/p))^{\varphi=1} \xrightarrow{\frac{1}{p^k}} \mc{W}(B^+_\cris(R/p)) \subseteq G(B^+_\cris(R/p))^{\varphi=1}. \]

We write this assignment $\tildeh \rightarrow \Aut_G(\tildex)$ as $\tilde{h}\mapsto \alpha_{\tilde{h}}$. If $R$ is $p$-adically complete and $p$-torsion free with $R/p \in \QPRS$ then, for $D(\alpha_{\tilde{h}})$ as in \S\ref{ss.D-maps} we have $D(\alpha_{\tilde{h}}) \in \mc{W}(R[\tfrac{1}{p}])$. If there is a divided powers ideal $J$ such that 
\[ \tilde{h} \in T_p\mathcal{H}(R/J)\subseteq \tildeh(R/J)=\tildeh(R) \]
then, since $\alpha_{\tilde{h}}$ comes from an isomorphism mod $J$, we find $D(\alpha_{\tilde{h}})$ lies in $\mc{W}(R)$. 

\subsubsection{}\label{sss.action-and-canonical-identification} Suppose now that $t \in \Lie \mathcal{H} \subset \mathcal{H}(\zpbreve[\varepsilon]/\varepsilon^2)$. Then, by taking 
\[ S=\zpbreve[[\varepsilon^{1/p^\infty}, x_1^{1/p^\infty}, \ldots, x_n^{1/p^\infty}]]/(\varepsilon^2, y_1-\varepsilon e_1, \ldots, y_n - \varepsilon e_n) \]
where $y_i$ and $x_i$ are as in \ref{lemma.cover-and-tate-module-rep-properties}.(3) and the $\varepsilon e_i$ are the $y$-coordinates of $t$, we obtain a lifting  $\widetilde{t}$ of $t$ to $\tildeh(S)$. 

Let $R=S/\varepsilon$, so both $S/p$ and $R/p$ are quotients of perfect rings by regular sequences. Because $t$ is zero modulo $\varepsilon$, the image of $\tilde{t}$ via the reduction map $\widetilde{\mathcal{H}}(S) \xrightarrow{\sim} \widetilde{\mathcal{H}}(R)$ is an element of $T_p \mathcal{H}(R)$. In particular since $S$ is $p$-torsion free, by the above discussion, $D(\alpha_{\tilde{t}})$ lies in $\mc{W}(S)$. 

\begin{Lem} \label{Lem:Identification}
The image of $D(\alpha_{\tilde{t}})$ in $\gr^{-1} \mc{W}(S)$ is the element associated to $t$ by the canonical identification $\Lie \mc{H}=\gr^{-1}\mc{W}$.
\end{Lem}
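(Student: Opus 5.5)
The plan is to reduce the statement to a Grothendieck--Messing computation for the $p$-divisible group $\mathcal{H}$ alone. The first step is to make $D(\alpha_{\tilde t})$ explicit. By construction in \S\ref{sss.map-construction}, the image $\tilde t_R$ of $\tilde t$ in $T_p\mathcal{H}(R)=\Hom((\mathbb{Q}_p/\mathbb{Z}_p)_R,\mathcal{H}_R)$ corresponds, under $\mathbb{D}$, to a $\varphi$-invariant element $w\in\mc{W}(A_\cris(R/p))$. This $w$ is the image of $1\in A_\cris(R/p)=\mathbb{D}(\mathbb{Q}_p/\mathbb{Z}_p)$ (Remark \ref{Rem:NormalizationsDieudonne}) under $\mathbb{D}(\tilde t_R)$, once we identify $\mathbb{D}(\mathcal{H}_{R/p})=\mc{W}(A_\cris(R/p))$. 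The automorphism $\alpha_{\tilde t}$ is the image of $w$ under $\mc{W}\hookrightarrow\mc{G}$. The ideal $(\varepsilon)\subset S$ carries divided powers since $\varepsilon^2=0$, and $\tilde t_R$ is an honest homomorphism over $R=S/\varepsilon$. So by \S\ref{ss.D-maps}, $D(\alpha_{\tilde t})$ is the crystalline transport of $w$ to $S$, namely $\mathbb{M}(\tilde t_R)(S)(1)\in\mathbb{M}(\mathcal{H}_R)(S)=D(\mathcal{H}_S)=\mc{W}(S)$, viewed as an element of the vector group $\mc{W}(S)$. By Remark \ref{Rem:FakeLogarithm} and \eqref{eq.canonical-idents-LieH}, the identifications of the vector group with its Lie algebra and with $D(\mathcal{H})$ respect the filtrations. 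It therefore suffices to prove the following claim. The image of $\mathbb{M}(\tilde t_R)(S)(1)$ in $\gr^{-1}D(\mathcal{H}_S)=\Lie\mathcal{H}_S$ (Proposition \ref{prop.filtered-dieudonne-theory}) equals $t$, viewed in $\Lie\mathcal{H}\otimes\varepsilon S\subseteq\Lie\mathcal{H}_S$.

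Next, I would prove this claim, which is a standard Grothendieck--Messing statement, by passing through the universal vector extension. Fix a homomorphism $f:\mathbb{Q}_p/\mathbb{Z}_p\to\mathcal{H}$ over $R$, where $R=S/J$ with $J$ a square-zero ideal. Then $f$ lifts to $S$ exactly when $\mathbb{M}(f)(S)$ carries $\Fil^0=\mathbb{M}(\mathbb{Q}_p/\mathbb{Z}_p)(S)=S$ into $\Fil^0 D(\mathcal{H}_S)$. The obstruction is the class $\mathbb{M}(f)(S)(1)\bmod \Fil^0\in\Lie\mathcal{H}_S\otimes J$. On the other hand, there is a geometric obstruction: choose the canonical lift of $f(1/p^n)$-compatible points to $\tildeh(S)=\tildeh(R)$, take the image in $\mathcal{H}(S)$, and note that its reduction in $\mathcal{H}(R)$ is $0$. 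This gives an element of $\ker(\mathcal{H}(S)\to\mathcal{H}(R))=\Lie\mathcal{H}\otimes J$. For $f=\tilde t_R$, the geometric element is tautologically the projection of $\tilde t$ to $\mathcal{H}(S)$, namely $t$. So the lemma reduces to showing that these two obstruction classes agree.

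To compare them I would follow Messing's construction of the crystal. Write $\mathbb{M}(\mathcal{H}_R)(S)=\Lie E\mathcal{H}_S$, and consider the extension of $\mathbb{Q}_p/\mathbb{Z}_p$ by $\mathcal{H}$ described via $\mathcal{E}xt$ and the universal vector extension. The map $f$ determines a point of $E\mathcal{H}(R)$, which has a canonical lift to $E\mathcal{H}(S)$ using the divided powers on $J$ (the exponential/logarithm on $\ker(E\mathcal{H}(S)\to E\mathcal{H}(R))=\Lie E\mathcal{H}\otimes J$). Projecting this lift to $\mathcal{H}(S)$ gives the geometric obstruction, and its Lie-algebra image is the crystalline one. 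Both constructions are functorial in $\mathcal{H}$, and our $S$ is $p$-torsion free with $S/p\in\QPRS$, so one could alternatively reduce, after choosing coordinates and using the Dieudonné classification over $\QPRS$ rings, to $\widehat{\mathbb{G}}_m$, where everything is explicit via $\log$. I expect the main obstacle to be this compatibility of the crystalline and geometric obstructions, in particular keeping track of signs and of the normalizations ($\varphi=F/p$, covariant conventions). The identification $\mc{W}\cong\mf{w}$ is routine by comparison.
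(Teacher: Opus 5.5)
Your reformulation is correct and agrees in substance with the paper. $D(\alpha_{\tilde t})$ is the crystalline realization of $\tilde t$ in $\mc{W}(S)=D(\mathcal{H}_S)$, and what must be shown is that its $\gr^{-1}$-component is $t$. However, the compatibility you defer as ``the main obstacle'' is the whole content of the lemma, and the proposal does not establish it. The paper takes it from Scholze--Weinstein in two steps. By \cite[Lemma 3.5.1]{ScholzeWeinstein}, the map $\tildeh(S)=\mc{W}(B^+_\cris(S/p))^{\varphi=1}\to\mc{W}(S[\tfrac{1}{p}])$ is the quasi-logarithm of $\mathcal{H}$. By \cite[Remark 3.2.6]{ScholzeWeinstein}, its composite with the projection to $\gr^{-1}=\Lie\mathcal{H}$ is $\log_{\mathcal{H}}$ of the image of $\tilde t$ in $\mathcal{H}(S)$. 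That is $\log(t)=t$, since $t$ is a tangent vector. Applied directly to $\tilde t\in\tildeh(S)$, this also makes your passage to $R=S/\varepsilon$ and the Grothendieck--Messing lifting criterion for homomorphisms unnecessary.

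Your sketch of a direct proof is heading toward the construction of $\mathrm{qlog}$, but it does not prove the comparison. The lift to $E\mathcal{H}(S)$ is canonical because $\widetilde{E\mathcal{H}}\cong\tildeh$ and $\tildeh(S)=\tildeh(R)$, not because of the divided powers on $J$. Since $\tilde t_R\in T_p\mathcal{H}(R)$, this point reduces to a point of $\ker(E\mathcal{H}\to\mathcal{H})(R)$ that need not vanish. So it is not in $\ker(E\mathcal{H}(S)\to E\mathcal{H}(R))$, and the divided-power logarithm on that kernel does not apply. Its ``Lie-algebra image'' has to be $\log_{E\mathcal{H}}$ after inverting $p$, i.e.\ $\mathrm{qlog}(\tilde t)$. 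The assertion that this equals $\mathbb{M}(\tilde t_R)(S)(1)$ is exactly \cite[Lemma 3.5.1]{ScholzeWeinstein}, which you are assuming rather than proving. Your fallback reduction to $\gmhat$ is not available as stated either. $\mathcal{H}$ is generally not multiplicative (it is a sum of Lubin--Tate groups in \S\ref{sub:ExampleII}), and nothing in the proposal explains how $\Lie\mathcal{H}_S$ would be detected by homomorphisms $\mathcal{H}_S\to\gmhat$. To close the gap, cite these two results of Scholze--Weinstein. Alternatively, prove that the crystalline realization on $\tildeh(S)$ is $\mathrm{qlog}$ and that $\mathrm{qlog}$ projects to $\log_{\mathcal{H}}$.
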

\begin{proof}
Under the canonical identification of $\mc{W}({\zpbr})$ with the Dieudonn\'{e} module of $\mathcal{H}$, by \cite[Lemma 3.5.1]{ScholzeWeinstein}, the map
\begin{equation}\label{eq.qlog} \widetilde{\mathcal{H}}(S)=\mc{W}(B^+_\cris(S/p))^{\varphi=1} \rightarrow \mc{W}(S[\tfrac{1}{p}]) \end{equation}
is the quasi-logarithm. The logarithm on $\mathcal{H}$ can be computed by choosing a lift to $\widetilde{\mathcal{H}}$, applying this map, projecting to $\gr^{-1}\mc{W}(S[\tfrac{1}{p}])$, and applying the canonical identification of $\Lie \mathcal{H}$ with $\gr^{-1}\mc{W}$, see \cite[Remark 3.2.6]{ScholzeWeinstein}. We thus see that the image of $\mathbb{D}(\alpha_{\tilde{t}})$ is the logarithm of $t$, viewed as an element of $\mathcal{H}(\zpbr[\varepsilon]/\varepsilon^2)$, which is $t$ itself. 
\end{proof}

\subsubsection{} We consider the partial uniformization map $u: \mathfrak{Ig}^b \rightarrow \scrshat_{\zpbr}$ associated with $\mbx_{\tilde{x}}$ as in \S \ref{subsub:FormalLiftingGeneral}. Since $T_p \mathcal{H} \subset \Aut_{\mathcal{G}}(\mbx_{\tilde{x}})$ and $u$ is $\Aut_{\mathcal{G}}(\mbx_{\tilde{x}})$-invariant by construction, the map $u$ descends to a map $\overline{u}: \overline{\mathfrak{Ig}}^b:=T_p \mathcal{H} \backslash \mathfrak{Ig}^b  \rightarrow \scrshat_{\zpbr}$, where we are taking the quotient in the fpqc topology. By Proposition \ref{Prop:MapToPdr}, there is a map 
\[ v: \mathfrak{Ig}_{b} \rightarrow \mathcal{P}_{\dr,\zpbr} \]
lying above $u$, which is equivariant for the action of $T_p \mathcal{H}$, where $T_p \mathcal{H}$ acts on $\mathcal{P}_{\dR,\zpbr}$ through the composition of the torsor action of $\mc{P}_{\mu^{-1}}$ and (see Lemma \ref{Lem:TensorPreservingI})
\[ T_p \mathcal{H} \rightarrow \ul{\mathrm{Aut}}_{\mathcal{G}}(\mbx_{\tilde{x}}) \to \mathcal{P}_{\mu^{-1}}.\]
By construction, this action factors through the torsor action of the subgroup 
\[ \Fil^0(\mc{W}_{\zpbr}) = \mc{W}_{\zpbr} \cap \mathcal{P}_{\mu^{-1}} \subseteq \mathcal{P}_{\mu^{-1}}.\]
In particular, if we write 
\[ \overline{\mathcal{P}}_\dR:= \Fil^0(\mc{W}_{\zpbr}) \backslash \mathcal{P}_{\dr,\zpbr}, \]
we obtain a lift $\overline{v}$ of $\overline{u}$ fitting in the commutative diagram

\[\begin{tikzcd}
	{\mathfrak{Ig}^{b}} & {\mathcal{P}_{\dR,\zpbr}} \\
	{\overline{\mathfrak{Ig}}^b} & {\overline{\mathcal{P}}_\dR} \\
	{\scrs_{\zpbr}.}
	\arrow["v", from=1-1, to=1-2]
	\arrow[from=1-1, to=2-1]
	\arrow["u"', shift right=3, curve={height=12pt}, from=1-1, to=3-1]
	\arrow[from=1-2, to=2-2]
	\arrow["{\overline{v}}", from=2-1, to=2-2]
	\arrow["{\overline{u}}"', from=2-1, to=3-1]
	\arrow[from=2-2, to=3-1]
\end{tikzcd}\]
\subsubsection{} \label{subsub:MapToTangentBundle} We also write $\overline{\mathcal{G}}_\dR= \Fil^0(\mc{W}_{\zpbr}) \backslash \mathcal{G}_{\dr,\zpbr} $. Since $\mc{W}$ is abelian, we obtain an induced action of $\gr^{-1}(\mc{W}_{\zpbr})$ on $\overline{\mathcal{G}}_\dR$. Differentiating this action gives a canonical inclusion
\[ \mathrm{gr}^{-1}\mf{w}_{\zpbr} \otimes \mathcal{O}_{\overline{\mathcal{G}}_\dR} \hookrightarrow T_{\overline{\mathcal{G}}_\dR/\scrs_K}. \]
Alternatively, this can be deduced from the identity $T_{\mathcal{G}_\dR/\scrs_K}=\mathfrak{g} \otimes \mathcal{O}_{\mathcal{G}_\dR}$ coming from differentiation: Then $T_{\overline{\mathcal{G}}_\dR/\scrs_K}$ is canonically identified with the descent of $(\mathfrak{g}_{\zpbr}/ \Fil^0\mf{w}_{\zpbreve}) \otimes \mathcal{O}_{\mathcal{G}_\dR}$ for the adjoint action of $\Fil^0(\mc{W}_{\zpbr})$, which is trivial on the sub-module $\mathrm{gr}^{-1}\mf{w}_{\zpbr} \otimes \mathcal{O}_{\mathcal{G}_\dR}$.

Now, because the splitting $T_{\mathcal{G}_\dR}=T_{\mathcal{G}_\dR/\scrs_K} \oplus \rho^*T_{\scrs_K}$ induced by the Gauss--Manin connection, see \S \ref{subsub:Constructions}, is equivariant for the action of $\mathcal{G}$ (and thus of $\Fil^0(\mc{W}_{\zpbr})$), we still obtain an identification $T_{\overline{\mathcal{P}}_\dR}=T_{\overline{\mathcal{G}}_\dR/\scrs_K}$. Therefore, we have a canonical map 
\begin{equation}\label{eq.can-inclusion-tangent-pdr} \mathrm{gr}^{-1}\mf{w}_{\zpbr} \hookrightarrow T_{\overline{\mathcal{P}}_\dR}.\end{equation}    

\subsubsection{} Let $\rho$ denote the projection $\mathcal{G}_\dR \rightarrow \scrs_K$ and let $s$ denote the Gauss--Manin lift of \S \ref{subsub:Constructions}. 
\begin{Lem} \label{Lem:IdentificationII}
    For $w \in \mathrm{gr}^{-1}\mf{w}_{\zpbr}$, the vector field $\partial_w$ on $\overline{\mathcal{P}}_\dR$ induced by \eqref{eq.can-inclusion-tangent-pdr} can be characterized as the unique map $\partial_w: \overline{\mathcal{P}}_\dR[\varepsilon]\rightarrow\overline{\mathcal{P}}_\dR$ such that: Viewed as a map $\overline{\mathcal{P}}_\dR[\varepsilon]\rightarrow\overline{\mathcal{G}}_\dR$ (i.e., a vector field in $\overline{\mathcal{G}}_\dR$ along $\overline{\mathcal{P}}_\dR$), we have
\begin{equation}\label{eq.GM-characterization} \partial_w =  w \cdot s(d\rho(\partial_w)) \end{equation}
Here $w$ acts by viewing it as an element of $\gr^{-1}(\mc{W}_{\zpbreve})(\zpbreve[\varepsilon]/\varepsilon^2) \subseteq \gr^{-1}(\mc{W}_{\zpbreve})(\overline{\mathcal{P}}_\dR[\varepsilon])$.
\end{Lem}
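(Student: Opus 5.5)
We prove existence and uniqueness by unwinding the definitions of \eqref{eq.can-inclusion-tangent-pdr} and of the splitting $T_{\mathcal{G}_\dR}=T_{\mathcal{G}_\dR/\scrs_K}\oplus\rho^*T_{\scrs_K}$ from \S\ref{subsub:Constructions}. These splittings are $\mathcal{G}$-equivariant, hence $\Fil^0(\mc{W}_{\zpbr})$-equivariant, so they descend to $\overline{\mathcal{G}}_\dR$. Over $\overline{\mathcal{P}}_\dR$ we then have
\[ \iota^*T_{\overline{\mathcal{G}}_\dR} = \iota^* T_{\overline{\mathcal{G}}_\dR/\scrs_K} \oplus \iota^*\rho^* T_{\scrs_K}. \]
The first summand contains $\gr^{-1}\mf{w}_{\zpbr}\otimes\mathcal{O}$ through the differentiated action of $\gr^{-1}(\mc{W}_{\zpbr})$. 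In the language of $\varepsilon$-points, a vector field along $\overline{\mathcal{P}}_\dR$ with vertical component $w$ and horizontal component $\tau\in\rho^*T_{\scrs_K}$ is the map $\overline{\mathcal{P}}_\dR[\varepsilon]\to\overline{\mathcal{G}}_\dR$ given by $w\cdot s(\tau)$. This holds because adding tangent vectors at a point is the same as composing the corresponding infinitesimal translations: the translate of the horizontal lift $s(\tau)$ by the infinitesimal group element $w$ has tangent vector $s(\tau)+w$. This is a first-order check in $\varepsilon$.

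Next recall how $\partial_w$ is defined. By the descended version of the proof of Proposition \ref{Prop:TPdrToLieGisaniso}, $T_{\overline{\mathcal{P}}_\dR}$ is a complement to $\iota^*\rho^*T_{\scrs_K}$ inside $\iota^*T_{\overline{\mathcal{G}}_\dR}$; this is the identification $T_{\overline{\mathcal{P}}_\dR}=T_{\overline{\mathcal{G}}_\dR/\scrs_K}$ used in \S\ref{subsub:MapToTangentBundle}. Hence $\partial_w$ is the unique tangent vector to $\overline{\mathcal{P}}_\dR$ whose projection along $\rho^*T_{\scrs_K}$ onto the vertical summand equals $w$.

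Now write $\partial_w=(\text{vertical part})+s(d\rho(\partial_w))$ in the decomposition above. This is valid because $d\rho$ kills vertical vectors and $d\rho\circ s=\mathrm{id}$. By the previous paragraph the vertical part is $w$. The description of sums via translation then gives exactly \eqref{eq.GM-characterization}.

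For uniqueness, suppose $\partial$ is any tangent vector to $\overline{\mathcal{P}}_\dR$ satisfying $\partial=w\cdot s(d\rho(\partial))$. Then its vertical component is $w$. By the complementarity statement, a tangent vector of $\overline{\mathcal{P}}_\dR$ is determined by its vertical component, so $\partial=\partial_w$.

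The main obstacle is descending the complementarity in Proposition \ref{Prop:TPdrToLieGisaniso} to the quotients by $\Fil^0(\mc{W}_{\zpbr})$. Concretely, one must check two things:
\begin{itemize}
\item $T_{\overline{\mathcal{P}}_\dR}\to T_{\overline{\mathcal{G}}_\dR/\scrs_K}$ is an isomorphism;
\item the vertical tangent space of $\overline{\mathcal{G}}_\dR$ identifies with the descent of $(\mf{g}/\Fil^0\mf{w})\otimes\mathcal{O}$.
\end{itemize}
For the first, use that quotienting by the free action of $\Fil^0(\mc{W}_{\zpbr})$ removes the same subbundle $\Fil^0\mf{w}\otimes\mathcal{O}$ from both $T_{\mathcal{P}_\dR}\cong\mf{g}\otimes\mathcal{O}$ and $T_{\mathcal{G}_\dR/\scrs_K}$. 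The isomorphism of Proposition \ref{Prop:TPdrToLieGisaniso} is compatible with these subbundles, since $\Fil^0\mc{W}\subseteq\mathcal{P}_{\mu^{-1}}$ acts vertically on both spaces. The second is then immediate.
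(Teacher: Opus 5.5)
Your argument is correct and is essentially the paper's proof. The paper compresses it to the single observation that acting by $w \in \gr^{-1}(\mc{W}_{\zpbreve})(\overline{\mathcal{P}}_\dR[\varepsilon])$ is the same as adding $w$ in the vertical tangent bundle $T_{\overline{\mathcal{G}}_\dR/\scrs_K}$, which you spell out through the vertical/horizontal decomposition and uniqueness by complementarity. The descent step you flag as the main obstacle is not really part of this lemma: the paper already sets it up in \S\ref{subsub:MapToTangentBundle} when it defines \eqref{eq.can-inclusion-tangent-pdr}.
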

\begin{proof}
    This is tautological after noting that multiplying by $w$, viewed as an element of $\gr^{-1}(\mc{W}_{\zpbreve})(\overline{\mathcal{P}}_\dR[\varepsilon])$, is the same as adding $w$ viewed as an element of the vertical tangent bundle $T_{\overline{\mathcal{G}}_\dR/\scrs_K}$. 
\end{proof}

\subsubsection{} Now, we have an induced action of $\mathcal{H}=\widetilde{\mathcal{H}}/T_p \mathcal{H}$ on $\overline{\mathfrak{Ig}}^b$; we write 
\[a: \mathcal{H} \times_{\Spf \zpbr} \overline{\mathfrak{Ig}}^b \rightarrow \overline{\mathfrak{Ig}}^b\]
for the action map. In particular, differentiating the action gives, for any $t \in \Lie \mathcal{H}$, a vector field $\partial_t$ on $\overline{\mathfrak{Ig}}^b$ expressed as the composition 
\[ \overline{\mathfrak{Ig}}^b[\varepsilon]=\Spf \zpbr[\varepsilon]/\varepsilon^2  \times_{\Spf \zpbr} \overline{\mathfrak{Ig}}^b \xrightarrow{(t \circ \pi_1, \pi_2) } \mathcal{H} \times_{\Spf \zpbr}  \overline{\mathfrak{Ig}}^b \xrightarrow{a} \overline{\mathfrak{Ig}}^b. \]
Here $\pi_i$ denotes projection to the $i$-th component of $\overline{\mathfrak{Ig}}^b[\varepsilon]=\Spf \zpbr[\varepsilon]/\varepsilon^2  \times_{\Spf \zpbr} \overline{\mathfrak{Ig}}^b$. By composition with $\overline{v}$, we obtain
\[ d\overline{v}(\partial_t): \overline{\mathfrak{Ig}}^b[\varepsilon] \rightarrow \overline{\mathcal{P}}_\dR, \]
a vector field on $\overline{\mathcal{P}}_\dR$ along the map $\overline{v}$, i.e., a section of $H^0(\overline{\mf{Ig}}^b, \overline{v}^*T_{\overline{\mathcal{P}}_\dR}).$

\subsubsection{} 
The following computes $d\overline{v}(\partial_t)$ and, as we will explain in the next section, specializes to Theorem \ref{Thm:IntroIntegralAction}.
\begin{Thm} \label{Thm:IdentificationAction}
Let $t \in \Lie \mathcal{H}$, and let $w \in \gr^{-1}(\mf{w}_{\zpbr})$ be the associated element under the canonical isomorphism $\Lie \mathcal{H} \xrightarrow{\sim} \gr^{-1}(\mf{w}_{\zpbr})$ in \eqref{eq.canonical-idents-LieH}. Then 
\[ d\overline{v}(\partial_t) = \overline{v}^* \partial_w.\]
In other words, $d\overline{v}|_{\Lie \mathcal{H}}$ is the composition of the canonical maps 
\[ \Lie \mathcal{H} \overset{\eqref{eq.canonical-idents-LieH}}{\xrightarrow{\sim}}\mathrm{gr}^{-1}\mf{w}_{\zpbr}=\mf{w}_{\zpbr}/\Fil^0 \mf{w}_{\zpbr}\lhook\joinrel\xrightarrow{\eqref{eq.can-inclusion-tangent-pdr}}T_{\overline{\mathcal{P}}_\dR}. \]
\end{Thm}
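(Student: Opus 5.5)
We plan to verify the characterization of Lemma \ref{Lem:IdentificationII}. That is, we show that $d\overline{v}(\partial_t)$, viewed as a map $\overline{\mathfrak{Ig}}^b[\varepsilon]\to\overline{\mathcal{G}}_\dR$, equals $w\cdot s(d\rho(d\overline{v}(\partial_t)))$. By uniqueness in that lemma this identifies $d\overline{v}(\partial_t)$ with $\overline{v}^*\partial_w$. Since everything is compatible with the faithfully flat quotient $\mathfrak{Ig}^b\to\overline{\mathfrak{Ig}}^b$, it suffices to check the identity after pulling back to $\mathfrak{Ig}^b$. Because $\mathfrak{Ig}^b$ is the Witt vector lift of a perfect scheme, and hence $p$-torsion free with $\mathfrak{Ig}^b/p$ perfect, it further suffices to check it on $S$-valued points for rings of the type in \S\ref{sss.action-and-canonical-identification}. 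Concretely, we take a point $i\in\mathfrak{Ig}^b(R)$ and form the product with the ring $S$ constructed there. This gives a $p$-torsion free, $p$-complete ring $S'$ with $S'/p\in\QPRS$, together with the lift $\tilde t\in\tildeh(S')$ of $t$. The point $a(t,i)$ is then the image of $\alpha_{\tilde t}\cdot i$.

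The key step is to compute $v(\alpha_{\tilde t}\cdot i)$ in terms of $v(i)$. Write $\mathbb{Y}$ for the $p$-divisible group of $u(i)$ over $S'$ and $\rho:\mathbb{Y}\xrightarrow{\sim}\mbx_{\tilde x}$ for its Igusa trivialization. Acting by $\alpha_{\tilde t}$ produces a new point with underlying deformation $\mathbb{Y}'$ over $S'$. Since $\alpha_{\tilde t}$ lies in $T_p\mathcal{H}$ modulo $\varepsilon$, the groups $\mathbb{Y}$ and $\mathbb{Y}'$ agree modulo the divided powers ideal $(\varepsilon)$. By \S\ref{ss.D-maps}, the crystalline identification $D(\mathbb{Y}')\simeq D(\mathbb{Y})$ is therefore the Gauss--Manin transport. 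Comparing with \S\ref{subsub:Constructions}, this means that the frame $v(\alpha_{\tilde t}\cdot i)$ equals $s(d\rho(\cdot))$ applied to $v(i)$, composed with $D(\alpha_{\tilde t})\in\mc{W}(S')$ acting on the trivialization $D(\mbx_{\tilde x})=\Lambda$. In other words,
\[ v(\alpha_{\tilde t}\cdot i)=D(\alpha_{\tilde t})\cdot s\bigl(d\rho(\partial)\bigr)(v(i)). \]
Here one must be careful about left versus right actions, and use that $\mc{W}$ is abelian and that $D(\alpha_{\tilde t})\equiv 1$ modulo $\varepsilon$. One should also check that the Igusa trivialization composes with $D(\alpha_{\tilde t})$ by functoriality of $D$ on quasi-isogenies. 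We expect this step to be the main obstacle. It requires identifying the abstract quasi-isogeny action of $\Aut_G(\tildex)$ on $\mathfrak{Ig}^b$ (via the product formula, \S\ref{subsub:ProductFormula}) with the crystalline map $D(\cdot)$ on frames. We would first do this in the Siegel case via the moduli description, and then deduce the Hodge-type case using the closed immersions of Proposition \ref{Prop:MapToPdr}.

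Finally, we pass to $\overline{\mathcal{G}}_\dR$. Here $D(\alpha_{\tilde t})\in\mc{W}(S')$ is only well defined modulo $\Fil^0\mc{W}$, because of the $T_p\mathcal{H}$-ambiguity in the lift $\tilde t$. By Lemma \ref{Lem:Identification}, its image in $\gr^{-1}\mc{W}(S')$ is exactly $w$ under the identification \eqref{eq.canonical-idents-LieH}. Substituting this into the formula above yields \eqref{eq.GM-characterization} for $d\overline{v}(\partial_t)$, which concludes the proof.
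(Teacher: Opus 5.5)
Your proposal is correct and is essentially the paper's proof: reduce to the Siegel case via the closed immersions, verify the characterization of Lemma \ref{Lem:IdentificationII} after pulling back to the fpqc cover $\mathfrak{Ig}^b_S \to \mathfrak{Ig}^b[\varepsilon]$, write the new frame as $D(\alpha_{\tilde t})$ composed with the crystalline (equivalently Gauss--Manin) transport along the quasi-isogeny relating the two points, and conclude with Lemma \ref{Lem:Identification}. One small correction: $D(\alpha_{\tilde t})$ is not $\equiv 1$ modulo $\varepsilon$ on the nose. Modulo $\varepsilon$ the lift $\tilde t$ only lands in $T_p\mathcal{H}$, which acts on frames through $\Fil^0\mc{W}$, in general nontrivially; it is trivial only modulo $\Fil^0\mc{W}$, which is exactly why the identity must be checked in $\overline{\mathcal{G}}_\dR$, as you ultimately do.
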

\begin{proof}
Recall the commutative diagram
\begin{equation}
    \begin{tikzcd}
        \mathfrak{Ig}^b\gx \arrow{r} \arrow[d] & \mathfrak{Ig}^b\gvx \arrow{d} \\
        \mathcal{P}_{\dr,\zpbr}\gx \arrow{r} & \mathcal{P}_{\dr,\zpbr}\gvx.
    \end{tikzcd}
\end{equation}
of Proposition \ref{Prop:MapToPdr}, where the horizontal arrows are closed immersions. There is an induced commutative diagram
\begin{equation}
    \begin{tikzcd}
        \overline{\mathfrak{Ig}}^b\gx \arrow{r} \arrow{d} & \overline{\mathfrak{Ig}}^b\gvx \arrow{d} \\
        \overline{\mathcal{P}}_{\dr}\gx \arrow{r} & \overline{\mathcal{P}}_{\dr}\gvx
    \end{tikzcd}
\end{equation}
where the horizontal arrows are again closed immersions. Since the bottom horizontal map induces an injective map on tangent bundles, and the top row is $\mathcal{H}$ equivariant, it follows that it suffices to prove the theorem when $\gx=\gvx$, and we will henceforth assume that we are in this case. In this case, the map $\mf{Ig}^b \to \scrshat_K\gx$ represents the functor sending $z:T \to \scrshat_K\gx$ to the set of trivializations $A[p^{\infty}]_T \xrightarrow{\sim} \mbx_{\tilde{x},T}$ compatible with the polarizations, see \cite[Lemma 4.3.10]{CaraianiScholze}; this will be used below. \smallskip 

Fix $t \in \operatorname{Lie} \mathcal{H}$. By Lemma \ref{Lem:IdentificationII}, we need to show that, as maps $\overline{\mf{Ig}}^b[\varepsilon] \rightarrow \overline{\mathcal{G}}_\dR$,  
\begin{equation}\label{eq.in-proof-desired-gm-equality} d\overline{v}(t)=w\cdot \overline{s}(d\overline{u}(t)) \end{equation}
where $\overline{s}$ denotes the Gauss--Manin lift from $\overline{u}^*T_{\scrs}$ to $\overline{v}^* T_{\overline{\mathcal{G}}_\dR}$.

To that end, we consider $\zpbr[\varepsilon]/\varepsilon^2 \to S$ as in \S\ref{sss.action-and-canonical-identification}, and choose a lift $\widetilde{t}$ of $t$ from $\mathcal{H}(\zpbr[\varepsilon]/\varepsilon^2)$ to $\widetilde{\mathcal{H}}(S)$. Then, we have a commutative diagram

\begin{equation}\label{eq.proof-of-derivative-cd}\begin{tikzcd}
	{\Spf S \times_{\Spf \zpbr}} \mathfrak{Ig}^{b} & {\widetilde{\mathcal{H}}\times_{\Spf \zpbr}\mathfrak{Ig}^{b}} & {\mathfrak{Ig}^{b}} & {\mathcal{P}_\dR} \\
	{ \Spf \zpbr[\varepsilon]/\varepsilon^2 \times_{\Spf \zpbr} \overline{\mathfrak{Ig}}^{b}} & { \mathcal{H}\times_{\Spf \zpbr} \overline{\mathfrak{Ig}}^{b} } & {\overline{\mathfrak{Ig}}^{b}} & {\overline{\mathcal{P}}_\dR} ,
	\arrow["{(\tilde{t}\circ\pi_1, \pi_2)}", from=1-1, to=1-2]
	\arrow[from=1-1, to=2-1]
	\arrow["a", from=1-2, to=1-3]
	\arrow["v", from=1-3, to=1-4]
	\arrow[from=1-4, to=2-4]
	\arrow["{(t\circ\pi_1,  \pi_2)}", from=2-1, to=2-2]
	\arrow["a", from=2-2, to=2-3]
	\arrow["{\overline{v}}", from=2-3, to=2-4]
\end{tikzcd}\end{equation}
where the composition on the bottom row is $d\overline{v}(t)$. We write our universal object over $\mathfrak{Ig}^b$ as $(A, \rho: A[p^\infty] \xrightarrow{\sim}  \mbx_{\tilde{x},\mf{Ig}^b})$. Since the classifying map for $A$ factors over $\overline{\mf{Ig}}^b$, the universal abelian variety up to prime-to-$p$ isogeny $A$ over $\mathfrak{Ig}^b$ is pulled back from $\overline{\mf{Ig}}^b$. In particular, writing $B$ for the pullback to $\mf{Ig}^b[\varepsilon]$ of this universal family along the composition of the first two bottom horizontal arrows and the map $\mf{Ig}^b[\varepsilon]\rightarrow \overline{\mf{Ig}}^b[\varepsilon]$, we find that the composition of the first two horizontal arrows in the top row classifies some $(B_{\mathfrak{Ig}^b_S}, \rho')$. It follows, in particular, that, after pullback along $\mf{Ig}^b_S \rightarrow \overline{\mf{Ig}}^b[\varepsilon]$, we can lift $d\overline{v}(t)$ to the classifying map for $(B_{\mathfrak{Ig}^b_S},D(\rho'))$. 

Now, after pullback to $\mathfrak{Ig}^b$, we can also lift $\overline{s}(d\overline{u}(t))$ to $v^*T_{\mathcal{G}_\dR}$ as the Gauss--Manin lift $s(du(t))$, which we view as a map $\mf{Ig}^b[\varepsilon] \rightarrow \mathcal{G}_\dR$. Writing $B_0$ for the reduction of $B$ mod $\varepsilon$ and $B_0[\varepsilon]=B_0 \times_{\mathfrak{Ig}^b}\mathfrak{Ig}^b[\varepsilon]$, the Gauss--Manin connection evaluated on $t$ gives an isomorphism $\nabla_t: D(B) \rightarrow D(B_0[\varepsilon])$ compatible with the extra structures, and the associated lift based at a point $(B_0, s)$ is $(B, s \circ \nabla_t)$.  As explained in \S\ref{sss.abelian-variety-compatibility}, the Gauss--Manin connection agrees with the crystalline connection for the reduction mod $p$ of $A[p^\infty]$, and this is how we will compute it.

To relate this to our first computation, we note that, by the definition of the action, working mod $p$, there is a quasi-isogeny $\psi: A_{\mathrm{Ig}^b[\varepsilon]} \rightarrow B_{\mathrm{Ig}^b[\varepsilon]}$ 
such that, writing $\tilde{\psi}$ for the map on universal covers, the following diagram commutes: 
\begin{equation}\label{eq.action-reps-diagram}
    \begin{tikzcd}
       \widetilde{A_{{\mathfrak{Ig}^b_S}}[p^\infty]} \arrow[d, "\tilde{\psi}_S"] \arrow[r, "\tilde{\rho}_S"]  &  \tildex_{b}  \arrow[d, "\alpha_{\tilde{t}}"] \\
        \widetilde{B_{{\mathfrak{Ig}^b_S}}[p^\infty]} \arrow[r, "\tilde{\rho'}"] &  \tildex_{b} .
    \end{tikzcd}
\end{equation}
Since $\alpha_{\tilde{t}}$ is an isomorphism mod $\varepsilon$, the map $\tilde{\psi}_S$ is an isomorphism mod $\varepsilon$, and thus $\psi_0:=\psi \mod \varepsilon$ is an isomorphism (since this can be checked on the fpqc cover $\mf{Ig}^b_{S/\varepsilon}$). Thus the crystalline connection induces an isomorphism $\gamma: D(B_0) \rightarrow D(A)$. The trivialization $s$ in question is $D(\rho) \circ \gamma$, so the Gauss--Manin lift classifies
\[ (B, D(\rho[\varepsilon]) \circ \gamma[\varepsilon]  \circ \nabla_t ).\]
Note that $D(\rho[\varepsilon]) \circ \gamma[\varepsilon]  \circ \nabla_t$ can be thought of as the map coming from the crystalline connection associated to the isomorphism $\rho \circ \psi_0^{-1} \circ \mathrm{Id}_{B_0} $ modulo $(p,\varepsilon)$. This agrees with $\rho[\varepsilon] \circ \psi^{-1}$ modulo $(p, \varepsilon)$. Thus, if we pullback to $\mathfrak{Ig}^b_S$ so that we have our Dieudonn\'{e} theory available as in \S\ref{ss.D-maps}, we obtain that
\[ s(du(t))|_{\mathfrak{Ig}^b_S}=(B_{\mathfrak{Ig}^b_S},  D(\tilde{\rho}_S) \circ D(\tilde{\psi}_S)^{-1}). \]
But since $\alpha_{\tilde{t}} \circ \tilde{\rho}_S \circ \tilde{\psi}_S^{-1}  =\tilde{\rho}'$,  $d\overline{v}(t)|_{\mf{Ig}^b_S}$ is lifted by
\begin{align*} (B_{\mathfrak{Ig}^b_S},D(\rho'_S)) &= (B_{\mathfrak{Ig}^b_S}, D(\alpha_{\tilde{t}}) \circ D(\tilde{\rho}_S) \circ D(\tilde{\psi}_S)^{-1}  ) \\&= D(\alpha_{\tilde{t}}) \cdot (B_{\mathfrak{Ig}^b_S}, D(\tilde{\rho}_S) \circ D(\tilde{\psi}_S)^{-1}).
\\&= D(\alpha_{\tilde{t}}) \cdot s(du(t))|_{\mathfrak{Ig}^b_S}.
\end{align*}
Projecting down to $\overline{\mathcal{G}}_\dR$, we thus obtain, 
\[ d\overline{v}(t)|_{\mf{Ig}^b_S} = w \cdot \overline{s}(d\overline{u}(t))|_{\mf{Ig}^b_S} \]
where on the right to obtain $w$ from $D(\alpha_{\tilde{t}})$ we have used Lemma \ref{Lem:Identification}. Since $\mf{Ig}^b_S \rightarrow \mf{Ig}^b[\varepsilon]$ is an fpqc cover, we obtain the desired equality \eqref{eq.in-proof-desired-gm-equality}.

\end{proof}    

\subsection{Construction of \texorpdfstring{$p$}{p}-adic differential operators in the \texorpdfstring{$\mu$}{mu}-ordinary case} \label{sub:MuOrdinaryOperators} We now specialize the operators constructed above to the $\mu$-ordinary setting, and to the Mantovan Igusa variety.

\subsubsection{} \label{Subsub:Hinmuordinarysetting}

We continue with the notation from \S \ref{subsub:FramingObject}, in particular there is a fixed representative $\mu:\mathbb{G}_{m,\oee} \to \mathcal{G}_{\oee}$ of $[\mu]$. We let $\mbx=\mbx_{b_{\mu}}$, which is defined over $k_{E}$ since $b_{\mu} \in G(E)$, and we let $\mbxcan$ be the canonical lift of $\mbx$ over $\oee$. Let $\mathcal{U}_{\overline{\mu},0} $ be the center of $\mathcal{U}_{\overline{\mu}}$ and consider the vector group
\begin{align}
    \mathcal{W}=\mathcal{U}_{\overline{\mu},0} \subset \mathcal{G},
\end{align}
together with $\Fil^0\mc{W}_{\oee}:=\mc{W}_{\oee} \cap \mathcal{P}_{\mu^{-1}}$. We will write $\mf{w}=\Lie \mathcal{W}$ and $\operatorname{Fil}^0 \mf{w}_{\oee} = \operatorname{Lie} \Fil^0\mc{W}_{\oee}$, and let $\mathcal{N}=\mathcal{M}_{\mu} \cap \mathcal{U}_{\overline{\mu}}$. 

\begin{Lem} \label{Lem:LieAlgebraH}
There is an inclusion $\operatorname{Fil}^{0} \mf{w}_{\oee} \subset \Lie \mathcal{M}_{\mu}$, and the induced natural ($M_{\overline{\mu}}(\zp)$-equivariant) map $\operatorname{gr}^{-1} \mf{w}_{\oee} \to \mf{g}/\Lie \mathcal{M}_{\mu}$ has image $\mf{u}_{\mu}^{\mathcal{N}}$. 
\end{Lem}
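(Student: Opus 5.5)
The plan is to reduce everything to a weight computation after choosing a maximal torus. As in the proof of Proposition \ref{Prop:CompatibleChoice}, I would pick a maximal torus $\mathcal{T} \subset \mathcal{G}_{\oee}$ through which $\mu$ factors; after base change to $\mathcal{O}_{C}$ (or a finite unramified extension splitting $\mathcal{T}$) this is harmless, since all statements are about subspaces of $\mf{g}$. Then $\overline{\mu}$ also factors through $\mathcal{T}$, being the Galois average of the $\sigma$-conjugates of $\mu$, which all lie in $X_*(\mathcal{T})$. I would decompose $\mf{g}$ into $\mf{t}$ and root spaces $\mf{g}_\alpha$, and record for each root the pair $(\langle\alpha,\mu\rangle, \langle\alpha,\overline{\mu}\rangle)$. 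Since $\mu$ is minuscule, $\langle\alpha,\mu\rangle\in\{-1,0,1\}$. The subspaces then have root-theoretic descriptions:
\begin{itemize}
\item $\mf{u}_{\overline{\mu}}$ is spanned by the roots with $\langle\alpha,\overline{\mu}\rangle>0$;
\item $\mf{u}_{\mu}$ is spanned by the roots with $\langle\alpha,\mu\rangle=-1$ (it is the unipotent radical of $\mathcal{P}_\mu$ in the paper's convention, opposite to $\mathcal{P}_{\mu^{-1}}$);
\item $\Fil^0_{\mu^{-1}}\mf{g}=\mf{p}_{\mu^{-1}}$ is spanned by $\mf{t}$ and the roots with $\langle\alpha,\mu\rangle\le 0$.
\end{itemize}

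The first step is to show that no root $\alpha$ occurring in $\mf{w}=\mf{u}_{\overline{\mu},0}$ has $\langle\alpha,\mu\rangle=1$. Suppose instead that $\langle\alpha,\mu\rangle=1$. The value $\langle\alpha,\overline{\mu}\rangle$ is an average of the numbers $\langle\alpha,\sigma^i\mu\rangle\in\{-1,0,1\}$. I would look for a root $\beta$ in $\mf{u}_{\overline{\mu}}$ with $[\mf{g}_\beta,\mf{g}_\alpha]\neq 0$, which would contradict centrality. I expect this to be the main obstacle. A cleaner route may be the following. We have $\mf{w}\cap\mf{p}_{\mu^{-1}}$ lying in weights $\le 0$ for $\mu$, and the claim that $\Fil^0\mf{w}\subset\mf{m}_\mu$ amounts to saying that $\mf{w}$ has no $\mu$-weight $-1$ part in $\Fil^0$. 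With the paper's sign conventions this should instead be read as $\mf{w}\cap\mf{u}_{\mu^{-1}}=0$. I would prove that statement using admissibility of the filtration (condition (2) of \S\ref{sss.action-setup}). Since $b_\mu=\mu(p^{-1})$, the $\mu$-ordinary Dieudonn\'e module $\mf{w}$ has slopes determined by $\overline{\mu}$, and its Hodge polygon is given by $\mu$. On the center $\mf{u}_{\overline{\mu},0}$, the slopes take the extreme value among the positive $\overline{\mu}$-weights. Comparing the Newton and Hodge polygons, which coincide in the $\mu$-ordinary case root-by-root along each Galois orbit, would force the relevant $\mu$-weights onto a single side. If this argument turns out to be delicate, I would first try the root-theoretic argument: for a simple unipotent center, the roots in $\mf{u}_{\overline{\mu},0}$ are the maximal ones for the $\overline{\mu}$-grading.

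Granting this, $\Fil^0\mf{w}$ is the span of the roots in $\mf{w}$ with $\langle\alpha,\mu\rangle=0$, and it therefore lies in $\mf{m}_\mu$. Moreover, $\gr^{-1}\mf{w}$ maps isomorphically onto the span of the roots in $\mf{w}$ with $\langle\alpha,\mu\rangle=-1$, so its image in $\mf{g}/\mf{m}_\mu$ is $\mf{w}\cap\mf{u}_\mu=\mf{u}_{\overline{\mu},0}\cap\mf{u}_\mu$. It remains to identify this with $\mf{u}_\mu^{\mathcal{N}}$, where $\mathcal{N}=\mathcal{M}_\mu\cap\mathcal{U}_{\overline{\mu}}$ acts on $\mf{u}_\mu$ through $\mathcal{M}_\mu$.
\begin{itemize}
\item $\mf{u}_{\overline{\mu},0}\cap\mf{u}_\mu\subseteq\mf{u}_\mu^{\mathcal{N}}$: these elements are central in $\mf{u}_{\overline{\mu}}\supseteq\Lie\mathcal{N}$, so they are fixed by the connected unipotent group $\mathcal{N}$.
\item $\mf{u}_\mu^{\mathcal{N}}\subseteq\mf{u}_{\overline{\mu},0}\cap\mf{u}_\mu$: since $\mathcal{N}$ is $\mathcal{T}$-stable, the invariants are $\mathcal{T}$-stable and hence spanned by root vectors $X_\alpha$ with $\langle\alpha,\mu\rangle=-1$ that are killed by every $\mf{g}_\beta\subseteq\Lie\mathcal{N}$.
\end{itemize}
For the second inclusion, I would argue that such an $\alpha$ has $\langle\alpha,\overline{\mu}\rangle>0$; otherwise it can be raised by some root of $\mathcal{N}$. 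Its centrality in $\mf{u}_{\overline{\mu}}$ follows because brackets with the roots of $\mf{u}_{\overline{\mu}}$ having $\mu$-weight $+1$ or $-1$ leave $\mf{g}$ by minuscularity. Finally, $M_{\overline{\mu}}(\zp)$-equivariance is clear, since $\mathcal{M}_{\overline{\mu}}$ normalizes $\mathcal{U}_{\overline{\mu},0}$ and $\mathcal{M}_{\overline{\mu}}\subseteq\mathcal{M}_\mu$ commutes with $\mu$.
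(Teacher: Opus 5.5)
There is a genuine gap, exactly at the step you flag as the main obstacle, and it is tangled with a sign slip.

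\textbf{The sign slip.} With the paper's convention ($\mf{p}_\lambda$ is the sum of the $\mathrm{Ad}\lambda$-weight spaces of weight $\geq 0$), $\mf{u}_\mu$ is spanned by the roots with $\langle\alpha,\mu\rangle=+1$, not $-1$. So your first step as literally stated (no root of $\mf{w}$ has $\langle\alpha,\mu\rangle=1$) is false. In the ordinary case $\mf{w}=\mf{u}_\mu$ consists only of such roots, and this is precisely $\Lie\mathcal{H}$. The statement also contradicts your later description of $\Fil^0\mf{w}$ and $\gr^{-1}\mf{w}$.

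\textbf{The unproved step.} What is really needed is the corrected version you reach: every root with $\langle\alpha,\ovmu\rangle>0$ has $\langle\alpha,\mu\rangle\geq 0$, i.e.\ $\mf{u}_{\ovmu}\subset\mf{p}_{\ovmu}\subset\mf{p}_{\mu}$. You never prove it.
\begin{itemize}
\item Appealing to admissibility or the Dieudonn\'e property of $\mf{w}$ is circular. The paper deduces those properties (Lemma \ref{Lem:AssumptionsMuOrdinary}) from the present lemma. Moreover, the Dieudonn\'e condition on $\mf{w}_{\zpbr}$ for $\mathrm{Ad}(\mu(p^{-1}))\circ\sigma$ is, root space by root space, literally the statement $\langle\alpha,\mu\rangle\in\{0,1\}$.
\item The Newton--Hodge ``root-by-root'' comparison and the ``maximal roots'' heuristic are not arguments.
\item More to the point, none of your routes uses how $\mu$ sits relative to its Galois conjugates, and this cannot be avoided. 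Take $G=\mathrm{Res}_{\mathbb{Q}_{p^2}/\qp}\mathrm{GL}_3$ and $\mu=((1,1,0),(0,0,1))$ (weights in the two $\mathrm{GL}_3$-factors for the diagonal torus), which is defined over $E=\mathbb{Q}_{p^2}$. Then $\ovmu$ is central, so $\mf{w}=0$ and $\mathcal{N}=1$, while $\mf{u}_\mu^{\mathcal{N}}=\mf{u}_\mu\neq 0$.
\end{itemize}

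\textbf{The missing idea} is the inclusion $\mathcal{P}_{\ovmu}\subset\mathcal{P}_\mu$, which the paper uses throughout. It holds because $\mu$ is (implicitly) chosen dominant for a Borel subgroup defined over $\zp$ containing a maximal torus $\mathcal{T}$ defined over $\zp$; this is also what makes $b_\mu=\mu(p^{-1})$ the $\mu$-ordinary element. Then $\sigma$ preserves the positive roots, so for the roots in a single Galois orbit the pairings with $\mu$ are either all $\geq 0$ or all $\leq 0$. Hence $\langle\alpha,\ovmu\rangle\geq 0\Rightarrow\langle\alpha,\mu\rangle\geq 0$, and $\langle\alpha,\mu\rangle>0\Rightarrow\langle\alpha,\ovmu\rangle>0$. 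With this, the first claim is one line, as in the paper:
\[ \mf{w}\cap\mf{p}_{\mu^{-1}}\subset\mf{p}_\mu\cap\mf{p}_{\mu^{-1}}=\mf{m}_\mu. \]

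\textbf{Where else the fact is needed.} The same input is silently used in your last paragraph.
\begin{itemize}
\item The inclusion $\mf{u}_\mu\subset\mf{u}_{\ovmu}$ is what should replace the unjustified ``otherwise it can be raised by some root of $\mathcal{N}$''.
\item Your centrality argument is wrong as stated: the bracket of root spaces of $\mu$-weights $+1$ and $-1$ lands in weight $0$ and need not vanish. What saves it is that $\mf{u}_{\ovmu}$ has no roots of $\mu$-weight $-1$. Hence $\mf{u}_{\ovmu}=\mf{n}\oplus\mf{u}_\mu$, and an element of the abelian $\mf{u}_\mu$ is central in $\mf{u}_{\ovmu}$ if and only if it commutes with $\mf{n}$.
\end{itemize}
Once this input is supplied, your root-space bookkeeping is the paper's proof.
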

\begin{proof}
The first claim follows from the fact that $\mf{w} \subset \mf{p}_{\overline{\mu}} \subset \mf{p}_{\mu}$ so that $ \mf{w}_{\oee} \cap \mf{p}_{\mu^{-1}}=  \mf{w}_{\oee} \cap \Lie \mathcal{M}_{\mu^{-1}} =  \mf{w}_{\oee} \cap \Lie \mathcal{M}_{\mu}$. We also deduce that the image $\operatorname{gr}^{-1} \mf{w}_{\oee}$ of $\mf{w}_{\oee}$ in $\mf{g}/\Lie \mathcal{M}_{\mu}$ lies in $\mf{p}_{\mu}/\Lie \mathcal{M}_{\mu}$ and so is contained in $\mf{u}_{\mu}$. It moreover lands in the invariants under $\mathcal{N}$, since conjugation by $\mathcal{N}$ acts trivially on $\mf{u}_{\overline{\mu},0}$ since it is central in $\mf{u}_{\overline{\mu}}$. 

To show that the induced map $\operatorname{gr}^{-1} \mf{w}_{\oee} \to \mf{u}_{\mu}^{\mathcal{N}}$ is an isomorphism, we recall that $\mf{u}_{\mu}$ is abelian. Now $\mf{u}_{\mu} \subset \mf{u}_{\overline{\mu},\oee}$, and a complement for this inclusion is given by the intersection with $\Lie \mathcal{M}_{\mu}$, which is $\mathfrak{n}=\operatorname{Lie} \mathcal{N}$. Thus an element of $\mf{u}_{\mu}$ is central in $\mf{u}_{\overline{\mu},\oee}$ precisely when it commutes with $\mathfrak{n}$, or equivalently when it is fixed by the adjoint action of $\mathcal{N}$. This shows that $\mf{u}_{\overline{\mu},0,\oee} \cap \mf{u}_{\mu}=\mf{u}_{\mu,\oee}^{\mathcal{N}}$, and hence we have a decomposition 
\[
\mf{u}_{\overline{\mu},0,\oee} = \mf{u}_{\mu,\oee}^{\mathcal{N}} \oplus (\mf{u}_{\overline{\mu},0,\oee} \cap \mathfrak{p}_{\mu^{-1}}) = \mf{u}_{\mu,\oee}^{\mathcal{N}} \oplus \opn{Fil}^0 \mathfrak{w}_{\mathcal{O}_E}.
\]
From this, we deduce the claim.
\end{proof}

\begin{Lem} \label{Lem:AssumptionsMuOrdinary}
The following hold:
    \begin{enumerate}
    \item $\mc{W}_{E}$ is preserved by $\mathrm{Ad}(b_{\mu})$, and $\mc{W}(\oee) \subseteq \mc{W}(\oee)[\tfrac{1}{p}]=\mc{W}(E)$ is a covariant Dieudonn\'{e} module under $\mathrm{Ad}(b_{\mu}) \circ \sigma$. 
    \item $\Fil^0\mc{W}_{\oee}$ is a vector subgroup of $\mc{W}_{\oee}$ such that $\Fil^0\mc{W}(\oee)$ is an admissible filtration on $\mc{W}(\oee)$.
\end{enumerate}
\end{Lem}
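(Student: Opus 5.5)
The plan is to work weight by weight for the adjoint action of $\mu$, which is defined over $\oee$. Since $\mu$ factors through a maximal torus $\mathcal{T}$ with $b_\mu=\mu(p^{-1})\in T(E)$, and $\overline{\mu}$ is the Galois average of $\mu$ over $\gal(E/\qp)$, the group $\mathcal{U}_{\overline{\mu}}$ and its center $\mathcal{U}_{\overline{\mu},0}$ are defined over $\zp$. Both are stable under $\mathrm{Ad}(\mu(z))$, because $\mu$ commutes with $\overline{\mu}$. Hence $\mathrm{Ad}(b_\mu)$ preserves $\mc{W}_E$. Since $\mc{W}$ is defined over $\zp$, $\sigma$ preserves $\mc{W}(\oee)$ as well. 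So the Frobenius $\varphi=\mathrm{Ad}(b_\mu)\circ\sigma$ preserves $\mc{W}(E)$, which gives the first half of (1).

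For the lattice condition in (1), decompose $\mf{w}_{\oee}=\bigoplus_{i}\mf{w}_i$, where $\mu$ acts on $\mf{w}_i$ with weight $i$. The weights lie in $\{-1,0,1\}$ because $\mu$ is minuscule. On $\mf{w}_i$, $\mathrm{Ad}(b_\mu)$ acts by $p^{-i}$. It follows that
\[ \varphi(\mc{W}(\oee)) = \bigoplus_i p^{-i}\,\sigma(\mf{w}_i), \]
and we need $\mc{W}(\oee)\subseteq \varphi(\sigma^*\mc{W}(\oee))\subseteq \tfrac1p\mc{W}(\oee)$. Both inclusions follow from two facts: $\sigma$ permutes the graded pieces $\sigma(\mf{w}_i)$, which together span $\mf{w}_{\oee}$ because $\mc{W}$ is defined over $\zp$; and $p^{-i}\in\{p,1,p^{-1}\}$. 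Here we use Remark \ref{Rem:FakeLogarithm} to identify the group $\mc{W}(\oee)$ with $\mf{w}_{\oee}$ as Frobenius modules. This is essentially the same as the argument that $\Lambda_{\oee}$ is a Dieudonn\'e module under $b_\mu\circ\sigma$.

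For (2), note that $\Fil^0\mc{W}_{\oee}=\mc{W}_{\oee}\cap\mathcal{P}_{\mu^{-1}}$ corresponds to $\bigoplus_{i\le 0}\mf{w}_i$, the part on which $\mu^{-1}$ has weight $\ge 0$. This is a direct summand, hence a vector subgroup. Admissibility should be checked with the criterion of Proposition \ref{prop.filtered-dieudonne-theory}: the filtration $V$ must reduce mod $p$ to the kernel of $M_k\to\sigma_*M_k$, the mod-$p$ reduction of the map adjoint to $p\varphi$. On $\sigma^*\mf{w}_i$, the operator $p\varphi$ acts by $p^{1-i}$. Its reduction mod $p$ is therefore nonzero exactly when $i=1$ and zero when $i\le 0$.

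The main obstacle is to track the $\sigma$-twist and the adjoint carefully enough to confirm that this kernel is $\bigoplus_{i\le 0}\mf{w}_i\otimes k$ and not the opposite filtration. The cleanest route is to avoid redoing the sign bookkeeping and compare with $\mbxcan$ instead. By definition $\mbxcan$ has Hodge filtration $\Fil^0_{\mu^{-1}}$, so $\Fil^0_{\mu^{-1}}$ is admissible on $\Lambda_{\oee}$. Hence it is admissible on the tensor construction $\mathrm{End}(\Lambda)=\Lambda\otimes\Lambda^*$, with the induced filtration and Frobenius $\mathrm{Ad}(b_\mu)\circ\sigma$. The kernel-of-Frobenius criterion is functorial and compatible with direct summands stable under both Frobenius and filtration. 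Applying it to the $\varphi$-stable, weight-graded summand $\mf{w}\subseteq\mf{g}\subseteq\mathrm{End}(\Lambda)$ gives the result. One caveat remains to be checked: the filtration on the whole tensor module $\mathrm{End}(\Lambda)$ has more than two steps, so Dieudonn\'e-module admissibility applies directly only to pieces with weights in $\{-1,0,1\}$. Since $\mf{w}$ has only those weights, restricting to it is enough.
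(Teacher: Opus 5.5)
Your argument for the lattice condition in (1) has a genuine gap. Since $\mf{w}$ is defined over $\zp$, we have $\sigma(\mf{w}_{\oee})=\mf{w}_{\oee}$, so $\varphi(\mf{w}_{\oee})=\mathrm{Ad}(b_\mu)(\mf{w}_{\oee})=\bigoplus_i p^{-i}\mf{w}_i$. The inclusions $\mf{w}_{\oee}\subseteq\varphi(\sigma^*\mf{w}_{\oee})\subseteq\tfrac{1}{p}\mf{w}_{\oee}$ hold on the weight-$i$ piece exactly when $p^{-i}\in\{1,p^{-1}\}$, i.e.\ when $i\in\{0,1\}$. A weight $-1$ piece is multiplied by $p$ (slope $+1$) and violates the first inclusion.

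So knowing $p^{-i}\in\{p,1,p^{-1}\}$ is not enough. Your argument would equally prove that $\mf{g}$ itself is a covariant Dieudonn\'e module, and this already fails for $\mathrm{GL}_2$ in the ordinary case: $\varphi(E_{21})=pE_{21}$, so $E_{21}\notin\varphi(\mf{g})$. Your analogy with $\Lambda_{\oee}$ works only because $\mu$ has weights $\{0,1\}$ on $\Lambda$.

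The missing input is that $\mu$ has no weight $-1$ on $\mf{w}$, i.e.\ $\mf{w}=\mf{u}_{\overline{\mu},0}\subseteq\mf{u}_{\overline{\mu}}\subseteq\mf{p}_{\overline{\mu}}\subseteq\mf{p}_{\mu}$. This is where the $\mu$-ordinary normalization of $\mu$ enters. For example, take $\mu$ dominant for a Borel over $\zp$ containing $\mathcal{T}$. Then every Galois conjugate of $\mu$ is dominant, so any root $\alpha$ with $\langle\alpha,\overline{\mu}\rangle>0$ is positive, and hence $\langle\alpha,\mu\rangle\geq 0$. The paper's proof rests on exactly this. It identifies $\mf{u}_{\overline{\mu},E}$ as the slope $<0$ part of $(\mf{g}_E,\mathrm{Ad}(b_\mu)\circ\sigma)$. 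It then uses $\mf{w}\subset\mf{p}_{\overline{\mu}}\subset\mf{p}_{\mu}$ (Lemma \ref{Lem:LieAlgebraH}) to see that $\mathrm{Ad}(\mu(p^{-1}))$ acts on each root space of $\mf{w}$ by $1$ or $p^{-1}$.

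The same slip appears in your caveat for (2). Restricting to $\mf{w}$ is legitimate because its weights lie in $\{0,1\}$, so that it is a Dieudonn\'e module at all and $\Fil^0\mf{w}=\mf{w}_0$. Weights in $\{-1,0,1\}$ do not suffice, since all of $\mathrm{End}(\Lambda)$ already has those.

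Once (1) is repaired, your comparison route for (2) is a valid alternative to the paper's one-line observation that $\Fil^0\mc{W}(\oee)$ is the $1$-eigenspace of $\mathrm{Ad}(\mu(p^{-1}))$. It can be made precise as follows.
\begin{enumerate}
\item Put $F=p\varphi$ and $V=\varphi^{-1}$ on $\Lambda_{\oee}$. Then $p\varphi_{\mathrm{End}}(f)=F\circ f\circ V$.
\item For any Dieudonn\'e module, $\ker(F\bmod p)=\mathrm{im}(V\bmod p)$. Admissibility of the canonical lift says this equals $\Fil^0_{\mu^{-1}}\Lambda_k$.
\item Hence the kernel of $p\varphi_{\mathrm{End}}$ mod $p$ is the stabilizer of $\Fil^0_{\mu^{-1}}\Lambda_k$, namely $\Fil^0_{\mu^{-1}}\mathrm{End}(\Lambda)_k$.
\item Intersecting with the saturated, weight-graded summand $\mf{w}_{\oee}$ gives $\ker(p\varphi_{\mf{w}}\bmod p)=\Fil^0\mf{w}_k$.
\end{enumerate}

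Your caution about the direct bookkeeping is warranted. Computing $\ker(p\varphi\bmod p)$ literally with $\varphi=\mathrm{Ad}(b_\mu)\circ\sigma$ gives $\sigma^{-1}(\mf{w}_0)\otimes k$. So the Frobenius twist must be absorbed by the same normalization that makes $\Fil_{\mu^{-1}}$ admissible on $\Lambda_{\oee}$, and your comparison argument inherits that normalization automatically.
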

\begin{proof}
As in Remark \ref{Rem:FakeLogarithm}, there is a canonical identification of filtered $F$-crystals $(\mathcal{W}(\oee), \Fil^0\mathcal{W}(\oee))$ with $(\mf{w}_{\oee}, \Fil^0(\mf{w}_{\oee}))$, where the Frobenius on $\mf{w}_{E}$ is induced by the Frobenius $\mathrm{Ad}(b_{\mu}) \circ \mathrm{Id}\otimes\sigma$ on $\mf{g}_{E}$. It follows from the discussion in \cite[Section 3]{KimCentralLeaves}, see \cite[Section 4.4]{DAddeziovH} that the slope $<0$ part of the F-isocrystal $(\mathfrak{g}_{E}, \mathrm{Ad}(b) \circ \operatorname{Id} \otimes \sigma)$ is given by $\mf{u}_{\overline{\mu},E}$. Since the Lie bracket is stable under the adjoint action, it follows that $\mf{u}_{\overline{\mu},0,E} \subset \mf{u}_{\overline{\mu},E}$ is a sub $F$-isocrystal. \smallskip

By Lemma \ref{Lem:LieAlgebraH}, on every (absolute) root subspace in $\mf{u}_{\overline{\mu},E}$, conjugation by $b_{\mu}=\mu(p^{-1})$ acts either as the identity or as $p^{-1}$. It follows that $\mf{u}_{\overline{\mu},0}$ is a covariant Dieudonn\'e module. Now note that $\Fil^0\mc{W}_{\oee}:=\mc{W}_{\oee} \cap \mathcal{P}_{\mu^{-1}}=\mc{W}_{\oee} \cap \mathcal{M}_{\mu^{-1}}$. Thus $\Fil^0\mc{W}(\oee)$ is simply the subspace where conjugation by $\mu(p^{-1})$ acts as $1$, and so it is clearly an admissible filtration on $\mc{W}(\oee)$. 
\end{proof}

\subsubsection{} Let $\mathcal{H} /\spf \oee$ be the $p$-divisible group determined by the filtered Dieudonn\'e module $(\mathcal{W}(\oee), \mathrm{Ad}(b_{\mu}) \circ \sigma, \Fil^0\mathcal{W}(\oee))$ under Proposition \ref{prop.filtered-dieudonne-theory}. Note that the map $T_p \mathcal{H} \to \Aut_{\mathcal{G}}(\mbxcan)$ is defined over $\spf \oee$, since both $\mathcal{H}$ and $\mbxcan$ are defined over $\spf \oee$. Note moreover that this map factors through $\Aut_{\mathcal{G}}(\mbxcan)^{\circ}$ since $\mathcal{H}$ is connected because all the slopes of the $F$-isocrystal $\mf{u}_{\overline{\mu},0,E}$ are strictly negative, see the proof of Lemma \ref{Lem:AssumptionsMuOrdinary}. The natural map $\Aut_{\mathcal{G}}(\mbxcan) \to \mathcal{P}_{\mu^{-1}}$ sending an automorphism of $\mbxcan$ to the induced automorphism of its filtered covariant Dieudonn\'e crystal, see Lemma \ref{Lem:TensorPreservingI}, can also be defined over $\spf \oee$. The image of $\Aut_{\mathcal{G}}(\mbxcan)^{\circ} \to \mathcal{P}_{\mu^{-1}}$ lands in $\mathcal{N}:=\mathcal{U}_{\overline{\mu}} \cap \mathcal{P}_{\mu^{-1}}$ because it preserves the Hodge filtration and acts trivially on the associated graded pieces of the slope filtration. Since $\mathcal{P}_{\overline{\mu}} \subset \mathcal{P}_{\mu}$ we see that $\mathcal{N}=\mathcal{U}_{\overline{\mu}} \cap \mathcal{M}_{\mu^{-1}}$, and that $\Aut_{\mathcal{G}}(\mbxcan)^{\circ} \to \mathcal{P}_{\mu^{-1}}$ lands in $\mathcal{M}_{\mu^{-1}}$. There is thus a commutative diagram (the map of Proposition \ref{Prop:MapToPdr} is also naturally defined over $\oee$)
\begin{equation}
    \begin{tikzcd}
        \igcsf \arrow{r} \arrow{d} & \overline{\igcsf} \arrow{r} \arrow{d} & \igmf \arrow{d} \\
        \mathcal{P}_{\dr} \arrow{r} & \overline{\mathcal{P}}_{\dr} \arrow{r} & \mathcal{N} \backslash \pdr.
    \end{tikzcd}
\end{equation}

\subsubsection{} Recall from Lemma \ref{Lem:LieAlgebraH} the (visibly $\mathcal{M}_{\overline{\mu}}(\zp)$-equivariant) identification 
\begin{align}
 \Lie \mathcal{H} \xrightarrow{\sim} \mf{u}_{\mu}^{\mathcal{N}}.
\end{align}
Recall the action of $\mathfrak{u}_{\mu}$ on $\mathcal{O}_{\mathcal{P}_{\dr}}$ from \S \ref{sub:AlgebraicMaassShimura}. By Lemma \ref{Lem.inclusion-of-Lie-algebras}, this induces an action of the invariants $\left(\operatorname{Sym} \mathfrak{u}_{\mu}\right)^{\mathcal{N}}$ on $\mathcal{O}_{\mathcal{N} \backslash \pdr}$ by passing to iterates of the morphism above and passing to $\mathcal{N}$-invariants. We can now state and prove Theorem \ref{Thm:IntroIntegralAction}.
\begin{Thm} \label{Thm:GlobalAction}
    The restriction map
    \begin{align}
        H^0(\mathcal{N} \backslash \pdr, \mathcal{O}) \to H^0(\igmf, \mathcal{O})
    \end{align}
    is $\operatorname{Lie} \mathcal{H}$-equivariant, where $\Lie \mathcal{H}$ acts via $\Lie \mathcal{H} \xrightarrow{\sim} \left(\operatorname{Lie}\mathcal{U}_{\mu}\right)^{\mathcal{N}}$ on the left-hand side and via differentiating the action of $\mathcal{H}$ on $\igmf$ on the right-hand side.
\end{Thm}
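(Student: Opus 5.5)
The plan is to deduce this from Theorem \ref{Thm:IdentificationAction}, applied with $b=b_{\mu}$, $\tilde{x}=\xi^{\mathrm{can}}$ and $\mathcal{W}=\mathcal{U}_{\overline{\mu},0}$; Lemma \ref{Lem:AssumptionsMuOrdinary} guarantees that the hypotheses of \S\ref{sss.action-setup} hold, after base change to $\zpbr$. This gives a commutative square $\overline{\igcsf} \to \overline{\mathcal{P}}_{\dr}$. In it, for $t\in \Lie\mathcal{H}$ with image $w\in\gr^{-1}\mf{w}$, the derivative of the $\mathcal{H}$-action satisfies $d\overline{v}(\partial_t)=\overline{v}^*\partial_w$, where $\partial_w$ comes from \eqref{eq.can-inclusion-tangent-pdr}.

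We then push everything down along $\overline{\mathcal{P}}_{\dr}\to \mathcal{N}\backslash\pdr$ and $\overline{\igcsf}\to\igmf$. The $\mathcal{H}$-action on $\igmf$ is by definition induced from the action on $\overline{\igcsf}$, so the restriction map from $\mathcal{O}(\mathcal{N}\backslash\pdr)$ to $\mathcal{O}(\igmf)$ is compatible with differentiating along $t$. Here we use that the map $\overline{\igcsf}\to\igmf$ is faithfully flat, by Corollary \ref{Cor:Torsor}, which makes pullback of functions injective. It therefore suffices to show the following: for $f\in H^0(\mathcal{N}\backslash\pdr,\mathcal{O})$, pulled back to $\overline{\mathcal{P}}_{\dr}$, the derivative $\partial_w f$ equals the pullback of the Maass--Shimura operator $X\cdot f$, where $X\in\mf{u}_\mu^{\mathcal{N}}$ is the image of $w$ under Lemma \ref{Lem:LieAlgebraH}. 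All identifications here are made after base change to $\zpbr$, which is harmless because $\zpbr$ is faithfully flat over $\oee$.

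The key comparison is between two vector fields on $\pdr$. The first is the vector field attached to $\tilde{w}\in\mf{w}$ (any lift of $w$) by \eqref{eq.inclusion-lie-algebra-to-vector-fields}, i.e.\ via Proposition \ref{Prop:TPdrToLieGisaniso}. The second is the field $\partial_w$ of Lemma \ref{Lem:IdentificationII}, which is characterized as a vertical translate by $w$ of the Gauss--Manin lift. Under the parallelization $T_{\pdr}\cong\mf{g}\otimes\mathcal{O}$, which is the projection along the Gauss--Manin splitting, the field $\partial_w$ corresponds to $w$ modulo $\Fil^0\mf{w}$. This is exactly \eqref{eq.GM-characterization}, because the vertical component of $\partial_w$ is $w$. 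So on $\overline{\mathcal{P}}_{\dr}$ the two fields agree up to the $\Fil^0\mf{w}\subset\mf{n}$ ambiguity.

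Two checks remain, and I expect the second to be the main obstacle.
\begin{enumerate}
\item \emph{Lift independence.} The ambiguity lies in $\Fil^0\mf{w}\subset \Lie\mathcal{N}$, so it acts trivially on $\mathcal{N}$-invariant functions.
\item \emph{Conventions.} I must confirm that $\tilde{w}$ and the element $X\in\mf{u}_\mu^{\mathcal{N}}$ define the same operator on $\mathcal{N}$-invariants. By Lemma \ref{Lem:LieAlgebraH}, $\mf{u}_{\overline{\mu},0}=\mf{u}_\mu^{\mathcal{N}}\oplus\Fil^0\mf{w}$, so one may take $\tilde{w}=X$ directly. The care needed is only in matching the conventions: left versus right torsor action, and vertical vector fields versus the Gauss--Manin projection.
\end{enumerate}
Finally, $\mathcal{N}$-invariance of $X\cdot f$ follows from $X$ being $\mathcal{N}$-fixed, together with the Lie-compatibility in Lemma \ref{Lem.inclusion-of-Lie-algebras}.
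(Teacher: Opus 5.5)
Your proposal is correct and follows the paper's route. The paper's proof is exactly the one-line deduction from Theorem \ref{Thm:IdentificationAction} together with Lemmas \ref{Lem:AssumptionsMuOrdinary} and \ref{Lem:LieAlgebraH}, and your identification of $\partial_w$ with the pushforward to $\overline{\mathcal{P}}_{\dr}$ of the Maass--Shimura field of the lift $X\in\mf{u}_\mu^{\mathcal{N}}$ (using $\mf{u}_{\overline{\mu},0}=\mf{u}_\mu^{\mathcal{N}}\oplus\Fil^0\mf{w}$) is precisely the unpacking that proof leaves implicit; note only that $\partial_w$ lives on $\overline{\mathcal{P}}_{\dr}$ rather than on $\pdr$, and that injectivity of pullback from $\igmf$ is best justified by $\igmf$ being the fpqc quotient of $\igcsf$ by $\Aut_{\mathcal{G}}(\mbxcan)^{\circ}$ rather than by Corollary \ref{Cor:Torsor} directly.
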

\begin{proof}
    This is a direct consequence of Theorem \ref{Thm:IdentificationAction} and Lemmas \ref{Lem:AssumptionsMuOrdinary} and \ref{Lem:LieAlgebraH}.
\end{proof}
 
\subsection{Constructing additional algebraic differential operators} \label{sub:ActiononIgm} 

We will in fact show that we can construct an action of $\left(\operatorname{Sym} \mf{u}_{\mu} \right)^{\mathcal{N}}$ extending the action of $\operatorname{Sym} (\mf{u}_{\mu}^{\mathcal{N}})$ on $\mathcal{O}(\igmf)$ coming from Theorem \ref{Thm:GlobalAction}. However, we will generally not be able to interpolate these extra differential operators.

\subsubsection{} Consider the commutative diagram
\[
\begin{tikzcd}
\mathcal{Q} \arrow[d, "f"'] \arrow[r, "\chi"] & \mathcal{P}_{\opn{dR}} \arrow[d, "x"] \arrow{r}{\iota} & \mathcal{G}_{\dr} \arrow{dr}{\pi_{\mathrm{Hdg}}} \arrow[ddl, "\beta"] \\
\igmf \arrow[r, "\bar{\chi}"]  \arrow{dr}{g}     & \mathcal{N} \backslash \pdr & & \operatorname{Fl}_{[\mu^{-1}]} \\
& \scrs_{K},
\end{tikzcd}
\]
where $\mathcal{Q}$ is defined such that the left square is Cartesian. Recall the isomorphism $T_{\mathcal{P}_{\opn{dR}}} \xrightarrow{\sim} \mathfrak{g} \otimes \mathcal{O}_{\pdr}$ of Proposition \ref{Prop:TPdrToLieGisaniso}.
\begin{Prop} \label{Prop:MuOrdinaryKodairaSpencer}
The map $T_{\mathcal{Q}} \xrightarrow{d\chi} \chi^{\ast} T_{\mathcal{P}_{\opn{dR}}} \xrightarrow{\sim} \mathfrak{g} \otimes \mathcal{O}_{\mathcal{Q}}$ induces an isomorphism between $T_{\mathcal{Q}}$ and $\mf{u}_{\overline{\mu}} \otimes \mathcal{O}_{\mathcal{Q}} \subset \mathfrak{g} \otimes \mathcal{O}_{\mathcal{Q}}$.
\end{Prop}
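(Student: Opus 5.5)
The strategy is a dimension count combined with an identification of the image in two pieces: the $\mathcal{N}$-directions along the fibers of $f$, and the $\mathcal{H}$-directions coming from Theorem \ref{Thm:IdentificationAction}.

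First, the vertical directions. Since the left square is Cartesian, $f\colon \mathcal{Q}\to\igmf$ is an $\mathcal{N}$-torsor and $\chi$ is $\mathcal{N}$-equivariant. The vertical tangent bundle $T_{\mathcal{Q}/\igmf}=\mf{n}\otimes\mathcal{O}_{\mathcal{Q}}$ is therefore sent by $d\chi$ to the vertical vector fields of $x$ in the $\mf{n}$-directions. Under the isomorphism of Proposition \ref{Prop:TPdrToLieGisaniso}, the vertical tangent bundle of $\pdr\to\scrs_K$ corresponds to $\mf{p}_{\mu^{-1}}\otimes\mathcal{O}$ via differentiating the torsor action. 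This is because vertical vectors have $d\beta=0$, so the splitting $1-s\circ d\beta$ acts as the identity on them. Consequently $\mf{n}\otimes\mathcal{O}_{\mathcal{Q}}$ maps isomorphically onto $\mf{n}\otimes\mathcal{O}_{\mathcal{Q}}\subset\mf{u}_{\overline{\mu}}\otimes\mathcal{O}_{\mathcal{Q}}$.

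Second, the horizontal directions. The $\mathcal{H}$-action on $\igmf$ is \'etale-locally free. In the $\mu$-ordinary case the central leaf is the whole open stratum, so $\igmf\to(\scrshat_K)^{[b_\mu]}$ is pro-\'etale. Dimension counting then gives $\dim \scrs_K=\dim \mf{u}_\mu$ and $\dim\mf{u}_{\overline{\mu}}=\dim\mf{n}+\dim\mf{u}_\mu$, using $\mf{u}_{\overline{\mu}}=\mf{n}\oplus\mf{u}_\mu$ as in the proof of Lemma \ref{Lem:LieAlgebraH}. Hence $\dim T_{\mathcal{Q}}=\dim\mf{u}_{\overline{\mu}}$.

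It then suffices to show that the image of $T_{\mathcal{Q}}$ lies in $\mf{u}_{\overline{\mu}}\otimes\mathcal{O}_{\mathcal{Q}}$ and that the map is injective fiberwise. For injectivity, compose with the projection $\mf{g}\to\mf{g}/\mf{p}_{\mu^{-1}}\cong\mf{u}_\mu$. By the proof of Proposition \ref{Prop:TPdrToLieGisaniso} this composite is (up to sign) the Kodaira--Spencer map composed with $dg$, which is an isomorphism because $g$ is pro-\'etale and Kodaira--Spencer is an isomorphism. So $T_{\mathcal{Q}}/T_{\mathcal{Q}/\igmf}$ maps isomorphically onto $\mf{u}_\mu\otimes\mathcal{O}$. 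Together with the vertical piece, this yields fiberwise injectivity of $T_{\mathcal{Q}}\to\mf{g}\otimes\mathcal{O}_{\mathcal{Q}}$.

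The main obstacle is the containment of the image in $\mf{u}_{\overline{\mu}}$, that is, showing that horizontal lifts have no $\mf{m}_{\overline{\mu}}$ or $\mf{u}_{\overline{\mu}^{-1}}$ components. My plan is to reduce this to the statement that the frame $\chi$ respects the slope filtration. By construction the map $\igcsf\to\pdr$ sends a point to $D(\rho)$, where $\rho$ is an isomorphism onto $\mbxcan$. The slope filtration on $\mby$ over $(\scrshat_K)^{[b_\mu]}$ is horizontal for the Gauss--Manin connection, being a filtration by sub-$p$-divisible groups and hence crystalline. On $\mathcal{Q}$, after the fpqc cover by $\igcsf$, the frame identifies this filtration with the $\overline{\mu}$-filtration of $\Lambda$ up to $\mathcal{N}$. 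Concretely, $\chi$ factors through the sub-torsor $\mathcal{P}_{\dr}\cap\mathcal{G}_{\dr}^{\mathrm{slp}}$ of frames that preserve both the Hodge filtration and the horizontal slope filtration, and that are trivialized on graded pieces (Mantovan level). The tangent directions of the torsor of frames compatible with a horizontal $\overline{\mu}$-filtration that are trivial on the associated graded are, via the Gauss--Manin splitting, exactly $\mf{u}_{\overline{\mu}}$. Differentiating this statement gives the containment. I would verify it first in the Siegel case and then transfer it using the closed immersion, as in Claim \ref{Claim:ReductiontoGSP}. As a consistency check, Theorem \ref{Thm:IdentificationAction} already shows that the $\mathcal{H}$-directions land in $\mf{u}_{\overline{\mu},0}\subset\mf{u}_{\overline{\mu}}$.
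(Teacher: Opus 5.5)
Your outline follows the paper's proof. Frames coming from $\chi$ respect the slope filtration and the Mantovan trivialization of its graded pieces. The Gauss--Manin connection agrees with the crystalline one, so the Gauss--Manin splitting preserves the slope filtration and the image lies in $\mf{p}_{\overline{\mu}}\otimes\mathcal{O}_{\mathcal{Q}}$. The vertical directions give $\mf{n}$, and Kodaira--Spencer identifies $T_{\mathcal{Q}}/(\mf{n}\otimes\mathcal{O}_{\mathcal{Q}})=f^{\ast}g^{\ast}T_{\scrs_K}$ with $\mf{u}_{\mu}\otimes\mathcal{O}_{\mathcal{Q}}$. The paper phrases the first step via the $\mathcal{P}_{\overline{\mu}}$-torsor $\pcris$ obtained by pushing out $\igcsf$. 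It needs neither a reduction to the Siegel case nor any freeness of the $\mathcal{H}$-action; your argument never actually uses the latter either.

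There is, however, one step you assert without justification, and it is exactly the step that removes the $\mf{m}_{\overline{\mu}}$-components. You claim that frames compatible with a horizontal $\overline{\mu}$-filtration and a fixed trivialization of its graded pieces have tangent directions, via the Gauss--Manin splitting, exactly $\mf{u}_{\overline{\mu}}$. That holds only if the trivialization of $\gr_{\mathrm{slp}}D(\mby)$ is itself horizontal; horizontality of the filtration alone yields only $\mf{p}_{\overline{\mu}}$.

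You need to add the following. The Mantovan trivialization comes from an isomorphism of $p$-divisible groups $\gr_{\mathrm{slp}}\mby_{\igmf}\xrightarrow{\sim}\gr_{\mathrm{slp}}\mbxcan_{\igmf}$, so its Dieudonn\'e map intertwines the crystalline connections. The target is constant (pulled back from $\spf\mathcal{O}_E$), so its crystalline connection is trivial. Combined with the Gauss--Manin/crystalline comparison, Gauss--Manin parallel transport therefore preserves the graded frame. Hence the difference between the actual variation of the frame and its parallel transport acts trivially on the graded pieces. With this added, your argument is complete.
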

\begin{proof}
We consider the pushout $\pcris$ of $\igcsf$ along $\Aut_{\mathcal{G}}(\mbxcan) \to \mathcal{P}_{\overline{\mu}}$, which admits a natural map $\pcris \xrightarrow{j} \gdr$. The map $\iota \circ \chi:\mathcal{Q} \to \mathcal{G}_{\dr}$ factors through $\mathcal{P}_{\operatorname{cris}} \subset \mathcal{G}_{\dr}$ via a map $c:\mathcal{Q} \to \pcris$. Indeed, this follows from the fact that $\mathcal{Q}$ is the push-out of $\mf{Ig}_\mathrm{CS}$ along the map $\mathbf{Aut}^\circ_{\mathcal{G}}(\mbb{X}^\can) \rightarrow \mathcal{N}=\mc{U}_{\overline{\mu}} \cap \mc{P}_{\mu^{-1}}$, that $\mathcal{P}_\dR$ is the push-out of $\mf{Ig}_\mathrm{CS}$ along the map to $\mathcal{P}_{\mu^{-1}}$ and that $\mathcal{G}_\dR$ is the push-out along the map to $\mathcal{G}$.  \smallskip

We first show that $T_{\pcris} \xrightarrow{dj} j^{\ast} T_{\gdr} \xrightarrow{j^{\ast} (1-s)} \mf{g} \otimes \mathcal{O}_{\pcris}$ factors through $\mf{p}_{\overline{\mu}} \otimes \mathcal{O}_{\pcris}$. For this, we go back to the construction of $s$ in terms of the Gauss--Manin connection in \S \ref{subsub:Constructions} and use the notation from there. So fix $x_0:S \to \pcris$ inducing $\varphi_{x_0}:D(A_{x_0}) \xrightarrow{\sim} \Lambda_{S}$ be the corresponding trivialization, and let $v \in \beta^\ast T_{\scrs_K}(S)$. It suffices to show that $D(v):D(A_v) \to D(A_{x_0})$ preserves the slope filtration. For this, we simply note that the isomorphism $D(v)$ induced by the Gauss--Manin connection equals the isomorphism $D(v)$ induced by the crystalline connection, see \S\ref{sss.abelian-variety-compatibility}. The isomorphism induced by the crystalline connection preserves the slope filtration, because the slope filtration is a filtration on the Dieudonn\'e crystal. \smallskip 

Next, we show that the image of $dc:T_{\mathcal{Q}} \to c^{\ast} T_{\pcris} \to \mf{p}_{\overline{\mu}} \otimes \mathcal{O}_{\mathcal{Q}}$ lands in $\mf{u}_{\overline{\mu}} \otimes \mathcal{O}_{\mathcal{Q}}$. We first note that there is a canonical trivialization
\begin{align}
    \rho:\operatorname{gr}_{\mathrm{slp}} \mby_{\igmf} \xrightarrow{\sim} \operatorname{gr}_{\mathrm{slp}} \mbxcan_{\igmf}
\end{align}
of the associated graded of the slope filtrations. The universal trivialization $\varphi:D(\mby_{\mathcal{Q}}) \xrightarrow{\sim} \Lambda \otimes \mathcal{O}_{\mathcal{Q}}$  is compatible with this canonical trivialization, in the sense that $\varphi$ induces
\begin{align}
    D(f^{\ast}\rho): \operatorname{gr}_{\mathrm{slp}} D(\mby_{\mathcal{Q}}) \to  \operatorname{gr}_{\mathrm{slp}} D(\mbxcan_{\mathcal{Q}}). 
\end{align}
Now we argue from the construction of $s$ again as above. It suffices to show that $D(v)$ also induces $D(f^{\ast}\rho)$ on the associated graded of the slope filtration. Now the crystalline connection on $D(\operatorname{gr}_{\mathrm{slp}} \mbxcan_{\igmf})$ is trivial since $\operatorname{gr}_{\mathrm{slp}} \mbxcan_{\igmf}$ is a constant $p$-divisible group. This equals the crystalline connection on $\operatorname{gr}_{\mathrm{slp}} D(\mbxcan_{\mathcal{Q}})$ induced by the one on $D(\mbxcan_{\mathcal{Q}})$, and thus $D(v)$ is trivial on $\operatorname{gr}_{\mathrm{slp}} D(\mbxcan_{\mathcal{Q}})$. \smallskip 

We now have a map $T_{\mathcal{Q}} \to \mf{u}_{\overline{\mu}} \otimes \mathcal{O}_{\mathcal{Q}}$ which we would like to show is an isomorphism. It is $\mathcal{N}$-equivariant by construction, and it thus suffices to show that the induced map $T_{\mathcal{Q}}/(\mf{n} \otimes \mathcal{O}_{\mathcal{Q}})=f^{\ast} T_{\igmf} = f^{\ast} g^{\ast} T_{\scrs_{K}} \to (\mf{u}_{\overline{\mu}} \otimes \mathcal{O}_{\mathcal{Q}})/ (\mf{n} \otimes \mathcal{O}_{\mathcal{Q}}) = \mf{u}_{\mu} \otimes \mathcal{O}_{{\mathcal{Q}}}$ is an isomorphism. 

For this, we note that as a consequence of Proposition \ref{Prop:TPdrToLieGisaniso}, we get an isomorphism $\beta^{\ast} T_{\scrs_{K}} \xrightarrow{\sim} \pi_{\mathrm{Hdg}}^{\ast} T_{\operatorname{Fl}_{[\mu^{-1}]}}$. Restricting to $\mathcal{P}_{\dr}$ this gives an isomorphism $\pi^{\ast} T_{\scrs_{K}} \xrightarrow{\sim} \iota^{\ast} \beta^{\ast} T_{\scrs_{K}}$, and since $\pdr$ maps to a point in $\operatorname{Fl}_{[\mu^{-1}]}$ with tangent space $\mathfrak{g}/\mf{p}_{\mu^{-1}}$, we can identify
\begin{align}
    \iota^{\ast} \beta^{\ast} T_{\scrs_{K}}= \mf{g}/\mf{p}_{\mu^{-1}} \otimes \mathcal{O}_{\mathcal{P}_{\dr}} = \mf{u}_{\mu} \otimes \mathcal{O}_{\mathcal{P}_{\dr}}.
\end{align}
The pullback of this map along $\chi:\mathcal{Q} \to \pdr$, is by construction, the map \[ T_{\mathcal{Q}}/(\mf{n} \otimes \mathcal{O}_{\mathcal{Q}})=f^{\ast} T_{\igmf} = f^{\ast} g^{\ast} T_{\scrs_{K}} \to \mf{u}_{\mu} \otimes \mathcal{O}_{\mathcal{Q}}\] described above, which is therefore an isomorphism. This concludes the proof.
\end{proof}

\subsubsection{} We thus get an $\mathcal{N}$-equivariant isomorphism $\mf{u}_{\overline{\mu}} \otimes \mathcal{O}_{\mathcal{Q}} \to T_{\mathcal{Q}}$. Since the map $\mf{g}_{\oee} \to T_{\pdr}$ is a homomorphism of Lie algebras, see Lemma \ref{Lem.inclusion-of-Lie-algebras}, it follows that $\mf{u}_{\overline{\mu}} \to T_{\mathcal{Q}}$ is also a homomorphism of Lie algebras. Thus, there is an induced $\mathcal{N}$-equivariant action of $U(\mf{u}_{\overline{\mu}})$ on $\mathcal{O}_{\mathcal{Q}}$ and thus an action of $(U(\mf{u}_{\overline{\mu}}))^{\mathcal{N}}$ on $(f_{\ast}\mathcal{O}_{\mathcal{Q}})^{\mathcal{N}}=\mathcal{O}_{\igmf}$. 

\begin{Cor} \label{Cor:AlgebraicActionIgM}
The action of $\left(\mathfrak{u}_{\mu}\right)^\mathcal{N} \subset (U(\mf{u}_{\overline{\mu}}))^{\mathcal{N}}$ on $\mathcal{O}_{\igmf}$ described above recovers the differentiation action of $\Lie \mathcal{H} \simeq \left(\mathfrak{u}_{\mu}\right)^\mathcal{N}$.
\end{Cor}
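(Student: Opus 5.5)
The plan is to reduce the statement to Theorem \ref{Thm:GlobalAction} by checking that the two actions of $(\mf{u}_{\mu})^{\mathcal{N}}$ on $\mathcal{O}_{\igmf}$ agree on the subring pulled back from $\mathcal{N}\backslash\pdr$, and then extending this agreement to all of $\mathcal{O}_{\igmf}$ via a derivation argument.

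First, I identify the action. An element $u \in (\mf{u}_{\mu})^{\mathcal{N}} = \mf{u}_{\mu}\cap \mf{u}_{\overline{\mu},0}$ lies in $\mf{u}_{\overline{\mu}}$. It is central in $\mf{u}_{\overline{\mu}}$, hence $\mathcal{N}$-invariant. By Proposition \ref{Prop:MuOrdinaryKodairaSpencer} it therefore defines an $\mathcal{N}$-invariant vector field $\partial_u$ on $\mathcal{Q}$, and this vector field descends to a derivation $D_u$ of $\mathcal{O}_{\igmf}=(f_*\mathcal{O}_{\mathcal{Q}})^{\mathcal{N}}$.

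Next, I compare with $\mathcal{P}_{\dR}$. By construction of the isomorphism in Proposition \ref{Prop:MuOrdinaryKodairaSpencer}, we have $d\chi(\partial_u)=\chi^*(u)$, where $u$ is viewed as a vector field on $\pdr$ via \eqref{eq.inclusion-lie-algebra-to-vector-fields}. Hence for $F\in \mathcal{O}(\mathcal{N}\backslash\pdr)=\mathcal{O}(\pdr)^{\mathcal{N}}$ we get $D_u(\bar\chi^*F)=\bar\chi^*(u\cdot F)$. By Theorem \ref{Thm:GlobalAction}, the derivative $\partial_t$ of the $\mathcal{H}$-action satisfies the same identity, with $t\mapsto u$ under $\Lie\mathcal{H}\simeq (\mf{u}_{\mu})^{\mathcal{N}}$. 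In fact Theorem \ref{Thm:IdentificationAction} gives the finer statement $d\bar\chi(\partial_t)=\bar\chi^*\partial_u$ as sections of $\bar\chi^*T_{\mathcal{N}\backslash\pdr}$ (after passing from $\overline{\mathcal{P}}_\dR$ to $\mathcal{N}\backslash\pdr$). So both vector fields $D_u$ and $\partial_t$ on $\igmf$ have the same image under $d\bar\chi$.

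The remaining step is injectivity of $d\bar\chi: T_{\igmf}\to \bar\chi^*T_{\mathcal{N}\backslash\pdr}$. Since $\igmf\to\scrshat_K$ is pro-étale, $T_{\igmf}=g^*T_{\scrs_K}$. The composite $T_{\igmf}\to \bar\chi^*T_{\mathcal{N}\backslash\pdr}\to g^*T_{\scrs_K}$ is the identity, because $\bar\chi$ lies over $\scrs_K$. Hence $d\bar\chi$ is injective, and a vector field on $\igmf$ is determined by its image. It follows that $D_u=\partial_t$.

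The main obstacle is that Theorem \ref{Thm:GlobalAction} is phrased only on global functions, while the argument needs the pointwise equality of vector fields. I would therefore work directly with Theorem \ref{Thm:IdentificationAction} and check carefully that pushing $\overline{\mathcal{P}}_\dR$ to $\mathcal{N}\backslash\pdr$ preserves the identification $\mathrm{gr}^{-1}\mf{w}\simeq (\mf{u}_\mu)^{\mathcal{N}}$ from Lemma \ref{Lem:LieAlgebraH}. This should follow from the fact that both use the $\mathcal{G}$-equivariant Gauss--Manin splitting.
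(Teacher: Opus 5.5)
Your proposal is correct and is essentially the paper's argument: the paper deduces the corollary from the compatibility, built into Proposition~\ref{Prop:MuOrdinaryKodairaSpencer}, of the parallelization $\mf{u}_{\overline{\mu}}\otimes\mathcal{O}_{\mathcal{Q}}\simeq T_{\mathcal{Q}}$ with $\mf{g}\to T_{\pdr}$ under $d\chi$, combined with Theorem~\ref{Thm:GlobalAction} (itself a consequence of Theorem~\ref{Thm:IdentificationAction}). Your additional observation that $d\bar\chi$ is split injective, because $T_{\igmf}=g^*T_{\scrs_K}$ and $\bar\chi$ lies over $\scrs_K$, makes explicit a step the paper leaves implicit: equality after applying $d\bar\chi$, rather than mere agreement on functions pulled back from $\mathcal{N}\backslash\pdr$, determines the vector field on $\igmf$.
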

\begin{proof}
This is a direct consequence of the fact that the map $T_{\mathcal{Q}} \xrightarrow{d\chi} \chi^{\ast} T_{\pdr}$ is $\mf{u}_{\mu}$-equivariant, by construction, together with Theorem \ref{Thm:GlobalAction}.
\end{proof}

\subsection{Computing the Tate module} \label{sub:TateModuleH} 

We let the notation be as in \S \ref{Subsub:Hinmuordinarysetting} and consider the $p$-divisible group $\mathcal{H}$ over $\spf \mathcal{O}_E$. Its adic generic fiber defines a $p$-divisible rigid analytic group $H$ over $\spa E$. Given a choice of $x \in \minfgbmu(\eab)$ as in Proposition \ref{Prop:CompatibleChoice}, we obtain an identification $\Aut_{\mathcal{G}}(\mbxcan)_{\eta,\eab}^{\diamondsuit} \xrightarrow{\sim} \ul{\mathcal{P}_{\overline{\mu}}(\zp)}$, see Lemma \ref{Lem:AutomorphismTwoWaysII}. The goal of this section is to compute the image of $T_p H_{\eab}^{\diamondsuit} \to \ul{\mathcal{P}_{\overline{\mu}}(\zp)}$ induced by this map.

\begin{Rem} \label{Rem:}
By \cite[Main Theorem]{FarguesI}, $p$-divisible rigid analytic groups over $E$ are classified by triples $(T,W, \alpha)$, where $T$ is a continuous representation of $\gal_{E}$ on a finite free $\zp$-module, where $W$ is an $E$-vector space and where $\alpha:W \to T \otimes_{\zp} C(-1)$ is a continuous Galois equivariant morphism of $C$-vector spaces. As a byproduct of the arguments in this section, we will determine the triple $(T,W,\alpha)$ associated with $H$, see Corollary \ref{Cor:HodgeTateTripleH}. 
\end{Rem}

\subsubsection{} Choose a maximal torus $\mathcal{T}$ of $\mathcal{G}$ such that $\mu$ factors through $\mathcal{T}_{\oee}$. Recall from the proof of Proposition \ref{Prop:CompatibleChoice} that we get an induced morphism $(\mathcal{T},b_{\mu},[\mu]) \to (\mathcal{G},b_{\mu},[\mu])$ of integral local Shimura data, and that $\xi^{\can}:\spf \oee \to \mintfgbmu$ factors through $\mintftbmu \to \mintfgbmu$. 

\subsubsection{} Choose tensors $\{t_{\beta} \in \Lambda^{\otimes}\}_{\beta \in \mathscr{B}}$ cutting out $\mathcal{T}$. Since $b \in T(\qpbr) \subset G(\qpbr)$, the tensors $t_{\beta, \cris}=t_{\beta} \otimes 1 \in \Lambda^{\otimes}_{\oee} = \mathbb{D}(\mbx)^{\otimes}$ are Frobenius invariant. By construction, see \cite[Theorem 3.4.5]{KisinZhou}, the canonical lift $\mbxcan$ is $(\mathcal{T},\mu^{-1})$-adapted in the sense of \cite[Definition 3.2.4]{ShankarZhou} or \cite[Section 1.1.8]{KisinPoints}. In particular, there are Galois-invariant \'etale tensors $\{t_{\beta,\text{\'et}}\}_{\beta \in \mathscr{B}} \in (T_p \mbxcan_{\eta})^{\otimes}$. Let $x \in \minftbmu(\eab)$ be a point lifting $\xi^{\can}_E$ along $\minftbmu \to \mtbmu$, which exists by the proof of Proposition \ref{Prop:CompatibleChoice}.

\subsubsection{} \label{subsub:EtaleCrystalline} The point $x$ gives us a trivialization $\Lambda \to T_p \mbxcan_{\eta,\eab}$ sending the tensors $t_{\beta}$ to the \'etale tensors $t_{\beta, \text{\'et}}$. The \'etale--crystalline comparison theorem gives a canonical isomorphism
\begin{align}
   T_p \mbxcan_{\eta,\eab} \otimes_{\zp} B_{\cris, \eab}  \xrightarrow{\sim} \mathbb{D}(\mbxcan) \otimes_{\mathcal{O}_E} B_{\cris, \eab}.
\end{align}
By \cite[Proposition 1.1.13]{KisinPoints}, this isomorphism moreover matches $t_{\beta, \text{\'et}} \otimes 1$ with $t_{\beta, \cris} \otimes 1$ for all $\beta \in \mathscr{B}$. We can compose this isomorphism with the inverse of the canonical identification $\Lambda_{\oee} \xrightarrow{\sim} \mathbb{D}(\mbxcan)$ and the isomorphism $\Lambda \xrightarrow{\sim} T_p \mbxcan_{\eta,\eab}$ to obtain a tensor-preserving automorphism of
\begin{align}
    \Lambda \otimes_{\zp} B_{\cris, \eab},
\end{align}
giving an element $c \in \mathcal{T}(B_{\cris, \eab})$.

\subsubsection{} There is a natural identification $T_p H(\eab) \xrightarrow{\sim} T_p \mathcal{H}(\mathcal{O}_{\eab})$. The morphism $f:T_p \mathcal{H} \to \Aut_{\mathcal{G}}(\mbxcan)$ on $\oeab$ points can be written as
\begin{equation}
    \begin{tikzcd}
        T_p \mathcal{H}(\mathcal{O}_{\eab}) \arrow{r} \arrow[d] \arrow[dr,"f"] & \Aut_{\mathcal{G}}(\mbxcan)(\oeab) \arrow{d}\\
        \mathcal{U}_{\overline{\mu},0}(A_\cris(\oeab/p)) \arrow{r} &  G(B_{\cris,\eab}).
    \end{tikzcd}
\end{equation}
It follows that the map $T_p H(\eab) \to G(\zp)$ can be written as
\begin{equation}
    \begin{tikzcd}
        T_p H(\eab) \arrow{d} \arrow{r} &  G(\zp) \arrow{dr} \\
        T_p \mathcal{H}(\mathcal{O}_{\eab}) \arrow{r}{f} & G(B_{\cris,\eab}) \arrow{r}{\operatorname{Ad} c} & G(B_{\cris,\eab}).
    \end{tikzcd}
\end{equation}

\subsubsection{} By full faithfulness of filtered Dieudonn\'e theory, see Proposition \ref{prop.filtered-dieudonne-theory}, we know that the image of $f$ can be identified with the intersection
\begin{align}
    \Aut_{\mathcal{G}}(\mbxcan)(\oeab) \cap U_{\overline{\mu},0}(B_{\cris,\eab})
\end{align}
inside $G(B_{\cris,\eab})$. Therefore, we are identifying the image of $T_p H(\eab) \to G(\zp)$ with the intersection
\begin{align}
    \Aut_{\mathcal{G}}(\mbxcan)(\oeab) \cap c U_{\overline{\mu},0}(B_{\cris,\eab}) c^{-1} = P_{\overline{\mu}}(\zp) \cap c U_{\overline{\mu},0}(B_{\cris,\eab}) c^{-1}. 
\end{align}
Because $c \in T(B_{\cris,\eab})$ and thus normalizes $U_{\overline{\mu},0}(B_{\cris,\eab})$, this intersection agrees with $U_{\overline{\mu},0}(\zp) \subset P_{\overline{\mu}}(\zp)$.

\subsubsection{} Next, we identify the Hodge--Tate filtration $W \subset T_p H(C)$. There is an \'etale--crystalline comparison
\begin{align}
    T_p H_{\eab} \otimes_{\zp} B_{\cris,\eab} \xrightarrow{\sim} \mathbb{D}(\mathcal{H}) \otimes_{\oee} B_{\cris,\eab}.
\end{align}
By definition, there is an identification $\mathbb{D}(\mathcal{H}) = \mathcal{U}_{\overline{\mu},0}(\oee)$. Above, we have constructed an isomorphism $T_p H_{\eab} \xrightarrow{\sim} \mathcal{U}_{\overline{\mu},0}(\zp)$. Thus we may identify the \'etale--crystalline comparison with an automorphism $h$ of $\mathcal{U}_{\overline{\mu},0}(\zp) \otimes_{\zp} B_{\cris,\eab}$. 
\begin{Lem} \label{Lem:IdentificationIsomorphisms}
There is an equality $h=\operatorname{Ad} c$.
\end{Lem}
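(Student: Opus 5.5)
The plan is to deduce the equality from the functoriality of the \'etale--crystalline comparison isomorphism with respect to morphisms of $p$-divisible groups over $\oeab$, applied to the maps supplied by $T_p\mathcal{H}$. Both $h$ and $\operatorname{Ad} c$ are $B_{\cris,\eab}$-linear automorphisms of $\mathcal{U}_{\overline{\mu},0}(\zp)\otimes_{\zp} B_{\cris,\eab}$, and $T_p H_{\eab}$ spans this module. It therefore suffices to check that for each $\tau \in T_p H(\eab)$ the two images in $\mathcal{U}_{\overline{\mu},0}(B_{\cris,\eab}) = \mathbb{D}(\mathcal{H})\otimes B_{\cris,\eab}$ agree.

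First, I interpret an element $\tau\in T_p H(\eab)=T_p\mathcal{H}(\oeab)$ via \S\ref{sss.map-construction}. It is a map $\underline{\zp}\to T_p\mathcal{H}$ over $\oeab$, hence a morphism of $p$-divisible groups $\tau:\qp/\zp\to\mathcal{H}_{\oeab}$. Its image under the étale--crystalline comparison for $\mathcal{H}$ is $\mathbb{D}(\tau)(1)$. This is the element $\tau_{\cris}\in\mathcal{W}(A_\cris(\oeab/p))^{\varphi=1}$, because $\mathbb{D}(\qp/\zp)=A_\cris$ with Frobenius $\sigma$ by Remark \ref{Rem:NormalizationsDieudonne}. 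It is also the image of $\tau$ under the left vertical arrow of the diagram defining $f$.

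Second, I compare this with the identification $T_pH_{\eab}\xrightarrow{\sim}\mathcal{U}_{\overline{\mu},0}(\zp)$ constructed above. By definition, it sends $\tau$ to the element $u_\tau\in\mathcal{U}_{\overline{\mu},0}(\zp)\subset G(\zp)$ obtained by transporting the automorphism $f(\tau)$ of $\mbxcan_{\oeab}$ to $\Lambda$ through $x$. The automorphism $f(\tau)$ acts on $T_p\mbxcan$ as $u_\tau$ and on $\mathbb{D}(\mbxcan)$ as $f(\tau)=\tau_{\cris}$. Functoriality of the comparison isomorphism for $\mbxcan$, which in the chosen trivializations is $c$, gives $c\,u_\tau\,c^{-1}=\tau_{\cris}$ in $G(B_{\cris,\eab})$ (or the same identity with $c$ replaced by $c^{-1}$, according to the normalization chosen in \S\ref{subsub:EtaleCrystalline}). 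Since $\mathcal{U}_{\overline{\mu},0}$ is a vector group, the vector-group/Lie-algebra identification of Remark \ref{Rem:FakeLogarithm} converts group elements into vectors compatibly with the $T$-conjugation. The result is $h(u_\tau)=\tau_{\cris}=\operatorname{Ad}(c)(u_\tau)$.

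The main obstacle is the step that compares the two comparison isomorphisms. One must check that the comparison isomorphism for $\mathcal{H}$ is compatible with the one for $\mbxcan$ under the map $T_p\mathcal{H}\to\Aut(\mbxcan)$, which is built from $\mathcal{W}\hookrightarrow\mathcal{G}$ and is not a morphism of $p$-divisible groups. My first approach is to realize $\tau$ as a genuine homomorphism $\qp/\zp\to\mathcal{H}$ and then the action map $\mathcal{H}\otimes\mbxcan\dashrightarrow\mbxcan$ as a map of filtered Dieudonn\'e modules $\mathcal{W}(\oee)\otimes\Lambda_{\oee}\to\Lambda_{\oee}$, namely $(w,\lambda)\mapsto (w-1)\lambda$. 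Viewed additively, the element $u-1$ with $u\in\mathcal{U}_{\overline{\mu},0}$ lies in $\operatorname{End}(\Lambda)$, and it is here that the covariance of the comparison is used. Applying comparison functoriality to $\tau\in\operatorname{Hom}(\qp/\zp,\mathcal{H})$ and to $\operatorname{End}$ of $\mbxcan$ (where $f(\tau)-1$ is an honest endomorphism) then yields the claim, together with Frobenius- and filtration-compatibility from \cite[Proposition 1.1.13]{KisinPoints}.
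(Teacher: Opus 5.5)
Your argument is correct and is essentially the paper's proof. The paper only cites tensor-functoriality of the \'etale--crystalline comparison after identifying $\mathcal{U}_{\overline{\mu},0}$ with $\mathfrak{u}_{\overline{\mu},0}\subset\operatorname{End}(\Lambda)$, on which the comparison for $\mbxcan$ induces $\operatorname{Ad}c$; your element-wise check makes that citation explicit, using functoriality for $\tau\colon\qp/\zp\to\mathcal{H}_{\oeab}$ and for the automorphism $f(\tau)$ of $\mbxcan_{\oeab}$, which both produce the same element $\tau_{\cris}\in\mathcal{W}(A_{\cris}(\oeab/p))^{\varphi=1}$. The ``obstacle'' in your last paragraph is not real, since $f(\tau)$ is already an honest automorphism of the $p$-divisible group $\mbxcan_{\oeab}$ (because $T_p\mathcal{H}\to\Aut_{\mathcal{G}}(\mbxcan)$ is a map of formal schemes), so the auxiliary ``action map'' $\mathcal{H}\otimes\mbxcan\dashrightarrow\mbxcan$ is unnecessary, and in any case it does not exist as a map of $p$-divisible groups.
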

\begin{proof}
After passing to Lie algebras and identifying $\mathcal{U}_{\overline{\mu},0} = \mathfrak{u}_{\overline{\mu},0}$, this follows from the tensor-functoriality of the \'etale--crystalline comparison. 
\end{proof}

\subsubsection{} Recall that the Hodge filtration on $\mathbb{D}(\mathcal{H})=\mathcal{U}_{\overline{\mu},0}(\oee)$ is given by the intersection with $\mathcal{P}_{\mu^{-1}}$. By Lemma \ref{Lem:LieAlgebraH}, this can be identified with $\mathcal{U}_{\mu}^{\mathcal{N}}(\oee)$. Viewing
\begin{align}
    T_p H_{\eab} \otimes B_{\dr,\eab}^+ \subset T_p H_{\eab} \otimes_{\zp} B_{\dr,\eab} = \mathbb{D}(\mathcal{H}) \otimes_{\oee} B_{\dr,\eab} = U_{\ovmu,0}(B_{\dr,\eab}),
\end{align}
identifies $T_p H_{\eab} \otimes B_{\dr,\eab}^+$ with $\mu(t) U_{\ovmu,0}(B^+_{\dr,\eab}) \mu(t^{-1})$, where $t \in B^+_{\dr,\eab}$ is a uniformizer. Here we are using the compatibility of the \'etale--crystalline comparison with filtrations after tensoring up to $B_{\dr,\eab}$. 

\subsubsection{} On the other hand, viewing
\begin{align}
     \mathbb{D}(\mathcal{H}) \otimes_{\oee} B^+_{\dr,\eab} \subset \mathbb{D}(\mathcal{H}) \otimes_{\oee} B_{\dr,\eab} \xrightarrow{\sim} T_p H_{\eab} \otimes_{\zp} B_{\dr,\eab} = U_{\ovmu,0}(B_{\dr,\eab})
\end{align}
identifies $\mathbb{D}(\mathcal{H}) \otimes_{\oee} B^+_{\dr,\eab}$ with  $c U_{\ovmu,0}(B^+_{\dr,\eab}) c^{-1}$. From the above computation, we also get an identification of $$T_p H_{\eab} \otimes B_{\dr,\eab}^+ = U_{\ovmu,0}(B^+_{\dr,\eab})$$ with $c \mu(t) U_{\ovmu,0}(B^+_{\dr,\eab}) \mu(t)^{-1} c^{-1}$. Now since $c$ and $\mu(t)$ commute since they both lie in $T$, this equals $\mu(t) c U_{\ovmu,0}(B^+_{\dr,\eab})c^{-1} \mu(t)^{-1}$. Conjugating by $\mu(t)^{-1}$, we find that
\begin{align}
    c U_{\ovmu,0}(B^+_{\dr,\eab})c^{-1} =  \mu(t^{-1}) U_{\ovmu,0}(B^+_{\dr,\eab}) \mu(t).
\end{align}
Thus $\mathbb{D}(\mathcal{H}) \otimes_{\oee} B^+_{\dr,\eab}$ is identified with $\mu(t^{-1}) U_{\ovmu,0}(B^+_{\dr,\eab}) \mu(t)$. Since the Hodge-Tate filtration is given by setting (cf., e.g., \cite[top of p.666]{CaraianiScholze})
\[ \Fil^i_\HT(T_p \mathcal{H} \otimes \eab) = \textrm{Image of }t^i \mathbb{D}(\mathcal{H}) \otimes_{\oee} B^+_{\dr,\eab} \cap T_p \mathcal{H} \otimes B^+_{\dr,\eab}\] 
it is then an immediate computation using the above identifications
\[ T_p \mathcal{H} \otimes  B^+_{\dr,\eab} =U_{\ovmu,0}( B^+_{\dr,\eab}) \textrm{ and } \mathbb{D}(\mathcal{H}) \otimes_{\oee} B^+_{\dr,\eab}=\mu(t^{-1}) U_{\ovmu,0}(B^+_{\dr,\eab}) \mu(t)\]
that the Hodge-Tate filtration satisfies
\[ \Fil^1 \left(U_{\ovmu,0}(\eab)\right) = \mathcal{U}_{\mu}(\eab) \cap U_{\overline{\mu},0}(\eab) = \mathcal{U}_{\mu}^{\mathcal{N}}(\eab). \]

\begin{Cor} \label{Cor:HodgeTateTripleH}
The $p$-divisible rigid analytic group $H_{\eab}$ over $\eab$ is classified by the triple $(T',W,\iota(-1))$, where $T'=\mathcal{U}_{\overline{\mu},0}(\zp)$ with the trivial Galois action, where $W=U_{\mu}^{\mathcal{N}}(\eab)(-1)$ and where $\iota$ is the canonical inclusion $U_{\mu}^{\mathcal{N}}(\eab) \to U_{\ovmu,0}(\eab)$.  
\end{Cor}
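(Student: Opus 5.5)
The corollary should follow by assembling the computations of \S\ref{sub:TateModuleH} into Fargues' classification from Remark \ref{Rem:}. There, a $p$-divisible rigid analytic group over $\eab$ corresponds to a triple $(T,W,\alpha)$: $T$ is the $\zp$-Tate module with its Galois action, and $\alpha: W \to T \otimes_{\zp} C(-1)$ is the Hodge--Tate map, whose image is the twisted $\Fil^1$ of the Hodge--Tate filtration, equivalently the dual of $\omega_{\mathcal{H}^\vee}$. So the plan is to identify each of the three pieces in turn.

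\emph{Step 1: the Tate module.} The identification $T_p H(\eab) \xrightarrow{\sim} \mathcal{U}_{\overline{\mu},0}(\zp)$ comes from the point $x$, via the map into $\ul{\mathcal{P}_{\overline{\mu}}(\zp)}$ and the computation that the image equals $U_{\overline{\mu},0}(\zp)$. I would check that this map is $\gal(\overline{E}/\eab)$-equivariant for the trivial action on the target. Since $x$ is an $\eab$-point, the trivialization $\Lambda \to T_p\mbxcan_{\eta,\eab}$ is Galois-invariant over $\eab$. Moreover, $T_p\mathcal{H}\to \Aut_{\mathcal{G}}(\mbxcan)$ is defined over $\oee$. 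Together these show that Galois acts trivially on $T' = \mathcal{U}_{\overline{\mu},0}(\zp)$.

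\emph{Step 2: the Hodge--Tate filtration.} The preceding computation gives $\Fil^1 = \mathcal{U}_{\mu}^{\mathcal{N}}(\eab)$ inside $U_{\overline{\mu},0}(\eab) = T'\otimes \eab$. This uses two inputs: Lemma \ref{Lem:IdentificationIsomorphisms}, which identifies the comparison isomorphism with $\operatorname{Ad} c$ for $c$ toral, and the fact that $c$ commutes with $\mu(t)$. I would then translate this statement into the form of Fargues' triple. The space $W$ is the $\Fil^1$ piece twisted by $(-1)$, that is, $W=U_\mu^{\mathcal{N}}(\eab)(-1)$, and $\alpha$ is the inclusion $\iota$ twisted by $(-1)$ and extended $C$-linearly. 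Because $T'$ carries the trivial action, Galois-equivariance of $\alpha$ reduces to the fact that $\iota$ is defined over $\eab$.

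\emph{Step 3: matching conventions.} This is where I expect the main obstacle. I need to check that the filtration computed from the $B_{\dr}^+$-lattices $T_p\mathcal{H}\otimes B^+_{\dr}$ and $\mathbb{D}(\mathcal{H})\otimes B^+_{\dr}$ is the one entering Fargues' classification, and that the Tate twist is placed correctly; otherwise one could end up with $W(1)$ or with the dual. To settle this, I would compare with a basic example such as $\mu_{p^\infty}$, or test the ordinary case $\mathcal{H}=\mf{u}_\mu\otimes\gmhat$ of Example \ref{example.intro-integral-ordinary}. In that case the triple should read $(\mf{u}_\mu, \mf{u}_\mu\otimes\eab(-1),\mathrm{id})$, and this pins down the normalizations via \cite[p.666]{CaraianiScholze}. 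Once the conventions agree, the corollary follows directly from Steps 1 and 2.
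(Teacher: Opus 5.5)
Your proposal is correct and follows the paper: the corollary is the packaging into Fargues' triple of the preceding computations in \S\ref{sub:TateModuleH}. Those computations are that the image of $T_pH_{\eab}$ is $U_{\ovmu,0}(\zp)$ because $c\in\mathcal{T}(B_{\cris,\eab})$ normalizes $U_{\ovmu,0}$, and that $\Fil^1_{\HT}=\mathcal{U}_{\mu}^{\mathcal{N}}(\eab)$ via Lemma \ref{Lem:IdentificationIsomorphisms} and the $B^+_{\dr}$-lattice formula for the Hodge--Tate filtration from \cite{CaraianiScholze}; that formula is what fixes the normalization you worry about in Step 3. One small slip: $\Fil^1_{\HT}$ is $\Lie\mathcal{H}\otimes C(1)=\omega_{\mathcal{H}}^{\vee}\otimes C(1)$, not the dual of $\omega_{\mathcal{H}^\vee}$ (that dual is $\Lie\mathcal{H}^\vee$, which e.g.\ vanishes in the ordinary case).
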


\subsubsection{} We consider the map
\[ \Omega: U_{\mu}^\mathcal{N}(\eab)\otimes_{\mathbb{Z}_p} \mathbb{Z}_p(1) \xrightarrow{\sim} U_{\mu}^\mathcal{N}(\eab). \]
induced by composing the scalar multiplication 
\[ U_{\mu}^\mathcal{N}(B^+_{\dR,\eab})\otimes_{\mathbb{Z}_p} \mathbb{Z}_p(1) \subseteq  U_{\mu}^\mathcal{N}(B^+_{\dR,\eab})\otimes_{B^+_{\dR,\eab}}\Fil^1 B^+_{\dR,\eab}  \rightarrow U_{\mu}^\mathcal{N}(B^+_{\dR,\eab}) \]
and conjugation by $c$ on $\mathcal{U}_{\mu}^{\mathcal{N}}(B^+_{\dR, \eab})$ and then projecting to $\mathcal{U}_{\mu}^{\mathcal{N}}(\eab)$.

Under the identification of the first copy of $U_{\mu}^\mathcal{N}(\eab)$ with $\Lie \mathcal{H} \otimes_{\oee} \eab$ given by Lemma \ref{Lem:LieAlgebraH} and the second copy as $U_{\mu}^\mathcal{N}(\eab)\subseteq U_{\overline{\mu},0}(\eab)=T_p\mathcal{H} \otimes \eab$ as above, $\Omega$ is (by construction) the Hodge-Tate comparison 
\begin{align}\label{eq.HT-comp-Omega} \Lie \mathcal{H}\otimes \eab(1)= \gr^{-1}_{\Hdg}(\mathbb{D}(\mathcal{H}))\otimes \eab(1) \xrightarrow{\sim} \gr^{1}_{\HT}(T_p \mathcal{H} \otimes \eab) = \Fil^1_{\HT} (T_p \mathcal{H} \otimes \eab). \end{align}

\begin{Cor} \label{Cor:HodgeTateTripleH-overE}
The $p$-divisible rigid analytic group $H$ over $E$ is classified by the triple $(T',W, (\iota \circ \Omega)(-1))$, where $T'=\mathcal{U}_{\overline{\mu},0}(\zp)$ with the Galois action given by its identification with $T_p H(\eab)$ as above, where $W=U_{\mu}^{\mathcal{N}}(\eab)$, and where $\iota$ is the canonical inclusion $U_{\mu}^{\mathcal{N}}(\eab) \to \mathcal{U}_{\overline{\mu},0}(\eab)$.  
\end{Cor}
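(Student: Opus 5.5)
The plan is to descend Corollary \ref{Cor:HodgeTateTripleH} from $\eab$ to $E$ using Fargues' classification \cite[Main Theorem]{FarguesI}. By Galois descent for $p$-divisible rigid analytic groups, $H$ over $E$ is classified by $(T, W_E, \alpha)$, where $T = T_p H(C)$ carries its $\gal_E$-action, $W_E = \Lie H$ is an $E$-vector space, and $\alpha : W_E \to T \otimes_{\zp} C(-1)$ is the Galois-equivariant Hodge--Tate map. Since $C$ is also the completed algebraic closure of $\eab$, base change to $\eab$ leaves $T$ and the $C$-linear map $\alpha$ unchanged and only restricts the Galois action. So we must do two things. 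First, we identify $T$ with $T' = \mathcal{U}_{\overline{\mu},0}(\zp)$, transporting the Galois action through the identification $T_pH(\eab) \cong \mathcal{U}_{\overline{\mu},0}(\zp)$ built above from the point $x$. This holds essentially by definition of the Galois action on $T'$. Second, we identify $\Lie H = \Lie\mathcal{H}\otimes E$ with $W$ in a way that makes $\alpha$ equal to $(\iota\circ\Omega)(-1)$.

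For the second step I would use Lemma \ref{Lem:LieAlgebraH}. It gives the identification $\Lie \mathcal{H} = \gr^{-1}\mf{w} \cong \mf{u}_{\mu}^{\mathcal{N}}$ over $\oee$. Hence $\Lie H \otimes_E \eab = U_\mu^{\mathcal{N}}(\eab)$, and under this identification $W_E$ is the $E$-rational structure $U_\mu^{\mathcal{N}}(E)$. The Hodge--Tate map, twisted by $(1)$, is the comparison \eqref{eq.HT-comp-Omega}:
\[
\Lie\mathcal{H}\otimes \eab(1) \to \Fil^1_{\HT}(T_p\mathcal{H}\otimes\eab).
\]
We have just observed that, under the two stated identifications, this comparison equals $\Omega$ followed by the inclusion of $\Fil^1_{\HT}$ into $T_p\mathcal{H}\otimes\eab = U_{\overline{\mu},0}(\eab)$, which is $\iota$. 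Untwisting gives $\alpha = (\iota\circ\Omega)(-1)$. As a consistency check, over $\eab$ the same argument recovers Corollary \ref{Cor:HodgeTateTripleH}, once one absorbs $\Omega$, which is an isomorphism, into the identification of $W$.

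The main obstacle is bookkeeping of Galois-equivariance. Both the identification of $T$ with $\mathcal{U}_{\overline{\mu},0}(\zp)$ and the map $\Omega$ pass through $c \in \mathcal{T}(B_{\cris,\eab})$, and $c$ is not Galois-invariant. So one has to check that the recorded triple genuinely satisfies Fargues' equivariance condition. I would argue that no separate check is needed. By construction, $\alpha$ is the Hodge--Tate map of the $E$-rational group $H$, transported along identifications that are simply declared to define the Galois structure on $T'$. Equivariance is therefore inherited from the intrinsic Hodge--Tate map. It remains to note that $\Omega$ is Galois-compatible for these transported structures, which follows from Lemma \ref{Lem:IdentificationIsomorphisms} ($h = \operatorname{Ad} c$) and the equivariance of the \'etale--crystalline and de Rham comparisons. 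Full faithfulness of the classification then pins down $H$.
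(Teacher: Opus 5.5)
Your proposal is correct and is essentially the paper's argument. The paper gives no separate proof: it deduces the corollary from Fargues' classification and the observation just before it that, under the identifications of Lemma \ref{Lem:LieAlgebraH} and of $T_p\mathcal{H}\otimes\eab$ with $U_{\overline{\mu},0}(\eab)$, the map $\Omega$ \emph{is} the Hodge--Tate comparison \eqref{eq.HT-comp-Omega}, so Galois-equivariance is inherited from the intrinsic Hodge--Tate map exactly as you say. Your reading of $W$ as the $E$-structure $U_\mu^{\mathcal{N}}(E)=\Lie H$ is the right one, and your closing appeal to Lemma \ref{Lem:IdentificationIsomorphisms} is superfluous once the Galois structure on $T'$ is defined by transport.
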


The Galois action on the Tate module is computed in \S\ref{sss.galois-action} below. 

\subsubsection{}\label{sss.omegazeta}
If we choose a trivialization $\zeta$ of $\mathbb{Z}_p(1)$, then we obtain from $\Omega$ an associated
\[ \Omega_{\zeta}: U_{\mu}^\mathcal{N}(\eab) \xrightarrow{\sim} U_{\mu}^\mathcal{N}(\eab). \]
This period automorphism appears in the introduction and several of our results. 

\begin{Rem}On each absolute root space, $\Omega_\zeta$ acts by a scalar in $\eab$. The inverses of these scalars lie in $\mathcal{O}_{\eab}$ (since the canonical map from $\Lie \mathcal{H}$ to $T_p \mathcal{H}\otimes \eab(-1)$ is induced from the integral Hodge-Tate map $T_p \mathcal{H}^\vee \rightarrow \omega_{\mathcal{H}}$ by extending scalars and dualizing). However, they are not typically units. In fact, these scalars have absolute value one only on the roots in the multiplicative part of $\mathcal{H}$ (because any map from $\mathcal{H}$ to $\widehat{\mathbb{G}}_m$ over $\mathcal{O}_{\eab}$ is trivial modulo $\mathfrak{m}_{\eab}$ except on the multiplicative part, and the integral Hodge-Tate map $T_p \mathcal{H}^\vee \rightarrow \omega_{\mathcal{H}}$ is defined by interpreting elements of $T_p \mathcal{H}^\vee$ as such maps then pulling back the differential form $\frac{dz}{z}$).  
\end{Rem}

\begin{Eg} \label{Eg:OrdinaryOmegaZeta}
    In the ordinary case (i.e. $E=\mathbb{Q}_p$ so $\overline{\mu}=\mu$), we have a canonical identification $\mathbb{X}^\can=\left( \Lambda_0 \otimes_{\mathbb{Z}_p} \mathbb{Q}_p/\mathbb{Z}_p \right) \oplus \left( \Lambda_1 \otimes_{\mathbb{Z}_p} \widehat{\mathbb{G}}_m \right)$, where the subscripts denote the weight spaces for $\mu$. Then $T_p \mathbb{X}^\can= \Lambda_0 \oplus \left(\Lambda_1 \otimes_{\mathbb{Z}_p} \mathbb{Z}_p(1)\right)$. In particular, if we choose a basis $\zeta$ for $\mathbb{Z}_p(1)$, we obtain a trivialization of $T_p \mathbb{X}^\can$ such that the associated $c$ is given by $\mu(t^{-1})$ for $t$ the usual Fontaine element associated to this basis for $\mathbb{Z}_p(1)$. In particular, if we use the same choice of basis $\zeta$ for $\mathbb{Z}_p(1)$ in the above construction, then the resulting period automorphism $\Omega_\zeta$ is the identity. 
\end{Eg}

\subsubsection{}\label{sss.galois-action} Finally, we compute the Galois action 
of $\gal(\eab/\ebreve)$ on $T_p H_{\eab} = \mathcal{U}_{\overline{\mu},0}(\zp)$. We recall the morphism
\begin{align}
    \operatorname{Res}_{\oee/\zp} \mathbb{G}_m \xrightarrow{r_{\mu}} \mathcal{T}
\end{align}
of \cite[Proof of Lemma 4.8]{DanielsTori}. The crystalline Galois representation of $\gal(\eab/\ebreve)$ corresponding to $T_p \mbxcan_{\eta,\eab}$ corresponds to the composition
\begin{align}
    \gal(\eab/\ebreve) \to \mathcal{O}_E^{\times} \xrightarrow{r_{\mu}} \mathcal{T}(\zp) \to \operatorname{GL}(\Lambda)(\zp),
\end{align}
where the first map is the local Artin map, see \cite[discussion before Proposition 4.9]{DanielsTori}. Since the map $T_p H \to \Aut_{\mathcal{T}}(\mbxcan)_{\eta}$ is Galois-equivariant, we find that the Galois action on $T_p H_{\eab} = \mathcal{U}_{\overline{\mu},0}(\zp)$ is induced by 
\begin{align}
     \gal(\eab/\ebreve) \to \mathcal{O}_E^{\times} \xrightarrow{r_{\mu}} \mathcal{T}(\zp) \xrightarrow{\operatorname{Ad}} \operatorname{GL}(\mathcal{U}_{\overline{\mu},0})(\zp).
\end{align}

\begin{Rem}
We expect that the action of $\gal(\eab/E)$ on $T_p \mbxcan_{\eta,\eab}$ factors through the Lubin--Tate extension of $E$ corresponding to the uniformizer $p$ (because we used $b_{\mu}=\mu^{-1}(p)$ instead of $\mu^{-1}(\varpi)$ for another uniformizer $\varpi$ of $E$). We show that this is indeed the case in an example, see Section \ref{sub:ExampleII}.
\end{Rem}

\subsection{Comparison to Eischen--Fintzen--Mantovan--Varma and Eischen--Mantovan} \label{sub:EischenMantovan} The goal of this section is to compare with \cites{EischenFintzenMantovanVarma, EischenMantovan} and to make precise Remark \ref{Rem:IntroEischenMantovan}.

\subsubsection{} \label{subsub:PELNotation} Let $\B$ be a finite-dimensional simple algebra over $\mathbb{Q}$ with center $\F$. Fix a positive involution $\ast$ and let $\F^+=\F^{\ast=1}$. We note that $\F^+$ is a totally real field, and we assume that $\F$ is an imaginary CM extension of $\F^+$. Let $(\V, \langle\rangle)$ be a Hermitian $\B$-module and let $\g=\operatorname{GU}(\V)$ be its similitude group, which we assume is of type $A$. Fix a prime $p>2$ such that $G=\G_{\qp}$ is quasi-split and splits over an unramified extension; we will similarly write $B=\B \otimes_{\mathbb{Q}} \qp$, $V=\V \otimes_{\mathbb{Q}} \qp$, $F=\F \otimes_{\mathbb{Q}} \qp$ and $F^+=\F^+ \otimes_{\mathbb{Q}} \qp$. Fix a prime $v$ of the reflex field $\mathsf{E} \subset \F$ above $p$ and write $E=\mathsf{E}_v$. Our choice of Hodge cocharacter $\mu$ determines a Hodge decomposition $V_{E} = V_{0} \oplus V_{-1}$. 

Let $\mathcal{O}_{\B,(p)}$ be a $\mathbb{Z}_{(p)}$-order in $\B$ whose $p$-adic completion is a maximal order $\mathcal{O}_B$ inside $B$. Fix an $\mathcal{O}_{\B, (p)}$-stable lattice $\Lambda_{(p)}$ in $V$ which is preserved by $\ast$ and self-dual under the pairing $\langle  \rangle$. Note that the PEL moduli space of abelian varieties up to prime-to-$p$ isogeny with extra structures is noncanonically a finite disjoint union of $\scrs_{K}\gx$, indexed by $\ker^1(\mathbb{Q},\g)$. 

\subsubsection{} Eischen and Mantovan define in \cite[Section 3.2]{EischenMantovan} an inverse system of Igusa varieties
\begin{align}
    \operatorname{Ig}_{\mu, n,m} \to \scrs_K\gx \otimes_{\oee} \zpbr/p^m
\end{align}
for a fixed choice of embedding $\oee \to \zpbr$. These are \'etale torsors for\footnote{Eischen--Mantovan write $J_{\mu}$ for what we call $\mathcal{M}_{\overline{\mu}}$.} $\mathcal{M}_{\overline{\mu}}(\mathbb{Z}/p^n \mathbb{Z})$ and it follows from their definition (in the PEL case, the Igusa varieties of \cite{HamacherKim} agree with those of \cite{CaraianiScholze}) that for $m=1$ there is an isomorphism of $\mathcal{M}_{\overline{\mu}}(\zp)$-torsors
\begin{align}
    \varprojlim_n \operatorname{Ig}_{\mu, n,1} \xrightarrow{\sim} \operatorname{Ig}_M \otimes_{k_E} \ovfp
\end{align}
over $\operatorname{Sh}_K\gx \otimes_{k_E} \ovfp$. Therefore, there is an isomorphism
\begin{align}
    \igmf \times_{\spf \oee} \spf \zpbr \xrightarrow{\sim} \varinjlim_m \varprojlim_n \operatorname{Ig}_{\mu, n,m}
\end{align}
of $\underline{\mathcal{M}_{\overline{\mu}}(\zp)}$-torsors over the formal Shimura variety $\scrshat_K\gx \times_{\spf \oee} \spf \zpbr$. They consider the space 
\begin{align}
    \mathcal{O}(\mf{Ig}_{\mathrm{M},\zpbr})=\varprojlim_m \varinjlim_n H^0(\operatorname{Ig}_{\mu,n,m}, \mathcal{O}).
\end{align}
Fix a Borel subgroup $\mathcal{B}_{\overline{\mu}} \subset \mathcal{M}_{\overline{\mu}}$ with unipotent radical $\mathfrak{N} \subset \mathcal{M}_{\overline{\mu}}$.\footnote{These are denoted by $B_{\mu}$ and $N_{\mu}$ in \cite{EischenMantovan}.} Then Eischen--Mantovan consider the space $\mathcal{O}(\mf{Ig}_{\mathrm{M},\zpbr})^{\mathfrak{N}(\zp)}$, see \cite[Definition 4.1.1]{EischenMantovan}, and define certain operators $\Theta^{\lambda}$ on it.

\subsubsection{} We now describe the weights $\lambda$ for which Eischen--Mantovan construct differential operators. For $G^0$ the kernel of the similitude map over $\qp$, there is a canonical embedding
\begin{align}
    G^0_{\zpbr} \subset \prod_{\tau:\mathcal{O}_{\mathsf{F}} \to \zpbr} \operatorname{GL}_{(\mathcal{O}_{B} \otimes \zpbr)_{\tau}}(\Lambda_{\zpbr, \tau})
\end{align}
identifying $G^0_{\zpbr}$ with the fixed points of the involution $x \mapsto x^{\ast}$. It contains a Levi $\mathcal{M}_{\mu}^0 \otimes_{\oee} \zpbr$ which we can identify with
\begin{align} \label{eq:IdentificationLeviPEL}
     \mathcal{M}_{\mu}^0 \otimes_{\oee} \zpbr \simeq \prod_{\tau:\mathcal{O}_F \to \zpbr} \operatorname{GL}_{a({\tau})}
\end{align}
where $a({\tau})$ is the $B \otimes \qpbr$-rank of $\left(V_{-1} \otimes_E \qpbr \right)_{\tau}$. More precisely, here we think of $\operatorname{GL}_{a(\tau)}=\operatorname{GL}((V_{-1}^{\ast} \otimes_{\mathcal{O}_E} \zpbr)_{\tau})$.

\subsubsection{} Fix a maximal torus $\mathcal{T} \subset \mathcal{G}$ such that $\mathcal{T}_{\mathcal{O}_E}$ contains the image of $\mu$, and fix a Borel subgroup $\mathcal{B} \subset \mathcal{G}$ containing $\mathcal{T}$. We may and will assume that under the isomorphisms \eqref{eq:IdentificationLeviPEL}, these are identified with the standard maximal torus of the right hand side and furthermore that $\mathcal{B}_{\oee} \cap \mathcal{M}_{\mu}^0$ is identified with the standard upper triangular Borel. 
We may thus identify the cone of dominant (for $\mathcal{M}_\mu^0$) characters with
\begin{align}
    X^{\ast}(T)^+ = \left\{(\lambda_{1, \tau}, \cdots, \lambda_{a(\tau), \tau}) \in \prod_{\tau:F \to \qpbr} \mathbb{Z}^{\oplus a(\tau)}  \; | \; \lambda_{i, \tau} \ge \lambda_{i+1, \tau} \text{ for all } i\right\}.
\end{align}
We introduce the following terminology of \cite{EischenMantovan}.
\begin{itemize}
    \item We call $\lambda \in X^{\ast}(T)^+$ \emph{positive} if all the $\lambda_{a(\tau)}$ are positive, see \cite[Section 2.3]{EischenMantovan}.
    
    \item We call $\lambda \in X^{\ast}(T)^+$ \emph{sum-symmetric} if $\lambda$ is positive and, for $\tau$ with complex conjugate $\overline{\tau}$,
    \begin{align}
        \sum_{i=1}^{a(\tau)} \lambda_{i,\tau} = \sum_{i=1}^{a(\overline{\tau})} \lambda_{i, \overline{\tau}},
    \end{align}
    see \cite[Section 2.3]{EischenMantovan}. The depth of such a character is defined to be $\frac{1}{2}\sum_{\tau:F \to \qpbr}\sum_{i=1}^{a(\tau)} \lambda_{i, \tau}$.
    \item We call $\lambda \in X^{\ast}(T)^+$ \emph{symmetric} if $\lambda$ is sum-symmetric and, for $\tau$ with complex conjugate $\overline{\tau}$, we have
    \[
    \lambda_{i, \tau} = \lambda_{i, \overline{\tau}}
    \]
    for all $i = 1, \dots, \operatorname{min}(a(\tau), a(\overline{\tau}))$.
\end{itemize}

\subsubsection{} We now want to explain how these notions are related to the inclusion $\operatorname{Lie} \mathcal{H} \subset \mathfrak{u}_{\mu}$. For the rest of this section, fix a CM-type $\Sigma \subset \opn{Hom}(\mathcal{O}_{F},\zpbr)$, that is, a set of embeddings of size $[\mathsf{F}^+ : \mathbb{Q}]$ with the property $\Sigma \cap \overline{\Sigma} = \varnothing$. We first observe that $\mathfrak{u}_{\mu} \otimes_{\oee} \zpbr$ can be described as a representation of $\mathcal{M}_{\mu}^0 \otimes_{\oee} \zpbr$ as follows
\begin{align} \label{Eq:DifferentialIdentification}
    \mathfrak{u}_{\mu} \otimes_{\oee} \zpbr \simeq \bigoplus_{\tau \in \Sigma} (\operatorname{Std}_{\tau}^{\ast} \otimes \operatorname{Std}_{{\bar{\tau}}}^{\ast}).
\end{align}
Here $\operatorname{Std}_{\tau}$ is the standard representation of $\operatorname{GL}_{a(\tau)}$, and similarly for $\operatorname{Std}_{\bar{\tau}}$. 
\begin{Lem} \label{Lem:HighestWeights}
    If $\lambda \in X^{\ast}(T)^+$ and $e$ is a positive integer, then the following are equivalent:
    \begin{itemize}
        \item $\lambda$ is the highest weight of an irreducible $\mathcal{M}_{\mu}^0 \otimes_{\mathcal{O}_E} \qpbr$-representation occurring in $\left(\mf{u}_{\mu, \qpbr}^{\ast}\right)^{\otimes e}$ (resp. $\operatorname{Sym}^e \mf{u}^{\ast}_{\mu, \qpbr}$).

        \item $\lambda$ is sum-symmetric (resp. symmetric) of depth $e$.
    \end{itemize}
    Moreover, these representations occur with multiplicity one in $\operatorname{Sym}^e \mf{u}^{\ast}_{\mu, \qpbr}$ in the symmetric case. 
\end{Lem}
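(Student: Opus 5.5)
Starting from \eqref{Eq:DifferentialIdentification}, I will reduce everything to classical representation theory of products of general linear groups. Dualizing \eqref{Eq:DifferentialIdentification} gives
\[
\mf{u}^{\ast}_{\mu,\qpbr} \simeq \bigoplus_{\tau \in \Sigma} \operatorname{Std}_{\tau} \otimes \operatorname{Std}_{\bar\tau}
\]
as a representation of $\prod_{\tau} \operatorname{GL}_{a(\tau)}$. The factors indexed by different pairs $\{\tau,\bar\tau\}$ are independent. Hence
\[
\left(\mf{u}^{\ast}_{\mu,\qpbr}\right)^{\otimes e} \quad\text{and}\quad \operatorname{Sym}^e \mf{u}^{\ast}_{\mu,\qpbr}
\]
decompose as sums, over $e = \sum_{\tau\in\Sigma} e_\tau$, of (tensor products over $\tau \in \Sigma$ of) $(\operatorname{Std}_\tau\otimes\operatorname{Std}_{\bar\tau})^{\otimes e_\tau}$, respectively $\operatorname{Sym}^{e_\tau}(\operatorname{Std}_\tau\otimes\operatorname{Std}_{\bar\tau})$. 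For tensor powers there are shuffling multiplicities, which are irrelevant for the question of which highest weights occur. An irreducible constituent of an external tensor product of representations of distinct factors has highest weight equal to the concatenation of highest weights. So it suffices to treat a single pair $(\tau,\bar\tau)$ with degree $e_\tau$, and then sum the depths; the total depth is $\frac12\sum_{\text{all }\tau}|\lambda_\tau| = \sum_{\tau\in\Sigma} e_\tau = e$.

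For a single pair, write $A=\operatorname{Std}_\tau$ of dimension $a=a(\tau)$ and $B=\operatorname{Std}_{\bar\tau}$ of dimension $b=a(\bar\tau)$. By Schur--Weyl duality,
\[
A^{\otimes n} = \bigoplus_{|\alpha|=n,\ \ell(\alpha)\le a} S_\alpha A\otimes M_\alpha,
\]
where $M_\alpha$ is the Specht module, and similarly for $B$. Therefore
\[
(A\otimes B)^{\otimes n}= A^{\otimes n}\otimes B^{\otimes n} \supseteq \bigoplus_{\alpha,\beta} S_\alpha A\otimes S_\beta B\otimes (M_\alpha\otimes M_\beta).
\]
The multiplicity spaces $M_\alpha \otimes M_\beta$ are nonzero, so every pair of partitions $(\alpha,\beta)$ with $|\alpha|=|\beta|=n$, $\ell(\alpha)\le a$ and $\ell(\beta)\le b$ occurs. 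Highest weights of polynomial representations are exactly such partitions; these are dominant and have nonnegative entries, which gives the positivity condition. The condition $|\alpha|=|\beta|$ is exactly sum-symmetry, and the depth is $\frac12(|\alpha|+|\beta|)=n$. Conversely every constituent has this form. This settles the tensor-power case.

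For $\operatorname{Sym}^n$ I will invoke the Cauchy formula (valid in characteristic $0$ over $\qpbr$):
\[
\operatorname{Sym}^n(A\otimes B)=\bigoplus_{|\alpha|=n,\ \ell(\alpha)\le\min(a,b)} S_\alpha A\otimes S_\alpha B,
\]
in which each summand appears with multiplicity one. Thus the highest weights are the pairs $(\alpha,\alpha)$, padded by zeros. This means $\lambda_{i,\tau}=\lambda_{i,\bar\tau}$ for $i\le\min(a,b)$, with all remaining entries zero, which is the symmetric condition. Multiplicity one across different pairs follows because the decomposition over $(e_\tau)$ is then a direct sum of pairwise non-isomorphic irreducibles, with distinct weights $\lambda$ determining distinct $e_\tau=|\lambda_\tau|$.

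The main obstacle is purely bookkeeping: matching the conventions. I must check that the dual of \eqref{Eq:DifferentialIdentification} indeed gives $\operatorname{Std}_\tau\otimes\operatorname{Std}_{\bar\tau}$ for the identification $\operatorname{GL}_{a(\tau)}=\operatorname{GL}((V_{-1}^\ast)_\tau)$, and that ``dominant for the upper triangular Borel'' matches the partition ordering $\lambda_1\ge\lambda_2\ge\cdots$. I must also interpret ``symmetric'' as implicitly forcing the entries beyond $\min(a(\tau),a(\bar\tau))$ to vanish. This last point is automatic from sum-symmetry together with positivity and the equalities $\lambda_{i,\tau}=\lambda_{i,\bar\tau}$ in the first $\min$ entries: the extra entries are nonnegative and sum to zero.
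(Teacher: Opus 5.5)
Your proposal is correct and follows essentially the same route as the paper. Both reduce to a single pair $(\tau,\bar\tau)$, handle the tensor power via $(\operatorname{Std}_\tau\otimes\operatorname{Std}_{\bar\tau})^{\otimes e}=\operatorname{Std}_\tau^{\otimes e}\otimes\operatorname{Std}_{\bar\tau}^{\otimes e}$ and Schur--Weyl duality, and handle the symmetric power via the Cauchy formula, which is the Fulton--Harris exercise the paper cites. You are more explicit than the paper on two points: that every pair of equal-size partitions actually occurs, and why multiplicity one survives the sum over $(e_\tau)$.
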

\begin{proof}
This is \cite[Remark 2.4.5]{EischenFintzenMantovanVarma}, cf. \cite[Theorem 12.7]{ShimuraArithmeticity}. For completeness, we give a proof. First note that we can reduce to the case where $\#\Sigma = 1$. Since $(\opn{Std}_{\tau} \otimes \opn{Std}_{\bar{\tau}})^{\otimes e} = \opn{Std}_{\tau}^{\otimes e} \otimes \opn{Std}_{\bar{\tau}}^{\otimes e}$, the claim about sum-symmetric weights follows from the fact that any weight of $\opn{GL}_{a(\tau)}$ which appears in $\opn{Std}_{\tau}^{\otimes e}$ must be positive and of depth $e$. For the claim about symmetric weights, this follows from \cite[Exercise 6.11(b)]{FultonHarris}. 
\end{proof}

\subsubsection{} For each sum-symmetric weight $\lambda \in X^{\ast}(T)^+$, Eischen and Mantovan construct an algebraic differential operator
\begin{align}
    \Theta^{\lambda}:\mathcal{O}(\mf{Ig}_{\mathrm{M},\zpbr})^{\mathfrak{N}(\zp)} \to \mathcal{O}(\mf{Ig}_{\mathrm{M},\zpbr})^{\mathfrak{N}(\zp)},
\end{align}
see \cite[Theorem 6.3.3]{EischenMantovan} (which generalizes \cite[Theorem 5.1.3]{EischenFintzenMantovanVarma} in the ordinary case). To compare with our differential operators, we need to introduce some notation. For $\kappa \in X^{\ast}(T)^{+}$ a (not necessarily sum-symmetric) dominant weight we let $d_{\kappa, \tau} = \sum_{i=1}^{a(\tau)} \kappa_{i, \tau}$ and consider the right action of the symmetric group on $\operatorname{Std}_{\tau}^{\otimes d_{\kappa,\tau}}$, which commutes with the action of $\operatorname{GL}_{a(\tau)}$. Using the symmetric group action, we get Young symmetrizer $c_{\kappa,\tau}$ acting on the right, and we consider 
\begin{align}
    W_{\kappa,\tau}:=(\operatorname{Std}_{\tau})^{\otimes d_{\kappa,\tau}} \cdot c_{\kappa,\tau}. 
\end{align}
This is a representation of $\operatorname{GL}_{a(\tau)}$ whose basechange to $\qpbr$ is an irreducible representation of highest weight $\kappa_{\tau}$. We note that the Young symmetrizers define a canonical surjective map $\pi_{\kappa,\tau}:\operatorname{Std}_{\tau}^{\otimes d_{\kappa, \tau}} \to W_{\kappa,\tau}$. We similarly consider 
\begin{align}
    W_{\kappa}= \bigotimes_{\tau} W_{\kappa,\tau} \subset \bigotimes_{\tau} \operatorname{Std}^{\otimes d_{\kappa_{\tau}}}_{\tau}
\end{align}
and $\pi_{\kappa}:\otimes_{\tau} \operatorname{Std}_{\tau}^{\otimes d_{\kappa, \tau}} \to W_{\kappa}$. 
\begin{Def}
    Let $\kappa \in X^{\ast}(T)^+$ be any positive dominant weight. For any $\tau \colon F \hookrightarrow \qpbreve$, let $\{ b_{1,\tau}, \dots, b_{a(\tau), \tau} \}$ denote the standard basis of the representation $\opn{Std}_{\tau}$ with dual basis $b_{i,\tau}^{\vee}$. We define following \cite[Definition 2.4.2]{EischenFintzenMantovanVarma}
   \begin{align}
        \tilde{\ell}^{\kappa} &= \bigotimes_{\tau} \left(\prod_{i=1}^{a(\tau)} (\kappa_{i, \tau}!)^{-1} \cdot \left[\otimes_{i=1}^{a(\tau)} (b_{i, \tau}^{\vee})^{\otimes \kappa_{i, \tau}} \right] \cdot c_{\kappa,\tau} \right) \in \bigotimes_{\tau}(\operatorname{Std}_{\tau}^{\ast})^{\otimes d_{\kappa_{\tau}}} \\
        \ell^{\kappa} &=  \left.\kern-\nulldelimiterspace \left(\bigotimes_{\tau} \left(\prod_{i=1}^{a(\tau)} (\kappa_{i, \tau}!)^{-1} \cdot \left[\otimes_{i=1}^{a(\tau)} (b_{i, \tau}^{\vee})^{\otimes \kappa_{i, \tau}} \right] \right)\right)\right|_{W_{\kappa}}.
    \end{align}
 \end{Def}
By construction, the morphism $\ell^{\kappa}$ defines a $T_{\zpbr}$-equivariant morphism $\ell^{\kappa}:W_{\kappa} \to \zpbr[\kappa]$ satisfying $\tilde{\ell}^{\kappa} = \ell^{\kappa} \circ \pi_{\kappa}$. To state our comparison result, we note that for sum-symmetric $\lambda$ of depth $e$ we have (using \eqref{Eq:DifferentialIdentification}) $$\tilde{\ell}^{\lambda} \in \bigotimes_{\tau \in \Sigma} (\operatorname{Std}_{\tau}^{\ast} \otimes \operatorname{Std}_{{\bar{\tau}}}^{\ast})^{d_{\lambda_{\tau}}} \subset \mf{u}_{\mu}^{\otimes e},$$
and that $\mf{u}_{\mu}^{\otimes e}$ acts on $\mathcal{O}(\igmf)$ via the action described in \S \ref{sub:ActiononIgm}.

\begin{Prop} \label{Prop:ComparisonEM}
    Let $\lambda \in X^{\ast}(T)^+$ be a sum-symmetric weight of depth $e$. Then the operator $\Theta^{\lambda}$ coincides with the action of $\tilde{\ell}^{\lambda} \in \mf{u}_{\mu}^{\otimes e}$ via the action described in \S \ref{sub:ActiononIgm}. 
\end{Prop}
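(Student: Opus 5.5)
Our plan is to unwind the Eischen--Mantovan construction of $\Theta^{\lambda}$ and show that it factors through the same parallelization of the tangent bundle of $\mathcal{Q}$ (Proposition \ref{Prop:MuOrdinaryKodairaSpencer}) that defines our action of $(U(\mf{u}_{\overline{\mu}}))^{\mathcal{N}}$ on $\mathcal{O}(\igmf)$. Recall that $\Theta^{\lambda}$ is built in \cite[\S6]{EischenMantovan} (following \cite[\S5]{EischenFintzenMantovanVarma}) in three steps. First, they take the $e$-fold iterate of the Gauss--Manin connection composed with Kodaira--Spencer, $\nabla^{e}: \mathcal{O}_{\mathrm{Ig}} \to \mathcal{O}_{\mathrm{Ig}} \otimes \Omega^{\otimes e}$. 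Second, they use the unit-root/slope splitting of $D(A)$ over the Igusa tower, or rather over the quotient by $\mathfrak{N}(\zp)$, to project onto the $\mu$-ordinary Kodaira--Spencer piece. Third, they identify this piece via the Igusa trivialization with $(\mf{u}_{\mu}^{\ast})^{\otimes e} \otimes \mathcal{O}$ and contract against $\tilde{\ell}^{\lambda}$. Since both sides are differential operators of order $e$, and both are compatible with base change to $\zpbr$ and with $p$-adic limits, we will compare them after inverting nothing, working on $\mathcal{Q}$ and passing to $\mathcal{N}$-invariants at the end.

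The first key step is the order-one comparison. We will show that for $X \in \mf{u}_{\mu}$ the EM first-order operator $\langle \nabla(\cdot), X\rangle$ agrees with our vector field $X$. The latter is defined through the $\mathcal{N}$-equivariant isomorphism $\mf{u}_{\overline{\mu}} \otimes \mathcal{O}_{\mathcal{Q}} \cong T_{\mathcal{Q}}$ and the projection $\mf{u}_{\overline{\mu}} \to \mf{u}_{\mu}$, where it is evaluated on $\mathcal{N}$-invariant functions. The proof of Proposition \ref{Prop:MuOrdinaryKodairaSpencer} shows that the map $T_{\mathcal{Q}}/\mf{n} \to \mf{u}_{\mu} \otimes \mathcal{O}_{\mathcal{Q}}$ is exactly the pullback of the Kodaira--Spencer isomorphism $\iota^{\ast}\beta^{\ast}T_{\scrs_K} \cong \mf{g}/\mf{p}_{\mu^{-1}}$. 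It also shows that the Gauss--Manin lift preserves the slope filtration and acts trivially on its graded pieces. These two facts should match the EM recipe: they use the Kodaira--Spencer map together with the slope splitting and the Igusa trivialization of the graded pieces to produce a frame for $\Omega$. Checking that the frames agree is bookkeeping with \eqref{Eq:DifferentialIdentification}, identifying $\mathrm{Std}_\tau^{\ast} \otimes \mathrm{Std}_{\bar\tau}^{\ast}$ with the relevant Hom-space between graded pieces of $D(A)$.

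The second key step is higher order, and this is the main obstacle. The EM operator is not literally the $e$-fold composite of first-order operators: it is $\nabla^{e}$ projected via the splitting at each stage, and the projection of $\nabla$ applied to a section of $\Omega^{\otimes k}$ differs a priori from iterating along constant vector fields. We plan to show that the correction terms vanish, in the sense that on $\mathcal{N}$-invariant functions the iterate
\[
f \mapsto X_1 X_2 \cdots X_e f, \qquad X_i \in \mf{u}_{\mu},
\]
computed in our action equals the contraction of $\nabla^{e} f$ against $X_1 \otimes \cdots \otimes X_e$. The reason to expect this is that in our parallelization the frame of $T_{\mathcal{Q}}$ given by $\mf{u}_{\overline{\mu}}$ is horizontal for the connection EM use on $\Omega$. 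That connection is the Gauss--Manin connection on the graded pieces of the slope filtration, which is trivial under the Igusa trivialization; this is the crystalline argument from the proof of Proposition \ref{Prop:MuOrdinaryKodairaSpencer}. There is a remaining subtlety that the non-abelian part $\mf{n}$ could contribute. We would handle it by noting that EM restrict to $\mathfrak{N}(\zp)$-invariants and that $\tilde{\ell}^{\lambda}$ lies in the tensor algebra of $\mf{u}_{\mu}$. We would first try to verify this step by reduction to the Siegel case via the closed immersion, as in the proof of Theorem \ref{Thm:IdentificationAction}, where Serre--Tate coordinates make it explicit.

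Finally, once $\Theta^{\lambda}(f) = \langle \nabla^{e}f, \tilde{\ell}^{\lambda}\rangle$ is matched with $\tilde{\ell}^{\lambda}$ acting through $\mf{u}_{\mu}^{\otimes e} \to U(\mf{u}_{\overline{\mu}})$, the identity $\tilde{\ell}^{\lambda} = \ell^{\lambda}\circ \pi_\lambda$ matches their definition \cite[Definition 2.4.2]{EischenFintzenMantovanVarma} with ours, and the comparison follows.
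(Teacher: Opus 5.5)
There is a genuine gap. Your plan compares our operator with the formula $f\mapsto\langle\nabla^{e}f,\tilde{\ell}^{\lambda}\rangle$ on scalar functions on the Igusa tower. That is not how $\Theta^{\lambda}$ is characterized, and the step connecting the two is what is missing.

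\emph{How $\Theta^{\lambda}$ is pinned down.} The input from \cite{EischenMantovan} is the family of operators $D^{\lambda}_{\kappa}\colon\mathcal{W}_{\kappa}\to\mathcal{W}_{\kappa}\otimes\mathcal{W}_{\lambda}\to\mathcal{W}_{\kappa+\lambda}$ between automorphic vector bundles. $\Theta^{\lambda}$ is the unique operator with $\Theta^{\lambda}\circ\Psi_{\kappa}=\Psi_{\kappa+\lambda}\circ D^{\lambda}_{\kappa}$ for all positive dominant $\kappa$, where $\Psi_{\kappa}$ is restriction to $\igmf$ followed by $\ell^{\kappa}$. Uniqueness needs density of divided congruences. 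The density of $\bigoplus_{\kappa}\operatorname{Im}\Psi_{\kappa}$ asserted in \cite{EischenMantovan} is not quite correct, so your appeal to ``compatibility with $p$-adic limits'' does not by itself suffice. You therefore have to work with $W_{\kappa}$-valued functions on $\mathcal{Q}$ for every $\kappa$, not only with $\mathcal{O}_{\mathcal{Q}}$.

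\emph{The two missing ingredients.}
\begin{enumerate}
\item After pulling back to $\mathcal{Q}$, where $\mathcal{W}_{\kappa}$ trivializes, $D^{\lambda}_{\kappa}$ becomes $\mathrm{id}_{W_{\kappa}}\otimes\bigl((\pi_{\lambda}\otimes 1)\circ a_{e}\bigr)$, with $a_{e}\colon\mathcal{O}_{\mathcal{Q}}\to(\mf{u}_{\mu}^{\ast})^{\otimes e}\otimes\mathcal{O}_{\mathcal{Q}}$ the $e$-fold action map. In other words, there is no connection term on the weight-$\kappa$ factor.
\item The identity $\ell^{\kappa+\lambda}\circ\pi_{\kappa,\lambda}=\ell^{\kappa}\otimes\ell^{\lambda}$ of \cite[Lemma 2.4.6]{EischenFintzenMantovanVarma}. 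Combined with $\tilde{\ell}^{\lambda}=\ell^{\lambda}\circ\pi_{\lambda}$, it gives $\Psi_{\kappa+\lambda}(D^{\lambda}_{\kappa}s)=\tilde{\ell}^{\lambda}\cdot\Psi_{\kappa}(s)$, and uniqueness then concludes.
\end{enumerate}
Your final paragraph uses only $\tilde{\ell}^{\lambda}=\ell^{\lambda}\circ\pi_{\lambda}$. That handles the $\lambda$-factor, but neither the $\kappa$-factor nor the Young-symmetrizer map $W_{\kappa}\otimes W_{\lambda}\to W_{\kappa+\lambda}$.

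\emph{The step you flag as the main obstacle.} Once things are set up this way, it dissolves and needs neither a Siegel reduction nor Serre--Tate coordinates.
\begin{itemize}
\item The higher-order operators of \cite{EischenMantovan} are built by iterating the first-order operator $D=\widetilde{U}\circ\nabla$ on tensor powers of $V_{-1}^{\ast}$, using Kodaira--Spencer to rewrite $\Omega^{1}$.
\item On $\mathcal{Q}$ all these identifications are constant, by Propositions \ref{Prop:TPdrToLieGisaniso} and \ref{Prop:MuOrdinaryKodairaSpencer}.
\item So by Leibniz it suffices to show that $D\colon V_{-1}^{\ast}\otimes\mathcal{O}_{\mathcal{Q}}\to V_{-1}^{\ast}\otimes\mathcal{O}_{\mathcal{Q}}\otimes\mf{u}_{\mu}^{\ast}$ is the identity tensor the action map.
\item This holds because, by construction of the parallelization, the connection form of the universal frame along the vector field attached to $\xi\in\mf{u}_{\mu}$ is, up to sign, $\xi$ itself. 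It sends $V_{-1}^{\ast}$ into $V_{0}^{\ast}$ and is therefore killed by the unit-root projection $\widetilde{U}$.
\end{itemize}
In particular $\mf{n}$ never enters, since only the $\mf{u}_{\mu}$-directions of $T_{\mathcal{Q}}\cong\mf{u}_{\overline{\mu}}\otimes\mathcal{O}_{\mathcal{Q}}$ are used.
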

\begin{proof}
For a positive dominant weight $\kappa$, Eischen and Mantovan first construct an algebraic differential operator between automorphic vector bundles
\begin{align}
    D^{\lambda}_{\kappa}: \mathcal{W}_{\kappa} \to \mathcal{W}_{\kappa} \otimes_{\mathcal{O}_{\scrs_{\zpbr}}} \mathcal{W}_{\lambda} \to \mathcal{W}_{\kappa+\lambda},
\end{align}
see \cite[Definition 6.2.2]{EischenMantovan}, where the second morphism is as in \cite[Lemma 2.4.6]{EischenFintzenMantovanVarma}.\footnote{Note that Eischen--Mantovan define $\mathcal{W}_{\kappa}$ as the subrepresentation of $(\operatorname{Fil}^1 H^1_{\dr})^{\otimes e}$ cut out by $\otimes_{\tau} c_{\kappa,\tau}$, where $e=\sum_{\tau} d_{\kappa,\tau}$. This is consistent with our conventions, since the automorphic vector bundle $(\operatorname{Fil}^1 H^1_{\dr})$ corresponds to $V_{-1}^{\ast}$.} We will now reinterpret the first morphism in terms of our algebraic differential operators. Recall the $\mathcal{N}$-torsor $f:\mathcal{Q} \to \igmf$ of \S \ref{sub:ActiononIgm}. After pullback along $\mathcal{Q}_{\zpbr} \to \scrs_{\zpbr}$, the vector bundles trivialize and we can interpret the first morphism as a $\mathcal{N} \rtimes \mathcal{M}_{\bar{\mu}}(\mbb{Z}_p)$-equivariant morphism 
\begin{align}
    D_{\kappa,\lambda}: W_{\kappa} \otimes \mathcal{O}_{Q,\zpbr} \to  W_{\kappa} \otimes W_{\lambda}\otimes \mathcal{O}_{Q,\zpbr}. 
\end{align}
\begin{Claim}
The map $D_{\kappa,\lambda}$ is the tensor product of the identity with the morphism
\begin{align}
    \mathcal{O}_{Q,\zpbr} \to W_{\lambda} \otimes \mathcal{O}_{Q,\zpbr}
\end{align}
given by the $\mathcal{N} \rtimes \mathcal{M}_{\bar{\mu}}(\mbb{Z}_p)$-equivariant map (here we implicitly use the identification in \eqref{Eq:DifferentialIdentification})
\[
\mathcal{O}_{\mathcal{Q}} \to \left(\mf{u}_{\mu,\zpbr}^{\ast}\right)^{\otimes e} \otimes \mathcal{O}_{\mathcal{Q}} \xrightarrow{\pi_{\lambda} \otimes 1} W_{\lambda} \otimes \mathcal{O}_{Q,\zpbr}
\]
where the first map is induced by the action of $\mathfrak{u}_{\mu,\zpbr} \subset \mathfrak{g}_{\zpbr}$ on $\mathcal{O}_{\mathcal{Q}}$. 
\end{Claim}
\begin{proof}
The Gauss--Manin connection $\nabla:H^1_{\dr}(A/\scrs) \to H^1_{\dr}(A/\scrs) \otimes \Omega^1_{\scrs_{K}}$ pulls back to a map (note that $ \Omega^1_{\scrs_{K}}$ pulls back to $\mf{u}_{\mu}^{\ast} \otimes \mathcal{O}_{\mathcal{Q}}$ by the proof of Proposition \ref{Prop:MuOrdinaryKodairaSpencer})
\begin{align}
    \widetilde{\nabla}: (V_{0}^{\ast} \oplus V_{-1}^{\ast}) \otimes \mathcal{O}_{\mathcal{Q}} \to (V_{0}^{\ast} \oplus V_{-1}^{\ast}) \otimes \mathcal{O}_{\mathcal{Q}} \otimes \mf{u}_{\mu}^{\ast}. 
\end{align}
The differential operator $D$ of \cite[Section 6.2]{EischenMantovan} is induced by precomposing $\widetilde{\nabla}$ with the inclusion of $V_{-1}^{\ast}$ and postcomposing with the unit-root splitting
\begin{align}
    \widetilde{U}:(V_{0}^{\ast} \oplus V_{-1}^{\ast}) \otimes \mathcal{O}_{\mathcal{Q}} \to V_{-1}^{\ast} \otimes \mathcal{O}_{\mathcal{Q}}. 
\end{align}
We claim that the induced map
\begin{align}
    V_{-1}^{\ast} \otimes \mathcal{O}_{\mathcal{Q}} \to V_{-1}^{\ast} \otimes \mathcal{O}_{\mathcal{Q}} \otimes \mf{u}_{\mu}^{\ast}
\end{align}
is induced by the action map $\mathcal{O}_{\mathcal{Q}} \to \mathcal{O}_{\mathcal{Q}} \otimes \mf{u}_{\mu}^{\ast}$ by tensor product with the identity of $V_{-1}^{\ast}$. This can be deduced from Propositions \ref{Prop:MuOrdinaryKodairaSpencer} and \ref{Prop:TPdrToLieGisaniso} and their proofs.
\end{proof}

The operator $\Theta^{\lambda}$ is uniquely determined by the operators $D^{\lambda}_{\kappa}$, as we will now recall: There is an injective pullback map
\begin{align}
    \Psi_{\kappa}:H^0(\scrs_{\zpbr}, \mathcal{W}_{\kappa}) \to H^0(\igmf, \mathcal{O})^{\mathfrak{N}(\zp)}[\kappa],
\end{align}
where $[\kappa]$ denotes the $\kappa$-eigenspace for the torus $\mathcal{T}(\zpbr)$. It is defined as
\begin{align}
    H^0(\scrs_{\zpbr}, \mathcal{W}_{\kappa}) \to H^0(\igmf, W_{\kappa}) \xrightarrow{\ell^{\kappa}} H^0(\igmf, \mathcal{O}). 
\end{align}
Then the operator $\Theta^{\lambda}$ is uniquely characterized by the commutative diagram\footnote{In \cite{EischenMantovan} this is justified by the claim that the image of $\bigoplus_{\kappa} H^0(\scrs_{\zpbr}, \mathcal{W}_{\kappa}) $ in $H^0(\igmf, \mathcal{O})^{\mathfrak{N}(\zp)}$ is dense (\cite[Proposition 4.2.5]{EischenMantovan}). This claim is not quite correct; one needs to consider instead divided congruences as in \cite[Theorem 8.3]{HidaBook} or \cite[Theorem 2.6.1]{EischenFintzenMantovanVarma}.} 
\begin{equation}
    \begin{tikzcd}
        H^0(\scrs_{\zpbr}, \mathcal{W}_{\kappa}) \arrow{d}{\Psi_{\kappa}} \arrow{r}{D^{\lambda}_{\kappa}} & H^0(\scrs_{\zpbr}, \mathcal{W}_{\kappa+\lambda}) \arrow{d}{\Psi_{\kappa+\lambda}} \\
        H^0(\igmf, \mathcal{O})^{\mathfrak{N}(\zp)}[\kappa] \arrow{r}{\Theta^{\lambda}[\kappa]} & H^0(\igmf, \mathcal{O})^{\mathfrak{N}(\zp)}[\kappa+\lambda]
    \end{tikzcd}
\end{equation}
for all $\kappa$. So it suffices to show that $\Theta^{\lambda}[\kappa]$ agrees with the action of $\tilde{\ell}^{\lambda}$ for all $\kappa$, which we can do after pulling back along $\mathcal{Q} \to \igmf$. There we have the commutative diagram
\begin{equation}
    \begin{tikzcd}
          \mathcal{O}_{\mathcal{Q}} \arrow{r} & \left(\mf{u}_{\mu,\zpbr}^{\ast}\right)^{\otimes e} \otimes \mathcal{O}_{\mathcal{Q}} \arrow{r}{\pi_{\lambda} \otimes 1} \arrow{d}{\tilde{\ell}^{\lambda} \otimes 1} & W_{\lambda} \otimes \mathcal{O}_{Q,\zpbr} \arrow{d}{\ell^{\lambda} \otimes 1} \\
         & \mathcal{O}_{\mathcal{Q}} \arrow[r, equals] & \mathcal{O}_{\mathcal{Q}}
    \end{tikzcd}
\end{equation}
which sits as the middle square in the commutative diagram (see \cite[Lemma 2.4.6]{EischenFintzenMantovanVarma} for the right hand square)
\begin{equation}
    \begin{tikzcd}
        W_{\kappa}  \otimes \mathcal{O}_{\mathcal{Q}} \arrow{d}{\ell^{\kappa}} \arrow{r}{} & W_{\kappa} \otimes \left(\mf{u}_{\mu,\zpbr}^{\ast}\right)^{\otimes e} \otimes \mathcal{O}_{\mathcal{Q}} \arrow{d}{\ell^{\kappa} \otimes 1} \arrow{r} &  W_{\kappa} \otimes W_{\lambda} \otimes \mathcal{O}_{\mathcal{Q}}  \arrow[d, "\ell^{\kappa} \otimes \ell^{\lambda}"] \arrow{r}{\pi_{\kappa, \lambda}} & W_{\kappa+\lambda} \otimes \mathcal{O}_{\mathcal{Q}} \arrow{d}{\ell^{\kappa+\lambda}} \\
        \mathcal{O}_{\mathcal{Q}} \arrow[r] & \mathcal{O}_{\mathcal{Q}} \otimes \left(\mf{u}_{\mu,\zpbr}^{\ast}\right)^{\otimes e} \arrow{r}{\tilde{\ell}^{\lambda}} & \mathcal{O}_{\mathcal{Q}} \arrow[r, equals] & \mathcal{O}_{\mathcal{Q}}. 
    \end{tikzcd} 
\end{equation}
The proposition is now a direct consequence of the last commutative diagram. Indeed, the bottom row describes the action of $\tilde{\ell}^{\lambda}$ on $\mathcal{O}_{\mathcal{Q}}$, the top row describes the pullback of $D^{\lambda}_{\kappa}$ to $\mathcal{Q}$, and the left and right vertical maps induce $\Psi_{\kappa}$ and $\Psi_{\kappa+\lambda}$. 
\end{proof}

The following corollary should be compared with \cite[Remark 5.2.5]{EischenFintzenMantovanVarma}.

\begin{Cor}
If $\lambda$ is sum-symmetric but not symmetric, then the operator $\Theta^{\lambda}=0$. 
\end{Cor}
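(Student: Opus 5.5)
By Proposition \ref{Prop:ComparisonEM}, $\Theta^{\lambda}$ is the action of $\tilde{\ell}^{\lambda} \in \mf{u}_{\mu}^{\otimes e}$ on $\mathcal{O}(\igmf)$ through the action of \S\ref{sub:ActiononIgm}. So it suffices to show that this action annihilates $\tilde{\ell}^{\lambda}$.

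The first step is to note that the action of $\mf{u}_{\mu}^{\otimes e}$ factors through $\operatorname{Sym}^e \mf{u}_{\mu}$. The action comes from the Lie algebra map $\mf{u}_{\overline{\mu}} \to T_{\mathcal{Q}}$ of Proposition \ref{Prop:MuOrdinaryKodairaSpencer}, and this map is a homomorphism of Lie algebras by Lemma \ref{Lem.inclusion-of-Lie-algebras}. Since $\mf{u}_{\mu}$ is abelian, the vector fields attached to elements of $\mf{u}_{\mu}$ commute on $\mathcal{Q}$. Hence the map $\mf{u}_{\mu}^{\otimes e} \to \operatorname{End}(\mathcal{O}_{\mathcal{Q}})$ is invariant under the $S_e$-action permuting tensor factors and factors through the symmetrization $\mf{u}_{\mu}^{\otimes e} \to \operatorname{Sym}^e \mf{u}_{\mu}$. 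We pass to $\mathcal{O}_{\igmf}$ by taking $\mathcal{N}$-invariants, or check the vanishing on $\mathcal{Q}$ directly, since $\mathcal{O}_{\igmf} \subset f_*\mathcal{O}_{\mathcal{Q}}$. The problem is thus reduced to showing that the image of $\tilde{\ell}^{\lambda}$ in $\operatorname{Sym}^e \mf{u}_{\mu,\qpbr}$ is zero. We may invert $p$ here because $\mathcal{O}(\igmf)$ is $p$-torsion free.

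The second step is representation theory. By construction, $\tilde{\ell}^{\lambda}$ is $\ell^{\lambda} \circ \pi_{\lambda}$, viewed through \eqref{Eq:DifferentialIdentification}. It therefore lies in the image of $W_{\lambda}^{\ast} \hookrightarrow (\mf{u}_{\mu}^{\ast})^{\otimes e,\ast} = \mf{u}_{\mu}^{\otimes e}$, which is an $\mathcal{M}_{\mu}^0$-isotypic piece whose irreducible constituent has highest weight determined by $\lambda$. The symmetrization map $\mf{u}_{\mu,\qpbr}^{\otimes e} \to \operatorname{Sym}^e \mf{u}_{\mu,\qpbr}$ is $\mathcal{M}_{\mu}^0$-equivariant. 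So the image of this subrepresentation is either zero or an irreducible representation of the same highest weight. By the dual form of Lemma \ref{Lem:HighestWeights}, the constituents of $\operatorname{Sym}^e$ have symmetric highest weights, and $\lambda$ is not symmetric. By Schur's lemma the image vanishes, so $\tilde{\ell}^{\lambda}$ maps to zero in $\operatorname{Sym}^e$ and $\Theta^{\lambda} = 0$.

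The main obstacle is the bookkeeping in the second step. Lemma \ref{Lem:HighestWeights} is phrased for $\mf{u}_{\mu}^{\ast}$, while $\tilde{\ell}^{\lambda}$ lives in $\mf{u}_{\mu}^{\otimes e}$ via the identification with $\bigotimes_{\tau}(\operatorname{Std}^{\ast}_{\tau})^{\otimes d}$. One must check that the subspace spanned by the $\mathcal{M}_{\mu}^0$-orbit of $\tilde{\ell}^{\lambda}$ is irreducible of highest weight $\lambda$ in the conventions where Lemma \ref{Lem:HighestWeights} applies, possibly after replacing $\lambda$ by its dual weight, which is symmetric if and only if $\lambda$ is. I would handle this by running the argument of Lemma \ref{Lem:HighestWeights} directly on $\operatorname{Sym}^e\big(\bigoplus_{\tau\in\Sigma}\operatorname{Std}^{\ast}_{\tau}\otimes\operatorname{Std}^{\ast}_{\bar\tau}\big)$. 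By the Cauchy formula (cf. \cite[Exercise 6.11(b)]{FultonHarris}), only the pairs $(\nu,\nu)$ occur there, so the Young symmetrizer image attached to a non-symmetric $\lambda$ has no component in $\operatorname{Sym}^e$.
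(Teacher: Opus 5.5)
Your proposal is correct and is the paper's argument. The paper's proof notes only that, because $\mf{u}_{\mu}$ is abelian and acts through a Lie algebra homomorphism (Lemma \ref{Lem.inclusion-of-Lie-algebras}), the action of $\mf{u}_{\mu}^{\otimes e}$ factors through $\operatorname{Sym}^e \mf{u}_{\mu}$; it leaves implicit the step you write out, namely that $\tilde{\ell}^{\lambda}$, which lies in a copy of $W_{\lambda}^{\ast}$, maps to zero in $\operatorname{Sym}^e$ by Lemma \ref{Lem:HighestWeights} (Cauchy's formula, after dualizing), and your checks on the passage from $\mathcal{Q}$ to $\igmf$ and on inverting $p$ are also fine.
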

\begin{proof}
This follows from the fact that the action of $\mf{u}_{\mu}$ on $\mathcal{O}(\pdr)$ is a Lie-algebra action by Lemma \ref{Lem.inclusion-of-Lie-algebras}, and thus the action of the $e$-th tensor power factors through the symmetric power (because $\mf{u}_{\mu}$ is an abelian Lie algebra).
\end{proof}

\subsubsection{} In what follows, we will consider the inclusion $\mathcal{M}_{\overline{\mu}, \zpbr} \subset \mathcal{M}_{\mu} \otimes_{\oee} \zpbr$. We assume that our maximal torus $\mathcal{T}$ has been chosen to be contained in $\mathcal{M}_{\overline{\mu}, \zpbr}$, and that our Borel $\mathcal{B}$ has been chosen such that $\mathcal{B} \cap \mathcal{M}_{\overline{\mu}, \zpbr}$ is the fixed Borel $\mathcal{B}_{\overline{\mu}}$. Then we get an inclusion
\begin{align}
    X^{\ast}(T)^{+} \subset X^{\ast}(T)^{+,\overline{\mu}}
\end{align}
of $\mathcal{B}$-dominant characters of $T$ into $\mathcal{B}_{\overline{\mu}}$-dominant characters of $T$. We will say a character $\lambda \in X^{\ast}(T)^{+}$ is \emph{$\mathcal{H}$-relevant} if the corresponding irreducible representation over $\qpbr$ occurs in $\operatorname{Sym}^e \operatorname{Lie} \mathcal{H}_{\qpbr}$ for some $e$. Similarly, we say that a character is $\mathcal{H}^{m}$-relevant if the corresponding irreducible representation over $\qpbr$ occurs in $\operatorname{Sym}^e \operatorname{Lie} \mathcal{H}_{\qpbr}^{m}$ for some $e$ (here $\mathcal{H}^{m}$ is the multiplicative part of $\mathcal{H}$). 
\begin{Lem} \label{Lem:SimpleWeights}
A character $\lambda \in X^{\ast}(T)^{+,\overline{\mu}}$ is simple in the sense of \cite[Definition 6.3.5]{EischenMantovan} if and only if it is $\mathcal{H}^{m}$-relevant (cf. \cite[Remark 6.3.7]{EischenMantovan}). 
\end{Lem}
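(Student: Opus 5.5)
The plan is to make both conditions explicit combinatorial conditions on $\lambda=(\lambda_{i,\tau})$ in terms of the block structure that $\overline{\mu}$ induces on each $\operatorname{GL}_{a(\tau)}$, and then compare them. First I would describe $\mathcal{M}_{\overline{\mu},\zpbr}\subset \mathcal{M}_{\mu}\otimes_{\oee}\zpbr$ under \eqref{eq:IdentificationLeviPEL}. Since $\overline{\mu}$ is the Galois average of $\mu$, for each $\tau$ it determines a decomposition of $\operatorname{Std}_\tau$ into blocks indexed by slopes of $\mbx$. Then $\mathcal{M}_{\overline{\mu}}$ is the product of the block-diagonal Levis, and $\mathcal{B}_{\overline{\mu}}$ is the product of the upper triangular Borels in the blocks. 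The EM notion of \cite[Definition 6.3.5]{EischenMantovan} is phrased in exactly these block terms; as I recall it (and this needs checking against loc. cit.), it asks that $\lambda$ be symmetric with $\lambda_{i,\tau}$ supported on the indices of the extreme slope block pairing $\tau$ with $\bar\tau$. I would record that description as the target.

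Next I would identify $\operatorname{Lie}\mathcal{H}^{m}$ inside $\operatorname{Lie}\mathcal{H}=\mf{u}_{\mu}^{\mathcal{N}}$, using Lemma~\ref{Lem:LieAlgebraH} and \eqref{Eq:DifferentialIdentification}. By the proof of Lemma~\ref{Lem:AssumptionsMuOrdinary}, $\mf{u}_{\overline{\mu},0}$ decomposes into absolute root spaces on which the Frobenius $\mathrm{Ad}(b_\mu)\circ\sigma$ acts, and the Galois orbits of root spaces give the slope decomposition of the Dieudonn\'e module $\mathcal{W}(\oee)$. The multiplicative part corresponds to slope exactly $-1$. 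These are the orbits of roots $\alpha$ with $\langle\alpha,\sigma^i\mu\rangle=1$ for every $i$, equivalently $\langle \alpha,\overline{\mu}\rangle=1$. The Remark after \S\ref{sss.omegazeta} is consistent with this, since it says the unit periods occur exactly on these roots. Inside $\bigoplus_{\tau\in\Sigma}\operatorname{Std}^{\ast}_\tau\otimes\operatorname{Std}^{\ast}_{\bar\tau}$, such root spaces are the tensor products $\operatorname{Std}^{\ast}_{\tau,B}\otimes\operatorname{Std}^{\ast}_{\bar\tau,B'}$ of the extreme blocks $B,B'$, namely those blocks whose slope pairs to give $\overline{\mu}$-weight $1$. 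So $\operatorname{Lie}\mathcal{H}^m_{\qpbr}\cong\bigoplus_{\tau\in\Sigma}\operatorname{Std}^{\ast}_{\tau,B_\tau}\otimes\operatorname{Std}^{\ast}_{\bar\tau,B'_{\bar\tau}}$ as a representation of $\mathcal{M}_{\overline{\mu}}$, with the other blocks acting trivially.

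Third, I would decompose $\operatorname{Sym}^e$ of this by Cauchy's formula, as in the proof of Lemma~\ref{Lem:HighestWeights} via \cite[Exercise 6.11(b)]{FultonHarris}. One gets $\operatorname{Sym}^e(\operatorname{Std}^{\ast}_{\tau,B}\otimes\operatorname{Std}^{\ast}_{\bar\tau,B'})=\bigoplus_{|\nu|=e}S_\nu\otimes S_\nu$. The resulting $\mathcal{B}_{\overline{\mu}}$-highest weights are therefore exactly the symmetric weights supported on the extreme blocks, taken with the sign and ordering conventions fixed earlier. Matching this list with the block-theoretic description of simple weights from the first step gives both implications. Here I would reduce to $\#\Sigma=1$ as in Lemma~\ref{Lem:HighestWeights}, which is harmless since the relevance condition is taken for some $e$ and the decomposition is multiplicative in $\tau$.

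I expect the main obstacle to be the bookkeeping of conventions rather than any conceptual step. Three things must be checked: which block is extreme (slope $0$ versus $1$, covariant versus contravariant, and the dual $V_{-1}^\ast$ in EM); that EM's $J_\mu$-dominance agrees with $\mathcal{B}_{\overline{\mu}}$-dominance; and that the relevant Levi is $\mathcal{M}_{\overline{\mu}}$ rather than $\mathcal{M}_\mu$, since relevance is measured for $\mathcal{M}_{\overline{\mu}}$. I would settle these first in the ordinary case, where $\mathcal{H}=\mathcal{H}^m=\mf{u}_\mu\otimes\gmhat$ and both conditions should reduce to ``symmetric'' by Lemma~\ref{Lem:HighestWeights}. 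I would then follow the slope labels through the general case via Remark~\ref{Rem:NormalizationsDieudonne}.
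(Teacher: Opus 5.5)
Your proposal is correct and follows essentially the same route as the paper. Both arguments identify $\operatorname{Lie}\mathcal{H}^m$ with the slope $-1$ block $\operatorname{Hom}(D(\mathbb{X}^{\mathrm{\acute{e}t}}),D(\mathbb{X}^{m}))$ of $\mathfrak{u}_{\overline{\mu}}$, and match the simple weights (those vanishing on the intermediate slope blocks) with the highest weights occurring in its symmetric powers. You should state explicitly, as the paper does, that this block is central in $\mathfrak{u}_{\overline{\mu}}$; this follows because minuscularity gives $\langle\beta,\overline{\mu}\rangle\le 1$ for every root $\beta$. Centrality is what places every root with $\langle\alpha,\overline{\mu}\rangle=1$ inside $\mathfrak{w}=\mathfrak{u}_{\overline{\mu},0}$, and hence inside $\operatorname{Lie}\mathcal{H}$.

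The remaining differences are organizational. You describe the block uniformly via $\langle\alpha,\overline{\mu}\rangle=1$ and make the matching explicit with Cauchy's formula. The paper instead uses Eischen--Mantovan's decomposition $\mathbb{X}_b=\bigoplus_{\mathfrak{o}}\mathbb{X}(\mathfrak{o},n,\mathfrak{f})^{\oplus r}$, computes the multiplicative and \'etale parts of each summand from the signatures, and treats the cases $\mathfrak{o}\neq\mathfrak{o}^*$ and $\mathfrak{o}=\mathfrak{o}^*$ separately.
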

\begin{proof}
In the notation of \cite{EischenMantovan}, we have 
\[
\mathbb{X}_b = \bigoplus_{\mathfrak{o}} \mathbb{X}(\mathfrak{o}, n, \mathfrak{f})^{\oplus r}
\]
where $r$ is the rank of the simple $\mathsf{F}$-algebra $\mathsf{B}$ appearing in the PEL datum (see \cite[\S 2.2]{EischenMantovan}). Suppose that $\mathfrak{o} \neq \mathfrak{o}^*$. If $\lambda = \lambda(\mathfrak{o}) = (\lambda_{s_{\mathfrak{o}}}, \dots, \lambda_0)$ is a non-trivial simple weight for the factor $\opn{Aut}(D(\mbb{X}(\mathfrak{o}, n, \mathfrak{f}))[\tfrac{1}{p}]) \cong \opn{Res}_{W(\kappa(\mathfrak{o}))[\tfrac{1}{p}]/\mbb{Q}_p} \opn{GL}_n$, then we must have $\mathfrak{f}(\tau) \neq 0, n$ for all $\tau \in \mathfrak{o}$. In particular, let $(a({\tau}), a(\bar{\tau}))$ denote the signature of $V$ at $\tau \in \mathfrak{o}$. Then $a({\tau}) \neq 0, n$ for all $\tau \in \mathfrak{o}$. Let $a_{\mathfrak{o}}^+ = \opn{min}_{\tau \in \mathfrak{o}} a({\tau}) \geq 1$ and $a_{\mathfrak{o}}^- = \opn{max}_{\tau \in \mathfrak{o}} a({\tau}) \leq n-1$. Then 
\[
\mathbb{X}(\mathfrak{o}, n, \mathfrak{f})^m = (\mu_{p^{\infty}} \otimes W(\kappa(\mathfrak{o})))^{\oplus a^+_{\mathfrak{o}}}, \quad \quad \mathbb{X}(\mathfrak{o}, n, \mathfrak{f})^{\acute{e}t} = (\mbb{Q}_p/\mbb{Z}_p \otimes W(\kappa(\mathfrak{o})))^{\oplus n-a^{-}_{\mathfrak{o}}} .
\]
A simple weight has $\lambda_1 = \cdots = \lambda_{s_{\mathfrak{o}}-1} = 0$, which corresponds to weights which appear in symmetric powers of the unipotent block 
\[
\opn{Hom}(D(\mathbb{X}(\mathfrak{o}, n, \mathfrak{f})^{\acute{e}t})[\tfrac{1}{p}], D(\mathbb{X}(\mathfrak{o}, n, \mathfrak{f})^{m})[\tfrac{1}{p}]).
\]
This unipotent block is the slope $-1$ part of $U_{\bar{\mu}}$ and lies in the center of $U_{\bar{\mu}}$, hence identifies with $\opn{Lie}\mathcal{H}_{\qpbr}^m$ after base-changing to $\qpbr$. This identifies simple weights with $\mathcal{H}^m$-relevant weight in the case $\mathfrak{o} \neq \mathfrak{o}^*$.

Now suppose that $\mathfrak{o} = \mathfrak{o}^*$. Then 
\[
\opn{Aut}(D(\mbb{X}(\mathfrak{o}, n, \mathfrak{f}))[\tfrac{1}{p}]) \subset \opn{Res}_{W(\kappa(\mathfrak{o}))[\tfrac{1}{p}]/\mbb{Q}_p} \opn{GL}_n
\]
is the unitary group of matrices fixed by the involution $x \mapsto x^*$. Suppose that $\lambda = \lambda(\mathfrak{o}) = (\lambda_{s_{\mathfrak{o}}}, \dots, \lambda_0)$ is a non-trivial simple weight for this factor; in particular, this implies that $a_{\mathfrak{o}}^+ := \opn{min}_{\tau \in \mathfrak{o}} a(\tau) \geq 1$. One can check that
\[
\mathbb{X}(\mathfrak{o}, n, \mathfrak{f})^m = (\mu_{p^{\infty}} \otimes W(\kappa(\mathfrak{o})))^{\oplus a^+_{\mathfrak{o}}}, \quad \quad \mathbb{X}(\mathfrak{o}, n, \mathfrak{f})^{\acute{e}t} = (\mbb{Q}_p/\mbb{Z}_p \otimes W(\kappa(\mathfrak{o})))^{\oplus n-a^{+}_{\mathfrak{o}}},
\]
and so simple weights appear in symmetric powers of the unipotent block
\[
\opn{Hom}(D(\mathbb{X}(\mathfrak{o}, n, \mathfrak{f})^{\acute{e}t})[\tfrac{1}{p}], D(\mathbb{X}(\mathfrak{o}, n, \mathfrak{f})^{m})[\tfrac{1}{p}]).
\]
Again, this is the slope $-1$ part of $U_{\bar{\mu}}$ and contained in the center, hence simple weights match up with $\mathcal{H}^m$-relevant weights.
\end{proof}

\subsubsection{} We now discuss the $p$-adic interpolation of the Maass--Shimura operators in \cite{EischenMantovan},\cite{EischenFintzenMantovanVarma}, reinterpreted in the language of this article. We note our main $p$-adic interpolation result in this article (see Corollary \ref{Cor:IntegralFourierConsequence}) is stronger than what is discussed in this subsection.

We first note that there is an algebra embedding
\[
\opn{Sym} (\opn{Lie} \mathcal{H}) := \bigoplus_{e \geq 0} \opn{Sym}^e \opn{Lie}\mathcal{H}  \hookrightarrow \mathcal{O}(T_p \mathcal{H}^{\vee})
\]
given by precomposing an element $f \in \opn{Sym}(\opn{Lie} \mathcal{H})$ (viewed as a polynomial function on $\omega_{\mathcal{H}} = (\opn{Lie} \mathcal{H})^{\vee}$) with the Hodge--Tate map $T_p \mathcal{H}^{\vee} \to \omega_{\mathcal{H}}$.

We may fix elements $\{ v_{\lambda} \in \opn{Sym}(\opn{Lie}\mathcal{H}^m_{\zpbr}) : \lambda \text{ is } \mathcal{H}^m\text{-relevant} \}$ such that: $v_{\lambda}$ is a highest weight vector for the algebraic representation of $\mathcal{M}_{\mu}$ of highest weight $\lambda$ appearing in $\opn{Sym}(\opn{Lie} \mathcal{H}^m_{\qpbr})$ (recall that this algebraic representation appears with multiplicity one); and one has $v_{\lambda + \lambda'} = v_{\lambda} \cdot v_{\lambda'}$ for all $\mathcal{H}^m$-relevant $\lambda$, $\lambda'$. Indeed, the set of $\mathcal{H}^{\opn{m}}$-relevant weights forms a cone under multiplication of characters (which we write additively); if we fix a basis of this cone $\{\lambda_1, \dots, \lambda_c\}$ and highest weight vectors $\{ v_{\lambda_1}, \dots, v_{\lambda_c}\}$, then we can define $v_{\lambda} := v_{\lambda_1}^{n_1} \cdots v_{\lambda_c}^{n_c}$ for a general $\mathcal{H}^{\opn{m}}$-relevant weight $\lambda = n_1\lambda_1 + \cdots + n_c \lambda_c$ with $n_i \in \mbb{Z}_{\geq 0}$ (this is well-defined because $\{ \lambda_1, \dots, \lambda_c \}$ is a basis).

Let $L/\mbb{Q}_p$ be a finite unramified extension over which $\mathcal{T}$ splits, and let $q$ denote the order of its residue field. We may assume the elements $\{ v_{\lambda} \}$ are defined over $L$. If we write $\lambda = (\kappa_{1, \tau}, \dots, \kappa_{a(\tau), \tau})_{\tau}$, and similarly for $\lambda'$, then the property $v_{\lambda + \lambda'} = v_{\lambda} \cdot v_{\lambda'}$ implies that if $\kappa_{i, \tau} \equiv \kappa'_{i, \tau}$ modulo\footnote{In \cite[Proposition 6.3.9 and Theorem 6.3.10]{EischenMantovan} the congruence is written as modulo $p^{k-1}(p-1)$, but this weaker congruence seems to be sufficient only when $\mathcal{T}$ splits over $\mathbb{Z}_p$.} $q^{k-1}(q-1)$ for all $(i, \tau)$, then
\[
v_{\lambda} \equiv v_{\lambda'} \; \text{ modulo } p^k T_p \mathcal{H}^{m,\vee}.
\]
In particular, if $(\lambda_i)_{i \geq 1}$ is a sequence of $\mathcal{H}^m$-relevant weights such that the corresponding characters $\mathcal{T}(\zp) \to \mathcal{O}_L^{\times}$ converge to a continuous character $\lambda_{\infty} \colon  \mathcal{T}(\mbb{Z}_p) \to \mathcal{O}_L^{\times}$ in the $p$-adic topology, then $v_{\lambda_i}$ converges to a well-defined element $\lim v_{\lambda_i} \in \mathcal{O}(T_p \mathcal{H}^{m,\vee})$.
    
\begin{Cor}
    Suppose that $\lambda_i$ are simple and $\kappa_{i, \tau}$ converges to $+\infty$ in the archimedean topology, for all $(i, \tau)$. With notation as in Proposition \ref{Prop:ComparisonEM}, the action of $\lim v_{\lambda_i}$ on $\mathcal{O}(\mf{Ig}_{\mathrm{M},\zpbr})^{\mathfrak{N}(\mbb{Z}_p)}$ through the algebra action of $\mathcal{O}(T_p\mathcal{H}^{\vee})$ in Corollary \ref{Cor:IntegralFourierConsequence} agrees with the operator ``$\Theta^{\lambda_{\infty}}$'' in \cite[Corollary 6.3.12]{EischenMantovan}.
\end{Cor}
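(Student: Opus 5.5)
The plan is to reduce the statement to Proposition \ref{Prop:ComparisonEM} and to the density and continuity properties on both sides. By \cite[Corollary 6.3.12]{EischenMantovan}, ``$\Theta^{\lambda_\infty}$'' is defined as the $p$-adic limit of the operators $\Theta^{\lambda_i}$ acting on $\mathcal{O}(\mf{Ig}_{\mathrm{M},\zpbr})^{\mathfrak{N}(\zp)}$. Their argument shows convergence using the congruences of the form $\kappa_{i,\tau}\equiv\kappa'_{i,\tau}$ modulo $q^{k-1}(q-1)$, together with the growth condition $\kappa_{i,\tau}\to+\infty$. On our side, the algebra action of $\mathcal{O}(T_p\mathcal{H}^\vee)$ from Corollary \ref{Cor:IntegralFourierConsequence} is continuous. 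Since $v_{\lambda_i}\to\lim v_{\lambda_i}$ in $\mathcal{O}(T_p\mathcal{H}^{m,\vee})\subset\mathcal{O}(T_p\mathcal{H}^\vee)$, the action of $\lim v_{\lambda_i}$ is the limit of the actions of the $v_{\lambda_i}$. It therefore suffices to show that, for each simple (so $\mathcal{H}^m$-relevant, by Lemma \ref{Lem:SimpleWeights}) weight $\lambda$, the action of $v_\lambda$ agrees with $\Theta^\lambda$ on $\mathfrak{N}(\zp)$-invariants, possibly after normalising $v_\lambda$.

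For a single weight, the action of $v_\lambda\in\operatorname{Sym}(\Lie\mathcal{H}^m)$ through Corollary \ref{Cor:IntegralFourierConsequence} restricts, on polynomial elements, to the differentiation action of $\operatorname{Sym}(\Lie\mathcal{H})$ from Theorem \ref{Thm:GlobalAction}. By Corollary \ref{Cor:AlgebraicActionIgM}, this is the action of $\operatorname{Sym}(\mf{u}_\mu^{\mathcal{N}})\subset(U(\mf{u}_{\overline\mu}))^{\mathcal{N}}$ from \S\ref{sub:ActiononIgm}. Proposition \ref{Prop:ComparisonEM} identifies $\Theta^\lambda$ with the action of $\tilde\ell^\lambda\in\mf{u}_\mu^{\otimes e}$. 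Because the $\mf{u}_\mu$-action is an abelian Lie algebra action, this factors through the image of $\tilde\ell^\lambda$ in $\operatorname{Sym}^e\mf{u}_\mu$.

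It remains to compare that image with $v_\lambda$ on $\mathfrak{N}(\zp)$-invariant functions of $\mathcal{T}(\zp)$-weight $\kappa$. The vector $\tilde\ell^\lambda$ is $T$-equivariant of weight $\lambda$ and factors through the projection to $W_\lambda$, so its symmetric image is a weight vector in the multiplicity-one $\lambda$-isotypic part of $\operatorname{Sym}^e\mf{u}_\mu$. After projecting to the $\mathfrak{N}$-coinvariants relevant to acting on $\mathfrak{N}(\zp)$-invariants, it should be a multiple of the highest weight vector $v_\lambda$. For simple $\lambda$, I expect this vector to lie in $\operatorname{Sym}(\Lie\mathcal{H}^m)$, being supported in the slope $-1$ central block, as in the proof of Lemma \ref{Lem:SimpleWeights}.

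The main obstacle is this last step. We must pin down the scalar, fixing the normalisation of $v_\lambda$ by $v_\lambda:=$ the image of $\tilde\ell^\lambda$ on a basis of the cone and checking that this choice is multiplicative. We must also justify that only the highest weight component survives on $\mathfrak{N}(\zp)$-invariants. I would first try to do both in the $\#\Sigma=1$ reduction, using the explicit Young symmetrizer description and \cite[Lemma 2.4.6]{EischenFintzenMantovanVarma}, which gives $\ell^{\kappa}\otimes\ell^{\lambda}$ compatibility and hence multiplicativity.
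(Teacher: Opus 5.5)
Your argument is essentially the paper's: its proof consists of observing that $\lim v_{\lambda_i}$ preserves the $\mathfrak{N}(\mathbb{Z}_p)$-invariants because each $v_{\lambda_i}$ does, and then invoking Proposition~\ref{Prop:ComparisonEM} together with the continuity of the $\mathcal{O}(T_p\mathcal{H}^{\vee})$-action. The normalization you flag as the main obstacle is left implicit in the paper: the $v_\lambda$ are tacitly the images of $\tilde\ell^{\lambda}$ in $\operatorname{Sym}^e\mathfrak{u}_\mu$. Your own multiplicity-one observation already makes that image a scalar multiple of $v_\lambda$, so the detour through $\mathfrak{N}$-coinvariants is unnecessary.
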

\begin{proof}
    Note that the action of $\lim v_{\lambda_i}$ preserves the $\mathfrak{N}(\mbb{Z}_p)$-invariants, since the action of each individual $v_{\lambda_i}$ does. The corollary now follows from Proposition \ref{Prop:ComparisonEM} and the continuity of the action of $\mathcal{O}(T_p \mathcal{H}^{\vee})$. 
\end{proof}

\begin{Rem} \label{Rem:Caveat}
Note that if $\lambda_{\infty}$ is an algebraic $\mathcal{H}^m$-relevant character, then it is (almost) never the case that $\lim v_{\lambda_i} = v_{\lambda_{\infty}}$ if $\kappa_{i, \tau}$ converges to $+\infty$ in the archimedean topology. Thus the operator denoted by ``$\Theta^{\lambda_{\infty}}$'' does not agree with the algebraic Maass--Shimura operator $\Theta^{\lambda_{\infty}}$ considered in Proposition \ref{Prop:ComparisonEM} (the same issue arises also in \cite[Corollary 5.2.8]{EischenFintzenMantovanVarma}). 
For example, for the group $\opn{GL}_2$, simple weights are just non-negative algebraic weights $\lambda \in X^*(\mbb{G}_m)$, and the action of the operator $\Theta^{\lambda}$ on $q$-expansions is given by the $\lambda$-iterate of the operator $q \frac{d}{dq}$. Identifying $T_p \mathcal{H}^{\vee} \cong \mbb{Z}_p$, the operator $\Theta^{\lambda}$ corresponds to the action of the function
\[
v_{\lambda} \colon \mbb{Z}_p \to \mbb{Z}_p, \quad \quad x \mapsto x^{\lambda} .
\]
We can consider the sequence $\lambda_i = p^{i}(p-1)$ which converges to $+\infty$ in the archimedean topology, and converges to $\lambda_{\infty} = 0$ in the $p$-adic topology, however ``$\Theta^{\lambda_{\infty}}$'' (the action of $\lim v_{\lambda_i}$) does not coincide with $\Theta^0 = \opn{Id}$. Indeed, the function $\lim v_{\lambda_i}$ is precisely the indicator function of $\mbb{Z}_p^{\times}$. In particular, one should be careful in applying \cite[Corollary 6.3.12]{EischenMantovan}/\cite[Corollary 5.2.8]{EischenFintzenMantovanVarma}, where these two different operators at algebraic weights are not distinguished in the notation.
\end{Rem}

\begin{Rem}
Our $p$-adic interpolation result, see Corollary \ref{Cor:IntegralFourierConsequence}, is stronger than being able to define $\Theta^{\chi}$ for $p$-adic weights $\chi \colon \mathcal{T}(\mbb{Z}_p) \to \mathcal{O}_L^{\times}$. For example, when $p$ is totally split in $\mathsf{F}$ so that $E=\qp$ and we may take $L=\qp$, we obtain an action of $\Cont(\mathfrak{u}_{\mu}^*, \zp)$ on $\mathcal{O}(\igmf)$ and the action of the operators $\Theta^{\chi}$ corresponds to $p$-adic limits of polynomial functions in $\Cont(\mathfrak{u}_{\mu}^*, \zp)$. As explained in Remark \ref{Rem:IntroEischenMantovan}, such $p$-adic limits form a proper subset of all continuous functions. 
\end{Rem}

\begin{Rem}
    For applications to $p$-adic $L$-functions, it is often the case that one fixes an $\mathcal{H}$-relevant character $\lambda \in X^*(T)^+$ and aims to raise the (algebraic) operator $\Theta^{\lambda}$ induced from the action of $v_{\lambda}$ to $p$-adic powers. In order to do this, one must perform a certain ``$p$-depletion'', which often corresponds to acting by a suitable indicator function in $\mathcal{O}(T_p\mathcal{H}^{\vee})[\tfrac{1}{p}] \widehat{\otimes}_{E} C$  for a completed algebraic closure $C$ of $E$; this is discussed in more detail in \S \ref{Sec:PAdicLFunctions}.
\end{Rem}


\section{Applying \texorpdfstring{$p$}{p}-adic Fourier theory} \label{Sec:ApplyingFourier}
In this section we will apply the $p$-adic Fourier theory of \cite{FourierPaper}. In \S \ref{sub:IntegralFourier}, we prove Corollary \ref{Cor:IntroIntegralOperators} and two refinements, see Corollary \ref{Cor:IntegralFourierConsequence} and Corollaries \ref{Cor:BddAction} and \ref{Cor:IntegralSmooth}. In \S \ref{Sub:FourierOnGenFiber}, we prove Corollary \ref{Cor:IntroRationalOperatorsI} and a refinement of Theorem \ref{Thm:IntroInfiniteLevelTheorem}, see Corollary \ref{Cor:RationalFourierApplication} and Theorem \ref{Thm:InfiniteLevelTheorem}. In \S \ref{sub:Ordinary}, we restate all our results in the ordinary case, see Corollaries \ref{Cor:IntegralFourierConsequenceOrdinary} and \ref{Cor:BddActionOrdinary}, and Theorem \ref{Thm:InfiniteLevelTheoremOrdinary}. Finally, we briefly discuss two non-ordinary examples in Sections \ref{sub:ExampleII} and \ref{sub:ExampleIII}.

\subsection{Applying integral Fourier theory} \label{sub:IntegralFourier} Let the notation be as in \S \ref{sub:MuOrdinaryOperators}. In particular, we let $\mathcal{H}$ be as in \S \ref{Subsub:Hinmuordinarysetting} and we recall the isomorphism $\Lie \mathcal{H} \xrightarrow{\sim} \mf{u}_{\mu}^{\mathcal{N}}$ of Lemma \ref{Lem:LieAlgebraH}. Note that $M_{\overline{\mu}}(\zp)$ acts on $\mathcal{U}_{\overline{\mu},0}$ and thus on $\mathcal{H}$. The identification $\Lie \mathcal{H} \to \mf{u}_{\mu}^{\mathcal{N}}$ is moreover $M_{\overline{\mu}}(\zp)$-equivariant, see Lemma \ref{Lem:LieAlgebraH}.

\subsubsection{} Recall from \cite[Section 7.1.11]{FourierPaper} that $\mathcal{O}(\mathcal{H})$ is a commutative and co-commutative topological Hopf algebra over $\mathcal{O}_E$. The action of $\mathcal{H}$ on $\igmf$ gives a continuous map (see \cite[Section 7.1.3]{FourierPaper} for our conventions of taking global sections on formal schemes, and \cite[Lemma 7.1.4]{FourierPaper} for the K\"unneth isomorphism)
\begin{align}
a:\mathcal{O}(\igmf) \to \mathcal{O}(\mathcal{H} \times_{\spf \mathcal{O}_E} \igmf) \simeq \mathcal{O}(\mathcal{H}) \widehat{\otimes}_{\mathcal{O}_E} \mathcal{O}(\igmf),
\end{align}
which we think of as a co-action. Consider the dual topological Hopf algebra $\mathcal{O}(\mathcal{H})^{\ast}$ equipped with the weak topology, see \cite[Section 7.1.11]{FourierPaper}. The co-action $a$ defines a continuous algebra action of $\mathcal{O}(\mathcal{H})^{\ast}$ on $\mathcal{O}(\igmf)$, see \cite[Section 7.1]{HoweUnipotent}. 

\subsubsection{} Consider the Serre dual $p$-divisible group $\mathcal{H}^{\vee}$ and let $T_p \mathcal{H}^{\vee}$ be its $p$-adic Tate-module. This is a $p$-adic formal scheme over $\spf \mathcal{O}_E$ and $\mathcal{O}(T_p \mathcal{H}^{\vee})$ is also a topological $\mathcal{O}_E$-Hopf algebra, see \cite[Section 7.1.11]{FourierPaper}. The integral $p$-adic Fourier theory of \cite[Proposition 7.1.12]{FourierPaper} gives us a canonical isomorphism of topological $\mathcal{O}_E$-Hopf algebras
\begin{align}
\mathbb{F}^{\mathcal{H}}:\mathcal{O}(\mathcal{H})^{\ast} \xrightarrow{\sim} \mathcal{O}(T_p \mathcal{H}^{\vee}).
\end{align}
It follows that there is a continuous algebra action of $\mathcal{O}(T_p \mathcal{H}^{\vee})$ on $\mathcal{O}(\igmf)$. 

\subsubsection{} We now state the following corollary of Theorem \ref{Thm:GlobalAction}. Consider the integral Hodge--Tate map $\HT:T_p \mathcal{H}^{\vee} \to \omega_{\mathcal{H}}$, see e.g. \cite[Section 7.1.5]{FourierPaper}. 
\begin{Cor} \label{Cor:IntegralFourierConsequence}
The continuous algebra action of $\mathcal{O}(T_p \mathcal{H}^{\vee})$ on $\mathcal{O}(\igmf)$ has the following properties:
\begin{enumerate}
    \item The action of $\mathcal{O}(T_p \mathcal{H}^{\vee})$ on $\mathcal{O}(\igmf) $ is semilinear for the natural continuous action of $M_{\overline{\mu}}(\zp)$ on $\mathcal{O}(T_p \mathcal{H}^{\vee})$ and $\mathcal{O}(\igmf)$.
    
    \item The action of $\operatorname{Lie} \mathcal{H}$ on $\mathcal{O}(\igmf)$ given by differentiation coincides with the action of $\operatorname{Lie} \mathcal{H}$ via the map
    \begin{align}
        \operatorname{Lie} \mathcal{H} \subset \mathcal{O}(\omega_{\mathcal{H}}) \xrightarrow{\HT^{\ast}} \mathcal{O}(T_p \mathcal{H}^{\vee}).
    \end{align}

    \item Under pullback along $\mathcal{O}(\mathcal{N} \backslash \mathcal{P}_{\dr}) \to \mathcal{O}(\igmf)$, the action of $\mf{u}_{\mu}^{\mathcal{N}}$ on the source by algebraic Maass--Shimura operators, see \S \ref{subsub:AlgebraicOperatorsDef}, intertwines with the action of $\Lie \mathcal{H} \xrightarrow{\sim} \mf{u}_{\mu}^{\mathcal{N}}$ on the target described in (2). 
\end{enumerate}
\end{Cor}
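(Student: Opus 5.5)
The plan is to deduce all three properties formally from the construction of the action via the co-action $a$ and the Fourier isomorphism $\mathbb{F}^{\mathcal{H}}$, together with Theorem \ref{Thm:GlobalAction}. The algebra action of $\mathcal{O}(\mathcal{H})^{\ast}$ on $\mathcal{O}(\igmf)$ is $\lambda \cdot s = (\lambda \widehat{\otimes} 1)(a(s))$, and the action of $\mathcal{O}(T_p\mathcal{H}^\vee)$ is obtained by transport along $\mathbb{F}^{\mathcal{H}}$. Each property then reduces to a compatibility of $\mathbb{F}^{\mathcal{H}}$ with a natural structure, and these compatibilities are recorded in \cite[Section 7.1]{FourierPaper}.

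For (1), I would first check that the action map $\mathcal{H}\times\igmf\to\igmf$ is equivariant for $\mathcal{M}_{\overline{\mu}}(\zp)$ acting diagonally. On $\mathcal{H}$ this group acts through conjugation on $\mathcal{U}_{\overline{\mu},0}$, and on $\igmf$ it acts as the torsor group. Equivariance holds because the map $\widetilde{\mathcal{H}}\to\Aut_G(\tildex)$ of \S\ref{sss.map-construction} is induced by $\mathcal{W}\hookrightarrow\mathcal{G}$, and the slope-preserving automorphisms normalize $\mathcal{U}_{\overline{\mu},0}$ compatibly with conjugation. Indeed, $m\alpha_{\tilde h}m^{-1}=\alpha_{\mathrm{Ad}(m)\tilde h}$ already in $G(B^+_\cris)$. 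Equivariance of $a$ makes the $\mathcal{O}(\mathcal{H})^{\ast}$-action semilinear for the contragredient action on $\mathcal{O}(\mathcal{H})^\ast$. Functoriality of $\mathbb{F}^{\mathcal{H}}$ in $\mathcal{H}$ then carries this to the natural action on $\mathcal{O}(T_p\mathcal{H}^\vee)$. Continuity is automatic because $\mathcal{M}_{\overline{\mu}}(\zp)$ acts through automorphisms of $\mathcal{H}$.

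For (2), note that $\Lie\mathcal{H}\subset\mathcal{O}(\mathcal{H})^\ast$ consists of point derivations at the identity. Under the co-action, such a derivation acts as the derivative of the action along $t$, which is exactly $\partial_t$. It remains to identify the image of a tangent vector under $\mathbb{F}^{\mathcal{H}}$ with $\HT^{\ast}$ of the corresponding linear function on $\omega_{\mathcal{H}}$. This is the derivative-level compatibility of Fourier theory. A point $x\in T_p\mathcal{H}^\vee$ gives a homomorphism $\mathcal{H}\to\gmhat$, and $\mathbb{F}^{\mathcal{H}}(t)(x)=d x(t)=\langle x^{\ast}\tfrac{dz}{z},t\rangle=\langle\HT(x),t\rangle$. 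I expect this to be stated in \cite[Section 7.1]{FourierPaper}; if it is not, I would verify it directly from this formula.

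For (3), combine (2) with Theorem \ref{Thm:GlobalAction}. The pullback $\mathcal{O}(\mathcal{N}\backslash\mathcal{P}_\dr)\to\mathcal{O}(\igmf)$ intertwines the algebraic Maass--Shimura action of $\mf{u}_\mu^{\mathcal{N}}$ with the differentiation action of $\Lie\mathcal{H}$, and (2) identifies the latter with the action through $\HT^\ast$. The main obstacle is the sign and normalization in (2): one must make sure the conventions for $\mathbb{F}^{\mathcal{H}}$ and $\HT$ in \cite{FourierPaper} match without a twist or sign. I would settle this by checking the case $\mathcal{H}=\gmhat$, where both sides reduce to $z\partial_z$ against the identity character.
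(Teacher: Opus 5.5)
Your proposal is correct and matches the paper's proof. In the paper, (1) ``follows from the construction'', which is what your $\mathrm{Ad}$-equivariance of $\tilde h\mapsto\alpha_{\tilde h}$ plus functoriality of $\mathbb{F}^{\mathcal{H}}$ spells out; (2) is quoted directly from \cite[Proposition 7.0.2]{FourierPaper} rather than rederived through $x^{\ast}\tfrac{dz}{z}$; and (3) is deduced from Theorem \ref{Thm:GlobalAction}, as you do.
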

\begin{proof}
Part (1) follows from the construction. Part (2) follows from \cite[Proposition 7.0.2]{FourierPaper}, and Part (3) follows from Theorem \ref{Thm:GlobalAction}.
\end{proof}

\subsubsection{} We now discuss a variant of Corollary \ref{Cor:IntegralFourierConsequence} for smooth functions on $\mathfrak{Ig}_{\opn{CS}}$. Recall that $\igcsf \to \igmf$ is a torsor for $\Aut_{\mathcal{G}}(\mbxcan)^{\circ}$, and that $\igcsf$ carries an action of $\Aut_{G}(\tildex)$. Consider the element $t := \bar{\mu}(p)^d \in M_{\bar{\mu}}(\mbb{Q}_p) \subset \Aut_{G}(\tildex)(\spf \oee)$, where $d = [E:\mbb{Q}_p]$, and for any integer $m \geq 0$, consider the subgroup 
\begin{equation} \label{Eqn:Autmsubgroup}
    \Aut_{\mathcal{G}}(\mbxcan)^{\circ}_m := t^m \Aut_{\mathcal{G}}(\mbxcan)^{\circ} t^{-m}.
\end{equation}
This subgroup is isomorphic to $\Aut_{\mathcal{G}}(\mbxcan)^{\circ}$ via conjugation by $t^{-m}$, hence for $\mathfrak{Ig}_{\mathrm{M}, m} := \Aut_{\mathcal{G}}(\mbxcan)^{\circ}_m \backslash \mathfrak{Ig}_{\opn{CS}}$ we have an isomorphism
\[
\mathfrak{Ig}_{\mathrm{M}, m} \xrightarrow{t^{-m}} \igmf
\]
induced from the (left) action of $t^{-m}$ on $\igcsf$. 
\begin{Lem} \label{Lem:ContractingAction}
For $m' \ge m$, there is a natural inclusion $\Aut_{\mathcal{G}}(\mbxcan)^{\circ}_{m'} \subset \Aut_{\mathcal{G}}(\mbxcan)^{\circ}_m$. Moreover, we have that $\cap_m \Aut_{\mathcal{G}}(\mbxcan)^{\circ}_{m}=\{1\}$. 
\end{Lem}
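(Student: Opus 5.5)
The plan is to work with the explicit description of points of $\Aut_{\mathcal{G}}(\mbxcan)^{\circ}$ on $\QPRS$-type test rings and reduce both claims to statements about conjugating a unipotent group by powers of a contracting cocharacter. For $R$ $p$-adically complete with $R/p\in\QPRS$, combining \eqref{Eq:DescriptionAutomorphismGroups} with Proposition \ref{prop.filtered-dieudonne-theory}, the points of $\Aut_{\mathcal{G}}(\mbxcan)(R)$ are the elements $g\in \mathcal{G}(A_\cris(R/p))^{\varphi=1}$ whose image in $\mathcal{G}(R)$ lies in $\mathcal{P}_{\mu^{-1}}(R)$. The identity component consists of those acting trivially on the graded pieces of the slope filtration, i.e.\ lying in $\mathcal{U}_{\overline{\mu}}(A_\cris(R/p))$. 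Since $t=\overline{\mu}(p)^d$ is central in $M_{\overline{\mu}}$ and commutes with $b_\mu$ and $\sigma$, conjugation by $t$ preserves $\varphi$-invariance. It also preserves $\mathcal{P}_{\mu^{-1}}$ after passing to $R[1/p]$, because $t$ lies in the torus through which $\mu$ factors. Hence $\Aut_{\mathcal{G}}(\mbxcan)^{\circ}_m(R)$ is identified with the set of elements of $\mathcal{U}_{\overline{\mu}}(B^+_\cris(R/p))^{\varphi=1}$ of the form $t^m u t^{-m}$ with $u$ integral and satisfying the Hodge condition.

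For the inclusion, I would filter $\mathcal{U}_{\overline{\mu}}$ by the weights of $\overline{\mu}$. On a root space of $\overline{\mu}$-weight $r>0$, with $r\in \frac{1}{d}\mathbb{Z}_{>0}$ after the Galois average, conjugation by $t$ multiplies coordinates by $p^{dr}$, which is a positive integral power of $p$. Thus $t\,\mathcal{U}_{\overline{\mu}}(A_\cris)t^{-1}\subseteq \mathcal{U}_{\overline{\mu}}(A_\cris)$. This uses the sign conventions of \S\ref{sss.cocharacter-conv}: $\mathcal{U}_{\overline{\mu}}$ is the unipotent radical of $\mathcal{P}_{\overline{\mu}}$, so it carries the positive weights. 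The conjugated element $t u t^{-1}$ remains $\varphi$-invariant. Its reduction in $\mathcal{G}(R)$ lies in $\mathcal{P}_{\mu^{-1}}$, since it is conjugate by an $R$-point $t\in T(R[1/p])$ preserving $\mathcal{P}_{\mu^{-1}}$ and is already integral. Therefore $\Aut^{\circ}_{1}\subseteq \Aut^{\circ}_0$, and conjugating by $t^m$ gives $\Aut^\circ_{m+1}\subseteq\Aut^\circ_m$. Since both sides are flat $p$-adic formal schemes (Corollary \ref{Cor:Torsor}), and Lemma \ref{lemma.cover-and-tate-module-rep-properties} together with Lemma \ref{Lem:InverseLimitGroupSchemes} show that the coordinate rings are detected on such test rings, the inclusion of functors on $\QPRS$-type rings gives a closed immersion.

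For the trivial intersection, an element of $\bigcap_m\Aut^\circ_m(R)$ has the form $t^m u_m t^{-m}$ for every $m$. Each of its root coordinates is therefore divisible by $p^{m}$ in $A_\cris(R/p)$ for all $m$, working inductively along the weight filtration since the group is unipotent and not abelian. Because $A_\cris(R/p)$ is $p$-adically separated and $p$-torsion free, all coordinates vanish and the element is $1$. To pass from $\QPRS$ test objects to arbitrary ones, I would use that the intersection is a closed subgroup of the flat formal group $\Aut^\circ$, and that the ideal of functions defining it is determined by evaluation on the perfectoid/$\QPRS$ covers $T_p$-type objects. Alternatively, I would use the presentation $\varprojlim U_n$ from Lemma \ref{Lem:InverseLimitGroupSchemes} and check that the image of $\Aut^\circ_m$ in each $U_n$ is trivial for $m\gg n$.

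The main obstacle is this last descent step: ensuring that the vanishing established on $\QPRS$-points forces the scheme-theoretic intersection to be trivial. I would first try to show that $\mathcal{O}(\Aut^\circ)$ injects into a product of evaluations at such points, mimicking Lemma \ref{Lem:CommutativeAlgebra}.
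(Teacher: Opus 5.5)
There is a genuine gap: the descent from special test rings to the actual subgroup functors. It affects both claims, not just the intersection.

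Your identification of $\Aut_{\mathcal{G}}(\mbxcan)^{\circ}(R)$ with Hodge-compatible elements of $\mathcal{U}_{\overline{\mu}}(A_{\cris}(R/p))^{\varphi=1}$ is only available for $p$-complete, $p$-torsion free $R$ with $R/p\in\QPRS$. The lemma, however, is a statement about subgroup functors on all of $\nilp_{\oee}$. Your pointwise contraction computation on such $R$ is fine, but nothing lets you conclude from it:
\begin{itemize}
\item Lemma \ref{lemma.cover-and-tate-module-rep-properties} concerns Tate modules and universal covers of $p$-divisible groups.
\item Lemma \ref{Lem:InverseLimitGroupSchemes} only gives an inverse-limit presentation by finite flat group schemes.
\end{itemize}
Neither says that $\mathcal{O}(\Aut_{\mathcal{G}}(\mbxcan)^{\circ})$ is of $\QPRS$ type, or that it embeds into a product of evaluations at $\QPRS$-type points. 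A Lemma \ref{Lem:CommutativeAlgebra}-style argument is also unavailable: the paper remarks after Lemma \ref{Lem:TensorPreservingI} that $\Aut_{\mathcal{G}}(\mbx_b)$ has no schematically dense set of $\fpbar$-points. For the intersection the problem is sharper. $\bigcap_m \Aut_{\mathcal{G}}(\mbxcan)^{\circ}_m$ is a closed subfunctor whose universal point need not be a $\QPRS$-type point, so having no nontrivial $\QPRS$-points does not force it to be $\{1\}$. Both fixes you sketch still go through such points, so they do not close the gap.

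The missing idea is to run your contracting argument on the $p$-divisible groups themselves, where the needed description holds on every test ring. The paper does this in four steps.
\begin{enumerate}
\item Since $t\in M_{\overline{\mu}}(\qp)\subset \Aut_G(\tildex)$, one has $\Aut_{\mathcal{G}}(\mbxcan)^{\circ}_m=\Aut_G(\tildex)\cap t^m\Aut(\mbxcan)^{\circ}t^{-m}$. So it suffices to treat $\Aut(\mbxcan)^{\circ}$.
\item By injectivity of $\Aut(\mbxcan)(R)\to \Aut(\mbxcan)(R/p)$, the paper then passes to $\Aut(\mbx)^{\circ}$.
\item Using the slope decomposition $\mbx=\bigoplus_i \mbx_i$, $\Aut(\mbx)^{\circ}$ is the block upper-triangular unipotent group with entries $\Hom(\mbx_i,\mbx_j)$. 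Each entry is the Tate module of the internal Hom $p$-divisible group.
\item The element $t$ is the $d$-th power of Frobenius of $\mbx$. Equivalently, it is the scalar quasi-isogeny $p^{w}$ on the summand where $d\overline{\mu}$ has weight $w$; this is exactly your root-space scaling. Hence conjugation by $t$ acts on each entry by the $d$-th power Frobenius of the internal Hom, i.e.\ by multiplication by a positive power of $p$.
\end{enumerate}
This maps each entry into itself, which gives the inclusion. Since the internal Hom has strictly positive slope, $\bigcap_m \operatorname{Frob}^{dm}\Hom(\mbx_i,\mbx_j)=\{0\}$ on points over every test ring, which gives the trivial intersection. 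No comparison between $\QPRS$-points and arbitrary points is ever needed.
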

\begin{proof}
Write $\Aut(\mbxcan)^{\circ}_m = t^m \Aut(\mbxcan)^{\circ} t^{-m}$. It suffices to prove the analogous assertions for $\Aut(\mbxcan)^{\circ}_m$, since $\Aut_{\mathcal{G}}(\mbxcan)^{\circ}_m = \Aut_{G}(\tildex) \cap \Aut(\mbxcan)^{\circ}_m$, and $\Aut_{G}(\tildex)$ is stable under conjugation by $t^m$ by construction. It moreover suffices to prove the lemma for $\Aut(\mbx)^{\circ}_m = \Aut(\mbxcan)^{\circ}_{m,k_{E}}$, since the natural map
\begin{align}
    \Aut(\mbxcan)(R) \to \Aut(\mbxcan)(R/p)
\end{align}
is injective for $R \in \nilp_{\oee}$. Recall that $\mbx$ is completely slope divisible since it is $\mu$-ordinary, see \S \ref{subsub:IgM}, and write $\mbx=\oplus_i \mbx_i$ for its slope decomposition. We can then write the connected part of its automorphism group as (cf. \cite[Section 4.1.9]{DAddeziovH}) the upper triangular matrix group
\begin{align}
\Aut(\mbx)^{\circ}=\begin{pmatrix}
    1 & \Hom(\mbx_1, \mbx_2) & \cdots &  \Hom(\mbx_1, \mbx_r)   \\ 
      & 1 & \cdots & \Hom(\mbx_2, \mbx_r) \\
       &  & \ddots & \vdots \\
      &    & & 1
    \end{pmatrix}.
\end{align}
By construction\footnote{Note that $\ovmu(p)^d=\mu(p)\sigma(\mu(p)) \cdots \sigma^{d-1}(\mu(p))$ and thus corresponds to the $d$-th power of the inverse of Frobenius on the covariant Dieudonn\'e module.}, the element $t \in \Aut(\tildex)$ is given by the $d$-th power of $\operatorname{Frob}_{\mbx}$. Conjugation by $\operatorname{Frob}_{\mbx}$ induces the Frobenius endomorphisms of $\Hom(\mbx_i, \mbx_j)$, and conjugation by $t$ is given by its $d$-th power. Since $\Hom(\mbx_i, \mbx_j)$ is the $p$-adic Tate-module of the internal hom $p$-divisible group, see \cite[Lemma 4.1.7]{CaraianiScholze}, it follows that the Frobenius is injective and thus conjugation by $t$ induces an injective map $\Hom(\mbx_i, \mbx_j) \to \Hom(\mbx_i, \mbx_j)$; it follows that $\Aut(\mbx)^{\circ}_{1} \subset \Aut(\mbx)^{\circ}$. To show that $\cap_{m} \Aut(\mbx)^{\circ}_{m}=\{1\}$, it suffices to note that $\cap_{m} \operatorname{Frob}^{dm} \Hom(\mbx_i, \mbx_j)=\{0\}$ since the internal hom $p$-divisible group has strictly positive slope in this case.
\end{proof}

By Lemma \ref{Lem:ContractingAction}, we have natural maps $\mathfrak{Ig}_{\mathrm{M}, m+1} \to \mathfrak{Ig}_{\mathrm{M}, m}$ arising from the inclusions $\Aut_{\mathcal{G}}(\mbxcan)^{\circ}_{m+1} \subset \Aut_{\mathcal{G}}(\mbxcan)^{\circ}_m$.

\begin{Def}
    We define the $\mathcal{O}_E$-algebra of smooth sections on $\igcsf$ as the colimit 
    \[
    \mathcal{O}(\igcsf)^{\opn{sm}} := \varinjlim_m \mathcal{O}(\mathfrak{Ig}_{\mathrm{M}, m})
    \]
    where the transition maps are induced from pullback along $\mathfrak{Ig}_{\mathrm{M}, m+1} \to \mathfrak{Ig}_{\mathrm{M}, m}$. We equip  $\mathcal{O}(\igcsf)^{\opn{sm}}$ with the $p$-adic topology.
    
\end{Def}
\subsubsection{} Each layer of this tower $\mathfrak{Ig}_{\mathrm{M}, m}$ carries an action of the quotient 
\[
\mathcal{H}_m := \widetilde{\mathcal{H}}/(\widetilde{\mathcal{H}} \cap \Aut_{\mathcal{G}}(\mbxcan)^{\circ}_m) .
\]
Since the vector group $\mathcal{W} \subset \mathcal{G}$ underlying the construction of $\mathcal{H}$ is stable under conjugation by $t^m$, the universal cover $\widetilde{\mathcal{H}}$ is stable under conjugation by $t^m$ via the morphism $\widetilde{\mathcal{H}} \to \Aut_{G}(\tildex)$. In particular, conjugation by $t^m$ induces an isomorphism $\mathcal{H}_m \xrightarrow{\sim} \mathcal{H}$. Note that it follows from Lemma \ref{Lem:ContractingAction} that $\varprojlim_m \mathcal{H}_m = \tildeh$. Passing to duals and Tate modules, the natural map $\mathcal{H}_m \twoheadrightarrow \mathcal{H}$ induces an inclusion $T_p\mathcal{H}^{\vee} \subset T_p \mathcal{H}_m^{\vee}$ and $\varinjlim_m T_p \mathcal{H}_m^{\vee} = \widetilde{\mathcal{H}}^{\vee}$. It thus follows by definition, see \cite[Section 7.1.3]{FourierPaper}, that $\varprojlim_m \mathcal{O}(T_p\mathcal{H}_m^{\vee})=\mathcal{O}(\tildeh^{\vee}) $ as topological rings. 
\begin{Cor} \label{Cor:IntegralSmooth}
The action of $\mathcal{O}(T_p\mathcal{H}^{\vee})$ on $\mathcal{O}(\igmf)$ extends to a continuous algebra action of $\mathcal{O}(\tildeh^{\vee})$ on $\mathcal{O}(\igcsf)^{\opn{sm}}$ via the maps $\mathcal{O}(\tildeh^{\vee}) \to \mathcal{O}(T_p \mathcal{H}^{\vee})$ and $\mathcal{O}(\igmf) \to \mathcal{O}(\igcsf)^{\opn{sm}}$.
\end{Cor}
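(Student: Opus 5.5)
The plan is to run the construction behind Corollary \ref{Cor:IntegralFourierConsequence} once for each layer of the tower $\mathfrak{Ig}_{\mathrm{M},m}$, and then pass to limits. At layer $m$ the quotient $\mathcal{H}_m$ acts on $\mathfrak{Ig}_{\mathrm{M},m}$. The Fourier isomorphism $\mathbb{F}^{\mathcal{H}_m}:\mathcal{O}(\mathcal{H}_m)^{\ast}\xrightarrow{\sim}\mathcal{O}(T_p\mathcal{H}_m^{\vee})$ of \cite[Proposition 7.1.12]{FourierPaper} then gives a continuous algebra action of $\mathcal{O}(T_p\mathcal{H}_m^{\vee})$ on $\mathcal{O}(\mathfrak{Ig}_{\mathrm{M},m})$. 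For $m=0$ this recovers the action of Corollary \ref{Cor:IntegralFourierConsequence}. One could also transport the action along $t^{-m}:\mathfrak{Ig}_{\mathrm{M},m}\xrightarrow{\sim}\igmf$ and the isomorphism $\mathcal{H}_m\cong\mathcal{H}$, but this is not needed.

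The key step is to make these actions compatible. For $m'\geq m$ the projection $\pi:\mathfrak{Ig}_{\mathrm{M},m'}\to\mathfrak{Ig}_{\mathrm{M},m}$ is equivariant along the quotient map $q:\mathcal{H}_{m'}\to\mathcal{H}_m$. Both actions are induced from the single action of $\tildeh\subset\Aut_G(\tildex)$ on $\igcsf$. Hence the coactions satisfy
\[
a_{m'}\circ\pi^{\ast}=(q^{\ast}\widehat{\otimes}\pi^{\ast})\circ a_m .
\]
Dualizing, the pullback $\pi^{\ast}$ intertwines the action of $\lambda\in\mathcal{O}(\mathcal{H}_{m'})^{\ast}$ on the target with the action of $\lambda\circ q^{\ast}\in\mathcal{O}(\mathcal{H}_m)^{\ast}$ on the source.

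Under Fourier theory, the restriction $\lambda\mapsto\lambda\circ q^{\ast}$ corresponds to the restriction map $\mathcal{O}(T_p\mathcal{H}_{m'}^{\vee})\to\mathcal{O}(T_p\mathcal{H}_m^{\vee})$ along the inclusion $T_p\mathcal{H}_m^{\vee}\subset T_p\mathcal{H}_{m'}^{\vee}$. This uses the functoriality of $\mathbb{F}$ in isogenies of $p$-divisible groups, which I expect to read off from \cite[\S7.1]{FourierPaper}. With this compatibility in hand, an element $f\in\mathcal{O}(\tildeh^{\vee})=\varprojlim_m\mathcal{O}(T_p\mathcal{H}_m^{\vee})$ acts on the image of $\mathcal{O}(\mathfrak{Ig}_{\mathrm{M},m})$ through its $m$-th component, and these prescriptions agree on overlaps.

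It then remains to check that the pieces assemble. The colimit $\mathcal{O}(\igcsf)^{\opn{sm}}$ carries the $p$-adic topology and each layer action is continuous and $\mathcal{O}_E$-linear. So the action map is continuous when $\mathcal{O}(\tildeh^{\vee})$ carries its inverse limit topology: modulo $p^k$, an element $f$ acts on layer $m$ only through its image mod an open ideal of $\mathcal{O}(T_p\mathcal{H}_m^{\vee})$. Multiplicativity and unitality hold layerwise, so they hold on the colimit. Compatibility with the maps $\mathcal{O}(\tildeh^{\vee})\to\mathcal{O}(T_p\mathcal{H}^{\vee})$ and $\mathcal{O}(\igmf)\to\mathcal{O}(\igcsf)^{\opn{sm}}$ is exactly the case $m=0$.

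I expect the main obstacle to be the functoriality of the integral Fourier transform under the surjection $q$. One has to check that $\mathbb{F}^{\mathcal{H}_m}$ and $\mathbb{F}^{\mathcal{H}_{m'}}$ are intertwined by the dual inclusion of Tate modules, including the topological compatibility of weak duals. If this is not stated explicitly, I would deduce it from the pairing description of $\mathbb{F}$: the Cartier pairing $\mathcal{H}\times T_p\mathcal{H}^{\vee}\to\widehat{\mathbb{G}}_m$ is natural in $\mathcal{H}$.
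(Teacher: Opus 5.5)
Your proposal is correct and follows essentially the same route as the paper: assemble the compatible layerwise coactions of $\mathcal{H}_m$ on $\mathcal{O}(\mathfrak{Ig}_{\mathrm{M},m})$ into an action of $\varprojlim_m \mathcal{O}(\mathcal{H}_m)^{\ast}=\varprojlim_m \mathcal{O}(T_p\mathcal{H}_m^{\vee})=\mathcal{O}(\tildeh^{\vee})$ on the colimit, with continuity coming from the $p$-adic topologies on the layers. The only difference is that you spell out the compatibility of the Fourier isomorphisms with the isogenies $\mathcal{H}_{m'}\to\mathcal{H}_m$, which are dual to the inclusions $T_p\mathcal{H}_m^{\vee}\subset T_p\mathcal{H}_{m'}^{\vee}$; the paper uses this compatibility implicitly when it identifies the two inverse limits.
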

\begin{proof}
The co-action maps assemble to give a continuous map
\begin{align}
    \varinjlim_m \mathcal{O}(\mf{Ig}_{\mathrm{M},m}) \to \varinjlim_m \left(\mathcal{O}(\mf{Ig}_{\mathrm{M},m}) \widehat{\otimes}_{\oee} \mathcal{O}(\mathcal{H}_m) \right). 
\end{align}
We thus get an algebra action of $\varprojlim_m \mathcal{O}(\mathcal{H}_m)^{\ast}=\varprojlim_m \mathcal{O}(T_p\mathcal{H}_m^{\vee})=\mathcal{O}(\tildeh^{\vee})$ on $\varinjlim_m \mathcal{O}(\mf{Ig}_{\mathrm{M},m})=\mathcal{O}(\igcsf)^{\opn{sm}}$. It is continuous for the $p$-adic topology because each $\mathcal{O}(\mathcal{H}_m)^{\ast}$ has the $p$-adic topology. 
\end{proof}

\subsubsection{} We now want to state a version of Corollary \ref{Cor:IntegralSmooth} where we invert $p$ before taking the direct limit over $m$. To compare with our other results stated later, we will state it in terms of condensed mathematics; we refer to \cite[Section 2]{FourierPaper} (or \cite{CondensedNotes}) for our conventions. In particular for a topological $\zp$-module $M$, we will write $\ul{M}$ for the induced condensed $\ul{\zp}$-module. Now we set 
\[
\mathcal{O}(\mathfrak{Ig}_{\opn{CS}, \eta})^{\opn{sm}} := \varinjlim_m (\ul{\mathcal{O}(\mathfrak{Ig}_{\mathrm{M}, m})[\tfrac{1}{p}]})
\]
and $\mathcal{O}[\tfrac{1}{p}](V_p \mathcal{H}^{\vee}) := \varprojlim_m (\ul{\mathcal{O}(T_p \mathcal{H}_m^{\vee})[\tfrac{1}{p}]})$. 
\begin{Rem} \label{Rem:FailureNormality}
Writing $T_p H_{m}^{\vee}$ for the adic generic fiber of $T_p \mathcal{H}_m^{\vee}$, there is a natural map 
\begin{align} \label{Eq:AdicGenericFiberRestr}
    r^{\mathrm{int}}:\mathcal{O}(T_p \mathcal{H}_m^{\vee})[\tfrac{1}{p}] \to \mathcal{O}(T_p H_m^{\vee})
\end{align}
inducing $\mathcal{O}[\tfrac{1}{p}](V_p \mathcal{H}^{\vee}) \to \ul{\mathcal{O}(V_p H^{\vee})}$, where $V_p H^{\vee} = \varinjlim_m T_p H_{m}^{\vee}$. Note that the natural map \eqref{Eq:AdicGenericFiberRestr} is typically not injective and typically not surjective, see \cite[Remark 7.2.4]{FourierPaper} for a discussion. 
\end{Rem}
\subsubsection{} We note that $\ul{\mathcal{O}(T_p \mathcal{H}_m^{\vee})[\tfrac{1}{p}]}$ and thus $\mathcal{O}[\tfrac{1}{p}](V_p \mathcal{H}^{\vee})$ has the structure of a $\ul{\qp}$-algebra since $M \mapsto \ul{M}$ commutes with limits. The compatible maps $\HT^{\ast}[\tfrac{1}{p}]:\Lie\mathcal{H}[\tfrac{1}{p}] \rightarrow \mathcal{O}(T_p \mathcal{H}_m^{\vee})[\tfrac{1}{p}]$ induce a map $\ul{\Lie\mathcal{H}[\tfrac{1}{p}]} \to \mathcal{O}[\tfrac{1}{p}](V_p \mathcal{H}^{\vee})$. 

\begin{Cor} \label{Cor:BddAction}
    There is an algebra action of $\mathcal{O}[\tfrac{1}{p}](V_p \mathcal{H}^{\vee})$ on $\mathcal{O}(\mathfrak{Ig}_{\opn{CS}, \eta})^{\opn{sm}}$ compatible with the actions of $\mathcal{O}(T_p \mathcal{H}_m^{\vee})[\tfrac{1}{p}]$ on $\mathcal{O}(\mathfrak{Ig}_{\mathrm{M}, m})[\tfrac{1}{p}]$ from Corollary \ref{Cor:IntegralFourierConsequence}. In particular, it recovers the algebraic action of $\Lie\mathcal{H}[\tfrac{1}{p}]$ on $\mathcal{O}(\mathfrak{Ig}_{\opn{CS}, \eta})^{\opn{sm}}$ (see \S \ref{sub:ActiononIgm}) via the natural map $\ul{\Lie\mathcal{H}[\tfrac{1}{p}]} \to \mathcal{O}[\tfrac{1}{p}](V_p \mathcal{H}^{\vee})$.
\end{Cor}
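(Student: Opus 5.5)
For each fixed $m$, I will transport the action of Corollary \ref{Cor:IntegralFourierConsequence} along the isomorphisms induced by $t^{m}$, invert $p$, and then pass to the limit over $m$ in condensed $\ul{\qp}$-modules.

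\textbf{Step 1 (level $m$).} The isomorphisms $\mathfrak{Ig}_{\mathrm{M},m}\xrightarrow{t^{-m}}\igmf$ and $\mathcal{H}_m\xrightarrow{\sim}\mathcal{H}$ (conjugation by $t^m$) are compatible with the actions. Transporting Corollary \ref{Cor:IntegralFourierConsequence} along them gives a continuous algebra action of $\mathcal{O}(T_p\mathcal{H}_m^{\vee})\simeq \mathcal{O}(\mathcal{H}_m)^{\ast}$ on $\mathcal{O}(\mathfrak{Ig}_{\mathrm{M},m})$, namely the dual of the co-action
\[ a_m:\mathcal{O}(\mathfrak{Ig}_{\mathrm{M},m})\to \mathcal{O}(\mathcal{H}_m)\widehat{\otimes}_{\oee}\mathcal{O}(\mathfrak{Ig}_{\mathrm{M},m}). \]
Inverting $p$ and applying $\ul{(-)}$ gives a map of condensed modules
\[ \ul{\mathcal{O}(T_p\mathcal{H}_m^{\vee})[\tfrac1p]}\otimes \ul{\mathcal{O}(\mathfrak{Ig}_{\mathrm{M},m})[\tfrac1p]}\to \ul{\mathcal{O}(\mathfrak{Ig}_{\mathrm{M},m})[\tfrac1p]}. \]
This uses that a continuous bilinear map of $p$-adically complete modules, rationalized, induces such a map; here the topology on $\mathcal{O}(\mathcal{H}_m)^\ast$ is $p$-adic, as noted in the proof of Corollary \ref{Cor:IntegralSmooth}.

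\textbf{Step 2 (compatibility in $m$).} For $m'\ge m$, the pullback $\mathcal{O}(\mathfrak{Ig}_{\mathrm{M},m})\to\mathcal{O}(\mathfrak{Ig}_{\mathrm{M},m'})$ is equivariant for $\mathcal{H}_{m'}\twoheadrightarrow\mathcal{H}_m$, because $\mathfrak{Ig}_{\mathrm{M},m'}\to\mathfrak{Ig}_{\mathrm{M},m}$ is a quotient map of $\widetilde{\mathcal{H}}$-spaces. Dually, the action of $f\in\mathcal{O}(T_p\mathcal{H}_{m'}^\vee)$ on a form pulled back from level $m$ equals the action of the restriction of $f$ to $T_p\mathcal{H}_m^{\vee}\subset T_p\mathcal{H}_{m'}^{\vee}$. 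This is exactly the compatibility already used in Corollary \ref{Cor:IntegralSmooth}.

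\textbf{Step 3 (passage to the limit).} Write $\mathcal{O}[\tfrac1p](V_p\mathcal{H}^{\vee})=\varprojlim_m A_m$, where $A_m:=\ul{\mathcal{O}(T_p \mathcal{H}_m^{\vee})[\tfrac{1}{p}]}$. For each $m$ I form the composite
\[ \Big(\varprojlim_{m'} A_{m'}\Big)\otimes \ul{\mathcal{O}(\mathfrak{Ig}_{\mathrm{M},m})[\tfrac1p]}\to A_m\otimes \ul{\mathcal{O}(\mathfrak{Ig}_{\mathrm{M},m})[\tfrac1p]}\to \ul{\mathcal{O}(\mathfrak{Ig}_{\mathrm{M},m})[\tfrac1p]}\to \mathcal{O}(\mathfrak{Ig}_{\opn{CS},\eta})^{\opn{sm}}. \]
By Step 2 these composites are compatible in $m$. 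Since the condensed tensor product commutes with filtered colimits in each variable, they assemble into a map
\[ \mathcal{O}[\tfrac1p](V_p\mathcal{H}^{\vee})\otimes\mathcal{O}(\mathfrak{Ig}_{\opn{CS},\eta})^{\opn{sm}}\to\mathcal{O}(\mathfrak{Ig}_{\opn{CS},\eta})^{\opn{sm}}. \]
Associativity and unitality hold levelwise by Step 1, hence hold in the colimit. So this is an algebra action, and it is compatible with the level-$m$ actions by construction.

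\textbf{Step 4 (Lie algebra).} The map $\ul{\Lie\mathcal{H}[\tfrac1p]}\to \mathcal{O}[\tfrac1p](V_p \mathcal{H}^{\vee})$ is induced by the compatible maps $\HT^\ast[\tfrac1p]$. By Corollary \ref{Cor:IntegralFourierConsequence}(2), transported to level $m$, the element $\HT^{\ast}(X)$ acts at each level by differentiating the $\mathcal{H}_m$-action in the direction $X$. By Corollary \ref{Cor:AlgebraicActionIgM}, this is the algebraic action of \S\ref{sub:ActiononIgm}.

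\textbf{Main obstacle.} I expect the delicate points to be the following.
\begin{enumerate}
\item Identifying $\Lie\mathcal{H}_m$ with $\Lie\mathcal{H}$ compatibly. Both are identified with $\Lie\widetilde{\mathcal{H}}$, and the identification must match the algebraic operators after the twist by $t^m$. I will handle this using the $M_{\overline{\mu}}(\qp)$-equivariance of Lemma \ref{Lem:TwistedHeckeAction} and Lemma \ref{Lem:LieAlgebraH}.
\item Making sure the condensed limit–colimit interchange in Step 3 is legitimate. I plan to avoid any completed-tensor issue by working only with maps out of $\varprojlim_{m'} A_{m'}$ to each fixed level, as in the composite above.
\end{enumerate}
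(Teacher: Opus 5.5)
Your proposal is correct and follows essentially the same route as the paper: take the level-$m$ Fourier actions dual to the co-actions of $\mathcal{H}_m$ on $\mathfrak{Ig}_{\mathrm{M},m}$, use their compatibility along $\mathfrak{Ig}_{\mathrm{M},m'}\to\mathfrak{Ig}_{\mathrm{M},m}$, and pass to $\varprojlim_m$ on the algebra side and $\varinjlim_m$ on the module side in condensed modules, with the Lie statement deduced from Corollary \ref{Cor:IntegralFourierConsequence}. The transport along $t^{m}$ and the concern in your obstacle (1) are unnecessary. The level-$m$ action is just Fourier theory for $\mathcal{H}_m$ acting on $\mathfrak{Ig}_{\mathrm{M},m}$, and the correct identification is $\Lie\mathcal{H}_m[\tfrac1p]\cong\Lie\mathcal{H}[\tfrac1p]$ via the isogeny $\mathcal{H}_m\to\mathcal{H}$, with which the Hodge--Tate maps are compatible; note that $\Lie\widetilde{\mathcal{H}}$ is actually zero, so it cannot serve for this identification.
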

\begin{proof}
The co-action maps assemble to give a map of direct systems\footnote{Note that the completed tensor product here does not agree with the solid tensor product, see \cite[Remark 2.1.12]{FourierPaper}.}
\begin{align}
   \{\mathcal{O}(\mf{Ig}_{\mathrm{M},m})[\tfrac{1}{p}]\}_{m} \to \{\left(\mathcal{O}(\mf{Ig}_{\mathrm{M},m}) \widehat{\otimes}_{\oee} \mathcal{O}(\mathcal{H}_m)\right)[\tfrac{1}{p}]\}_m. 
\end{align}
This shows that the continuous algebra action of $\mathcal{O}(\mathcal{H}_m)^{\ast}[\tfrac{1}{p}]$ on $\mathcal{O}(\mf{Ig}_{\mathrm{M},m-1}) \subset \mathcal{O}(\mf{Ig}_{\mathrm{M},m})$ restricts to the action on $\mathcal{O}(\mf{Ig}_{\mathrm{M},m-1})$. Passing to the condensed world, we thus get an algebra action of $\varprojlim_m \ul{\mathcal{O}(\mathcal{H}_m)^{\ast}[\tfrac{1}{p}]}=\varprojlim_m \ul{\mathcal{O}(T_p\mathcal{H}_m^{\vee})[\tfrac{1}{p}]}=\mathcal{O}[\tfrac{1}{p}](V_p \mathcal{H}^{\vee})$ on $\varinjlim_m \ul{\mathcal{O}(\mf{Ig}_{\mathrm{M},m})[\tfrac{1}{p}]}=\mathcal{O}(\igcsa)^{\opn{sm}}$. The second claim of the corollary follows from Corollary \ref{Cor:IntegralFourierConsequence}.
\end{proof}

\begin{Eg} \label{Eg:OrdinaryBDD}
When $\mathcal{H}=\gmhat$, then we get
\begin{align}
    \mathcal{O}[\tfrac{1}{p}](V_p \mathcal{H}^{\vee})&=\varprojlim_m \left(\cont(\tfrac{1}{p^m} \zp, \zp)[\tfrac{1}{p}]\right) = \varprojlim_m \cont(\tfrac{1}{p^m}\zp, \qp)  \\
    \cont(\qp,\qp) &= \mathcal{O}(V_p H^{\vee}),
\end{align}
which we note is much bigger than
\begin{align}
    \mathcal{O}(\tildeh^{\vee})[\tfrac{1}{p}]=\left(\varprojlim_m \cont(\tfrac{1}{p^m} \zp, \zp)\right)[\tfrac{1}{p}] = \cont(\qp, \zp)[\tfrac{1}{p}]= \mathcal{O}^+(V_p H^{\vee})[\tfrac{1}{p}].
\end{align}
If we give $\cont(\qp,\qp) = \varprojlim_m \cont(\tfrac{1}{p^m}\zp, \qp)$ the inverse limit topology (of the natural Banach topologies), then $\mathcal{O}[\tfrac{1}{p}](V_p \mathcal{H}^{\vee})= \ul{\cont(\qp,\qp)}$, since $M \mapsto \ul{M}$ commutes with inverse limits. 
\end{Eg}

\subsection{Applying Fourier theory on the generic fiber} \label{Sub:FourierOnGenFiber}

In this section we will apply the Fourier theory of \cite{FourierPaper} to further study the action of Corollary \ref{Cor:BddAction}. 

\subsubsection{} Let $H$ be the adic generic fiber of $\mathcal{H}$; this is a $p$-divisible rigid analytic group in the sense of Fargues \cite[Definition 4.1]{FarguesII} over $\spa E$. Write $\mathcal{O}(H)=\ul{H^0(H, \mathcal{O})}$, which is a solid Hopf algebra over $\ul{E}$, and consider its condensed dual $\mathcal{D}(H)$, which is also a solid Hopf algebra over $\ul{E}$, see \cite[Section 5.1.4, Proposition 5.1.1, Lemma 6.5.2]{FourierPaper}. 

\subsubsection{} Let $T_p H^{\vee}$ be the adic generic fiber of $T_p \mathcal{H}^{\vee}$, equipped with its Hodge--Tate map $\gamma:T_p H^{\vee} \to \omega_{H}$. We consider the solid Hopf algebra $\mathcal{O}(T_p H^{\vee})^{\gamma-\mathrm{la}}:=H^0_{\mathrm{cond}}(\spd E, \mathcal{O}^{\gamma-\mathrm{la}}_{T_p H^{\vee}/\spd E})$ of $\gamma$-locally analytic functions on $T_p H^{\vee}$, see \cite[Section 6.3.3]{FourierPaper}. We note that there is a natural monomorphism $\mathcal{O}(T_p H^{\vee})^{\gamma-\mathrm{la}} \to \mathcal{O}(T_p H^{\vee})$, where $\mathcal{O}(T_p H^{\vee})=\ul{H^0(T_p H^{\vee}, \mathcal{O})}$. 

\subsubsection{} We will now construct an action of $\mathcal{O}(T_p H^{\vee})^{\gamma-\mathrm{la}}$ on $\mathcal{O}(\igma) = \ul{H^0(\igma, \mathcal{O})}$ in two different ways. First, we take the rigid generic fiber of the action of $\mathcal{H}$ on $\igmf$ to get an action of $H$ on $\igma$. Applying the K\"unneth formula\footnote{This can be proved by writing $H$ as an increasing union of (uniform) affinoid adic spaces over $\spa E$, proving the K\"unneth formula for these affinoid adic spaces using \cite[Proposition A.68]{Bosco}, and then passing the projective limit through the solid tensor product using \cite[Corollary A.67]{Bosco}.}, the action of $H$ on $\igma$ gives rise to a co-action
\begin{align}
a_{\eta}:\mathcal{O}(\igma) \to \ul{\mathcal{O}(H \times_{\spa E} \igma)} \simeq \ul{\mathcal{O}(H)} \otimes_{\ul{E}}^{\blacksquare} \mathcal{O}(\igma).
\end{align}
The co-action $a_{ \eta}$ defines an action of the solid algebra $\mathcal{D}(H)$ on $\mathcal{O}(\igma)$. By \cite[Corollary 6.3.4]{FourierPaper}, there is a natural isomorphism of solid Hopf algebras
\begin{align}
\mathbb{F}^{H}:\mathcal{D}(H) \xrightarrow{\sim} \mathcal{O}(T_p H^{\vee})^{\gamma-\mathrm{la}},
\end{align}
giving the desired algebra action of $\mathcal{O}(T_p H^{\vee})^{\gamma-\mathrm{la}}$ on $\mathcal{O}(\igma)$. 

\subsubsection{} \label{subsub:rla} For the second construction, we recall from \cite[Section 7.2.1]{FourierPaper} the monomorphism $r^{\mathrm{la}}:\mathcal{O}(T_p H^{\vee})^{\gamma-\mathrm{la}} \to \mathcal{O}(T_p H^{\vee})=\ul{H^0(T_p H^{\vee}, \mathcal{O})}$. We furthermore recall from \cite[Theorem 7.2.6]{FourierPaper} the commutative diagram of condensed\footnote{The condensed structure is not addressed in the statement of \cite[Theorem 7.2.6]{FourierPaper}. However, to get the map on $S$-points for $S$ a profinite set, we can apply \cite[Theorem 7.2.6]{FourierPaper} with $(R,R^+)=(\cont(S,E), \cont(S,\oee))$. To do this, we have to check that $(R,R^+)$ is fiercely v-complete in the sense of \cite[Definition 3.1.9]{FourierPaper}, note that by \cite[Lemma 3.1.10]{FourierPaper} it is enough to check that it is diamantine in the sense of \cite[Definition 11.1]{HansenKedlaya}. It is plus-sheafy in the sense of \cite[Definition 6.8]{HansenKedlaya}, because the topological space $\spa(R,R^+)$, which is homeomorphic to $S$, has no higher cohomology. It is moreover v-complete since $(E,\oee)$ is, see \cite[proof of Lemma 3.1.6]{FourierPaper}; it is thus diamantine and we conclude.} $\ul{E}$-algebras
\begin{equation} \label{Eq:BigDiagram}
    \begin{tikzcd}
        \ul{\mathcal{O}(\mathcal{H})^{\ast}[\tfrac{1}{p}]} \arrow{r}{\mathbb{F}^{\mathcal{H}}[\tfrac{1}{p}]} & \ul{\mathcal{O}(T_p \mathcal{H}^{\vee})[\tfrac{1}{p}]} \arrow{d}{r^{\mathrm{int}}}\\
        & \mathcal{O}(T_p H^{\vee})  \\
        \mathcal{D}(H) \arrow{r}{\mathbb{F}^{H}} \arrow{uu}{r_{\mathcal{H}}^{\ast}} & \mathcal{O}(T_p H^{\vee})^{\gamma-\mathrm{la}} \arrow{u}{r^{\mathrm{la}}}.
    \end{tikzcd}
\end{equation}
Since $\ul{\mathcal{O}(T_p \mathcal{H}^{\vee})[\tfrac{1}{p}]}$ acts on $\mathcal{O}(\igma)=\ul{\mathcal{O}(\igmf)}[\tfrac{1}{p}]$, we get an action of $\mathcal{O}(T_p H^{\vee})^{\gamma-\mathrm{la}}$ by applying 
\begin{align}
   \mathbb{F}^{\mathcal{H}}[\tfrac{1}{p}] \circ r_{\mathcal{H}}^{\ast} \circ (\mathbb{F}^{H})^{-1}.
\end{align}
This agrees with the previous action, by construction, because both are induced from the action of $\mathcal{H}$ on $\igmf$ by applying $p$-adic Fourier theory, using \eqref{Eq:BigDiagram}. 

\subsubsection{} We consider the natural map of adic spaces (where $P_{\dr}^{\mathrm{an}}$ is the analytification of $P_{\dr}=\mathcal{P}_{\dR,E}$ and $N=\mathcal{N}_E$ with analytification $N^{\mathrm{an}}$)
\begin{align}
\igma \to \mathcal{N}_{\eta} \backslash \mathcal{P}_{\dr,\eta} \to N^{\mathrm{an}} \backslash P_{\dr}^{\mathrm{an}} 
\end{align}
The Hodge--Tate map on the generic fiber $\gamma:T_p H^{\vee} \to \omega_H$ induces a pullback map
\begin{align}
    \gamma^{\ast}: \ul{\operatorname{Lie} H} \subset \ul{\mathcal{O}(\omega_H)} \to \mathcal{O}(T_p H^{\vee})^{\gamma-\mathrm{la}}.
\end{align}
We have the following corollary of Theorem \ref{Thm:GlobalAction}.
\begin{Cor} \label{Cor:RationalFourierApplication}
The algebra action of $\mathcal{O}(T_p H^{\vee})^{\gamma-\mathrm{la}}$ on $\mathcal{O}(\igma)$ constructed above has the following properties: 
\begin{enumerate}
    \item The action of $\ul{\operatorname{Lie} H}$ on $\mathcal{O}(\igma)$ given by differentiation coincides with the action of $\ul{\operatorname{Lie} \mathcal{H}}$ via $\gamma^{\ast}$.

    \item The natural action of $\ul{\operatorname{Lie} H} \xrightarrow{\sim} \ul{\mf{u}_{\mu,E}^{N}}$ on $\ul{\mathcal{O}(P_{\dr}^{\mathrm{an}})}^{N^{\mathrm{an}}}$ by algebraic Maass--Shimura operators intertwines with the action of $\ul{\operatorname{Lie} H}$ on $\mathcal{O}(\igma)$ described in (1) via pullback along
    \begin{align}
        \ul{\mathcal{O}(P_{\dr}^{\mathrm{an}})}^{N^{\mathrm{an}}} \to \mathcal{O}(\igma).
    \end{align}
\end{enumerate}
\end{Cor}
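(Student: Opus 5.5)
The plan is to deduce both parts from their integral counterparts, Corollary \ref{Cor:IntegralFourierConsequence}(2) and (3) and Theorem \ref{Thm:GlobalAction}, by inverting $p$. The bridge is the commutative diagram \eqref{Eq:BigDiagram} together with the fact, recorded in \S\ref{subsub:rla}, that the action of $\mathcal{O}(T_p H^{\vee})^{\gamma-\mathrm{la}}$ on $\mathcal{O}(\igma)$ is obtained by transporting the action of $\ul{\mathcal{O}(T_p \mathcal{H}^{\vee})[\tfrac{1}{p}]}$ along $\mathbb{F}^{\mathcal{H}}[\tfrac{1}{p}] \circ r_{\mathcal{H}}^{\ast} \circ (\mathbb{F}^{H})^{-1}$.

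For (1), I first check that $\gamma^{\ast}$ on $\ul{\Lie H}$ corresponds under this composite to the integral map $\HT^{\ast}[\tfrac{1}{p}]$ on $\Lie \mathcal{H}[\tfrac{1}{p}]$. Concretely, I would show that $(\mathbb{F}^{H})^{-1}(\gamma^{\ast} t)\in \mathcal{D}(H)$ is the distribution ``derivative at the identity in direction $t$''. This is the rational analogue of \cite[Proposition 7.0.2]{FourierPaper}, and I expect to invoke the corresponding statement in \cite{FourierPaper} (Fourier transform sends $\Lie H \subset \mathcal{D}(H)$ to $\gamma^{\ast}\Lie H$). Its image under $r_{\mathcal{H}}^{\ast}$ is then the integral derivation functional scaled by $p^{-k}$ for some $k$. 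The action of such a distribution through the co-action $a_\eta$ is by definition differentiation of the $H$-action. Since $a_\eta$ is the generic fiber of $a$, this agrees with the $\tfrac{1}{p}$-extension of the integral differentiation action. Corollary \ref{Cor:IntegralFourierConsequence}(2) then identifies the latter with the action of $\HT^{\ast}(t)$, which gives (1).

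For (2), I compose the restriction $\mathcal{O}(\mathcal{N}\backslash \mathcal{P}_{\dr})[\tfrac{1}{p}] \to \mathcal{O}(\igmf)[\tfrac{1}{p}]$, which is $\Lie\mathcal{H}$-equivariant by Theorem \ref{Thm:GlobalAction}, with the analytification map $\ul{\mathcal{O}(P_{\dr}^{\mathrm{an}})}^{N^{\mathrm{an}}} \to \mathcal{O}(\igma)$. The main obstacle is that a global analytic function on $P_{\dr}^{\mathrm{an}}$ need not come from an algebraic or formal one. My approach is to argue locally instead of globally. The map $\igma \to N^{\mathrm{an}}\backslash P_{\dr}^{\mathrm{an}}$ factors through the generic fiber $(\mathcal{N}\backslash\widehat{\mathcal{P}}_{\dr})_\eta$, and Theorem \ref{Thm:IdentificationAction} is really a statement about vector fields, namely $d\overline{v}(\partial_t)=\overline{v}^{\ast}\partial_w$. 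This identity passes to generic fibers. The Maass--Shimura vector field on $P_{\dr}^{\mathrm{an}}$ is the analytification of the algebraic one from \eqref{eq.inclusion-lie-algebra-to-vector-fields}, so it restricts to the corresponding vector field on the open $(\widehat{\mathcal{P}}_{\dr})_\eta$. Pulling back any analytic function along $\overline{v}_\eta$ therefore intertwines the two derivations. Finally, $N$-invariance is used to descend from $\overline{\mathcal{P}}_{\dr}$ to $N\backslash P_{\dr}$, and the condensed statement follows by applying the same argument to $S$-valued points for profinite $S$.
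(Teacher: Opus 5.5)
Your proposal is correct and follows the paper's proof. Part (1) is exactly the rational Fourier-theoretic identification of \cite[Propositions 6.4.2, 6.4.4]{FourierPaper} that you expect to invoke. Once $(\mathbb{F}^{H})^{-1}(\gamma^{\ast}t)$ is known to be the derivative-at-the-identity distribution, (1) is immediate, so your further detour through $r_{\mathcal{H}}^{\ast}$ and Corollary \ref{Cor:IntegralFourierConsequence} is harmless but unnecessary. The paper deduces part (2) from Theorem \ref{Thm:GlobalAction}. Your local argument, which passes the vector-field identity $d\overline{v}(\partial_t)=\overline{v}^{\ast}\partial_w$ of Theorem \ref{Thm:IdentificationAction} to generic fibers, just spells out the step from formal to analytic functions that the paper leaves implicit.
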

\begin{proof}
Part (1) follows from \cite[Proposition 6.4.2, 6.4.4]{FourierPaper}, and part (2) follows from Theorem \ref{Thm:GlobalAction}.    
\end{proof}
We note that Corollary \ref{Cor:IntroRationalOperatorsI} is a consequence of Corollary \ref{Cor:RationalFourierApplication} together with Corollary \ref{Cor:HodgeTateTripleH}.

\subsubsection{} Define
\begin{align}
    \mathcal{O}(V_p H^{\vee})^{\gamma-\mathrm{la}}:=\varprojlim_{m} \mathcal{O}(T_p H_{m}^{\vee})^{\gamma-\mathrm{la}},
\end{align}
which admits a morphism (see \S \ref{subsub:rla})
\begin{align}
     \mathcal{O}(V_p H^{\vee})^{\gamma-\mathrm{la}} \to \mathcal{O}[\tfrac{1}{p}](V_p \mathcal{H}^{\vee}).
\end{align}
Thus the action of $\mathcal{O}[\tfrac{1}{p}](V_p \mathcal{H}^{\vee})$ on $\mathcal{O}(\igcsa)^{\mathrm{sm}}$ of Corollary \ref{Cor:BddAction} induces an action of $ \mathcal{O}(V_p H^{\vee})^{\gamma-\mathrm{la}}$ on $\mathcal{O}(\igcsa)^{\mathrm{sm}}$. This action can also be constructed by using the compatible actions of $H_m= \mathcal{H}_{m,\eta}$ on $\mf{Ig}_{M,m,\eta}$.

\begin{Cor} \label{Cor:BddActionII}
The algebra action of $\mathcal{O}(V_p H^{\vee})^{\gamma-\mathrm{la}}$ on $\mathcal{O}(\igcsa)^{\mathrm{sm}}$ is compatible with the action of $\mathcal{O}(T_p H^{\vee})^{\gamma-\mathrm{la}}$ on $\mathcal{O}(\igma)$ from Corollary \ref{Cor:RationalFourierApplication}. Moreover, it recovers the algebraic action of $\ul{\Lie\mathcal{H}[\tfrac{1}{p}]}$ on $\mathcal{O}(\mathfrak{Ig}_{\opn{CS}, \eta})^{\opn{sm}}$ (see \S \ref{sub:ActiononIgm}) via the natural map $\ul{\Lie\mathcal{H}[\tfrac{1}{p}]} \to \mathcal{O}(V_p H^{\vee})^{\gamma-\mathrm{la}}$.
\end{Cor}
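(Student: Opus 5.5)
The plan is to reduce everything to the level-$m$ statements and then pass to the limit, mirroring the proof of Corollary \ref{Cor:BddAction}. For each $m$, the quotient $\mathcal{H}_m$ acts on $\mf{Ig}_{\mathrm{M},m}$. Applying the generic-fiber Fourier theory (the construction preceding Corollary \ref{Cor:RationalFourierApplication}, with $\mathcal{H}$ replaced by $\mathcal{H}_m\simeq\mathcal{H}$ via conjugation by $t^m$) gives an action of $\mathcal{O}(T_pH_m^{\vee})^{\gamma-\mathrm{la}}$ on $\mathcal{O}(\mf{Ig}_{\mathrm{M},m,\eta})$. The diagram \eqref{Eq:BigDiagram} for $\mathcal{H}_m$ shows that this action agrees with the one obtained by restricting the action of $\ul{\mathcal{O}(T_p\mathcal{H}_m^{\vee})[\tfrac1p]}$ along $\mathbb{F}^{\mathcal{H}_m}[\tfrac1p]\circ r^{\ast}_{\mathcal{H}_m}\circ(\mathbb{F}^{H_m})^{-1}$. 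Since the action of $\mathcal{O}(V_pH^{\vee})^{\gamma-\mathrm{la}}$ on $\mathcal{O}(\igcsa)^{\mathrm{sm}}$ is by definition the pullback of the action in Corollary \ref{Cor:BddAction} along $\varprojlim_m$ of these maps, the first claim reduces to the case $m=0$. There the compatibility of $\mathcal{O}(V_pH^{\vee})^{\gamma-\mathrm{la}}\to\mathcal{O}(T_pH^{\vee})^{\gamma-\mathrm{la}}$ with $\mathcal{O}[\tfrac1p](V_p\mathcal{H}^{\vee})\to\ul{\mathcal{O}(T_p\mathcal{H}^{\vee})[\tfrac1p]}$ is exactly the naturality of \eqref{Eq:BigDiagram} in the projection $\mathcal{H}_m\to\mathcal{H}$, and the inclusion $\mathcal{O}(\igma)\subset\mathcal{O}(\igcsa)^{\mathrm{sm}}$ is the $m=0$ term of the colimit.

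The key check is functoriality of the integral and rational Fourier transforms, and of the restriction maps $r^{\mathrm{int}}$, $r^{\mathrm{la}}$ and $r_{\mathcal{H}}^{\ast}$, with respect to the isogeny $\mathcal{H}_m\twoheadrightarrow\mathcal{H}_{m-1}$. This naturality should follow from the construction in \cite[Section 7]{FourierPaper}, as all these maps are defined via the universal character pairing. We also need that the co-action of $\mathcal{H}_m$ on $\mf{Ig}_{\mathrm{M},m}$ restricts on $\mathcal{O}(\mf{Ig}_{\mathrm{M},m-1})$ to the co-action of $\mathcal{H}_{m-1}$ pushed forward. This holds because both are induced from the single action of $\tildeh$ on $\igcsf$, as already used in Corollary \ref{Cor:BddAction}. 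I expect this naturality check to be the main obstacle: the condensed and solid subtleties make it necessary to verify the transition maps as maps of condensed algebras rather than on points.

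For the second claim, the map $\ul{\Lie\mathcal{H}[\tfrac1p]}\to\mathcal{O}(V_pH^{\vee})^{\gamma-\mathrm{la}}$ is given by the compatible maps $\gamma^{\ast}$ for each $H_m$, and its composite into $\mathcal{O}[\tfrac1p](V_p\mathcal{H}^{\vee})$ is the map induced by the $\HT^{\ast}[\tfrac1p]$. This is because the Hodge–Tate map on the generic fiber is the generic fiber of the integral one. The claim then follows from Corollary \ref{Cor:BddAction}, or directly at each level from Corollary \ref{Cor:RationalFourierApplication}(1) applied to $H_m$ together with Corollary \ref{Cor:AlgebraicActionIgM}.
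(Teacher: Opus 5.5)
Your proposal is correct and follows essentially the paper's route. The paper gives no separate proof: it treats the statement as immediate once the action is defined through the compatible level-$m$ diagrams \eqref{Eq:BigDiagram} for $\mathcal{H}_m$ (equivalently, through the actions of $H_m$ on $\mathfrak{Ig}_{\mathrm{M},m,\eta}$), combined with Corollaries \ref{Cor:BddAction} and \ref{Cor:RationalFourierApplication}.

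One minor caveat concerns the second claim. The fact that $\gamma$ is the generic fiber of $\HT$ does not by itself show that the lift of $\gamma^{\ast}(v)$ to $\ul{\mathcal{O}(T_p\mathcal{H}_m^{\vee})[\tfrac{1}{p}]}$ is $\HT^{\ast}(v)$, because $r^{\mathrm{int}}$ need not be injective (Remark \ref{Rem:FailureNormality}). Your alternative level-by-level argument via Corollary \ref{Cor:RationalFourierApplication}(1) avoids this issue and is the cleaner justification.
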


\subsection{Going to infinite level}\label{ss.infinite-level} We now pass to partial infinite level on our nearly holomorphic forms, and compare the algebra action to the resulting Hecke action.

\subsubsection{} Let $E \subset L \subset C$ be a complete field containing $\qp(\mu_{p^{\infty}})$ and choose an $L$-point $x \in \minfgbmu$ satisfying the conclusion of Proposition \ref{Prop:CompatibleChoice} (this in particular assumes $L$ is big enough such that such a point exists). Recall from \S \ref{subsub:MapsGenericFiber} that this induces a map
\begin{align}\label{Eq:InfiniteLevelMapToSh}
    \igcsdL \to \mathbf{Sh}_{K^p,L}^{\circ,\diamondsuit}.
\end{align}
By \S \ref{subsub:MapsGenericFiber}, this is $\ul{P_{\overline{\mu}}(\qp)}$-equivariant via the natural inclusion $\ul{P_{\overline{\mu}}(\qp)} \to \widetilde{G}_{b_{\mu},L}$ coming from our choice of $x$, which identifies $\ul{P_{\overline{\mu}}(\zp)}$ with $\Aut_{\mathcal{G}}(\mbxcan)_{\eta,L}^{\diamondsuit}$. By the discussion in \S \ref{sub:TateModuleH}, the natural map $T_p {H}_L \to \ul{\mathcal{G}(\zp)}$ is identified with the inclusion of $\ul{\mathcal{U}_{\overline{\mu},0}(\zp)}$, and therefore $V_p H_L \subset \ul{G(\qp)}$ is identified with $\ul{U_{\overline{\mu},0}(\qp)}$. 

\subsubsection{} \label{subsub:TwistedHeckeAction} Consider $P_{\dr,\infty,L}^{\diamondsuit}:=\mathbf{Sh}_{K^p,L}^{\circ,\diamondsuit} \times_{\mathbf{Sh}_{K,L}^{\circ, \diamondsuit}} P_{\dr,K,L}^{\mathrm{an},\diamondsuit}$. Recall from \S \ref{subsub:MapsGenericFiber} the map 
\begin{align}
\igcsdL \to P_{\dr,\infty,L}^{\diamondsuit}
\end{align}
and its $\ul{P_{\ovmu}(\qp)}$-equivariance via Lemma \ref{Lem:TwistedHeckeAction}. 

\subsubsection{} For each compact open subgroup $K_p' \subset G(\qp)$ with intersection $J_{K_p'}=K_p' \cap \mathcal{U}_{\overline{\mu}}(\zp)$, we consider the natural map
\begin{align} \label{Eq:DiamondMap}
    \ul{J_{K_p'}} \backslash \igcsdL \to N^{\mathrm{an}} \backslash P_{\dr,\infty,L}^{\mathrm{an},\diamondsuit}/\ul{K_p'},
\end{align}
which is well defined by Lemma \ref{Lem:TwistedHeckeAction} and the observation that under the map $\ul{P_{\overline{\mu}}(\qp)} \to P_{\mu^{-1}}^{\mathrm{an},\lozenge}$ of \eqref{Eq:FactorsThrough N} the subgroup $\ul{U_{\overline{\mu}}(\qp)}$ factors through $N^{\mathrm{an}} \subset P_{\mu^{-1}}^{\mathrm{an},\lozenge}$. 

\begin{Lem} \label{Lem:MapDescends}
    If $K_{p,m} \subset G(\qp)$ is a compact open subgroup with $J_{K_{p,m}}=J_m$ for some $m$, so that $\ul{J_{K_{p,m}}} \backslash \igcsdL=\mf{Ig}_{M,m,\eta,L}^{\diamondsuit}$, then there is a unique dashed arrow making the following diagram commute
    \begin{equation}
        \begin{tikzcd}
            \mf{Ig}_{M,m,\eta,L} \arrow[r, dashed] \arrow{d} & N^{\mathrm{an}} \backslash P^\mathrm{an}_{\dr,L} \times_{\mathbf{Sh}_{K,L}^{\circ}} \mathbf{Sh}_{K^pK_{p,m},L}^{\circ} \arrow{d} \\
            \mf{Ig}_{M,\eta,L} \arrow{r} & N^{\mathrm{an}} \backslash P^\mathrm{an}_{\dr,L},
        \end{tikzcd}
    \end{equation}
which induces \eqref{Eq:DiamondMap} after applying $\diamondsuit$.
\end{Lem}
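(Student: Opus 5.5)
The approach is to descend \eqref{Eq:DiamondMap} from diamonds to rigid spaces via full faithfulness, and to construct the map first at the level of diamonds. Uniqueness is formal. The right-hand vertical map $N^{\mathrm{an}} \backslash P^\mathrm{an}_{\dr,L} \times_{\mathbf{Sh}^{\circ}_{K,L}} \mathbf{Sh}^{\circ}_{K^pK_{p,m},L} \to N^{\mathrm{an}} \backslash P^\mathrm{an}_{\dr,L}$ is finite étale, being a base change of the finite étale map $\mathbf{Sh}^{\circ}_{K^pK_{p,m},L} \to \mathbf{Sh}^{\circ}_{K,L}$. Both source and target are rigid spaces over $L$, and $X \mapsto X^\diamondsuit/\spd L$ is fully faithful on them (cf. \cite[Lemma 3.1.12]{FourierPaper}). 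So it suffices to produce the dashed arrow after applying $\diamondsuit$ and to check commutativity there.

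\textbf{Step 1: a map on diamonds.} Start from the map of \S\ref{subsub:TwistedHeckeAction},
\[ \igcsdL \to N^{\mathrm{an}}\backslash P^{\mathrm{an},\diamondsuit}_{\dr,\infty,L}. \]
By Lemma \ref{Lem:TwistedHeckeAction}, $\ul{J_m}\subset\ul{U_{\overline{\mu}}(\qp)}$ acts on the target through $\ul{G(\qp)}$ composed with an element of $N^{\mathrm{an}}$, and the $N^{\mathrm{an}}$-part is killed in the quotient. Hence the map descends to
\[ \ul{J_m}\backslash\igcsdL \to N^{\mathrm{an}}\backslash P^{\mathrm{an},\diamondsuit}_{\dr,\infty,L}/\ul{J_m}. \]
Since $J_m \subset K_{p,m}$, composing with the further quotient gives \eqref{Eq:DiamondMap}. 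The target of \eqref{Eq:DiamondMap} is
\[ N^{\mathrm{an}}\backslash P^{\mathrm{an},\diamondsuit}_{\dr,L}\times_{\mathbf{Sh}^{\circ,\diamondsuit}_{K,L}}\bigl(\mathbf{Sh}^{\circ,\diamondsuit}_{K^p,L}/\ul{K_{p,m}}\bigr), \]
and $\mathbf{Sh}^{\circ,\diamondsuit}_{K^p,L}/\ul{K_{p,m}} = \mathbf{Sh}^{\circ,\diamondsuit}_{K^pK_{p,m},L}$ because the infinite-level Shimura variety is a pro-étale $\ul{\mathcal G(\zp)}$-torsor. The source is $\mf{Ig}_{M,m,\eta,L}^\diamondsuit$; this uses that $\mf{Ig}_{M,m}$ is the quotient of $\igcsf$ by $\Aut_{\mathcal G}(\mbxcan)^\circ_m$, that this quotient commutes with generic fibre and $\diamondsuit$ as in Lemma \ref{Lem:TorsorGenericFiber}, and that $\Aut_{\mathcal G}(\mbxcan)^{\circ,\diamondsuit}_{m,\eta,L}$ is identified with $\ul{J_m}$. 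The last identification follows from the identification of $\Aut^\circ$ with $\ul{\mathcal U_{\overline{\mu}}(\zp)}$, conjugated by $t^m$.

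\textbf{Step 2: commutativity.} Commutativity of the diagram on diamonds is immediate, since both composites to $N^{\mathrm{an}}\backslash P^{\mathrm{an},\diamondsuit}_{\dr,L}$ are induced from the same map out of $\igcsdL$. To see this, enlarge $\ul{J_m}$ to $\ul{\mathcal U_{\overline{\mu}}(\zp)}$ and $K_{p,m}$ to $\mathcal G(\zp)$. This recovers the map $\igma\to N^{\mathrm{an}}\backslash P^{\mathrm{an}}_{\dr}$ via the compatibility \S\ref{subsub:MapToPdrUniformization}.

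\textbf{Step 3 (main obstacle): descending to rigid spaces.} Full faithfulness applies to maps between diamonds of rigid spaces. However, $\mf{Ig}_{M,m,\eta,L}$ is only a perfectoid-type adic space, not of finite type, so the cited lemma may not apply directly. I would circumvent this with the finite étaleness of the right vertical arrow. Pull back this finite étale cover along $\mf{Ig}_{M,m,\eta,L}\to\mf{Ig}_{M,\eta,L}\to N^{\mathrm{an}}\backslash P^{\mathrm{an}}_{\dr,L}$. The resulting space is an adic space $Y$ finite étale over $\mf{Ig}_{M,m,\eta,L}$, and a lift is the same as a section of $Y\to\mf{Ig}_{M,m,\eta,L}$. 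Sections of finite étale maps are open-and-closed subspaces mapping isomorphically, and these are detected on diamonds by the equivalence of étale sites $X_{\et}\simeq X^\diamondsuit_{\et}$ \cite{EtCohDiam}. Hence the diamond section from Step 1 descends uniquely to an honest section, giving the dashed arrow. Its diamondification is \eqref{Eq:DiamondMap} by construction.
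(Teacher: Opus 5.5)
Your proposal is correct and is essentially the paper's argument. Step 3 is exactly the paper's proof: the universal property of the fibre product reduces the lift to a statement about finite étale covers, and the equivalence of étale sites $X_{\mathrm{\acute{e}t}}\simeq X^{\diamondsuit}_{\mathrm{\acute{e}t}}$ for analytic adic spaces then descends the diamond map uniquely. The paper phrases this as a morphism $\mathfrak{Ig}^{\diamondsuit}_{M,m,\eta,L}\to\mathfrak{Ig}^{\diamondsuit}_{M,\eta,L}\times_{\mathbf{Sh}^{\circ,\diamondsuit}_{K,L}}\mathbf{Sh}^{\circ,\diamondsuit}_{K^pK_{p,m},L}$ of finite étale covers of $\mathfrak{Ig}^{\diamondsuit}_{M,\eta,L}$, whereas you phrase it as a section of the pulled-back cover over $\mathfrak{Ig}_{M,m,\eta,L}$; the two are equivalent.

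Your opening appeal to full faithfulness of $X\mapsto X^{\diamondsuit}$ on rigid spaces is unnecessary and, as you note yourself, does not apply to the source. Steps 1--2 just re-derive the well-definedness of \eqref{Eq:DiamondMap} and its compatibility with the bottom arrow, which the paper takes as given from the discussion preceding the lemma.
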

\begin{proof}
By the universal property of the fiber product, it suffices to show that the morphism
\begin{align}
    \mf{Ig}_{M,m,\eta,L}^{\diamondsuit} \to \mf{Ig}_{M,\eta,L}^{\diamondsuit} \times_{\mathbf{Sh}_{K,L}^{\circ,\diamondsuit}} \mathbf{Sh}_{K^pK_{p,m},L}^{\circ,\diamondsuit}
\end{align}
is uniquely induced from a map of adic spaces. But this is a morphism of finite \'etale covers of $\mf{Ig}_{M,\eta,L}^{\diamondsuit}$, and the statement thus follows from the fact that the (finite) \'etale site of an analytic adic space is equivalent to that of its diamond, see \cite[Lemma 15.6]{EtCohDiam}. 
\end{proof}

\subsubsection{} Choosing a decreasing sequence of compact open subgroups $K_{p,m}$ with $J_{K_{p,m}}=J_m$ and $\cap_{m} K_{p,m}=\{1\}$, we define 
\begin{align} \label{Eq:PDrAnSm}
   \mathcal{O}(N^{\mathrm{an}} \backslash P_{\dr,\infty}^{\mathrm{an}})^{\mathrm{sm}}:=\varinjlim_{m} \ul{ \mathcal{O}( N^{\mathrm{an}} \backslash P_{\dr,L}^\mathrm{an} \times_{\mathbf{Sh}_{K,L}^{\circ}} \mathbf{Sh}_{K^pK_{p,m},L}^{\circ})}.
\end{align}
Note moreover that
\begin{align}
     \mathcal{O}(\igcsa)^{\mathrm{sm}} \otimes_{\ul{E}}^{\blacksquare} \ul{L} = \varinjlim_m\left(\ul{\mathcal{O}(\mf{Ig}_{M,m,\eta})} \otimes_{\ul{E}}^{\blacksquare} \ul{L}\right) \xrightarrow{\sim} \varinjlim_m \ul{\mathcal{O}(\mf{Ig}_{M,m,\eta,L})}=:\mathcal{O}(\igcsaL)^{\mathrm{sm}},
\end{align}
so that there is an induced pullback map
\begin{align}
    \mathcal{O}(N^{\mathrm{an}} \backslash P_{\dr,\infty}^{\mathrm{an}})^{\mathrm{sm}} \to \mathcal{O}(\igcsaL)^{\mathrm{sm}}.
\end{align}
This map is $P_{\ovmu}(\qp)$-equivariant for the twisted action on the domain and the natural action on the codomain, see Lemma \ref{Lem:TwistedHeckeAction}. Note that the algebraic Maass--Shimura operators define an action of $\ul{\mf{u}_{\mu,L}^{N}}$ on $\mathcal{O}(N^{\mathrm{an}} \backslash P_{\dr,\infty}^{\mathrm{an}})^{\mathrm{sm}}$, see \S \ref{subsub:AlgebraicOperatorsDef}.

\subsubsection{} Using Corollary \ref{Cor:HodgeTateTripleH}, we identify $T_p H_L=\ul{\mathcal{U}_{\overline{\mu},0}(\zp)}=\ul{\mf{u}_{\overline{\mu},0}}$. We now fix a choice $\zeta$ of identification $\ul{\zp} \to \ul{\zp}(1)$ over $L$, which gives us the further identification $T_p H^{\vee}_{L}= \ul{\mf{u}_{\overline{\mu},0}^{\ast}}$ and identifies $\gamma:T_p H^{\vee}_L \to \omega_{H}=(\mf{u}_{\mu,L}^{\mathcal{N}})^{\ast}$ with $(\iota \circ \Omega_\zeta)^*$, where $\iota$ is the canonical inclusion $\mf{u}_{\mu,L}^{\mathcal{N}} \subset \mf{u}_{\overline{\mu},0,L}$. Therefore (see \cite[Proposition 5.2.9]{FourierPaper})
\begin{align}
    \mathcal{O}(T_p H^{\vee})^{\gamma-\mathrm{la}} \otimes_{\ul{E}}^{\blacksquare} \ul{L} = \mathcal{O}^{\gamma-\mathrm{la}}(\ul{\mf{u}_{\overline{\mu},0}^{\ast}},L).
\end{align}
Note that we have written $\mathcal{O}^{\gamma-\mathrm{la}}(\ul{\mf{u}_{\overline{\mu},0}^{\ast}},L)$ instead of just $\mathcal{O}^{\gamma-\mathrm{la}}(\ul{\mf{u}_{\overline{\mu},0}^{\ast}})$ (as in \cite{FourierPaper}) to stress that we are looking at $L$-valued locally analytic functions. Now $\mathcal{O}(\igcsaL)^{\mathrm{sm}}$ has an action of
\begin{align}
     \mathcal{O}(V_p H^{\vee})^{\gamma-\mathrm{la}} \otimes_{\ul{E}}^{\blacksquare} \ul{L}.
\end{align}
Using \cite[Proposition 5.2.9]{FourierPaper} and \cite[Corollary A.67]{Bosco}, we identify this with
\begin{align}
\varprojlim_m \mathcal{O}^{\gamma-\mathrm{la}} (\tfrac{1}{p^m}\ul{\mf{u}_{\overline{\mu},0}^{\ast}},L)
=:\mathcal{O}^{\gamma-\mathrm{la}} (\ul{\mf{u}_{\overline{\mu},0,\qp}^{\ast}},L).
\end{align}
There is a pullback map
\begin{align}
    \gamma^{\ast}:\ul{\operatorname{Lie} H_L} \to \mathcal{O}(V_p H_L^{\vee})^{\gamma-\mathrm{la}},
\end{align}
which is the inverse limit of the corresponding pullback maps for $\mathcal{O}(T_p H_L^{\vee})^{\gamma-\mathrm{la}}$.

\subsubsection{} Recall the Weil pairing $V_p H \times V_p H^{\vee} \to \mathbb{Q}_p(1)$. To an element $h \in V_p H$ we can thus associate a locally constant function $f_{h}:V_p H^{\vee} \to \qp(1) \to \qp(1)/\zp(1)=\mu_{p^{\infty}} \subset L$. We can finally state our main theorem. 

\begin{Thm} \label{Thm:InfiniteLevelTheorem}
The action of $\mathcal{O}^{\gamma-\mathrm{la}}(\ul{\mf{u}_{\overline{\mu},0,\qp}^{\ast}},L)$ on $\mathcal{O}(\igcsaL)^{\mathrm{sm}}$ constructed above has the following properties: 
\begin{enumerate}
    \item The action on $\ul{\mathcal{O}(\igmaL)} \subset \mathcal{O}(\igcsaL)^{\mathrm{sm}}$ factors through the restriction map $\mathcal{O}^{\gamma-\mathrm{la}}(\ul{\mf{u}_{\overline{\mu},0,\qp}^{\ast}},L) \to \mathcal{O}^{\gamma-\mathrm{la}}(\ul{\mf{u}_{\overline{\mu},0}^{\ast}},L)$ and recovers the action of Corollary \ref{Cor:RationalFourierApplication} after (solid) tensor product to $L$. 

    \item The action of $h \in U_{\overline{\mu},0}(\qp) \subseteq P_{\overline{\mu}}(\qp)$ corresponds to the action of the locally constant function $f_h \in \mathcal{O}^{\gamma-\mathrm{la}}(\ul{\mf{u}_{\overline{\mu},0,\qp}^{\ast}},L)(\ast)$.

    \item The action of $\ul{\operatorname{Lie} H_L}$ on $\mathcal{O}(\igcsaL)^{\mathrm{sm}}$ given by differentiation coincides with the action of $\ul{\operatorname{Lie} H_L}$ via $\gamma^{\ast}= (\iota \circ \Omega_\zeta)^*$.

    \item The action of $\mathcal{O}^{\gamma-\mathrm{la}}(\ul{\mf{u}_{\overline{\mu},0,\qp}^{\ast}},L)$ on $\mathcal{O}(\igcsaL)^{\mathrm{sm}}$ is $M_{\overline{\mu}}(\qp)$-semilinear for the action of $M_{\overline{\mu}}(\qp)$ on $\mathcal{O}(\igcsaL)^{\mathrm{sm}}$ and $\ul{\mf{u}_{\overline{\mu},0,\qp}^{\ast}}$. 

    \item The differentiation action of $\ul{\operatorname{Lie} H} \xrightarrow{\sim} \ul{\mf{u}_{\mu,E}^{N}}$ on $\mathcal{O}(\igcsaL)^{\mathrm{sm}}$ intertwines with the action of $\ul{\mf{u}_{\mu,E}^{N}}$ on $\mathcal{O}(N^{\mathrm{an}} \backslash P_{\dr,\infty}^{\mathrm{an}})^{\mathrm{sm}}$ through algebraic Maass--Shimura operators, via the pullback map
    \begin{align}
        \mathcal{O}(N^{\mathrm{an}} \backslash P_{\dr,\infty}^{\mathrm{an}})^{\mathrm{sm}} \to \mathcal{O}(\igcsaL)^{\mathrm{sm}}.
    \end{align}

    \item The natural action of $P_{\overline{\mu}}(\qp)$ on $\mathcal{O}(\igcsaL)^{\mathrm{sm}}$ intertwines with the twisted action of $P_{\overline{\mu}}(\qp)$ on $\mathcal{O}(N^{\mathrm{an}} \backslash P_{\dr,\infty}^{\mathrm{an}})^{\mathrm{sm}}$ described in \S \ref{subsub:TwistedHeckeAction} via the pullback map
    \begin{align}
        \mathcal{O}(N^{\mathrm{an}} \backslash P_{\dr,\infty}^{\mathrm{an}})^{\mathrm{sm}} \to \mathcal{O}(\igcsaL)^{\mathrm{sm}}.
    \end{align}
\end{enumerate}
\end{Thm}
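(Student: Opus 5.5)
Most of the six properties follow by passing to the colimit over $m$ in results already established at each finite layer $\mf{Ig}_{M,m,\eta}$, transported along the isomorphisms $t^{-m}\colon \mf{Ig}_{M,m}\xrightarrow{\sim}\igmf$ and $\mathcal{H}_m\xrightarrow{\sim}\mathcal{H}$ given by conjugation by $t^m$. The genuinely new input is part (2), which compares the Fourier-theoretic action of $H$-torsion points with the Hecke action of $U_{\overline{\mu},0}(\qp)$.

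Parts (1), (3) and (5) are formal. For (1), the action on $\mathcal{O}(\igmaL)$, i.e.\ the layer $m=0$, is by construction the Fourier dual of the coaction of $H=H_0$. The inverse-limit algebra then acts through its projection to $\mathcal{O}^{\gamma-\mathrm{la}}(\ul{\mf{u}_{\overline{\mu},0}^*},L)$, and this is the action of Corollary \ref{Cor:BddActionII} base changed along $\otimes^\blacksquare_{\ul E}\ul L$ via \cite[Proposition 5.2.9]{FourierPaper}. For (3), apply Corollary \ref{Cor:RationalFourierApplication}(1) at each level and use Corollary \ref{Cor:HodgeTateTripleH-overE} together with the choice of $\zeta$ to rewrite $\gamma$ as $(\iota\circ\Omega_\zeta)^*$. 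For (5), apply Theorem \ref{Thm:GlobalAction} at each level $m$. The maps of Lemma \ref{Lem:MapDescends} are compatible with the finite-level maps to $N\backslash P_\dR$, and Maass--Shimura operators commute with pullback along the finite étale maps $\mathbf{Sh}_{K^pK_{p,m}}\to\mathbf{Sh}_K$.

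Part (6) is Lemma \ref{Lem:TwistedHeckeAction}, applied after taking quotients by $J_m$ and passing to the colimit. Part (4) follows from two facts: $M_{\overline{\mu}}(\qp)$ normalizes $\mathcal{W}=\mathcal{U}_{\overline{\mu},0}$, hence acts on $\tildeh=\varprojlim\mathcal{H}_m$ compatibly with its action on $\igcsf$; and $p$-adic Fourier theory is functorial, so it carries this to the coadjoint action on $\ul{\mf{u}_{\overline{\mu},0,\qp}^*}$. The one point to verify is that elements of $M_{\overline{\mu}}(\qp)$ such as $t$ move the layers $m$ but preserve the colimit, and this follows from Lemma \ref{Lem:ContractingAction}.

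Part (2) is the main obstacle. First, I would show that $h\in V_pH(L)$ acts on $\igcsdL$ exactly as the element of $\widetilde{G}_{b_\mu}$ obtained from the map $\tildeh\to\Aut_G(\tildex)$ of \S\ref{sss.map-construction}, and that under the identification of \S\ref{sub:TateModuleH} this element is $h\in U_{\overline{\mu},0}(\qp)\subset P_{\overline{\mu}}(\qp)$. This reduces to the computation $h=\operatorname{Ad}c$ in Lemma \ref{Lem:IdentificationIsomorphisms}. Second, on the Fourier side, translation by an $L$-point $h$ is a Dirac distribution $\delta_h\in\mathcal{D}(H_m)$. The Fourier transform of \cite{FourierPaper} sends $\delta_h$ to the character $z\mapsto\langle h,z\rangle\in\mu_{p^\infty}$ given by the Weil pairing, which is $f_h$. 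Combining the two identifies the action of $f_h$ with that of $h$. The care needed is in matching sign and normalization conventions, namely $\zeta$, whether the action is on the left or right, and the sign of the Weil pairing, so I would fix these in the Siegel ordinary case (Example \ref{Eg:OrdinaryOmegaZeta}) and then reduce the general case to it.
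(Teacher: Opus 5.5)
Your proposal is correct and is essentially the paper's proof. Parts (1), (4) and (6) come from the construction, and (3) and (5) come from Corollary \ref{Cor:RationalFourierApplication} and Theorem \ref{Thm:GlobalAction} passed to the colimit over the levels. For (2), the paper likewise reduces at level $J_m$ to the action of $\ul{U_{\overline{\mu},0}(\qp)}/T_pH_{m,L}\simeq H_{m,L}[p^\infty]$, and it phrases your Dirac-delta computation as functoriality of the Fourier transform along $H_{m,L}[p^\infty]\hookrightarrow H_{m,L}$, with locally constant functions being the character data with $V=0$.

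Two minor corrections. First, the identification of $V_pH_L$ with $U_{\overline{\mu},0}(\qp)$ holds by construction in \S\ref{sub:TateModuleH}, via $c\in T(B_{\cris})$ normalizing $U_{\overline{\mu},0}$; it does not come from Lemma \ref{Lem:IdentificationIsomorphisms}, which concerns the Hodge--Tate filtration. Second, the normalization check $\mathbb{F}^{H}(\delta_h)=f_h$ is intrinsic to $H$, so no reduction to the Siegel ordinary case is needed. Such a reduction would not be available anyway, since the $\mu$-ordinary locus maps to a non-ordinary Siegel stratum.
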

\begin{proof}
Parts (1), (4) and (6) are a direct consequence of the construction. Part (3) follows as in the proof of part (1) of Corollary \ref{Cor:RationalFourierApplication} by taking the direct limit over $m$, and similarly part (5) follows as in the proof of part (2) of Corollary \ref{Cor:RationalFourierApplication} by taking the direct limit over $K$.  For part (2), we note that at level $J_m$ the action of $U_{\overline{\mu},0}(\qp)$ factors through the action of $\ul{U_{\overline{\mu},0}(\qp)} / T_p H_{m,L} \simeq H_{m,L}[p^{\infty}]$. We consider the natural map
\begin{align}
    \ul{\mathcal{O}(H_{m,L}[p^{\infty}])}^{\ast} \to \ul{\mathcal{O}(H_{m,L})}^{\ast}.
\end{align}
It suffices to identify this map with the inclusion $\mathcal{O}^{\mathrm{sm}}(T_p H_{m,L}^{\vee}) \to \mathcal{O}(T_p H_{m,L}^{\vee})^{\gamma-\mathrm{la}}$, where $\mathcal{O}^{\mathrm{sm}}(T_p H_{m,L}^{\vee})$ denotes the locally constant functions on $T_p H_{m,L}^{\vee}$. This is a direct consequence of the functoriality of the Fourier transform, see \cite[Theorem 2]{FourierPaper}, since locally constant functions correspond to the locally analytic character datum with (in the notation of \cite[Definition 4.3.6]{FourierPaper}) $V=0$. 
\end{proof}

We now deduce Theorem \ref{Thm:IntroInfiniteLevelTheorem} from Theorem \ref{Thm:InfiniteLevelTheorem}. 
\begin{proof}[Proof of Theorem \ref{Thm:IntroInfiniteLevelTheorem}]
Note that $\mathcal{O}(N \backslash P_{\dr,\infty}) \subset \mathcal{O}(N^{\mathrm{an}} \backslash P_{\dr,\infty}^{\mathrm{an}})^{\mathrm{sm}}$. Then, part (1) of Theorem~\ref{Thm:IntroInfiniteLevelTheorem} follows from part (6) of Theorem~\ref{Thm:InfiniteLevelTheorem} and part (2) in Theorem~\ref{Thm:IntroInfiniteLevelTheorem} follows from part (2) in Theorem~\ref{Thm:InfiniteLevelTheorem}. 

Part (3) of Theorem \ref{Thm:IntroInfiniteLevelTheorem} will follow from part (4) of Theorem \ref{Thm:InfiniteLevelTheorem}. For $g \in P_{\overline{\mu}}(\mathbb{Q}_p)$, $s \in \mathcal{O}(\igmf)\widehat{\otimes} C$, and $f \in \mathcal{O}^{\gamma-\locan}(\mf{u}_{\overline{\mu},0}^*, C)$, the Hecke action of $[g]$ on $f \cdot s$ can be described as follows: First we write
\[ U_{\overline{\mu}}(\mathbb{Z}_p) g U_{\overline{\mu}}(\mathbb{Z}_p) = g \left(g^{-1}U_{\overline{\mu}}(\mathbb{Z}_p)g  \cdot U_{\overline{\mu}}(\mathbb{Z}_p)\right) =\sqcup_i g u_i U_{\overline{\mu}}(\mathbb{Z}_p) \]
where the $u_i$ are representatives for $\left(g^{-1}U_{\overline{\mu}}(\mathbb{Z}_p)g  \cdot U_{\overline{\mu}}(\mathbb{Z}_p)\right) / U_{\overline{\mu}}(\mathbb{Z}_p)$. Then, identifying $f$ with its extension by zero to $\mf{u}_{\overline{\mu},0}^*[\tfrac{1}{p}]$ and treating $s$ as an element of $\mathcal{O}(\igcsaL)^{\mathrm{sm}}$ (for $L=C$), 
\begin{align} [g] \cdot (f \cdot s)=\sum_i (gu_i) \cdot (f \cdot s)&= \sum_i (gu_i \cdot f) \cdot ( gu_i \cdot s)\\
& = \sum_i (g \cdot f) \cdot (gu_i \cdot s)\\
& =(g \cdot f) \cdot \left( \sum_i (gu_i) \cdot s \right)\\
&=(g \cdot f) \cdot ([g] \cdot s)\end{align}
The formula of part (3) of Theorem \ref{Thm:IntroInfiniteLevelTheorem} then follows since the action of $g \cdot f$ on $[g] \cdot s$ is the same as the action of $(g \cdot f)1_{\mf{u}_{\ovmu,0}^*}$ as $1_{\mf{u}_{\ovmu,0}^*}$ acts trivially on $[g] \cdot s$ (which is again a function at Mantovan level). 
\end{proof}

\subsection{The ordinary case} \label{sub:Ordinary} Let the notation be as above, and assume that the local reflex field $E$ is equal to $\qp$. In this case, the $\mu$-ordinary locus is equal to the ordinary locus, see \cite[Corollary 1.0.2]{LeeNewton}. 
We have an equality $\overline{\mu}=\mu$ and thus $$\mathcal{M}_{\overline{\mu}}=\mathcal{M}_{\mu} \qquad \mathcal{P}_{\overline{\mu}}=\mathcal{P}_{\mu} \qquad \mathcal{U}_{\overline{\mu}}=\mathcal{U}_{\mu} \qquad b_{\mu}=\mu(p^{-1}).$$ Moreover, the group scheme $\mathcal{N}=\mathcal{M}_{\mu} \cap \mathcal{U}_{\overline{\mu}}$ is trivial and thus $\overline{\mathcal{P}}_{\dr}=\mathcal{P}_{\dr}$. Furthermore, the formal group $\mathcal{H}$ can be identified with
    \begin{align}
        \mathcal{U}_{\mu}(\zp) \otimes_{\zp} \gmhat
    \end{align}
    or $\mf{u}_{\mu} \otimes_{\zp} \gmhat$. Under these identifications, the natural isomorphism $\operatorname{Lie} \mathcal{H} \xrightarrow{\sim} \operatorname{Lie} \mathcal{U}_{\mu}$ is the identity map. Let us write $\mathcal{M}$ for $\mathcal{M}_{\mu}$ and $M$ for $M_{\mu}$ going forward.

\subsubsection{} The automorphism group $\Aut_{\mathcal{G}}(\mbxcan)$ can be identified with $\mathcal{H }\rtimes \ul{\mathcal{M}(\zp)}$, which acts on $\igmf$. Recall the morphism $\igmf \to \mathcal{P}_{\dR}$, and recall the induced pullback map
\begin{align}
    \mathcal{O}(\mathcal{P}_{\dR}) \to \mathcal{O}(\igmf). 
\end{align}
The integral Fourier theory is simply the isomorphism
\begin{align}
    \mathcal{O}(\mathcal{H})^{\ast} = \mathcal{O}(T_p \mathcal{H}^{\vee}) \simeq \cont(\mf{u}_{\mu}^{\ast}, \zp),
\end{align}
and the Hodge--Tate map is the natural map $\ul{\mathfrak{u}_{\mu}}^{\ast} \to \mathfrak{u}_{\mu}^{\ast} \otimes_{\zp} \mathbb{G}_{a,\spf \zp}$, inducing $\HT^{\ast}:\mathfrak{u}_{\mu} \to \cont(\mf{u}_{\mu}^{\ast}, \zp)$. 
The action of $\mathcal{H}$ on $\igmf$ induces a continuous algebra action of $\cont(\mf{u}_{\mu}^{\ast}, \zp)$ on $\mathcal{O}(\igmf)$. We have the following specialization of Corollary \ref{Cor:IntegralFourierConsequence}.
\begin{Cor} \label{Cor:IntegralFourierConsequenceOrdinary}
The continuous algebra action of $\cont(\mf{u}_{\mu}^{\ast}, \zp)$ on $\mathcal{O}(\igmf)$ satisfies the following properties:
\begin{enumerate}
    \item The action map is semilinear for the natural action of $\mathcal{M}(\zp)$ on $\cont(\mf{u}_{\mu}^{\ast}, \zp)$ and $\mathcal{O}(\igmf)$.
    
    \item The action of $\operatorname{Lie} \mathcal{H} \xrightarrow{\sim} \mf{u}_{\mu}$ on $\mathcal{O}(\igmf)$ given by differentiation coincides with the action of $\mf{u}_{\mu}$ via the natural map $\HT^{\ast}: \mf{u}_{\mu} \to \cont(\mf{u}_{\mu}^{\ast}, \zp)$.

    \item Under pullback along $\mathcal{O}(\mathcal{P}_{\dr}) \to \mathcal{O}(\igmf)$, the action of $\mf{u}_{\mu} $ on the source by algebraic Maass--Shimura operators intertwines with the action of $\mf{u}_{\mu}$ on the target described in (2). 
\end{enumerate}
\end{Cor}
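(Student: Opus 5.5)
This is the specialization of Corollary \ref{Cor:IntegralFourierConsequence} to the case $E=\qp$. The plan is to check that every object appearing there takes the explicit form claimed, and then transport the three properties across these identifications.

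First I will make the ordinary-case identifications precise. Since $\overline{\mu}=\mu$, we have $\mathcal{N}=\mathcal{M}_\mu\cap\mathcal{U}_\mu$, which is trivial. Hence $\mathcal{N}\backslash\pdr=\pdr$ and $\mf{u}_\mu^{\mathcal{N}}=\mf{u}_\mu$. In addition $\mathcal{U}_{\overline{\mu},0}=\mathcal{U}_\mu$, because $\mathcal{U}_\mu$ is abelian. On $\mf{u}_\mu$ the Frobenius $\mathrm{Ad}(b_\mu)\circ\sigma$ acts as $p^{-1}\sigma$, since $\mu$ has weight $1$ on $\mf{u}_\mu$ and $b_\mu=\mu(p^{-1})$. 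Moreover $\Fil^0\mf{w}=\mf{u}_\mu\cap\mf{p}_{\mu^{-1}}=0$. By Remark \ref{Rem:NormalizationsDieudonne} and Proposition \ref{prop.filtered-dieudonne-theory}, these filtered Dieudonn\'e data are exactly those of $\mf{u}_\mu\otimes_{\zp}\gmhat$. Full faithfulness then gives $\mathcal{H}\simeq\mf{u}_\mu\otimes\gmhat$, and the canonical identification \eqref{eq.canonical-idents-LieH} becomes the identity on $\mf{u}_\mu$, in agreement with Lemma \ref{Lem:LieAlgebraH}.

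Next I will identify the dual side. The Serre dual is $\mathcal{H}^\vee=\mf{u}_\mu^*\otimes\qp/\zp$, so $T_p\mathcal{H}^\vee$ is the constant profinite formal scheme $\ul{\mf{u}_\mu^*}$. Therefore $\mathcal{O}(T_p\mathcal{H}^\vee)=\cont(\mf{u}_\mu^*,\zp)$, and the integral Fourier isomorphism $\mathbb{F}^{\mathcal{H}}$ gives $\mathcal{O}(\mathcal{H})^*\simeq\cont(\mf{u}_\mu^*,\zp)$; this is the classical Amice/Mahler transform tensored along $\mf{u}_\mu^*$. For the Hodge--Tate map, note that each $\lambda\in\mf{u}_\mu^*$ defines a homomorphism $\mathcal{H}\to\gmhat$. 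Pulling back $dz/z$ along it gives $\lambda$ itself in $\omega_{\mathcal{H}}=\mf{u}_\mu^*\otimes\zp$. Consequently $\HT^*$ sends $X\in\mf{u}_\mu$ to the linear function $\lambda\mapsto\lambda(X)$.

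With these identifications, the three parts of the statement are parts (1)--(3) of Corollary \ref{Cor:IntegralFourierConsequence}, in the same order. The $\mathcal{M}(\zp)$-semilinearity in (1) is compatible because the identification $\mathcal{H}\simeq\mf{u}_\mu\otimes\gmhat$ is $\mathcal{M}(\zp)$-equivariant through the adjoint action. The step most likely to need care is confirming that the normalizations agree: the covariant convention, under which $\mu_{p^\infty}$ carries Frobenius $\sigma/p$, must match the slope $-1$ Frobenius on $\mf{u}_\mu$. If it did not, we would obtain the \'etale dual instead, and the identification $\mathcal{H}\simeq\mf{u}_\mu\otimes\gmhat$ would fail. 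With that settled, the remaining checks are formal.
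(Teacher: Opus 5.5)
Your proposal is correct and takes the same route as the paper. The paper records the same ordinary-case identifications ($\mathcal{N}$ trivial, $\mathcal{H}\simeq\mf{u}_\mu\otimes_{\zp}\gmhat$ with $\Lie\mathcal{H}\to\mf{u}_\mu$ the identity, $\mathcal{O}(T_p\mathcal{H}^\vee)\simeq\cont(\mf{u}_\mu^*,\zp)$, and $\HT$ the natural map) and obtains the statement directly as a specialization of Corollary \ref{Cor:IntegralFourierConsequence}; you additionally spell out why the filtered Dieudonn\'e data of $\mathcal{U}_{\overline{\mu},0}$ (Frobenius $p^{-1}\sigma$, $\Fil^0=0$) match those of $\mf{u}_\mu\otimes\mu_{p^\infty}$ under the paper's covariant normalization.
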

We have the following specialization of Corollaries \ref{Cor:IntegralSmooth} and \ref{Cor:BddAction}. Define (where the locally convex inductive limit is in the sense of \cite[Section 11.1]{PGS})
\begin{align}
   \mathcal{O}(\igcsa)^{\opn{sm}}_{\mathrm{lcv}}:= \varinjlim_m^{\mathrm{lcv}} \mathcal{O}(\mf{Ig}_{\mathrm{M},m})[\tfrac{1}{p}],
\end{align}
and topologise $\cont(\mf{u}_{\mu,\qp}^{\ast}, \qp)$ as in Example \ref{Eg:OrdinaryBDD}. Note that $\ul{\mathcal{O}(\igcsa)^{\opn{sm}}_{\mathrm{lcv}}} = \mathcal{O}(\igcsa)^{\opn{sm}}$ by Lemma \ref{Lem:InductiveLimitTopologyI} since the inductive limit is strict with closed transition maps, see \S \ref{subsub:strict}.
\begin{Cor} \label{Cor:BddActionOrdinary} \noindent 
\begin{enumerate}
    \item There is a continuous action of $\cont(\mf{u}_{\mu,\qp}^{\ast}, \zp)$ on $\mathcal{O}(\igcsf)^{\mathrm{sm}}$, such that the action on $\mathcal{O}(\igmf) \subset \mathcal{O}(\igcsf)^{\mathrm{sm}}$ factors through the restriction map $$\cont(\mf{u}_{\mu,\qp}^{\ast}, \zp) \to \cont(\mf{u}_{\mu}^{\ast}, \zp)$$ via the action of Corollary \ref{Cor:IntegralFourierConsequenceOrdinary}. 

    \item There is a continuous action of $\cont(\mf{u}_{\mu,\qp}^{\ast}, \qp)$ on $\mathcal{O}(\igcsa)^{\opn{sm}}_{\mathrm{lcv}}$, such that the action on $\mathcal{O}(\igma) \subset \mathcal{O}(\igcsa)^{\opn{sm}}_{\mathrm{lcv}}$ factors through the restriction map $$\cont(\mf{u}_{\mu,\qp}^{\ast}, \qp) \to \cont(\mf{u}_{\mu}^{\ast}, \qp)$$ via the action of Corollary \ref{Cor:IntegralFourierConsequenceOrdinary}.
\end{enumerate}
\end{Cor}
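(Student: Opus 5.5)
This corollary is the ordinary-case specialization of Corollaries \ref{Cor:IntegralSmooth} and \ref{Cor:BddAction}, so the argument identifies the abstract algebras there with explicit function spaces. With $E=\qp$ we have $\mathcal{H}=\mf{u}_{\mu}\otimes_{\zp}\gmhat$. Then $\mathcal{H}_m$ is a quotient of $\tildeh$ by $t^m T_p\mathcal{H}\, t^{-m}$, where $t=\mu(p)$ acts on $\mf{u}_\mu$ by $p^{-1}$. Hence $\mathcal{H}_m\simeq \tfrac{1}{p^m}\mf{u}_\mu\otimes\gmhat$, and $T_p\mathcal{H}_m^\vee$ is the constant profinite formal scheme attached to the dual lattice. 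After identifying dual lattices via $\mf{u}_{\mu,\qp}^*$, this gives $\mathcal{O}(T_p\mathcal{H}_m^\vee)=\cont(\Lambda_m,\zp)$ for lattices $\Lambda_m$ that increase and exhaust $\mf{u}_{\mu,\qp}^*$, with $\Lambda_0=\mf{u}_\mu^*$. The transition maps are the restriction maps. The integral Fourier isomorphism is then the classical Mahler/Amice duality $\mathcal{O}(\widehat{\mathbb{G}}_m^{\,n})^*\simeq \cont(\zp^n,\zp)$, applied at each level.

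For part (1), we take inverse limits: $\mathcal{O}(\tildeh^\vee)=\varprojlim_m\cont(\Lambda_m,\zp)=\cont(\mf{u}_{\mu,\qp}^*,\zp)$ with the inverse limit topology. Corollary \ref{Cor:IntegralSmooth} then gives the continuous action on $\mathcal{O}(\igcsf)^{\mathrm{sm}}$, and its compatibility with the action of $\mathcal{O}(T_p\mathcal{H}^\vee)=\cont(\mf{u}_\mu^*,\zp)$ on $\mathcal{O}(\igmf)$ is exactly the factorization through restriction. One point needs checking: the level-$0$ action in Corollary \ref{Cor:IntegralSmooth} is that of Corollary \ref{Cor:IntegralFourierConsequenceOrdinary}. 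This holds because both come from the coaction of $\mathcal{H}$ on $\igmf$.

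For part (2), Corollary \ref{Cor:BddAction} together with Example \ref{Eg:OrdinaryBDD} (applied coordinatewise) gives a condensed action of $\varprojlim_m\ul{\cont(\Lambda_m,\qp)}=\ul{\cont(\mf{u}_{\mu,\qp}^*,\qp)}$ on $\mathcal{O}(\igcsa)^{\mathrm{sm}}=\ul{\mathcal{O}(\igcsa)^{\mathrm{sm}}_{\mathrm{lcv}}}$, where the last identification was noted just before the statement. To get a genuinely continuous action on the locally convex space, argue level by level. Each $f\in\cont(\mf{u}^*_{\mu,\qp},\qp)$ acts on the Banach space $\mathcal{O}(\mf{Ig}_{\mathrm{M},m})[\tfrac1p]$ through its restriction to $\Lambda_m$. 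For this action $\|f\cdot s\|\le \|f|_{\Lambda_m}\|_\infty\|s\|$, because the integral structures are preserved. So the action map is jointly continuous on $\cont(\Lambda_m,\qp)\times\mathcal{O}(\mf{Ig}_{\mathrm{M},m})[\tfrac1p]$, hence on $\cont(\mf{u}^*_{\mu,\qp},\qp)\times\mathcal{O}(\mf{Ig}_{\mathrm{M},m})[\tfrac1p]$. These level-$m$ maps are compatible along the transition inclusions, which intertwine the actions, as seen in the proof of Corollary \ref{Cor:BddAction}. Therefore they pass to the strict locally convex inductive limit. The main obstacle is exactly this continuity of the bilinear map on the inductive limit. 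Separate continuity is immediate. For joint (hypo)continuity, I would use that the inductive limit is strict, with closed transition maps (\S\ref{subsub:strict}), so bounded subsets lie in a single step. If full joint continuity is not available, I would content myself with separate continuity plus the condensed statement.
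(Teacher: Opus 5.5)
Part (1) is the paper's argument: Corollary~\ref{Cor:IntegralSmooth} together with $\mathcal{O}(\tildeh^{\vee})=\varprojlim_m\cont(\Lambda_m,\zp)=\cont(\mf{u}_{\mu,\qp}^{\ast},\zp)$. There is one slip: $\mathrm{Ad}(t)$ scales $\mf{u}_\mu$ by $p$, not $p^{-1}$, which is why the groups in Lemma~\ref{Lem:ContractingAction} shrink. So $T_p\mathcal{H}_m=p^mT_p\mathcal{H}$ and $\Lambda_m=p^{-m}\mf{u}_\mu^{\ast}$; with your normalization the dual lattices would decrease.

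In part (2), write $X=\cont(\mf{u}_{\mu,\qp}^{\ast},\qp)$, $V_m=\mathcal{O}(\mf{Ig}_{\mathrm{M},m})[\tfrac1p]$ and $V_\infty=\mathcal{O}(\igcsa)^{\opn{sm}}_{\mathrm{lcv}}$. Your level-wise estimates match the paper, but the step ``therefore they pass to the strict locally convex inductive limit'' is a genuine gap. The universal property of $\varinjlim^{\mathrm{lcv}}$ only concerns linear maps. Regularity does not help either: it controls bounded subsets of $V_\infty$, whereas a zero-neighbourhood in $X$ only constrains $f$ on a single lattice $\Lambda_k$.

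In fact the bilinear action map $a\colon X\times V_\infty\to V_\infty$ is not jointly continuous. Let $U=\mathcal{O}(\igcsf)^{\mathrm{sm}}=\bigcup_m\mathcal{O}(\mf{Ig}_{\mathrm{M},m})$. It is an open lattice by \cite[Theorem 11.1.2]{PGS}. By faithful flatness its trace on $V_m$ is $\mathcal{O}(\mf{Ig}_{\mathrm{M},m})$, so $U$ contains no nonzero $\qp$-line. Suppose $a(A\times W)\subset U$ with $A\supseteq\{f:\|f|_{\Lambda_k}\|_\infty\le p^{-c}\}$ and $W$ a zero-neighbourhood. Under Fourier duality, $\mathbf{1}_{\Lambda_k}|_{\Lambda_{k+1}}$ is the Haar idempotent of the finite multiplicative group $\ker(\mathcal{H}_{k+1}\to\mathcal{H}_k)$, under which $\mf{Ig}_{\mathrm{M},k+1}\to\mf{Ig}_{\mathrm{M},k}$ is a torsor. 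Hence $f_N=p^{-N}\mathbf{1}_{\Lambda_{k+1}\setminus\Lambda_k}$ acts on $V_{k+1}$ as $p^{-N}(1-e)$, where $e$ is the projector onto $V_k\subsetneq V_{k+1}$. Each $f_N$ vanishes on $\Lambda_k$, so $f_N\in A$. Take $0\neq u=(1-e)v$ and $c'$ with $p^{c'}u\in W$. Then $a(f_N,p^{c'}u)=p^{c'-N}u\in U$ for all $N$, a contradiction. This is the same phenomenon as for multiplication $C(Y)\times C_c(Y)\to C_c(Y)$ on a locally compact, non-compact $Y$.

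So joint continuity cannot be proved, and your fallback is the correct conclusion. You can in fact say a bit more: $a$ is separately continuous and even hypocontinuous.
\begin{itemize}
\item With respect to bounded $B\subset V_\infty$: $B$ lies boundedly in a single $V_m$, which is where your regularity remark is used.
\item With respect to bounded $D\subset X$: take $W=\sum_m W_m$ with $W_m\subset V_m$ shrunk according to $\sup_{f\in D}\|f|_{\Lambda_m}\|_\infty<\infty$.
\item It also yields the condensed action of Corollary~\ref{Cor:BddAction}.
\end{itemize}
The paper's own proof of (2) has the same defect. The sets $a^{-1}(U_m)$ are not open in $X\times V_\infty$, and openness of their traces on $X\times V_m$ does not suffice. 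So ``continuous'' in (2) is only justified in this weaker sense. Part (1) is unaffected, because there all of $\cont(\mf{u}_{\mu,\qp}^{\ast},\zp)$ preserves the unit ball.
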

\begin{proof}
Part (1) is a direct consequence of Corollary \ref{Cor:IntegralSmooth}. For part (2), we simply note that the proof of Corollary \ref{Cor:BddAction} also establishes that there is a continuous action of $\cont(\mf{u}_{\mu,\qp}^{\ast},\qp)$ on
\begin{align}
    \varinjlim_m^{\mathrm{lcv}} \mathcal{O}(\mf{Ig}_{\mathrm{M},m})[\tfrac{1}{p}]=\mathcal{O}(\igcsa)^{\opn{sm}}_{\mathrm{lcv}}.
\end{align}
Indeed, to check that the action map 
\[ a:\cont(\mf{u}_{\mu,\qp}^{\ast},\qp) \times \mathcal{O}(\igcsa)^{\opn{sm}}_{\mathrm{lcv}} \to \mathcal{O}(\igcsa)^{\opn{sm}}_{\mathrm{lcv}}\] is continuous, it suffices to check that $a^{-1}(U)$ is open for $U$ running over a basis of neighborhoods of $0$ in $\mathcal{O}(\igcsa)^{\opn{sm}}_{\mathrm{lcv}}$. By \cite[Theorem 11.1.2]{PGS}, there is such a basis consisting of $\zp$ lattices $U=\cup_m U_m$, with each $U_m$ an open $\zp$ lattice in $\mathcal{O}(\mf{Ig}_{\mathrm{M},m})[\tfrac{1}{p}]$. Now $a^{-1}(U)=\cup_m a^{-1}(U_m)$, and it thus suffices to show that $a^{-1}(U_m)$ is open. But the action of $\cont(\mf{u}_{\mu,\qp}^{\ast},\qp)$ on $U_m$ factors through the (continuous) action of $\cont(\tfrac{1}{p^m} \mf{u}_{\mu}^{\ast},\qp)$, completing the proof.
\end{proof}

\subsubsection{Going to infinite level} We make the choice of $x$ using a fixed basis $\zeta$ of $\zp(1)$ as in Example \ref{Eg:OrdinaryOmegaZeta}, so that $\Omega_{\zeta}$ is the identity. This gives identifications $T_p H_L^{\vee} = \mf{u}_{\mu}^{\ast}$ and $V_p H_L^{\vee}= \mf{u}_{\mu,\qp}^{\ast}$, so that $\mathcal{O}(V_p H_L^{\vee})^{\gamma-\mathrm{la}}$ is simply the space $\mathcal{O}(\mf{u}_{\mu,\qp}^{\ast},L)^{\mathrm{la}}$ of $L$-valued locally analytic functions on $\mf{u}_{\mu,\qp}^{\ast}$. 

\subsubsection{} The action of $\cont(\mf{u}_{\mu,\qp}^{\ast}, \qp)$ on $\mathcal{O}(\igcsa)^{\opn{sm}}_{\mathrm{lcv}}$ induces an action of (where the tensor products are completed projective tensor products)
\begin{align}
    \cont(\mf{u}_{\mu,\qp}^{\ast}, \qp) \widehat{\otimes}_{\qp} L \qquad \text{on} \qquad \mathcal{O}(\igcsa)^{\opn{sm}}_{\mathrm{lcv}} \widehat{\otimes}_{\qp} L.
\end{align}
The natural map of $L$-Fr\'echet spaces $\cont(\mf{u}_{\mu,\qp}^{\ast}, \qp) \widehat{\otimes}_{\qp} L \to \cont(\mf{u}_{\mu,\qp}^{\ast}, L)$ is an isomorphism, this follows from \cite[Proposition 1.1.29]{EmertonLocallyAnalytic}. Similarly, the natural map $\mathcal{O}(\igcsa)^{\opn{sm}}_{\mathrm{lcv}} \widehat{\otimes}_{\qp} L \to \mathcal{O}(\mf{Ig}_{\mathrm{CS}, \eta, L})^{\opn{sm}}_{\mathrm{lcv}}$ is an isomorphism, where $\mathcal{O}(\mf{Ig}_{\mathrm{CS}, \eta, L})^{\opn{sm}}_{\mathrm{lcv}}$ is defined by taking the locally convex direct limit over $\mathcal{O}(\mf{Ig}_{\mathrm{M},m, \eta, L})$, this follows from \cite[Lemma 1.1.30]{EmertonLocallyAnalytic}.

\subsubsection{} We recall the natural map $\gamma^{\ast}:\mf{u}_{\mu,L} \to \cont(\mf{u}_{\mu,\qp}^{\ast},L)$. We have the following variant of Theorem \ref{Thm:InfiniteLevelTheorem}.

\begin{Thm} \label{Thm:InfiniteLevelTheoremOrdinary}
The action of $\cont(\mf{u}_{\mu,\qp}^{\ast},L)$ on $\mathcal{O}(\mf{Ig}_{\mathrm{CS}, \eta, L})^{\opn{sm}}_{\mathrm{lcv}}$ induced by Corollary \ref{Cor:BddActionOrdinary}.(2) after completed tensor product to $L$ has the following properties: 
\begin{enumerate}
    \item The action of $h \in U_{\mu}(\qp)$ on $\mathcal{O}(\mf{Ig}_{\mathrm{CS}, \eta, L})^{\opn{sm}}_{\mathrm{lcv}}$ corresponds to the action of the locally constant function $f_h \in \cont(\mf{u}_{\mu,\qp}^{\ast},L)$.

    \item The action of $\operatorname{Lie} H_{L} \xrightarrow{\sim} \mf{u}_{\mu,L}$ on $\mathcal{O}(\mf{Ig}_{\mathrm{CS}, \eta, L})^{\opn{sm}}_{\mathrm{lcv}}$ given by differentiation coincides with the action of $\mf{u}_{\mu,L}$ via $\gamma^{\ast}$.

   \item The differentiation action of $\operatorname{Lie} H_L \xrightarrow{\sim} \mf{u}_{\mu,L}$ on $\mathcal{O}(\mf{Ig}_{\mathrm{CS}, \eta, L})^{\opn{sm}}_{\mathrm{lcv}}$ intertwines with the action of $\mf{u}_{\mu,L}$ on $\mathcal{O}(P_{\dr,\infty}^{\mathrm{an}})^{\mathrm{sm}}_{\mathrm{lcv}}$ through algebraic Maass--Shimura operators, via the pullback map
    \begin{align}
        \mathcal{O}(P_{\dr,\infty}^{\mathrm{an}})^{\mathrm{sm}}_{\mathrm{lcv}} \to \mathcal{O}(\mf{Ig}_{\mathrm{CS}, \eta, L})^{\opn{sm}}_{\mathrm{lcv}}.
    \end{align}

    \item The action of $\cont(\mf{u}_{\mu,\qp}^{\ast}, L)$ on $\mathcal{O}(\mf{Ig}_{\mathrm{CS}, \eta, L})^{\opn{sm}}_{\mathrm{lcv}}$ is $M(\qp)$-semilinear for the action of $M(\qp)$ on $\mathcal{O}(\mf{Ig}_{\mathrm{CS}, \eta, L})^{\opn{sm}}_{\mathrm{lcv}}$ and $\cont(\mf{u}_{\mu,\qp}^{\ast}, L)$. 

    \item The natural action of $P_{\mu}(\qp)$ on $\mathcal{O}(\mf{Ig}_{\mathrm{CS}, \eta, L})^{\opn{sm}}_{\mathrm{lcv}}$ intertwines with the twisted action of $P_{\mu}(\qp)$ on $\mathcal{O}(P_{\dr,\infty}^{\mathrm{an}})^{\mathrm{sm}}_{\mathrm{lcv}}$ described in \S \ref{subsub:TwistedHeckeAction} via the pullback map
    \begin{align}
        \mathcal{O}(P_{\dr,\infty}^{\mathrm{an}})^{\mathrm{sm}}_{\mathrm{lcv}} \to \mathcal{O}(\mf{Ig}_{\mathrm{CS}, \eta, L})^{\opn{sm}}_{\mathrm{lcv}}.
    \end{align}
 
\end{enumerate}
\end{Thm}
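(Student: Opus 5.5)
The plan is to deduce each part by specializing Theorem \ref{Thm:InfiniteLevelTheorem} (and its proof) to the ordinary case, and then to pass from the condensed/solid formulation to the locally convex one. In the ordinary case the choice of $x$ via $\zeta$ as in Example \ref{Eg:OrdinaryOmegaZeta} makes $\Omega_\zeta$ the identity. Hence $\gamma$ is the identity map $\mf{u}_{\mu}^{\ast}\to\mf{u}_{\mu}^{\ast}$, and $\mathcal{O}^{\gamma-\mathrm{la}}(\mf{u}_{\mu,\qp}^{\ast},L)=\mathcal{O}^{\mathrm{la}}(\mf{u}_{\mu,\qp}^{\ast},L)$. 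The first step is to check that the action of Corollary \ref{Cor:BddActionOrdinary}.(2), completed to $L$, restricts along $\mathcal{O}^{\mathrm{la}}\subset\cont(\mf{u}_{\mu,\qp}^{\ast},L)$ to the action of Theorem \ref{Thm:InfiniteLevelTheorem}. Both actions come from the same compatible coactions of $\mathcal{H}_m$ on $\mf{Ig}_{\mathrm{M},m}$. In the ordinary case $r^{\mathrm{int}}$ is an isomorphism onto $\cont(\mf{u}_{\mu}^{\ast},\qp)$, since $\mathcal{H}_m$ is multiplicative. Diagram \eqref{Eq:BigDiagram} then shows that the rational Fourier action factors through the integral one. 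After that, the underlying condensed objects are compared using $\ul{\mathcal{O}(\igcsa)^{\opn{sm}}_{\mathrm{lcv}}}=\mathcal{O}(\igcsa)^{\opn{sm}}$ and the isomorphisms from \cite[Proposition 1.1.29, Lemma 1.1.30]{EmertonLocallyAnalytic}.

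Once this compatibility is established, the parts follow one by one.

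\textbf{Parts (2) and (4).} Part (2) is Corollary \ref{Cor:IntegralFourierConsequenceOrdinary}.(2), extended to each level $m$ via conjugation by $t^m$ and then scalar-extended. Part (4) comes from the $\mathcal{M}(\zp)$-semilinearity at each level, together with the equivariance of the tower under $t$. The element $t$ conjugates $\mathcal{H}_{m}$ to $\mathcal{H}_{m+1}$ and acts on $\mf{u}_{\mu,\qp}^{\ast}$ by the coadjoint action, so $M(\qp)=\mathcal{M}(\zp)\cdot t^{\mathbb{Z}}$ acts semilinearly.

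\textbf{Parts (3) and (5).} These are parts (5) and (6) of Theorem \ref{Thm:InfiniteLevelTheorem}. They are statements purely about the pullback map and the Lie algebra and group actions, so they are insensitive to which topology is used. The only thing to check is that $\mathcal{O}(P^{\mathrm{an}}_{\dr,\infty})^{\mathrm{sm}}_{\mathrm{lcv}}$ maps continuously into $\mathcal{O}(\mf{Ig}_{\mathrm{CS},\eta,L})^{\opn{sm}}_{\mathrm{lcv}}$. This holds levelwise, hence on the locally convex colimit.

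\textbf{Part (1).} This is the analogue of Theorem \ref{Thm:InfiniteLevelTheorem}.(2), and it needs care because $f_h$ is locally constant but the action is now of all continuous functions. The plan is to reprove it directly. At level $m$, the element $h$ acts through $H_{m}[p^\infty]=\mathcal{H}_m[p^\infty]$, and the point $h\in\mathcal{H}_m(\mathcal{O}_L)$ gives the evaluation functional $\delta_h\in\mathcal{O}(\mathcal{H}_m)^{\ast}$. Under $\mathbb{F}^{\mathcal{H}_m}$, $\delta_h$ goes to the character of $T_p\mathcal{H}_m^{\vee}$ given by the Weil pairing with $h$; this is by definition of the integral Fourier transform \cite[Proposition 7.1.12]{FourierPaper}. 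With the identification fixed by $\zeta$, that character is $f_h$.

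The main obstacle I expect is this bookkeeping of identifications: matching the Weil pairing normalization with the identification $V_pH_L=U_\mu(\qp)$ from \S\ref{sub:TateModuleH}, and checking that $\zeta$ makes the pairing agree with the one defining $f_h$. I would verify this first in the case $\mathcal{H}=\gmhat$, using the explicit description in Example \ref{Eg:OrdinaryOmegaZeta}.
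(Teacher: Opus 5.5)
Your overall route is the paper's. Everything is deduced from Theorem~\ref{Thm:InfiniteLevelTheorem} once two things are known. First, the locally analytic action there is the restriction of the continuous action of Corollary~\ref{Cor:BddActionOrdinary}(2); this holds by construction, since in the ordinary case $\mathcal{O}[\tfrac{1}{p}](V_p\mathcal{H}^\vee)=\ul{\cont(\mf{u}^{\ast}_{\mu,\qp},\qp)}$. Second, the locally convex and condensed spaces have the same underlying $L$-vector spaces.

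Parts (2), (3) and (5) are fine as you describe them. Part (1) needs no separate argument: $f_h$ is locally constant, hence locally analytic, so (1) is Theorem~\ref{Thm:InfiniteLevelTheorem}(2) verbatim. Your integral computation with $\delta_h$ is valid but redundant.

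The gap is in (4): the equality $M(\qp)=\mathcal{M}(\zp)\cdot t^{\mathbb{Z}}$ is false.
\begin{itemize}
\item Since $M$ is the centralizer of $\mu$, $t=\mu(p)$ is central in $M$. So $\mathcal{M}(\zp)\,t^{\mathbb{Z}}$ is compact modulo the central cyclic subgroup $t^{\mathbb{Z}}$, whereas $M(\qp)$ is not.
\item Already for $\opn{GL}_2$ this subgroup misses $pI$.
\item For $\opn{GSp}_{2g}$ with $g\geq 2$, where $M\simeq\opn{GL}_g\times\mathbb{G}_m$, it misses $\mathrm{diag}(p,1,\dots,1)\in\opn{GL}_g(\qp)$.
\end{itemize}
So you have only proved semilinearity for a proper subgroup. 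Your mechanism, shifting the tower $\{\mf{Ig}_{\mathrm{M},m}\}$ by one level, does not apply to a general $g$, because $\mathrm{Ad}(g)$ can have weights of both signs on $\mf{u}_\mu$.

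There are two ways to repair this.
\begin{itemize}
\item Cite Theorem~\ref{Thm:InfiniteLevelTheorem}(4), which gives $M(\qp)$-semilinearity for the locally analytic action. Then extend to $\cont(\mf{u}^{\ast}_{\mu,\qp},L)$ using density of locally constant functions and continuity of the action.
\item Run the equivariance argument for an arbitrary $g\in M(\qp)\subset\Aut_G(\tildex)$. The element $g$ normalizes the image of $\tildeh$, acting through $\mathrm{Ad}(g)$ on $\mf{u}_\mu$, and the action map is $g$-equivariant. Moreover $g\,\Aut_{\mathcal{G}}(\mbxcan)^{\circ}_{m'}\,g^{-1}\subset\Aut_{\mathcal{G}}(\mbxcan)^{\circ}_{m}$ for $m'\gg m$. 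This is a bounded-denominators statement for $\mathrm{Ad}(g)$, since $\mathrm{Ad}(t)$ is multiplication by $p$ on $\mf{u}_\mu$. Hence $g$ carries $\mathcal{O}(\mf{Ig}_{\mathrm{M},m})$ into $\mathcal{O}(\mf{Ig}_{\mathrm{M},m'})$ compatibly with the coactions, and dualizing gives semilinearity for the coadjoint action.
\end{itemize}
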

\begin{proof}
All statements follow from Theorem \ref{Thm:InfiniteLevelTheorem} using the natural isomorphisms of $L$-vector spaces\footnote{It is not clear to us if this identification is continuous for the natural topology on the right hand side, see Question \ref{Question:Appendix} in Appendix \ref{appendix.locally-convex-inductive-limits}  for a discussion. This does not affect the proof, since all statements can be checked on the underlying $L$-vector spaces.}
\begin{align}
    \mathcal{O}(P_{\dr,\infty}^{\mathrm{an}})^{\mathrm{sm}}_{\mathrm{lcv}} &\simeq \mathcal{O}(P_{\dr,\infty}^{\mathrm{an}})^{\mathrm{sm}}(\ast) \\
     \mathcal{O}(\mf{Ig}_{\mathrm{CS}, \eta, L})^{\opn{sm}}_{\mathrm{lcv}} &\simeq \mathcal{O}(\igcsdL)^{\opn{sm}}(\ast).
\end{align}
\end{proof}

\subsection{A non-ordinary example} \label{sub:ExampleII} Consider the setup of \S \ref{subsub:PELNotation} and furthermore assume that $\mathsf{F}^+ \not=\mathbb{Q}$ and that the signatures of $\V$ are $(n-1,1),(n,0), \cdots (n,0)$, with $n \ge 2$. Suppose that $p$ is inert in $\mathsf{F}_0$ and splits into $p=\mf{p}_1 \mf{p}_2$ in $\mathsf{F}$. The reflex field $\mathsf{E}$ of $\gx$ is equal to $\mathsf{F}$ in this case (since $\mathsf{F}^+ \not=\mathbb{Q}$). We will now compute $\mathcal{H}$ and $\Omega$ very explicitly in this example: Write $F=\mathsf{F}_{\mathfrak{p}_1}=F_{0,p}$ for the local reflex field, and 
\begin{align}
    \mathcal{O}_F \otimes_{\zp} \mathcal{O}_F = \prod_{i=1}^r \mathcal{O}_F,
\end{align}
with Frobenius sending the $i$-th factor to $i+1$ modulo $r$. 
\subsubsection{} We have $\mathcal{G}=\operatorname{Res}_{\mathcal{O}_{F}/\zp} \operatorname{GL}_{n} \times \mathbb{G}_{m}$. If we write $$\mathcal{G}_{\mathcal{O}_F} \simeq \mathbb{G}_{m} \times \prod_{i=1}^r \operatorname{GL}_{n},$$ then the cocharacter $\mu$ corresponds to $(\mu_1, 0, \cdots, 0)$, where $\mu_1$ is the first fundamental cocharacter of $\operatorname{GL}_{n}$. The parabolic subgroup $\mathcal{P}_{\mu} \subset \mathcal{G}_{\mathcal{O}_{F}}$ is
\begin{align}
    \mathbb{G}_{m} \times \mathcal{P}_{1} \times \prod_{i=2}^r \operatorname{GL}_{n}  \subset \mathbb{G}_{m} \times \prod_{i=1}^r \operatorname{GL}_{n}
\end{align}
and $\mathcal{P}_{\overline{\mu}}$ is $\mathbb{G}_{m} \times \operatorname{Res}_{\mathcal{O}_{F}/\zp} \mathcal{P}_{1}$. The unipotent radical of $P_{1,F}$ can be identified with $\mathcal{O}_{F}^{\oplus n-1}$, on which conjugation by $\mu_1(p^{-1})$ acts by $p^{-1}$. Thus the Dieudonn\'e module of $\mathcal{H}$ is given by $\mathcal{O}_{F}^{\oplus n-1} \otimes_{\zp} \mathcal{O}_F = \prod_{i=1}^r \mathcal{O}_F^{\oplus n-1}$, with Frobenius acting by
\begin{align}
    (v_1, \cdots, v_r) \mapsto (p^{-1}v_r, v_1, \cdots, v_{r-1}).
\end{align}
The admissible filtration corresponds to the subspace $\{0\}\times \prod_{i=2}^r \mathcal{O}_F^{\oplus n-1}$.  We thus see that $\mathbb{D}(\mathcal{H})$ is isomorphic as a filtered covariant Dieudonn\'e module to $\mathbb{D}(\mathscr{G})^{\oplus n-1}$, where $\mathscr{G}$ is the Lubin--Tate formal group over $\spf \mathcal{O}_F$ corresponding to the uniformizer $p \in \mathcal{O}_F$; note that $\mathcal{O}_F$ acts on $\mathscr{G}$. 

\subsubsection{} Let $L$ be the (complete) Lubin--Tate extension of $\mathcal{O}_F$ corresponding to the choice of uniformizer $p$. Then by Lubin--Tate theory, there is an $\mathcal{O}_F$-linear isomorphism $\mathcal{O}_F \xrightarrow{\sim} T_p \mathscr{G}_{\eta,L}$, well defined up to $\mathcal{O}_F^{\times}$. This induces a similar isomorphism $\mathcal{O}_F^{\oplus n-1} \xrightarrow{\sim} T_p \mathcal{H}_{\eta,L}$, defining a point $x$ as in Proposition \ref{Prop:CompatibleChoice}. The ring $\mathcal{O}^\locan(T_p H^\vee, L)^{\gamma-\mathrm{la}}$ can be identified with a (solid) tensor product of $n-1$ copies of the ring of $L$-valued functions on $\mathcal{O}_{F}$ which are $\mathcal{O}_{F}$-analytic via the embedding $w:\mathcal{O}_{F} \to L$. The group $\mc{N}$ is nontrivial, but it acts trivially on $U_{\mu}$ since $\mathcal{U}_{\overline{\mu}}$ is abelian (see the proof of Lemma \ref{Lem:LieAlgebraH}). We thus have $U_{\mu}=U_{\mu}^{\mathcal{N}}$ and $\operatorname{Sym}(\mf{u}_{\mu})=(\operatorname{Sym} \mf{u}_{\mu})^{\mathcal{N}}$.

\subsubsection{}\label{sss.period-description} We now describe the automorphism $\Omega$ of \S \ref{subsub:EtaleCrystalline}. First note that we have the Hodge-Tate comparison for $\mathscr{G}$, defined over $L$ in this case, 
\[ \HT: (\Lie \mathscr{G})\otimes L(1) \xrightarrow{\sim} \Fil^1_{\HT} (T_p \mathscr{G} \otimes L). \]
Then, as in \eqref{eq.HT-comp-Omega}, $\Omega$ is  the composition
\[ \mathcal{O}_F^{n-1} \otimes_{\mathcal{O}_F} L(1)=(\Lie \mathscr{G})^{n-1}\otimes L(1) \xrightarrow{\HT^{\oplus n-1}} \Fil^1_{\HT} (T_p \mathscr{G} \otimes L)^{n-1}=\mathcal{O}_F^{n-1} \otimes L. \]
In particular, if we choose a trivialization $\zeta$ of $\mathbb{Z}_p(1)$, then $\Omega_\zeta$ is multiplication by a Lubin--Tate period in each coordinate: concretely, letting $1_\et \in \mathcal{O}_F=T_p \mathscr{G}$ be the fixed basis vector as a $\mathcal{O}_F$-module, then we obtain a basis element $q(1_\et)$ for 
\[ \Fil^1_{\HT} (T_p \mathscr{G} \otimes_{\mathbb{Z}_p} L) \subseteq   T_p \mathscr{G} \otimes_{\mathbb{Z}_p} L  = \prod_{\mathcal{O}_F \rightarrow L} L\] 
by projection of $1_\et$ to the $w$-component, i.e., by the element in the product that is $1$ for the embedding $w$ and $0$ for all other embeddings. We also have a basis $1_\dR \in \mathcal{O}_F=\Lie \mathscr{G}$, and $\Omega_\zeta$ is multiplication in each coordinate by $\frac{\HT(1_\dR \otimes \zeta)}{q(1_\et)} \in L$. 

\begin{Rem}
    It is not clear to us how $\Omega_{\zeta}$ is related to the Lubin--Tate period of \cite[Appendix]{SchneiderTeitelbaumFourier}.
\end{Rem}

\begin{Rem}
    When $n=2$, the Shimura variety is a unitary Shimura curve. The action of the Lubin--Tate group $\mathscr{G}$ on $\igmf$ should be closely related to the action of $\mathscr{G}$ on $\mathcal{X}'(\infty)$ constructed in \cite[Section 2.5]{LiuZhangZhang}.
\end{Rem}

\subsection{A second non-ordinary example} \label{sub:ExampleIII} We now give a simple example where the natural inclusion $\mf{u}_{\mu}^{\mathcal{N}} \hookrightarrow \mf{u}_{\mu}$ is strict.

\subsubsection{} Consider the setup of \S \ref{subsub:PELNotation} and furthermore assume that $\mathsf{F}^+ =\mathbb{Q}$ and that the signature of $\V$ is $(2,1)$; then $\mathsf{E}=\mathsf{F}$. Suppose that $p$ is inert in $\mathsf{F}$. In this case the canonical lift of the $\mu$-ordinary $p$-divisible group is isomorphic to
\begin{align}
(\qp/\zp \otimes_{\zp} \mathcal{O}_E) \oplus \operatorname{LT} \oplus (\mu_{p^{\infty} } \otimes_{\zp} \mathcal{O}_E),
\end{align}
where $\operatorname{LT}$ is the dimension one height two Lubin--Tate formal group for $E$. We can write its $\mathcal{O}_E$-linear automorphism group as
\begin{align}
\operatorname{Aut}_{\mathcal{O}_E}(\mbxcan)=
\begin{pmatrix}
\ul{\mathcal{O}_E^{\times}} & \hom_{}(\qp/\zp, \operatorname{LT}) & T_p \mu_{p^{\infty}} \otimes_{\zp} \mathcal{O}_E \\
0 & \ul{\mathcal{O}_E^{\times}} & \hom_{\mathcal{O}_E}(\operatorname{LT}, \mu_{p^{\infty}} \otimes_{\zp} \mathcal{O}_E) \\
0 & 0 & \ul{\mathcal{O}_E^{\times}}
\end{pmatrix}.
\end{align}
It follows that we have a noncommutative extension
\begin{align}
1 \to T_p \mu_{p^{\infty}} \to \operatorname{Aut}_{\mathcal{G}}(\mbxcan)^{\circ} \to  T_p \operatorname{LT} \to 1.
\end{align}

\subsubsection{} In this case $\mathcal{H}=\gmhat$, and the natural map $\Lie \mathcal{H} \xrightarrow{\sim} \mf{u}_{\mu}^{\mathcal{N}} \to \mf{u}_{\mu}$ is not surjective, because the latter is two-dimensional.


\section{Near overconvergence of the action} \label{Sec:NearlyOC}

In this section, we discuss the near overconvergence of the action in Corollary \ref{Cor:IntegralFourierConsequenceOrdinary} in the ordinary case, following \cite{GrahamPilloniRodriguesJacinto}. We will use locally convex functional analysis over $\qp$ as in Section \ref{sub:Ordinary}. We will write $\mathcal{O}^{\opn{la}}(\mathfrak{u}_{\mu}^*, \mbb{Q}_p)$ for the locally convex $\qp$ vector space of locally analytic functions on $\mathfrak{u}_{\mu}^*$. After passing to condensed $\ul{\qp}$-vector spaces, this recovers the previous definition by Lemma \ref{Lem:InductiveLimitTopologyII} (see \S \ref{subsub:compactoid} and the proof of \cite[Lemma 6.5.2]{FourierPaper} for the compactoidness of the transition maps).

\subsection{Main result} Suppose that we are in the setting of \S \ref{subsub:FramingObject} and suppose moreover that $E=\qp$ so that $\overline{\mu}=\mu$ and $\mathcal{N}=\{1\}$. Let $P^{\opn{an}}_{\opn{dR}}$ be the analytification of the generic fiber of $\pdr$, which receives a morphism 
$\mathfrak{Ig}_{M, \eta} \to P_{\opn{dR}}^{\opn{an}}$. Let 
\begin{align}
    \mathscr{N}^{\dagger} := \varinjlim^{\mathrm{lcv}}_U \opn{H}^0(U, \mathcal{O}_{P^{\opn{an}}_{\opn{dR}}}),
\end{align}
where the locally convex inductive limit (see \cite[Section 11.1]{PGS}) is over all open neighborhoods $U \subset P^{\opn{an}}_{\opn{dR}}$ containing the closure of $\mathfrak{Ig}_{M, \eta}$. We refer to $\mathscr{N}^{\dagger}$ as the space of nearly overconvergent automorphic forms. 

Consider $\mathcal{M}_{\dr} = \mathcal{P}_{\dr} \times^{\mathcal{P}_{\mu^{-1}}} \mathcal{M}_{\mu^{-1}}$ with generic fiber $M_{\dr}$. We define the space of overconvergent automorphic forms as the locally convex inductive limit
\begin{align}
    \mathscr{M}^{\dagger} := \varinjlim^{\mathrm{lcv}}_U \opn{H}^0(U, \mathcal{O}_{M^{\opn{an}}_{\opn{dR}}}),
\end{align}
over all neighborhoods $U \subset M_{\opn{dR}}^{\opn{an}}$ containing the closure of $\mathfrak{Ig}_{M, \eta}$ (via the natural map $\mathfrak{Ig}_{M, \eta} \to M_{\opn{dR}}^{\opn{an}}$). We have a natural inclusion $\mathscr{M}^{\dagger} \subset \mathscr{N}^{\dagger}$ induced from pullback along $P_{\opn{dR}}^{\opn{an}} \to M_{\opn{dR}}^{\opn{an}}$.

\begin{Thm} \label{Thm:NOCmainThm}
The $\qp$-vector space $\mathscr{N}^{\dagger}$ is an LB-space of compact type which is a topological $(\mathfrak{g}, \mathcal{M}_{\mu}(\mbb{Z}_p))$-module, and comes equipped with a continuous $\mathcal{M}_{\mu}(\mbb{Z}_p)$-semilinear algebra action of $\mathcal{O}^{\opn{la}}(\mathfrak{u}_{\mu}^*, \mbb{Q}_p)$ on $\mathscr{N}^{\dagger}$ with the following properties:
    \begin{enumerate}
        \item The action of $\opn{Sym} \mathfrak{u}_{\mu}$ on $\mathscr{N}^{\dagger}$ through its $\mathfrak{g}$-module structure coincides with the action through the map $\opn{Sym} \mathfrak{u}_{\mu} \subset \mathcal{O}^{\opn{la}}(\mathfrak{u}_{\mu}^*, \mbb{Q}_p)$.
        \item The natural pullback map $\mathscr{N}^{\dagger} \to \mathcal{O}(\mathfrak{Ig}_{M, \eta})$ is $\mathcal{O}^{\opn{la}}(\mathfrak{u}_{\mu}^*, \mbb{Q}_p)$-equivariant.
        \item The natural map $\mathcal{O}(P^{\opn{an}}_{\opn{dR}}) \to \mathscr{N}^{\dagger}$ is $(\mathfrak{g}, \mathcal{M}_{\mu}(\mbb{Z}_p))$-equivariant.
        \item $\mathscr{M}^{\dagger} \subset \mathscr{N}^{\dagger}$ is identified with the subspace of nearly overconvergent automorphic forms killed by the action of $\mathfrak{u}_{\mu^{-1}}$.
    \end{enumerate}
\end{Thm}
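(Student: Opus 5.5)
The plan is to follow the strategy of \cite{GrahamPilloniRodriguesJacinto} and \cite[\S7]{UFJ}, first reducing to the Siegel case and then proving the Siegel case by an explicit analysis in a strict neighbourhood of the ordinary locus.

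\textbf{Functional analysis and structural properties.} The LB-structure of $\mathscr{N}^{\dagger}$ comes from choosing a cofinal decreasing sequence of affinoid (or quasi-Stein) strict neighbourhoods $U_n$ of the closure of $\mathfrak{Ig}_{M,\eta}$. The transition maps $\opn{H}^0(U_n,\mathcal{O})\to \opn{H}^0(U_{n+1},\mathcal{O})$ are compact, since $U_{n+1}\Subset U_n$ relative to the base, so $\mathscr{N}^\dagger$ is of compact type. The $(\mathfrak{g},\mathcal{M}_\mu(\zp))$-module structure is inherited from $\mathcal{O}_{P_{\dr}^{\an}}$ via Lemma \ref{Lem.inclusion-of-Lie-algebras}. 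Since vector fields act continuously on sections over each $U_n$, this gives (3). For (4), a function on $P^{\an}_{\dr}$ killed by $\mathfrak{u}_{\mu^{-1}}$ is invariant under the unipotent radical of $P_{\mu^{-1}}$ on a neighbourhood, hence descends to $M_{\dr}^{\an}$. This works because $P_{\dr}\to M_{\dr}$ is a torsor for a connected unipotent group, whose orbits are connected.

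\textbf{Reduction to the Siegel case.} The closed immersion $\scrs_K\gx\to\scrs_U\gvx$ induces a closed immersion $P_{\dr}\gx\to P_{\dr}\gvx$ compatible with the maps from the Igusa varieties (Proposition \ref{Prop:MapToPdr}). It is also compatible with the actions of $\mathfrak{u}_\mu\subset\mathfrak{u}_{\mu_V}$ (Claim \ref{Claim:ReductiontoGSP}) and of the formal groups $\mathcal{H}\to\mathcal{H}_V$, since in the ordinary case $\mathcal{H}=\mf{u}_\mu\otimes\gmhat$. Under the restriction $\mathcal{O}^{\opn{la}}(\mathfrak{u}_{\mu_V}^*)\to\mathcal{O}^{\opn{la}}(\mathfrak{u}_\mu^*)$ along the surjection $\mathfrak{u}_{\mu_V}^*\to\mathfrak{u}_\mu^*$, the actions are then compatible, and it suffices to treat $\gvx$. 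Indeed, restriction $\opn{H}^0(U_V,\mathcal{O})\to\opn{H}^0(U_V\cap P_{\dr},\mathcal{O})$ is surjective on affinoids, so neighbourhoods are cofinal. A function $f$ on $\mathfrak{u}_\mu^*$ acts on a section $s$ by lifting $s$ to the Siegel case, acting by $f\circ\pi$, and restricting. Well-definedness follows because the result on $\mathcal{O}(\mathfrak{Ig}_{M,\eta})$ agrees with the known action (Corollary \ref{Cor:IntegralFourierConsequenceOrdinary}) and the map $\mathscr{N}^\dagger\to\mathcal{O}(\mathfrak{Ig}_{M,\eta})$ is injective. Injectivity holds because $\mathfrak{Ig}_{M,\eta}$ maps onto a Zariski-dense subset fibrewise over the ordinary locus, and the ordinary locus is Zariski dense.

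\textbf{Siegel case; the main obstacle.} Over a strict neighbourhood of the ordinary locus, the canonical subgroup gives a partial unit-root splitting. Over a neighbourhood $U$ of the image of $\mathfrak{Ig}_{M,\eta}$ we choose coordinates $P_{\dr}^{\an}|_U\cong \mathcal{V}\times U_{\mu^{-1}}$, where $\mathcal{V}$ is a strict neighbourhood of $\mathfrak{Ig}_{M,\eta}$ in a suitable cover of the Shimura variety at level $\Gamma(p^m)$-type. This choice uses the Hodge--Tate/Serre--Tate coordinate $q$ and the fibre coordinates. In these coordinates, a locally analytic function of radius $p^{-r}$ corresponds to a power series $\sum_{\mathbf{n}} a_{\mathbf{n}} \binom{\nabla}{\mathbf{n}}$ in the operators $\nabla$ of $\mathfrak{u}_\mu$, after translating by locally constant pieces. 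Those pieces act via the $U_\mu(\qp)$ Hecke translations of Theorem \ref{Thm:InfiniteLevelTheoremOrdinary}(1), which preserve overconvergence. The heart of the argument is a norm estimate: $\nabla/p^r$, or rather $\binom{\nabla}{n}$ suitably normalized, acts with operator norm bounded by $C\cdot p^{-\epsilon n}$ on $\opn{H}^0(U_n,\mathcal{O})$ after shrinking $U_n$ depending on $r$. In the chosen coordinates, $\nabla$ acts as $q\,d/dq$ plus a term that is small on the neighbourhood; this is where the choice of coordinates eliminating the nilpotent part, as in Proposition \ref{Prop:OrdinarySiegelOCcomp}, should enter. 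I expect this estimate (uniform convergence of $\sum a_{\mathbf{n}}\binom{\nabla}{\mathbf{n}}$ on overconvergent sections) to be the main difficulty.

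Given the estimate, the action extends continuously to $\mathcal{O}^{\opn{la}}(\mathfrak{u}_\mu^*,\qp)$. Compatibility (2) and property (1) follow by checking on polynomials and locally constant functions, which are dense, against Corollary \ref{Cor:IntegralFourierConsequenceOrdinary} and Theorem \ref{Thm:InfiniteLevelTheoremOrdinary}. Uniqueness and $\mathcal{M}_\mu(\zp)$-semilinearity follow from density and the injectivity of pullback to $\mathfrak{Ig}_{M,\eta}$.
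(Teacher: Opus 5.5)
The soft parts of your plan (LB/compact type, (3), (4)) are fine as sketches. The two steps that carry the weight, however, both have gaps.

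\textbf{Siegel case.} You leave the key estimate as an expectation, and the mechanism you sketch would not produce it.

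First, the coordinates are wrong. In coordinates $\mathcal{V}\times U_{\mu^{-1}}$ with the full unipotent fibre, the vertical component of $\partial_w$ is not small. For $g=\smat{1 & \\ Y & 1}$ and $w=\smat{0 & W\\ 0 & 0}$ it equals $[\opn{Ad}(g)w]=\smat{-WY & 0\\ -YWY & YW}$, which has size $|Y|$. The missing idea is to reduce the structure group of $P^{\an}_{\dR}$ over the ordinary locus to $\mathcal{P}_{\square,n}=\mathcal{M}_\mu(\zp)\mathcal{P}_{\mu^{-1},n}$. Here $\mathcal{P}_{\mu^{-1},n}$ consists of elements congruent to $1$ mod $p^n$ that lie in $\mathcal{M}_\mu$ mod $p^{2n}$. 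For this group $[\opn{Ad}(g)w]\in p^n\opn{Lie}^+\mathcal{P}_{\mu^{-1},n}$. Combined with Theorem~\ref{Thm:IdentificationAction}, this shows $\partial_w$ is integral and $\partial_w\equiv\theta_w\times 1$ mod $p^n$ on $\mathfrak{Ig}_{M,\eta}\times\mathcal{P}_{\mu^{-1},n}$ (Lemma~\ref{Lem:PartialWequivThetaW}).

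Since $\theta_w$ already extends to the action of Corollary~\ref{Cor:IntegralFourierConsequenceOrdinary}, \cite[Lemma 3.2.1]{UFJ} gives a $C_\varepsilon(\mathfrak{u}_\mu^*,\qp)$-action for $n\ge n(\varepsilon)$ directly through Mahler expansions $\sum a_k\binom{\partial_w}{k}$. You do not need to split off locally constant pieces via $U_\mu(\qp)$-translations. Your claim that those translations preserve overconvergence is also unproved.

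Second, on a strict neighbourhood there is no Igusa tower or unit-root splitting, so there is no $\theta_w$ to compare $\nabla$ with. The statement ``$\nabla=q\,d/dq$ plus a small term on the neighbourhood'' therefore has no content there. The paper proceeds in three steps:
\begin{enumerate}
\item It proves the estimate over the ordinary locus (Proposition~\ref{Prop:OrdinarySiegelOCcomp}).
\item It overconverges the torsor $P^{\mathrm{ord}}_{\dR,n}$ to $P_{\dR,n,r}$.
\item It transfers the bound to $\varinjlim_s$ using the three-circles type inequality $\|v\|_\infty\le p^{c-m},\ \|v\|_r\le p^{c}\Rightarrow\|v\|_s\le p^{c-\delta m}$ together with \cite[Proposition 3.4.1]{UFJ}.
\end{enumerate}
This transfer step is absent from your plan.

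\textbf{Reduction to $\gvx$.} Your well-definedness argument rests on injectivity of $\mathscr{N}^\dagger\to\mathcal{O}(\mathfrak{Ig}_{M,\eta})$, and the reason you give is false. Over a point of the ordinary locus, the image of $\mathfrak{Ig}_{M,\eta}$ in the $P_{\mu^{-1}}$-torsor fibre is a single $\mathcal{M}_\mu(\zp)$-orbit inside the unit-root $M_\mu$-reduction. This is not Zariski dense, since $\mathfrak{u}_\mu\neq 0$. Concretely, over the ordinary locus the $U_{\mu^{-1}}$-coordinate relative to the unit-root splitting is a nonzero function vanishing on that image. Excluding such functions from $\mathscr{N}^\dagger$ would require a genuine non-overconvergence argument.

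The paper avoids injectivity altogether:
\begin{enumerate}
\item The map $\mathscr{N}^\dagger\gvx_B\to\mathscr{N}^\dagger\gx_B$ is surjective.
\item Its kernel is a closed ideal in each term of the colimit.
\item The kernel is stable under $\opn{Sym}\mathfrak{u}_\mu$, because the vector fields coming from $\mathfrak{g}\subset\mathfrak{g}_V$ are tangent to $P_{\dR}\gx$.
\item By density of polynomials in $\mathcal{O}^{\opn{la}}(\mathfrak{u}_\mu^*,\qp)$, the action of $f\circ\pi$ preserves the kernel and so descends.
\end{enumerate}
Uniqueness and $\mathcal{M}_\mu(\zp)$-semilinearity likewise follow from density and continuity alone. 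Note also that the relevant map goes $\mathcal{O}^{\opn{la}}(\mathfrak{u}_\mu^*)\to\mathcal{O}^{\opn{la}}(\mathfrak{u}_{\mu,V}^*)$, $f\mapsto f\circ\pi$, not the other way.
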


The LB-space $\mathscr{N}^{\dagger}$ is important for the construction of $p$-adic $L$-functions. Moreover, because $\mathscr{N}^{\dagger}$ is of compact type, there is a natural isomorphism $$ \varinjlim_U \ul{\opn{H}^0(U, \mathcal{O}_{P^{\opn{an}}_{\opn{dR}}})} \xrightarrow{\sim} \ul{\mathscr{N}^{\dagger}}$$ of condensed $\qp$-vector spaces by Lemma \ref{Lem:InductiveLimitTopologyII}, see \S \ref{subsub:compactoid}. 

\subsection{The Siegel case}

We first discuss the proof of Theorem \ref{Thm:NOCmainThm} in the setting where $\gx=\gvx$ equals the standard Siegel datum for $\mathsf{G} = \opn{GSp}_{2g}$. The $g=1$ case is \cite{GrahamPilloniRodriguesJacinto}. Write $\scrshat=\scrshat_U\gvx$. Throughout, let $\opn{Spf}A^+ \subset (\scrshat)^{[b_{\mu}]}$ be an open affine over which $\mathcal{P}_{\opn{dR}}$ has a section. Let $\opn{Spa}(A, A^+)$ denote its adic generic fiber (note $A = A^+[1/p]$).

\subsubsection{Locally analytic functions}
Recall that $\mathcal{O}^{\opn{la}}(\mathfrak{u}_{\mu}^*, \mbb{Q}_p)$ denotes the space of locally analytic functions. More precisely, we consider this as the locally convex inductive limit of the Banach spaces of $\varepsilon$-analytic functions, as $\varepsilon \to 0$. For convenience, we fix coordinates for $\mathfrak{u}_{\mu}^*$. More precisely, set $d = g(g+1)/2$ and fix an identification $\mathfrak{u}_{\mu}^* \simeq \mbb{Z}_p^{d}$. Then by Mahler's theorem, $\Cont(\mathfrak{u}_{\mu}^*, \mbb{Q}_p)$ is identified with all multivariable power series $f(X_1, \dots, X_{d})$
\[
\sum_{k_1, \dots, k_d=0}^{\infty} a_{\ul{k}} \cdot \bincoeff{X_1}{k_1} \cdots \bincoeff{X_d}{k_d}  
\]
where $\ul{k} = (k_1, \dots, k_d)$ and $a_{\ul{k}} \in \mbb{Q}_p$, such that $|a_{\ul{k}}| \to 0$ as $k_1 + \cdots + k_d \to +\infty$. For a real number $\varepsilon > 0$, we let $C_{\varepsilon}(\mathfrak{u}_{\mu}^*, \mbb{Q}_p) \subset \Cont(\mathfrak{u}_{\mu}^*, \mbb{Q}_p)$ denote the Banach space of functions such that $p^{(k_1+ \cdots + k_d)\varepsilon}|a_{\ul{k}}| \to 0$ as $k_1 + \cdots + k_d \to +\infty$. This is independent of the choice of coordinates, and we have $\mathcal{O}^{\opn{la}}(\mathfrak{u}_{\mu}^*, \mbb{Q}_p) = \varinjlim^{\mathrm{lcv}}_{\varepsilon} C_{\varepsilon}(\mathfrak{u}_{\mu}^*, \mbb{Q}_p)$.

\subsubsection{Computations over the ordinary locus}

We introduce some notation. Let $\mathcal{P}_{\opn{dR}}^{\opn{ord}} = \mathcal{P}_{\opn{dR}} \times_{\widehat{\mathscr{S}}} (\scrshat)^{[b_{\mu}]}$, and note that $\mathfrak{Ig}_M \to \mathcal{P}_{\opn{dR}}^{\opn{ord}}$ is a reduction of structure. For an integer $n \geq 1$, let $\mathcal{P}_{\mu^{-1}, n} \subset \mathcal{P}_{\mu^{-1},\eta}$ denote the affinoid subgroup of elements which reduce to the identity modulo $p^n$ and, additionally, lie in $\mathcal{M}_{\mu}$ modulo $p^{2n}$. Set $\mathcal{P}_{\square, n} := \mathcal{M}_{\mu}(\mbb{Z}_p) \mathcal{P}_{\mu^{-1}, n} \subset \mathcal{P}_{\mu^{-1},\eta}$. Let 
\[
P^{\opn{ord}}_{\opn{dR}, n} := \mathfrak{Ig}_{M, \eta} \times^{\mathcal{M}_{\mu}(\mbb{Z}_p)} \mathcal{P}_{\square, n} 
\]
denote the pushout, which is an \'{e}tale $\mathcal{P}_{\square, n}$-torsor over $\mathcal{S}_{\mu} := (\scrshat^{[b_{\mu}]})_{\eta}$. Let $\mathfrak{Ig}_{M, n}$ denote the quotient of $\mathfrak{Ig}_M$ by $\mathcal{M}_{\mu}(\mbb{Z}_p) \cap \mathcal{P}_{\mu^{-1}, n}$ (not to be confused with $\mathfrak{Ig}_{M,m}$ introduced in \S \ref{sub:IntegralFourier}); then the space $P^{\opn{ord}}_{\opn{dR}, n}$ naturally lives over $\mathfrak{Ig}_{M, n, \eta}$ as a $\mathcal{P}_{\mu^{-1},n}$-torsor. Set $P^{\opn{univ}}_{\opn{dR}, n} = P^{\opn{ord}}_{\opn{dR}, n} \times_{\mathfrak{Ig}_{M, n, \eta}} \mathfrak{Ig}_{M,\eta}$, an \'{e}tale $\mathcal{P}_{\mu^{-1}, n}$-torsor over $\mathfrak{Ig}_{M, \eta}$.

Note that Proposition \ref{Prop:TPdrToLieGisaniso} induces an isomorphism $T_{P^{\opn{ord}}_{\opn{dR}, n}} \simeq \mathfrak{g} \otimes \mathcal{O}_{P^{\opn{ord}}_{\opn{dR}, n}}$ because $P^{\opn{ord}}_{\opn{dR}, n}$ is an open subspace of $P_{\opn{dR}}^{\opn{an}}$. Similarly $T_{P^{\opn{univ}}_{\opn{dR}, n}} \simeq \mathfrak{g} \otimes \mathcal{O}_{P^{\opn{univ}}_{\opn{dR}, n}}$ because $P^{\opn{univ}}_{\opn{dR}, n} \to P^{\opn{ord}}_{\opn{dR}, n}$ is pro-(finite-\'{e}tale). As in Lemma \ref{Lem:IdentificationII}, for any $w \in \mathfrak{g} \otimes \mathcal{O}_{P^{\opn{univ}}_{\opn{dR}, n}}$ we let $\partial_w \colon P^{\opn{univ}}_{\opn{dR}, n}[\epsilon] \to P^{\opn{univ}}_{\opn{dR}, n}$ denote the corresponding vector field. If $w \in \mathfrak{u}_{\mu}$, then we let $\theta_w \colon \mathfrak{Ig}_{M}[\epsilon] \to \mathfrak{Ig}_{M}$ denote the vector field obtained from differentiating the action of $\opn{Lie}\mathcal{H}$ on $\mathfrak{Ig}_M$.

\begin{Lem} \label{Lem:PartialWequivThetaW}
    Let $w \in \mathfrak{u}_{\mu}$. The vector field $\partial_w$ is integral, i.e., its associated derivation on $\mathcal{O}_{P^{\opn{univ}}_{\opn{dR}, n}}$ preserves $\mathcal{O}^+_{P^{\opn{univ}}_{\opn{dR}, n}}$. Moreover, the section $\mathfrak{Ig}_{M, \eta} \to P_{\opn{dR}}^{\opn{an}}$ induces a trivialization
    \[
    P^{\opn{univ}}_{\opn{dR}, n} \simeq \mathfrak{Ig}_{M, \eta} \times \mathcal{P}_{\mu^{-1}, n} 
    \]
    and $\partial_w \equiv \theta_w \times 1$ modulo $p^n$ under this identification.
\end{Lem}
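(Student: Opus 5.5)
We first construct the trivialization. The map $\mathfrak{Ig}_{M,\eta} \to P_{\dR}^{\an}$ is a reduction of structure of $P^{\opn{ord}}_{\dR}$ to $\mathcal{M}_{\mu}(\zp)$; equivalently, it is a section of $P^{\opn{univ}}_{\dR,n} \to \mathfrak{Ig}_{M,\eta}$. A torsor with a section is trivial, so $(i,p) \mapsto \sigma(i)\cdot p$ gives the isomorphism $\mathfrak{Ig}_{M,\eta} \times \mathcal{P}_{\mu^{-1},n} \xrightarrow{\sim} P^{\opn{univ}}_{\dR,n}$, where $\sigma$ denotes this section. Since $\mathcal{P}_{\mu^{-1},n}$ is an affinoid group with integral model $\mathcal{O}^+$ given by a formal ball, the integral structure $\mathcal{O}^+$ on the product is the completed tensor product of $\mathcal{O}^+(\mathfrak{Ig}_M)$ with $\mathcal{O}^+(\mathcal{P}_{\mu^{-1},n})$. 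Integrality of $\partial_w$ therefore follows from the congruence $\partial_w \equiv \theta_w \times 1 \bmod p^n$, because both $\theta_w$ (differentiation of the integral $\mathcal{H}$-action on $\mathfrak{Ig}_M$) and $1$ preserve this integral structure.

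To compute $\partial_w$ in these coordinates, fix a point $(i,p)$. The vector field $\partial_w$ is the left-invariant field for the $\mathfrak g$-parallelization, which is equivariant for the right $\mathcal{P}_{\mu^{-1}}$-torsor action. Hence $\partial_w$ at $\sigma(i)\cdot p$ equals the translate by $p$ of $\partial_{\mathrm{Ad}(p)w}$ at $\sigma(i)$. Decompose $\mathrm{Ad}(p)w = w + (\mathrm{Ad}(p)w - w)$. By Theorem \ref{Thm:GlobalAction} (in the ordinary case $\mathcal{N}=1$, and via Theorem \ref{Thm:IdentificationAction}), $\partial_w$ restricted along $\sigma$ equals $d\sigma(\theta_w)$, which contributes the $\theta_w \times 1$ term.

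The correction term is $\mathrm{Ad}(p)w - w \in \mathfrak{g}$. For $p \in \mathcal{P}_{\mu^{-1},n}$ write $p = m u$ with $m \equiv 1 \bmod p^n$ in $\mathcal{M}_\mu$ and $u \in \mathcal{U}_{\mu^{-1}}$ with $u \equiv 1 \bmod p^{2n}$. Then $\mathrm{Ad}(p)w - w$ has a $\mathfrak{u}_\mu$-component divisible by $p^n$ and components in $\mathfrak{m}_\mu$ and $\mathfrak{u}_{\mu^{-1}}$. Here the choice of the $p^{2n}$ condition should matter: the $\mathfrak{u}_{\mu^{-1}}$-part is $O(p^{2n})$ and the $\mathfrak{m}$-part is $O(p^n)$. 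The $\mathfrak{p}_{\mu^{-1}}$-directions are vertical and act as derivations on the $\mathcal{P}_{\mu^{-1},n}$ factor. Rescaled to the coordinates of this ball, which are $p^{-n}(p-1)$, such a derivation has size $p^{n}\cdot p^{-n}=1$ times an element already divisible by $p^n$, so it stays integral and is divisible by $p^n$. The $\mathfrak{u}_\mu$-part has the form $p^n\cdot$(integral)$\cdot\partial_{w'}$, which by the previous paragraph is $p^n$ times an integral field (one can induct, or bound directly using $\theta_{w'}$).

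The main obstacle is this bookkeeping. One has to verify that vertical fields from $\mathfrak{m}_\mu$ and $\mathfrak{u}_{\mu^{-1}}$, with coefficients of the stated divisibility, act on $\mathcal{O}^+(\mathcal{P}_{\mu^{-1},n})$ with image in $p^n\mathcal{O}^+$, which is exactly where the asymmetric $p^n/p^{2n}$ definition of $\mathcal{P}_{\mu^{-1},n}$ enters. The $\mathfrak{u}_{\mu^{-1}}$-component of $\mathrm{Ad}(mu)w$ involves $[\log u, w]$ and $[\log u,[\log u,w]]$. The first has size $p^{2n}$; in the ball coordinate $p^{-2n}\log u$ on the $\mathfrak{u}_{\mu^{-1}}$ direction it gives derivations of size $p^{2n}\cdot p^{-2n}\cdot p^{2n}$. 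I would check all of these cases explicitly in matrix coordinates for $\operatorname{GSp}_{2g}$.
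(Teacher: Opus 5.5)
\textbf{The approach matches the paper, but the key estimate is wrong as stated.} Your route is the paper's. You trivialize $P^{\opn{univ}}_{\opn{dR},n}$ by the section. You use Theorem \ref{Thm:IdentificationAction} and $\opn{Ad}$-equivariance to identify $\theta_w\times 1$ with $(x,g)\mapsto\partial_{\opn{Ad}(g)w}(x)$. You then split $\opn{Ad}(g)w$ into a $\mathfrak{u}_\mu$-part $\equiv w \bmod p^n$ and a vertical $\mathfrak{p}_{\mu^{-1}}$-part.

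The step that carries the content is showing the vertical part is $\equiv 0 \bmod p^n$. You defer this step, and the bounds you do state would not give it. The coordinates on $\mathcal{P}_{\mu^{-1},n}$ in the $\mathfrak{m}_\mu$- and $\mathfrak{u}_{\mu^{-1}}$-directions are the corresponding blocks of $p^{-n}(g-1)$ and $p^{-2n}(g-1)$. So a vertical field with coefficient $X$ is divisible by $p^n$ once $X\in p^n\opn{Lie}^+\mathcal{P}_{\mu^{-1},n}$. This means the $\mathfrak{m}_\mu$-part must lie in $p^{2n}$ and the $\mathfrak{u}_{\mu^{-1}}$-part in $p^{3n}$.

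With your estimates ($O(p^n)$ and $O(p^{2n})$) you only get an integral field. The extra factor of $p^n$ in ``size $p^n\cdot p^{-n}=1$ times an element already divisible by $p^n$'' has no source. You also place $[\log u,w]$ in the $\mathfrak{u}_{\mu^{-1}}$-component, but it has weight $0$ and lies in $\mathfrak{m}_\mu$.

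\textbf{The missing observation.} $\opn{Ad}(\mathcal{M}_\mu)$ preserves the decomposition $\mathfrak g=\mathfrak u_\mu\oplus\mathfrak m_\mu\oplus\mathfrak u_{\mu^{-1}}$. Hence the Levi factor $m$, which is only $\equiv 1 \bmod p^n$, contributes nothing to the vertical part, and everything comes from $u\equiv 1 \bmod p^{2n}$. This is how the paper reduces to $g=\smat{1&\\Y&1}$ with $Y\in p^{2n}$. For $w=\smat{0&W\\0&0}$ the vertical part is then $\smat{-WY&0\\-YWY&YW}$. Its $\mathfrak{m}_\mu$-part is in $p^{2n}$ and its $\mathfrak{u}_{\mu^{-1}}$-part is in $p^{4n}$, which suffices.

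\textbf{The argument is also circular as written.} You deduce integrality of $\partial_w$ from the congruence. But in proving the congruence you bound $p^n\sum_j c_j\partial_{e_j}$ using integrality of the $\partial_{e_j}$ ``by the previous paragraph''. The paper avoids this by establishing integrality first.

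A clean fix inside your setup: for a basis $e_k$ of $\mathfrak u_\mu$,
\[
\theta_{e_k}\times 1=\sum_j \rho(g)_{jk}\,\partial_{e_j}+(\text{vertical}),
\]
where $\rho(g)=\opn{Ad}(m)|_{\mathfrak u_\mu}$ is an integral matrix $\equiv 1 \bmod p^n$, hence integrally invertible. Once the vertical terms are known to be $\equiv 0 \bmod p^n$, inverting $\rho(g)$ gives both that each $\partial_{e_j}$ is integral and that $\partial_{e_j}\equiv\theta_{e_j}\times 1 \bmod p^n$.
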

\begin{proof}
    Let $s \colon \mathfrak{Ig}_{M, \eta} \to P^{\opn{univ}}_{\opn{dR}, n}$ denote the section 
    induced from the section $\mathfrak{Ig}_{M, \eta} \to P_{\opn{dR}}^{\opn{an}}$ constructed before Proposition \ref{Prop:MapToPdr} (which we can think of as arising from the unit root splitting). Clearly this induces the above trivialization. By Theorem \ref{Thm:IdentificationAction}, we have $ds(\theta_w) = s^* \partial_w$. More generally, for any $g \in \mathcal{P}_{\mu^{-1}, n}(\mathfrak{Ig}_{M, \eta})$, we have 
    \begin{equation} \label{Eqn:Adgpartialrelation}
    (g \cdot -) \circ \partial_w = \partial_{\opn{Ad}(g)w} \circ (g \cdot -)
    \end{equation}
    hence $(g \cdot s)^* \partial_{\opn{Ad}(g)w} = g \cdot ds(\theta_w)$. This implies that $\theta_w \times 1$ is identified with the vector field
    \begin{align*}
    (\mathfrak{Ig}_{M, \eta} \times \mathcal{P}_{\mu^{-1}, n})[\epsilon] &\to \mathfrak{Ig}_{M, \eta} \times \mathcal{P}_{\mu^{-1}, n} \\
    (x, g) &\mapsto \partial_{\opn{Ad}(g)w}(x)
    \end{align*}
    via the trivialization $P^{\opn{univ}}_{\opn{dR}, n} \simeq \mathfrak{Ig}_{M, \eta} \times \mathcal{P}_{\mu^{-1}, n}$. Since $\theta_w \times 1$ is integral, we see that $\partial_{\opn{Ad}(g)w}$ is integral, and hence $\partial_w$ is integral by \eqref{Eqn:Adgpartialrelation}.

    Note that $g \mapsto \opn{Ad}(g)w$ defines a section of $\mathfrak{g} \otimes \mathcal{O}^+_{\mathcal{P}_{\mu^{-1}, n}}$ (because $w \in \mathfrak{u}_{\mu}$ is integral). Consider the representation $\rho \colon \mathcal{P}_{\mu^{-1},n} \to \opn{GL}(\mathfrak{u}_{\mu} \otimes \mathcal{O}^+_{\mathcal{P}_{\mu^{-1}, n}})$ given by transporting the representation $\opn{Ad}$ via the identification $\mathfrak{g}/\mathfrak{p}_{\mu^{-1}} \simeq \mathfrak{u}_{\mu}$. We have a decomposition
    \[
    \opn{Ad}(g)w = \rho(g) \cdot w + [\opn{Ad}(g)w]
    \]
    where $[\opn{Ad}(g)w] \in \mathfrak{p}_{\mu^{-1}} \otimes \mathcal{O}^+_{\mathcal{P}_{\mu^{-1}, n}}$ is the image of $\opn{Ad}(g)w$ under the splitting 
    \[
    \mathfrak{g} \otimes \mathcal{O}^+_{\mathcal{P}_{\mu^{-1}, n}} \twoheadrightarrow \mathfrak{p}_{\mu^{-1}} \otimes \mathcal{O}^+_{\mathcal{P}_{\mu^{-1}, n}}
    \]
    induced from the decomposition $\mathfrak{g} = \mathfrak{u}_{\mu} \oplus \mathfrak{p}_{\mu^{-1}}$. Since $\partial_{w'}$ is integral for any $w' \in \mathfrak{u}_{\mu}$ (by above), we see that $\partial_{\rho(g)\cdot w}$ is integral and $\partial_{\rho(g)\cdot w} \equiv \partial_w$ modulo $p^n$ (because $\rho(g) \equiv 1$ modulo $p^n$). Furthermore, $\partial_{[\opn{Ad}(g)w]}$ is integral because it is equal to $(\theta_w \times 1) - \partial_{\rho(g) \cdot w}$.  

    To conclude the proof of the lemma, we claim that the vector field $\partial_{[\opn{Ad}(g)w]}$ is congruent to the trivial vector field modulo $p^n$. Note that $\partial_{[\opn{Ad}(g)w]}$ is in the vertical bundle of the (trivial) torsor $\mathfrak{Ig}_{M, \eta} \times \mathcal{P}_{\mu^{-1}, n}$ hence is obtained by differentiating the $\mathcal{P}_{\mu^{-1}, n}$-action on the second factor.
    
    For the rest of the proof, all matrices $\smat{* & * \\ * & *}$ denote block $2g \times 2g$-matrices with block sizes $g \times g$. Identify $\mathfrak{m}_{\mu^{-1}} \subset \mathfrak{p}_{\mu^{-1}}$ with sub-$\mbb{Z}_p$-Lie algebras $V' \subset V \subset M_{2g \times 2g}$, where $M_{2g \times 2g}$ denotes the $\mbb{Z}_p$-algebra of $2g \times 2g$-matrices, by fixing the symplectic pairing $J = \smat{ & 1 \\ -1 & }$. In particular, all elements in $V$ (resp. $V'$) are of the form $\smat{* & \\ * & *}$ (resp. $\smat{* & \\ & *}$). The isomorphism 
    \[
    \mathfrak{p}_{\mu^{-1}} \otimes \mathcal{O}_{\mathcal{P}_{\mu^{-1}, n}} = \mathfrak{p}_{\mu^{-1}} \otimes \mathcal{O}_{P^{\opn{an}}_{\mu^{-1}}}|_{\mathcal{P}_{\mu^{-1}, n}} \simeq T_{P_{\mu^{-1}}^{\opn{an}}}|_{\mathcal{P}_{\mu^{-1}, n}} = T_{\mathcal{P}_{\mu^{-1}, n}}
    \]
    identifies the $\opn{Spa}(R, R^+)$-points of $\opn{Lie}^+\mathcal{P}_{\mu^{-1}, n}$ (the integral Lie algebra) with
    \[
    \{ v \in V(R^+) : v \equiv 0 \text{ modulo } p^n, v \text{ mod } p^{2n} \in V'(R^+/p^{2n}) \} \subset \mathfrak{p}_{\mu^{-1}}(R^+) 
    \]
    for any $p$-adically complete, separated, $p$-torsion free $\mbb{Z}_p$-algebra $R^+$. 
    
    To conclude the proof, it therefore suffices to show that 
    \[
    [\opn{Ad}(g)w] \in p^{n}\opn{Lie}^+\mathcal{P}_{\mu^{-1}, n}(\opn{Spa}(R, R^+)) 
    \]
    for any $g \in \mathcal{P}_{\mu^{-1}, n}(\opn{Spa}(R, R^+))$ with $R^+$ being $p$-adically complete, separated, and $p$-torsion free, i.e., we need to show that for any $g \in \mathcal{P}_{\mu^{-1}}(R^+)$ that is congruent to the identity modulo $p^n$ and lies in $\mathcal{M}_{\mu}(R^+/p^{2n})$ modulo $p^{2n}$, the element $[\opn{Ad}(g)w] \in V(R^+)$ is zero modulo $p^{2n}$ and lies in $V'(R^+/p^{3n})$ modulo $p^{3n}$. 
    
    Since $\opn{Ad}(\mathcal{M}_{\mu})(\mathfrak{u}_{\mu}) \subset \mathfrak{u}_{\mu}$, without loss of generality, we may assume that $g = \smat{1 & \\ Y & 1}$ is unipotent, where $Y$ has entries in $p^{2n}R^+$. Write $w = \smat{ 0 & W \\ 0 & 0}$ with $W$ having entries in $R^+$. Then an easy computation shows
    \[
    [\opn{Ad}(g)w] = \smat{-WY & 0 \\ -Y W Y & YW}
    \]
    which lies in $p^{n}\opn{Lie}^+\mathcal{P}_{\mu^{-1}, n}(\opn{Spa}(R, R^+))$ as required. This completes the proof.
\end{proof}

As a consequence of this lemma, we obtain the following:

\begin{Prop} \label{Prop:OrdinarySiegelOCcomp}
    Let $\varepsilon > 0$. Then there exists $n(\varepsilon) \geq 1$ such that for all $n \geq n(\varepsilon)$, there exists a unique continuous $\mbb{Q}_p$-algebra action 
    \[
    C_{\varepsilon}(\mathfrak{u}_{\mu}^*, \mbb{Q}_p) \times \opn{H}^0(\opn{Spa}(A, A^+), \mathcal{O}_{P^{\opn{ord}}_{\opn{dR}, n}/\mathcal{S}_{\mu}}) \to \opn{H}^0(\opn{Spa}(A, A^+), \mathcal{O}_{P^{\opn{ord}}_{\opn{dR}, n}/\mathcal{S}_{\mu}})
    \]
    extending the action of $\mathfrak{u}_{\mu}$ (through the inclusion $\mathfrak{u}_{\mu} \subset C_{\varepsilon}(\mathfrak{u}_{\mu}^*, \mbb{Q}_p)$).
\end{Prop}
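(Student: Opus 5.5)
We will define the action directly through Mahler expansions: for $f=\sum_{\ul{k}} a_{\ul{k}} \bincoeff{X_1}{k_1}\cdots\bincoeff{X_d}{k_d} \in C_{\varepsilon}(\mathfrak{u}_{\mu}^*,\mbb{Q}_p)$ we set
\[
f\cdot s := \sum_{\ul{k}} a_{\ul{k}}\, \bincoeff{\partial_{w_1}}{k_1}\cdots \bincoeff{\partial_{w_d}}{k_d}\, s,
\]
where $w_1,\dots,w_d$ is the basis of $\mathfrak{u}_{\mu}$ dual to our chosen coordinates. The $\partial_{w_i}$ are the Maass--Shimura vector fields on $P^{\opn{ord}}_{\opn{dR},n}$. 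Since $\mathfrak{u}_\mu$ is abelian, these commute by Lemma \ref{Lem.inclusion-of-Lie-algebras}, so the binomial operators make sense and multiply correctly. Granting convergence, the algebra property follows by the usual argument. The operators $\bincoeff{\partial_w}{k}$ satisfy the same product identities as the Mahler basis, because $\partial_w$ is a derivation and Vandermonde gives $\bincoeff{\partial}{k}(ab)=\sum_{i+j=k}\ldots$ only after passing through the $\widehat{\mathbb{G}}_m$-style expansion. More cleanly, on polynomials the map agrees with the $\opn{Sym}\mathfrak{u}_\mu$-action, which is an algebra action, and we then extend by continuity and density of polynomials in $C_\varepsilon$. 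Uniqueness is immediate from the same density.

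The key step is convergence. We reduce to integral estimates on $P^{\opn{univ}}_{\opn{dR},n}$, which is pro-finite-étale over $P^{\opn{ord}}_{\opn{dR},n}$, so it suffices to bound operators on $\mathcal{O}^+$ of $\mathfrak{Ig}_{M,\eta}\times\mathcal{P}_{\mu^{-1},n}$ over $\opn{Spa}(A,A^+)$. By Lemma \ref{Lem:PartialWequivThetaW}, $\partial_{w}$ is integral and $\partial_w=\theta_w\times 1+p^n D_w$ with $D_w$ an integral derivation. The operators $\theta_w\times1$ come from the $\mathcal{H}=\mathfrak{u}_\mu\otimes\widehat{\mathbb{G}}_m$-action. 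By Corollary \ref{Cor:IntegralFourierConsequenceOrdinary}, $\bincoeff{\theta_w}{k}$ is the action of the Mahler function $\bincoeff{X}{k}\in\Cont(\mathfrak{u}_\mu^*,\mbb{Z}_p)$, which is integral and hence uniformly bounded on $\mathcal{O}^+$.

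We then need a bound of the form $\|\bincoeff{\partial_w}{k}\|\le C\,p^{k\varepsilon'}$ on $\mathcal{O}^+$ with $\varepsilon'<\varepsilon$ once $n\gg0$. The plan is to expand $\partial_w^{\,j}=(\theta_w+p^nD_w)^j$. We also need control of the commutators $[\theta_w, D_w]$; these are integral, since both are integral derivations. From this we get $\partial_w^j\in \sum_i \binom{j}{i} p^{ni}(\text{integral})\,\theta_w^{j-i}$ modulo integral commutator terms. Converting to binomial operators costs a factor $1/k!$, of size about $p^{k/(p-1)}$. On the $\theta$-part this loss is absorbed, because $\bincoeff{\theta}{k}$ is integral. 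On the $p^nD$ part we pay $p^{-k/(p-1)}$ against the gain $p^{n i}$.

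This estimate is the main obstacle. We would first try to prove that $\frac{\partial_w^k}{k!}$ has norm at most $p^{k(1/(p-1)-n)}$ times the norm of $\bincoeff{\theta_w}{k}$-type terms, by induction on $k$ using the Leibniz-type identity $\bincoeff{\theta+p^nD}{k}=\sum_{i}\bincoeff{\theta}{k-i}\cdot p^{ni}(\ldots)/i!$. A cleaner alternative is a formal-group trick: realize $\exp(t\,p^nD_w)$ as an integral flow for $|t|\le 1$. This holds because $p^n D_w$ has small norm, so the exponential converges for $n\ge 1$ with $p>2$. Then $\partial_w$ integrates to an action of a small disc times $\widehat{\mathbb{G}}_m$, and the resulting continuous functions have Mahler coefficients decaying like $p^{-k\varepsilon}$ once $n\ge n(\varepsilon)$. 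Finally, we sum over multi-indices using commutativity. Continuity in $f$ follows from the resulting bound $\|f\cdot s\|\le C\|f\|_{\varepsilon}\|s\|$, and $\mathcal{M}_\mu(\mbb{Z}_p)$-descent from $P^{\opn{univ}}$ to $P^{\opn{ord}}$ follows from the equivariance in Corollary \ref{Cor:IntegralFourierConsequenceOrdinary}(1).
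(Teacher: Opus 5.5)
Your overall architecture is the paper's. You pass to the integrally pro-finite-\'etale cover on which Lemma \ref{Lem:PartialWequivThetaW} trivializes the torsor (the paper uses that pullback to $\opn{H}^0(\ldots)\widehat{\otimes}_{A_n}A_\infty$ is an isometry). You use $\partial_w\equiv\theta_w\times 1 \bmod p^n$ with both operators integral, and that $\bincoeff{\theta_w}{k}$ is integral from the $\Cont(\mathfrak{u}_\mu^*,\mbb{Z}_p)$-action. You then define the action by Mahler expansions, get uniqueness from density of polynomials, and reduce to one variable via commutativity of the $\partial_{w_i}$. The gap is the one substantive step: deducing $\|\bincoeff{\partial_w}{k}\|\le C\,p^{k\varepsilon}$ for $n\ge n(\varepsilon)$ from these inputs. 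The paper imports exactly this perturbation statement as \cite[Lemma 3.2.1]{UFJ} (cf.\ \cite[Proposition 4.2.4]{GrahamPilloniRodriguesJacinto} and Remark \ref{Rem:GrowthOfAction}), and neither of your two sketches proves it.

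\emph{Why the expansion sketch fails.} Write $T=\theta+p^nD$ and $\bincoeff{T}{k}=\frac{1}{k!}\sum_j s(k,j)T^j$.
The words with exactly one $D$ contribute $\frac{p^n}{k!}\sum_j s(k,j)\sum_{a+b=j-1}\theta^aD\theta^b$. Bounding termwise (the $j=k$ term alone) gives only $|k!|^{-1}p^{-n}\approx p^{k/(p-1)-n}$. For fixed $n$ this is not $O(p^{k\varepsilon})$ once $\varepsilon<1/(p-1)$, so ``paying $1/k!$ against the gain from $p^n$'' fails precisely for terms with few $D$'s.
Even if $[\theta,D]=0$, the saving comes from a cancellation, $\frac{d}{dx}\bincoeff{x}{k}=\sum_{i=1}^k\frac{(-1)^{i-1}}{i}\bincoeff{x}{k-i}$, which lets the powers of $\theta$ reassemble into binomial operators.
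When $[\theta_w,D_w]\neq 0$, moving $D$ across $\theta^a$ produces $\sum_m\binom{a}{m}\mathrm{ad}_\theta^m(D)\,\theta^{a-m}$ with no decay in $m$. Integrality of the commutators alone does not recover the cancellation.
For the same reasons, your proposed bound on $\partial_w^k/k!$ cannot hold as stated: it contains $\theta_w^k/k!$, whose norm is typically $|k!|^{-1}$. The Vandermonde-type identity you invoke also requires commuting operators.

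\emph{Why the flow sketch fails.} Flows of $\theta_w\times 1$ and of $p^nD_w$ do not assemble into a flow of their non-commuting sum. Separately, $\exp(p^nD\log(1+z))$ needs $|p^n\log(1+z)|<p^{-1/(p-1)}$, while $\log(1+z)$ has sup-norm of order $1/\varepsilon$ on $|z|\le p^{-\varepsilon}$. So $|t|\le 1$ is not the relevant range.

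\emph{What is missing} is a genuinely non-commutative estimate, which you should cite or prove. It says: if $T,T_1$ preserve a $p$-adically complete lattice, $T-T_1\in p^n\opn{End}$, and $T_1$ extends to a $\Cont(\mbb{Z}_p,\mbb{Z}_p)$-action, then $T$ extends to a $C_\varepsilon(\mbb{Z}_p,\mbb{Q}_p)$-action for $n\ge n(\varepsilon)$.

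A smaller point: descent to $P^{\opn{ord}}_{\opn{dR},n}$ should not rest on the semilinearity in Corollary \ref{Cor:IntegralFourierConsequenceOrdinary}(1). It should use that $\partial_w$ is pulled back from $P^{\opn{ord}}_{\opn{dR},n}$, hence commutes with the deck transformations. Combine this with the isometry of the pullback map and closedness of the subspace.
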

\begin{proof}
    Let $\opn{Spa}(A_n, A_n^+)$ denote the pullback of $\opn{Spa}(A, A^+)$ under the finite \'{e}tale morphism $\mathfrak{Ig}_{M, n, \eta} \to \mathcal{S}_{\mu}$, and we let $\opn{Spa}(A_{\infty}, A_{\infty}^+)$ denote the pullback of $\opn{Spa}(A, A^+)$ under $\mathfrak{Ig}_{M, \eta} \to \mathcal{S}_{\mu}$ (it is affinoid because $\igmf \to (\scrshat)^{[b_{\mu}]}$ is pro-(finite-\'{e}tale)). The natural pullback map
    \begin{equation} \label{eq:isometry}
    \opn{H}^0(\opn{Spa}(A, A^+), \mathcal{O}_{P^{\opn{ord}}_{\opn{dR}, n}/\mathcal{S}_{\mu}}) \to \opn{H}^0(\opn{Spa}(A, A^+), \mathcal{O}_{P^{\opn{ord}}_{\opn{dR}, n}/\mathcal{S}_{\mu}}) \widehat{\otimes}_{A_n} A_{\infty}
    \end{equation}
    is an isometry because $\mathfrak{Ig}_{M} \to (\scrshat)^{[b_{\mu}]}$ is pro-(finite-\'{e}tale) integrally. As above, the action of $\mathfrak{u}_{\mu}$ extends to the right-hand side, again because the torsor is pro-finite-\'{e}tale. 

    Let $\theta_w$ denote the derivation on $A_{\infty}$ which corresponds to the action of $w \in \mathfrak{u}_{\mu}$ via Corollary \ref{Cor:IntegralFourierConsequenceOrdinary}. Using the universal trivialization in Lemma \ref{Lem:PartialWequivThetaW}, we obtain an identification 
    \[
    \opn{H}^0(\opn{Spa}(A, A^+), \mathcal{O}_{P^{\opn{ord}}_{\opn{dR}, n}/\mathcal{S}_{\mu}}) \widehat{\otimes}_{A_n} A_{\infty} \simeq \mathcal{O}(\mathcal{P}_{\mu^{-1}, n, A_{\infty}}) = A_{\infty} \widehat{\otimes}_{\mbb{Q}_p} \mathcal{O}(\mathcal{P}_{\mu^{-1}, n}) .
    \]
    By Lemma \ref{Lem:PartialWequivThetaW}, the action of $\partial_{w}$ is integral and satisfies
    \[
    \partial_w \cdot f \equiv \theta_w \cdot f 
    \]
    modulo $p^n \mathcal{O}^+(\mathcal{P}_{\mu^{-1}, n, A_{\infty}})$, where $f \in \mathcal{O}^+(\mathcal{P}_{\mu^{-1}, n, A_{\infty}})$. Since $\theta_w$ extends to a continuous algebra action of $\mathcal{O}^{\opn{la}}(\mbb{Z}_p, \mbb{Q}_p)$ on $\mathcal{O}(\mathcal{P}_{\mu^{-1}, n, A_{\infty}})$ (Corollary \ref{Cor:IntegralFourierConsequenceOrdinary}, which we translate to the world of locally convex functional analysis using the discussion in \cite[Section 6.5]{FourierPaper}), we see that by \cite[Lemma 3.2.1]{UFJ} there exists $n(\varepsilon) \geq 1$ such that $\partial_w$ extends to a continuous algebra action of $C_{\varepsilon}(\mbb{Z}_p, \mbb{Q}_p)$ on $\mathcal{O}(\mathcal{P}_{\mu^{-1}, n, A_{\infty}})$ for any $n \geq n(\varepsilon)$. Since \eqref{eq:isometry} is an isometry, we see that the same is true for the action of $\partial_{w}$ on $\opn{H}^0(\opn{Spa}(A, A^+), \mathcal{O}_{P^{\opn{ord}}_{\opn{dR}, n}/\mathcal{S}_{\mu}})$.

    This completes the proof of the proposition because 
    \[
    C_{\varepsilon}(\mathfrak{u}_{\mu}^*, \mbb{Q}_p) \simeq C_{\varepsilon}(\mbb{Z}_p, \mbb{Q}_p)^{\widehat{\otimes} d} 
    \]
    using the fixed basis $\mathfrak{u}_{\mu}^* \cong \mbb{Z}_p^{\oplus d}$. The resulting action is unique because it can be computed using Mahler expansions.
\end{proof}

\begin{Rem}\label{remark.gprj-comparison}
    In \cite{GrahamPilloniRodriguesJacinto}, one worked with reductions of structure of $P_{\opn{dR}}^{\opn{an}}$ to \'{e}tale torsors for the affinoid subgroup of $\mathcal{P}_{\mu^{-1}, \eta}$ of elements which reduce to the identity modulo $p^n$. With the benefit of hindsight, it is better to reduce further to the smaller groups $\mathcal{P}_{\mu^{-1}, n}$ since this avoids the explicit computations involving coordinates in Proposition 5.2.1 \emph{op.cit.}.
\end{Rem}

\begin{Rem} \label{Rem:GrowthOfAction}
    Let $C_{\varepsilon}(\mathfrak{u}_{\mu}^*, \mbb{Q}_p)^{\circ}$ denote the unit ball of $C_{\varepsilon}(\mathfrak{u}_{\mu}^*, \mbb{Q}_p)$ with respect to the Banach norm $|\!|-|\!|_{\varepsilon}$ given by:
    \[
    \left|\!\left| \sum_{\ell_1, \dots, \ell_d=0}^{\infty} a_{\ul{\ell}} \cdot \bincoeff{X_1}{\ell_1} \cdots \bincoeff{X_d}{\ell_d}   \right|\!\right|_{\varepsilon} = \opn{sup}_{\ul{\ell}}(p^{(\ell_1+ \dots +\ell_d)\varepsilon}|a_{\ul{\ell}}|) .
    \]
    For applications to $p$-adic $L$-functions (in particular, understanding the growth of such $p$-adic $L$-functions), it is important to understand when $C_{\varepsilon}(\mathfrak{u}_{\mu}^*, \mbb{Q}_p)^{\circ}$ maps $\opn{H}^0(\opn{Spa}(A, A^+), \mathcal{O}^+_{P^{\opn{ord}}_{\opn{dR}, n}/\mathcal{S}_{\mu}})$ into itself via the action in Proposition \ref{Prop:OrdinarySiegelOCcomp}. If we let $n(\varepsilon) \geq 1$ denote the smallest integer for which this happens, then one can show that
    \[
    n(\varepsilon) = -[\opn{log}_p(\varepsilon)] + O(1)
    \]
    as $\varepsilon \to 0$, where $\opn{log}_p$ denotes the logarithm in base $p$ and $[-]$ denotes the integer part.

    Indeed, if we let $T = \partial_w$ and $T_1 = \theta_w$, then in the notation of \cite[Proposition 4.2.4]{GrahamPilloniRodriguesJacinto} this amounts to understanding when $p^{-k \varepsilon} |\!|f_k(T)|\!| \leq 1$ for all $k \geq 0$, where $|\!|-|\!|$ denotes the norm on $\opn{H}^0(\opn{Spa}(A, A^+), \mathcal{O}_{P^{\opn{ord}}_{\opn{dR}, n}/\mathcal{S}_{\mu}})$. Since $|\!|f_k(\theta_w)|\!| \leq 1$ for all $k$, this implies that $|\!|z_I|\!| \leq 1$ for all $I$. We therefore see from \cite[(4.2.5)]{GrahamPilloniRodriguesJacinto} that\footnote{Here we use the fact that $\binom{k}{a}\binom{a}{k_1, \dots, k_r} = \binom{k}{k-a, k_1, \dots, k_r}$ to simplify the expression slightly.}
    \[
    p^{-k \varepsilon}|\!|f_k(T)|\!| \leq \opn{max}(1, p^{\opn{log}_p(k)-k\varepsilon - n(\varepsilon) + 1/(p-1)}) .
    \]
    We must therefore take $n(\varepsilon)$ such that $\opn{log}_p(k)-k\varepsilon - n(\varepsilon) + 1/(p-1) \leq 0$. Taking $n(\varepsilon) = -[\opn{log}_p(\varepsilon)] + C$, with $C$ a constant independent of $\varepsilon$, clearly works.
\end{Rem}

\subsubsection{Overconvergence in the Shimura variety direction}

We now overconverge along the embedding $(\scrshat)^{[b_{\mu}]} \subset \widehat{\mathscr{S}}$. To ease notation, let $\mathcal{S}_{\mu} \subset \mathcal{S}$ denote the adic generic fiber of this embedding. Let $\opn{Spf}B^+ \subset \widehat{\mathscr{S}}$ be an open affine over which $\mathcal{P}_{\opn{dR}}$ has a section. Let $U=\opn{Spa}(B, B^+) \subset \mathcal{S}$ denote its adic generic fiber. Let $h \in B^+$ denote a local lift of the Hasse invariant, and note that 
\[
\opn{Spa}(B^{\opn{ord}}, B^{+, \opn{ord}}) := \opn{Spa}(B\langle 1/h \rangle, B^+\langle 1/h \rangle) \subset \mathcal{S}_{\mu}
\]
is the pullback of $\opn{Spa}(B, B^+)$. Let $\mathcal{S}_r$ denote the locus where $|h|^{p^{r+1}} \geq |p|$ for each local lift $h$ of the Hasse invariant, and set $U_r = U \cap \mathcal{S}_r$ which is affinoid of the form $\opn{Spa}(B_r, B_r^+)$.

\begin{Lem}
    The torsor $P_{\opn{dR}, n}^{\opn{ord}}$ overconverges, that is, there exists an integer $r \geq 1$ (depending on $n$) and an \'{e}tale $\mathcal{P}_{\square, n}$-torsor $P_{\opn{dR}, n, r} \to \mathcal{S}_r$ which fits into a Cartesian diagram:
    \[
\begin{tikzcd}
{P_{\opn{dR}, n}^{\opn{ord}}} \arrow[d] \arrow[r] & {P_{\opn{dR}, n, r}} \arrow[d] \\
\mathcal{S}_{\mu} \arrow[r]                       & \mathcal{S}_r                 
\end{tikzcd}
    \]
    where the top vertical arrow is $\mathcal{P}_{\square, n}$-equivariant.
\end{Lem}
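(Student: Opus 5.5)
The plan is to overconverge the reduction of structure given by the unit root splitting, following the strategy of \cite{GrahamPilloniRodriguesJacinto}. Over $\mathcal{S}_{\mu}$, $P^{\opn{ord}}_{\opn{dR}, n}$ is the pushout of $\mathfrak{Ig}_{M,\eta}$ along $\mathcal{M}_{\mu}(\mathbb{Z}_p) \to \mathcal{P}_{\square,n}$. Giving it is therefore equivalent to giving two pieces of data. The first is an \'etale reduction of $P_{\opn{dR}}^{\opn{an}}|_{\mathcal{S}_\mu}$ to $\mathcal{P}_{\square, n}$. The second, more concretely, is a splitting of the Hodge filtration $\omega_{A^\vee} \subset H_1^{\dR}$ together with a trivialization of the graded pieces, both known up to the congruence conditions defining $\mathcal{P}_{\mu^{-1},n}$ ($\equiv 1$ mod $p^n$, Levi mod $p^{2n}$), with the Levi trivialization only up to $\mathcal{M}_\mu(\mathbb{Z}_p)$. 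I would construct this over $\mathcal{S}_r$ for $r \gg n$, using the canonical subgroup.

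The first step is to recall the canonical subgroup theory over the Siegel variety, in the form due to Fargues and Andreatta--Iovita--Pilloni. For $r$ large relative to $n$, on $\mathcal{S}_r$ there is a canonical subgroup $H_{m} \subset A[p^{m}]$ of level $m$ (for $m \le$ something like $r$), finite flat of rank $p^{gm}$. Its Cartier dual $H_m^D$ is \'etale locally isomorphic to $(\mathbb{Z}/p^m)^g$. Moreover, the Hodge--Tate map $\HT: H_m^D \otimes \mathcal{O}^+ \to \omega_{A}\otimes \mathcal{O}^+/p^{m-c}$ is an isomorphism modulo $p^{m-c}$, where $c$ is a constant depending on $r$ and $p$. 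The next step defines $P_{\opn{dR},n,r}$ as the \'etale sheaf over $\mathcal{S}_r$ of trivializations of $H_1^{\dR}$ satisfying two conditions. First, the induced trivialization of $\omega$ (resp. the dual Lie piece) matches, modulo $p^{n}$ (resp. with the finer $p^{2n}$ condition on the Levi part), the frame coming from a basis of $H_m^D$ via $\HT$. Second, the splitting it induces is congruent mod $p^{2n}$ to the splitting given by the image of $H_1^{\dR}$ of the canonical subgroup (its Dieudonn\'e-side lift, i.e. the kernel of the $m$-fold Frobenius/the overconvergent unit-root-like subspace). Choosing $m \ge 2n + c$ makes these conditions well defined. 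The group acting is then exactly $\mathcal{M}_\mu(\mathbb{Z}/p^m)$-changes of basis of $H_m^D$ times $\mathcal{P}_{\mu^{-1},n}$, that is $\mathcal{P}_{\square,n}$. I will show this is an \'etale torsor by working locally, where $H_m^D$ is constant and a frame exists.

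The final step is the Cartesian property. Over $\mathcal{S}_\mu$ the canonical subgroup is $\mathbb{X}^{\circ}[p^m]$, i.e. the multiplicative part, and a trivialization of its dual is exactly the level-$m$ Mantovan Igusa datum. Its Hodge--Tate image is the unit root frame of $\omega$, and the subspace in $H_1^\dR$ is the unit root splitting used to build $\mathfrak{Ig}_M \to \mathcal{P}_{\dR}$. So $P_{\opn{dR},n,r}|_{\mathcal{S}_\mu}$ agrees with $\mathfrak{Ig}_{M,\eta}\times^{\mathcal{M}_\mu(\mathbb{Z}_p)}\mathcal{P}_{\square,n}$ compatibly with the embedding into $P^{\opn{an}}_{\opn{dR}}$. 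Equivariance is then immediate.

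The main obstacle is the second condition, the overconvergence of the splitting to precision $p^{2n}$ rather than just $p^n$. The unit root subspace does not overconverge literally. I would handle it by approximating it by $\operatorname{Frob}^{k}$-images or by the de Rham image of the canonical subgroup, as in \cite{GrahamPilloniRodriguesJacinto}. I would then bound the error by $p^{k}$ times a power of the Hasse invariant, which is controlled on $\mathcal{S}_r$ once $r$ is large relative to $n$.
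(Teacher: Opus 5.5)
Your strategy is essentially the paper's. The paper's proof just appeals to the argument of \cite[Proposition 6.2.1]{GrahamPilloniRodriguesJacinto}, noting that $\mathfrak{Ig}_{M,n}$ overconverges because the canonical subgroup does, and you are unpacking that argument by hand.

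However, one input you state is false, and taken literally it breaks the torsor claim. Off the ordinary locus, the Hodge--Tate map of the canonical subgroup is \emph{not} an isomorphism onto $\omega_A^{+}$ modulo $p^{m-c}$. By Fargues and Andreatta--Iovita--Pilloni, it identifies $H_m^D\otimes\mathcal{O}^+$, modulo $p^{m-c}$, with a \emph{modified} lattice $\mathcal{F}$ satisfying $p^{\Hdg/(p-1)}\omega_A^{+}\subseteq\mathcal{F}\subseteq\omega_A^{+}$. In general $\mathcal{F}$ is strictly smaller than $\omega_A^+$ away from $\mathcal{S}_\mu$; for $g=1$ one has $\mathcal{F}=p^{\Hdg/(p-1)}\omega_A^+$ pointwise. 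So suppose ``matches modulo $p^n$'' is measured in the integral structure coming from $H_1^{\dR,+}$. Then at a rank one point of $\mathcal{S}_r$ with $\Hdg>0$, lifts of the Hodge--Tate images of a basis of $H_m^D$ do not span $\omega_A^+/p^n$. No frame satisfies your condition there, and your sheaf is empty at such points. In particular, the step ``working locally, where $H_m^D$ is constant and a frame exists'' fails.

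The repair is to impose all congruences in the modified lattices: $\mathcal{F}$, its dual, and the corresponding lattice in $H_1^{\dR}$. This is legitimate because a reduction of structure of the rational torsor $P^{\an}_{\dR}$ to $\mathcal{P}_{\square,n}$ need not preserve $H_1^{\dR,+}$. It also leaves the Cartesian square intact, since $\mathcal{F}=\omega_A^+$ over $\mathcal{S}_\mu$. You will meet the same Hasse-invariant denominators when you approximate the unit root splitting by Frobenius iterates. So the precision of the splitting must likewise be measured in a modified lattice, or you lose a corresponding power of $p$, which is harmless once $m$ is large.

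Separately, the $p^{2n}$ precision you single out as the main obstacle is not a new difficulty. The subgroup of $\mathcal{P}_{\mu^{-1}}(\mathcal{O}^+)$ of elements congruent to the identity modulo $p^{2n}$ is contained in $\mathcal{P}_{\mu^{-1},n}$, so $\mathcal{M}_\mu(\mathbb{Z}_p)\cdot\{g\equiv 1 \bmod p^{2n}\}\subseteq\mathcal{P}_{\square,n}$. It therefore suffices to overconverge a reduction of the type used in \cite{GrahamPilloniRodriguesJacinto} at level $2n$ (all entries congruent to the identity modulo $p^{2n}$), and then push it out to $\mathcal{P}_{\square,n}$. Pushout commutes with restriction to $\mathcal{S}_\mu$, so this gives the Cartesian square and the equivariance directly. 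This is presumably why the paper can say the argument is exactly the same.
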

\begin{proof}
    This follows from exactly the same argument as in \cite[Proposition 6.2.1]{GrahamPilloniRodriguesJacinto} noting that $\mathfrak{Ig}_{M, n}$ overconverges (because the canonical subgroup overconverges). 
\end{proof}

We now consider the following chain of Banach spaces 
\[
V_r \to V_{r+1} \to \cdots \to V_{\infty}
\]
given by $V_s = \opn{H}^0(\opn{Spa}(B_s, B_s^+), \mathcal{O}_{P_{\opn{dR}, n, s}/\mathcal{S}_s})$ (with lattice $\opn{H}^0(\opn{Spa}(B_s, B_s^+), \mathcal{O}^+_{P_{\opn{dR}, n, s}/\mathcal{S}_s})$) and
\[
V_{\infty} = \opn{H}^0(\opn{Spa}(B^{\opn{ord}}, B^{+, \opn{ord}}), \mathcal{O}_{P_{\opn{dR}, n}^{\opn{ord}}/\mathcal{S}_{\mu}})
\]
Let $|\!|-|\!|_s$ denote the Banach norm on $V_s$. By exactly the same argument as in \cite[Lemma 7.2.3]{UFJ}, we have the following property of this chain: For any real number $0 < \delta < 1$, there exists an integer $s = s(\delta) \geq r$ such that for all $v \in V_r$, $m \in \mbb{N}$, $c \in \mbb{Q}$, one has
\[
|\!| v |\!|_{\infty} \leq p^{c-m} \text{ and } |\!| v |\!|_r \leq p^c \; \Rightarrow \; |\!|v|\!|_s \leq p^{c-\delta m} .
\]
We obtain the following:

\begin{Prop} \label{Prop:OCII}
    Let $\varepsilon > 0$. Then there exists an integer $n(\varepsilon) \geq 1$ such that for all $n \geq n(\varepsilon)$ the action of $\mathfrak{u}_{\mu}$ on $\mathcal{O}_{P_{\opn{dR}}^{\opn{an}}/\mathcal{S}}$ extends to a unique algebra action of $C_{\varepsilon}(\mathfrak{u}_{\mu}^*, \mbb{Q}_p)$ on 
    \[
    \varinjlim_s \opn{H}^0(\opn{Spa}(B_s, B_s^+), \mathcal{O}_{P_{\opn{dR}, n, s}/\mathcal{S}_s}) .
    \]
\end{Prop}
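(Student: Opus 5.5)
The plan is to transport the ordinary-locus action of Proposition \ref{Prop:OrdinarySiegelOCcomp} to the overconvergent neighbourhoods $\mathcal{S}_s$, using the norm-interpolation property of the chain $V_r \to V_{r+1} \to \cdots \to V_\infty$ stated just above. Fix $\varepsilon>0$ and choose $\varepsilon'$ with $0<\varepsilon'<\varepsilon$, say $\varepsilon' = \varepsilon/2$. Take $n \geq n(\varepsilon')$ from Proposition \ref{Prop:OrdinarySiegelOCcomp}, possibly enlarged to absorb constants. Then the operators $\partial_w$ for $w$ in a fixed $\mbb{Z}_p$-basis of $\mathfrak{u}_\mu$ act on $V_\infty$, and the Mahler-basis operators $\binom{\partial_{w_1}}{k_1}\cdots\binom{\partial_{w_d}}{k_d}$ satisfy $\|\cdot\|_\infty$-operator bounds of the form $p^{-k\varepsilon'}$ up to a constant, where $k=\sum k_i$. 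Indeed, this is exactly the statement that $C_{\varepsilon'}(\mathfrak{u}_\mu^*,\mbb{Q}_p)$ acts continuously on $V_\infty$. Since the $\partial_w$ come from vector fields on $P_{\mathrm{dR}}^{\mathrm{an}}$, they are defined on every $V_s$. By Lemma \ref{Lem:PartialWequivThetaW}, or its overconvergent analogue obtained by the same computation, they are moreover bounded, after rescaling by a fixed power of $p$, on the integral lattices of $V_s$. The restriction maps $V_r \to V_s \to V_\infty$ are injective, so the Mahler operators are defined on each $V_s$ with at most polynomial growth $\|\binom{\partial}{\underline{k}}v\|_r \leq p^{c_0 + C\log_p k}\|v\|_r$. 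This crude bound comes from expanding binomial coefficients and using integrality; I would prove it exactly as in \cite[Lemma 7.2.4]{UFJ}.

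Interpolation then works as follows. For $v \in V_r$ with $\|v\|_r\leq 1$ and multi-index $\underline{k}$, we have $\|\binom{\partial}{\underline{k}}v\|_\infty \leq p^{-k\varepsilon'}$ and $\|\binom{\partial}{\underline{k}}v\|_r\leq p^{c_0+C\log_p k}$. Applying the chain property with $m \approx k\varepsilon'$ and $\delta$ chosen so that $\delta\varepsilon' > \varepsilon''$ for some $\varepsilon''$ with $\varepsilon > \varepsilon'' > 0$, we obtain an $s=s(\delta)$ with $\|\binom{\partial}{\underline{k}}v\|_s \leq p^{c_0+C\log_p k - \delta k\varepsilon'}$. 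For $k \to\infty$ this decays faster than $p^{-k\varepsilon''}$. This does not quite give $\varepsilon$ directly, so I will instead run Proposition \ref{Prop:OrdinarySiegelOCcomp} with a smaller auxiliary parameter $\varepsilon'$ such that $\delta\varepsilon'$ still exceeds the target decay rate. Concretely, choose $\delta = 1/2$ and $\varepsilon' = \varepsilon/4$; then the exponent $\delta\varepsilon' = \varepsilon/8$ is positive. Convergence of Mahler series of $f \in C_\varepsilon$ needs only $|a_{\underline{k}}|p^{-k\varepsilon/8+O(\log k)} \to 0$, and this holds because $|a_{\underline{k}}| = o(p^{-k\varepsilon})$. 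Therefore $f\cdot v := \sum_{\underline{k}} a_{\underline{k}}\binom{\partial}{\underline{k}}v$ converges in $V_s$, hence in $\varinjlim_s V_s$, and $n(\varepsilon)$ is taken to be $n(\varepsilon/4)$.

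It remains to check that this defines an algebra action extending that of $\mathfrak{u}_\mu$, and that it is unique. Both properties can be checked after restriction to $V_\infty$, where the injectivity of $V_s \to V_\infty$ reduces them to Proposition \ref{Prop:OrdinarySiegelOCcomp}. Uniqueness in addition uses that the action is forced on Mahler polynomials, which are dense in $C_\varepsilon$ (by continuity). Finally, the construction must be compatible with shrinking $U$ and with gluing over affines, which is again clear from injectivity. The main obstacle I expect is the polynomial-growth bound on $V_r$ for the Mahler operators. For this I would first try to show that $p^{a}\partial_w$ preserves the integral lattice of $V_s$ uniformly in $s\ge r$, using the integrality statement in Lemma \ref{Lem:PartialWequivThetaW}, the overconvergence of the trivialization, and $\partial_w$ being congruent to $\theta_w$ on the torsor modulo a fixed power of $p$. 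Once this is in place, expanding $\binom{T}{k}$ in powers of $T$ gives only the harmless $k!$-type denominators, i.e.\ the $\log_p k$ term.
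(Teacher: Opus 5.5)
Your overall architecture is right, and it is what the paper's one-line proof packages by citing \cite[Proposition 3.4.1]{UFJ}: run Proposition~\ref{Prop:OrdinarySiegelOCcomp} with an auxiliary $\varepsilon'<\varepsilon$, bound the Mahler operators on $V_r$, and interpolate with the chain property. But there is a genuine gap: the two estimates you feed into the interpolation are wrong, and your parameter choices do not survive correcting them.

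\textbf{First error.} Continuity of the $C_{\varepsilon'}$-action on $V_\infty$ does not give decay. The element $\binom{X}{\underline{k}}$ has $\|\cdot\|_{\varepsilon'}$-norm $p^{k\varepsilon'}$, so continuity gives only the growth bound $\|\binom{\partial}{\underline{k}}\|_\infty\le C p^{k\varepsilon'}$ (cf.\ the condition $p^{-k\varepsilon}\|f_k(T)\|\le 1$ in Remark~\ref{Rem:GrowthOfAction}). Your own last step signals the problem: with the decay you claim, every Mahler series with $a_{\underline{k}}\to 0$ would converge, so the hypothesis $f\in C_\varepsilon$ would play no role.

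\textbf{Second error.} Boundedness $\|\partial_w\|_r\le p^{a}$ on $V_r$ does not give polynomial growth of the Mahler operators. We have $\binom{T}{k}=T(T-1)\cdots(T-k+1)/k!$ and $v_p(k!)=(k-s_p(k))/(p-1)$, which is linear in $k$. The available bound is therefore $\|\binom{\partial}{\underline{k}}\|_r\le p^{Ak}$ with $A=a+1/(p-1)$: exponential, not $p^{c_0+C\log_p k}$. Injectivity of the restriction maps gives no norm information either.

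\textbf{The repaired interpolation.} Take $v$ in the unit ball of $V_r$, and in the chain property put $c=Ak$ and $m=\lfloor (A-\varepsilon')k-\log_p C\rfloor$. This gives $\|\binom{\partial}{\underline{k}}v\|_s\le C' p^{k((1-\delta)A+\delta\varepsilon')}$. A continuous $C_\varepsilon$-action on $V_s$ needs this exponent to be at most $\varepsilon$. That forces $\delta\ge (A-\varepsilon)/(A-\varepsilon')$, which is possible precisely because $\varepsilon'<\varepsilon$; then $s=s(\delta)$ depends on $\varepsilon$.

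Your choice $\delta=1/2$ fails. Already in the integral case $A=1/(p-1)$, the exponent $(A+\varepsilon')/2$ exceeds $\varepsilon$ for all small $\varepsilon$.

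With $\delta$ chosen this way, your argument becomes essentially the cited one. You still need two inputs you only sketched: the bound on $\partial_w$ on the lattice of whichever $V_{s_0}$ the vector lives in, and the chain property started at $s_0$. The paper also simplifies by treating one basis vector $e_i$ at a time, using $C_\varepsilon(\mathfrak{u}_\mu^*,\mbb{Q}_p)\simeq C_\varepsilon(\mbb{Z}_p,\mbb{Q}_p)^{\widehat{\otimes} d}$.
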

\begin{proof}
    This follows by combining Proposition \ref{Prop:OrdinarySiegelOCcomp} with \cite[Proposition 3.4.1]{UFJ}. Note that it suffices to extend each basis element $e_i \in \mathfrak{u}_{\mu}$ to an action of $C_{\varepsilon}(\mbb{Z}_p, \mbb{Q}_p)$ separately.
\end{proof}

\begin{proof}[Proof of Theorem \ref{Thm:NOCmainThm} in the Siegel case]
Let $\opn{Spa}(B, B^+) \subset \mathcal{S}$ be as above. By Propositions \ref{Prop:OrdinarySiegelOCcomp} and \ref{Prop:OCII}, we see (by uniqueness) that the action $\mathfrak{u}_{\mu}$ extends to an action of $\mathcal{O}^{\opn{la}}(\mathfrak{u}_{\mu}^*, \mbb{Q}_p)$ on 
\[
\mathscr{N}^{\dagger}_B := \varinjlim^{\mathrm{lcv}}_V \opn{H}^0(V, \mathcal{O}_V)
\]
where $V$ runs over all open neighborhoods in $P_{\opn{dR}}^{\opn{an}} \times_{\mathcal{S}} \opn{Spa}(B, B^+)$ of the closure of $\mathfrak{Ig}_{M, \eta} \times_{\mathcal{S}} \opn{Spa}(B, B^+)$. Again by uniqueness, this glues to an action on $\mathscr{N}^{\dagger}$. The remaining properties in Theorem \ref{Thm:NOCmainThm} are easily verified, following \cite[\S 6]{GrahamPilloniRodriguesJacinto}.
\end{proof}

\subsection{The general case} We now prove Theorem \ref{Thm:NOCmainThm} in the general case.
\begin{proof}[Proof of Theorem \ref{Thm:NOCmainThm}]
We write objects for the Siegel case decorated with $\gvx$ or a subscript $V$. Consider the commutative diagram
\begin{equation}
    \begin{tikzcd}
        \igma\gx \arrow{r} \arrow{d} & P_{\dr}^{\an}\gx \arrow{d} \\
        \igma\gvx \arrow{r} & P_{\dr}^{\an}\gvx,
    \end{tikzcd}
\end{equation}
where the vertical arrows are closed immersions. Note that the left vertical arrow is $\mf{u}_{\mu} \otimes \gmhateta$ equivariant via the closed immersion $\mf{u}_{\mu} \otimes \gmhateta \to \mf{u}_{\mu, V} \otimes \gmhateta$. Let $\opn{Spf}B^+ \subset \mathscr{S}(\mathsf{G}_V, \mathsf{H}_V)$ be an open affine subspace, and let $\opn{Spa}(B, B^+)$ denote its adic generic fiber. We work locally over $\opn{Spa}(B, B^+)$ and add the subscript $B$ to denote the base change $- \times_{\mathscr{S}(\mathsf{G}_V, \mathsf{H}_V)} \opn{Spa}(B, B^+)$. We have a diagram of pullback maps
\begin{equation}
    \begin{tikzcd}
        \mathcal{O}( P_{\dr}^{\an}\gvx_B) \arrow[r, twoheadrightarrow] \arrow{d} & \mathcal{O}( P_{\dr}^{\an}\gx_B)\arrow{d} \\
        \mathscr{N}^{\dagger}\gvx_B \arrow{r} \arrow{d} & \mathscr{N}^{\dagger}\gx_B \arrow{d} \\
        \mathcal{O}(\igma\gvx_B) \arrow[r, twoheadrightarrow] & \mathcal{O}(\igma\gx_B).
    \end{tikzcd}
\end{equation}
The top horizontal arrow is $(\mathfrak{g}, \mathcal{M}_{\mu}(\mbb{Z}_p))$ equivariant via the inclusion $(\mathfrak{g}, \mathcal{M}_{\mu}(\mbb{Z}_p)) \to (\mathfrak{g}_V, M_{\mu,V}(\mbb{Z}_p))$. The bottom horizontal arrow is $\mathcal{O}^{\opn{la}}(\mathfrak{u}_{\mu}^*, \mbb{Q}_p)$-equivariant via the natural map $\mathcal{O}^{\opn{la}}(\mathfrak{u}_{\mu}^*, \mbb{Q}_p) \to \mathcal{O}^{\opn{la}}(\mathfrak{u}_{\mu,V}^*, \mbb{Q}_p)$. \smallskip

We first show that $\mathscr{N}^{\dagger}\gvx_B \to \mathscr{N}^{\dagger}\gx_B$ is surjective. For this, we simply note that given a cofinal collection of open neighborhoods $U \subset P^{\an}_{\dr}\gvx_B$ containing the closure of $\igma\gvx_B$, the intersections $U \cap P^{\an}_{\dr}\gx_B$ form a cofinal collection of open neighborhoods $U'=U \cap P^{\an}_{\dr}\gx_B \subset P^{\an}_{\dr}\gx_B$ containing the closure of $\igma\gx_B$. Since $P^{\an}_{\dr}\gx_B \to P^{\an}_{\dr}\gvx_B$ is a closed immersion, it follows that
\begin{align}
    H^0(U, \mathcal{O}_{P^{\an}_{\dr}\gvx_B}) \to H^0(U', \mathcal{O}_{P^{\an}_{\dr}\gx_B})
\end{align}
is surjective for each $U$, and thus in the colimit we get a surjective map $\mathscr{N}^{\dagger}\gvx_B \to \mathscr{N}^{\dagger}\gx_B$. \smallskip 

Note that the kernel of this map is a closed ideal in each term in the colimit. Since the action of $\opn{Sym}\mathfrak{u}_{\mu}$ preserves this ideal, by the density of polynomial functions, we see that the action of $\mathcal{O}^{\opn{la}}(\mathfrak{u}_{\mu}^*, \mbb{Q}_p)$ descends to a unique algebra action on $\mathscr{N}^{\dagger}\gx_B$ extending the action of $\opn{Sym}\mathfrak{u}_{\mu}$.

To conclude, we see by uniqueness that the action of $\mathcal{O}^{\opn{la}}(\mathfrak{u}_{\mu}^*, \mbb{Q}_p)$ on $\mathscr{N}^{\dagger}\gx_B$ glues to an action on $\mathscr{N}^{\dagger}\gx$, as required.
\end{proof}

\subsection{The \texorpdfstring{$\mu$}{mu}-ordinary case} We end this section by making some remarks on the $\mu$-ordinary case. 

\subsubsection{} Let the notation be as in Section \ref{Sub:FourierOnGenFiber}. Define 
\begin{align}
    \mathscr{N}^{\dagger} := \varinjlim^{\mathrm{lcv}}_U \opn{H}^0(U, \mathcal{O}_{N\backslash P^{\opn{an}}_{\opn{dR}}}),
\end{align}
where the locally convex inductive limit is over all open neighborhoods $U \subset N\backslash P^{\opn{an}}_{\opn{dR}}$ containing the closure of the image of $\igma$. Recall that $\mathcal{O}^{\gamma\opn{-la}}(T_p H^{\vee})$ acts on $\mathcal{O}(\igma)$, see Section \ref{Sub:FourierOnGenFiber}. We have the following conjectural generalization of Theorem \ref{Thm:NOCmainThm}.
\begin{Conj} \label{Conj:NOC}
    The action of $\mathcal{O}^{\gamma\opn{-la}}(T_p H^{\vee})$ on $\mathcal{O}(\igma)$ extends to an action of $\mathcal{O}^{\gamma\opn{-la}}(T_p H^{\vee})$ on $\mathscr{N}^{\dagger}$. 
\end{Conj}
It is not clear (to the authors) whether the proof of Theorem \ref{Thm:NOCmainThm} generalizes. This is because, in several places of the proof, one uses the fact that the binomial functions $\binom{x}{k}$ ($k \geq 0$) form an orthonormal basis for $\opn{Cont}(\mbb{Z}_p, \mbb{Q}_p)$, and that we can concretely describe the topology on $\mathcal{O}^{\opn{la}}(\mbb{Z}_p, \mbb{Q}_p)$ (and hence $\mathcal{O}^{\opn{la}}(\mathfrak{u}_{\mu}^*, \mbb{Q}_p)$) via a family of Banach norms depending on $\varepsilon > 0$ and this orthonormal basis. To construct an action of $\mathcal{O}^{\gamma\opn{-la}}(T_p H^{\vee})$ by the same method, it seems that one needs a suitably explicit description of the sections $\mathcal{O}(T_p \mathcal{H}^\vee)$.


\section{Relation to \texorpdfstring{$p$}{p}-adic \texorpdfstring{$L$}{L}-functions, an outlook} \label{Sec:PAdicLFunctions}

We end by describing how our theory produces the $p$-adic families which feature in the construction of several examples of $p$-adic $L$-functions. In particular, we speculate what these families should be in the unitary GGP setting when the prime $p$ is inert in the totally real subfield and split in the CM extension.

\subsection{Differential operators on unitary groups} \label{Sub:DiffOpsonunitary} Let the notation be as in \S \ref{subsub:PELNotation} with $\mathsf{F}^0=\mathbb{Q}$ and $\mathsf{F}$ an imaginary quadratic field. Let $(r,s)$ be the signature of $\V$, let $p$ be a prime split in $\mathsf{F}$, and choose an isomorphism $\mathcal{G} \xrightarrow{\sim} \operatorname{GL}_n \times \mathbb{G}_m$ such that the Hodge cocharacter is conjugate to $$\mu=\mathrm{diag}(\overbrace{z, \ldots, z}^r, \overbrace{1, \ldots, 1}^s) \times z.$$ In particular, we have
\[ 
\mathcal{M}=\mathcal{M}_{\mu}=\begin{bmatrix} \ast_{r\times r} & 0 \\ 0 & \ast_{s \times s}\end{bmatrix} \times \mbb{G}_m \textrm{ and } \mathcal{U}_\mu=\begin{bmatrix} \mathrm{Id}_{r\times r} & *_{r\times s}\\0& \mathrm{Id}_{s \times s} \end{bmatrix} \times \{1\}
\]
and we may therefore identify $\mf{u}_{\mu}$ with the space $M_{r \times s}$ of $r \times s$ matrices. We thus obtain a dual identification of $\mf{u}_{\mu}^*$ with the space of $r\times s$ matrices such that $(g_r, g_s; s) \in \mathcal{M}$ acts by 
\[ 
(g_r,g_s;s) \cdot A= (g_r^t)^{-1} A g_s^t .
\]

\subsubsection{EFMV operators} \label{Subsub:EFMVoperators} We now discuss the operators of \cite{EischenFintzenMantovanVarma} from our perspective. Let $\mf{N} \leq \mathcal{M}$ be the unipotent subgroup consisting of upper triangular unipotent matrices in both blocks (and the trivial group in the similitude factor). The $p$-adic differential operators acting on $\mathcal{O}(\igmf)^{\mf{N}(\mbb{Z}_p)}$ arise from functions in $\Cont(\mf{u}_{\mu}^*, \mathbb{Z}_p)^{\mf{N}(\mbb{Z}_p)}$ (see Corollary \ref{Cor:IntegralFourierConsequenceOrdinary}); in particular, the usual algebraic differential operators that preserve the subspace $\mathcal{O}(\mf{Ig}_M)^{\mf{N}(\mbb{Z}_p)}$ are the highest weight vectors in $\mathrm{Sym} (\mf{u}_{\mu})$. We can compute these explicitly: They are given by polynomials in $\det A_i$ for $1 \leq i \leq s$, where $A_i$ is the $i \times i$ submatrix growing from the top right corner of $A$. If we let $\mathcal{T} \subset \mathcal{G}$ denote the standard diagonal torus, the element $\det A_i \in \opn{Sym}(\mf{u}_{\mu})$ is a highest weight vector of weight
\begin{equation} \label{Eqn:HighestWeightsEMFV}
(\overbrace{1,\ldots,1}^i, \overbrace{0,\ldots,0}^{r-i}, \overbrace{0, \ldots, 0}^
{s-i}, \overbrace{-1, \ldots, -1}^i; 0).
\end{equation}

Let $\mathcal{T}^{\opn{sym}}$ denote the $s$-dimensional torus obtained as the quotient of $\mathcal{T}$ by the intersection of the kernels of the weights in \eqref{Eqn:HighestWeightsEMFV} (as $1 \leq i \leq s$ varies). In \cite[Main Result 1, Corollary 5.2.8]{EischenFintzenMantovanVarma}, it is stated that these operators can be interpolated $p$-adically into operators corresponding to characters $\kappa$ of $T^{\mathrm{sym}}$. The notation in \emph{loc.cit.}\ is slightly misleading however, see Remark \ref{Rem:Caveat}. The discrepancy can be explained by reinterpreting the construction of these operators in the language of this article, which we now describe. \smallskip

Let $W \subset M_{r \times s}(\zp)$ denote the compact open subset of $r \times s$ matrices such that $\det A_i \in \mathbb{Z}_p^\times$ for each $1 \le i \le s$. Then, the $p$-depleted differential operators that are interpolated correspond to the action of the functions 
\begin{equation} \label{Eqn:PDepletedOps}
\mathbf{1}_W\cdot f(\det A_1, \ldots, \det A_i) \in \opn{Cont}(\mf{u}_{\mu}^*, R)^{\mf{N}(\mbb{Z}_p)}
\end{equation}
for $R$-valued polynomials $f$, where $R$ is a $p$-adically complete and separated $\mbb{Z}_p$-algebra. Here $\mathbf{1}_W$ denotes the indicator function of $W$. 

The interpolation of these operators can be obtained immediately from the algebra action of $\opn{Cont}(\mf{u}_{\mu}^*, R)^{\mf{N}(\mbb{Z}_p)}$ on $\mathcal{O}(\mf{Ig}_M)^{\mf{N}(\mbb{Z}_p)}$. More precisely, note that $W$ is the pre-image of the unique open ${\mf{N}}$-orbit mod $p$, i.e., the set of $r \times s$ matrices $A$ whose $s \times s$ submatrix $A_s$ lies in the open Schubert cell mod $p$ respect to the action of upper triangular unipotent matrices. Ignoring the similitude factor (which is trivial on the unipotent radical), a matrix $A \in W$ can therefore be written uniquely in the form 
\[ 
A= m \cdot \begin{bmatrix} w  \\ 0_{(r-s) \times{s}}  \end{bmatrix},\quad \quad w=\begin{bmatrix}0 & \ldots & 0 & t_1\\ 0 & \ldots & t_2 & 0 \\
\vdots & \ldots & \vdots & \vdots \\
t_s & 0 & \ldots & 0 \end{bmatrix},\; m \in {\mf{N}}(\mbb{Z}_p), t_i \in \mathbb{Z}_p^\times. 
\]
Given a continuous character $\kappa \colon \mathcal{T}^{\opn{sym}}(\mbb{Z}_p) \to R^{\times}$, which is of the form 
\[
(\kappa_1, \dots, \kappa_s, 0, \dots, 0, \kappa_s^{-1}, \dots, \kappa_1^{-1}; 0)
\]
for continuous characters $\kappa_i \colon \mbb{Z}_p^{\times} \to R^{\times}$, we define a function $\mathbf{1}_W \cdot \Theta^{\kappa} \in \opn{Cont}(\mf{u}_{\mu}^*, R)$ as
\[
\mathbf{1}_W \cdot \Theta^{\kappa}(A) = \left\{ \begin{array}{cc} \prod_{i=1}^s \kappa_i  ((-1)^{s-i}t_i) & \text{ if } A= m \cdot \begin{bmatrix} w  \\ 0_{(r-s) \times{s}}  \end{bmatrix} \in W \\ 0 & \text{ otherwise } \end{array} \right. .
\]
By construction, this is invariant under the action of $\mf{N}(\mbb{Z}_p)$, and $p$-adically interpolates the $p$-depleted operators in \eqref{Eqn:PDepletedOps}. As an example, if we fix $1 \leq i \leq s$, and take $\kappa$ to be the weight in \eqref{Eqn:HighestWeightsEMFV}, then $\mathbf{1}_W \cdot \Theta^{\kappa}$ is just $\mathbf{1}_W \cdot \det A_i$ (the multiplication of the indicator function of $W$ with the algebraic differential operator $\opn{det}A_i \in \opn{Sym}^{\bullet}\mf{u}_{\mu}$). 
    
\begin{Rem}
There is a similar interpolation in the general ordinary case. Indeed, since $\mu$ is minuscule, $\mf{u}_\mu^*$ is a spherical variety for the adjoint action of $\mathcal{M}_{\mu}$.\footnote{Recall we are in the ordinary case, so $E = \mbb{Q}_p$. Then, to see that $\mathfrak{u}_{\mu}^*$ is a spherical variety, let $\mathcal{B} \subset \mathcal{G}$ be a Borel subgroup such that $\mathcal{B}_{\mu} = \mathcal{B} \cap \mathcal{M}_{\mu}$ is also a Borel subgroup of $\mathcal{M}_{\mu}$. Then $\mathfrak{u}_{\mu} \subset \opn{Lie}\mathcal{B}$ is an abelian ideal, hence there are only finitely many orbits for the adjoint action of $\mathcal{B}$ on $\mathfrak{u}_{\mu}$ \cite[Intro.]{Panyushev}. In particular, there exists an open orbit. Since this action factors through $\mathcal{B}_{\mu}$, this implies that there exists an open orbit for $\mathcal{B}_{\mu}$, so $\mathfrak{u}_{\mu}$ (and hence $\mathfrak{u}_{\mu}^*$) is a spherical $\mathcal{M}_{\mu}$-variety. } Fix a representative $x_0 \in \mathfrak{u}_{\mu}^*(\mbb{Z}_p)$ for the open orbit of $\mathcal{B}_{\mu}$ (the Borel subgroup with unipotent radical $\mathfrak{N}$), and let $\opn{Stab}_{\mathcal{B}_{\mu}}(x_0) \subset \mathcal{B}_{\mu}$ denote the stabilizer of the point $x_0$. Let $W \subset \mathfrak{u}_{\mu}^*(\mbb{Z}_p)$ denote the preimage of the unique Borel orbit modulo $p$, which in this case is the same thing as $\mathcal{B}_{\mu}(\mbb{Z}_p) \cdot x_0$. Then for any continuous character $\kappa \colon \mathcal{B}_{\mu}(\mbb{Z}_p)/\opn{Stab}_{\mathcal{B}_{\mu}}(x_0) \to R^{\times}$, we can construct a function $\mathbf{1}_W \cdot \Theta^{\kappa} \in \opn{Cont}(\mathfrak{u}_{\mu}^*(\mbb{Z}_p), R)$ as the extension by zero of the function on $W$ given by $b \cdot x_0 \mapsto \kappa(b)$.
\end{Rem}

\subsubsection{Operators adapted to the doubling method} In the notation of \S \ref{subsub:PELNotation}, consider a direct sum $\mathsf{W} = \mathsf{V} \oplus \mathsf{V}'$ of Hermitian $\F$-modules with $\F$ imaginary quadratic, where $\mathsf{V}$ and $\mathsf{V}'$ have signatures $(r,s)$ and $(s,r)$, respectively. Then there is a natural embedding $\operatorname{G}(\operatorname{U}(\V) \times \operatorname{U}(\V')) \to \operatorname{GU(\mathsf{W})}$. One can apply the EFMV operators of \cite{EischenFintzenMantovanVarma} (see \S \ref{Subsub:EFMVoperators}) on the Mantovan Igusa variety for $\operatorname{GU(\mathsf{W})}$ then restrict to the Mantovan Igusa variety for $\operatorname{G}(\operatorname{U}(\V) \times \operatorname{U}(\V'))$. However, this does not preserve ordinarity. An alternative is to perhaps replace the EFMV operators with a further $p$-depletion: Instead of considering functions on the (mod $p$) open $n\times n$ Schubert cell, we restrict further to functions on the subset of $n \times n$ matrices consisting of the product of the open $r \times r$ Schubert cell in the top right and open $s \times s$ Schubert cell in the bottom left. In particular, one can ask whether it is possible to obtain the Eisenstein measure of EHLS \cite{EischenHarrisLiSkinner} by considering a Hida family of \emph{Hida-ordinary} Eisenstein series on $\operatorname{GU(\mathsf{W})}$, then applying the interpolation via these differential operators and restricting. 

\subsubsection{The unitary Gan--Gross--Prasad setting} \label{Subsub:UGGPsetting} In the notation of \S \ref{subsub:PELNotation}, consider a direct sum $\mathsf{W} = \mathsf{V} \oplus \mathsf{V}'$ of Hermitian $\F$-modules with $\F$ imaginary quadratic, where $\mathsf{V}$ has signature $(1,1)$ and where $\mathsf{V}'$ has signature $(1,0)$. Consider the embedding $\mathsf{H}:=\operatorname{G}(\operatorname{U}(\V) \times \operatorname{U}(\V')) \xrightarrow{\iota} \operatorname{GU(\mathsf{W})}=:\mathsf{G}$ as above, which induces a morphism of Shimura data $\iota:\hy \to \gx$. Choose a prime $p>2$ split in $\mathsf{F}$ and identify
\begin{align}
    \mathcal{G} &\xrightarrow{\sim} \operatorname{GL}_{3} \times \mathbb{G}_{m} \\
    \mathcal{H} &\xrightarrow{\sim} \operatorname{GL}_{2} \times \mathbb{G}_{m} \times \mathbb{G}_{m},
\end{align}
such that the induced embedding of Levi subgroups attached to the Hodge cocharacters is given by
\[ 
\begin{bmatrix} g & 0 \\ 0 & g' \end{bmatrix} \times g'' \times s \rightarrow \begin{bmatrix} g''	& 0 & 0  \\ 0 & g & 0  \\ 0 & 0 & g' \end{bmatrix} \times s, 
\]
where $s$ denotes the similitude factor (recall that in $\mathcal{G}$, the Levi subgroup $\mathcal{M}$ is the subgroup of block diagonal matrices with block sizes $2 \times 2$ and $1 \times 1$).

Let $f \in \mathcal{O}(\mf{Ig}_M\gx)^{\mf{N}(\mbb{Z}_p)}$ be a $p$-adic automorphic form for $\mathsf{G}$ which is Hida-ordinary for the $U_p$-operators. Then we can consider the one-parameter $p$-adic family of automorphic forms on $\mathsf{H}$ given by
\[
\iota^* \left( \mathbf{1}_W \cdot \Theta^{\kappa}(f) \right) \in \mathcal{O}(\mf{Ig}_{M}\hy)^{\mf{N}_H(\mbb{Z}_p)} \widehat{\otimes} R
\]
where 
\[
W = \begin{bmatrix} \mbb{Z}_p^{\times} \\ \mbb{Z}_p \end{bmatrix}, \quad \quad \kappa \colon \mbb{Z}_p^{\times} \to R^{\times}, \quad \quad \mf{N}_H = \mf{N} \cap H,
\]
and the operator $\mathbf{1}_W \cdot \Theta^{\kappa}$ is defined in \S \ref{Subsub:EFMVoperators}. It should be the case that this family of $p$-adic automorphic forms has non-zero image under the ordinary projector for the subgroup $\mathsf{H}$, and is related to the construction of the $p$-adic $L$-functions in \cite{HarrisSquareRoot}.

Note that one should not $p$-deplete further, by taking $W$ to be $\begin{bmatrix} \mbb{Z}_p^{\times} & \mbb{Z}_p^{\times} \end{bmatrix}^t$ for example. Indeed, even though one can still $p$-adically interpolate the Maass--Shimura operators after applying this $p$-depletion, the pullback $\iota^* \left( \mathbf{1}_W \cdot \Theta^{\kappa}(f) \right)$ will be killed by the ordinary projector on $\mathsf{H}$. Essentially, the $p$-depletion arising from $W = \begin{bmatrix} \mbb{Z}_p^{\times} & \mbb{Z}_p \end{bmatrix}^t$ is the optimal choice in the construction of the $p$-adic $L$-function.

\subsection{The case when \texorpdfstring{$p$}{p} is inert in \texorpdfstring{$\F^+$}{F+}}

We end this section by explaining how one can extend the construction of the one-parameter families in \S \ref{Subsub:UGGPsetting} to the setting of unitary groups over non-imaginary quadratic CM fields with $p$ inert in the totally real subfield.

We place ourselves in the setting of \S \ref{sub:ExampleII}, and specialise to the case $n=3$, i.e., the signatures of $\V$ are $(2,1), (3, 0), \dots, (3, 0)$ and $p$ is inert in $\mathsf{F}^+$ but splits in $\mathsf{F}/\mathsf{F}^+$. Write $\V = \mathsf{W} \oplus \mathsf{W}'$ as Hermitian $\mathsf{F}$-modules, and assume that $\mathsf{W}$ has signatures $(1, 1), (2, 0), \dots, (2,0)$. Write $\mathsf{H}=\mathsf{G}(\mathsf{U}(W)\times \mathsf{U}(Z))$ and consider the natural map $\iota: \hy \to \gx$. We have an action of $C^{F^+\opn{-la}}(\mathcal{O}_{F^+}^{\oplus 2}, \mbb{C}_p)$ on $\mathcal{O}(\mf{Ig}_{M, \eta}\gx)$, and we note that $\mathcal{U}_{\overline{\mu}}(\mbb{Z}_p)$ is abelian and given by
\[
\begin{bmatrix}
    1 & 0 & \mathcal{O}_{F^+} \\ 0 & 1 & \mathcal{O}_{F^+} \\ 0 & 0 & 1
\end{bmatrix} \times \{ 1 \} \subset \mathsf{G}_{F^+} = \left( \prod_{\tau \colon F^+ \hookrightarrow \overline{\mathbb{Q}}_p} \opn{GL}_{3, F^+} \right) \times \mbb{G}_{m, F^+} 
\]
embedded diagonally. The differential operators one would like to interpolate should come from the top-right entry. More precisely, let $R$ be an affinoid $\mbb{C}_p$-algebra. For any $f \in \mathcal{O}(\mf{Ig}_{M, \eta}\gx)$ and $F^+$-analytic character $\kappa \colon \mathcal{O}_{F^+}^{\times} \to R^{\times}$, we can consider the family of $p$-adic automorphic forms on $\mathsf{H}$ given by
\[
\iota^*\left( \mathbf{1}_W \cdot \Theta^{\kappa} (f)\right) \in \mathcal{O}(\mf{Ig}_{M, \eta}\hy) \widehat{\otimes} R
\]
where $W = \begin{bmatrix} \mathcal{O}_{F^+}^{\times} & \mathcal{O}_{F^+} \end{bmatrix}^t \subset \mathcal{O}_{F^+}^{\oplus 2}$, and $\mathbf{1}_W \cdot \Theta^{\kappa} \in C^{F^+\opn{-la}}(\mathcal{O}_{F^+}^{\oplus 2}, R)$ is the function
\[
\mathbf{1}_W \cdot \Theta^{\kappa} ( (x_1, x_2)^t) = \left\{ \begin{array}{cc} \kappa(x_1) & \text{ if } x_1 \in \mathcal{O}_{F^+}^{\times} \\ 0 & \text{ otherwise } \end{array} \right. .
\]
If $f$ is Hida-ordinary, this family should have an interesting image under the ordinary projector on $\mathsf{H}$, which we expect will feature in the construction of square-root $p$-adic $L$-functions in this setting.


\appendix 

\section{Some remarks about locally convex inductive limits}\label{appendix.locally-convex-inductive-limits}

\subsection{Locally convex inductive limits and condensed mathematics} We fix a nonarchimedean field $K$ (not necessarily spherically complete). Our goal is to compare locally convex inductive limits with condensed inductive limits. 

\subsubsection{Tool and computational resource disclosure} The authors made use of ChatGPT 5.5 Pro while writing this appendix. More specifically, the counterexamples in Lemmas A.2.2, A.2.7 and the proofs of Lemmas A.1.7 and A.2.5 were initially suggested to us by that model. We take full responsibility for the written arguments. 

\subsubsection{} Throughout this appendix we consider inductive systems $ \{V_n\}_{n \in \mathbb{Z}_{\ge 0}}$ of locally convex $K$-vector spaces with injective transition maps $\iota_{n}:V_{n} \to V_{n+1}$, and write $V_{\infty}$ for its locally convex inductive limit, see \cite[Section 11.1]{PGS}. If we write $\varinjlim_n V_n$ for the topological inductive limit, then there is a natural continuous bijection $\varinjlim_n V_n \to V_{\infty}$. Note that $\varinjlim_n V_n$ is not necessarily a topological group because the natural
continuous bijection $\varinjlim_n (V_n \times V_n) \to (\varinjlim_n V_n) \times (\varinjlim_n V_n)$ might not be a homeomorphism; see \cite{TopologicalInductiveLimit} for an in-depth discussion of this point. 

\subsubsection{} We say that $\{V_n\}_{n \in \mathbb{Z}_{\ge 0}}$ is \emph{cond-regular} if the natural map
\begin{align}
    \varinjlim_n \ul{V_n} \to \ul{V_{\infty}}
\end{align}
is an isomorphism. In other words, we are asking that for every profinite set $S$, the natural map 
\begin{align}
\varinjlim_n C^0(S, V_n) \to C^0(S, V_{\infty})
\end{align}
is a bijection. 

\subsubsection{} \label{subsub:strict}  We now compare this to notions from classical locally convex functional analysis. Recall that the inductive system $ \{V_n\}_{n \in \mathbb{Z}_{\ge 0}}$ is called \emph{strict}, see \cite[Definition 11.1.3]{PGS}, if the transition maps $\iota_n:V_{n} \to V_{n+1}$ endow $V_{n}$ with the subspace topology from $V_{n+1}$. Recall that $ \{V_n\}_{n \in \mathbb{Z}_{\ge 0}}$ is called \emph{regular}, see \cite[Definition 11.1.3]{PGS}, if every bounded subset $B \subset V_{\infty}$ lies inside $V_n \subset V_{\infty}$ for some $n$ and is bounded inside $V_n$. If an inductive system $ \{V_n\}_{n \in \mathbb{Z}_{\ge 0}}$ is strict and each $V_{n}$ is closed in $V_{n+1}$, then the inductive system is regular, see \cite[Theorem 11.1.6]{PGS}. 

\begin{Lem} \label{Lem:InductiveLimitTopologyI}
If the inductive system $ \{V_n\}_{n \in \mathbb{Z}_{\ge 0}}$ is regular and strict, then it is cond-regular.
\end{Lem}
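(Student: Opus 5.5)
We need to show: for every profinite set $S$, the natural map $\varinjlim_n C^0(S,V_n)\to C^0(S,V_\infty)$ is bijective. Injectivity is immediate because the transition maps and the maps $V_n\to V_\infty$ are injective, so a continuous map $S\to V_n$ is determined by its composite into $V_\infty$. The content is therefore surjectivity: given a continuous $f\colon S\to V_\infty$, we must show that $f$ factors through some $V_n$ and that the factored map $S\to V_n$ is continuous for the topology of $V_n$.

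For the factorization, we use regularity. The image $f(S)$ is compact in $V_\infty$, and compact subsets of a locally convex space are bounded: for every open lattice $U$ the sets $\lambda U$ cover $f(S)$, so finitely many suffice. Regularity (\cite[Definition 11.1.3]{PGS}) then gives some $n$ with $f(S)\subset V_n$ and $f(S)$ bounded in $V_n$. We get a set-theoretic factorization $f_n\colon S\to V_n$.

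It remains to show $f_n$ is continuous, and we expect this to be the main obstacle: regularity controls boundedness but not topologies. Here we use strictness. The key claim is that, for a strict system, the map $V_n\to V_\infty$ is a topological embedding, i.e.\ $V_\infty$ induces the original topology on $V_n$. We plan to prove this by the standard extension argument for strict inductive limits (cf.\ \cite[Section 11.1]{PGS}). Given an open lattice $U_n\subset V_n$, strictness provides an open lattice $U_{n+1}\subset V_{n+1}$ with $U_{n+1}\cap V_n\supseteq U_n$ up to shrinking. Taking the lattice generated by $U_n$ and a lattice $W$ with $W\cap V_n\subset U_n$, we can arrange $U_{n+1}\cap V_n=U_n$. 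Iterating inductively gives $U_m\subset V_m$ with $U_{m+1}\cap V_m=U_m$. Then $U=\bigcup_m U_m$ is a lattice in $V_\infty$ that is open, since its preimage in each $V_m$ is $U_m$, and it satisfies $U\cap V_n=U_n$. Over a nonarchimedean field, absolute convexity is replaced by $\mathcal{O}_K$-convexity, and the convex hull of two lattices is their sum, so the construction works without spherical completeness.

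Once this embedding holds, $f_n$ is continuous, because $f$ is continuous and $V_n$ carries the subspace topology from $V_\infty$. This establishes surjectivity, and hence cond-regularity.

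Note that regularity enters only through the factorization step. The closedness hypothesis is not needed directly, but it is what typically guarantees regularity via \cite[Theorem 11.1.6]{PGS}.
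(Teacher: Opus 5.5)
Your proof is correct and follows the paper's argument. The compact image of $S$ is bounded, so regularity gives a set-theoretic factorization through some $V_n$; strictness then makes $V_n\to V_\infty$ a topological embedding, so the factored map is continuous. The only difference is that you reprove the embedding via the lattice-extension argument ($U_{n+1}=U_n+W$ with $W\cap V_n\subset U_n$), whereas the paper simply cites \cite[Theorem 11.1.5]{PGS}.
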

\begin{proof}
Let $S$ be a profinite set. To show that the natural map
\begin{align}
\varinjlim_n C^0(S, V_n) \to C^0(S, V_{\infty})
\end{align}
is a bijection, we have to show that any continuous map $f:S \to V_{\infty}$ factors through some $V_n \subset V_{\infty}$ via a continuous map $f_n:S \to V_n$. Since the image of $S \to V_{\infty}$ is quasicompact, hence bounded, we know by regularity that $f(S) \subset V_n$ for some $n$. Since the inductive system is strict, \cite[Theorem 11.1.5]{PGS} implies $V_n \subset V_{\infty}$ has the subspace topology, and it follows that the induced map $f_n:S \to V_n$ is continuous, as required.
\end{proof}

\subsubsection{} \label{subsub:compactoid} Recall that the inductive system $ \{V_n\}_{n \in \mathbb{Z}_{\ge 0}}$ is called \emph{compactoid}, see \cite[Definition 11.3.1]{PGS}, if the transition maps $V_{n} \to V_{n+1}$ are compactoid in the sense of \cite[Definition 8.1.1]{PGS}. If an inductive system $ \{V_n\}_{n \in \mathbb{Z}_{\ge 0}}$ of Banach spaces is compactoid, then the inductive system is regular, see \cite[Theorem 11.3.5]{PGS}.

\begin{Lem} \label{Lem:InductiveLimitTopologyII}
If each $V_n$ is Banach and the inductive system $ \{V_n\}_{n \in \mathbb{Z}_{\ge 0}}$ is compactoid, then it is cond-regular. 
\end{Lem}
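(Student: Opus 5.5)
We follow the proof of Lemma \ref{Lem:InductiveLimitTopologyI}. Fix a profinite set $S$ and a continuous map $f:S \to V_{\infty}$; we must show that $f$ factors through a continuous map $f_n: S \to V_n$ for some $n$. Injectivity of $\varinjlim_n C^0(S,V_n) \to C^0(S,V_\infty)$ is automatic, since all the maps $V_n \to V_\infty$ are injective. The image $f(S)$ is compact, hence bounded in $V_\infty$. The system is compactoid, so by \cite[Theorem 11.3.5]{PGS} it is regular. Hence there is some $n$ with $f(S) \subset V_n$ and $f(S)$ bounded in $V_n$. So $f$ factors set-theoretically through $V_n$, and it remains to upgrade this to continuity into some $V_m$.

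Strictness is not available here, so we cannot say that $V_n$ carries the subspace topology from $V_\infty$. Instead we use the compactoid transition maps. Since $f(S)$ is bounded in the Banach space $V_n$, its image under $V_n \to V_{n+1}$ is compactoid in $V_{n+1}$; applying the next transition map keeps it compactoid in $V_{n+2}$. Our aim is then the following claim: on a bounded compactoid subset $B \subset V_{m}$, the topology induced from $V_m$ coincides with the topology induced from $V_\infty$, at least after passing to some larger index $m' \ge m$. Granting the claim, $f_{m'}:S \to V_{m'}$ is continuous because $f$ is continuous into $V_\infty$.

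To prove the claim, we first use the classical fact for compact-type (LB) limits that the topology of $V_\infty$ restricted to the closed absolutely convex hull of $B$ in $V_{m'}$ agrees with the Banach topology. Concretely, for a compactoid $B$, the $V_{m'}$-norm topology on $B$ agrees with the weak topology (\cite[Ch.~8]{PGS}, where compactoid sets have coinciding weak and original topologies). Second, the continuous injection $V_{m'} \to V_\infty$ shows that the weak topology from $V_\infty^*$ is coarser than the $V_\infty$-topology on $B$. It is also coarser than the $V_{m'}$-topology. Sandwiching, the $V_\infty$-topology on $B$ lies between the weak topology and the $V_{m'}$-topology, so all three coincide provided the weak topologies from $V_{m'}^*$ and $V_\infty^*$ agree on $B$.

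This last point is where we expect the main obstacle. It requires enough continuous functionals on $V_\infty$, namely that $V_\infty^*$ be weakly dense in $V_{m'}^*$ on bounded sets, which needs an approximation or Hahn--Banach argument when $K$ is not spherically complete. If that fails, the fallback is a direct sequential argument. Metrizability is not available on general profinite $S$, so we would use nets: take a net $s_\alpha \to s$ and suppose $f(s_\alpha) \not\to f(s)$ in $V_{m'}$. Compactoidness of $f(S)$ in $V_{m'}$ gives a cluster point $v \neq f(s)$, while convergence in $V_\infty$ forces $v = f(s)$ because $V_\infty$ is Hausdorff. This contradiction would give continuity directly.
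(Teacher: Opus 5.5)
Your reduction is the same as the paper's. Regularity (\cite[Theorem 11.3.5]{PGS}) places $f(S)$ inside some $V_n$ as a bounded set, and its image in $V_{n+1}$ is compactoid (\cite[Theorem 8.3.2]{PGS}). Everything then hinges on showing that, on this compactoid, the Banach topology of $V_{n+1}$ agrees with the subspace topology from $V_\infty$. Your proposal does not prove that step.

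\begin{itemize}
\item The ``classical fact'' you open with is just the claim restated.
\item Your sandwich argument needs the weak topologies from $V_{m'}^*$ and from $V_\infty^*$ to agree on $B$, and you leave this open. It is not a formality. A functional on $V_{m'}$ need not be continuous for the coarser topology that $V_{m'}$ inherits from $V_{m'+1}$. The image of $V_\infty^*$ in $V_{m'}^*$ lies in the image of $V_{m'+1}^*$, which for compactoid transition maps is typically not norm-dense.
\item Your fallback fails as stated: a compactoid subset of a Banach space need not have cluster points. If $K$ is not locally compact, then $\mathcal{O}_K\subset K$ is compactoid but, being complete and non-compact, not precompact. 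So it contains a sequence with pairwise distances bounded below, and that sequence has no cluster point.
\end{itemize}

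The appendix works over an arbitrary non-archimedean $K$; compare Lemma \ref{Lem:LocallyCompactCompactoid}, where local compactness is imposed explicitly, and Lemma \ref{Lem:CompactoidII}. So this is a real restriction. For locally compact $K$ your fallback does work. There compactoid means precompact, so the closure of $f(S)$ in the Banach space $V_{m'}$ is compact, and the compact-to-Hausdorff argument applies. You must still justify that $V_\infty$ is Hausdorff, which follows from regularity by \cite[Theorem 11.2.4]{PGS}.

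The missing ingredient is the non-archimedean substitute for ``a continuous bijection from a compact space onto a Hausdorff space is a homeomorphism''. This is \cite[Corollary 3.8.39]{PGS}, which the paper invokes. Let $\tau_{n+1}$ be the Banach topology on $V_{n+1}$ and $\tau'_{n+1}$ the subspace topology from $V_\infty$. Then $\tau'_{n+1}$ is a coarser locally convex topology, and it is Hausdorff because $V_\infty$ is. The corollary gives that the two topologies coincide on the compactoid $\iota_n(f(S))$. This yields continuity of $f_{n+1}\colon S\to V_{n+1}$ directly, with no duality or Hahn--Banach input, which is exactly what becomes delicate when $K$ is not spherically complete.
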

\begin{proof}
Let $S$ be a profinite set. To show that the natural map
\begin{align}
\varinjlim_n C^0(S, V_n) \to C^0(S, V_{\infty})
\end{align}
is a bijection, we have to show that any continuous map $f:S \to V_{\infty}$ factors through some $V_n$ via a continuous map $f_n:S \to V_n$. Since the image of $S \to V_{\infty}$ is quasicompact, hence bounded, we know by regularity that $f(S) \subset V_n$ for some $n$ and is bounded inside of $V_n$. We are going to show that the map $f_{n+1}:S \to V_{n+1}$ is continuous. \smallskip 

Since $f(S)$ is bounded in $V_n$, it follows that $\iota_n(f(S)) \subset V_{n+1}$ is compactoid by \cite[Theorem 8.3.2]{PGS}. Let us write $\tau_{n+1}$ for the topology on $V_{n+1}$ and $\tau_{n+1}'$ for the subspace topology of $V_{n+1}$ inside $V_{\infty}$. Since each $V_n$ is Hausdorff, it follows by regularity that $V_{\infty}$ is Hausdorff, see \cite[Theorem 11.2.4]{PGS}. We thus see that $\tau_{n+1}'$ is a Hausdorff locally convex topology on $V_{n+1}$. It suffices to show that $\tau_{n+1}$ and $\tau_{n+1}'$ induce the same topology on $\iota_n(f(S))$, but this is a direct consequence of \cite[Corollary 3.8.39]{PGS}.
\end{proof}

\begin{Rem}
For $K$ a finite extension of $\qp$, Lemma \ref{Lem:InductiveLimitTopologyII} is \cite[Lemma 2.19]{colmez2023arithmeticdualitypadicproetale} (note that compactoid maps between Banach spaces are the same as compact maps, see \cite[first paragraph of Section 8.8]{PGS}). However, the proof there only shows that $f(S) \subset V_n$ and not that the induced map $S \to V_n$ is continuous.
\end{Rem}

\subsection{A question} If $ \{V_n\}_{n \in \mathbb{Z}_{\ge 0}}$ is a cond-regular inductive system, then one could ask if $\ul{V_{\infty}}$ determines $V_{\infty}$. There is an adjunction map
\begin{align} \label{eq:Adjunction}
    \ul{V_{\infty}}(\ast)_{\mathrm{top}} \to V_{\infty},
\end{align}
see \cite[Proposition 1.7]{CondensedNotes}. By \cite[Proposition 1.7]{CondensedNotes}, this map is an isomorphism if $V_{\infty}$ is compactly generated as a topological space. Using cond-regularity, we can identify the left-hand side with (where $\varinjlim_n$ denotes the topological inductive limit)
\begin{align}
    \ul{V_{\infty}}(\ast)_{\mathrm{top}} = (\varinjlim_n \ul{V_n})(\ast)_{\mathrm{top}} = \varinjlim_n \left(\ul{V_n}(\ast)_{\mathrm{top}}\right),
\end{align}
since $Y \mapsto Y(\ast)_{\mathrm{top}}$ is a left adjoint and thus commutes with colimits. If each $V_n$ is moreover compactly generated (as a topological space), then this identifies with $\varinjlim_n V_n$, see \cite[Proposition 1.7]{CondensedNotes}. 

\begin{Question} \label{Question:Appendix}
Suppose that $ \{V_n\}_{n \in \mathbb{Z}_{\ge 0}}$ is a cond-regular inductive system where each $V_n$ is compactly generated as a topological space. When is the adjunction map \eqref{eq:Adjunction} an isomorphism? In other words, when does the locally convex inductive limit topology on $V_{\infty}$ agree with the topological inductive limit topology?
\end{Question}

\subsubsection{} \label{subsub:Counterexample} We first give an example where Question \ref{Question:Appendix} has a negative answer: Consider the inductive system where $V_n = \oplus_{i=0}^{n} E_i$ and $V_{n} \to V_{n+1}$ the natural inclusion, with $E_i=\hat{\oplus}_{\mathbb{Z}} K$ the $K$-Banach space with countable orthonormal basis $\{e_{i,j}\}_{j \in \mathbb{Z}}$. Let $V_{\infty}$ be the locally convex inductive limit as above; it is the locally convex direct sum of the $E_i$. In this case the inductive system is strict and $V_{n} \subset V_{n+1}$ is closed, so the inductive system is cond-regular by Lemma \ref{Lem:InductiveLimitTopologyI}.
\begin{Lem} \label{Lem:Counterexample}
Let $V_{\infty}$ be as in \S \ref{subsub:Counterexample}. The topological inductive limit topology on $V_{\infty}$ has strictly more closed subsets than the locally convex inductive limit topology. In particular, \eqref{eq:Adjunction} is not a homeomorphism.
\end{Lem}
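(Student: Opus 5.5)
We will exhibit an explicit subset $A \subset V_{\infty}$ that is closed in the topological inductive limit $\varinjlim_n V_n$ but not closed in the locally convex inductive limit $V_\infty$. We use the standard diagonal construction for countable direct sums of infinite-dimensional spaces. Fix $\pi \in K$ with $0<|\pi|<1$. Set
\[
A=\{\, \pi^{m} e_{0,m} + \pi^{-m}\cdot \pi^{-k}\, e_{k,m} \;:\; k\ge 1,\ m\ge 1 \,\}\subset V_\infty .
\]
The first component tends to $0$ in $E_0$ as $m\to\infty$. The second component has large norm in $E_k$, and it lies in a different summand for each $k$.

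The first step is to show that $A$ is closed in $\varinjlim_n V_n$. By definition of the final topology, this means showing that $A\cap V_n$ is closed in the Banach space $V_n=\oplus_{i\le n}E_i$ for every $n$. The intersection $A\cap V_n$ consists of the points with $k\le n$. Consider a convergent sequence of such points. Only finitely many values of $k$ occur, so we may fix one $k$. The points for distinct $m$ are at distance at least $|\pi|^{-m-k}$ in the $E_k$-coordinate, hence at distance $\ge 1$ from each other. A convergent sequence must therefore be eventually constant, so $A\cap V_n$ is discrete and closed.

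The second step is to show that $0\in\overline{A}$ for the locally convex topology, while $0\notin A$. A basis of neighbourhoods of $0$ in the locally convex direct sum is given by the absolutely convex hulls $U=\mathrm{aco}\bigl(\bigcup_i U_i\bigr)$, where $U_i=\{x\in E_i: \|x\|\le \varepsilon_i\}$ with $\varepsilon_i>0$ arbitrary. Given such a $U$, choose $m$ large enough that $|\pi|^m\le \varepsilon_0$, so that the first component $\pi^m e_{0,m}$ lies in $U_0$. Then choose $k$ large enough that $|\pi|^{-m-k}\le\varepsilon_k$ fails; we must instead arrange that it holds. This is the point where the construction has to be calibrated. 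The fix is to reverse the roles of the indices: take
\[
A=\{\, \pi^{m} e_{0,k} + \pi^{-k}\, \pi^{?}\,e_{k,m} \,\},
\]
choosing the scalars so that, for a fixed $k$, varying $m$ makes the $E_k$-component small, while the $E_0$-component is forced large by $k$ but can still be made small by picking $m$ after $k$. Concretely, take points $x_{k,m}=\pi^{m}e_{k,m}+\pi^{?}e_{0,k}$ arranged so that for a given $(\varepsilon_i)$ we first pick $k$ and then $m$. For local discreteness, we keep distinct basis vectors $e_{k,m}$ attached to the large coefficient inside each finite-$n$ piece. A convenient choice is $x_{k,m}=e_{0,k}\pi^{k}\cdot\pi^{-k}+\pi^{m}e_{k,m}$. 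In general, we aim for an element that lies in $U_0+U_k\subset U$ after suitable choices, so that every neighbourhood of $0$ meets $A$.

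The main obstacle is exactly this balancing: each finite piece $A\cap V_n$ has to remain discrete (we get this by using pairwise orthogonal basis vectors in $E_0$ with norm bounded below), while every $U$ must still meet $A$. I would try a pairing of the shape $x_{k,m}=\pi^{m}e_{0,m}+ \lambda_{k,m} e_{k,m}$ with $|\lambda_{k,m}|\to 0$ as $m\to\infty$ for each fixed $k$. I would then have to check discreteness within $V_n$ using the $E_0$-coordinates. If that fails, I would fall back on the classical Köthe/Floret argument for $\bigoplus_{\mathbb N}\ell^\infty$-type spaces, which shows that addition is not jointly continuous on $\varinjlim_n V_n$. That argument produces a closed set $A+B$ whose locally convex closure contains $0$. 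Once such an $A$ exists, \eqref{eq:Adjunction} is a continuous bijection that is not a homeomorphism.
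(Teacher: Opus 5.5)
There is a genuine gap. The whole content of the lemma is the ``balancing'' that you flag as the main obstacle, and none of your concrete candidates achieves it.
\begin{itemize}
\item For $\pi^m e_{0,m}+\pi^{-m-k}e_{k,m}$, every $E_k$-component has norm $|\pi|^{-m-k}>1$. So the neighbourhood $U=\bigoplus_i U_i$ with all $\varepsilon_i<1$ misses $A$, and $0\notin\overline{A}$.
\item For $e_{0,k}+\pi^m e_{k,m}$, the $E_0$-component has norm $1$, which causes the same problem.
\item For your final shape $\pi^m e_{0,m}+\lambda_{k,m}e_{k,m}$ with $\lambda_{k,m}\to 0$ as $m\to\infty$, already for one fixed $k$ the points form a null sequence in the Banach space $V_k$. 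Hence $A\cap V_k$ is not closed, and $A$ is not closed in $\varinjlim_n V_n$.
\end{itemize}
Your heuristic that the $E_0$-components should have norm bounded below is the right one. However, $|\pi|^m$ is not bounded below on $A\cap V_n$, because $m$ is unbounded there. The fallback to a K\"othe/Floret-type failure of joint continuity of addition is only a citation. The assertion that some $A+B$ is closed in the final topology is exactly the point needing proof, and you do not carry it out.

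The missing idea is to separate the roles of the two indices.
\begin{itemize}
\item The coefficient on the common summand should depend only on the summand index, while the free index chooses an orthonormal direction there.
\item The free index should control the size of the other component.
\end{itemize}
For example, take $A=\{x_{k,m}=\pi^k e_{0,m}+\pi^m e_{k,0} : k,m\ge 1\}$.

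To see that $0\in\overline{A}$: given $U=\bigoplus_i U_i$, first choose $k$ with $|\pi|^k\le\varepsilon_0$, then choose $m$ with $|\pi|^m\le\varepsilon_k$. Then $x_{k,m}\in U$, so $0\in\overline{A}$, while $0\notin A$.

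To see that $A$ is closed in $\varinjlim_n V_n$: we have $A\cap V_n=\{x_{k,m}: k\le n\}$. Two distinct such points have $E_0$-coordinates at distance at least $|\pi|^n$. If $m\ne m'$ this follows from orthonormality; if $m=m'$ it follows from $|\pi^k-\pi^{k'}|=|\pi|^{\min(k,k')}$. So $A\cap V_n$ is uniformly discrete, hence closed in $V_n$.

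This is exactly the paper's set $a_{n,k}=\varpi^n e_{1,k}+\varpi^k e_{n,1}$, with $E_1$ playing the role of your $E_0$. The conclusion about the adjunction map then follows as you say, since $\underline{V_\infty}(\ast)_{\mathrm{top}}=\varinjlim_n V_n$ by cond-regularity and the Banach spaces $V_n$ being compactly generated.
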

\begin{proof}
We will construct a subset $A \subset V_{\infty}$ that is not closed in $V_{\infty}$ but is closed in the topological inductive limit topology: Let $\varpi \in \mathcal{O}_K$ be an element of norm strictly less than one. Consider $a_{n,k} = \varpi^n e_{1,k} + \varpi^k e_{n,1} \in V_n \subset V_{\infty}$, and let $A=\{a_{n,k}\}_{n,k \in \mathbb{Z}_{\ge 2}}$.

\textbf{Step 1: The closure of $A$ in $V_{\infty}$ contains $0$:} By \cite[Corollary 3.3.16]{PGS} we know that $V_{\infty}$ has a basis of zero neighborhoods consisting of absolutely convex subsets in the sense of \cite[Definition 3.1.3]{PGS}; note that absolutely convex subsets are just $\mathcal{O}_K$-submodules.  By \cite[Theorem 11.1.2]{PGS} an $\mathcal{O}_K$-submodule $U$ of $V_{\infty}$ is open if and only if it is of the form $\sum_i W_i$, with $W_i$ an open $\mathcal{O}_K$-submodule of $V_i$. It follows that a basis of neighborhoods of $0$ in $V_{\infty}$ is given by $U=\bigoplus_{i} U_i$, where each $U_i=\varpi^{k_{i}} \hat{\oplus}_{\mathbb{Z}} \mathcal{O}_K$ for some $k_{i} \in \mathbb{Z}_{\ge 0}$. If we take $n$ such that $n \ge k_{1}$ and $k$ such that $k \ge k_{n}$, then $\varpi^n e_{1,k} \in U_1$ and $\varpi^k e_{n,1} \in U_n$, so that $a_{n,k} \in U$. Thus $A$ has nonempty intersection with each member of a basis of neighborhoods of $0$, and so the closure of $A$ in $V_{\infty}$ contains $0$. Note that $0 \not \in A$ so that $A$ is not closed in $V_{\infty}$.  \smallskip 

\textbf{Step 2: The set $A$ is closed in the inductive limit topology on $V_{\infty}$:} It suffices to show that $A_n=A \cap V_n$ is closed for all $n$. Since $V_n$ is a metric space, it suffices to show that convergent sequences in $A_n$ contain a limit in $A_n$. For this, we simply note that the topology on $V_n$ can be defined by the norm $\|(v_0, \cdots, v_n)\| = \operatorname{Max}_{i=0, \cdots,n}\left(\|v_i\|\right)$ and by looking at the $E_1$-coordinate we see that $\|a_{m,k} - a_{m',k'}\| \ge |\varpi^n|$ for $(m,k) \not=(m',k')$ and $2 \le m,m' \le n$. Thus any convergent sequence in $A_n$ is eventually constant.

We conclude that the inductive limit topology on $V_{\infty}$ does not agree with the locally convex inductive limit topology, so that the natural map of \eqref{eq:Adjunction} is not a homeomorphism.
\end{proof}

\begin{Rem}
Lemma \ref{Lem:Counterexample} contradicts the assertion made in \cite[proof of Lemma 3.32]{RJRC} that $\ul{V_{\infty}}(\ast)_{\mathrm{top}}$ is a classical LF space. 
\end{Rem}

\subsubsection{} Next, we give an example where Question \ref{Question:Appendix} does have a positive answer (cf. the last sentence of \cite[Remark 4.9]{RJRC}). As written, the proof of \cite[Lemma 2.20]{colmez2023arithmeticdualitypadicproetale} seems to use Lemma \ref{Lem:LocallyCompactCompactoid}.
\begin{Lem} \label{Lem:LocallyCompactCompactoid}
Suppose that $K$ is locally compact. If each $V_n$ is Banach and the inductive system $ \{V_n\}_{n \in \mathbb{Z}_{\ge 0}}$ is compactoid, then the inductive limit topology on $V_{\infty}$ agrees with the locally convex inductive limit topology. 
\end{Lem}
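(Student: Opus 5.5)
We need to show that a subset $A \subset V_{\infty}$ whose intersection with each $V_n$ is closed in $V_n$ is already closed in the locally convex inductive limit topology; the converse containment of topologies is automatic, since the natural map $\varinjlim_n V_n \to V_{\infty}$ is a continuous bijection. The plan is to exploit local compactness of $K$. By \cite[first paragraph of Section 8.8]{PGS}, compactoid maps between Banach spaces over such $K$ are compact, so each transition map $\iota_n$ sends the unit ball $B_n$ of $V_n$ into a subset of $V_{n+1}$ with compact closure. Replacing $B_{n+1}$ by a large multiple, we may assume $\overline{\iota_n(B_n)} \subset B_{n+1}$. We then aim to show that $V_{\infty}$ is compactly generated, i.e.\ a $k$-space, and that its compact subsets are controlled by the $V_n$.

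\textbf{Step 1.} Let $C_n$ denote the closure of $\iota_n(B_n)$ in $V_{n+1}$; this is compact. Since $\iota_n(V_n) = \bigcup_k \varpi^{-k} \iota_n(B_n)$, every point of $V_{\infty}$ lies in some compact set $\varpi^{-k} C_n$. By the regularity of the system \cite[Theorem 11.3.5]{PGS} and the argument in the proof of Lemma \ref{Lem:InductiveLimitTopologyII}, any compact $Q \subset V_{\infty}$ lies, for some $n$, in $V_{n+1}$, and the topology of $V_{\infty}$ restricted to $Q$ coincides with the topology of $V_{n+1}$ restricted to $Q$. In particular, if $A \cap V_m$ is closed in $V_m$ for all $m$, then $A \cap Q$ is closed in $Q$ for every compact $Q \subset V_{\infty}$.

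\textbf{Step 2.} It therefore suffices to show that $V_{\infty}$ carries the final topology with respect to its compact subsets; this is the main obstacle. We would take the standard route for DFS/compact-type spaces: $V_{\infty}$ is the (semi-)Montel dual-type space, and we show it is a sequential space. Concretely, let $A$ be a subset meeting every compact set in a closed set, and suppose $x \in \overline{A}$ with $x \notin A$. We construct an absolutely convex open zero-neighbourhood $U = \sum_i W_i$, as in \cite[Theorem 11.1.2]{PGS}, with $(x+U) \cap A = \emptyset$. This is done inductively: since the compact sets $\varpi^{-k}C_n$ meet $A$ in closed sets, we can choose radii $r_i$ step by step so that $x + \sum_{i \le n} \varpi^{r_i} B_i$ avoids $A$ inside the compact set $K_n := \sum_{i\le n} \varpi^{r_i}\overline{B_i}$ (taken in $V_{n+1}$). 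This uses compactness to pass from "avoid on each compact" to "avoid on a neighbourhood", in the style of the classical proof that DF/LB-spaces with compact linking maps are $k$-spaces (cf.\ the Komatsu/Schneider argument for spaces of compact type). Local compactness of $K$ ensures that the sets $\varpi^{r}\overline{B_i}$ are genuinely compact. We expect the delicate point to be arranging that the inductive choice at stage $n+1$ does not destroy stage $n$; we would handle this by shrinking to the margins left by a strict inequality, namely choosing a positive distance from $A \cap K_n$ via compactness.

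\textbf{Step 3.} It follows that $A$ is closed in $V_{\infty}$. Hence the two topologies agree, and the adjunction map \eqref{eq:Adjunction} is a homeomorphism, using cond-regularity (Lemma \ref{Lem:InductiveLimitTopologyII}) and the identification $\ul{V_{\infty}}(\ast)_{\mathrm{top}} = \varinjlim_n V_n$.
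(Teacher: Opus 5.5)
Your Step 2, once made precise, is the paper's argument: inductively build an $\mathcal{O}_K$-submodule $\sum_i W_i$ that avoids $A-x$, which is open in the locally convex inductive limit by \cite[Theorem 11.1.2]{PGS}. The construction uses that the image of a ball has compact closure in the next space (this is where local compactness of $K$ enters), and chooses each new radius so that the new summand lies within the positive distance, measured in $V_{n+2}$, between the compact set $x+K_n$ and the closed set $A\cap V_{n+2}$. This one-step look-ahead (the paper's $W'_{n+1}$) is the real delicate point, rather than a distance from $A\cap K_n$. The $k$-space framing and your Step 1 are a harmless detour, since the induction only uses that each $A\cap V_m$ is closed in $V_m$.
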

\begin{proof}
It suffices\footnote{Indeed, it follows directly from this that addition is continuous in the topological inductive limit topology. This shows that the topological inductive topology defines a topological group, and we can then compare the two topologies by comparing zero neighborhoods as usual.} to show that every zero neighborhood $U \subset \varinjlim_n V_n$ in the topological inductive limit topology, contains a subset $L \subset U$ which is a zero neighborhood in the locally convex inductive limit topology. Such $U$ has the defining property that $U_n = U \cap V_n$ is open in $V_n$ for each $n$. We are going to inductively construct bounded open $\mathcal{O}_K$-submodules $L_n \subset U_n$ with $\iota_n(L_n) \subset L_{n+1}$ and $\overline{\iota_n(L_n)} \subset U_{n+1}$. We will then let $L$ be the union of the $L_n$; which will be open by \cite[Theorem 11.1.2]{PGS}. \smallskip 

Since $U_1$ is open, it contains an bounded open $\mathcal{O}_K$-submodule $W_1 \subset U_1$, which is also closed. Let $L_0$ be a bounded open $\mathcal{O}_K$-submodule in $\iota_0^{-1}(W_1) \cap U_0$, then $\iota_0(L_0) \subset W_1 \subset U_1$ and hence its closure is also contained in $U_1$. For the inductive step, suppose we are given a bounded open $\mathcal{O}_K$-submodule $L_{n-1} \subset U_{n-1}$ such that $\iota_{n-2}(L_{n-2}) \subset L_{n-1}$ and $\overline{\iota_{n-1}(L_{n-1})} \subset U_{n}$. Note that $L_{n-1}$ is bounded hence $\iota_{n-1}(L_{n-1})$ is compactoid by \cite[Theorem 8.3.2]{PGS} and thus has compact closure $C_{n-1}$ by \cite[Theorem 3.8.4.(ii) and Theorem 3.8.3]{PGS}. 

The compactness of $C_{n-1}$ together with the fact that $C_{n-1} \subset U_{n}$, shows that there is an bounded open $\mathcal{O}_K$-submodule $W_n \subset V_n$ with $C_{n-1} + W_n \subset U_n$. Now $\iota_{n}(C_{n-1})$ is also compact and contained in $U_{n+1}$, and we can choose an bounded open $\mathcal{O}_K$-submodule $W_{n+1}' \subset V_{n+1}$ with $\iota_{n}(C_{n-1}) + W_{n+1}' \subset U_{n+1}$. Finally, choose $M_{n} \subset W_{n}$ an bounded open $\mathcal{O}_K$-submodule with $\iota_{n}(M_n) \subset W_{n+1}'$, and define $L_{n}=\iota_{n-1}(L_{n-1}) + M_n$. \smallskip

To show the desired properties of $L_n$, we first note that $L_n$ is open since it is an $\mathcal{O}_K$-submodule containing the bounded open $\mathcal{O}_K$-submodule $M_n$. By construction $\iota_n(L_n) \subset \iota_n(C_{n-1}) + \iota_n(M_n) \subset \iota_n(C_{n-1}) + W_{n+1}' \subset U_{n+1}$. Moreover, the closure of $\iota_n(L_n)$ is contained in the closed\footnote{Note that $\overline{\iota_n(M_n)}$ is compact and $\iota_n(C_{n-1})$ is compact, so their sum is compact.} subset $\iota_n(C_{n-1}) + \overline{\iota_n(M_n)}$, which is contained in $\iota_n(C_{n-1}) + W_{n+1}' \subset U_{n+1}$. This completes the induction step. 
\end{proof}

\subsubsection{} Next, we give an example showing that Lemma \ref{Lem:LocallyCompactCompactoid} is optimal. 
\begin{Lem} \label{Lem:CompactoidII}
Suppose that $K$ is an extension of $\qp$ that is not locally compact. Let $\{V_n\}_{n \in \mathbb{Z}_{\ge 0}}$ be the inductive system where $V_n$ is the Banach space over $K$ of functions $\zp \to K$ which have convergent power series expansions on all discs of radius $|p^{n+1}|$, and the transition maps are the natural ones. Then the inductive limit topology does $V_{\infty}$ does not agree with the locally convex inductive limit topology.
\end{Lem}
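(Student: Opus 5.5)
We mimic the counterexample of Lemma \ref{Lem:Counterexample}: we exhibit a subset $A \subset V_{\infty}$ whose intersection with each $V_n$ is closed in $V_n$, but whose closure in the locally convex inductive limit contains $0 \notin A$. The input we exploit is that $K$ is not locally compact. The unit ball of $K$ is then not compact, so there is an infinite sequence $(c_j)_j$ in $\mathcal{O}_K$ with $|c_j - c_{j'}| = 1$ for $j \neq j'$. Such a sequence exists because either the residue field is infinite, or the value group is not discrete; in the latter case we rescale appropriately and obtain pairwise distances bounded below by a fixed constant, which is enough for the argument.

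For each $n$ we choose elements $f_{n} \in V_n$ that are ``new'' at level $n$. Concretely, let $f_n$ be the function equal to $(x-a)/p^{n+1}$ raised to a suitable power on a single disc of radius $|p^{n+1}|$, and zero elsewhere. Since $f_n$ is not analytic on discs of radius $|p^n|$, it lies in $V_n \setminus V_{n-1}$. We normalise so that the norm of $f_n$ in $V_n$ is $1$ and its distance to $V_{n-1}$ is $1$. Next we pick a family $g_{k} \in V_0$ with $\|g_k\|_{V_0} = 1$ whose images in every $V_m$ stay uniformly separated; the characters $x \mapsto c_j^{\lfloor x \rfloor}$, or polynomials scaled by the $c_j$, should serve. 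Set
\[ a_{n,k} = \varpi^n g_k + \varpi^k f_n \in V_n, \qquad A = \{a_{n,k}\}_{n,k \ge 2}. \]
This is the direct analogue of the set used in Lemma \ref{Lem:Counterexample}. The point of choosing the $g_k$ separated is that the sequence $g_k$ has no convergent subsequence in any $V_m$. By Lemma \ref{Lem:LocallyCompactCompactoid} this is exactly the feature that breaks once $K$ is not locally compact.

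There are then two steps. For closedness in the topological inductive limit, we fix $m$. Points of $A_m = A \cap V_m$ with $n \le m$ are distinguished in two ways: for distinct $n$, by the $\varpi^n$ coefficient of $g_k$ together with the $f_n$ component; for fixed $n$ and distinct $k$, by the uniform separation of the $g_k$ in $V_m$, which gives distance at least $|\varpi|^{n}$. Hence every convergent sequence in $A_m$ is eventually constant, and $A_m$ is closed. For showing that $0$ lies in the locally convex closure, we use \cite[Theorem 11.1.2]{PGS}: a basic zero neighbourhood is $\sum_i \varpi^{k_i} B_{V_i}$. Taking $n \ge k_0$ places $\varpi^n g_k$ inside $\varpi^{k_0} B_{V_0}$, and then taking $k \ge k_n$ places $\varpi^k f_n$ inside $\varpi^{k_n}B_{V_n}$.

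The main obstacle is the construction of the $g_k$. We need them uniformly bounded in $V_0$ and yet uniformly separated in every $V_m$. This is exactly the failure of compactness of the compactoid images when $K$ is not locally compact: the image of the unit ball of $V_0$ in $V_m$ is compactoid but not precompact. We would realise this by taking $g_k = c_k \cdot h$ for a fixed nonzero $h \in V_0$, so that $\|g_k - g_{k'}\|_{V_m} = |c_k - c_{k'}| \, \|h\|_{V_m}$, which is bounded below independently of $k, k'$. This avoids any delicate estimates. The only thing left to check is that $A_m$ consists of distinct points, which holds because the $f_n$ are linearly independent of $V_0$.
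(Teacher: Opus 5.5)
Your final construction $a_{n,k}=\varpi^n c_k h+\varpi^k f_n$ (with $h=\mathbbm{1}_{\zp}$, say) is essentially the paper's set $a_{n,k}=p^n\lambda_k\mathbbm{1}_{\zp}+p^k\mathbbm{1}_{1+p^n\zp}$. Both of your steps are also the paper's: the zero-neighbourhood argument via \cite[Theorem 11.1.2]{PGS}, and closedness of $A\cap V_m$ from the uniform separation of the $c_k$ (which the paper detects by evaluating at $0$).

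One slip to fix: $f_n$ cannot be normalised to have distance $1$ from $V_{n-1}$, because $V_{n-1}$ (indeed the polynomials) is dense in $V_n$. You do not need any separation between different $n$, however. $A\cap V_m$ involves only finitely many $n$, so a convergent sequence in $A\cap V_m$ has a subsequence with constant $n$. Along it, $\|a_{n,k}-a_{n,k'}\|_{V_m}=|\varpi|^n|c_k-c_{k'}|\,\|h\|_{V_m}$ once $k,k'$ are large, which forces $k$ to stay bounded, so the limit lies in $A\cap V_m$.
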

\begin{proof}
Since $K$ is not locally compact, we can choose a sequence $\lambda_1, \lambda_2, \cdots$ of elements of $\mathcal{O}_K$ with $|\lambda_i| \ge |p|$ and $|\lambda_i - \lambda_j| \ge |p|$ for $i \not=j$. Consider 
\begin{align}
    a_{n,k} = p^n \lambda_k \mathbbm{1}_{\zp} + p^k \mathbbm{1}_{1+p^n \mathbb{Z}_p},
\end{align}
which we note lies in $V_{n-1}$ but not in $V_{n-2}$, and define $A=\{a_{n,k} \; | \; n,k \in \mathbb{Z}_{\ge 2}$\}.

One checks as in the proof of Lemma \ref{Lem:Counterexample} that the closure of $A$ inside $V_{\infty}$ contains $0$
and that $0 \not \in A$. Finally, we check that $A \cap V_{n-1}$ contains all its limit points hence is closed, by looking at the value at $0$ of the functions $a_{n,k}$ to see that any convergent sequence is eventually constant. This shows that $A$ is closed in the topological inductive limit topology but not in the locally convex inductive limit topology. 
\end{proof}

\subsection{Full faithfulness} In cases where Question \ref{Question:Appendix} has a negative answer, the following lemma is a reasonable replacement (cf. \cite[Lemma 3.32]{RJRC} and \cite[Theorem 2.11]{li2025dualityarithmeticpadicproetale}).
\begin{Lem} \label{Lem:FullyFaithfull}
    Let $\{V_n\}_{n \in \mathbb{Z}_{\ge 0}}$ and $\{W_m\}_{m \in \mathbb{Z}_{\ge 0}}$ be cond-regular inductive systems with locally convex inductive limits $V_{\infty}$ and $W_{\infty}$. If all $V_n$ and $W_m$ are Fr\'echet, then the natural map
\begin{align}
    \operatorname{Hom}_K(V_{\infty}, W_{\infty}) \to \operatorname{Hom}_{\ul{K}}(\ul{V_{\infty}},\ul{W_{\infty}})
\end{align}
is a bijection. 
\end{Lem}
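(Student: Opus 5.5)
Injectivity of the map is immediate: a continuous linear map $V_\infty \to W_\infty$ is determined by its underlying map of sets, and $\ul{V_\infty}(\ast)=V_\infty$. The real content is surjectivity, so I will start from a morphism $\phi:\ul{V_\infty}\to\ul{W_\infty}$ of condensed $\ul{K}$-modules and show that the linear map $\phi(\ast):V_\infty\to W_\infty$ is continuous for the locally convex inductive limit topologies.

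\textbf{Reduction to a single Fr\'echet space.} By the universal property of the locally convex inductive limit (\cite[Section 11.1]{PGS}), continuity of $\phi(\ast)$ on $V_\infty$ is equivalent to continuity of each restriction $\phi_n:V_n\to W_\infty$. Using cond-regularity of $\{V_n\}$, the composite $\ul{V_n}\to\varinjlim_n\ul{V_n}=\ul{V_\infty}\xrightarrow{\phi}\ul{W_\infty}$ is a morphism of condensed modules whose value on $\ast$ is $\phi_n$. So it suffices to show the following: for a Fr\'echet space $F$ and an arbitrary locally convex $W$, every condensed map $\ul{F}\to\ul{W}$ comes from a continuous map $F\to W$.

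\textbf{The Fr\'echet case.} $F$ is metrizable and hence compactly generated, since first countable spaces are. By \cite[Proposition 1.7]{CondensedNotes}, maps of condensed sets $\ul{F}\to\ul{W}$ correspond to continuous maps $F\to W$, because $\ul{(-)}$ is fully faithful on compactly generated sources. Concretely, I would argue with sequences. Let $x_k\to x$ in $F$. Take $S=\mathbb{N}\cup\{\infty\}$; this gives a continuous map $S\to F$, and applying $\phi$ yields a continuous map $S\to W$. Hence $\phi(x_k)\to\phi(x)$, so $\phi_n$ is sequentially continuous, and therefore continuous because $F$ is metrizable. Linearity is automatic since $\phi$ is $\ul{K}$-linear.

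\textbf{Where the difficulty lies.} The only delicate point is making sure that the two cond-regularity hypotheses are used in the correct direction.
\begin{itemize}
\item Cond-regularity of the source is what allows us to restrict $\phi$ to each $\ul{V_n}$. Without it, $\ul{V_\infty}$ could be larger than $\varinjlim_n\ul{V_n}$; that would not break the argument, but the restriction step has to be phrased through the canonical map $\ul{V_n}\to\ul{V_\infty}$, which always exists.
\item For the target, the argument only uses $\ul{W_\infty}(S)=C^0(S,W_\infty)$ with the locally convex topology. So cond-regularity of $\{W_m\}$ and the Fr\'echet property of the $W_m$ appear not to be needed. They would only matter if one wanted to identify $\operatorname{Hom}$ into $\varinjlim_m\ul{W_m}$ instead.
\end{itemize}
The main thing to double-check is therefore that sequential continuity on each $V_n$ really yields continuity on the locally convex limit. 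This holds because the locally convex limit topology is final among locally convex topologies, and a continuous linear map into a locally convex space automatically defines a compatible family.
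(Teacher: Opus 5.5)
Your argument is correct, and it takes a genuinely different and shorter route than the paper.

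\textbf{The paper's route.} The paper identifies both Hom-sets with $\varprojlim_n \varinjlim_m \operatorname{Hom}_K(V_n, W_m)$.
\begin{itemize}
\item On the locally convex side it uses Grothendieck's factorization theorem \cite[Corollary 8.9]{SchneiderFA}.
\item On the condensed side it uses cond-regularity of both systems. It then identifies $\ul{W_\infty}(\ast)_{\mathrm{top}}$ with the topological colimit $\varinjlim_m W_m$, which needs the $W_m$ to be compactly generated. This turns a condensed map $\ul{V_n}\to\ul{W_\infty}$ into a continuous linear map $V_n\to W_\infty$, which is then factored through some $W_m$.
\item Finally it compares $\operatorname{Hom}_K(V_n,W_m)$ with $\operatorname{Hom}_{\ul K}(\ul{V_n},\ul{W_m})$.
\end{itemize}

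\textbf{Your route.} You argue directly. The underlying linear map $\phi(\ast)$ is continuous on each $V_n$ because $V_n$ is metrizable. This follows either from the profinite set $\mathbb{N}\cup\{\infty\}$ and sequential continuity, or from the adjunction in \cite[Proposition 1.7]{CondensedNotes}. The universal property of the locally convex inductive limit, with target the locally convex space $W_\infty$, then gives continuity on all of $V_\infty$.

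\textbf{What each route buys.} Your route makes visible that the gap between the topological and locally convex colimit topologies (Lemma \ref{Lem:Counterexample}) is invisible to linear maps into a locally convex space. As you observe, this shows the stated bijection needs neither cond-regularity hypothesis nor the Fr\'echet condition on the $W_m$. It only needs the $V_n$ to be metrizable, or just compactly generated. The paper's route gives more: a description of both sides as $\varprojlim_n\varinjlim_m\operatorname{Hom}(V_n,W_m)$. In particular, every condensed morphism restricted to $\ul{V_n}$ factors through some $\ul{W_m}$, which matches the formulation in the references it follows.

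\textbf{One step to state explicitly.} You should say why $\ul{\phi(\ast)}=\phi$. Since $\ul{W_\infty}(S)=C^0(S,W_\infty)$ embeds into set maps $S\to W_\infty$, naturality along each point $\ast\to S$ shows that a morphism into $\ul{W_\infty}$ is determined by its value on $\ast$.
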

\begin{proof}
It follows formally that 
\begin{align}
     \operatorname{Hom}_K(V_{\infty}, W_{\infty}) \xleftarrow{\sim} \varprojlim_n \operatorname{Hom}_K(V_n, W_{\infty})
\end{align}
and from \cite[Corollary 8.9]{SchneiderFA} that 
\begin{align}
    \operatorname{Hom}_K(V_n, W_{\infty})  \xleftarrow{\sim} \varinjlim_m \operatorname{Hom}_K(V_n, W_{m}).
\end{align}
Using cond-regularity of $\{V_n\}_{n \in \mathbb{Z}_{\ge 0}}$, we see that 
\begin{align}
    \operatorname{Hom}_{\ul{K}}(\ul{V_{\infty}},\ul{W_{\infty}})  \simeq  \operatorname{Hom}_{\ul{K}}(\varinjlim_n \ul{V_n}, \ul{W_{\infty}}) \xleftarrow{\sim} \varprojlim_n \operatorname{Hom}_{\ul{K}}(\ul{V_n}, \ul{W_{\infty}}).
\end{align}
Using cond-regularity of $\{W_m\}_{m \in \mathbb{Z}_{\ge 0}}$, we identify
\begin{align}
    \operatorname{Hom}_{\ul{K}}(\ul{V_n}, \ul{W_{\infty}}) = \operatorname{Hom}_{\ul{K}}(\ul{V_n}, \varinjlim_m \ul{W_m}).
\end{align}
Now we show that
\begin{align}
    \operatorname{Hom}_{\ul{K}}(\ul{V_n}, \varinjlim_m \ul{W_m}) \xleftarrow{\sim}  \varinjlim_m  \operatorname{Hom}_{\ul{K}}(\ul{V_n},\ul{W_m})
\end{align}
following \cite[proof of Lemma 3.32]{RJRC}: Any morphism $f:\ul{V_n} \to \ul{W_{\infty}}$ induces a morphism\footnote{This is a morphism of $K$-vector spaces that is continuous, although the right hand side might not be a topological $K$-vector space.}
\begin{align}
    V_n \to \ul{V_n}(\ast)_{\mathrm{top}} \to \ul{W_{\infty}}(\ast)_{\mathrm{top}}.
\end{align}
Since $V_n$ is Fr\'echet, the first arrow is an isomorphism. Now by cond-regularity (and using that each $W_m$ is Fr\'echet and thus metrizable hence compactly generated in the last step)
\begin{align}
    \ul{W_{\infty}}(\ast)_{\mathrm{top}} = (\varinjlim_m \ul{W_m})(\ast)_{\mathrm{top}} = \varinjlim_m \ul{W_m}(\ast)_{\mathrm{top}} = \varinjlim_m W_m,
\end{align}
where the final colimit is taken in the category of topological spaces. We can thus compose $f$ with the natural continuous bijection $\varinjlim_m W_m \to W_{\infty}$ to get an induced continuous map $V_n \to W_{\infty}$ of $K$-vector spaces, and thus a morphism in the category of locally convex $K$-vector spaces. This map factors through a morphism $V_n \to W_m$ for some $m$ by \cite[Corollary 8.9]{SchneiderFA}. The induced map $\ul{V_n} \to \ul{W_m}$ thus factors $f$ (this can be checked after evaluating on $\ast$). 

We have reduced the lemma to showing that
\begin{align}
     \operatorname{Hom}_K(V_n, W_{m}) \to \operatorname{Hom}_{\ul{K}}(\ul{V_{n}},\ul{W_m})
\end{align}
is a bijection, and this holds because $V_n$ and $W_m$ are Fr\'echet and thus metrizable hence compactly generated. 
\end{proof}



\DeclareRobustCommand{\VAN}[3]{#3}

\end{document}